\DeclareSymbolFont{bbold}{U}{bbold}{m}{n}
\DeclareSymbolFontAlphabet{\mathbbb}{bbold}
\newtheorem{theorem}{Theorem}
\numberwithin{theorem}{subsection}
\newtheorem{thm}[theorem]{Theorem}
\newtheorem{proposition}[theorem]{Proposition}
\newtheorem{propn}[theorem]{Proposition}
\newtheorem{corollary}[theorem]{Corollary}
\newtheorem{cor}[theorem]{Corollary}
\newtheorem{lemma}[theorem]{Lemma}
\theoremstyle{definition}
\newtheorem{definition}[theorem]{Definition}
\newtheorem{defn}[theorem]{Definition}
\newtheorem{conj}[theorem]{Conjecture}
\newtheorem{predef}[theorem]{Preliminary Definition}
\newtheorem{example}[theorem]{Example}
\newtheorem{examples}[theorem]{Examples}
\newtheorem{notation}[theorem]{Notation}
\newtheorem{remark}[theorem]{Remark}
\newtheorem{construction}[theorem]{Construction}
\newtheorem{question}[theorem]{Question}
\providecommand{\op}{\mathrm{op}}
\providecommand{\xel}{\mathrm{el}}
\providecommand{\xint}{\mathrm{int}}
\newcommand{\xHom}{\operatorname{Hom}}
\newcommand{\xFun}{\operatorname{Fun}}
\newcommand{\bbone}{\mathbbb{1}}
\DeclareMathOperator*{\colim}{colim}
\newcommand{\finset}{\operatorname{Fin}}
\newcommand{\pfinset}{\finset_*}
\newcommand{\finsetskel}{\mathbf{F}}
\newcommand{\pfinsetskel}{\finsetskel_*}
\newcommand{\xCat}{\operatorname{Cat}}
\newcommand{\xMap}{\operatorname{Map}}
\newcommand{\Map}{\operatorname{Map}}
\newcommand{\xAlg}{\operatorname{Alg}}
\newcommand{\xsSet}{\operatorname{sSet}}
\newcommand{\xE}{\mathcal{E}}
\newcommand{\xxO}{\mathcal{O}}
\newcommand{\xcc}{\mathcal{C}}
\newcommand{\xS}{\mathcal{S}}
\newcommand{\xU}{\mathcal{U}}
\newcommand{\xV}{\mathcal{V}}
\newcommand{\xW}{\mathcal{W}}
\newcommand{\olI}{\overline{I}}
\newcommand{\xfe}{\mathfrak{e}}
\newcommand{\xfc}{\mathfrak{c}}
\newcommand{\id}{\operatorname{id}}
\newcommand{\Pre}{\name{P}}
\newcommand{\PSU}{\Pre_{\mathbb{S}}(\xU)}
\newcommand*\cocolon{%
	\nobreak
	\mskip6mu plus1mu
	\mathpunct{}%
	\nonscript
	\mkern-\thinmuskip
	{:}%
	\mskip2mu
	\relax
}
\newcommand{\icat}{$\infty$-category}
\newcommand{\isoto}{\xrightarrow{\sim}}
\newcommand{\xto}[1]{\xrightarrow{#1}}
\newcommand{\csquare}[8]{ %
	\[ %
	\begin{tikzpicture} %
	\matrix (m) [matrix of math nodes,row sep=3em,column sep=2.5em,text height=1.5ex,text depth=0.25ex] %
	{ #1 \pgfmatrixnextcell #2 \\ %
		#3 \pgfmatrixnextcell #4 \\ }; %
	\path[->,font=\footnotesize] %
	(m-1-1) edge node[auto] {$#5$} (m-1-2)%
	(m-1-1) edge node[left] {$#6$} (m-2-1)%
	(m-1-2) edge node[auto] {$#7$} (m-2-2)%
	(m-2-1) edge node[below] {$#8$} (m-2-2);%
	\end{tikzpicture}%
	\]%
}
\newcommand{\nolabelcsquare}[4]{\csquare{#1}{#2}{#3}{#4}{}{}{}{}}
\newcommand{\name}[1]{\ensuremath{\text{\textup{#1}}}}
\newcommand{\simp}{\mathbf{\Delta}}
\newcommand{\bbO}{\mathbf{\Omega}}
\newcommand{\levelg}{\mathbf{L}}
\newcommand{\levelV}{\levelg^\xV}
\newcommand{\levelU}{\levelg^\xU}
\newcommand{\levelgconn}{\levelg_{\name{c}}}
\newcommand{\levelint}{\levelg_{\name{int}}}
\newcommand{\levelel}{\levelg_{\name{el}}}
\newcommand{\levelPSU}{\levelg^{\PSU}}
\newcommand{\levelcPSU}{\levelgconn^{\PSU}}
\newcommand{\levelcV}{\levelgconn^{\xV}}
\newcommand{\levelcVel}{\levelg_{\name{c},\name{el}}^{\xV}}
\newcommand{\levelcUint}{\levelg_{\name{c},\name{int}}^{\xU}}
\newcommand{\levelcUel}{\levelg_{\name{c},\name{el}}^{\xU}}
\newcommand{\levelcU}{\levelgconn^{\xU}}
\newcommand{\oplevelV}{\levelg^{\op,\xV}}
\newcommand{\oplevelcU}{\levelg^{\op,\xU}_{\name{c}}}
\newcommand{\oplevelcUel}{\levelg^{\op,\xU}_{\name{c},\xel}}
\newcommand{\oplevelcV}{\levelgconn^{\op,\xV}}
\newcommand{\oplevelcVel}{\levelg_{\name{c}, \name{el}}^{\op,\xV}}
\newcommand{\bbY}{\mathbf{G}}
\newcommand{\bbYout}{\bbY_{\name{out}}}
\newcommand{\bbYV}{\bbY^{\xV}}
\newcommand{\bbYVint}{\bbY_{\xint}^{\xV}}
\newcommand{\bbYVel}{\bbY_{\xel}^{\xV}}
\newcommand{\bbYU}{\bbY^\xU}
\newcommand{\bbYUint}{\bbY_{\xint}^{\xU}}
\newcommand{\bbYUel}{\bbY_{\xel}^{\xU}}
\newcommand{\bbYPSU}{\bbY^{\PSU}}
\newcommand{\opbbYU}{\bbY^{\op,\xU}}
\newcommand{\opbbYV}{\bbY^{\op,\xV}}
\newcommand{\opbbYVel}{\bbY^{\op,\xV}_{\name{el}}}
\newcommand{\opbbYUel}{\bbY^{\op,\xU}_{\name{el}}}
\newcommand{\hryint}{\bbY_{\name{int}}}
\newcommand{\hryact}{\bbY_{\name{act}}}
\newcommand{\hryGamma}{\mathbf{\Gamma}}
\newcommand{\segalGamma}{\mathbf{\Gamma}}
\newcommand{\gc}{\mathbf{\Xi}}
\newcommand{\gcint}{\gc_{\name{int}}}
\newcommand{\gcact}{\gc_{\name{act}}}
\newcommand{\gcel}{\gc_{\xel}}
\newcommand{\gcone}{\gc}
\newcommand{\gctwo}{\mathbf{\Upsilon}}
\newcommand{\gcVone}{\gcone^\xV}
\newcommand{\gcVtwo}{\gctwo^\xV}
\newcommand{\gcUone}{\gcone^\xU}
\newcommand{\gcUtwo}{\gctwo^\xU}
\newcommand{\gcU}{\gc^{\xU}}
\newcommand{\gcPU}{\gc^{\Pre(\xU)}}
\newcommand{\gcPSU}{\gc^{\PSU}}
\newcommand{\gcV}{\gc^{\xV}}
\newcommand{\opgcV}{\gc^{\op,\xV}}
\newcommand{\opgcVint}{\gc^{\op,\xV}_{\name{int}}}
\newcommand{\opgcVel}{\gc^{\op,\xV}_{\name{el}}}
\newcommand{\kockgraphs}{\mathbf{K}}
\newcommand{\kockint}{\kockgraphs_{\name{int}}}
\newcommand{\bartau}{\overline{\tau}}
\newcommand{\bartauint}{\bartau_{\name{int}}}
\newcommand{\Dext}{\partial_{\name{ext}}}
\newcommand{\barishriek}{\bar \imath_!}
\newcommand{\barinotshriek}{\bar \imath_\vartriangle}
\newcommand{\bariustar}{\bar \imath^*}
\newcommand{\Set}{\name{Set}}
\newcommand{\Seg}{\name{Seg}}
\newcommand{\Fun}{\name{Fun}}
\newcommand{\blank}{\text{\textendash}}
\newcommand{\PrL}{\name{Pr}^{\mathrm{L}}}
\newcommand{\Cat}{\name{Cat}}
\newcommand{\CatI}{\Cat_{\infty}}
\newcommand{\IFF}{if and only if}
\newcommand{\Alg}{\name{Alg}}
\newcommand{\AlgPrdSet}{\name{Alg}_{\mathcal{P}rpd/\Set}}
\newcommand{\lists}{\mathtt{Lst}}
\newcommand{\sub}{\mathtt{Sb}}
\newcommand{\out}{\mathtt{out}}
\newcommand{\inp}{\mathtt{in}}
\newcommand{\edge}{\mathtt{E}}
\newcommand{\vertex}{\mathtt{V}}
\providecommand*{\cupdot}{%
  \mathbin{%
    \mathpalette\@cupdot{}%
  }%
}
\newcommand*{\@cupdot}[2]{%
  \ooalign{%
    $\m@th#1\cup$\cr
    \hidewidth$\m@th#1\cdot$\hidewidth
  }%
}
\providecommand*{\capdot}{%
  \mathbin{%
    \mathpalette\@capdot{}%
  }%
}
\newcommand*{\@capdot}[2]{%
  \ooalign{%
    $\m@th#1\cap$\cr
    \hidewidth$\m@th#1\cdot$\hidewidth
  }%
}
\newcommand{\ordcup}{\cupdot}
\newcommand{\ordcap}{\capdot}
\newcommand{\strsub}{\sqsubset}
\newcommand{\strcup}{\mathbin{\tilde \cup}}
\newcommand{\Segrep}{\Seg^{\name{rep}}}
\newcommand{\SegS}{\Seg_{\mathbb{S}}}
\newcommand{\SegrepS}{\Seg^{\name{rep}}_{\mathbb{S}}}
\newcommand{\Segrepc}{\widehat{\Seg}\vphantom{\Seg}^{\name{rep}}}
\newcommand{\properads}{\name{Prpd}(\Set)}
\newcommand{\calproperad}{\mathcal{P}rpd}
\newcommand{\bfproperad}{\mathbf{Prpd}}
\newcommand{\nmproperad}{\name{Prpd}}
\newcommand{\nsqelt}[1]{\vec{#1}}
\newcommand{\scriptyell}{\mathscr L}
\title{On rectification and enrichment of infinity properads}
\author{Hongyi Chu}
\address{Max Planck Institute for Mathematics, Bonn, Germany}
\email{chu@mpim-bonn.mpg.de}
\author{Philip Hackney}
\address{Department of Mathematics, University of Louisiana at Lafayette, Lafayette, LA 70504-3568 USA}
\email{philip@phck.net} 
\urladdr{http://phck.net}
\thanks{
The first author thanks the Labex CEMPI (ANR-11-LABX-0007-01) and Max Planck Institute for Mathematics for their hospitality and financial support during the process of writing this article.
This material is based upon work supported by the National Science Foundation under Grant No.\ DMS-1440140, while the second author was in residence at the Mathematical Sciences Research Institute in Berkeley, California, during the Spring 2020 semester. 
The second author also acknowledges the support of Australian Research Council Discovery Project grant DP160101519.
}
\date{\today}
\begin{document}

\begin{abstract}
We develop a theory of infinity properads enriched in a general symmetric monoidal infinity category.
These are defined as presheaves, satisfying a Segal condition and a Rezk completeness condition, over certain categories of graphs.
In particular, we introduce a new category of level graphs which also allow us to give a framework for algebras over an enriched infinity properad.
We show that one can vary the category of graphs without changing the underlying theory.

We also show that infinity properads cannot always be rectified, indicating that a conjecture of the second author and Robertson is unlikely to hold.
This stands in stark contrast to the situation for infinity operads, and we further demarcate these situations by examining the cases of infinity dioperads and infinity output properads.
In both cases, we provide a rectification theorem that says that each up-to-homotopy object is equivalent to a strict one.
\end{abstract}

\maketitle

\tableofcontents

\section{Introduction}

Properads, first introduced by Vallette \cite{Vallette:KDP} in the context of Koszul duality theory for props, are an intermediate notion between operads and props that are capable of governing certain types of bialgebraic structures.
Morphisms in operads take the form $f \colon a_1, \dots, a_n \to b$, that is, they can be interpreted as many-to-one operations.
If $g \colon c_1, \dots, c_m \to d$ is some other operation, then we can form \[ g\circ_i f \colon c_1, \dots, c_{i-1}, a_1, \dots, a_n, c_{i+1}, \dots, c_m \to d\]  whenever $1\leq i \leq m$ and $b = c_i.$
Properads, or more precisely the many-colored variant of them discovered independently by Duncan in his thesis \cite[\S6.1]{Duncan:TQC} under the name ``compact symmetric polycategories,'' are an extension of operads that allows one to consider many-to-many operations \[f \colon a_1, \dots, a_n \to b_1, \dots, b_p.\]
Composition, rather than connecting one output with one input, is designed to connect several (meaning `at least one') outputs with inputs.
These should be regarded as props (in the sense of \cite[\S24]{MacLane:CA}) without horizontal composition.\footnote{See also \cite[p.80]{Duncan:TQC}.}

Infinity properads were introduced by the second author, Robertson, and Yau in \cite{hrybook}, in part as a potential structural framework for chain-level string topology operations. 
In the present work we study infinity properads enriched in an arbitrary (presentably symmetric monoidal $\infty$-)category $\xV$. 
The main examples the reader should keep in mind are when $\xV$ is the category of chain complexes over a field of characteristic zero (which is the original context for the properads in \cite{Vallette:KDP}) or the category of spaces (which can be regarded as the `unenriched' case of infinity properads). 
The method for $\xV$-enrichment presented here works more generally, a story that will be told in forthcoming work of the first author and Haugseng \cite{patterns3}.

The basic idea for $\xV$-enrichment is as follows.
There is a category of graphs equipped with a suitable amount of structure so that one may identify ordinary properads as certain set-valued presheaves over this graph category, namely those presheaves satisfying a ``Segal condition.'' 
Infinity properads are then space-valued presheaves satisfying a (homotopical) Segal condition (as well as a discreteness condition for objects). 
One then builds a new indexing $\infty$-category (Definition~\ref{def LV_YV}) of $\xV$-decorated graphs and then $\xV$-enriched infinity properads are a subclass of Segal presheaves on this new $\infty$-category (Definition~\ref{def cts Seg psh}, Definition~\ref{def completness}). 
When $\xV$ is just spaces, this recovers the usual unenriched notion of infinity properads.

The present paper concentrates on two major questions in the theory:
\begin{enumerate}
\item 
Given a $\xV$-enriched infinity properad $P$, what should be meant by the $\infty$-category of algebras of $P$? 
We address this question by showing that the $\infty$-category of $\xV$-enriched infinity properads, $\nmproperad_\infty^\xV$, is tensored over $\infty$-categories (Theorem~\ref{theo tensor} and Proposition~\ref{prop tensor}), so by adjunction one can produce the desired $\infty$-category of algebras (Corollary~\ref{cor Alg(-,-)} and Corollary~\ref{corollary algebras cotensors complete}).
\item Suppose $\xV$ is a symmetric monoidal $\infty$-category associated to a symmetric monoidal model category and suppose $P$ is a $\xV$-enriched infinity properad, is it possible to rectify it to a strict properad enriched in $\xV$? 
We explain why we expect the answer to this question to be negative in general (even when $\xV$ is spaces, see Theorem~\ref{theorem non-rect}), but give an affirmative answer in certain special cases such as chain complexes (Theorem~\ref{theorem rational rectification for properads}) and symmetric spectra (Remark~\ref{remark symm spectra}). 
We also give an affirmative answer in related situations, such as for dioperads (i.e., symmetric polycategories) and for output properads. 
In particular, we expect that Conjecture 4.14 of \cite{infproplec}, which occurs in the setting of model categories, is false as stated, but that analogues will be true for both dioperads and for output properads.
\end{enumerate}

The two preceding questions each require us to approach enriched infinity properads using a different base indexing category of graphs.
For the second question, it is most appropriate to use the properadic graphical category from \cite{hrybook}, which was further developed in \cite{Kock_Properads} and \cite{hry_factorizations}. 
This is a category of directed graphs with loose ends which are acyclic as directed graphs and connected as undirected graphs. 
In \cite{hrybook} it was shown that Segal set-valued presheaves on this category are precisely the ordinary properads.
In this paper we give a new, conceptual presentation of morphisms of this category as certain homomorphisms on the partially-ordered set of subgraphs. The new description of the properadic graphical category then reveals its tight connection to the operads governing properads which is in turn essential for our rectification procedure.

The first question requires a different approach. 
Namely, we introduce a new category of directed \emph{level} graphs, which admits a cartesian fibration to the simplex category. 
This extra structure allows for an alternative description of the Segal condition. 
This enhanced relationship between the level graph category and the simplex category induces a relationship between infinity properads and infinity categories.
Of course level graphs also played a prominent role in \cite{Vallette:KDP}.
Indeed, the `vertical' structure of our level graph category is already visible in the simplicial bar construction (see the second remark \cite[p.4920]{Vallette:KDP}), though we will need the full structure below.

In each instance, we have utilized a graph category especially suited to the task at hand. 
In the first question, we used the level graph category which bears a close relationship with the simplex category, while in the second we used the directed graph category which is closely related to operads for properads. 
A third question arises, which is whether we are really talking about the same kind of enriched infinity properads in both instances. 
This is indeed the case (Corollary~\ref{cor equ enr properads}), though the proof is rather involved (\S\ref{subsection comparison proof}).

In the second question above, we alluded to there being related developments for infinity dioperads and output (or input) properads.
For the most part, these developments are entirely parallel to the case of properads, and simply amount to restricting to full subcategories of various graph categories.

Dioperads are like properads, in that operations can have many inputs and outputs, but are also like operads, in that the only compositions we have connect one output with one input.
The name `dioperad' first appeared in work of Gan \cite{Gan:KDD}, again in the context of Koszul duality theory, but the many-colored version `polycategory' had appeared earlier (see \cite{Szabo:P} for the non-symmetric version and \cite[\S5.1]{Hyland:PTA} or \cite{Garner:PPDL} for the symmetric version). 
To treat this case we will restrict all of our graph categories to the full subcategory whose objects are graphs which, as topological spaces, have trivial fundamental groups.

Output properads are those properads having the property that if $f \colon a_1, \dots, a_n \to b_1, \dots, b_p$ is an operation, then $p$ is positive.
For input properads one instead requires that $n$ is positive.
Many interesting properads are, in fact, output (or input) properads.
For instance, there are several homological conformal field theories for string topology \cite{ChataurMenichi:STCS,CohenGodin:PVST,Godin,HepworthLahtinen:OSTCS}, all of which require at least some type of positive boundary condition (or even noncompactness condition).
In the closed part of the theory of \cite{Godin}, this amounts to the structure of an algebra over the output properad given by the homology of the moduli spaces of connected Riemann surfaces with at least one outgoing boundary (see \S1 of \cite{Tamanoi:SSOT}).
It is not possible to relax this condition in string topology situations, that is, to consider a full hcft with both units and counits as the value on a circle would be finite-dimensional, while the (co)homology of a free loop space is usually infinite dimensional (see \S1.5 of \cite{HepworthLahtinen:OSTCS} for further details).
Further examples of this phenomenon abound, e.g.\ the topological conformal field theories of \cite[\S1.1]{Costello:TCFTCYC} have a similar restriction.

Though the development of the theory of enriched infinity properads, dioperads and output/input properads follow the same path, we do not have the same rectification theorems in the first case.
The main difference is that properads, unlike dioperads or output properads, are not modeled by a $\Sigma$-cofibrant operad.
At the very least, this means that one does not have access to standard tools (such as \cite[Theorem 4.4]{BergerMoerdijk}, which even provides a Quillen equivalence of model categories) for rectifying homotopy properads.
We prove slightly more in Theorem~\ref{theorem non-rect}, showing that when working over simplicial sets and for a specific model of homotopy properads, that the standard comparison adjunction between strict properads and homotopy properads is not a Quillen equivalence.

We should compare the preceding paragraph to the classical setting of the operads $\mathcal{A}ss$ and $\mathcal{C}om$ in topological spaces: the former is $\Sigma$-free while the latter is not.
Connected $\mathcal{A}_\infty$-spaces can be rectified to topological monoids (see \cite{BoardmanVogt,May,Stasheff:HAHSI}), whereas it can be shown by vanishing of k-invariants or Dyer--Lashof operations for commutative topological monoids that it is not possible to rectify all $\mathcal{E}_\infty$-spaces to commutative monoids (see, e.g., \cite[p.203]{BoardmanVogt}).
By analogy, our interpretation of the non-rectification result is that infinity properads are more free than strict properads, and are the correct notion for homotopy theory.

\begin{remark}\label{remark homotopical setting}
The homotopical setting for this paper is that of $\infty$-categories, and we avoid Quillen model categories until the very end (\S\ref{subsec rect}).
This added flexibility is important when studying enriched properads and their algebras, and much of what we do has no obvious counterpart in the realm of model categories and Quillen functors.
Let us point to two concrete situations where it is clear that the rigidity of model categories and Quillen functors would be an impediment.
\begin{itemize}[leftmargin=*]
\item Let $p$ be a prime number and let $\mathbf{Ch}_k$ be the category of chain complexes over the field $k$ with $p$ elements.
As was observed in \cite[3.3.3]{BergerMoerdijk}, it is not possible to have a model structure on (unreduced, monochrome) operads in $\mathbf{Ch}_k$ so the forgetful functor to symmetric sequences simultaneously creates weak equivalences and is a right Quillen functor.
The key step in this argument is that in positive characteristic the free graded commutative algebra functor from chain complexes to the category of CDGAs can take an acyclic chain complex to a CDGA with nontrivial homology.
A similar consideration applies in the case of properads; related concerns for props manifest in \cite[4.10]{Fresse:PMCHIS}.
\item If $P$ is a (monochrome) properad in a symmetric monoidal category $\mathbf{V}$, then the forgetful functor from $P$-algebras to $\mathbf{V}$ often does not have a left adjoint.
In particular, when $\mathbf{V}$ is a monoidal model category, we cannot expect to have a model structure on $P$-algebras so that the forgetful functor is right Quillen, since this functor is not even a right adjoint for many choices of $\mathbf{V}$ and $P$ (and similarly for left adjoints).
To have the desired model structure, there seem to be substantial restrictions on at least one of $\mathbf{V}$ or $P$, such as requiring that $P$ be an operad \cite{BergerMoerdijk}, or that $\mathbf{V}$ be cartesian \cite[Theorem 1.4]{JohnsonYau:OHIACP}.
\end{itemize}
\end{remark}

\subsection{Notation and conventions}
We write $\simp$ for the usual simplicial indexing category and $[n]$, $n\geq 0$, for its objects.
The category of simplicial sets will be denoted by $\xsSet$, and the category of small categories by $\xCat$.

The category of all small properads and properad maps will be denoted by $\properads$. 
Objects are unenriched properads, that is properads with sets of morphisms $a_1, \dots, a_n \to b_1, \dots, b_p$. 
We won't give a formal definition here as it is somewhat involved (see \S6.1 \cite{Duncan:TQC} or Definition 11.25 or Definition 11.27 of \cite{YauJohnson:FPAM}), but one will appear much later in this paper as well, in Definition~\ref{def PrdS} (the operad governing $S$-colored properads) and Definition~\ref{def prpdV} (the category of all properads).

We let $\finset$ and $\pfinset$ denote the category of finite sets and pointed finite sets, respectively. 
An object in $\pfinset$ is denoted by $A_+=A\amalg \{*\}$ where $A\in \finset$ and $*$ is the base point.
We write $\finsetskel$ for a skeleton of the category $\finset$, spanned by objects $\mathbf{n} \coloneqq \{1,\ldots,n\}$. 
Similarly, we write $\pfinsetskel$ for a skeleton of the category $\pfinset$, spanned by $\langle n \rangle \coloneqq \{1,\ldots,n\}\amalg\{*\}$. 
We will often implicitly identify an object $K_{+}\in \finset$ such that $|K| = n$ with $\langle n \rangle\in\pfinsetskel$.

This paper is mostly written $\infty$-categorically, that is, using the quasicategorical formalism for $(\infty,1)$-categories presented in \cite{ht}.
We will regard $1$-categories as special $\infty$-categories and all categorical constructions such as taking (co)limits should be interpreted in the $\infty$-categorical setting.

We write $\xS$ for the $\infty$-category of spaces (or $\infty$-groupoids) and, for an $\infty$-category $\xcc$, we write $\Pre(\xcc)$ for the $\infty$-category $\xFun(\xcc^\op,\xS)$ of presheaves of spaces on $\xcc$.
We will use $\xCat_\infty$ to denote the $\infty$-category of $\infty$-categories.

Given a functor $f\colon \xcc \to \mathcal{D}$ between $\infty$-categories and an object $d\in \mathcal{D}$. We let $\xcc_{d/}$ denote the pullback $\xcc\times_{\mathcal{D}} \mathcal{D}_{d/}$ whose objects are pairs $(c, \alpha)$ where $c\in \xcc$ and $\alpha\colon d\to f(c)$. For such an object we will often write $c$ and leave $f$ implicit. We define $\xcc_{/d}$ analogously.

In this paper, $\xU$ will always refer to a small symmetric monoidal \icat{}, while $\xV$ and $\xW$ will denote arbitrary symmetric monoidal $\infty$-categories.
That said, often $\xV$ will denote a presentably symmetric monoidal \icat{}, (i.e., $\xV$ is a presentable \icat{} and the tensor product preserves colimits in each variable).
The results generally remain true for large symmetric monoidal $\infty$-categories by passage to a larger universe as in \cite[Remark 3.5.9]{ChuHaugseng} and \cite[Theorem 5.6.6]{enriched}, but we will not comment further on that here.

\subsection{Outline}
Section~\ref{section categories of directed graphs} is devoted to several categories of directed graphs without (directed) cycles.
Key among these is the new category $\levelg$ of level graphs, introduced in \S\ref{subsec levelg}.
In \S\ref{section subcategory bbY}, we give a new, conceptual presentation of the properadic graphical category from \cite{hrybook}; for narrative purposes, a proof of the relevant equivalence is postponed until Appendix~\ref{appendix comparison with HRY}.
We also introduce a functor from the full subcategory $\levelgconn$ of $\levelg$ on the connected graphs to the properadic graphical category in \S\ref{section level to acyclic}, which plays a key role in later comparison results.
Several other graph categories appear in \S\ref{section categories of directed graphs} as subcategories of the main two, which are useful for studying structures related to properads (trees and forests are for operads, directed graphs without undirected cycles are for dioperads, and so on).

In Section~\ref{sec segal presheaves}, we introduce the algebraic version of $\xV$-enriched $\infty$-properads and give the first results.
Section~\ref{sec algebras over inf properads} restricts attention to categories of level graphs, shows how to tensor by Segal simplicial spaces, and introduces categories of algebras.
At this point, we have two competing notions of $\xV$-enriched $\infty$-properads, one based on $\levelg$ and the other based on $\bbY$.
We show in Section~\ref{sec comparison} that these two approaches coincide.

We leave the algebraic world behind in Section~\ref{sec ffes}, where we introduce a completeness condition for enriched $\infty$-properads.
Finally, in Section~\ref{sec alg} we compare our notion of enriched $\infty$-properads to enriched ordinary properads.

\subsection{Further directions}
The goal of this paper is to build a foundational framework for enriched $\infty$-properads and their algebras. 
We now propose several interesting areas of exploration  based on the machinery developed here:
\begin{itemize}[leftmargin=*]
\item \emph{Enriched $\infty$-properads as monoids}: 
First of all, one should be able to describe enriched $\infty$-properads as monoids in bicollections in a similar way as Vallette first introduced properads in \cite{Vallette:KDP}.
More precisely, the goal would be to construct a Day convolution double $\infty$-category $\widetilde{\levelV}$ by using the double $\infty$-categorical structure of $\levelV$ (Remark~\ref{rem doublecatV}) introduced in Definition~\ref{def LV_YV}.
This naturally generalizes the constructions given by \cite{HaugsengSS} for the $\infty$-operadic setting. 
The universal property of $\widetilde{\levelV}$ should then show that the $\infty$-category of monoids in $\widetilde{\levelV}$ is equivalent to the $\infty$-category $\Seg(\levelV)$ of Segal objects and by restricting to the $\infty$-category $\name{Prpd}^\xV_\infty$ of $\xV$-enriched $\infty$-properads viewed as a full subcategory of $\Seg(\levelV)$ we obtain the desired description of enriched $\infty$-properads as monoids. 

One of the reasons for introducing $\levelV$ in this paper is the fact that in contrast to other $\infty$-categories governing enriched $\infty$-properads such as $\bbYV$ the $\infty$-category $\levelV$ admits a natural double $\infty$-categorical structure which is essential for the construction of $\widetilde{\levelV}$ mentioned above.

\item \emph{Algebras as modules:} 
As a first application of the previous item, for any enriched $\infty$-properad $P$ we wish to describe $P$-algebras as certain module in bicollections.
This would improve on the results about algebras from the present paper by not only proving the existence, but also giving an explicit formula of computing algebras.
\item \emph{Koszul duality or bar-cobar construction:} 
Built on the description of enriched $\infty$-properads as monoids in bicollections Lurie's theory of Koszul duality for associative algebras \cite[\S 5.2]{ha} then gives an adjunction between enriched $\infty$-properads and enriched $\infty$-coproperads which are coassociative coalgebras in bicollections.
We expect that this approach generalizes the bar-cobar construction in the setting of ordinary properads and by restricting our general construction we should then obtain the Koszul duality for enriched $\infty$-operads with the most interesting case being the enrichment over spectra.
As was observed by Ching and Harper \cite{ChingBar,ChingHarper}, the coalgebraic structures in spectra are difficult to work with using model categorical methods, enticing one to work in the $\infty$-categorical setting and to study Koszul duality occurring for any stable symmetric monoidal $\infty$-categories instead of spectra.
In \cite{FrancisGaitsgory} Francis and Gaitsgory have used the expected properties of enriched $\infty$-operads to obtain Koszul duality equivalences under certain finiteness hypotheses and also conjectured how this should generalize.
It would be interesting to compare our $\infty$-categorical construction with the approach suggested by Francis--Gaitsgory.
\item \emph{Tensor product for $\infty$-properads:} 
In Section~\ref{sec algebras over inf properads}, we use the level graph structure of object in $\levelV$ to define the tensor product of enriched $\infty$-properads with $\infty$-categories. 
Replacing $\levelV$ with $\levelg$, one can extend the construction from \S\ref{sec algebras over inf properads} to give a tensor product of unenriched $\infty$-properads generalizing the tensor product on strict properads from \cite[\S 4.2]{hrybook} and the Boardman--Vogt tensor product of $(\infty\text{-})$operads.
Although there is no known tensor product of $\xV$-enriched $\infty$-operads unless $\xV$ is cartesian, we expect that in presence of a cartesian symmetric monoidal $\infty$-category $\xV$ a natural generalization of our construction of the tensor product in Section~\ref{sec algebras over inf properads} then gives a closed symmetric monoidal structure on the $\infty$-category of $\xV$-enriched $\infty$-properads. 
\item \emph{Simplicial localization of $P$-algebras:} Let $P$ be a properad in the category of chain complexes in characteristic zero.
As mentioned in Remark~\ref{remark homotopical setting}, we do not expect the category of $P$-algebras to have a meaningful Quillen model structure as the ground category is not cartesian.
Nonetheless, one can consider the category of $P$-algebras as a relative category, where the weak equivalences are the quasi-isomorphisms.
At this stage, one can use Dwyer--Kan simplicial localization (as in \cite{DwyerKan:SLC}) to obtain an $\infty$-category (see \cite[6.10]{BarwickKan:RCAMHTHT}); it would be interesting to see how this compares with the construction of algebras from the present paper.
Homotopy-invariance properties of this construction were proved by Yalin in \cite{Yalin:SLHAOP} (for props, rather than properads), a remarkable result given the lack of a suitable model structure on $P$-algebras. 
As our construction of algebras is manifestly homotopy-invariant, it is to be expected that any comparison would be closely related to Yalin's theorem.
\end{itemize}

\subsection*{Acknowledgments}
The authors give warm thanks to Michael Batanin, Rune Haugseng, Joachim Kock, Steve Lack, Dmitri Pavlov, Marcy Robertson, David White, and Donald Yau for valuable discussions related to this paper.

\section{Categories of directed graphs}\label{section categories of directed graphs}

In this paper, we are concerned with (finite) graphs which are \emph{directed}, have \emph{loose ends}, and are \emph{acyclic}.
Each graph (with loose ends) $G$ is given by two sets $\edge(G)$ and $\vertex(G)$ together with incidence data.
Namely, each vertex $v\in \vertex(G)$ should come equipped with two subsets $\inp(v)$ and $\out(v)$ of $\edge(G)$ (so that no edge is the input or output of two different vertices), but a given edge need not be an input (or output) for any vertex of the graph.
This last bit is what gives the distinction of `loose ends.'
Let us give a convenient, short formalism for graphs having the first two properties, which we learned from \cite[1.1.1]{Kock_Properads}.
\begin{definition}\label{graph definition}
Let $\mathscr{G}$ denote the category
\[
\begin{tikzcd}
\mathbf{e} & \mathbf{i} \dar{p} \lar{s} \\
\mathbf{o} \uar{t} \rar{q} & \mathbf{v}.
\end{tikzcd}
\]
A \emph{graph} $G$ is a functor $\mathscr{G} \to \finset$ which sends $s$ and $t$ to monomorphisms, that is, a diagram of finite sets of the form
\[
	\edge \overset{s}\hookleftarrow \mathtt{I} \overset{p}\rightarrow \vertex \overset{q}\leftarrow \mathtt{O} \overset{t}\hookrightarrow \edge.
\]
\begin{itemize}
	\item The image of $\mathbf{e}$, denoted by $\edge$ or $\edge(G)$, is the set of \emph{edges}.
	\item The image $\mathbf{v}$, denoted by $\vertex$ or $\vertex(G)$, is the set of \emph{vertices}.
	\item If $v\in \vertex$, we write $\inp(v) = p^{-1}(v)$ and $\out(v) = q^{-1}(v)$.
	\item We write $\inp(G) = \edge \setminus t(\mathtt{O})$ and $\out(G) = \edge \setminus s(\mathtt{I})$.
\end{itemize}
\end{definition}

We will typically regard $\mathtt{I}\cong \coprod_{v\in \vertex} \inp(v)$ and $\mathtt{O} \cong \coprod_{v\in \vertex} \out(v)$ as actual subsets of $\edge$.
Thus the set $\edge$ admits two decompositions
\[
	\edge = \inp(G) \amalg \coprod_{v\in \vertex} \out(v) \qquad \edge = \out(G) \amalg \coprod_{v\in \vertex} \inp(v).
\]
A \emph{na\"ive morphism} of graphs is simply a natural transformation of functors.
In \cite[1.1.7]{Kock_Properads} these were called \emph{morphism of graphs} and defined a full subcategory $\mathbf{Gr}^+ \subseteq \xFun(\mathscr{G}, \finset)$.
We won't have too much use for na\"ive morphisms as such in the present work (the one exception being as an alternative characterization of `structured subgraph,' in Definition~\ref{def structured subgraphs} -- see the proof of Lemma~\ref{lemma: level subgraph}) but they are useful in discussing the concepts of \emph{connectedness} and \emph{acyclicity}, as in \cite[\S 1.2]{Kock_Properads}.

\begin{remark}
There are other possible definitions of directed graph with loose ends, and also of basic notions like connectedness and acyclicity.
For instance, in \cite[\S 2.1.2]{hrybook} the definition of `generalized graph' is given; this formalism was extensively developed in \cite{YauJohnson:FPAM}.
This Yau--Johnson formalism for directed graphs is nearly equivalent to the one from Definition~\ref{graph definition}, the only exception being that in that formalism graphs are allowed to have components that are vertex-free loops.
Since we will only be interested in acyclic directed graphs in what follows, this difference would not appear anyway.
The equivalence of these approaches can be chained together from \cite[1.1.12]{Kock_Properads}, \cite[Proposition 15.2]{batanin-berger}, and \cite[Proposition 15.6]{batanin-berger}, noting that the directionality is preserved across all of these bijections.
\end{remark}

\begin{definition}[\'Etale map]
\label{definition kockint}
A na\"ive morphism of graphs $G\to H$ is called \emph{\'etale} if the middle two squares in the commutative diagram
\[ \begin{tikzcd}[column sep=small, row sep=scriptsize]
\edge(G)  \dar & \mathtt{I}(G) \lar \rar \dar & \vertex(G) \dar & \lar \mathtt{O}(G) \rar \dar & \edge(G) \dar \\
\edge(H)  & \mathtt{I}(H) \lar \rar & \vertex(H) & \lar \mathtt{O}(H) \rar & \edge(H)
\end{tikzcd} \]
are pullbacks.
We write $\kockint$ for the category whose objects are isomorphism classes of graphs which are both connected and acyclic, and whose morphisms are the \'etale maps.
\end{definition}

The category $\kockint$ was called $\mathbf{Gr}$ in \cite{Kock_Properads}.
Kock also had a larger category of graphs $\widetilde{\mathbf{Gr}}$ whose morphisms are more complicated; this category is equivalent to $\kockgraphs$ from the following definition.

\begin{definition}
\label{definition kockgraphs}
Given a connected, acyclic graph $G$ which is equipped with a total ordering on each of the sets $\inp(v)$ and $\out(v)$, an associated $\edge(G)$-colored properad is defined in \cite[Definition 5.7]{hrybook}.
For each object in $\kockint$, make a choice of representative of the isomorphism class and a choice of total ordering on all of the sets $\inp(v)$ and $\out(v)$.
Let $\kockgraphs$ denote the full subcategory of the category of colored properads (in $\Set$, see \cite[Definition 3.5]{hrybook}) spanned by the objects of $\kockint$, considered as colored properads.
\end{definition}

\subsection{Level graphs}\label{subsec levelg}
A level graph is a graph whose vertices and edges are arranged in several distinct layers, so that each edge in a middle layer connects vertices in the adjacent layers.
More precisely, we have the following, which we will later package as Definition~\ref{def:level2}.
\begin{predef}\label{def:level1}
	A \emph{level graph of height $n$} is a directed graph $G$ together with an assignment of an integer in $[0,n] = \{0,1,\dots, n\}$ to each edge and an assignment of a number in $[1,n]=\{1, \dots, n\}$ to each vertex.
	The functions $h_E\colon \edge(G) \to [0,n]$ and $h_V\colon \vertex(G) \to [1,n]$ should satisfy
	\[
		h_E(e) = \begin{cases}
			0 & \text{if } e\in \inp(G), \\
			n & \text{if } e\in \out(G), \\
			h_V(v) - 1 & \text{if $e\in \inp(v)$, and} \\
			h_V(v) & \text{if } e\in \out(v). \\
		\end{cases}
	\]
\end{predef}
See Figure \ref{figure level graph ex} for an example for a level graph of height $4$.
In general, the extremal layers will be edge layers, whose edges are connected to vertices only at one side, allowing us to glue graphs together.
We want to think of the vertices as `functions' or `processes' and the edges as `inputs' or `outputs' of these, and gluing corresponds to composition of total functions.

Notice that any directed graph which admits this extra structure is automatically acyclic, i.e., wheel-free.
On the other hand, there are acyclic graphs that do not admit any level structure, for instance, the graph in Example~\ref{example three vertex} below.

\begin{remark}\label{remark: loose edge}
Suppose $G$ is a level graph of height $n$ and suppose there exists an edge $e\in \edge$ which is not attached to any vertex in $G$.
Then the decompositions $\edge = \inp(G) \amalg \coprod_{v\in \vertex} \out(v)$ and $ \edge = \out(G) \amalg \coprod_{v\in \vertex} \inp(v)$ imply that $e\in \inp(G)\cap \out(G)$, and the condition on $h_E$ implies that $0=h_E(e)=n$.
Hence, the underlying graph of a level graph containing a loose edge is of height zero, that is, is a finite collection of loose edges.
\end{remark}

\begin{figure}
	\includegraphics[width=0.6\textwidth]{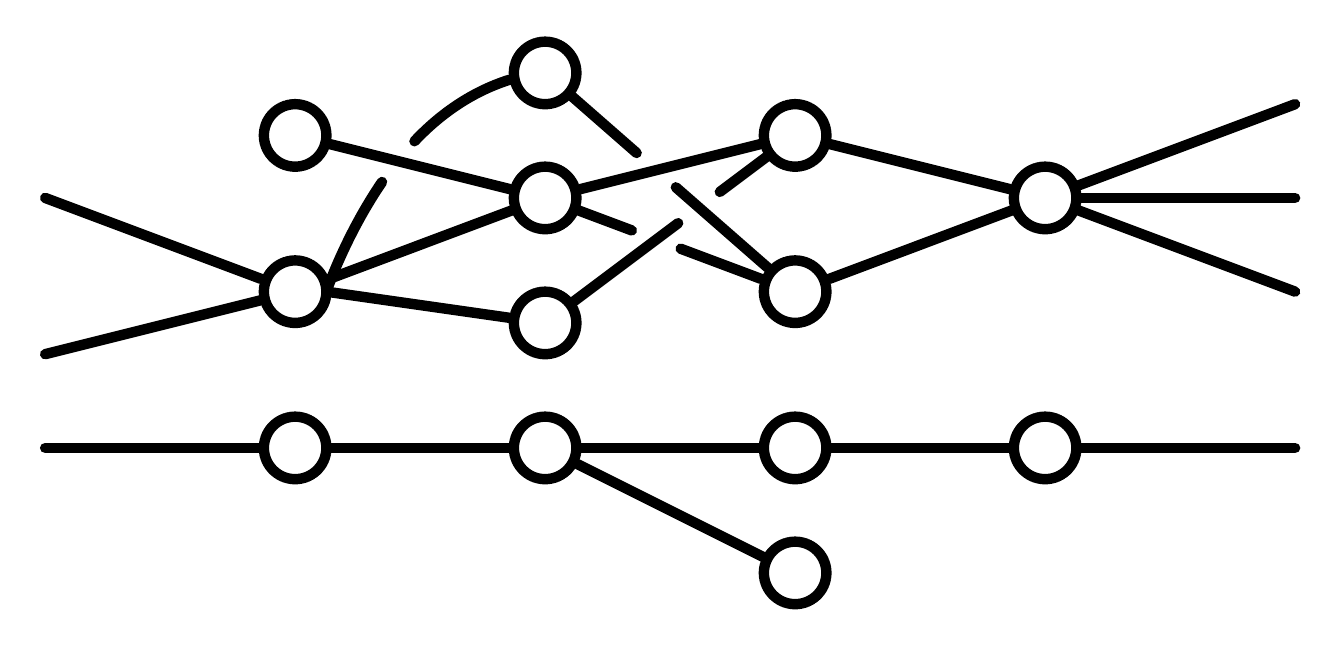}
	\caption{A level graph of height 4}\label{figure level graph ex}
\end{figure}

\begin{example}[Elementary level graphs]\label{example: elementary graphs}
The following level graphs will be called \emph{elementary}:
	\begin{itemize}
		\item If $p,q \geq 0$, the level graph $\xfc_{p,q}$ has height $1$, $p$ edges in level $0$, $q$ edges in level $1$, and a single vertex. We call such a level graph a \emph{corolla} and we write $\xfc$ for it if we do not want to emphasize the numbers of input and output edges.
		\item The level graph $\xfe$ which has height $0$ (hence no vertices) and a single edge.
	\end{itemize}
\end{example}

\begin{remark}
\label{remark many heights}
If $G$ is a connected graph with $\inp(G) \neq \varnothing \neq \out(G)$, then $G$ is a level graph in at most one way.
In particular, the height of $G$ is uniquely determined.
This is not the case when either $\inp(G)$ or $\out(G)$ is empty.
For example, consider the graph $G$ with a single vertex and no edges.
If $n\geq 1$, then each of the $n$ functions $h_V\colon\ast \simeq \vertex(G) \to [1,n]$ exhibits $G$ as a level graph of height $n$.
Conversely, if $G$ is a connected level graph of height $n$ that admits no other level graph structures, then both $\inp(G)$ and $\out(G)$ are nonempty sets.
\end{remark}

In the following we want to give an equivalent definition using certain category $\scriptyell_0^n$ (Definition~\ref{def twnzero}) to the category of finite sets.
This will allow us to compare (connected) level graphs with objects in the Hackney--Robertson--Yau category $\hryGamma$ (Theorem~\ref{theorem upsilon equivalent definitions} and Corollary~\ref{corollary functor l to y}).
For this purpose we introduce the following definitions.

\begin{definition}[Twisted arrow category]
\label{def twisted arrow}
If $\xcc$ is a category, then the \emph{twisted arrow category}
	$\name{Tw}(\xcc)$ has as objects the morphisms of $\xcc$, and morphisms $f \to f'$ in $\name{Tw}(\xcc)$ are given by commutative squares of the form
	\[
		\begin{tikzcd}
			\bullet \dar{f} & \bullet \lar \dar{f'} \\
			\bullet \rar & \bullet
		\end{tikzcd}
	\]
	in $\xcc$.
	Let $\epsilon\colon \simp\to \simp$ denote the functor $[n]\mapsto [n]^\op \star [n]$.
	Then the twisted arrow category is the restriction (to $\xCat$) of the functor $\epsilon^*\colon \xsSet\to \xsSet$ given by precomposition with $\epsilon$.
\end{definition}

Let $n\geq 0$ and consider the twisted arrow category $\name{Tw}(\Delta^n)$.
This category is, in fact, a partially-ordered set, and we will give it the alternative name $\scriptyell^{n} \cong \name{Tw}(\Delta^n)$ when we have identified the morphism $i \to j$ of $\Delta^n$ with the pair $(i,j)$.
In other words, objects of $\scriptyell^{n}$ are pairs $(i,j)$ with $0\leq i \leq j \leq n$ and there is a unique morphism $(i,j) \to (i',j')$ exactly when $i' \leq i$ and $j \leq j'$.
As an example, here is the category $\scriptyell^3$.
\[ \begin{tikzcd}[column sep=tiny, row sep=tiny]
0,0 \arrow[dr] & & 1,1 \arrow[dr]\arrow[dl] & & 2, 2 \arrow[dr]\arrow[dl] & & 3,3 \arrow[dl] \\
& 0,1 \arrow[dr]& &  1,2  \arrow[dr]\arrow[dl] & & 2,3\arrow[dl] \\
& & 0,2\arrow[dr] & & 1,3\arrow[dl] \\
& & & 0,3
\end{tikzcd} \]
Notice that every square in $\scriptyell^{n}$ is both a pushout and a pullback.
The opposite category of Definition~\ref{def twnzero}\eqref{ellzero} appears as \cite[Definition 2.2.7]{HaugsengSS}.
\begin{definition}\label{def twnzero}
Let $n\geq 0$.
\begin{enumerate}
	\item The category $\scriptyell^{n}$ has objects pairs $(i,j)$ with $0\leq i \leq j \leq n$ and a unique morphism $(i,j) \to (i',j')$ exactly when $0 \leq i' \leq i \leq j \leq j' \leq n$.
	\item Let $\scriptyell_0^n$ denote the full subcategory of $\scriptyell^n$ spanned by the objects $(i,j)$ where $j-i\leq 1$. \label{ellzero}
\end{enumerate}
\end{definition}

In other words, $\scriptyell_0^n$ is the full subcategory of $\scriptyell^{n}$ consisting of all objects of the form $(i,i)$ and $(i,i+1)$, that is,
\[
\scriptyell^{n}_0 = \left(
 \begin{tikzcd}[row sep=tiny, column sep=tiny]
0,0 \arrow[dr] & & 1,1 \arrow[dl]
& \cdots & n{-}1,n{-}1  \arrow[dr] & & n,n \arrow[dl]
 \\
& 0,1 & & \cdots & & n{-}1,n
\end{tikzcd} \right). \]

\begin{notation}\label{level graph subscript notation}
If $G\colon \scriptyell^n \to \finset$ is a functor, we denote its value at an object $(i,j) \in \scriptyell^n$ by $G_{i,j}$, and similarly for functors from $\scriptyell^n_0$.
\end{notation}

\begin{definition}\label{def:level2}
A \emph{level graph of height $n$} is a functor $G\colon \scriptyell^{n}_0\to \finset$.
\end{definition}

Note that natural transformations of functors correspond to na\"ive morphisms of graphs.
We will cut out a more appropriate class of level-preserving morphisms of height $n$ level graphs in Definition~\ref{def M}.
\begin{lemma}\label{lemma: level graph}
	Preliminary Definition~\ref{def:level1} is equivalent to Definition~\ref{def:level2}.
\end{lemma}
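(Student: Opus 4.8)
The plan is to exhibit a bijection between the data of Preliminary Definition~\ref{def:level1} and that of Definition~\ref{def:level2}, and to check that this bijection respects the constraints appearing in the former. The key observation is that the poset $\scriptyell_0^n$ has exactly two kinds of objects: the diagonal objects $(i,i)$ for $0 \leq i \leq n$, and the off-diagonal objects $(i,i+1)$ for $0 \leq i \leq n-1$. A functor $G \colon \scriptyell_0^n \to \finset$ is therefore the same as a choice of finite set $G_{i,i}$ for each $i$ (these will be the edges at level $i$), a finite set $G_{i,i+1}$ for each $i$ (these will be the vertices at level $i+1$), together with the two maps $G_{i,i} \leftarrow G_{i,i+1} \to G_{i+1,i+1}$ induced by the two generating morphisms $(i,i+1) \to (i,i)$ and $(i,i+1) \to (i+1,i+1)$ of the poset.

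First I would set up the translation in each direction. Given a level graph $(G, h_E, h_V)$ in the sense of Preliminary Definition~\ref{def:level1}, I define a functor on $\scriptyell_0^n$ by setting $G_{i,i} \coloneqq h_E^{-1}(i) = \{e \in \edge(G) : h_E(e) = i\}$ and $G_{i,i+1} \coloneqq h_V^{-1}(i+1) = \{v : h_V(v) = i+1\}$. The structure map $G_{i,i+1} \to G_{i+1,i+1}$ sends a vertex $v$ with $h_V(v) = i+1$ to its (unique) output edge assignment; more precisely, the incidence data $\out(v) \subseteq \edge(G)$ together with the constraint $h_E(e) = h_V(v) = i+1$ for $e \in \out(v)$ shows these outputs land in level $i+1$, and the map records this. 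Symmetrically, the map $G_{i,i+1} \to G_{i,i}$ records the inputs of $v$, which by the constraint $h_E(e) = h_V(v) - 1 = i$ all sit in level $i$. The point is that the four-case formula for $h_E$ is precisely what guarantees the incidence sets $\inp(v)$ and $\out(v)$ are confined to the two adjacent edge-levels, so the level structure is \emph{exactly} the data needed to make the maps into $\scriptyell_0^n$ land in the correct objects.

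Conversely, given a functor $G \colon \scriptyell_0^n \to \finset$, I recover the underlying graph of Definition~\ref{graph definition} by taking $\edge(G) \coloneqq \coprod_{i=0}^n G_{i,i}$ and $\vertex(G) \coloneqq \coprod_{i=0}^{n-1} G_{i,i+1}$, with $\inp(v)$ and $\out(v)$ for a vertex $v \in G_{i,i+1}$ read off from the two structure maps landing in $G_{i,i}$ and $G_{i+1,i+1}$ respectively. The functions $h_E$ and $h_V$ are then defined by declaring $h_E \equiv i$ on $G_{i,i}$ and $h_V \equiv i+1$ on $G_{i,i+1}$, and one checks directly that the four required identities for $h_E$ hold: an edge in $\inp(G)$ is one receiving no vertex-output, hence can only occupy level $0$ (there is no level below it to supply an output), and dually for $\out(G)$ at level $n$, while the middle two cases are immediate from the definitions of $\inp(v), \out(v)$. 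I would then verify that these two constructions are mutually inverse, which is routine once the dictionary between levels and poset-objects is fixed.

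\textbf{The main obstacle} will be handling the edge-versus-loose-end bookkeeping at the extremal levels $0$ and $n$ carefully, together with confirming that an edge is forced into a single level. The subtlety is that the two decompositions $\edge = \inp(G) \amalg \coprod_v \out(v)$ and $\edge = \out(G) \amalg \coprod_v \inp(v)$ must be shown to be compatible with the single-valued function $h_E$ — that is, one must confirm that no edge is simultaneously an input of one vertex and an output of another at incompatible levels, and that the monomorphism conditions $s, t$ from Definition~\ref{graph definition} are respected. The constraint that each edge lies in the image of at most one $\out(v)$ and at most one $\inp(v)$ is exactly the well-definedness of $h_E$ under the two structure maps sharing a target $G_{i,i}$, so this reduces to unwinding the incidence axioms. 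I expect the remaining verifications to be formal, and I would note in passing that Remark~\ref{remark: loose edge} is consistent with this dictionary: a loose edge forces $0 = h_E(e) = n$, matching the degenerate case $\scriptyell_0^0$ where the only objects are diagonal.
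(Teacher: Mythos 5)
There is a genuine error at the heart of your dictionary: you have reversed the direction of the generating morphisms of $\scriptyell^n_0$. By Definition~\ref{def twnzero}, there is a (unique) morphism $(i,j) \to (i',j')$ exactly when $i' \leq i$ and $j \leq j'$, so the generating arrows of $\scriptyell^n_0$ are $(i,i) \to (i,i+1)$ and $(i+1,i+1) \to (i,i+1)$, not the other way around. A functor $G \colon \scriptyell^n_0 \to \finset$ therefore gives a chain of \emph{cospans} $G_{i,i} \to G_{i,i+1} \leftarrow G_{i+1,i+1}$ (the paper notes immediately after the lemma that height-$1$ level graphs are cospans of finite sets), whereas you describe spans $G_{i,i} \leftarrow G_{i,i+1} \to G_{i+1,i+1}$. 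This is not a cosmetic slip, because your entire encoding of the incidence data is built on it: you say the map $G_{i,i+1} \to G_{i+1,i+1}$ "sends a vertex $v$ to its (unique) output edge," but a vertex of a graph with loose ends can have zero or many outputs, so no single-valued function from vertices to edges can record $\out(v)$; the same objection applies to your treatment of $\inp(v)$. With the arrows in the wrong direction the forward construction is simply not well-defined for any graph containing a vertex with $|\inp(v)| \neq 1$ or $|\out(v)| \neq 1$.

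The correct dictionary (which is the one the paper uses) runs the other way: the structure map $G_{i,i} \to G_{i,i+1}$ sends a level-$i$ edge to the level-$(i+1)$ vertex of which it is an input, and $G_{i+1,i+1} \to G_{i,i+1}$ sends a level-$(i+1)$ edge to the level-$(i+1)$ vertex of which it is an output; conversely $\inp(v)$ and $\out(v)$ are recovered as the \emph{fibers} of these maps over $v$, matching $\inp(v) = p^{-1}(v)$ and $\out(v) = q^{-1}(v)$ in Definition~\ref{graph definition}. Concretely, the underlying graph is
\[
\coprod_{i=0}^n G_{i,i} \hookleftarrow \coprod_{i=0}^{n-1} G_{i,i} \rightarrow \coprod_{i=1}^n G_{i-1,i} \leftarrow \coprod_{i=1}^n G_{i,i} \hookrightarrow \coprod_{i=0}^n G_{i,i},
\]
and the conditions in Preliminary Definition~\ref{def:level1} are exactly what force every level-$i$ edge with $i<n$ to be the input of a (unique) level-$(i+1)$ vertex and every level-$i$ edge with $i>0$ to be the output of a (unique) level-$i$ vertex, i.e., the totality of the two structure maps. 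Your remarks about the extremal levels and loose edges are consistent with this corrected dictionary, but as written the proposal does not establish the equivalence.
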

\begin{proof}
Given a functor $G\colon \scriptyell^{n}_0\to \finset$, the correspondence is realized by $h_E^{-1}(i) = G_{i,i}$ for $0\leq i \leq n$ and $h_V^{-1}(i) = G_{i-1,i}$ for $1\leq i \leq n$.
That is, the underlying directed graph (as in Definition~\ref{graph definition}) is given by
\[
\coprod_{i=0}^n G_{i,i} \overset{s}\hookleftarrow \coprod_{i=0}^{n-1} G_{i,i} \overset{p}\rightarrow \coprod_{i=1}^n G_{i-1,i} \overset{q}\leftarrow \coprod_{i=1}^n G_{i,i} \overset{t}\hookrightarrow \coprod_{i=0}^n G_{i,i}
\]
where $p$ restricts to $G_{i,i} \to G_{i,i+1}$ and $q$ restricts to $G_{i,i} \to G_{i-1,i}$.
\end{proof}

We instantly see that level graphs of height $1$ are just cospans of finite sets.
But cospans assemble into a (weak) double category, as in \cite[\S 5]{spancospan}, which we could use as a starting point to define a category $\levelg$ of level graphs.
We take an alternative approach, noting that a functor $\scriptyell^{n}_0 \to \finset$ is essentially the same thing as a \emph{pushout-preserving} functor $\scriptyell^{n} \to \finset$.
This observation is fruitful, as there is a faithful functor $\simp \to \xCat$ sending $[n]$ to $\scriptyell^{n}$ (since $\name{Tw}$ from Definition~\ref{def twisted arrow} is a functor), but no such functor which sends $[n]$ to $\scriptyell^{n}_0$.

\begin{definition}\label{definition special and tilde M}
	We say a functor $\scriptyell^{n}\to \xcc$ is \emph{special} if it is a left Kan extension of its restriction to $\scriptyell^{n}_0$.
	We write $\widetilde{M}\colon \simp^\op \to \xCat$ for the functor which takes $[n]$ to the full subcategory $\widetilde{M}_n$ of $\xFun(\scriptyell^{n},\finset)$ spanned by the special functors.
\end{definition}

Since every functor $\scriptyell^{n}_0 \to \finset$ admits a left Kan extension $\scriptyell^{n} \to \finset$, we see that $\widetilde{M}_n$ contains all of the level graphs of height $n$.
Notice that a functor $\scriptyell^{n}$ is special if and only if it is pushout-preserving; this is equivalent to saying that \emph{every} square in $\scriptyell^{n}$ is sent to a pushout.
Given any $\alpha\colon [n] \to [m]$, the functor $\scriptyell^\alpha \colon \scriptyell^n \to \scriptyell^m$ is automatically pushout-preserving since every square in both categories is a pushout square.

\begin{remark}\label{remark quotients}
If $F \colon \scriptyell^n \to \finset$ is a special functor, then $F_{i,j}$ is a quotient of
\begin{equation*}
\left( \coprod_{k=i}^j F_{k,k} \right) \amalg \left( \coprod_{k=i}^{j-1} F_{k,k+1} \right).
\end{equation*}
One can see this via induction on $j-i$.
The base cases don't utilize the assumption at all: for $j-i = 0$ the statement is clear.
For $j-i = 1$, elements of $F_{i,i}$ and $F_{j,j} = F_{i+1,i+1}$ are identified with their images in $F_{i,i+1} = F_{i,j}$.
For higher values, we have that $F_{i,j}$ is a pushout of $F_{i,j-1} \leftarrow F_{i+1,j-1} \rightarrow F_{i+1,j}$, so the result follows.
\end{remark}

\begin{lemma}\label{lemma F0n connected}
Suppose that $F\colon \scriptyell^n \to \finset$ is a special functor and let $H$ be the directed graph associated to $F|_{\scriptyell^{n}_0}$ as in Lemma~\ref{lemma: level graph}.
The graph $H$ is connected if and only if $F_{0,n}$ is a one-element set.
\end{lemma}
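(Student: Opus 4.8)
The plan is to identify $F_{0,n}$ with the set of connected components of $H$, so that $H$ is connected precisely when $F_{0,n}$ is a singleton. The first step is to observe that $(0,n)$ is the terminal object of the poset $\scriptyell^n$, since every $(i,j)$ admits a (unique) morphism $(i,j)\to(0,n)$. Because $F$ is special, it is the left Kan extension of $F|_{\scriptyell^n_0}$ along the inclusion $\scriptyell^n_0\hookrightarrow\scriptyell^n$ (Definition~\ref{definition special and tilde M}), so $F_{0,n}$ is the colimit of $F|_{\scriptyell^n_0}$ over the category of objects of $\scriptyell^n_0$ equipped with a map to $(0,n)$. As $(0,n)$ is terminal in $\scriptyell^n$, this indexing category is isomorphic to $\scriptyell^n_0$ itself, and hence
\[
F_{0,n}\;\cong\;\colim_{\scriptyell^n_0} F|_{\scriptyell^n_0}.
\]

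I would then compute this colimit of finite sets via the standard coequalizer presentation. The only non-identity morphisms of $\scriptyell^n_0$ are those emanating from the objects $(i,i)$, namely $(i,i)\to(i,i+1)$ and $(i,i)\to(i-1,i)$, so the colimit is the quotient of
\[
\left(\coprod_{i=0}^n F_{i,i}\right)\amalg\left(\coprod_{i=1}^n F_{i-1,i}\right)\;=\;\edge(H)\amalg\vertex(H)
\]
by the equivalence relation generated by identifying each element with its images under these morphisms. By the description of the associated graph in the proof of Lemma~\ref{lemma: level graph}, the map $F_{i,i}\to F_{i,i+1}$ sends an edge $e$ to the vertex $v$ with $e\in\inp(v)$, while $F_{i,i}\to F_{i-1,i}$ sends $e$ to the vertex $w$ with $e\in\out(w)$. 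Thus the generating relation is exactly $e\sim v$ whenever $e$ is an input or output of $v$, i.e.\ the incidence relation of $H$.

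Finally, the equivalence classes of the relation generated by incidence are, by definition, the connected components of $H$: two edges or vertices become identified if and only if they are joined by a zigzag of incidences. Therefore $F_{0,n}$ is in bijection with the set of connected components of $H$, and $H$ is connected if and only if $F_{0,n}$ has exactly one element, as claimed. The step needing the most care is matching this colimit-theoretic equivalence relation with graph-theoretic connectedness in the sense of \cite[\S1.2]{Kock_Properads}, checking that no spurious identifications occur and accounting for the degenerate case $n=0$, in which $H$ is a disjoint union of $|F_{0,0}|$ loose edges and each edge is its own component, matching $F_{0,n}=F_{0,0}$ directly. Remark~\ref{remark quotients} furnishes an alternative, purely inductive derivation of the same quotient presentation should one prefer to avoid the Kan extension formalism.
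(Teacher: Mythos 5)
Your proof is correct, but it takes a genuinely different route from the paper's. You compute $F_{0,n}$ outright: since $(0,n)$ is terminal in $\scriptyell^n$ and $F$ is a left Kan extension of its restriction, $F_{0,n}\cong\colim_{\scriptyell^n_0}F|_{\scriptyell^n_0}$, and the coequalizer presentation of this colimit of finite sets identifies it with the set of equivalence classes of $\edge(H)\amalg\vertex(H)$ under the relation generated by incidence, i.e.\ with the set of connected components of $H$. This yields a stronger, cleaner statement (a bijection $F_{0,n}\cong\pi_0(H)$) and handles the degenerate cases ($n=0$, the empty graph) uniformly. The paper instead argues both implications by coproduct decomposition: a splitting $F_{0,n}=A_1\amalg A_2$ into nonempty pieces induces a splitting $F=F^1\amalg F^2$ by taking preimages, each piece again special and nontrivial by Remark~\ref{remark quotients}, so $H$ decomposes; conversely a decomposition of $H$ induces one of $F|_{\scriptyell^n_0}$, hence of $F$ and of $F_{0,n}$. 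That route avoids any explicit colimit computation and uses only that the correspondence of Lemma~\ref{lemma: level graph} preserves coproducts, but it establishes only the characterization of connectedness rather than the full bijection with $\pi_0$. The one step in your argument that genuinely requires the care you flag is matching the colimit-theoretic equivalence classes with connectedness in the sense of \cite[\S 1.2]{Kock_Properads} (indecomposability as a coproduct); this is the standard equivalence between the zigzag-of-incidences relation and component decomposition, and it does go through, so there is no gap.
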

\begin{proof}
If $F_{0,n} = A_1 \amalg A_2$, then $F = F^1 \amalg F^2$ where $F^k_{i,j} = (F_{i,j} \to F_{0,n})^{-1}(A_k) \subseteq F_{i,j}$ for $k=1,2$.
Each $F^k$ is a left Kan extension of $F^k|_{\scriptyell^{n}_0}$; further, if $A_k \neq \varnothing$, then $F^k|_{\scriptyell^{n}_0}$ is not the trivial functor (which sends each objects to $\varnothing$) by Remark~\ref{remark quotients}.
Thus if $A_1$ and $A_2$ are both nonempty, we get a nontrivial coproduct decomposition $F|_{\scriptyell^{n}_0} = F^1|_{\scriptyell^{n}_0} \amalg F^2|_{\scriptyell^{n}_0}$.
The correspondence from Lemma~\ref{lemma: level graph} preserves coproducts, which tells us that $H$ is not connected.

In the other direction, suppose that $H$ is not connected.
As the correspondence preserves coproducts, we obtain a nontrivial decomposition $F|_{\scriptyell^{n}_0} = G^1 \amalg G^2$.
Letting $F^k$ be a left Kan extension of $G^k$ ($k=1,2$), we then have $F \cong F^1 \amalg F^2$.
As each $G^k$ is nonempty, so is $F^k_{0,n}$ by Remark~\ref{remark quotients}.
Thus we have $F_{0,n} \cong F^1_{0,n} \amalg F^2_{0,n}$ as a decomposition into disjoint nonempty sets.
\end{proof}

We now endeavor (in Remark~\ref{remark level sugraphs}) to isolate and generalize the construction from the proof of this lemma.

\begin{definition}[Level subgraphs]\label{definition level subgraph}
Suppose that $F\colon \scriptyell^n \to \finset$ is a special functor.
Elements of $F_{i,j}$ will be called $(i,j)$-\emph{level subgraphs} of the level graph $F|_{\scriptyell^{n}_0}$.
\end{definition}

Figure~\ref{figure height four level graph} provides a graphical representation of this concept, 
where each element of the set $F_{i,j}$ is depicted as a connected level graph.

\begin{definition}\label{partial scriptyell}
If $0 \leq i \leq j \leq n$, write $\scriptyell^{n}_{i,j}$ for the full subcategory consisting of those objects $(k,\ell)$ so that $i \leq k \leq \ell \leq j$.
In other words, if $\alpha \colon [j-i] \to [n]$ is given by $\alpha(t) = t+i$, then $\scriptyell^{n}_{i,j}$ is the image of the functor $\scriptyell^{\alpha} \colon \scriptyell^{j-i} \to \scriptyell^n$.
\end{definition}

\begin{figure}
\includegraphics[width=0.8\textwidth]{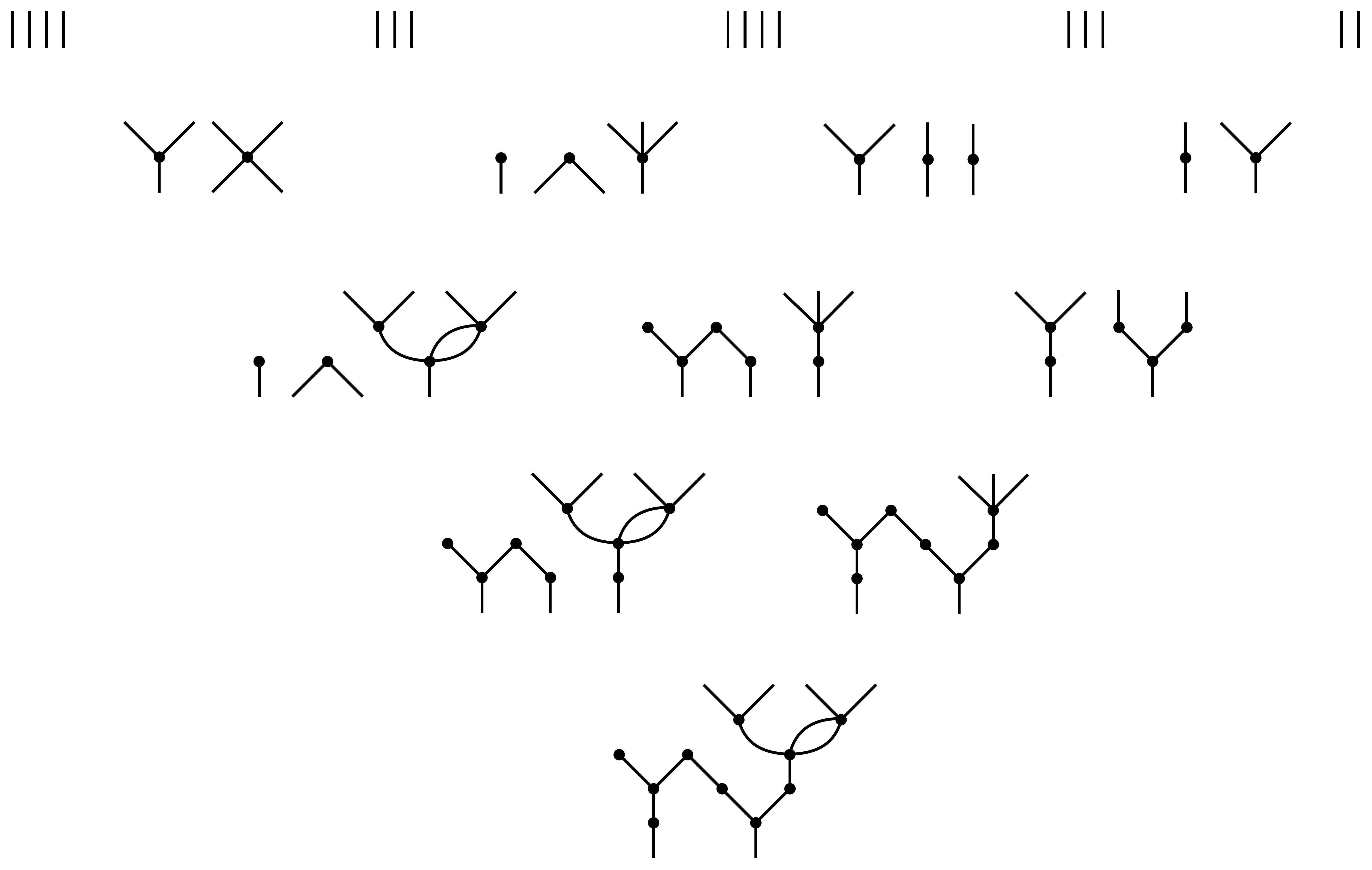}
\caption{Special functor associated to a level graph of height four.}
\label{figure height four level graph}
\end{figure}

\begin{remark}\label{remark level sugraphs}
Each $(i,j)$-level subgraph of $G$ determines a connected, height $(j-i)$-level graph.
Specifically, given a special functor $F\colon \scriptyell^{n} \to \finset$ and an element $x\in F_{i,j}$, define for $i \leq k \leq \ell \leq j$ a set $\widetilde F_{k,\ell}$ as the pullback
\[
\begin{tikzcd}
\widetilde F_{k,\ell} \rar \dar & F_{k,\ell} \dar \\
\{ x \} \rar & F_{i,j}.
\end{tikzcd}
\]
This determines a functor $K\colon \scriptyell^{j-i} \overset\cong\to \scriptyell^n_{i,j} \to \finset$ (see Definition~\ref{partial scriptyell}) by $K_{a,b} = \widetilde F_{a+i,b+i}$.
The functor $K$ is special because the pullback functor $\finset_{/ F_{i,j}} \to \finset_{/ \{ x \}}$ preserves colimits. 
The underlying level graph $K|_{\scriptyell^{j-i}_0}$ is connected by Lemma~\ref{lemma F0n connected} since $K_{0,j-i} = \widetilde F_{i,j} \cong \{x\}$.
\end{remark}

We now give a crucial definition of this section, which is inspired by the $\Phi$-sequences of \cite{bar} (though we are not actually considering a category of $\Phi$-sequences; see Remark~\ref{remark level not phi seq} below).
It will tell us that any morphism of level graphs which \emph{fixes levels} is a monomorphism on edge and vertex sets \eqref{M def mono}, and preserves sets of edges incident to a given vertex \eqref{M def cartesian}.

\begin{definition}\label{def M}
	Define a functor $M\colon \simp^\op\to \xCat$ by declaring that $M_n$ is the wide subcategory of $\widetilde{M}_n$ containing those morphisms $F\to F'$ which satisfy the following two properties:
	\begin{enumerate}
		\item For every $i\leq j$, the map $F_{i,j}\to F'_{i,j}$ is a monomorphism in $\finset$. \label{M def mono}
		\item For every $0\leq k\leq i\leq j\leq \ell\leq n$, the naturality square
		\begin{equation*}
		\begin{tikzcd}
		F_{i,j}\ar{r}\ar{d}& F'_{i,j}\ar {d}\\
		F_{k,\ell}\ar{r}& F'_{k,\ell}
		\end{tikzcd}
		\end{equation*}
		is cartesian in $\finset$. \label{M def cartesian}
	\end{enumerate}
\end{definition}

\begin{definition}[Category of level graphs]\label{def levelg}
Let $\xE\to \simp$ denote the Grothendieck fibration associated to the functor $M$.
We write $\levelg \to \simp$ for the Grothendieck fibration\footnote{Since $\simp$ is a skeletal category, the composite $\levelg \to \xE \to \simp$ is again a Grothendieck fibration.} obtained by choosing a skeleton $\levelg$ of $\xE$.
We call $\levelg$ the \emph{category of level graphs}.
Write $\levelg_n$ for the fiber over $[n] \in \simp$.
\end{definition}

\begin{remark}\label{rem doublecat}
It follows from the previous definition that $\levelg\to \simp$ is associated to a categorical object in $\Cat$, i.e.\ $\levelg_n\simeq \levelg_1\times_{\levelg_0}\ldots\times_{\levelg_0} \levelg_1$.
In other words, it induces a double categorical structure.
\end{remark}

\begin{notation}\label{not: check}
	For each height $n$ level graph $G$, there is a unique special functor $F \colon \scriptyell^n \to \finset$ which is an object of $\levelg_n$ so that $F|_{\scriptyell^n_0}$ is isomorphic to $G$.
	Further, every object of $\levelg$ arises in this way.
	We therefore simplify matters and identify every level graph with its corresponding object in $\levelg$, and call objects in $\levelg$ level graphs as well.
	Henceforth, we generally use $G$ to denote a special functor $\scriptyell^{n} \to \finset$ since it is essentially the same thing as the level graph $G|_{\scriptyell^n_0}$.
\end{notation}

Let us unravel Definition~\ref{def levelg} explicitly.
Suppose that $G$ and $H$ are two level graphs, of height $n$ and $m$ respectively.
Then a morphism from $G$ to $H$ consists of two pieces of data:
	\begin{itemize}
		\item A map $ \alpha \colon [n]\to [m]$ in $\simp$ and
		\item A natural transformation
		\[ \begin{tikzcd}
		\scriptyell^n  \arrow[rr, "\scriptyell^\alpha"] \arrow[dr, bend right, "G" swap, ""{name=B
		}]  & & \scriptyell^m \arrow[Rightarrow, from=B, "\eta" description] \arrow[dl, bend left, "H"]\\
		 & \finset
		\end{tikzcd} \]
		from $G$ to $H \circ \scriptyell^{\alpha}$.
	\end{itemize}
These should satisfy the following two conditions:
\begin{enumerate}
 	\item For $0 \leq i \leq j \leq n$, the map $\eta_{i,j} \colon G_{i,j} \to H_{\alpha(i),\alpha(j)}$ is a monomorphism.
 	\item For every $0\leq k\leq i\leq j\leq \ell\leq n$, the naturality square
		\begin{equation*}
		\begin{tikzcd}
		G_{i,j}\ar[r,"\eta_{i,j}"] \ar{d}& H_{\alpha(i),\alpha(j)}\ar {d}\\
		G_{k,\ell}\ar[r,"\eta_{k,\ell}"] & H_{\alpha(k),\alpha(\ell)}
		\end{tikzcd}
		\end{equation*}
		is a pullback.
 \end{enumerate}

Given a 1-category $\xcc$, a \emph{factorization system} (or \emph{orthogonal} factorization system) consists of a pair of subcategories $(\xcc^L, \xcc^R)$, each containing all isomorphisms of $\xcc$, so that each morphism $f$ admits a factorization $f = r\circ \ell$ where $r\in \xcc^R$ and $\ell \in \xcc^L$, and this factorization is unique up to \emph{unique} isomorphism (cf., \cite[Proposition 14.7]{AdamekHerrlichStrecker:ACCJC}).
This notion is subsumed by Definition~\ref{definition inf cat factorization system} in the $\infty$-categorical context.

\begin{example}\label{ex: bbG}
The category $\pfinset$ of finite pointed sets coincides with the opposite category of Segal's category $\segalGamma$ introduced in \cite[Definition 1.1]{SegalCatCohlgy}. 
In particular, a map $f\colon A_+\to B_+$ in $\pfinset$ can be identified with a partial map from $A$ to $B$. 
The category $\pfinset$ has an inert-active factorization system (see, for instance, \cite[Remark 2.1.2.2]{ha}), where a map $f\colon A_+\to B_+$ is \emph{inert} if $|f^{-1}(b)|=1$ for every $b\in B$, and \emph{active} if $f^{-1}(*)= \{*\}$. 
This factorization system restricts to one on the skeleton, $\pfinsetskel$, of $\pfinset$.
\end{example}

\begin{example}\label{ex simpfactsys}
Let $\alpha\colon [m]\to [n]$ be a morphism in $\simp$. We call it \emph{active} if it is boundary preserving, i.e.\ $\alpha(0)=0$ and $\alpha(m)=n$, and \emph{inert} if there is a constant $c_\alpha$ so that $\alpha(t) = c_\alpha + t$ for all $t$. 
It is standard that this constitutes an active-inert factorization system on $\simp$.
\end{example}

\begin{definition}\label{definition: int and el}
Let $(\alpha, \eta) \colon G \to H$ be a morphism in $\levelg$ where $\alpha\colon [m]\to [n]$.
\begin{itemize}
\item The map is called $\emph{inert}$ if $\alpha$ is inert in $\simp$.
\item The map is called \emph{active} if $\alpha$ is active in $\simp$ and $\eta_{i,j} \colon G_{i,j} \to H_{\alpha(i), \alpha(j)}$ is an isomorphism for every $0\leq i\leq j\leq m$.
\end{itemize} 
We have the following three subcategories of $\levelg$:
\begin{itemize}
\item Write $\levelint$ for the wide subcategory of $\levelg$ consisting of the inert maps.
\item Write $\levelel$ for the full subcategory of $\levelint$ spanned by the elementary graphs from Example~\ref{example: elementary graphs}.
\item Write $\levelg_{\name{act}}$ for the wide subcategory of $\levelg$ containing only active morphisms.
\end{itemize}
\end{definition}

\begin{remark}\label{rem underlying graph}
Inert maps are, in particular, monomorphisms.
Suppose $(\alpha, \eta) \colon G \to H$ is an inert map in $\levelg$. 
As $\alpha$ is inert, it is of the form $\alpha(i) = i + t$.
By Definition~\ref{def M}\eqref{M def mono}, $G_{i, j} \to H_{\alpha(i),\alpha(j)} = H_{i+t,j+t}$ is a monomorphism.
It follows that 
\begin{align*} 
\edge(G) = \coprod G_{i,i} &\to \coprod H_{k,k} = \edge(H)  & \text{and} \\
\vertex(G) = \coprod G_{i-1,i} &\to \coprod H_{k-1,k} = \vertex(H)
\end{align*} are monomorphisms.  
Together with Lemma~\ref{lemma: level graph}, we see that every inert morphism in $\levelg$ determines an inclusion of a subgraph.
\end{remark}

\begin{remark}[Weaker condition for active maps]\label{remark weaker active maps}
To show that a map as in Definition~\ref{definition: int and el} is active, it is enough that $\alpha$ be active and $\eta_{0,m} \colon G_{0,m} \to H_{0,n}$ be a bijection.
Indeed, for every $0\leq i \leq j \leq m$, the diagram
\[ \begin{tikzcd}
G_{i,j} \rar{\eta_{i,j}} \dar & H_{\alpha(i),\alpha(j)} \dar \\
G_{0,m} \rar["\eta_{0,m}", "\cong" swap] & H_{\alpha(0),\alpha(m)}
\end{tikzcd} \]
is a pullback by \eqref{M def cartesian} of Definition~\ref{def M}, which implies that $\eta_{i,j}$ is an isomorphism.
\end{remark}

\begin{lemma}\label{lem Lfs}
The pair of subcategories $(\levelg_{\name{act}},\levelint)$ constitute an orthogonal factorization system on $\levelg$.
\end{lemma}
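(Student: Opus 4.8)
The plan is to erect the factorization system on $\levelg$ on top of the active–inert factorization system on $\simp$ from Example~\ref{ex simpfactsys}, using the image/preimage construction of Remark~\ref{remark level sugraphs} to repair the fibrewise data. Throughout I identify a morphism $(\alpha,\eta)\colon G\to H$ with a map $\alpha\colon [m]\to[n]$ in $\simp$ together with a natural transformation $\eta\colon G\Rightarrow H\circ\scriptyell^{\alpha}$ satisfying Definition~\ref{def M}\eqref{M def mono} and \eqref{M def cartesian}. The bookkeeping is quick: both $\levelg_{\name{act}}$ and $\levelint$ are wide subcategories closed under composition (because active and inert maps in $\simp$ are, and isomorphisms of $\finset$ compose), and each contains every isomorphism of $\levelg$ (an isomorphism lies over an identity of $\simp$, which is simultaneously active and inert, and has invertible $\eta$-components). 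For \emph{existence}, given $(\alpha,\eta)\colon G\to H$ I set $a=\alpha(0)$, $b=\alpha(m)$ and factor $\alpha$ as $[m]\xrightarrow{\alpha_{a}}[b-a]\xrightarrow{\alpha_{i}}[n]$ with $\alpha_{a}(i)=\alpha(i)-a$ active and $\alpha_{i}(t)=a+t$ inert. Letting $S\subseteq H_{a,b}$ be the image of the monomorphism $\eta_{0,m}\colon G_{0,m}\to H_{\alpha(0),\alpha(m)}=H_{a,b}$, I define a special functor $K\colon\scriptyell^{b-a}\to\finset$ exactly as in Remark~\ref{remark level sugraphs}, but tracking the subset $S$ rather than a single element, by declaring $K_{s,t}\subseteq H_{a+s,a+t}$ to be the preimage of $S$ under the structure map $H_{a+s,a+t}\to H_{a,b}$. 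The inclusions $K_{s,t}\hookrightarrow H_{a+s,a+t}$ are monomorphisms with cartesian naturality squares (nested preimages of $S$), so they assemble into an inert map $K\to H$ over $\alpha_i$.

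It remains to produce an active map $G\to K$ over $\alpha_a$ composing to $(\alpha,\eta)$. Naturality shows that $\eta_{i,j}$ lands in $K_{\alpha_a(i),\alpha_a(j)}$, yielding components $\eta'_{i,j}\colon G_{i,j}\to K_{\alpha_a(i),\alpha_a(j)}$. The crucial point is that each $\eta'_{i,j}$ is an \emph{isomorphism}: applying Definition~\ref{def M}\eqref{M def cartesian} with outer indices $0\le i\le j\le m$ exhibits $G_{i,j}=H_{\alpha(i),\alpha(j)}\times_{H_{a,b}}G_{0,m}$, and since $\eta_{0,m}$ restricts to a bijection $G_{0,m}\cong S$ this pullback coincides with $K_{\alpha_a(i),\alpha_a(j)}=H_{\alpha(i),\alpha(j)}\times_{H_{a,b}}S$ via $\eta'_{i,j}$. (By Remark~\ref{remark weaker active maps} it would in fact suffice to check the top component $\eta'_{0,m}\colon G_{0,m}\cong S=K_{0,b-a}$.) Hence $G\to K$ is active and $G\to K\to H$ is the desired factorization.

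For \emph{uniqueness} I establish the orthogonality $\levelg_{\name{act}}\perp\levelint$, from which uniqueness of factorizations is formal. Consider a commuting square with active $\ell\colon G\to K$ (over $\alpha_\ell$, with $G$ of height $g$ and $K$ of height $k$) on the left, inert $r\colon H\to L$ (over $\alpha_r$) on the right, top edge $u\colon G\to H$ and bottom edge $v\colon K\to L$. The active–inert orthogonality in $\simp$ supplies a unique diagonal $\beta$ on underlying objects. The fibrewise lift $\eta^{w}$ is then forced: $\beta^{*}(\eta^{r})$ is a levelwise monomorphism, so there is at most one $\eta^{w}$ with $\beta^{*}(\eta^{r})\circ\eta^{w}=\eta^{v}$, and its existence is exactly the inclusion $\operatorname{im}(\eta^{v}_{s,t})\subseteq\operatorname{im}(\eta^{r}_{\beta(s),\beta(t)})$ inside each $L_{\alpha_v(s),\alpha_v(t)}$. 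Invoking Definition~\ref{def M}\eqref{M def cartesian} for both $\eta^{v}$ and $\eta^{r}$, each of these images is the preimage of the corresponding image at the top object, so the inclusion reduces to the single statement at $(0,k)=(\alpha_\ell(0),\alpha_\ell(g))$; there it holds because the square commutes and $\eta^{\ell}_{0,g}$ is an isomorphism. A pullback‑pasting argument shows $\eta^{w}$ again satisfies Definition~\ref{def M}\eqref{M def mono}–\eqref{M def cartesian}, and cancelling the monomorphism $\alpha_u^{*}(\eta^{r})$ yields the remaining triangle $w\ell=u$. Thus the diagonal $w$ exists and is unique, which is the orthogonality, and uniqueness of factorizations follows.

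The main obstacle, common to both halves, is controlling the levels $(s,t)$ that do not lie in the image of the active simplicial map $\alpha_a$ (respectively $\alpha_\ell$): the fibrewise data there is not directly furnished by $G$. The observation that unlocks everything is that activeness forces the top object into the image, and Definition~\ref{def M}\eqref{M def cartesian} then propagates this top‑level information to all levels as a pullback, so that both the factoring object $K$ and the orthogonal lift $w$ are completely determined by their top components.
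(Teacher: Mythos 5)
Your proof is correct, and its existence half is essentially the paper's: both produce the intermediate object $K$ by pulling the fibrewise data of $H$ back along the top-level monomorphism $\eta_{0,m}\colon G_{0,m}\to H_{\alpha(0),\alpha(m)}$ (your ``preimage of $S$'' is literally the paper's pullback along $\eta_{0,n}$), both invoke Remark~\ref{remark level sugraphs} for specialness, and both use the cartesian condition of Definition~\ref{def M}\eqref{M def cartesian} to see that the induced map $G\to K$ is a levelwise bijection, hence active. Where you diverge is uniqueness: the paper takes a second putative factorization $G\to K'\to H$, uses uniqueness of factorizations in $\simp$ to match the underlying simplicial maps, and then builds the comparison isomorphism $K\cong K'$ directly from the pullback squares, finishing with the observation (Remark~\ref{rem underlying graph}) that inert maps are monomorphisms so the isomorphism is unique. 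You instead prove the orthogonality $\levelg_{\name{act}}\perp\levelint$ (unique diagonal fillers) and deduce uniqueness of factorizations formally. The two are equivalent characterizations of an orthogonal factorization system, and both hinge on the same mechanism — condition \eqref{M def cartesian} propagating the top-level component to all levels and inert maps being levelwise monic — so neither buys much over the other here; your route yields the lifting property as an explicit byproduct, while the paper's is marginally shorter since it never has to verify that the constructed diagonal again satisfies Definition~\ref{def M}. One small gloss on your side: closure of $\levelint$ under composition also needs that monomorphisms compose and that cartesian squares paste, not just the corresponding facts in $\simp$ and for isomorphisms in $\finset$; this is immediate but worth a word.
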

\begin{proof}
Let us construct a factorization of a morphism $(\alpha, \eta)\colon (n,G) \to (m,H)$ in $\levelg$.
For $0\leq i \leq j \leq \alpha(n) - \alpha(0) = p$, let $K_{i,j}$ be the pullback
\[
\begin{tikzcd}
K_{i,j} \rar \dar & H_{i+\alpha(0),j+\alpha(0)} \dar \\
G_{0,n} \rar[hook, "\eta_{0,n}"] & H_{\alpha(0),\alpha(n)}.
\end{tikzcd}
\]
As in Remark~\ref{remark level sugraphs}, the functor $K\colon \scriptyell^p \to \finset$ is a special functor.
Letting $\beta \colon [p] \to [m]$ be $ \beta(i) = i + \alpha(0)$ and $\gamma \colon [n] \to [p]$ be $\gamma(i) = \alpha(i) - \alpha(0)$, we have a factorization
\[ \begin{tikzcd}
G \rar \dar[dotted,no head] & K \rar \dar[dotted,no head] & \beta^*H \rar \dar[dotted,no head] & H \dar[dotted,no head] \\
{[n]} \rar["\gamma"] & {[p]} \rar["\id"] & {[p]} \rar["\beta"] & {[m]}
\end{tikzcd} \]
lying above the usual active-inert factorization in $\simp$.
The morphism $(\gamma, \varepsilon)\colon G \to K$ is defined via the diagram
\[ \begin{tikzcd}[column sep=small, row sep=small]
G_{i,j} \arrow[dr, dashed, "\varepsilon_{i,j}"]
\arrow[drr, bend left, "\eta_{i,j}" swap]
\arrow[ddr, bend right]
& &[1.8em]
 \\
&
K_{\gamma(i),\gamma(j)} \rar\dar & H_{\alpha(i),\alpha(j)}\dar \\[1.8em]
&
G_{0,n} \rar & H_{\alpha(0),\alpha(n)};
\end{tikzcd} \]
since both the inner and outer squares are pullbacks, $\varepsilon_{i,j}$ is a bijection.
It follows that $(\gamma, \varepsilon)$ is active.
The map $K \to H$ is inert since $\beta$ is.

Suppose we have some other factorization
\[ \begin{tikzcd}
G \rar{(\gamma', \varepsilon')} & K' \rar{(\beta', \mu')} & H
\end{tikzcd} \]
into an active map followed by an inert map.
By uniqueness of factorizations in $\simp$, we know that $\gamma' = \gamma$ and $\beta' = \beta$.
Since $\gamma$ is active, for each $0 \leq i \leq j \leq p$, there exists $0\leq k \leq \ell \leq n$ with $0\leq \gamma(k) \leq i \leq j \leq \gamma(j) \leq p$.
Since $(\gamma, \varepsilon')$ is active, we know that $\varepsilon'_{k,\ell} \colon G_{k,\ell} \to K'_{\gamma(k),\gamma(\ell)}$ is a bijection, hence we have an isomorphism $\varepsilon'_{k,\ell} \varepsilon^{-1}_{k,\ell}  \colon K_{\gamma(k),\gamma(\ell)} \cong K'_{\gamma(k),\gamma(\ell)}$.
If $t=\beta(0)$, then \eqref{M def cartesian} of Definition~\ref{def M} gives that both squares in
\[ \begin{tikzcd}
K_{i,j} \arrow[r, hook] \arrow[r, hook] \dar& H_{i+t,j+t} \dar &  K'_{i,j} \arrow[l, hook'] \dar \\
K_{\gamma(k),\gamma(\ell)} \arrow[r, hook] \arrow[rr, bend right=20, "\cong"] & H_{\alpha(k), \alpha(\ell)} & K'_{\gamma(k),\gamma(\ell)} \arrow[l, hook']
\end{tikzcd} \]
are pullbacks, exhibiting an isomorphism so that the following diagram commutes.
\[ \begin{tikzcd}[row sep=small]
& K \arrow[dr]  
\ar[dd, "\cong"]
\\
G \arrow[ur] 
\arrow[dr]
& & H \\
& K' \ar[ur]
\end{tikzcd} \]
As inert maps are monomorphisms (Remark~\ref{rem underlying graph}), there is at most one such isomorphism $K\to K'$ making the right triangle commute.
Thus every morphism in $\levelg$ factors as an active map followed by an inert map, and this factorization is unique up to unique isomorphism.
\end{proof}

We now want to define a vertex functor $\vertex_{\levelg}\colon \levelg \to \pfinset^\op$ which takes every level graph to the set of its vertices with added base point.
Recall that if $H$ is a height $m$ level graph, then we have a decomposition $\vertex(H) = \coprod_{k=0}^{m-1} H_{k,k+1}$.

\begin{definition}[The functor $\levelg\to\pfinset^\op$] \label{def vertex levelg}
Suppose that $G\colon \scriptyell^n \to \finset$ and $H\colon \scriptyell^m \to \finset$ are level graphs and $f\colon G \to H$ is a morphism of $\levelg$ lying over $\alpha \colon [n] \to [m]$.
Define 
\[
	\vertex_{\levelg} (f) \colon \vertex(H)_+ \rightarrow \vertex(G)_+
\]
by specifying that $\vertex_{\levelg} (f) (v) = w$ just when there is a commutative square
\[ \begin{tikzcd}
\ast \rar{v} \dar{w} & H_{k,k+1} \dar \\
G_{i,i+1} \rar{f} & H_{\alpha(i), \alpha(i+1)},
\end{tikzcd} \]
and otherwise $\vertex_{\levelg} (f) (v) = \ast$.
\end{definition}

In other words, $\vertex_{\levelg} (f) (v) = w$ if $v$ is in the level subgraph associated to $f(w)$ and $\vertex_{\levelg} (f) (v)$ is the base point otherwise.
The following proposition shows that this rule defines a functor.
\begin{proposition}
The function $\vertex_{\levelg} (f)$ from the previous definition is well-defined.
Further, this assignment is compatible with composition.
\end{proposition}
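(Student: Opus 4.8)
The plan is to prove both assertions by exploiting the fact that a vertex can have a preimage in only one level. For well-definedness, fix a vertex $v \in H_{k,k+1}$ and suppose $w \in G_{i,i+1}$ fits into the square of Definition~\ref{def vertex levelg}. The existence of the right-hand vertical map $H_{k,k+1} \to H_{\alpha(i),\alpha(i+1)}$ forces $(k,k+1) \to (\alpha(i),\alpha(i+1))$ to be a morphism of $\scriptyell^m$, i.e. $\alpha(i) \le k < k+1 \le \alpha(i+1)$. I would first observe that monotonicity of $\alpha$ makes the index $i$ unique: if some $i' < i$ also worked, then $\alpha(i'+1) \le \alpha(i) \le k$ would contradict $k+1 \le \alpha(i'+1)$, and the case $i'>i$ is symmetric. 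Once $i$ is pinned down, the equation requiring $\eta_{i,i+1}(w)$ to equal the image of $v$ determines $w$ uniquely because $\eta_{i,i+1}$ is a monomorphism (Definition~\ref{def M}\eqref{M def mono}). Hence at most one $w$ can occur, and the rule of Definition~\ref{def vertex levelg} is well-defined.

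For compatibility with composition, write $f = (\alpha,\eta)\colon G\to H$ and $g=(\beta,\zeta)\colon H\to L$, so that $g\circ f$ lies over $\beta\alpha$ with $(g\circ f)_{i,j} = \zeta_{\alpha(i),\alpha(j)}\circ \eta_{i,j}$. Since $\vertex_{\levelg}$ is by construction a pointed map, proving $\vertex_{\levelg}(g\circ f) = \vertex_{\levelg}(f)\circ\vertex_{\levelg}(g)$ (note the reversal, as the target is $\pfinset^\op$) reduces to the following claim, which I would establish as an \emph{if and only if}: for $u\in L_{r,r+1}$, one has $\vertex_{\levelg}(g\circ f)(u) = w \ne \ast$ exactly when $\vertex_{\levelg}(g)(u) = v \ne \ast$ and $\vertex_{\levelg}(f)(v) = w$. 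Granting this, the two functions agree on every non-basepoint value, and the basepoint cases follow by contraposition; compatibility with identities is immediate.

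The \emph{if} direction is a direct computation. Applying the monotone $\beta$ to $\alpha(i)\le k<k+1\le\alpha(i+1)$ and combining with $\beta(k)\le r<r+1\le\beta(k+1)$ shows $\beta\alpha(i)\le r<r+1\le \beta\alpha(i+1)$, so the level of $\vertex_{\levelg}(g\circ f)(u)$ is again $i$. I would then factor the structure map $L_{r,r+1}\to L_{\beta\alpha(i),\beta\alpha(i+1)}$ through $L_{\beta(k),\beta(k+1)}$ and use naturality of $\zeta$ to rewrite the image of $u$ as $\zeta_{\alpha(i),\alpha(i+1)}(\eta_{i,i+1}(w)) = (g\circ f)_{i,i+1}(w)$; uniqueness (the monomorphism $\eta_{i,i+1}$) then gives $\vertex_{\levelg}(g\circ f)(u) = w$.

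The \emph{only if} direction is the main obstacle, because one must manufacture the intermediate vertex $v\in \vertex(H)$ out of $u$, and here the monomorphism condition alone does not suffice. Starting from $\vertex_{\levelg}(g\circ f)(u)=w\in G_{i,i+1}$, I would select the unique level $k$ with $\alpha(i)\le k\le\alpha(i+1)-1$ and $\beta(k)\le r<r+1\le\beta(k+1)$, again by monotonicity of $\beta$ on $[\alpha(i),\alpha(i+1)]$ together with $\beta\alpha(i)\le r$ and $\beta\alpha(i+1)\ge r+1$. The crucial step is then to apply the cartesian condition Definition~\ref{def M}\eqref{M def cartesian} to the naturality square of $\zeta$ with corners $H_{k,k+1}$, $L_{\beta(k),\beta(k+1)}$, $H_{\alpha(i),\alpha(i+1)}$, $L_{\beta\alpha(i),\beta\alpha(i+1)}$: since the image of $u$ in $L_{\beta(k),\beta(k+1)}$ and the element $\eta_{i,i+1}(w)\in H_{\alpha(i),\alpha(i+1)}$ have the same image in $L_{\beta\alpha(i),\beta\alpha(i+1)}$, the pullback produces a unique $v\in H_{k,k+1}$ mapping to both. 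By construction this $v$ witnesses $\vertex_{\levelg}(g)(u)=v$ and $\vertex_{\levelg}(f)(v)=w$, completing the argument. The pullback (rather than merely monomorphism) hypothesis of Definition~\ref{def M} is exactly what makes this lift possible, and is where I expect the real content to lie.
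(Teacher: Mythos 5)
Your proof is correct and takes essentially the same route as the paper's: you establish uniqueness of the level index by monotonicity, get well-definedness from the monomorphism condition of Definition~\ref{def M}\eqref{M def mono}, handle the easy composition direction by pasting squares, and produce the intermediate vertex in the hard direction by applying the cartesian condition \eqref{M def cartesian} to the naturality square of the second morphism, which is exactly the paper's $S\subseteq T$ step. No gaps.
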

\begin{proof}
First, note that for any $k$ there is at most one $i$ so that
\begin{equation}\label{map in scriptyellm}
(k,k+1) \to (\alpha(i),\alpha(i+1))
\end{equation}
exists in $\scriptyell^m$.
Suppose that there are $i < i'$ so that \eqref{map in scriptyellm} exists.
The existence of \eqref{map in scriptyellm} for $i$ means that $k+1 \leq \alpha(i+1)$, while \eqref{map in scriptyellm} for $i'$ implies $\alpha(i') \leq k$.
But $i+1 \leq i'$ and so we have $k+1 \leq \alpha(i+1) \leq \alpha(i') \leq k$, a contradiction.

If $k$ is such that \eqref{map in scriptyellm} does not exist for any $i$, then $\vertex_\levelg(f)$ takes every vertex in $H_{k,k+1}$ to the base point.
Otherwise, form the pullback
\[
\begin{tikzcd}
P \rar[hook] \dar & H_{k,k+1} \dar \\
G_{i,i+1} \rar[hook] & H_{\alpha(i),\alpha(i+1)};
\end{tikzcd}
\]
since the bottom map is a monomorphism, so is the top. Hence it gives us a morphism $\vertex (H_{k,k+1})_+ \rightarrow \vertex(P)_+$ which takes every vertex in $\vertex (H_{k,k+1})$ that does not lie in the image of the monomorphism to the base point.
Composing with $\vertex(P)_+ \to \vertex(G_{i,i+1})_+ \hookrightarrow \vertex(G)_+$, we attain our desired map $\vertex_\levelg(f) \colon \vertex(H)_+ \rightarrow\vertex(G)_+$.

We now show that $\vertex_{\levelg}$ is a functor.
It clearly sends identities to identities.

Consider maps $f \colon G \to H$ and $f' \colon H \to K$ (lying over $\alpha$ and $\beta$, respectively).
We wish to show that $\vertex_{\levelg}(f'f) = \vertex_{\levelg}(f)\vertex_{\levelg}(f')$. 
Let $S\subseteq \vertex(K)$ be the preimage of $\vertex(G)$ under $\vertex_{\levelg}(f'f)$, and let $T\subseteq \vertex(K)$ be the preimage of $\vertex(G)$ under $\vertex_{\levelg}(f)\vertex_{\levelg}(f')$.
Our goal is to show that $S=T$ and that the two functions agree on this subset.
To that end, let $x\in K_{\ell, \ell+1}$ and consider the following diagram.
\[ \begin{tikzcd}
\ast \arrow[rr, "x"] \arrow[dr, "w"] \arrow[dd, "u"] & & K_{\ell, \ell+1} \dar \\
& H_{k,k+1} \rar{f'} \dar & K_{\beta(k),\beta(k+1)} \dar \\
G_{i,i+1} \rar{f} & H_{\alpha(i),\alpha(i+1)} \rar{f'} & K_{\beta\alpha(i), \beta\alpha(i+1)}
\end{tikzcd} \]
It instantly shows that if $\vertex_{\levelg}(f') (x) = w$ and $\vertex_{\levelg}(f) (w) = u$, then $\vertex_{\levelg}(f'f)(x) = u = \vertex_{\levelg}(f) (\vertex_{\levelg}(f') (x))$.
Hence $T\subseteq S$, and the two functions agree on $T$.

To see that $\vertex_{\levelg}(f'f) = \vertex_{\levelg}(f)\vertex_{\levelg}(f')$, it remains to show that $S\subseteq T$.
Suppose that $x \in K_{\ell, \ell+1}$ is such that $\vertex_{\levelg}(f'f)(x) = u \in \vertex(G)$, that is, so that the outer rectangle exists.
Since $\beta \alpha(i) \leq \ell < \ell+1 \leq \beta\alpha(i+1)$, there exits a unique $k$ so that $\alpha(i) \leq k < \alpha(i+1)$, $\beta(k) \leq \ell$, and $\beta(k+1) > \ell$.
It follows that the lower-right rectangle exists in the diagram; it is a pullback since $f'$ is a morphism of $\levelg$.
Thus the indicated $w$ exists and we have that $S\subseteq T$.
\end{proof}

\begin{proposition}\label{prop active inert bbG}
	The functor $\vertex_{\levelg}\colon \levelg\to \pfinset^\op$ preserves the active-inert factorization systems.
\end{proposition}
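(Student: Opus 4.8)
The plan is to check directly that $\vertex_{\levelg}$ carries each class of the factorization system $(\levelg_{\name{act}}, \levelint)$ of Lemma~\ref{lem Lfs} into the corresponding class of the factorization system that $\pfinset^{\op}$ inherits, by passage to opposites, from the inert-active system of Example~\ref{ex: bbG}. Since on $\pfinset$ the left class is the inert maps and the right class is the active maps, the opposite category $\pfinset^{\op}$ carries the factorization system whose left class is the opposites of active maps and whose right class is the opposites of inert maps; so it suffices to show that $\vertex_{\levelg}$ sends an inert morphism of $\levelg$ to (the opposite of) an inert map of $\pfinset$ and an active morphism to (the opposite of) an active map. Preserving both classes of an orthogonal factorization system automatically preserves the factorization itself: the active-inert factorization $f = r\circ \ell$ of Lemma~\ref{lem Lfs} is sent to $\vertex_{\levelg}(f) = \vertex_{\levelg}(r)\circ\vertex_{\levelg}(\ell)$ with $\vertex_{\levelg}(\ell)$ in the left class and $\vertex_{\levelg}(r)$ in the right class, which is forced to be the active-inert factorization of $\vertex_{\levelg}(f)$ by uniqueness. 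Hence the two claims below give the proposition.

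For the inert case, I take $(\alpha,\eta)\colon G\to H$ inert, so $\alpha(i) = i + t$ for a constant $t$, with $\alpha\colon[n]\to[m]$, and I unwind $\vertex_{\levelg}(\alpha,\eta)\colon \vertex(H)_+\to\vertex(G)_+$ from Definition~\ref{def vertex levelg} to show each genuine vertex $v\in G_{i,i+1}$ has exactly one preimage. By the uniqueness of the index established in the well-definedness argument just given, the only level of $H$ that can map to the $i$th level of $G$ is $k = i+t$, and there the connecting map $(k,k+1)\to(\alpha(i),\alpha(i+1)) = (i+t, i+1+t)$ is the identity on $H_{i+t,i+1+t}$; so the preimage of $v$ is precisely $\{\eta_{i,i+1}(v)\}$. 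Thus $\vertex_{\levelg}(\alpha,\eta)$ is inert in $\pfinset$, i.e.\ lands in the right class of $\pfinset^{\op}$, matching $\levelint$ on the source.

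For the active case, I take $(\alpha,\eta)\colon G\to H$ active, so $\alpha(0)=0$, $\alpha(n)=m$, and every $\eta_{i,j}$ is an isomorphism, and I must verify that $\vertex_{\levelg}(\alpha,\eta)^{-1}(\ast) = \{\ast\}$, that is, that no genuine vertex of $H$ is sent to the basepoint. Given $w\in H_{k,k+1}$ with $0\le k\le m-1$, monotonicity of $\alpha$ together with $\alpha(0)=0$ and $\alpha(n)=m$ yields (via $i = \max\{j : \alpha(j)\le k\}$, which satisfies $i<n$) an index $i$ with $\alpha(i)\le k < k+1\le\alpha(i+1)$, so the connecting map $(k,k+1)\to(\alpha(i),\alpha(i+1))$ exists in $\scriptyell^m$; and since $\eta_{i,i+1}$ is an isomorphism, the image of $w$ in $H_{\alpha(i),\alpha(i+1)}$ lies in the image of $\eta_{i,i+1}$, so $w$ is carried to the genuine vertex $\eta_{i,i+1}^{-1}(\text{image of } w)$ of $G$. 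Hence $\vertex_{\levelg}(\alpha,\eta)$ is active in $\pfinset$, i.e.\ lands in the left class of $\pfinset^{\op}$, matching $\levelg_{\name{act}}$ on the source.

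The computations are routine once Definition~\ref{def vertex levelg} is unwound; the main obstacle is the active case, where one must invoke \emph{both} halves of the definition of an active morphism of $\levelg$ — the activeness of $\alpha$, to guarantee that every level of $H$ is seen by some level of $G$, and the fact that the $\eta_{i,i+1}$ are isomorphisms, to guarantee that every vertex at that level genuinely lifts rather than falling to the basepoint. The remaining point to watch is the direction-reversal introduced by $\pfinset^{\op}$, so that the left and right classes are matched correctly.
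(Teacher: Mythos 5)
Your proof is correct and follows essentially the same route as the paper's: for an inert map the interval-inclusion form of $\alpha$ forces $k=i+t$ so each vertex of $G$ has exactly one preimage, and for an active map the boundary-preservation of $\alpha$ together with the isomorphisms $\eta_{i,i+1}$ guarantees no vertex of $H$ falls to the basepoint. The only difference is that you make explicit the bookkeeping about opposite categories and the fact that preserving both classes suffices, which the paper leaves implicit.
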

\begin{proof}
If $(\alpha,\eta)\colon G\to H$ is an active map in $\levelg$, then $\alpha\colon [n]\to [m]$ is boundary preserving.
Therefore, if a vertex $v\in \vertex(H)$ lies in $H_{k, k+1}$ for some $0\leq k\leq m$, then by the previous proposition there exists a unique $0\leq i\leq n$ such that $\alpha(i)\leq k$ and $k+1\leq \alpha(i+1)$.
Since $\eta_i\colon G_{i,i+1}\to H_{\alpha(i), \alpha(i+1)}$ is an isomorphism, the composite $H_{k,k+1}\to H_{\alpha(i), \alpha(i+1)}\to G_{i,i+1}$ exists. It induces a map $\vertex_\levelg(f) \colon \vertex(H)_+ \rightarrow \vertex(G)_+$ by the construction of Definition~\ref{def vertex levelg} which takes only the base point to the base point, hence, $\vertex_{\levelg}$ preserves active morphisms.

Now suppose $(\alpha,\eta)\colon G\to H$ is inert and $v\in \vertex(H)$ lies in $H_{k, k+1}$ for some $k$.
Since $\alpha$ is inert, i.e.\ an interval inclusion, the construction of $\vertex_\levelg(f) \colon \vertex(H)_+ \rightarrow \vertex(G)_+$ shows that the preimage of every element in $\vertex(G)\subseteq \vertex(G)_+$ has cardinality $1$.
This proves that the functor $\vertex_{\levelg}$ also preserves inert maps.
\end{proof}
\begin{notation}
		We will also write $\vertex_\levelg$ for a composite $\levelg\to \pfinset^\op\simeq \pfinsetskel^\op$.
\end{notation}

We now discuss several important subcategories of $\levelg$.
Recall that the any level graph has an underlying directed graph (see Lemma~\ref{lemma: level graph}).
See Figure~\ref{figure lattice of subcategories} for a schematic.

\begin{definition}[Other categories of level graphs]
\label{def subcategories of levelg}
We define several full subcategories of $\levelg$.
\begin{itemize}
\item The most important is $\levelgconn$, which is the full subcategory on the \emph{connected} level graphs.
By Lemma~\ref{lemma F0n connected}, these are characterized by the property that the associated special functor $F \colon \scriptyell^n \to \finset$ preserves terminal objects.
\item The full subcategory $\levelg_{\name{sc}}$ consists of the \emph{simply-connected} level graphs.
\item There is also the larger subcategory $\levelg_{\name{0-type}}$, consisting of possibly disconnected graphs where each connected component is simply-connected.
\item The full subcategory $\levelg_{\name{out}}$ consists of those level graphs $G$ so that each vertex has at least one output. That is, the functions $G_{k,k} \to G_{k-1,k}$ are surjective.
\item The full subcategory $\levelg_{\name{out,c}}$ consists of those level graphs $G$ which are both connected and so that each vertex has at least one output.
\item Likewise there is a subcategory consisting of level graphs $G$ which so that each $G_{k,k} \to G_{k,k+1}$ surjective, and a subcategory of that where additionally the graphs are connected.
By symmetry, all theorems concerning $\levelg_{\name{out}}$ and $\levelg_{\name{out,c}}$ also hold for these nonempty input versions, so we will not mention them again.
\item There is a subcategory of $\levelg$ consisting of the level \emph{forests}, which are those graphs level $G$ so that $G_{k,k} \to G_{k-1,k}$ is bijective.
Following Proposition~\ref{prop DF} below we identify this category with the category $\simp_\finsetskel$ of $\finset$-sequences defined in \cite{bar}. 
Likewise, there is subcategory of level \emph{trees}, which are the level forests $G$ so that $G_{n,n}$ is a point. 
Again by Proposition~\ref{prop DF} this category is equivalent to the category $\simp^1_\finsetskel$ defined in \cite{ChuHaugsengHeuts}.
\item The simplex category $\simp$ can be identified with a full subcategory of all of the above.
It is spanned by the linear graphs, i.e.\ those graphs $G$ such that $G_{k,k}$ is a point for all $k$.
\end{itemize}
\end{definition}

\begin{remark}
Restricting the Grothendieck fibration $\levelg \to \simp$ to any of the subcategories $\levelg_{\name{0-type}}$, $\levelg_{\name{out}}$, $\simp_\finsetskel$, or $\simp$ again gives a Grothendieck fibration.
The same is not true for $\levelgconn$, $\levelg_{\name{sc}}$, $\levelg_{\name{out,c}}$, and $\simp^1_\finsetskel$, as the domain of a cartesian lift in $\levelg$ over an inert morphism is sometimes disconnected, even when the codomain is connected.
\end{remark}

\begin{remark}[Simply-connected graphs]
Most of the types of level graphs above are characterized by clear properties of $G \colon \scriptyell^n_0 \to \finset$ or of the associated special functor -- certain maps are surjections or bijections, or certain sets like $G_{0,n}$ or $G_{n,n}$ are terminal.
The exceptions are $\levelg_{\name{sc}}$ and $\levelg_{\name{0-type}}$.
Though it is possible to give a criterion for a level graph $G$ to be in one of these subcategories strictly in terms of the associated special functor, the only criterion we know is somewhat unsatisfactory, as it simply reformulates the notion of `undirected path' in the underlying directed graph.
It does not seem worthwhile to do so here, as we have developed other tools in this paper that do the job just as well.
Namely, in \S\ref{section level to acyclic} we define a functor $\tau \colon \levelgconn \to \bbY$ which takes a level graph to its underlying directed graph.
The codomain of $\tau$ is the properadic graphical category from \cite{hrybook}, so we may use theorems therein.
For example, we have the following proposition, which says that $\levelg_{\name{sc}} \subseteq \levelgconn$ and $\levelg_{\name{0-type}} \subseteq \levelg$ are both sieves. 
We emphasize that there is no logical circularity here: nothing in \S\ref{section subcategory bbY} and \S\ref{section level to acyclic} (including Appendix~\ref{appendix comparison with HRY} and Appendix~\ref{appendix proof l to y}) depends on this result.
\end{remark}

\begin{lemma}\label{lemma simply connected sieves}
Suppose that $G \to H$ is a map in $\levelg$ with $H\in \levelg_{\name{0-type}}$. Then $G$ is also an object of $\levelg_{\name{0-type}}$.
In particular, if $G$ is connected then $G\in \levelg_{\name{sc}}$.
\end{lemma}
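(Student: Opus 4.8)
The plan is to reduce the statement to the two classes of maps in the orthogonal factorization system of Lemma~\ref{lem Lfs}. Throughout I identify the property ``$H \in \levelg_{\name{0-type}}$'' with the combinatorial statement that the underlying undirected graph of $H$ is a \emph{forest} (every connected component a tree), so that a simply-connected level graph is exactly a connected tree and $\levelg_{\name{sc}} = \levelgconn \cap \levelg_{\name{0-type}}$. Given a map $G \to H$, I factor it as $G \xrightarrow{a} K \xrightarrow{\iota} H$ with $a$ active and $\iota$ inert. Since $\levelg_{\name{0-type}}$ is a full subcategory, it suffices to show separately that each of the two classes of maps has its domain in $\levelg_{\name{0-type}}$ whenever its codomain does.

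First I would handle the inert case. By Remark~\ref{rem underlying graph}, an inert map $\iota\colon K \to H$ induces monomorphisms $\edge(K) \hookrightarrow \edge(H)$ and $\vertex(K) \hookrightarrow \vertex(H)$ compatible with the incidence data, so $K$ is (isomorphic to) a subgraph of $H$. Deleting edges and vertices from a forest cannot create an undirected cycle, so a subgraph of a forest is again a forest; thus $H \in \levelg_{\name{0-type}}$ forces $K \in \levelg_{\name{0-type}}$.

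The active case is the one I expect to be the main obstacle, precisely because morphisms of $\levelg$ are \emph{not} na\"ive graph morphisms, so there is no direct map of underlying graphs to exploit. Write $a = (\alpha,\eta)\colon G \to K$ over an active $\alpha\colon [n]\to [n']$, so that every $\eta_{i,j}\colon G_{i,j}\to K_{\alpha(i),\alpha(j)}$ is an isomorphism. The key is to describe the underlying graph of $G$ in terms of that of $K$ using Remark~\ref{remark level sugraphs}: each vertex of $G$, i.e.\ each element of $G_{i,i+1}\cong K_{\alpha(i),\alpha(i+1)}$, corresponds to a connected level subgraph (a ``slab'') of $K$ spanning levels $\alpha(i)$ to $\alpha(i+1)$. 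These slabs are pairwise disjoint and, together with the edges living at the levels $\alpha(i)$, account for all of $K$. Hence the underlying graph of $G$ is obtained from that of $K$ by contracting each slab to a single vertex (when $\alpha(i)=\alpha(i+1)$ the slab is a single edge and the contraction merely inserts a unary vertex, i.e.\ subdivides an edge). Since $K$ is a forest, each connected slab is a subtree; contracting a family of disjoint subtrees and subdividing edges is a homotopy equivalence on topological realizations within each component, while the isomorphism $G_{0,n}\cong K_{0,n'}$ together with Lemma~\ref{lemma F0n connected} gives a bijection on connected components. Therefore each component of $G$ has trivial fundamental group and $G \in \levelg_{\name{0-type}}$. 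If one prefers to avoid realizations entirely, the same conclusion follows combinatorially: a hypothetical reduced undirected cycle of $G$ would lift, through the connected slabs of $K$, to a reduced undirected cycle in $K$, contradicting that $K$ is a forest.

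Combining the two cases proves the first assertion: $H \in \levelg_{\name{0-type}}$ gives $K \in \levelg_{\name{0-type}}$ via $\iota$, and then $G \in \levelg_{\name{0-type}}$ via $a$. For the final sentence, if in addition $G$ is connected then it is a connected forest, hence a tree, hence simply-connected, so $G \in \levelg_{\name{sc}}$.
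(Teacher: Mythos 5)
Your proof is correct, but it is genuinely different from the one in the paper. The paper argues componentwise: it picks a connected component $G'$ of $G$, lands it in the appropriate component $H'$ of $H$, applies the functor $\tau\colon \levelgconn \to \bbY$ from \S\ref{section level to acyclic}, and then invokes \cite[Proposition 5.2.8]{hrybook}, which says that a graphical map in $\hryGamma \simeq \bbY$ with simply-connected codomain has simply-connected domain. In other words, the paper outsources all of the graph combinatorics to the properadic graphical category (which is why the surrounding remark has to insist there is no circularity with \S\ref{section subcategory bbY}--\S\ref{section level to acyclic}). You instead stay entirely inside $\levelg$: you split the map along the active--inert factorization system of Lemma~\ref{lem Lfs}, dispose of the inert half via the subgraph-inclusion description of Remark~\ref{rem underlying graph}, and handle the active half by identifying the underlying graph of the domain with the codomain after contracting the disjoint connected ``slabs'' of Remark~\ref{remark level sugraphs} (plus subdivisions where $\alpha$ is not injective), so that the Euler-characteristic/homotopy-equivalence count settles it. What your route buys is self-containedness: it needs neither $\tau$ nor any input from \cite{hrybook}, and it isolates exactly where the work happens (the active maps), since the inert case is trivial. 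What the paper's route buys is brevity and uniformity with the proof of the analogous Lemma~\ref{lem comparison between dirgraph cats}, at the cost of depending on the comparison with $\hryGamma$. One small caution: your fallback ``purely combinatorial'' lifting of a cycle is not quite as immediate as stated, since a simple cycle of $G$ passing through a unary vertex coming from a degenerate slab ($\alpha(i)=\alpha(i+1)$) lifts to a walk in $K$ with an immediate backtrack across the corresponding edge, so one must reduce the lifted walk and check nontriviality survives; your primary argument via contracting contractible subcomplexes does not have this issue and is the one to keep.
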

\begin{proof}
Let $G'$ be an arbitrary connected component of $G$, and let $H'$ be the unique connected component of $H$ so that the following factorization exists
\[ \begin{tikzcd}
G' \rar[dashed] \dar & H'\dar  \\
G \rar & H.
\end{tikzcd} \]
Apply $\tau$ from \S\ref{section level to acyclic} to the morphism $G' \to H'$ of $\levelgconn$.
By assumption $\tau H'$ is simply-connected, so \cite[Proposition 5.2.8]{hrybook} gives that the graph $\tau G'$ is also simply-connected.
Each component of $G$ is simply-connected, hence $G\in \levelg_{\name{0-type}}$.
\end{proof}

We will show that several other of these subcategory inclusions are sieves in Lemma~\ref{lem comparison between levelgraph cats} below.

\begin{lemma}\label{lemma general level graph factorization systems}
The active-inert factorization system on $\levelg$ restricts to one on each of the subcategories from Definition~\ref{def subcategories of levelg}.
\end{lemma}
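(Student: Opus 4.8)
The plan is to use the explicit active--inert factorization constructed in the proof of Lemma~\ref{lem Lfs} and, for each full subcategory $\mathcal{S}$ appearing in Definition~\ref{def subcategories of levelg}, to check that the intermediate object of the factorization of a morphism between objects of $\mathcal{S}$ again lies in $\mathcal{S}$. This is exactly what is needed: since $\mathcal{S}$ is full, if $G,H\in\mathcal{S}$ and the factorization $G\to K\to H$ has $K\in\mathcal{S}$, then both legs are morphisms of $\mathcal{S}$; the classes obtained by intersecting $\levelg_{\name{act}}$ and $\levelint$ with the morphisms of $\mathcal{S}$ contain all isomorphisms, and the orthogonality (hence uniqueness up to unique isomorphism) is inherited from $\levelg$, because any lifting square in $\mathcal{S}$ is a lifting square in $\levelg$ whose unique solution lies in $\mathcal{S}$ by fullness.

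I would first record the relevant structure of the intermediate object. For $(\alpha,\eta)\colon(n,G)\to(m,H)$ with $p=\alpha(n)-\alpha(0)$, $\beta(i)=i+\alpha(0)$ and $\gamma(i)=\alpha(i)-\alpha(0)$, the intermediate special functor is $K_{i,j}=G_{0,n}\times_{H_{\alpha(0),\alpha(n)}}H_{\beta(i),\beta(j)}$, with $G\to K$ active and $K\to H$ inert. Three facts follow immediately: (i) since $\beta(0)=\alpha(0)$ and $\beta(p)=\alpha(n)$, the defining pullback collapses to $K_{0,p}\cong G_{0,n}$, and more generally the active leg gives isomorphisms $K_{\gamma(i),\gamma(j)}\cong G_{i,j}$; (ii) each projection $K_{i,j}\to H_{\beta(i),\beta(j)}$ is a base change of the monomorphism $\eta_{0,n}\colon G_{0,n}\hookrightarrow H_{\alpha(0),\alpha(n)}$ (a monomorphism by \eqref{M def mono}), hence a monomorphism; (iii) by the pasting law applied to the two defining pullback squares over $\eta_{0,n}$, every structure map of $K$ is a base change of the corresponding structure map of $H$.

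The verification is then organized according to which endpoint controls each condition. Connectivity is controlled by $G$: by (i), if $G$ is connected then $K_{0,p}\cong G_{0,n}$ is a point, so $K$ is connected by Lemma~\ref{lemma F0n connected}; this settles $\levelgconn$. The local surjectivity/bijectivity conditions are controlled by $H$ via (iii): the map $K_{i,i}\to K_{i-1,i}$ is a base change of $H_{\beta(i),\beta(i)}\to H_{\beta(i)-1,\beta(i)}$, hence surjective if $H\in\levelg_{\name{out}}$ and bijective if $H\in\simp_\finsetskel$, giving $K\in\levelg_{\name{out}}$ and $K\in\simp_\finsetskel$. Combining the two kinds of input produces the remaining connected variants: $\levelg_{\name{out,c}}$ (connectivity from $G$, outputs from $H$) and $\simp^1_\finsetskel$ (forest from $H$, together with $K_{p,p}\cong G_{n,n}$ a point from (i) when $G$ is a tree). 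The subcategories $\levelg_{\name{0-type}}$ and $\levelg_{\name{sc}}$ are handled by Lemma~\ref{lemma simply connected sieves}: that lemma makes $\levelg_{\name{0-type}}$ a sieve, so $K\to H$ with $H\in\levelg_{\name{0-type}}$ forces $K\in\levelg_{\name{0-type}}$, and its ``in particular'' clause upgrades this to $K\in\levelg_{\name{sc}}$ once $K$ is connected (again by (i)). The input versions are identical by symmetry.

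I expect the only genuine bookkeeping to occur for $\simp$: by (ii) each $K_{i,i}$ embeds into the singleton $H_{\beta(i),\beta(i)}$, so is empty or a point, and the content is ruling out the empty case. For this I would combine the endpoint isomorphisms $K_{0,0}\cong G_{0,0}$ and $K_{p,p}\cong G_{n,n}$ from (i) (both singletons, as $G$ is linear) with the pushout decomposition $K_{0,p}\cong K_{0,i}\amalg_{K_{i,i}}K_{i,p}$ coming from specialness (Remark~\ref{remark quotients}): were some $K_{i,i}$ empty, connectivity of $K$ (which holds since $G$ is connected) would be contradicted. Everything else reduces cleanly to facts (i)--(iii) and the sieve lemma, so no further difficulty is anticipated.
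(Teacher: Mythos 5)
Your proof is correct and takes essentially the same route as the paper: the paper likewise checks that the intermediate object $K$ of the explicit factorization from Lemma~\ref{lem Lfs} inherits connectivity from $G$ (via $G_{0,m}\isoto K_{0,n}$) and simple-connectedness from $H$ via Lemma~\ref{lemma simply connected sieves}, and then dismisses the remaining cases with ``$K$ inheriting properties from $G$ or $H$.'' You have simply made those remaining cases explicit — correctly — including the small extra nonemptiness argument needed for $\simp$ that the paper leaves to the reader.
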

\begin{proof}
Let $G\to K\to H$ be the active-inert factorization of a morphism in $\levelg$, where $G$ has height $m$ and $K$ has height $n$. 
If $G$ is connected, then Lemma~\ref{lemma F0n connected} implies that $G_{0,m}\cong \{*\}$. 
By definition of active morphisms we have $G_{0,m}\isoto K_{0,n}$ where $n$ is the height of $K$, hence, $K$ is connected as well. 
It follows that the active-inert factorization system on $\levelg$ restricts to one on $\levelgconn$.

Now suppose that $G$ is connected and $H$ is simply-connected.
By the previous paragraph, we know that $K$ is connected,
and by Lemma~\ref{lemma simply connected sieves}, $K$ is simply-connected as well.
Thus the active-inert factorization system on $\levelgconn$ further restricts to one on $\levelg_{\name{sc}}$.

The remaining cases are proved in a similar manner, with $K$ inheriting properties from $G$ or $H$.
\end{proof}

\begin{proposition}\label{prop DF}
	There is an equivalence between the full subcategory of $\levelg$ consisting of the forests and the category $\simp_{\finsetskel}$ defined in \cite{bar}.
\end{proposition}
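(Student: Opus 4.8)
The plan is to exhibit the full subcategory of forests $\levelg_{\name{fors}}\subseteq\levelg$ and $\simp_\finsetskel$ as the total categories of the same Grothendieck fibration over $\simp$, and then compare them fiberwise. First I would recall from \cite{bar} that an object of $\simp_\finsetskel$ of length $n$ is an $\finset$-sequence, i.e.\ (up to the chosen skeleton) a functor $[n]\to\finsetskel$, equivalently a chain $T_0\to T_1\to\cdots\to T_n$ of finite sets, and that $\simp_\finsetskel\to\simp$ sending an $\finset$-sequence to its length is a Grothendieck fibration whose morphisms over $\alpha\colon[n]\to[m]$ are the natural transformations satisfying monomorphism and cartesianness conditions entirely analogous to Definition~\ref{def M}. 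On the level-graph side, restricting $\levelg\to\simp$ to $\levelg_{\name{fors}}$ again yields a Grothendieck fibration (by the remark following Definition~\ref{def subcategories of levelg}).

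The object correspondence is the heart of the bookkeeping. Given a height $n$ forest $G\colon\scriptyell^n\to\finset$, the defining condition that each $G_{k,k}\to G_{k-1,k}$ be a bijection lets me eliminate the output-edge sets: writing $V_i\coloneqq G_{i-1,i}$ for the set of level-$i$ vertices, the bijections $G_{k,k}\cong V_k$ together with the input maps $G_{i,i}\to G_{i,i+1}$ assemble (by Lemma~\ref{lemma: level graph} and Remark~\ref{remark quotients}) into a single chain
\[
G_{0,0}\longrightarrow V_1\longrightarrow V_2\longrightarrow\cdots\longrightarrow V_n,
\]
that is, a functor $[n]\to\finset$; conversely any such chain left Kan extends to a special functor $\scriptyell^n\to\finset$ satisfying the forest condition, and these two constructions are mutually inverse on objects over a fixed $n$. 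This matches objects of $\levelg_{\name{fors}}$ with objects of $\simp_\finsetskel$ compatibly with the projections to $\simp$.

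Next I would promote this object bijection to a functor and check it is full and faithful. Since both sides are fibrations over $\simp$, it suffices to (i) match the fibers over each $[n]$ and (ii) match cartesian lifts. For (i), a morphism over $\id_{[n]}$ in $\levelg_{\name{fors}}$ is, by Definition~\ref{def M}, a natural transformation that is levelwise monic with cartesian naturality squares; restricting such a transformation along the chain description above recovers exactly the vertical morphisms of $\simp_\finsetskel$, because the forest bijections force the data on the eliminated sets $G_{k,k}$ to be determined by the data on the $V_i$. For (ii), I would use the active-inert factorization systems, which exist on $\levelg_{\name{fors}}$ by Lemma~\ref{lemma general level graph factorization systems} and on $\simp_\finsetskel$ by \cite{bar}: an inert map in $\levelg$ is a subgraph inclusion (Remark~\ref{rem underlying graph}) and corresponds to the inert selection of a sub-$\finset$-sequence, while over an active $\alpha$ the maps $\eta_{i,j}$ are isomorphisms (Definition~\ref{definition: int and el}) and translate into the active maps of $\simp_\finsetskel$. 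Checking the correspondence separately on these two classes and gluing via uniqueness of factorizations (Lemma~\ref{lem Lfs}) then yields bijections on all hom-sets.

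The main obstacle is precisely this morphism-matching in the last paragraph: one must verify carefully that eliminating the output-edge data $G_{k,k}$ with $k\geq1$ along the forest bijections loses no morphism information, and that Definition~\ref{def M}'s monomorphism and cartesianness conditions restrict to \emph{exactly} the morphism conditions defining $\simp_\finsetskel$ in \cite{bar}, with the correct handling of the variance coming from the vertex functor $\vertex_\levelg$ (Proposition~\ref{prop active inert bbG}). Once active and inert maps are matched, fullness, faithfulness, and essential surjectivity follow formally, giving the desired equivalence.
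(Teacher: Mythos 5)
Your proposal is correct and follows essentially the same route as the paper: both identify the fiber over $[n]$ of the forest subcategory with $\xFun([n],\finset)$ by using the bijections $G_{k,k}\cong G_{k-1,k}$ to collapse the level-graph data to a chain of finite sets, observe that the monomorphism and cartesianness conditions of Definition~\ref{def M} cut out exactly Barwick's $\finset$-sequences, and conclude via an equivalence of the corresponding (co)cartesian fibrations over $\simp$. Your extra discussion of cartesian lifts and active--inert factorizations is a more explicit version of the paper's final sentence rather than a genuinely different argument (and the appeal to $\vertex_\levelg$ for the variance is unnecessary).
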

\begin{proof}
	It follows from \cite[Proposition 4.3.2.15]{ht} or classical results about Kan extension that $\widetilde{M}_n \simeq \xFun(\scriptyell^n_0,\finset)$ for every $n\geq 0$ (see Definition~\ref{definition special and tilde M}).
	This equivalence shows that the full subcategory in $\widetilde{M}_n$ spanned by forests can be identified with $\xFun([n],\finset)$. The two conditions in Definition~\ref{def M} implies that $M_n$ is equivalent to the subcategory of $\finset$-sequences in $\xFun([n],\finset)$ in the sense of \cite[Definition 2.4]{bar}. Hence, we have an equivalence of the corresponding cartesian fibrations, which implies the result.
\end{proof}

\begin{remark}
In \cite{bar}, Barwick uses the full subcategory $\simp_{\finsetskel}$ of $\levelg$ to construct `complete Segal operads' and proves that they are equivalent to Lurie's $\infty$-operads. 
In \cite{ChuHaugsengHeuts} the authors further show that Barwick's approach to $\infty$-operads is equivalent to dendroidal Segal spaces, which are in turn equivalent to simplicial operads by \cite{CisinkiMoerdijk2}.
\end{remark}

\begin{figure}
\[ \begin{tikzcd}[column sep=small]
& \levelg \arrow[d, no head] \arrow[dr, no head] \arrow[dl, no head] \\
\levelg_{\name{0-type}} \arrow[d, no head] & \levelgconn \arrow[dl, no head] \arrow[dr, no head] & \levelg_{\name{out}} \arrow[d, no head] \arrow[dl, no head, crossing over] \\
\levelg_{\name{sc}} \arrow[dr, no head] & \simp_{\finsetskel} \arrow[d, no head] \arrow[ul, no head, crossing over] & \levelg_{\name{out,c}} \arrow[dl, no head] \\
& \simp_{\finsetskel}^1 \arrow[d, no head] \\
& \simp
\end{tikzcd} \]
\caption{The lattice of subcategories from Definition~\ref{def subcategories of levelg}} \label{figure lattice of subcategories}
\end{figure}
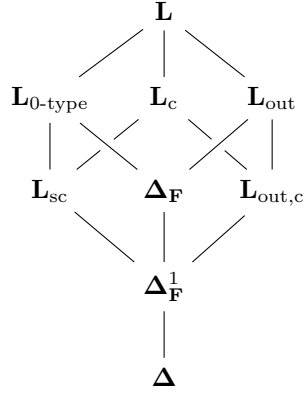

\begin{remark}\label{remark level not phi seq}
In contrast to the level forest situation, the category $\levelg$ does not arise from an operator category $\Phi$ in the sense of \cite[Definition 1.2]{bar}. 
Indeed, if $\levelg$ was the category of $\Phi$-sequences (\cite[Definition 2.4]{bar}) for some operator category $\Phi$, then
\begin{itemize}
\item height 0 level graphs would correspond to functors $[0] \to \Phi$, and
\item height 1 level graphs would correspond to functors $[1] \to \Phi$.
\end{itemize}
It would then follow that the objects of $\Phi$ are finite sets and the morphisms of $\Phi$ are cospans of finite sets.
In particular, each $\hom(X,Y)$ is infinite, so neither (1.2.1) nor (1.2.3) from \cite{bar} is satisfied.
\end{remark}

\begin{lemma}\label{lem comparison between levelgraph cats}
Suppose that $\mathbf{A} \subseteq \mathbf{B}$ is one of the fully-faithful inclusions appearing in the following diagram
\[ \begin{tikzcd}
\simp \rar & \simp_\finsetskel^1 \rar \dar & \levelg_{\name{out,c}} \dar & \simp_\finsetskel \rar \dar & \levelg_{\name{out}} \dar  \\
& \levelg_{\name{sc}} \rar[dashed, color=red] & \levelgconn & \levelg_{\name{0-type}}\rar[dashed, color=red]  & \levelg
\end{tikzcd} \]
including the two {\color{red} red dashed} maps.
\begin{enumerate}[label=(\arabic*), ref={\arabic*}]
	\item If $f \colon G \to H$ is a morphism in $\mathbf{B}$ and $H \in \mathbf{A}$, then $G\in \mathbf{A}$.\label{item one comparison between levelgraph cats}
\end{enumerate}
\noindent
Now suppose that $\mathbf{A} \subseteq \mathbf{B}$ is one of the solid black arrows.
\begin{enumerate}[resume*]
	\item If $G\in \mathbf{B}$ is such that for each corolla $\xfc$ of $\mathbf{B}$ and each inert map $\xfc \to G$, we have that $\xfc \in \mathbf{A}$, then $G\in \mathbf{A}$.\label{item two comparison between levelgraph cats}
\end{enumerate}
\end{lemma}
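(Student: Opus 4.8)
The plan is to reduce both parts to the behaviour of the output maps $q_i \colon G_{i,i} \to G_{i-1,i}$ and of the edge sets $G_{k,k}$, exploiting that every subcategory in the diagram is carved out of $\levelg$ by one \emph{pointwise} condition---surjectivity of every $q_i$ (for $\levelg_{\name{out}}$ and $\levelg_{\name{out,c}}$), bijectivity of every $q_i$ (for the forest categories $\simp_\finsetskel$ and $\simp_\finsetskel^1$), or terminality of every $G_{k,k}$ (for $\simp$)---together with a connectivity condition. I would dispose of the connectivity conditions first: connectedness, simple-connectedness and the $0$-type condition are either inherited from membership in $\mathbf{B}$ or are exactly the two {\color{red} red dashed} inclusions, which are already Lemma~\ref{lemma simply connected sieves}.

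For \eqref{item one comparison between levelgraph cats} I would invoke the active--inert factorization (Lemma~\ref{lem Lfs}), which restricts to each $\mathbf{B}$ by Lemma~\ref{lemma general level graph factorization systems}, so that it suffices to check the sieve property separately for inert maps and for active maps. For an inert map, Definition~\ref{def M}\eqref{M def cartesian} makes each square
\[ \begin{tikzcd}
G_{i,i} \rar \dar[swap, "q_i"] & H_{i+t,i+t} \dar["q_{i+t}"] \\
G_{i-1,i} \rar & H_{i+t-1,i+t}
\end{tikzcd} \]
cartesian, so $q_i$ is a base change of $q_{i+t}$ and hence inherits surjectivity or bijectivity from $H$, while the monomorphism $G_{k,k} \hookrightarrow H_{k+t,k+t}$ of Definition~\ref{def M}\eqref{M def mono} carries terminality downward (no $G_{k,k}$ is empty, since $G$ is connected in the relevant cases).

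The substantive case is the active one, where every $\eta_{i,j}$ is an isomorphism. Here terminality of the $G_{k,k}$ is immediate from $G_{k,k}\cong H_{\alpha(k),\alpha(k)}$, so I would concentrate on the output conditions. Fixing a vertex $x \in G_{i-1,i}$: if $\alpha(i-1) = \alpha(i)$ then $q_i$ is itself a bijection and nothing is needed; otherwise $\eta$ identifies the fibre of $q_i$ over $x$ with the set of loose output edges of the connected level subgraph $K_x \subseteq H$ attached to $x$ (Remark~\ref{remark level sugraphs}), whose vertices retain all their $H$-outputs because $K_x$ is built by the cartesian pullbacks of that remark. The plan then rests on the graph-theoretic fact that a finite connected acyclic graph in which every vertex has an outgoing edge has at least one loose output---follow outputs upward and use acyclicity---and exactly one when every vertex has a \emph{unique} outgoing edge, since such a $K_x$ is a connected forest, i.e.\ a tree. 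This yields surjectivity of $q_i$ when $H \in \levelg_{\name{out}}$ and bijectivity when $H$ is a forest, completing \eqref{item one comparison between levelgraph cats}.

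For \eqref{item two comparison between levelgraph cats} I would use that inert maps out of corollas are exactly the vertices of $G$ together with their incident edges (Remark~\ref{rem underlying graph}); by part \eqref{item one comparison between levelgraph cats} each such vertex-corolla already lies in $\mathbf{B}$, so the hypothesis amounts to asking that \emph{every} vertex-corolla of $G$ lie in $\mathbf{A}$. Since membership of $\xfc_{p,q}$ in $\mathbf{A}$ is a numerical condition on its unique vertex ($q \geq 1$ for the surjectivity case, $q = 1$ for the forest case, $p = q = 1$ for $\simp$), imposing it at every vertex produces the corresponding global pointwise condition, and combining this with the connectivity already guaranteed by $G \in \mathbf{B}$ (a connected forest is a tree; a connected graph all of whose vertices are $\xfc_{1,1}$ is linear) gives $G \in \mathbf{A}$. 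I expect the main obstacle to be precisely the active case of \eqref{item one comparison between levelgraph cats}: matching the fibres of $q_i$ with the loose outputs of $K_x$ and proving the existence, and in the forest case uniqueness, of those outputs.
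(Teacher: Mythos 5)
Your argument is correct in substance and reaches the same reductions as the paper (red dashed maps via Lemma~\ref{lemma simply connected sieves}; item \eqref{item two comparison between levelgraph cats} via identifying inert maps from corollas with vertices and reading off the pointwise condition on the fibres of $G_{k,k}\to G_{k-1,k}$), but your treatment of item \eqref{item one comparison between levelgraph cats} for the solid arrows takes a genuinely different route. The paper does not factor $f$ into active and inert pieces at all: it first upgrades the pointwise condition ``$F_{k,k}\to F_{k-1,k}$ is surjective (resp.\ bijective)'' to the condition that $F_{k,\ell}\to F_{k-1,\ell}$ is surjective (resp.\ bijective) for all $k\leq \ell$, using only that surjections and bijections are stable under the pushouts defining the special functor, and then applies the cartesian naturality square of Definition~\ref{def M}\eqref{M def cartesian} to an \emph{arbitrary} morphism: $G_{i,i}\to G_{i-1,i}$ is a base change of $H_{\alpha(i),\alpha(i)}\to H_{\alpha(i-1),\alpha(i)}$, which is surjective (resp.\ bijective) by the upgraded condition. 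Your inert case is exactly this base-change argument; your active case re-proves the upgraded condition in geometric form, since the fibre of $H_{\alpha(i),\alpha(i)}\to H_{\alpha(i-1),\alpha(i)}$ over $\eta(x)$ is precisely $\out(K_x)=(K_x)_{m,m}$ for the level subgraph $K_x$ of Remark~\ref{remark level sugraphs}, and your combinatorial lemma (a connected level graph whose vertices all have at least/exactly one output has at least/exactly one output edge) is the content of that surjectivity/bijectivity. Both work; the paper's version is shorter and avoids the factorization system, while yours makes the combinatorial content explicit.

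One justification in your inert case needs repair. For $\mathbf{A}=\simp\subseteq\mathbf{B}=\simp_{\finsetskel}^1$ you deduce $G_{k,k}\neq\varnothing$ ``since $G$ is connected,'' but connectedness of a level graph does not force the edge levels to be inhabited: the height-$n$ tree consisting of a single vertex in $G_{n-1,n}$ with one output edge in $G_{n,n}$ and $G_{k,k}=G_{k-1,k}=\varnothing$ for $k<n$ is a connected object of $\simp_{\finsetskel}^1$ with empty edge levels. What actually rules this out is Definition~\ref{def M}\eqref{M def cartesian}: the square over $(k,k)\to(0,n)$ exhibits $G_{k,k}$ as the pullback of $G_{0,n}\to H_{\alpha(0),\alpha(n)}\leftarrow H_{\alpha(k),\alpha(k)}$, and since all three of these sets are singletons ($G$ connected, $H\in\simp$), $G_{k,k}\cong\ast$. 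Alternatively, characterize $\simp$ inside $\simp_{\finsetskel}^1$ by bijectivity of the input maps $G_{k,k}\to G_{k,k+1}$ and run your base-change argument on those instead. With either patch the proof goes through.
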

\begin{proof}
Item \eqref{item one comparison between levelgraph cats} for the two {\color{red} red dashed} maps is exactly the content of Lemma~\ref{lemma simply connected sieves}.
We thus concentrate on the cases where $\mathbf{A} \subseteq \mathbf{B}$ is one of the solid black arrows. 
In each case, we can characterize objects $F$ in $\mathbf{A}$ among those in $\mathbf{B}$ by the following conditions on the functions comprising $F\colon \scriptyell^{n}_0 \to \finset$
\[ \begin{tikzcd}[sep=huge]
\simp \rar["\color{blue}F_{k,k} \cong F_{k+1,k}"] & \simp_\finsetskel^1 \rar["\color{blue}F_{k,k} \cong F_{k-1,k}"] \dar["\color{blue}F_{k,k} \cong F_{k-1,k}" description] & \levelg_{\name{out,c}} \dar["\color{blue}F_{k,k} \twoheadrightarrow F_{k-1,k}" description]  \\
& \levelg_{\name{sc}} & \levelgconn
\end{tikzcd} \]
and likewise for the other solid black arrow maps.
As bijections and surjections are preserved by pushouts, we get extended conditions when working with special functors $F\colon \scriptyell^{n} \to \finset$.
For example, a special functor $F\colon \scriptyell^{n} \to \finset$ with $F_{0,n} = \ast$ (that is, a connected graph) is in $\levelg_{\name{out,c}}$ if and only if $F_{k,\ell} \to F_{k-1, \ell}$ is a surjection whenever $0 < k \leq \ell \leq n$.
It follows that if $H\in \levelg_{\name{out,c}}$ and $G\to H$ is a morphism, then we have a pullback square
		\begin{equation*}
		\begin{tikzcd}
		G_{i,i}\ar[r] \dar  \arrow[dr, phantom, "\lrcorner", very near start] & H_{\alpha(i),\alpha(i)} \dar[two heads] \\
		G_{i-1,i}\ar[r] & H_{\alpha(i-1),\alpha(i)}
		\end{tikzcd}
		\end{equation*}
which implies $G \in \levelg_{\name{out,c}}$ as well.
The other six black arrow cases of \eqref{item one comparison between levelgraph cats} follow similarly.

For \eqref{item two comparison between levelgraph cats} again consider the case $\levelg_{\name{out,c}} \to \levelgconn$, as the others are similar.
Fixing $k$, for each $x \in G_{k-1,k}$ there is a corresponding inert map $\xfc_x \to G$, which assembles into an inert map $\coprod_x \xfc_x \to G$ in $\levelg$.
This map is a bijection on the level $k-1$ and $k$ edges and the level $k$ vertices, and we see that $G_{k,k} \to G_{k-1,k}$ is surjective if and only if each $\xfc_x$ is in $\levelg_{\name{out,c}}$.
\end{proof}

\subsection{The category \texorpdfstring{$\bbY$}{G} of connected, acyclic graphs}\label{section subcategory bbY}
In \cite{hrybook} the authors introduced $\infty$-properads as presheaves on an indexing category $\hryGamma$, called the `graphical category' or `properadic graphical category.'
In this section, we give a new presentation for a category $\bbY$ (see Definition~\ref{definition complete morphism} and Definition~\ref{def Y}), and in Appendix~\ref{appendix comparison with HRY} we show that our $\bbY$ is indeed equivalent to the category $\hryGamma$.
Accordingly, going forward we will use whichever description of the properadic graphical category that is most convenient at the time, and to avoid duplication we will frequently use results from \cite{hrybook}.

Let $G$ and $H$ be directed graphs, and suppose that we are given a vertex $v\in G$ as well as bijections
$b_i \colon \inp(v) \cong \inp(H)$ and
$b_o \colon \out(v) \cong \out(H)$.
Then we can define a new graph $G(H)$, the \emph{graph substitution} of $H$ into $G$, where the vertex $v$ has been replaced by the graph $H$.
We call the quintuple $(G,H,v,b_i,b_o)$ \emph{graph substitution data}.
See Figure~\ref{figure graph substitution example} for an example (where all edges are oriented by gravity and the bijections are left implicit).
Basic facts about graph substitution (including associativity, unitality, and so on) may be found in \cite{YauJohnson:FPAM}.

\begin{figure}[htb]
\labellist
\small\hair 2pt
 \pinlabel {$v$} [ ] at 84 133
 \pinlabel {$\color[HTML]{330066}G$} [ ] at 96 0
 \pinlabel {$\color[HTML]{330066}H$} [ ] at 323 0
 \pinlabel {$\color[HTML]{330066}G(H)$} [ ] at 575 0
\endlabellist
\centering
\includegraphics[width=\textwidth]{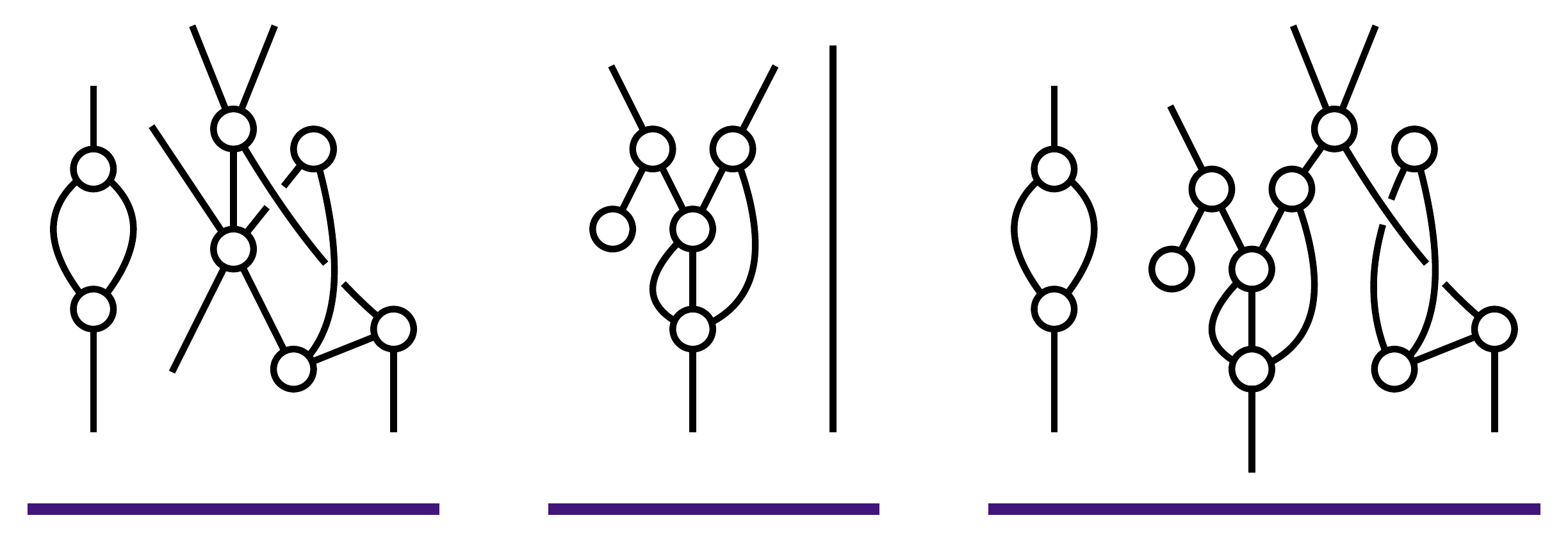}
\caption{Substitution of the graph $H$ into the vertex $v$ of $G$.}
\label{figure graph substitution example}
\end{figure}

\begin{definition}
Suppose that $K$ is a directed graph.
\begin{itemize}
\item An \emph{ordinary subgraph} $H$ of $K$ consists of a pair of subsets $\edge(H) \subseteq \edge(K)$ and $\vertex(H) \subseteq \vertex(K)$, with incidence data inherited from $K$.
That is, if $v\in \vertex(H)$, then we define $\inp^H(v)$ to be $\inp^K(v) \cap \edge(H)$, and likewise for $\out(v)$.
\item An \emph{open} subgraph $H$ of $K$ is an ordinary subgraph so that $\inp^H(v) = \inp^K(v)$ and $\out^H(v) = \out^K(v)$ for all $v\in \vertex(H)$.
\item If $G$ and $H$ are both open subgraphs of $K$, then write $G \ordcap H$ and $G\ordcup H$ for the open subgraphs with
\begin{align*}
	\edge(G \ordcap H) &= \edge(G) \cap \edge(H) & \vertex(G \ordcap H) &= \vertex(G) \cap \vertex(H) \\
	\edge(G \ordcup H) &= \edge(G) \cup \edge(H) & \vertex(G \ordcup H) &= \vertex(G) \cup \vertex(H).
\end{align*}
\end{itemize}
\end{definition}

Suppose that we are given graph substitution data so that $G(H)$ is well-defined.
Then $H$ is an open subgraph of $G(H)$.
We are primarily interested in acyclic graphs (that is, graphs without directed cycles), and in that case the reverse implication does not hold.

\begin{definition}\label{def structured subgraphs}
Suppose that $K$ is a connected acyclic directed graph.
\begin{itemize}
\item An ordinary subgraph $H$ of $K$ is said to be a \emph{structured subgraph} if
\begin{itemize}
	\item $H$ is connected, and
	\item there exists a connected acyclic directed graph $G$, graph substitution data $(G,H,v,b_i,b_o)$, and an isomorphism $G(H) \cong K$ which is the identity on $H$.
\end{itemize}
\item If $H$ is a structured subgraph of $K$, we write $H \strsub K$.
\item Write $\sub(K)$ for the set of structured subgraphs of $K$.
\end{itemize}
\end{definition}

Note that the data which guarantees that $H$ is a structured subgraph of $K$ is unique up to isomorphism, and we generally disregard it.
Also note that structured subgraphs are automatically open subgraphs.

We note that any vertex $v$ of $K$ determines a subgraph $C_v$, which consists of $v$ and all edges incident to $v$.
Indeed, $K(C_v)$ is canonically isomorphic to $K$ (unitality of graph substitution). Likewise, every edge $e$ of $K$ determines a subgraph $\downarrow_e$.

\begin{example}\label{example three vertex}
Consider the following three-vertex graph.
\begin{center}
\begin{tikzpicture}[new set=import nodes]
\begin{scope}[nodes={set=import nodes}] 
\node (i) at (0,2) {};
\node [circle,draw,very thick] (u) at (0,1) {$u$};
\node [circle,draw,very thick] (v) at (1,.5) {$v$};
\node [circle,draw,very thick] (w) at (0,0) {$w$};
\node (o) at (0,-1) {};
\end{scope}
\graph [edges={thick}]
{ (import nodes);
i -> u -> v -> w -> o; u -> w }; \end{tikzpicture}
\end{center}
The open subgraph with vertices $u$ and $w$ is \emph{not} a structured subgraph.
The open subgraph with vertices $u$ and $v$ \emph{is} a structured subgraph. Informally, collapsing the first subgraph down to a vertex yields a graph that contains a directed cycle, whereas collapsing the second graph down to a vertex yields a graph with no directed cycles.
\end{example}

\begin{remark}\label{rem convex}
	In \cite{hrybook}, structured subgraphs were simply called \emph{subgraphs}.
	In \cite{Kock_Properads}, structured subgraphs are called \emph{convex open subgraphs} and by reformulating \cite[1.6.5]{Kock_Properads} and \cite[1.6.10]{Kock_Properads} we obtain the following characterization of structured subgraphs:
	
	Suppose $G$ is an open subgraph of $K$.
	Then $G$ is a structured subgraph of $K$ if and only if the associated inclusion $i\colon G\to K$ has the right lifting property with respect to all na\"ive morphisms of graphs $\downarrow_0 \amalg \downarrow_m \to L$, where $L$ is a linear graph (see Definition~\ref{def subcategories of levelg}) of height $m\geq 0$ and 
	which take $\downarrow_0$ and $\downarrow_m$ to the unique edges in $\inp(L)$ and $\out(L)$, respectively. In other words, every commutative square
\[
\begin{tikzcd}	
\downarrow_0 \amalg \downarrow_m \rar \dar & G \dar{i} \\
L \rar \arrow[ur, dotted, "k" description] & K
\end{tikzcd}
\]
of na\"ive morphisms has a lift $k$.
\end{remark}

Notice in the case when $m=0$, the map $\downarrow_0 \amalg \downarrow_m \to L$ from the remark is just the 2-fold cover of $\downarrow$.

\begin{proposition}\label{strsub and ordsub coincide}
	Suppose that $G \strsub K$ is a structured subgraph of $K$ and $H$ is an ordinary subgraph of $G$.
	Then $H \strsub K$ if and only if $H \strsub G$.
\end{proposition}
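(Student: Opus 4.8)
The plan is to avoid graph substitution altogether and to work through the lifting-property characterization of structured subgraphs recorded in Remark~\ref{rem convex}. First I would set up two easy reductions. Since $G \strsub K$, the subgraph inclusion $G \hookrightarrow K$ is a monomorphism (it is injective on both $\edge$ and $\vertex$), and $G$ is an open subgraph of $K$. Because $\vertex(H) \subseteq \vertex(G)$, and openness of $G$ in $K$ gives $\inp^K(v) = \inp^G(v)$ and $\out^K(v) = \out^G(v)$ for every $v \in \vertex(G)$, one checks immediately that $H$ is open in $G$ if and only if $H$ is open in $K$. As connectedness of $H$ is required by both $H \strsub K$ and $H \strsub G$, I may assume throughout that $H$ is connected and open in $G$ (equivalently in $K$); otherwise both sides of the asserted equivalence fail and there is nothing to prove.

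With these reductions in place, Remark~\ref{rem convex} lets me replace each occurrence of $\strsub$ by the right lifting property (RLP) against the class $\mathcal{J}$ of na\"ive maps $\downarrow_0 \amalg \downarrow_m \to L$ with $L$ linear. Thus $H \strsub K$ becomes the statement that $H \hookrightarrow K$ has the RLP against $\mathcal{J}$, $H \strsub G$ becomes the RLP for $H \hookrightarrow G$, and we already know $G \hookrightarrow K$ has the RLP. For the implication $H \strsub G \Rightarrow H \strsub K$ I would factor $H \hookrightarrow K$ as $H \hookrightarrow G \hookrightarrow K$ and invoke closure of a right-lifting class under composition. For the converse $H \strsub K \Rightarrow H \strsub G$, given a lifting problem for $H \hookrightarrow G$ with top edge $a\colon \downarrow_0 \amalg \downarrow_m \to H$ and bottom edge $b\colon L \to G$, I would postcompose $b$ with $G \hookrightarrow K$ to obtain a lifting problem for $H \hookrightarrow K$; its solution produces a map $k\colon L \to H$ satisfying $(H \hookrightarrow K)\circ k = (G \hookrightarrow K)\circ b$, and since $G \hookrightarrow K$ is a monomorphism this forces $(H \hookrightarrow G)\circ k = b$, so $k$ already solves the original problem over $G$.

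The only genuine point is this right-cancellation step in the converse direction, and it works precisely because subgraph inclusions are monic, so the factor $G \hookrightarrow K$ may be cancelled; everything else is bookkeeping (the openness comparison, the shared connectedness hypothesis, and closure of the right class under composition). As a consistency check one could argue the forward implication instead by associativity of graph substitution, composing the substitution data witnessing $H \strsub G$ with that witnessing $G \strsub K$, but the lifting-property route dispatches both directions uniformly and is cleaner.
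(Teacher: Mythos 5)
Your proof is correct and follows essentially the same route as the paper's: both directions are handled via the lifting-property characterization of Remark~\ref{rem convex}, with the converse using that the subgraph inclusion $G \hookrightarrow K$ is a monomorphism to cancel it after solving the lifted problem over $K$, and the forward direction by closure of the right class under composition (the paper likewise notes the graph-substitution associativity argument as an alternative). The only difference is that you spell out the openness and connectedness reductions explicitly, which the paper leaves implicit.
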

\begin{proof}
According to the previous remark being a structured subgraph is equivalent to being right orthogonal to all na\"ive maps of inclusions of endpoints into linear graphs $L$.
If $H$ is a structured subgraph of $K$, then we can extend any commutative square 
\[
\begin{tikzcd}	
\downarrow_0 \amalg \downarrow_m \rar{a} \dar & H \dar{i_i} \\
L \rar{b} & G
\end{tikzcd}
\]
 to a commutative diagram
\[
\begin{tikzcd}
	\downarrow_0 \amalg \downarrow_m  \arrow[dd] \rar["a"] & H \dar{i_1}  \\
	& G \dar{i_2} \\
	L \rar["i_2b" swap] \arrow[ur, "b" description] \arrow[color=purple, dashed, uur, "c" description] & K
\end{tikzcd}
\]
The assumption $H \strsub K$ gives the existence of the map $c$ such that $cf=a$ and $i_2i_1c=i_2b$. 
It follows from the definition of ordinary subgraphs that the morphism $i_2$ is an monomorphism of graphs, hence, $i_1c=b$ and the `only if' direction is established.
The `if' direction is just closure under composition of maps in a right orthogonality class; alternatively, one can use associativity of graph substitution $K \cong G'(G) \cong G'(H'(H)) \cong ((G'(H'))(H)$.
\end{proof}

\begin{remark}
	The set $\sub(K)$ is a partially-ordered set, ordered by inclusion of subgraphs.
	In light of the previous proposition, it does not matter whether we take $H \leq G$ to mean `$H$ is an ordinary subgraph of $G$,' the stronger `$H$ is an open subgraph of $G$,' or the strongest `$H$ is a structured subgraph of $G$.'
	This partially-ordered set has a unique maximal element, $K$.
	Each edge of $K$ is a minimal element.
\end{remark}

\begin{definition}
Suppose that $G, H \in \sub(K)$.
If $G \ordcup H$ is a structured subgraph of $K$, we write $G \strcup H \coloneqq G \ordcup H \in \sub(K)$.
If not, we say that $G \strcup H$ does not exist.
\end{definition}

The following is immediate.

\begin{proposition}
Suppose that $G, H \in \sub(K)$.
If $G \ordcup H$ is a structured subgraph of $K$, then it is the least upper bound of $G$ and $H$. \qed
\end{proposition}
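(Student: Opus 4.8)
The plan is to verify directly that $G \ordcup H$ satisfies the universal property of a join in the poset $\sub(K)$, using the hypothesis only to guarantee that $G \ordcup H$ is actually an element of $\sub(K)$, and using Proposition~\ref{strsub and ordsub coincide} (via the subsequent remark) to reduce every comparison to an inclusion of underlying edge and vertex sets.

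First I would check that $G \ordcup H$ is an upper bound. By hypothesis $G \ordcup H$ is a structured subgraph of $K$, so it is a well-defined element of $\sub(K)$. From the definitions $\edge(G), \edge(H) \subseteq \edge(G) \cup \edge(H) = \edge(G \ordcup H)$ and similarly $\vertex(G), \vertex(H) \subseteq \vertex(G \ordcup H)$; since the ordering on $\sub(K)$ may be tested on underlying edge and vertex sets, this gives $G \leq G \ordcup H$ and $H \leq G \ordcup H$.

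Next I would show minimality. Let $L \in \sub(K)$ be any upper bound, so $G \leq L$ and $H \leq L$. Unwinding the ordering, $\edge(G), \edge(H) \subseteq \edge(L)$ and $\vertex(G), \vertex(H) \subseteq \vertex(L)$, whence $\edge(G \ordcup H) = \edge(G) \cup \edge(H) \subseteq \edge(L)$ and likewise $\vertex(G \ordcup H) \subseteq \vertex(L)$. Thus $G \ordcup H$ is an ordinary subgraph of $L$; since both lie in $\sub(K)$, the remark following Proposition~\ref{strsub and ordsub coincide} lets us upgrade this inclusion of edge and vertex sets to the relation $G \ordcup H \leq L$ in $\sub(K)$. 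Hence $G \ordcup H$ is the least upper bound.

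There is essentially no obstacle here: the content is the set-theoretic fact that a union is the least set containing two given sets, together with the bookkeeping that the partial order on $\sub(K)$ is detected on underlying sets. The only place where the hypothesis is used---and the only place where anything beyond set theory enters---is in asserting that $G \ordcup H$ belongs to $\sub(K)$ in the first place, so that it is an admissible candidate for the join; without this the union need not be a structured subgraph and the least upper bound may fail to exist.
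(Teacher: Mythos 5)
Your argument is correct and is exactly the reasoning the paper has in mind: the paper labels this proposition "immediate" and omits the proof entirely, and your verification (upper bound from set-theoretic union, minimality from testing the order on underlying edge and vertex sets via the remark after Proposition~\ref{strsub and ordsub coincide}) is the intended one.
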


It is possible to have least upper bounds in $\sub(K)$ which are not of this form, and we are entirely uninterested in those.
For example, in the graph
\begin{center}
	\tikz \graph [math nodes, nodes={circle, draw, very thick}, edges=thick] { / [white] -> u -> v -> w -> / [white]};
\end{center}
the structured subgraphs $C_u$ and $C_w$ have a least upper bound, namely the entire graph.

\begin{example}
It may be that $H_1 \ordcup H_2$ is connected, but still is not a structured subgraph.
This should be immediate since $\vertex(K) \hookrightarrow \sub(K)$ is a monomorphism, but there can exist open connected subgraphs that are not structured subgraphs.
For example, in the graph from Example~\ref{example three vertex}, the open subgraph $C_u \ordcup C_w$ is connected, but is not a structured subgraph.
\end{example}

\begin{definition}
	If $K$ is a connected acyclic directed graph, then there are two functions $\inp$ and $\out$ that go from $\sub(K)$ to the set $\lists (\edge(K))$ of finite unordered lists of elements of $\edge(K)$.
	The first takes a subgraph $H$ to its set of input edges, while the second takes $H$ to its set of output edges.
\end{definition}

The following definition was inspired both by a question from Steve Lack and by Definition 1.12 of \cite{HackneyRobertsonYau:HCO}, which concerned morphisms in a category of undirected trees.

\begin{definition}\label{definition complete morphism}
Let $G$ and $K$ be connected acyclic directed graphs.
A \emph{morphism} $f \colon G\to K$ consists of two functions $f_0 \colon \edge(G) \to \edge(K)$ and $f_1 \colon \sub(G) \to \sub(K)$.
These should satisfy the following:
\begin{enumerate}
	\item The diagram
\[
\begin{tikzcd}
\lists (\edge(G))  \dar["\lists(f_0)"] &\sub(G)  \dar["f_1"] \lar["\inp" swap] \rar["\out"]& \lists (\edge(G))  \dar["\lists(f_0)"] \\
\lists (\edge(K))  & \sub(K) \lar["\inp" swap] \rar["\out"] & \lists (\edge(K))
\end{tikzcd}
\]
	commutes. \label{definition complete morphism preservation of ports}
	\item Suppose that $H_1, H_2 \in \sub(G)$ and $H_1 \strcup H_2$ exists, then $f_1(H_1 \strcup H_2) = f_1 (H_1) \strcup f_1 (H_2).$ \label{definition complete morphism intersection union}

\end{enumerate}
\end{definition}

This definition of morphism turns out to be equivalent to the (properadic) graphical maps from \cite[Definition 6.46]{hrybook}.
We show this in Theorem~\ref{theorem upsilon equivalent definitions}.
This implies that a morphism $f \colon G\to K$ is completely determined by $f_0$ by \cite[Corollary 6.62]{hrybook}.

The reader familiar with \cite[Definition 1.12]{HackneyRobertsonYau:HCO} may wonder why Definition~\ref{definition complete morphism} does not mention intersections of subgraphs.
This may be understood with an example.

\begin{example}\label{example no intersections}
Properadic graphical maps in the sense of \cite{hrybook} need not preserve intersections of subgraphs, hence this is not part of the definition of morphism.
Indeed, consider the following situation.
\begin{center}
\begin{tikzpicture}[new set=import nodes]
\begin{scope}[nodes={set=import nodes}] 
\node (i) at (0,2) {};
\node [circle,draw,very thick] (u) at (0,1) {$u$};
\node [circle,draw,very thick] (v) at (1,.5) {$v$};
\node [circle,draw,very thick] (w) at (0,0) {$w$};
\node (o) at (0,-1) {};
\node [circle,draw,very thick] (a) at (3,1) {$a$};
\node [circle,draw,very thick] (b) at (3,0) {$b$};
\node (i') at (3,2) {};
\node (o') at (3,-1) {};
\end{scope}
\graph [edges={thick}]
{ (import nodes);
i -> u -> v -> w -> o; u -> [bend right] w;
i' -> a -> [bend right] b; a -> [bend left, "$e$"] b -> o';
 }; \end{tikzpicture}
\end{center}
There is a properadic graphical map $f$ which sends $C_u$ to $C_a$, $C_w$ to $C_b$, and $C_v$ to $\downarrow_e$.
But $C_u \ordcap C_w$ consists of a single edge.
On the other hand, $f(C_u) \ordcap f(C_w) = C_a \ordcap C_b$ consists of \emph{two} edges, and is not a structured subgraph.
Morphisms of this type are essential in establishing the nerve theorem (see \cite[Theorem 7.42]{hrybook}) in this setting.
\end{example}

\begin{example}\label{example of etale but not mono}
Consider the following two graphs $G$ and $K$.
\begin{center}
\begin{tikzpicture}[new set=import nodes]
\begin{scope}[nodes={set=import nodes}] 
\node [circle,draw,very thick] (u0) at (0,2) {$u_0$};
\node [circle,draw,very thick] (u1) at (2,2) {$u_1$};
\node [circle,draw,very thick] (v0) at (0,0) {$v_0$};
\node [circle,draw,very thick] (v1) at (2,0) {$v_1$};
\node [circle,draw,very thick] (u) at (5,2) {$u$};
\node [circle,draw,very thick] (v) at (5,0) {$v$};
\end{scope}
\graph [edges={thick}]
{ (import nodes);
u0 -> {v0,v1};
u1 -- [line width=5pt, white] v0;
u1 -> {v0,v1};
u -> [bend right] v; u -> [bend left] v;
}; \end{tikzpicture}
\end{center}
There is no morphism $f \colon G \to K$.
Such a morphism would necessarily have $f_1(C_{u_0}) = f_1(C_{u_1}) = C_u$ as there is exactly one structured subgraph of $K$ with two outputs; likewise $f_1(C_{v_0}) = f_1(C_{v_1}) = C_v$.
But $H = C_{u_1} \strcup C_{v_1} \in \sub(G)$, so is required that $f_1(H) = f_1(C_{u_1}) \strcup f_1(C_{v_1}) = C_u \strcup C_v = K$.
But then $H$ has one input and one output, whereas $K$ has an empty set of inputs and empty set of outputs.
Likewise, there is no morphism $g \colon K \to G$.
If one existed, we would have $g_1(C_u) = C_{u_i}$ and $g_1(C_v) = C_{v_j}$ for some $i,j$.
But then $C_{u_i} \strcup C_{v_j}$ is a structured subgraph with one input and one output, whereas $C_u \strcup C_v = K$ has no inputs or outputs.
\end{example}

\begin{definition}[Properadic Graphical Category]\label{def Y}
We let $\bbY$ denote the category whose objects are (isomorphism classes of) connected acyclic directed graphs together with the morphisms of Definition~\ref{definition complete morphism}.
Composition is given by composition of pairs of functions. Similar to Definition~\ref{def subcategories of levelg} we write
\begin{itemize}
	\item $\bbYout$ for the full subcategory of $\bbY$ spanned by graphs whose vertices have at least one output and 
	\item $\bbY_{\name{sc}}$ for the full subcategory of $\bbY$ spanned by simply-connected graphs.
\end{itemize}
The dendroidal category, $\bbO$, from \cite{MoerdijkWeiss} is the full subcategory of $\bbYout$ so that each vertex has precisely one output.
\end{definition}

Under the equivalence of Theorem~\ref{theorem upsilon equivalent definitions}, the subcategory $\bbY_{\name{sc}}$ was called `$\Theta$' in \cite{hrybook}.
The dendroidal category $\bbO$ is a full subcategory of both $\bbYout$ and $\bbY_{\name{sc}}$.
See Figure~\ref{figure lattice of subcategories for bbY}.

\begin{figure}
\[ \begin{tikzcd}[column sep=small]
& \bbY \arrow[dl, no head] \arrow[dr, no head] & \\
\bbY_{\name{sc}} \arrow[dr, no head] &  & \bbYout \arrow[dl, no head] \\
& \bbO \arrow[d, no head] \\
& \simp
\end{tikzcd} \]
\caption{Lattice of subcategories (see Definition~\ref{def Y})} \label{figure lattice of subcategories for bbY}
\end{figure}
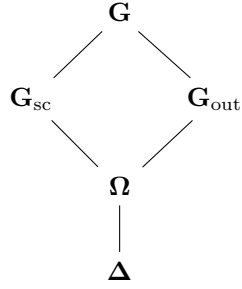

\begin{example}\label{ex inert}
	If $H \in \sub(K)$, then there is a canonical morphism $H \to K$ given by inclusion of subsets $\edge(H) \subseteq \edge(K)$ and $\sub(H) \subseteq \sub(K)$.
\end{example}

\begin{remark}[Alternative characterization of morphisms]\label{remark alternative characterization of morphisms}
Definition~\ref{definition complete morphism} is not the original definition of graphical map that appeared in \cite{hrybook}.
Let us give an alternative definition more closely aligned with the original.
Suppose that $f\colon G \to K$ is a morphism. 
We can identify each vertex with a corolla in $\sub(G)$, which induces a function $\vertex(G) \to \sub(K)$ so that the diagram
\[
\begin{tikzcd}
\lists (\edge(G))  \dar["\lists(f_0)"] &\vertex(G)  \dar["f_1"] \lar["\inp" swap] \rar["\out"]& \lists (\edge(G))  \dar["\lists(f_0)"] \\
\lists (\edge(K))  & \sub(K) \lar["\inp" swap] \rar["\out"] & \lists (\edge(K))
\end{tikzcd}
\]
commutes.
Condition \eqref{definition complete morphism intersection union} no longer makes sense, as taking the union of two corollas will yield a graph that is not a corolla.
Instead, we have the following two potential conditions (essentially borrowed from \cite{hrybook}), where we write $H_v \in \sub(K)$ for the image of $C_v$ under $f_1$.
\begin{enumerate}[start=3]
	\item For each $J\in \sub(G)$, the induced \'etale map $J\{ H_v \}_{v\in \vertex(J)} \to K$ is convex open. \label{alternate def subgraphs}
	\item The induced \'etale map $G\{H_v\}_{v\in \vertex(J)} \to K$ is convex open. \label{alternate def whole graph}
\end{enumerate}
Combining one of these two conditions with \eqref{definition complete morphism preservation of ports} yields a definition equivalent to the one from Definition~\ref{definition complete morphism}.
This equivalence follows from Theorem~\ref{theorem upsilon equivalent definitions}.
\end{remark}

\begin{definition}\label{def intact Y}
Let $f\colon G \to K$ be a morphism.
\begin{itemize}
	\item If $f_1(G) = K$, then $f$ is said to be \emph{active}.
	\item If $f$ is isomorphic to a morphism of the form of Example~\ref{ex inert}, then $f$ is said to be \emph{inert}.
\end{itemize}
\end{definition}

Notice that active maps are automatically bijective on boundaries.
The reverse implication also holds.

\begin{lemma}
If $f \colon G \to K$ is a morphism of $\bbY$ inducing a bijection between the inputs / outputs of $G$ and $K$, then $f$ is active.
\end{lemma}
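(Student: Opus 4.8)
The plan is to first translate the hypothesis into a statement purely about the structured subgraph $H \coloneqq f_1(G) \in \sub(K)$, and then to show that such an $H$ must be all of $K$. I would begin by recording what condition \eqref{definition complete morphism preservation of ports} of Definition~\ref{definition complete morphism} says for the maximal element $G \in \sub(G)$: namely $\inp(f_1(G)) = \lists(f_0)(\inp(G))$ and $\out(f_1(G)) = \lists(f_0)(\out(G))$. Since by hypothesis $f_0$ restricts to bijections $\inp(G) \cong \inp(K)$ and $\out(G) \cong \out(K)$, the right-hand sides are exactly $\inp(K)$ and $\out(K)$ (with all multiplicities one). Thus $H = f_1(G)$ is a structured subgraph $H \strsub K$ whose boundary is the full boundary of $K$: $\inp(H) = \inp(K)$ and $\out(H) = \out(K)$ as subsets of $\edge(K)$. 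The map $f$ is active precisely when $H = K$, so it remains to prove: if $H \strsub K$ satisfies $\inp(H) = \inp(K)$ and $\out(H) = \out(K)$, then $H = K$.

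To prove this I would use graph substitution data $(K', H, v, b_i, b_o)$ with $K'(H) \cong K$ witnessing $H \strsub K$; here $K'$ is obtained by collapsing $H$ to a single vertex $v$. The crux is to show that $v$ carries the entire boundary of $K'$, i.e.\ $\inp(v) = \inp(K')$ and $\out(v) = \out(K')$ as subsets of $\edge(K')$. Since graph substitution preserves global inputs and outputs, a glued input edge (an element of $\inp(v)$, identified with an input loose-end of $H$) is a global input of $K = K'(H)$ if and only if it already lies in $\inp(K')$ -- the only vertices that could output such an edge lie in $K'$ away from $v$. The equality $\inp(H) = \inp(K)$ then forces both inclusions: that every edge of $\inp(v)$ becomes a global input (giving $\inp(v) \subseteq \inp(K')$), and that $K'$ has no global inputs outside $\inp(v)$ (giving $\inp(K') \subseteq \inp(v)$). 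The output computation is symmetric.

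Granting $\inp(v) = \inp(K')$ and $\out(v) = \out(K')$, no edge of $K'$ can join $v$ to another vertex: such an edge would be either an output of $v$ that is an input of some $w$, or an input of $v$ that is an output of some $w$, and in either case it would fail to be a global boundary edge of $K'$, contradicting the boundary equalities (here I use that each edge is incident, as an input and as an output, to at most one vertex). As $K'$ is connected and $v$ meets no other vertex, $K'$ can have no further vertices and no stray edges, so $K' \cong C_v$ is a corolla. By unitality of graph substitution $C_v(H) \cong H$, and since $K \cong K'(H) = C_v(H)$ via an isomorphism that is the identity on $H$, we conclude $H = K$, so $f$ is active. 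The main obstacle is the substitution bookkeeping in the second paragraph -- carefully tracking which glued edges remain global inputs/outputs of $K'(H)$ -- but this is routine given the calculus of graph substitution recalled from \cite{YauJohnson:FPAM}; notably, acyclicity plays no role here, only connectedness together with the incidence constraints of Definition~\ref{graph definition}.
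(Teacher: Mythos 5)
Your proof is correct, but it takes a genuinely different route from the paper's. The paper disposes of the lemma in two sentences by invoking \cite[Lemma 6.39]{hrybook}, which says that a structured subgraph of $K$ is uniquely determined by its boundary: since $f_1(G)$ and $K$ are both structured subgraphs of $K$ with boundary $\partial K$, they coincide. You instead make the argument self-contained: after the same translation step (condition \eqref{definition complete morphism preservation of ports} applied to the maximal element, plus the bijectivity hypothesis, gives $\inp(f_1(G)) = \inp(K)$ and $\out(f_1(G)) = \out(K)$), you reprove exactly the special case of the uniqueness lemma that is needed, by analyzing the graph-substitution witness $K'(H) \cong K$ and showing that the collapsed graph $K'$ must be a corolla. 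The bookkeeping you defer is indeed routine and checks out: $\inp(v) = \inp(K')$ and $\out(v) = \out(K')$ follow from the two inclusions you describe, and then no edge of the connected graph $K'$ can be internal or disjoint from $v$, forcing $K' \cong C_v$ and hence $H = K$ by unitality. What the paper's route buys is brevity and reuse of established machinery; what yours buys is independence from \cite{hrybook} and an explicit structural explanation of \emph{why} a full-boundary structured subgraph must be everything. One small caveat: your closing remark that acyclicity plays no role is not quite accurate, since you implicitly use that no edge is simultaneously an input and an output of the collapsed vertex $v$ (i.e.\ no loops), which in this setting is guaranteed by acyclicity — though this is moot in $\bbY$, where all objects are acyclic by definition.
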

\begin{proof}
There is at most one subgraph possessing a given boundary by \cite[Lemma 6.39]{hrybook}.
Our assumption is that the inputs / outputs of $f_1(G)$ are the same as those of $K$, so it follows that $f_1(G) = K$.
\end{proof}

Kock described a weak factorization system on $\kockgraphs$ (see Definition~\ref{definition kockgraphs}) with $\kockint$ as the right class.
As observed in \cite[2.4.14]{Kock_Properads}, this restricts to an orthogonal factorization system $(\hryact, \hryint)$ on $\bbY$.

\begin{theorem}[Kock]\label{theo Yfs}
The pair $(\hryact, \hryint)$ is an orthogonal factorization system on $\bbY$. \qed
\end{theorem}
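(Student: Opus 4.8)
The plan is to verify the three clauses in the paper's definition of an orthogonal factorization system directly from Definition~\ref{definition complete morphism}: that $\hryact$ and $\hryint$ are subcategories containing every isomorphism, that each $f\colon G\to K$ admits an active-then-inert factorization, and that this factorization is unique up to unique isomorphism. (Alternatively one could transport Kock's weak factorization system along the equivalence of Theorem~\ref{theorem upsilon equivalent definitions}, but I prefer the intrinsic route.) The closure clauses are immediate. Every isomorphism $f$ has $f_1(G)=K$, so is active, and is isomorphic to the identity inclusion of Example~\ref{ex inert}, so is inert. Since composition in $\bbY$ is composition of pairs of functions, $(gf)_1 = g_1 f_1$; thus if $f_1(G)=K$ and $g_1(K)=L$ then $(gf)_1(G)=L$, and active maps compose. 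A composite of subgraph inclusions $A \strsub B \strsub K$ is again a subgraph inclusion by Proposition~\ref{strsub and ordsub coincide}, so inert maps compose as well.

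For the factorization I would take the \emph{image} $\bar K \coloneqq f_1(G) \in \sub(K)$, which is a structured subgraph. Applying condition~\eqref{definition complete morphism intersection union} to the pair $H, G$, whose union is the maximal element $G\in\sub(G)$, gives $f_1(G) = f_1(H)\strcup f_1(G)$, so $f_1(H) \leq \bar K$ for all $H\in\sub(G)$; and testing on the one-edge subgraphs $\downarrow_e$ together with condition~\eqref{definition complete morphism preservation of ports} shows that $f_0$ lands in $\edge(\bar K)$. By Proposition~\ref{strsub and ordsub coincide} the poset $\sub(\bar K)$ is precisely the principal down-set of $\bar K$ in $\sub(K)$, and because $\inp$, $\out$, and the test for being structured are intrinsic to a subgraph, this identification respects $\inp$, $\out$, and $\strcup$. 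Hence $f$ corestricts to a morphism $\ell\colon G\to \bar K$ of $\bbY$ with $\ell_1(G)=\bar K$, which is therefore active; writing $r\colon \bar K \strsub K$ for the inert inclusion of Example~\ref{ex inert}, we obtain $f = r\ell$.

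Uniqueness is where the new description pays off. Suppose $f = r'\ell'$ is any active-then-inert factorization with middle object $M$. Since $r'$ is inert it is isomorphic to the inclusion of a unique structured subgraph $\bar M \strsub K$, and since $\ell'$ is active $\ell'_1$ hits all of $M$; chasing through $(r'\ell')_1 = r'_1\ell'_1$ identifies $f_1(G)$ with the image $\bar M$, so $\bar M = \bar K$. Thus $r'$ is canonically isomorphic to our $r$, and because inert maps are monomorphisms (their edge and subgraph components are injective) the comparison isomorphism $M\to\bar K$ is the unique one compatible with $\ell,\ell'$ and with $r,r'$. This yields uniqueness up to unique isomorphism and completes the proof. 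I expect the only genuine work to lie in the middle paragraph, namely checking that the corestriction $\ell$ really is a morphism of $\bbY$---that conditions~\eqref{definition complete morphism preservation of ports} and~\eqref{definition complete morphism intersection union} survive the passage from $\sub(K)$ to $\sub(\bar K)$. This is exactly what Proposition~\ref{strsub and ordsub coincide} is designed to guarantee, so the obstacle is more bookkeeping than substance; the fact that a morphism is determined by $f_0$ (\cite[Corollary 6.62]{hrybook}) can serve as a sanity check but is not strictly needed.
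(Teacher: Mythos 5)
Your proof is correct, but it is not the paper's proof: the paper states Theorem~\ref{theo Yfs} without argument, attributing it to Kock --- the weak factorization system on $\kockgraphs$ with $\kockint$ as right class restricts to an orthogonal one by \cite[2.4.14]{Kock_Properads}, and this is transported to $\bbY$ along the equivalence $\bbY \simeq \hryGamma$ of Theorem~\ref{theorem upsilon equivalent definitions}. You instead verify the definition directly from Definition~\ref{definition complete morphism}, and the argument goes through: condition~\eqref{definition complete morphism intersection union} applied to $H \strcup G = G$ gives monotonicity of $f_1$, so $\bar K = f_1(G)$ bounds every value of $f_1$ from above; Proposition~\ref{strsub and ordsub coincide} identifies $\sub(\bar K)$ with the principal down-set of $\bar K$ in $\sub(K)$ (and makes $\strcup$ agree in the two posets), so the corestriction $\ell$ is again a morphism; and injectivity of both components of a canonical inclusion makes inert maps monic, which yields existence and uniqueness of the comparison isomorphism in one stroke. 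What your route buys is a self-contained proof internal to the new presentation of $\bbY$ --- arguably a selling point of that presentation; what the paper's route buys is brevity plus the simultaneous identification of inert maps with Kock's convex open inclusions (Remark~\ref{rem inert=convex_open}), which is used later. Two small points you should make explicit if this were to replace the citation: first, the structured subgraph attached to an inert map $r'\colon M \to K$ is well-defined because it equals $r'_1(M)$ (isomorphisms send the maximal element of $\sub(M)$ to the maximal element, by the same monotonicity argument), which is what makes ``$\bar M = \bar K$'' meaningful in your uniqueness step; second, since objects of $\bbY$ are isomorphism classes of graphs, the middle object $\bar K$ must be replaced by the chosen representative of its class, at the cost of composing with an isomorphism. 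Both are routine.
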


\begin{remark}\label{rem inert=convex_open}
	The equivalence $\bbY \simeq \hryGamma$ of Theorem~\ref{theorem upsilon equivalent definitions} and \cite[2.4.14]{Kock_Properads} show that $G\to H$ is inert in $\bbY$ if and only if $G$ is a convex open subgraph of $H$.
	Therefore, if $H$ lies in $\bbYout$, $\bbY_{\name{sc}}$, $\bbO$, or $\simp$, so does $G$. 
	In particular, the active-inert factorization system on $\bbY$ restricts to each of these subcategories.
	This restricted factorization system on $\simp$ coincides with the one described in Example~\ref{ex simpfactsys} and with the restriction of the factorization system on $\levelg$ from Lemma~\ref{lemma general level graph factorization systems}.
\end{remark}

We also have the following analogue of Lemma~\ref{lem comparison between levelgraph cats}\eqref{item one comparison between levelgraph cats}. 
The corresponding statement for item \eqref{item two comparison between levelgraph cats} of Lemma~\ref{lem comparison between levelgraph cats} is immediate from Definition~\ref{def Y}.

\begin{lemma}\label{lem comparison between dirgraph cats}
Suppose that $\mathbf{A} \subseteq \mathbf{B}$ is one of the fully-faithful inclusions appearing in the following diagram
\[ \begin{tikzcd}
\simp \rar & \bbO \rar \dar & \bbYout \dar  \\
& \bbY_{\name{sc}} \rar[dashed, color=red] & \bbY. 
\end{tikzcd} \]
If $f \colon G \to H$ is a morphism in $\mathbf{B}$ and $H \in \mathbf{A}$, then $G\in \mathbf{A}$.
\end{lemma}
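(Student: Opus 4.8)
The plan is to handle the {\color{red} red dashed} inclusion $\bbY_{\name{sc}} \to \bbY$ exactly as in Lemma~\ref{lemma simply connected sieves}: given $f\colon G \to H$ with $H$ simply-connected, \cite[Proposition 5.2.8]{hrybook} shows that $G$ is simply-connected, so $G \in \bbY_{\name{sc}}$. (We are already inside $\bbY$, so there is no need to pass through the functor $\tau$.) This refines the inert case recorded in Remark~\ref{rem inert=convex_open} to arbitrary morphisms.

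For the four solid arrows I would argue uniformly, reducing everything to a statement about structured subgraphs via the vertex description of morphisms from Remark~\ref{remark alternative characterization of morphisms}. Given $f \colon G \to H$ and a vertex $v$ of $G$, write $H_v = f_1(C_v) \in \sub(H)$. The commuting diagram of Definition~\ref{definition complete morphism}\eqref{definition complete morphism preservation of ports}, applied to the corolla $C_v$, gives $\out(H_v) = \lists(f_0)(\out(v))$ and $\inp(H_v) = \lists(f_0)(\inp(v))$; since $\lists(f_0)$ preserves the length of lists we obtain
\[ |\out(H_v)| = |\out(v)|, \qquad |\inp(H_v)| = |\inp(v)|. \]
Thus the input/output counts at each vertex of $G$ are computed by structured subgraphs of $H$, and membership of $G$ in the relevant subcategory will follow once these counts are controlled for structured subgraphs.

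The technical core is the following claim about a structured subgraph $H' \strsub H$, which is connected and acyclic and whose vertices inherit their full incidence from $H$ (structured subgraphs being open): if every vertex of $H$ has at least one output, then $|\out(H')| \geq 1$; and if every vertex of $H$ has exactly one output, then $|\out(H')| = 1$ (with the evident input-analogues obtained by reversing orientation). The inequality follows from a terminating-walk argument: following outputs from any vertex of $H'$ must eventually reach a loose output, as $H'$ is finite and acyclic. For the equality I would introduce the auxiliary directed graph $D$ on $\vertex(H')$ with an arrow $w \to w'$ whenever the unique output of $w$ is an input of $w'$; then $D$ has out-degree at most one, is acyclic (a cycle would lift to a directed cycle in $H'$), and is connected (connectedness of $H'$ with a vertex produces zig-zags of shared edges, and an isolated loose edge cannot occur). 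Hence $D$ is an in-tree with a single root, and its root's output is the unique loose output of $H'$.

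With the claim in hand the solid arrows are immediate. For $\bbYout \to \bbY$ we get $|\out(v)| = |\out(H_v)| \geq 1$ for every vertex $v$ of $G$, so $G \in \bbYout$; for $\bbO \to \bbYout$ and $\bbO \to \bbY_{\name{sc}}$ we get $|\out(v)| = |\out(H_v)| = 1$, so $G \in \bbO$ (the simple-connectivity hypothesis is not even needed in the latter case). For $\simp \to \bbO$, the linearity of $H$ means each of its vertices has exactly one input and one output, so the claim and its input-analogue give $|\inp(v)| = |\out(v)| = 1$ for every vertex $v$ of $G$; a connected acyclic graph all of whose vertices have exactly one input and one output is linear, hence $G \in \simp$. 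I expect the main obstacle to be the ``exactly one output'' half of the claim, whose proof rests on recognizing $D$ as an in-tree; alternatively one could invoke the standard fact that structured subgraphs of dendroidal trees are again trees.
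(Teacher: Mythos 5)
Your proof is correct and follows essentially the same route as the paper: both arguments reduce to the fact that $f_1(C_v)$ is a structured subgraph of $H$ with the same number of inputs and outputs as the vertex $v$, and then to the observation that membership in $\mathbf{A}$ can be detected on the boundaries of structured subgraphs. The only difference is that the paper merely asserts the latter characterization (and extracts the arity statement from the active--inert factorization $G \to G\{K_v\}\to H$ of \cite[Lemma 6.42]{hrybook} rather than from Definition~\ref{definition complete morphism}\eqref{definition complete morphism preservation of ports}), whereas you supply a correct terminating-walk/in-tree proof of it.
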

\begin{proof}
The statement for the {\color{red} dashed} map is \cite[Proposition 5.2.8]{hrybook}.
We thus concentrate on the cases where $\mathbf{A} \subseteq \mathbf{B}$ is one of the solid black arrows.
In each case, we may distinguish graphs in $\mathbf{A}$ among those in $\mathbf{B}$ by a corresponding property not just for vertices, but for subgraphs.
For instance, 
\begin{itemize}
	\item a graph $G\in \bbY$ is in $\bbYout$ if and only if each structured subgraph of $G$ has at least one output,
	\item a graph $G\in \bbY_{\name{sc}}$ is in $\bbO$ if and only if each structured subgraph has exactly one output,
\end{itemize}
and so on.
We have the active-inert factorization $G\to G\{K_v\} \to H$ from \cite[Lemma 6.42]{hrybook}.
As each $K_v$ is a structured subgraph of $H$, we have $K_v \in \mathbf{A}$.
But $K_v$ has the same number of inputs and outputs as the vertex $v\in G$, hence $G\in \mathbf{A}$ as well.
\end{proof}

We now introduce the functor $\vertex_\bbY\colon\bbY \to \pfinset^\op$.
\begin{definition}\label{definition bbY to Fin}
	We define the functor $\vertex_\bbY\colon\bbY \to \pfinset^\op$ by requiring that it takes 
	\begin{itemize}
		\item an object $G\in \bbY$ to the set $\vertex(G)_+$ of its vertices together with a base point, 
		\item a morphism $f\colon G\to K$ to the based map
		\[
		\vertex_\bbY(f)\colon\vertex(K)_+ \rightarrow \vertex(G)_+
		\]
		defined by the rule
		\begin{equation}\label{definition of partial map}
		\vertex_\bbY(f)(v) = w \qquad \text{if and only if} \qquad v \in \vertex(f_1(w))
		\end{equation}
		and otherwise $\vertex_\bbY(f)(v)=*$.
	\end{itemize}
	We will also write $\vertex_\bbY$ for the composite $\bbY\to \pfinset^\op\simeq \pfinsetskel^\op$.
\end{definition}
We now want to check that $\vertex_\bbY$ is indeed a functor. First note that since $f$ is a graphical map, there is \emph{at most one} $w$ so that $v \in \vertex({f_1(w)})$, hence this is a well-defined map.

Suppose that $g\colon H \to G$ is another graphical map; let us verify that $\vertex_\bbY(fg) = \vertex_\bbY(g) \vertex_\bbY(f)$.
We have $\vertex_\bbY(fg)(v) = w$ if and only if $v \in \vertex((fg)_1(w))$.
By definition of composition in $\bbY$,
\[
	\vertex((fg)_1(w)) = \coprod_{x \in \vertex(g_1(w))} \vertex(f_1(x));
\]
it follows that $v \in \vertex((fg)_1(w))$ if and only if $v\in \vertex(f_1(x))$ for some (unique) $x \in \vertex(g_1(w))$.
This of course happens if and only if $\vertex_\bbY(f)(v) = x$ and $\vertex_\bbY(g)(x) = w$.
Thus if $\vertex_\bbY(fg)(v)$ is in $\vertex(H)$, then so is $\vertex_\bbY(g)(\vertex_\bbY(f)(v))$, and we have the equality
\[
	\vertex_\bbY(fg)(v) = \vertex_\bbY(g)(\vertex_\bbY(f)(v)).
\]
To finish showing that $\vertex_\bbY(fg) = \vertex_\bbY(g)\vertex_\bbY(f)$, simply observe that $\vertex_\bbY(g)(\vertex_\bbY(f)(v))$ is in $\vertex(H)$ if and only if $\vertex_\bbY(f)(v)$ is in $\vertex(G)$ and $\vertex_\bbY(g)(\vertex_\bbY(f)(v))$ is in $\vertex(H)$, which in turn implies that $\vertex_\bbY(fg)(v)$ is in $\vertex(H)$.

\begin{proposition}\label{prop active inert bbY}
	The functor $\vertex_\bbY\colon \bbY\to \pfinset^\op$ preserves the active-inert factorization systems (see Example~\ref{ex: bbG}).
\end{proposition}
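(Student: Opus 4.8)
The plan is to follow the template of the proof of Proposition~\ref{prop active inert bbG}: since $\vertex_\bbY$ is already known to be a functor, and since the factorization system on $\bbY$ is the orthogonal pair $(\hryact,\hryint)$ of Theorem~\ref{theo Yfs}, it is enough to verify that $\vertex_\bbY$ sends active maps to active maps and inert maps to inert maps; the image of any active--inert factorization is then automatically one, by uniqueness. Recall from Example~\ref{ex: bbG} that a map of $\pfinset$ is active when only the basepoint maps to the basepoint and inert when every fiber over a non-basepoint is a singleton, and that a map of $\pfinset^\op$ is active (resp.\ inert) exactly when the underlying map of $\pfinset$ is active (resp.\ inert). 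So for $f\colon G\to K$ I must analyze the fibers of the based map $\vertex_\bbY(f)\colon \vertex(K)_+\to \vertex(G)_+$ of Definition~\ref{definition bbY to Fin}.

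For the active case, suppose $f_1(G)=K$ as in Definition~\ref{def intact Y}. Writing $H_w\coloneqq f_1(C_w)$ for $w\in\vertex(G)$, the structured subgraph $f_1(G)$ is realized as the graph substitution $G\{H_w\}_{w\in\vertex(G)}$ by condition~\ref{alternate def whole graph} of Remark~\ref{remark alternative characterization of morphisms}, and graph substitution decomposes its vertex set as $\coprod_{w}\vertex(H_w)$. Because $f_1(G)=K$, every $v\in\vertex(K)$ then lies in exactly one $\vertex(H_w)$, so the rule \eqref{definition of partial map} gives $\vertex_\bbY(f)(v)=w\neq *$. Hence $\vertex_\bbY(f)^{-1}(*)=\{*\}$, i.e.\ $\vertex_\bbY(f)$ is active.

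For the inert case, $f$ is, up to isomorphism, the inclusion of a structured subgraph $G\in\sub(K)$ as in Example~\ref{ex inert}, so $f_1$ is the inclusion $\sub(G)\subseteq\sub(K)$ and $f_1(C_w)=C_w$ for each $w\in\vertex(G)\subseteq\vertex(K)$. The defining condition $v\in\vertex(f_1(w))$ in \eqref{definition of partial map} therefore reduces to $v=w$, so each fiber $\vertex_\bbY(f)^{-1}(w)$ over a vertex $w\in\vertex(G)$ is the singleton $\{w\}$. This is precisely the condition that $\vertex_\bbY(f)$ be inert.

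The one point needing care---and the main obstacle---is the vertex decomposition $\vertex(f_1(G))=\coprod_{w}\vertex(f_1(C_w))$ invoked in the active case. This is the statement that $f_1$ applied to the whole graph is the graph substitution of the corolla-images $f_1(C_w)$, which I would take from the alternative description of morphisms in Remark~\ref{remark alternative characterization of morphisms}; alternatively it can be read off from the compatibility of $f_1$ with vertices recorded in the verification that $\vertex_\bbY$ is a functor (the computation of $\vertex((fg)_1(w))$). Everything else is a direct unwinding of Definition~\ref{definition bbY to Fin}.
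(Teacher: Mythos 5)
Your proposal is correct and follows essentially the same route as the paper: the active case rests on the decomposition $\vertex(K)=\vertex(f_1(G))=\coprod_{w\in\vertex(G)}\vertex(f_1(C_w))$ coming from graph substitution, and the inert case on the observation that the fibers $\vertex_\bbY(f)^{-1}(w)$ are singletons for structured-subgraph inclusions. The only difference is cosmetic — you justify the vertex decomposition explicitly via Remark~\ref{remark alternative characterization of morphisms}, where the paper simply asserts it.
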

\begin{proof}
	If $f\colon G\to K$ is an active map in $\bbY$ then $f(G) = K$.
	Therefore, we have
	\[
		\vertex(K) = \vertex(f(G)) = \coprod_{w\in \vertex(G)} \vertex(f_1(w))
	\]
	and $\vertex_\bbY(f)$ is active by construction.
	 If $f$ is inert then it easily follows from the definition of $\vertex_\bbY(f)$ that for a subgraph we have $|\vertex(f_1(w))| = 1$ for every $w\in \vertex(G)$.
	In other words, the functor $\vertex_\bbY\colon \bbY\to \pfinset^\op$ also preserves inert morphisms.
\end{proof}

\subsection{From connected level graphs to connected acyclic graphs}\label{section level to acyclic}

Let $\levelgconn$ denote the full subcategory of $\levelg$ spanned by the connected level graphs.
The present goal is to define a functor $\tau\colon \levelgconn \to \bbY$.
On objects, $\tau$ simply forgets the level structure, that is, $\tau(G)$ is the directed graph from Lemma~\ref{lemma: level graph}.

\begin{lemma}\label{lemma: level subgraph}
If $G\in \levelgconn$, then any level subgraph (Definition~\ref{definition level subgraph}) is a structured subgraph (Definition~\ref{def structured subgraphs}) of $\tau(G)$.
\end{lemma}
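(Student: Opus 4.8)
The plan is to exhibit the level subgraph as a structured subgraph directly, by using the lifting-property characterization from Remark~\ref{rem convex}. Recall that an $(i,j)$-level subgraph of $G$ is an element $x\in G_{i,j}$, which via Remark~\ref{remark level sugraphs} determines a connected special functor $K\colon \scriptyell^{j-i} \to \finset$ whose underlying directed graph $\tau(K)$ sits inside $\tau(G)$. I first need to verify that this inclusion is an \emph{open} subgraph of $\tau(G)$: the edges of $\tau(K)$ are $\coprod_{i\leq k \leq j} \widetilde F_{k,k}$ and its vertices are $\coprod_{i < k \leq j} \widetilde F_{k-1,k}$, each sitting inside the corresponding sets for $\tau(G)$ via the pullback defining $\widetilde F$. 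Since the incidence maps $p,q$ for $\tau(K)$ are obtained by restricting those of $\tau(G)$ along the monomorphisms $\widetilde F_{k,k} \hookrightarrow G_{k,k}$, and the pullback along $\{x\}\to G_{i,j}$ is compatible with the structure maps, each vertex $w$ in $\tau(K)$ retains all of its inputs and outputs from $\tau(G)$; this is exactly openness.

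Next I would establish the structured (convex open) condition via the lifting criterion. Suppose we are given a commutative square of naïve morphisms
\[
\begin{tikzcd}
\downarrow_0 \amalg \downarrow_m \rar \dar & \tau(K) \dar \\
L \rar["b"] & \tau(G),
\end{tikzcd}
\]
where $L$ is a linear graph of height $m$, with the two loose edges $\downarrow_0, \downarrow_m$ landing on the input and output edges of $\tau(K)$ respectively. I must produce a lift $L \to \tau(K)$. The key observation is that the level structure on $G$ assigns to every edge of $\tau(G)$ a height $h_E$, and a directed path in $\tau(G)$ strictly increases this height; the image $b(L)$ is such a directed path, beginning at an edge of height-value recording membership in level $i$ (since its source loose edge lands in $\inp(\tau(K))$, whose edges all lie in $\widetilde F_{i,i} \subseteq G_{i,i}$) and ending at an edge landing in $\out(\tau(K)) \subseteq \widetilde F_{j,j}$. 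Because every intermediate edge and vertex of $b(L)$ has height strictly between these two values, and because $\widetilde F$ is \emph{defined} as the full preimage (pullback) of $\{x\}$ under the maps $G_{k,\ell} \to G_{i,j}$, every edge and vertex of $b(L)$ maps to $x$ and hence lies in $\tau(K)$. This shows $b$ factors through $\tau(K)$, giving the desired lift, which is unique since $\tau(K)\to\tau(G)$ is a monomorphism.

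The content is therefore to verify the claim that \emph{every edge and vertex along the path $b(L)$ maps to $x\in G_{i,j}$}. Here is where the special-functor structure does the real work: for an edge $e$ of $b(L)$ at height $k$ with $i\leq k\leq j$, the zig-zag of structure maps $G_{k,k}\to G_{k,j}\leftarrow G_{i,j}$ (or the analogous composite landing in $G_{i,j}$) is what governs membership in $\widetilde F_{k,k}$, and I must check that the path's endpoints being in the fiber over $x$ forces the interior to be as well. This follows because the two endpoints lie in $\widetilde F_{i,i}$ and $\widetilde F_{j,j}$, which both map to $x$ under the canonical maps to $G_{i,j}$, and the connectedness of the path together with functoriality of $G$ propagates the value $x$ along the path --- precisely the mechanism by which $K$ was recognized as connected via $K_{0,j-i}=\{x\}$ in Remark~\ref{remark level sugraphs}.

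\textbf{The main obstacle} I anticipate is making the height/path argument fully rigorous at the level of the naïve morphism $b\colon L \to \tau(G)$, rather than just for the edge set: one must track that $b$ sends the vertices of $L$ (not merely its edges) into $\vertex(\tau(K))$, and that the input/output incidence at each such vertex is respected, so that $b$ genuinely factors as a naïve morphism $L\to \tau(K)$ and not merely as a map of underlying edge-and-vertex sets. The alternative, cleaner route --- and the one the proof-reference to the proof of this lemma in the excerpt hints at via the phrase ``see the proof of Lemma~\ref{lemma: level subgraph}'' attached to the naïve-morphism characterization of structured subgraphs --- would be to realize $\tau(K) \hookrightarrow \tau(G)$ directly as a restriction of a naïve morphism and invoke Remark~\ref{rem convex}, sidestepping any explicit graph-substitution data. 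I expect the level-indexed pullback description of $\widetilde F$ to make this factorization essentially formal once the right diagram is drawn, so the difficulty is bookkeeping rather than conceptual.
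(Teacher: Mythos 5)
Your proof takes essentially the same route as the paper's: establish that $\tau(K)$ is an open subgraph from the pullback definition of $\widetilde F$, then verify the lifting criterion of Remark~\ref{rem convex} by observing that the linear graph lands in consecutive levels of $G$ all lying in $[i,j]$ and that the fiber-over-$x$ condition propagates along the path via the structure maps to $G_{i,j}$. One correction: in the lifting problem of Remark~\ref{rem convex} the boundary constraint is that $\downarrow_0,\downarrow_m$ hit $\inp(L)$ and $\out(L)$, not $\inp(\tau(K))$ and $\out(\tau(K))$ --- the top map may send them to arbitrary edges of $\tau(K)$, so the path runs between levels $p$ and $p+m$ with $i\le p\le p+m\le j$ rather than from level $i$ to level $j$; your propagation argument applies unchanged in that generality, so nothing breaks.
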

\begin{proof}
Suppose that $x\in G_{i,j}$ is a level subgraph.
As in Remark~\ref{remark level sugraphs}, let $\widetilde H\colon\scriptyell^n_{i,j} \to \finset$ be given by pullbacks
\begin{equation}\label{pullback square for lemma level sub}
\begin{tikzcd}
\widetilde H_{k,\ell} \rar \dar &  G_{k,\ell} \dar \\
\{x\} \rar &  G_{i,j}
\end{tikzcd}
\end{equation}
and let $H$ be the composite $\scriptyell^{j-i} \overset\cong\to \scriptyell^n_{i,j} \to \finset$ with $\widetilde H_{k,\ell} = H_{k-i,\ell-i}$.
As this is defined by pullbacks, $\tau(H)$ is automatically an open subgraph of $\tau(G)$.
Remark~\ref{rem convex} implies that it suffices to construct a lift $k$ for any commutative square
\[
\begin{tikzcd}
\downarrow_0 \amalg \downarrow_m \rar{a} \dar & \tau(H) \dar \\
L \rar{f} \arrow[ur, dotted, "k" description] & \tau(G)
\end{tikzcd}
\]
of na\"ive morphisms of graphs (that is, in $\xFun(\mathscr{G}, \finset)$), where $L$ is a linear graph of height $m\geq 0$ with $\edge(L) = \{ \downarrow_0, \dots, \downarrow_m \}$ and $\vertex(L) = \{ \bullet_1, \dots, \bullet_m \}$.
When $m=0$ this is automatic, as $\tau(H) \to \tau(G)$ is a monomorphism.

There is a $p$ with $f(\downarrow_w) \in G_{p+w, p+w}$, $f(\bullet_w) \in G_{p+w-1, p+w}$, 
and $i\leq p$, $p+m \leq j$.
In $G$, we have commutative diagrams
\[ \begin{tikzcd}[sep=small]
\{ f(\downarrow_w) \}  \rar \dar[hook] & \{ f(\bullet_{w+1}) \} \dar[hook] & & \{f(\bullet_u)\} \dar[hook] & \{f(\downarrow_u)\} \dar[hook] \lar \\
G_{p+w, p+w} \rar & G_{p+w,p+w+1} \rar & G_{i,j} & G_{p+u-1,p+u} \lar & G_{p+u,p+u} \lar
\end{tikzcd} \]
for $0\leq w \leq m-1$ and $1\leq u \leq m$. 
Since $f(\downarrow_0)$ maps to $x\in G_{i,j}$ by assumption, it follows that all of $f(\downarrow_w)$ and $f(\bullet_w)$ map to $x\in G_{i,j}$.
The vertical maps thus factor through the pullbacks from \eqref{pullback square for lemma level sub}, yielding the dashed maps in the following diagram.
\[
\begin{tikzcd}[column sep=tiny]
\{f(\downarrow_0)\} \arrow[dr] \dar[dashed] & & \{f(\downarrow_1)\} \arrow[dl]\dar[dashed] 
&[-1.6em] \cdots & {} \arrow[dr] & & \{f(\downarrow_m)\} \arrow[dl]\dar[dashed] 
 \\
H_{p-i,p-i} \arrow[dr]  \dar &  \{ f(\bullet_1) \}\dar[dashed]  & H_{p-i+1,p-i+1} \arrow[dl] \dar 
 & & {} \arrow[dr] & \{f(\bullet_m)\} \dar[dashed] & H_{p-i+m,p-i+m} \dar \arrow[dl] 
 \\
G_{p,p} \arrow[dr] & H_{p-i,p+1-i} \dar & G_{p+1,p+1} \arrow[dl]
&  \cdots & {}  \arrow[dr] & H_{p-i+m-1,p-i+m} \dar & G_{p+m,p+m} \arrow[dl]
 \\
& G_{p,p+1} & &  & & G_{p+m-1,p+m}
\end{tikzcd}
\]
This determines the na\"ive morphism $k \colon L \to \tau(H)$. 
By uniqueness of the map to the pullback, we have $k(\downarrow_0) = a(\downarrow_0)$ and $k(\downarrow_m) = a(\downarrow_m)$, hence $k$ is the desired lift.
\end{proof}

We now want to construct a functor $\tau\colon \levelgconn\to \bbY$ which is on objects is taking the underlying directed graph (see Lemma~\ref{lemma: level graph}).
To define $\tau$ on morphisms, note that a morphism in $\levelgconn$ lying over $\alpha: [n] \to [m]$ consists of the following data:
\begin{enumerate}
 	\item For each $0 \leq i \leq n$, a monomorphism $G_{i,i} \to H_{\alpha(i), \alpha(i)}$; these assemble into a function $\edge(\tau(G)) \to \edge(\tau(H))$.
 	\item If $1 \leq i \leq n$, a monomorphism $G_{i-1,i} \to  H_{\alpha(i-1), \alpha(i)}$.
In light of the previous lemma, these assemble into a function $\vertex(\tau(G)) \to \sub(\tau(H))$.
\end{enumerate}
As a provisional definition, we would like $\tau(G\to H)$ to be specified by the above data.
Let us first check that this is plausible.
From the definition of morphism in $\levelg$, the following diagrams are pullbacks
\[
\begin{tikzcd}
G_{i,i} \rar[hook] \dar & H_{\alpha(i),\alpha(i)} \dar & G_{i,i} \rar[hook] \dar & H_{\alpha(i),\alpha(i)} \dar \\
G_{i,i+1} \rar[hook] &  H_{\alpha(i),\alpha(i+1)} & G_{i-1,i} \rar[hook] &  H_{\alpha(i-1),\alpha(i)}
\end{tikzcd}
\]
whenever $i$ is in the appropriate range.
In particular, for every $v\in\vertex(G)$, there exists $i$ such that $\inp(v)\simeq \{v\}\times_{G_{i,i+1}} G_{i,i} \simeq \{v\}\times_{H_{\alpha(i),\alpha(i+1)}} H_{\alpha(i),\alpha(i)}$ and similarly for $\out(v)$. Hence, the diagram
\[
\begin{tikzcd}
\lists (\edge(G))  \dar["\lists(-)"] &\vertex(G)  \dar \lar["\inp" swap] \rar["\out"]& \lists (\edge(G))  \dar["\lists(-)"] \\
\lists (\edge(H))  & \sub(H) \lar["\inp" swap] \rar["\out"] & \lists (\edge(H))
\end{tikzcd}
\]
from Remark~\ref{remark alternative characterization of morphisms} commutes.

\begin{proposition}
\label{proposition functor l to y}
If $f\colon G\to H$ is a morphism in $\levelgconn$ lying over $\alpha\colon [n] \to [m]$, then the pair $\edge(\tau(G)) \to \edge(\tau(H))$, $\vertex(\tau(G)) \to \sub(\tau(H))$ from above indeed constitutes a morphism in $\bbY$.
\end{proposition}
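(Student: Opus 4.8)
The plan is to verify the conditions of the alternative description of graphical maps recorded in Remark~\ref{remark alternative characterization of morphisms}. The discussion immediately preceding the proposition already checks the ports condition \eqref{definition complete morphism preservation of ports} for the pair consisting of $\edge(\tau(G))\to\edge(\tau(H))$ and $\vertex(\tau(G))\to\sub(\tau(H))$, so it remains only to establish condition \eqref{alternate def whole graph}: that the induced étale map $\tau(G)\{H_v\}_{v\in\vertex(\tau(G))}\to\tau(H)$ is convex open, where $H_v\in\sub(\tau(H))$ is the level subgraph of $H$ determined by the image of $v$ (a structured subgraph by Lemma~\ref{lemma: level subgraph}).

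First I would cut the problem down to a single level subgraph by factoring $f$. By Lemma~\ref{lem Lfs} we may write $f$ as an active map $a\colon G\to K$ followed by an inert map $\iota\colon K\to H$ in $\levelg$, and by Lemma~\ref{lemma general level graph factorization systems} the intermediate graph $K$ is again connected. Write $\alpha\colon[n]\to[m]$ for the map underlying $f$, let $[p]$ be the height of $K$, and let $\gamma$ be the active factor of $\alpha$. Connectedness together with the active map gives $K_{0,p}\cong G_{0,n}\cong\{\ast\}$ (Lemma~\ref{lemma F0n connected}). Comparing with the pullback construction used in the proof of Lemma~\ref{lem Lfs}, I would identify $\tau(K)$ with the level subgraph of $H$ attached to the unique element of $H_{\alpha(0),\alpha(n)}$, exactly as in Remark~\ref{remark level sugraphs}; Lemma~\ref{lemma: level subgraph} then shows that $\tau(K)\strsub\tau(H)$ is a structured subgraph.

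The remaining, and main, step is to show that the substitution map is an isomorphism onto $\tau(K)$. Since $a$ is active, every comparison map $G_{i,j}\to K_{\gamma(i),\gamma(j)}$ is a bijection, so $a$ is in particular a bijection on edge sets and on vertex sets. Under the inclusion $\tau(K)\subseteq\tau(H)$, each $H_v$ corresponds to the level subgraph $H'_v$ of $K$ attached to the image of $v$. The crux is to check that the family $\{H'_v\}_{v\in\vertex(\tau(G))}$ tiles $\tau(K)$: their vertex sets partition $\vertex(\tau(K))$, and gluing them along the edges coming from the various $G_{i,i}$ recovers every edge of $\tau(K)$, so that $\tau(G)\{H'_v\}\to\tau(K)$ is an isomorphism of graphs. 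I expect the real effort to lie here, as it requires tracking the level bookkeeping across the active map and confirming that no vertex or edge of $K$ is omitted or double-counted; the boundary-preserving property of $\gamma$ together with the bijectivity of the comparison maps is what should make this go through.

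With this in hand the conclusion is immediate: the étale map $\tau(G)\{H_v\}\to\tau(H)$ is, up to isomorphism, the inclusion $\tau(K)\subseteq\tau(H)$, which is convex open by the identification of the previous paragraph. Hence condition \eqref{alternate def whole graph} holds, and combined with the ports condition \eqref{definition complete morphism preservation of ports} this exhibits the pair $(f_0,f_1)$ as a morphism of $\bbY$, as desired.
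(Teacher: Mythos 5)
Your strategy is genuinely different from the paper's and, I believe, viable. The paper does not factor $f$ at all: its proof applies Theorem~\ref{theorem on levelgconn to etale mono} to the top element $x\in G_{0,n}$, and that theorem is proved with properad-theoretic machinery — a properad map $\mathfrak{P}(G\{H_v\})\to\mathfrak{P}(H)$ with an injective vertex lift, the injectivity analysis of Lemma~\ref{lemma mix up each ways} and Corollary~\ref{cor vertex inclusion in out inclusion} to conclude the map is an étale monomorphism, and Lemma~\ref{lemma in and out determination} to identify its image with $\eta(x)$. That machinery exists precisely because an étale map out of a substitution need not be injective on edges (Example~\ref{ex etale not inj edges}), and because identifying the image of a monomorphism requires knowing that a connected open subgraph is determined by its boundary. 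Your reduction through the active--inert factorization of Lemma~\ref{lem Lfs} elegantly sidesteps both issues: once $\tau(G)\{H'_v\}\to\tau(K)$ is known to be an isomorphism, the composite with the structured-subgraph inclusion $\tau(K)\hookrightarrow\tau(H)$ (Lemma~\ref{lemma: level subgraph}) is automatically convex open, and no injectivity or image-identification lemmas are needed. What each approach buys: the paper's Theorem~\ref{theorem on levelgconn to etale mono} is stated for an arbitrary level subgraph $x\in G_{i,j}$, not just the top one, whereas your argument only addresses the top element — which is all the proposition needs, given Remark~\ref{remark alternative characterization of morphisms}\eqref{alternate def whole graph}.

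The gap is that the one step carrying all the content — that $\tau(G)\{H'_v\}\to\tau(K)$ is an isomorphism — is asserted with a plausible plan but not executed, as you yourself acknowledge. Three points in particular need to be written out. First, the bijections $\inp(v)\cong\inp(H_v)$ and $\out(v)\cong\out(H_v)$ used to form the substitution must be shown to be those induced by the cartesian squares of Definition~\ref{def M}\eqref{M def cartesian}; otherwise the substitution is not canonically defined. Second, the tiling argument as you state it implicitly assumes $\gamma$ (equivalently $\alpha$) is injective; when $\gamma(i)=\gamma(i+1)$ the corresponding $H'_v$ are single edges, the substitution collapses $(1,1)$-vertices and identifies $G_{i,i}$ with $G_{i+1,i+1}$ inside $K_{\gamma(i),\gamma(i)}$, so the edge count is a quotient of $\coprod_i G_{i,i}$ rather than a coproduct, and the bookkeeping must account for this. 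Third, a cosmetic slip: $\tau(K)$ is the level subgraph of $H$ attached to the \emph{image under $\eta_{0,n}$ of the unique element of $G_{0,n}$}, not to ``the unique element of $H_{\alpha(0),\alpha(n)}$'' — that set is generally not a singleton. None of these is fatal, but until the tiling isomorphism is actually proved the proposal is an outline rather than a proof.
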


The proof this proposition is rather involved, utilizing the equivalence of $\bbY$ with the graphical category $\hryGamma$ from \cite{hrybook} (see Theorem~\ref{theorem upsilon equivalent definitions}).
As the methods used are rather different than what we are dealing with otherwise, we have separated this proof out into Appendix~\ref{appendix proof l to y}.

\begin{corollary}
\label{corollary functor l to y}
The assignment $\tau \colon \levelgconn \to \bbY$ is a functor.
\end{corollary}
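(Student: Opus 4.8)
The plan is to lean on Proposition~\ref{proposition functor l to y}, which already supplies the only substantive point: that $\tau$ carries each morphism of $\levelgconn$ to an honest morphism of $\bbY$. Granting this, the remaining content of the corollary is the purely formal verification that $\tau$ preserves identities and composition. The decisive tool here is the fact, noted just after Definition~\ref{definition complete morphism} as a consequence of Theorem~\ref{theorem upsilon equivalent definitions} and \cite[Corollary 6.62]{hrybook}, that a morphism in $\bbY$ is completely determined by its edge component $f_0$. Thus in each case it suffices to compare edge components, and all of the subgraph data takes care of itself.

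First I would dispose of the object assignment and identities. On objects $\tau(G)$ is the underlying directed graph of Lemma~\ref{lemma: level graph}, so nothing is required there. For the identity $\id_G$ lying over $\id_{[n]}$, each structure map $G_{i,i}\to G_{i,i}$ is the identity, so the induced edge function $\edge(\tau G) = \coprod_i G_{i,i} \to \coprod_i G_{i,i} = \edge(\tau G)$ is the identity. As the identity morphism of $\tau(G)$ in $\bbY$ also has identity edge component, and morphisms of $\bbY$ are determined by $f_0$, we conclude $\tau(\id_G) = \id_{\tau(G)}$. For composition, I would take $f\colon G\to H$ over $\alpha\colon [n]\to[m]$ and $g\colon H\to K$ over $\beta$; the composite $g\circ f$ lies over $\beta\alpha$, and its edge component at level $i$ is the composite $G_{i,i}\to H_{\alpha(i),\alpha(i)}\to K_{\beta\alpha(i),\beta\alpha(i)}$ of the structure maps of $f$ and $g$. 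Assembling over all $i$, the edge function of $\tau(g\circ f)$ equals $g_0\circ f_0$, which by the description of composition in $\bbY$ (Definition~\ref{def Y}) is exactly the edge component of $\tau(g)\circ\tau(f)$. Invoking edge-determination once more yields $\tau(g\circ f)=\tau(g)\circ\tau(f)$.

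The main obstacle to proving that $\tau$ is a functor has therefore already been absorbed into Proposition~\ref{proposition functor l to y}, whose proof is deferred to Appendix~\ref{appendix proof l to y}; once one knows $\tau$ lands in $\bbY$, functoriality is immediate from the edge-determination property. The only point demanding any care is to confirm that the edge assignment $G\mapsto \edge(\tau G)=\coprod_i G_{i,i}$ is visibly transitive on morphisms, but this is nothing more than the transitivity of the level-wise monomorphisms $G_{i,i}\to H_{\alpha(i),\alpha(i)}$ along composable $\alpha$ and $\beta$, which holds by naturality of the defining data.
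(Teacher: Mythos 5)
Your argument is correct and follows the same route as the paper: both reduce functoriality to the faithfulness of the edge functor $\edge\colon\bbY\to\Set$ (\cite[Corollary 6.62]{hrybook}) and then observe that the level-wise edge assignment $G\mapsto\coprod_i G_{i,i}$ is visibly functorial. Your additional explicit check of identities is harmless and implicit in the paper's version.
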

\begin{proof}
We wish to show that $\tau(f)\tau(g) = \tau(fg)$ whenever $f$ and $g$ are composable morphisms in $\levelgconn$.
By Corollary 6.62 of \cite{hrybook}, the functor $\edge \colon \bbY \to \Set$ is faithful, so it is enough to show that $\edge(\tau(fg))$ is equal to $\edge(\tau(f)\tau(g)) = \edge(\tau(f)) \edge(\tau(g))$.
This follows because the assignment on objects $G \mapsto \edge(\tau(G))$ constitutes a functor $\levelgconn \to \Set$, so we have $\edge\tau(fg) = \edge\tau(f) \edge\tau(g)$.
\end{proof}

\begin{lemma}\label{lem factorization int el}
The functor $\tau\colon \levelgconn \to \bbY$ is compatible with the active-inert factorization systems.
Further, $\tau$ restricts to an isomorphism $\levelel \cong \bbY_\xel$.
\end{lemma}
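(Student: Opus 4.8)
The plan is to treat the two factorization classes separately---showing that $\tau$ sends active maps to active maps and inert maps to inert maps---and then to deduce compatibility of the factorizations (Lemma~\ref{lemma general level graph factorization systems} and Theorem~\ref{theo Yfs}) from uniqueness: factoring $f$ in $\levelgconn$ as $f=\iota a$ and applying $\tau$ produces an active-followed-by-inert factorization $\tau(f)=\tau(\iota)\tau(a)$ in $\bbY$, which must then be \emph{the} active-inert factorization of $\tau(f)$.

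For the active case I would first read off from Lemma~\ref{lemma: level graph} that a height $n$ level graph $G$ satisfies $\inp(\tau G)=G_{0,0}$ and $\out(\tau G)=G_{n,n}$. If $(\alpha,\eta)\colon G\to H$ is active then $\alpha$ is boundary preserving, so $\eta_{0,0}\colon G_{0,0}\to H_{0,0}$ and $\eta_{n,n}\colon G_{n,n}\to H_{m,m}$ are isomorphisms; these exhibit $\tau(f)$ as a bijection on inputs and on outputs. By the lemma following Definition~\ref{def intact Y}, a morphism of $\bbY$ that is bijective on boundary is active, so $\tau(f)$ is active.

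For the inert case, suppose $(\alpha,\eta)\colon G\to H$ is inert, so that $\alpha(i)=i+t$ for some constant $t$. Since $G$ is connected, $G_{0,n}$ is a single point (Lemma~\ref{lemma F0n connected}); set $x=\eta_{0,n}(\ast)\in H_{t,n+t}$. Taking $(k,\ell)=(0,n)$ in the cartesian squares of Definition~\ref{def M}\eqref{M def cartesian} identifies each $G_{i,j}$ with the pullback $\{x\}\times_{H_{t,n+t}}H_{i+t,j+t}$, so $G$ is exactly the level subgraph of $H$ determined by $x$ in the sense of Remark~\ref{remark level sugraphs}. By Lemma~\ref{lemma: level subgraph} this is a structured subgraph of $\tau(H)$, and since $\tau(f)_0$ is the inclusion $\edge(\tau G)\hookrightarrow\edge(\tau H)$ of the edges of this subgraph while graphical maps are determined by their action on edges (\cite[Corollary 6.62]{hrybook}), $\tau(f)$ coincides with the canonical inclusion of Example~\ref{ex inert}. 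Remark~\ref{rem inert=convex_open} then identifies $\tau(f)$ as inert.

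For the final assertion I would verify that $\tau$ is a bijection on objects and on inert hom-sets between elementary graphs. On objects $\tau(\xfe)$ is the edge and $\tau(\xfc_{p,q})$ is the corolla with $p$ inputs and $q$ outputs, giving a bijection onto the elementary objects of $\bbY$; and since $\tau$ preserves inert maps the restriction lands in $\bbY_\xel$. The hom-sets then match by inspection: there are no inert maps from a corolla to an edge on either side; the inert maps $\xfe\to\xfc_{p,q}$ are indexed on the left by a choice of level ($0$ or $1$) together with an element of the corresponding edge set, and on the right by the edge-subgraphs $\downarrow_e$, with $\tau$ pairing them; and the only inert endomorphisms of a corolla are the automorphisms in $\Sigma_p\times\Sigma_q$, again preserved by $\tau$. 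Hence $\tau|_{\levelel}\colon\levelel\to\bbY_\xel$ is bijective on objects and morphisms, i.e.\ an isomorphism. I expect the inert case to be the main obstacle: the real content is recognizing the domain of an inert map, after forgetting levels, as a genuine \emph{structured} (convex open) subgraph rather than merely an open one, which is precisely what the cartesian conditions of Definition~\ref{def M} together with Lemma~\ref{lemma: level subgraph} deliver.
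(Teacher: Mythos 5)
Your proof is correct, and its overall shape (show $\tau$ preserves actives, show it preserves inerts, then check the elementary subcategories directly) matches the paper's. The inert case is essentially the paper's argument, with the welcome extra detail that the cartesian squares of Definition~\ref{def M}\eqref{M def cartesian} taken over $(0,n)$ literally exhibit $G$ as the level subgraph of $H$ attached to $x=\eta_{0,n}(\ast)$, and that faithfulness of the edge functor (\cite[Corollary 6.62]{hrybook}) pins $\tau(f)$ down as the canonical inclusion of Example~\ref{ex inert}. Where you genuinely diverge is the active case: the paper argues that $\tau(f)_1(G)=H$ by connectivity together with the fact that $\vertex_{\levelg}(f)$ is active in $\pfinset^\op$ (Proposition~\ref{prop active inert bbG}), i.e.\ every vertex of $H$ is hit by some $\tau(f)_1(v)$; you instead read off $\inp(\tau G)=G_{0,0}$ and $\out(\tau G)=G_{n,n}$ from Lemma~\ref{lemma: level graph}, note that $\eta_{0,0}$ and $\eta_{n,n}$ are bijections onto the boundary of $\tau H$, and invoke the boundary-bijection criterion for activity (the unnamed lemma after Definition~\ref{def intact Y}, which rests on uniqueness of subgraphs with a given boundary). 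Both routes lean on previously established facts of comparable weight; yours stays closer to the raw level-graph data, while the paper's runs parallel to Proposition~\ref{prop active inert bbY}. Your explicit count of inert hom-sets between elementaries supplies detail that the paper compresses into ``the definition of $\tau$ implies.''
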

\begin{proof}
	We first show that $\tau$ preserves active-inert factorization systems.
	Let $f\colon G\to H$ be an active morphism in $\levelgconn$. We need to show that $\tau(f)_1(G)=H$. This is clear if $H$ consists of just one edge. For a non-trivial graph $H$ the connectivity implies that $\tau(f)_1(G)\neq H$ can only happen if there is a vertex $w\in \vertex(H)$ such that there is no $v\in \vertex(G)$ with $w\in \vertex(\tau(f)_1(v))$. But this case cannot occur due to the fact that $\vertex_{\levelg}(f)$ is active in $\pfinset^\op$ by Lemma~\ref{prop active inert bbG}.
	
	Suppose $f\colon G\to H$ is inert in $\levelgconn$ lying over an interval inclusion $\alpha\colon [m]\to [n]$.
	Then the monomorphisms $G_{i,j}\hookrightarrow H_{\alpha(i), \alpha(i)-i+j}= H_{\alpha(i), \alpha(j)}$ for every $0\leq i\leq j\leq m$ and the connectivity of $G$ show that $G$ is a level subgraph of $H$. By Lemma~\ref{lemma: level subgraph}, $\tau(f)\colon \tau(G)\to \tau(H)$ is the inclusion of a structured subgraph and inert by definition.
	
	The definition of $\tau$ implies that the restriction $\levelel = \levelg_{\name{c,el}} \isoto \bbY_\xel$ is an equivalence.
It is an isomorphism as both $\levelg$ and $\bbY$ are skeletal categories.
\end{proof}

\section{The Segal condition and an algebraic version of enriched \texorpdfstring{$\infty$}{∞}-properads}\label{sec segal presheaves}

In this section, we give a preliminary version of the notion of enriched $\infty$-properad.
We first recall in \S\ref{subsection hcas} a general framework for Segal objects.
This is applied in \S\ref{subsec DFV} to give and compare definitions for `algebraic' $\xV$-enriched $\infty$-properads.
For many purposes (in particular, for existence of certain adjoints), it is important to work with small symmetric monoidal $\infty$-categories $\xU$ rather than general symmetric monoidal $\infty$-categories $\xV$.
In \S\ref{subsec psh} we make precise how one can work in the small, rather than presentable, setting.

There is some overlap between the material in this section and that which will appear in the forthcoming \cite{patterns3}, but many of the constructions and results below depend on special properties of the category of level graphs $\levelg$. 
These will be important in the next section, where we take up the question of algebras over $\infty$-properads.

\subsection{Algebraic patterns and homotopy-coherent algebraic structures}\label{subsection hcas}
We have already encountered (Example~\ref{ex: bbG}, Example~\ref{ex simpfactsys}, Lemma~\ref{lem Lfs}, Theorem~\ref{theo Yfs}) inert-active factorization systems on several 1-categories
\[
\pfinset \simeq \pfinsetskel, \simp^\op, \levelg^\op, \bbY^\op
\]
as well as several restrictions of the latter two (Lemma~\ref{lemma general level graph factorization systems} and Remark~\ref{rem inert=convex_open}).
By declaring certain objects to be `elementary objects,' these categories and factorization systems allow us to define Segal objects.
As the remainder of the paper deals with $\infty$-categories, we recall the definition of a factorization system in that context.

\begin{definition}\label{definition inf cat factorization system}
	Let $\xcc$ be an $\infty$-category and let $(\xcc^{L}, \xcc^{R})$ be a pair of wide subcategories of $\xcc$. Suppose $\Fun_{L,R}(\Delta^{2},\xcc)$ denotes the full subcategory of $\Fun(\Delta^{2},\xcc)$ spanned by those diagrams $\sigma$ such that $\sigma|_{\Delta^{\{0,1\}}}$ is in $\xcc^{L}$ and $\sigma|_{\Delta^{\{1,2\}}}$ is in $\xcc^{R}$. Then we say $\xcc$ has a \emph{factorization system} if the functor
	\[ \Fun_{L,R}(\Delta^{2},\xcc) \to
	\Fun(\Delta^{1},\xcc) \] given by restriction to $\Delta^{\{0,2\}}$ is an equivalence.
\end{definition}

The following is our first important example of factorization system that is not 1-categorical in nature.

\begin{example}[Symmetric monoidal $\infty$-categories]\label{example fact system smc}
Recall that a symmetric monoidal \icat{} is a cocartesian fibration $\xV^\otimes \to \pfinsetskel$ so that $\prod \rho^i_! \colon \xV^{\otimes}_n \to (\xV^{\otimes}_1)^{\times n}$ is an equivalence, where $\rho^i_!\colon \xV^\otimes_n\to \xV^\otimes_1$ denotes the cocartesian pushout along the inert map $\rho^i\colon \langle n\rangle \to \langle 1\rangle$ determined by $\rho^i(i)=1$.
Any symmetric monoidal $\infty$-category $\xV^{\otimes} \to \pfinsetskel$ has a canonical inert-active factorization system by \cite[Proposition 2.1.2.4]{ha}. 
Here a map in $\xV^\otimes$ is called inert if it is cocartesian and lies over an inert map in $\pfinsetskel$, and active if it lies over an active map in $\pfinsetskel$. 
\end{example}

The categories from Section~\ref{section categories of directed graphs}, along with several others, fit into a general framework developed by the first author with Haugseng in \cite{patterns1}.

\begin{definition}\label{def algpatt}
Let $\mathcal Q$ be an $\infty$-category with an active-inert factorization system, and let $\mathcal Q^\op$ be its opposite, which comes with an inert-active factorization system.
Fix a full subcategory $\mathcal Q^\op_\xel$ of $\mathcal Q^\op_{\name{int}}$ whose objects are called elementary objects. 
In \cite[Definition 2.1]{patterns1} this data is called an \emph{algebraic pattern}.
Following \cite[Definition 2.7]{patterns1} we can define \emph{Segal $\mathcal{Q}$-spaces} to be functors $F\colon \mathcal Q^\op \to \xS$ such that the canonical map 
	\[F(X)\to \lim_{E\in (\mathcal Q^{\op}_{\xel})_{X/}}	F(E)\]
	is an equivalence for each $X\in \mathcal Q^\op$. 
We write $\Seg(\mathcal Q)$ for the full subcategory of $\Pre(\mathcal Q) = \xFun(\mathcal Q^\op, \xS)$ spanned by Segal $\mathcal Q$-spaces (this is denoted by $\Seg_{\mathcal{Q}^\op}(\xS)$ in \cite{patterns1}).
\end{definition}

	In other words, $F$ is a Segal $\mathcal Q$-space if and only if the restriction $F|_{\mathcal Q^\op_{\name{int}}}$ is the right Kan extension of $F|_{\mathcal Q^\op_\xel}$ along the inclusion $\mathcal Q^{\op}_{\xel}\hookrightarrow \mathcal Q^{\op}_{\xint}$ (see \cite[Lemma 2.9]{patterns1}). 
	We write $\Seg(\mathcal Q)$ for the full subcategory of $\Pre(\mathcal Q) = \xFun(\mathcal Q^\op, \xS)$ spanned by Segal $\mathcal Q$-spaces (this is denoted by $\Seg_{\mathcal{Q}^\op}(\xS)$ in \cite{patterns1}).
	The description of Segal $\mathcal Q$-spaces as right Kan extensions and \cite[Proposition 4.3.2.15]{ht} imply that $\Seg(\mathcal Q_{\xint})\simeq \Pre(\mathcal Q_{\xel})$.
	
	The most prominent example of an algebraic pattern is $\mathcal Q=\simp$ which has an active-inert factorization system where the inert morphisms are interval inclusions and active morphisms are boundary preserving maps (see Example~\ref{ex simpfactsys}). 
	By choosing the elementary objects to be $[1]$ and $[0]$, we see that $\simp^\op$ admits the structure of an algebraic pattern and Segal $\simp$-spaces are exactly Segal spaces in the sense of \cite{Rezk}, which turn out to model $\infty$-categories. 
	The basic idea of the construction of Segal $\mathcal Q$-presheaves is that the elementary objects play the role of building blocks of an algebraic structure while the Segal condition, i.e.\ the requirement that the canonical maps $F(X)\to \lim_{E\in (\mathcal Q^\op_{\xel})_{X/}} F(E)$ are equivalences, says that every space $F(X)$ is given by gluing these building blocks along inert morphisms. 
	The algebraic operations such as compositions are induced by active morphisms. 
	In general, we observe that every $\infty$-category with an inert-active factorization system and elementary objects defines a kind of homotopy-coherent algebra. 
	As we will see in the next section this construction recovers the notion of $\infty$-properads.
	
	Following the idea that the algebraic structure of objects in $\Seg(\mathcal Q)$ are controlled by inert/active morphisms in $\mathcal Q^\op$, it is natural that a functor $f\colon \mathcal Q\to \mathcal R$ induces a functor $\Seg(\mathcal R)\to \Seg(\mathcal Q)$ (by precomposition) when $f$ is compatible with the additional data.
	A precise characterization can be found in \cite[Lemma 4.5]{patterns1}, which includes that $f$ must preserve both the factorization system and the elementary objects.

\subsection{Segal presheaves and decorated graph categories}\label{subsec DFV}
In this section, we introduce `$\xV$-decorated graph categories' and use the framework from \S\ref{subsection hcas} to give a first model for enriched $\infty$-properads (Definition~\ref{def cts Seg psh}). 
We also provide another model in terms of algebras over categories of graphs whose edges are decorated by elements of a space (Definition~\ref{def alg}); these two approaches are equivalent (Theorem~\ref{thm:PCSisAlgLT}).

It is a theorem of the first author, Robertson, and Yau that the full subcategory of $\xFun(\bbY^{\op},\Set)$ on the Segal objects is equivalent to the category $\properads$ (see \cite{hrybook}).
We thus regard a Segal object in $\Pre(\bbY) = \xFun(\bbY^{\op},\xS)$ to be, at least as a first approximation, a reasonable notion of an $\infty$-properad.

We will shortly unravel the following definition (see Remark~\ref{remark equiv segal and local}).

\begin{definition}
\label{def segal upsilon and ell}
	We now introduce a few algebraic structures given by the construction described in Definition~\ref{def algpatt}.
	\begin{enumerate}
		\item 
		We write $\bbY^{\op}_{\xel}$ for the full subcategory of $\hryint^{\op}$ of Theorem~\ref{theo Yfs} spanned by the corollas and the edge $\downarrow$.
		This yields 
		the $\infty$-category $\Seg(\bbY) \subseteq \xFun(\bbY^\op, \xS)$ of Segal $\bbY$-spaces.
		\item We write $\levelg^{\op}_{\xel}$ (resp.\ $\levelg^{\op}_{\name{c},\xel}$) for the full subcategory of $\levelg^{\op}_{\xint}$ (resp.\ $\levelg^{\op}_{\name{c},\xint}$) spanned by the elementary level graphs (Example~\ref{example: elementary graphs}).
		Note that $\levelg^{\op}_{\xel} = \levelg^{\op}_{\name{c},\xel}$.
		We let $\Seg(\levelg)$ and $\Seg(\levelgconn)$ denote the $\infty$-category of Segal $\levelg$-spaces and Segal $\levelgconn$-spaces.
	\end{enumerate}
\end{definition}

As we will see in the next section, the $\infty$-category of Segal $\bbY$-spaces is equivalent to that of Segal $\levelg$-spaces.

\begin{notation}[Graph categories {$\gc$}]\label{notation graph categories}
So far, we have seen several examples of categories of directed graphs which we will return to again and again.
In what follows, we will use a generic symbol $\gc$ for $\levelg$, $\bbY$, or any of the variations from Definition~\ref{def subcategories of levelg} and Definition~\ref{def Y}.
We refer generically to these types of categories as \emph{graph categories}.
The common features that we will utilize are the following:
\begin{itemize}
\item A factorization system $(\gcact, \gcint)$. See Lemma~\ref{lem Lfs}, Lemma~\ref{lemma general level graph factorization systems}, and Theorem~\ref{theo Yfs}.
\item A full subcategory $\gcel \subseteq \gcint$ of elementary graphs, whose objects are the corollas and the edge. We generically denote these by $\xfc_{p,q}$ and $\xfe$, respectively.
As an example, when $\gc = \bbY$, we take $\xfc_{p,q} \coloneqq C_{p,q}$ and $\xfe \coloneqq {\downarrow}$.
\item A vertex functor $\vertex_\gc \colon \gc \to \pfinset^\op \simeq \pfinsetskel^\op$ preserving the factorization systems. See Proposition~\ref{prop active inert bbG} and Proposition~\ref{prop active inert bbY}.
\item A canonical inclusion $\simp \to \gc$ as the linear level graphs. (This will become relevant only in Section~\ref{sec ffes}.)
\end{itemize}
The subcategories $\gcel$ are all isomorphic for $\gc \in \{\levelg, \levelgconn, \levelg_{\name{0-type}}, \levelg_{\name{sc}}, \bbY, \bbY_{\name{sc}} \}$; likewise, the subcategories $\gcel$ are all isomorphic for $\gc \in \{ \levelg_{\name{out}}, \levelg_{\name{out,c}}, \bbYout \}$.
Note that $\bbY_{\name{out,el}}$ is missing the objects $\xfc_{p,0}$ which are present in $\bbY_\xel$.
\end{notation}

\begin{remark}\label{rem algpattvariation} \quad
	\begin{enumerate}[label=(\roman*), ref={\roman*}]
		\item All the opposites of these graph categories are in particular algebraic patterns, but they come equipped with more structure.
		In particular, for each $\gc$ we have an \icat{} $\Seg(\gc) \subseteq \xFun(\gc^\op, \xS)$. 
		\label{rem algpattvariation more structure}
	\item The category $\kockgraphs$ from Definition~\ref{definition kockgraphs} is unfortunately not a graph category in this sense: it has only a weak factorization system, rather than an orthogonal one, and it does not admit a vertex functor (see Remark~\ref{remark no vertex map Gr} below).
	\item Corollas of the form $C_{0,n}$ or $C_{m,0}$ in $\bbY$ will admit many level graph structures (as in Remark~\ref{remark many heights}), though only one of those will be elementary in the sense of Example~\ref{example: elementary graphs}.
	It thus seems harmless to use $\xfc_{p,q}$ as a common notation for corollas.
	\end{enumerate}
\end{remark}

We do not pursue an abstract version of Notation~\ref{notation graph categories} here.

\begin{remark}[Spans of graph categories]\label{remark three contexts}
In \cite{ChuHaugseng}, the graph categories $\simp_{\finsetskel}^1, \simp_{\finsetskel},$ and $\bbO$ (all of whose objects are trees or forests) played a primary role.
Each of these three indexing categories is suitable to define (enriched) $\infty$-operads.
Indeed, these fit into a span $\simp_{\finsetskel} \leftarrow \simp_{\finsetskel}^1 \rightarrow \bbO$ of algebraic patterns respecting the extra structure specified in Notation~\ref{notation graph categories}.
Both of these functors induce equivalences at the level of Segal objects.

At a high level, much of the present paper is about extending constructions of \cite{ChuHaugseng} to the corresponding span $\levelg \leftarrow \levelgconn \rightarrow \bbY$ of graph categories.
The graph categories $\levelg$ and $\bbY$ each give a different approach to (enriched) $\infty$-properads, and this zig-zag allows us to compare them.
Likewise, we will utilize the spans $\levelg_{\name{0-type}} \leftarrow \levelg_{\name{sc}} \rightarrow \bbY_{\name{sc}}$ (for $\infty$-dioperads) and $\levelg_{\name{out}} \leftarrow \levelg_{\name{out,c}} \rightarrow \bbY_{\name{out}}$ (for $\infty$-output-properads), as well as the inclusions among these four spans.
For notational reasons we generally deemphasize the final two contexts.
\end{remark}

\begin{definition}[Segal cores]\label{segal cores}
Let $\gc$ be a graph category.
We will not distinguish between a graph $G\in \gc$ and its image in $\xFun(\gc^{\op},\xS)$ under the Yoneda embedding.
\begin{itemize}
\item If $G\in \gc$ is a directed graph, define the \emph{Segal core} to be \[ G_{\Seg} \coloneqq \colim_{H\in (\gc^{\op}_{\xel})_{G/}} H\] in $\xFun(\gc^{\op},\xS)$.
The map $G_{\Seg} \to G$ will be called a \emph{Segal core inclusion}.
\item A level graph $L\in \levelg$ will be called \emph{short} if the height of $L$ is $0$ or $1$.
A \emph{short Segal core inclusion} is just a Segal core inclusion $L_{\Seg} \to L$ where $L$ is a short level graph.
\end{itemize}
\end{definition}

\begin{remark}\label{remark equiv segal and local}
	It follows directly from the previous definition and the Yoneda embedding that an object $F\in \xFun(\gc^{\op},\xS)$ lies in $\Seg(\gc)$ if and only if $F$ is local with respect to the Segal core inclusions.
	The main advantage of working with $\levelg^\op$ instead of $\bbY^\op$ is that its ``rigid'' structure allows us to reformulate this description of Segal $\levelg$-spaces by rewriting the colimits $L_{\Seg}$ in various ways. 
	In Section~\ref{sec tensor product} we will use an alternative characterization of Segal $\levelg$-spaces, given in Proposition~\ref{propn:SegDFcond}, to construct the tensor product between $\infty$-properads and Segal spaces.
\end{remark}

\begin{example}
We emphasize that representable $\bbY$-presheaves do not necessarily possess the Segal property.
We saw, in Example~\ref{example of etale but not mono}, a pair of graphs $G$ and $K$ so that $\hom(G,K)$ is empty.
But in this same example, the set $\hom(G_{\Seg},K)$ is inhabited.
Indeed, there is a map
$C_{u_0} \amalg C_{u_1} \amalg C_{v_0} \amalg C_{v_1} \to K$
which factors through $G_{\Seg}$.
Hence $K$ is not local with respect to all Segal core inclusions. 
A similar phenomenon occurs in other settings where there is a distinction between the representable presheaf on a graph and the nerve of the free object generated by the graph (see, for example, \cite[Remark 5.10]{HackneyRobertsonYau:HCO}).
\end{example}

\begin{definition}[Segmentation map]\label{def segmentation map}
Suppose $L \colon \scriptyell^n \to \finset$ is a height $n$ level graph. 
For $1\leq i \leq n$, let $L^{\{i-1,i\}}$ be the composite 
\[ \scriptyell^1 \simeq \scriptyell^n_{i-1,i} \xrightarrow{L} \finset, \]
(see Definition~\ref{partial scriptyell}) that is, the restriction of $L$ to height $i$ vertices and their adjacent edges, and similarly for $L^{\{i\}} \colon \scriptyell^0 \simeq \scriptyell^n_{i,i} \xrightarrow{L} \finset$ ($0\leq i \leq n$).
Each of these objects admits an evident inert map in $\levelg$ with codomain $L$, and we define the \emph{segmentation map} associated to $L$ to be the morphism
\[
L|_{\Delta^n_{\Seg}} \coloneqq L^{\{0,1\}} \amalg_{L^{\{1\}}} L^{\{1,2\}} \amalg_{L^{\{2\}}} \cdots
			\amalg_{L^{\{n-1\}}} L^{\{n-1,n\}} \to L
\]
in $\xFun(\levelg^{\op},\xS)$.
In particular, when $n\leq 1$, the segmentation map is the identity.
\end{definition}

Observe that we do not define segmentation maps in the setting of $\levelgconn$-presheaves.
Indeed, if $L\in \levelgconn$, then the restricted graphs $L^{\{i-1,i\}}$ and $L^{\{i\}}$ are usually not connected (they will all be connected if and only if $L$ is a linear graph).
This problem disappears when working with the categories of disconnected level graphs $\levelg_{\name{0-type}}$ and $\levelg_{\name{out}}$ (Definition~\ref{def subcategories of levelg}).
In particular, a similar statement to the following holds when $\levelg$ is replaced by $\levelg_{\name{0-type}}$ or $\levelg_{\name{out}}$.

\begin{propn}\label{propn:SegDFcond}
Suppose $F \in \xFun(\levelg^{\op},\xS)$.
The following are equivalent:
\begin{enumerate}
\item $F$ is a Segal $\levelg$-space.\label{SegDFcond segal}
\item $F$ is local with respect to all Segal core inclusions $L_{\Seg} \to L$ (Definition~\ref{segal cores}).\label{segDFcond segal core}
\item $F$ is local with respect to the short Segal core inclusions (Definition~\ref{segal cores}) and the segmentation maps (Definition~\ref{def segmentation map}).\label{segDFcond short segmentation}
\end{enumerate}
\end{propn}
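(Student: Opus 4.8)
The plan is to deduce everything from the factorization of each Segal core inclusion through the source of the corresponding segmentation map. The equivalence \eqref{SegDFcond segal}$\Leftrightarrow$\eqref{segDFcond segal core} is already recorded in Remark~\ref{remark equiv segal and local}, where it is observed that $F$ is a Segal $\levelg$-space exactly when it is local with respect to all Segal core inclusions. So the real work is to prove \eqref{segDFcond segal core}$\Leftrightarrow$\eqref{segDFcond short segmentation}, and for this I would fix a level graph $L\colon \scriptyell^n \to \finset$ and exhibit, for each Segal core inclusion $L_{\Seg}\to L$, a canonical factorization
\[ L_{\Seg}\longrightarrow L|_{\Delta^n_{\Seg}} \longrightarrow L \]
in $\xFun(\levelg^\op,\xS)$ whose second arrow is the segmentation map of Definition~\ref{def segmentation map} and whose composite is the given Segal core inclusion.

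The heart of the argument is a combinatorial analysis of the colimit defining $L_{\Seg}$. Writing $L^{\{i\}}$ and $L^{\{i-1,i\}}$ for the short restrictions as in Definition~\ref{def segmentation map}, I would observe that the indexing category $(\levelg^\op_\xel)_{L/}$ of elementary graphs equipped with an inert map to $L$ has a transparent description: its objects are the edges of $L$ (one for each element of some $L_{i,i}$, $0\le i\le n$) and the corollas of $L$ (one for each element of some $L_{i-1,i}$, $1\le i\le n$), with a morphism from an edge to a corolla exactly when the edge is incident to the corresponding vertex. Grouping the corollas at level $i$ together with their adjacent edges at levels $i-1$ and $i$ recovers precisely the indexing category $(\levelg^\op_\xel)_{L^{\{i-1,i\}}/}$ for the short graph $L^{\{i-1,i\}}$; consecutive groups overlap exactly in the edges at level $i$, which form the indexing category for $L^{\{i\}}$. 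Consequently $(\levelg^\op_\xel)_{L/}$ is the gluing of the level-wise indexing categories along these (discrete) edge-level subcategories, and taking the colimit of representables yields an identification of $L_{\Seg}$ with the iterated pushout
\[ (L^{\{0,1\}})_{\Seg} \amalg_{(L^{\{1\}})_{\Seg}} (L^{\{1,2\}})_{\Seg} \amalg_{(L^{\{2\}})_{\Seg}} \cdots \amalg_{(L^{\{n-1\}})_{\Seg}} (L^{\{n-1,n\}})_{\Seg}. \]
Comparing this diagram termwise with the iterated pushout defining $L|_{\Delta^n_{\Seg}}$ (whose terms are the full representables $L^{\{i-1,i\}}$ and $L^{\{i\}}$ rather than their Segal cores), the short Segal core inclusions $(L^{\{i-1,i\}})_{\Seg}\to L^{\{i-1,i\}}$ and $(L^{\{i\}})_{\Seg}\to L^{\{i\}}$ assemble into a map of pushout diagrams inducing the desired first arrow $L_{\Seg}\to L|_{\Delta^n_{\Seg}}$.

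With the factorization in hand, I would finish by a saturation argument. Let $W_F$ be the class of morphisms $\varphi$ of $\xFun(\levelg^\op,\xS)$ that $F$ inverts, i.e.\ such that $\Map(-,F)$ carries $\varphi$ to an equivalence. Since $\Map(-,F)$ sends colimits of presheaves to limits of spaces, a map of diagrams that is a termwise member of $W_F$ induces a member of $W_F$ on colimits; in particular, if $F$ inverts all short Segal core inclusions then it inverts $L_{\Seg}\to L|_{\Delta^n_{\Seg}}$. The class $W_F$ also satisfies two-out-of-three. Thus, assuming \eqref{segDFcond segal core}, $F$ inverts the short Segal core inclusions and the composite $L_{\Seg}\to L$, hence inverts the segmentation map $L|_{\Delta^n_{\Seg}}\to L$, giving \eqref{segDFcond short segmentation}; conversely, assuming \eqref{segDFcond short segmentation}, $F$ inverts $L_{\Seg}\to L|_{\Delta^n_{\Seg}}$ (short cores) and $L|_{\Delta^n_{\Seg}}\to L$ (segmentation), hence inverts the composite, which is an arbitrary Segal core inclusion, giving \eqref{segDFcond segal core}.

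The main obstacle I anticipate is making the identification of $L_{\Seg}$ with the iterated pushout fully rigorous: one must check that the slice category $(\levelg^\op_\xel)_{L/}$ really is the gluing of the short slices along the edge-level subcategories (so that the colimit computing $L_{\Seg}$ splits as the claimed iterated pushout), and that the comparison map to $L|_{\Delta^n_{\Seg}}$ is exactly the levelwise Segal core inclusion. All of this is elementary bookkeeping with inert maps out of edges and corollas, but it is where the rigidity of $\levelg$ (as opposed to $\bbY$) is essential and must be used carefully.
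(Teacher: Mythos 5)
Your proposal is correct and follows essentially the same route as the paper: both factor the Segal core inclusion $L_{\Seg}\to L$ through the segmentation map via the identification of $L_{\Seg}$ with the iterated pushout of the short Segal cores $(L^{\{i-1,i\}})_{\Seg}$ along the $(L^{\{i\}})_{\Seg}$, observe that the comparison map to $L|_{\Delta^n_{\Seg}}$ is a (colimit of) short Segal core inclusions, and conclude by two-out-of-three. The combinatorial identification of $(\levelg^\op_{\xel})_{L/}$ that you flag as the delicate point is likewise left implicit in the paper's proof, so there is no substantive difference in rigor or strategy.
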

\begin{proof}
The first two are equivalent by Remark~\ref{remark equiv segal and local}.
If $L$ is a height $n$ level graph, then the Segal core inclusion factors as the following composite.
\begin{equation*}
\begin{tikzcd}
L_{\Seg} \rar{\simeq} \dar &  (L^{\{0,1\}})_{\Seg} \amalg_{(L^{\{1\}})_{\Seg}} \cdots \amalg_{(L^{\{n-1\}})_{\Seg}} (L^{\{n-1,n\}})_{\Seg} \dar \\
L & L^{\{0,1\}} \amalg_{L^{\{1\}}} \cdots
\amalg_{L^{\{n-1\}}} L^{\{n-1,n\}} \lar 
\end{tikzcd}
\end{equation*}
If \eqref{segDFcond segal core} is satisfied, then $F$ is local with respect to the vertical maps in the diagram. Hence, the 2-of-3 property implies that $F$ is local with respect to the bottom horizontal segmentation map. 
Further, it is automatic that if $F$ satisfies \eqref{segDFcond segal core}, then $F$ is local with respect to the short Segal core inclusions.
Hence \eqref{segDFcond segal core} implies \eqref{segDFcond short segmentation}.

On the other hand, the right vertical map of the diagram above is a pushout of short Segal core inclusions, so if $F$ is local with respect to short Segal core inclusions then it is local with respect to this map.
As the bottom map is a segmentation map, we see that \eqref{segDFcond short segmentation} implies \eqref{segDFcond segal core}.
\end{proof}

In the $1$-categorical setting, the concept of properads is a generalization of that of operads, which in turn is a generalization of categories. 
Following Rezk \cite{Rezk}, \cite{CisinkiMoerdijk2} and the previous definition in the $\infty$-categorical setting these algebraic structures can be described as presheaves satisfying Segal conditions. 
Using the terminology of Definition~\ref{def algpatt}, $\infty$-categories, $\infty$-operads, and $\infty$-properads are modeled by objects in $\Seg(\simp)$, $\Seg(\bbO)$, and $\Seg(\bbY)$, respectively, and the two generalization steps are induced by embeddings $\simp^{\op} \overset{i}{\hookrightarrow} \bbO^\op \overset{j}{\hookrightarrow} \bbY^\op $ of the corresponding indexing categories which respect both the inert-active factorization systems and the elementary objects. 
More precisely, the precompositions with $i$ and $j$ induce functors (see \cite[Lemma 4.5]{patterns1})
\[i^*\colon \Seg(\bbO)\to \Seg(\simp)\text{ and } j^*\colon \Seg(\bbY)\to \Seg(\bbO),\]
where $j^*$ takes any $\infty$-properad to its underlying $\infty$-operad while $i^*$ associates to any $\infty$-operad its underlying $\infty$-category
(see Proposition~\ref{proposition restriction restricts to segal stuff} below).

By writing $\simp_{\finsetskel}^{1, \op}$ for the full subcategory of $\simp_{\finsetskel}^{\op}$ (see Proposition~\ref{prop DF}) spanned by connected objects, that is, trees instead of forests, we then obtain a commutative diagram
\[
\begin{tikzcd}
\simp_{\finsetskel}^{1,\op} \arrow{r}\arrow{d} & \bbO ^\op \arrow{d}\\
\levelgconn^\op \arrow {r}{\tau} & \bbY^\op,
\end{tikzcd}
\]
where the bottom horizontal map is the morphism $\tau$ of Lemma~\ref{lem factorization int el}.
In \cite{ChuHaugseng}, the upper horizontal map was first extended to a functor $\simp_{\finsetskel}^{1,\op, \xV}\to \bbO^{\op,\xV}$ for any symmetric monoidal $\infty$-category $\xV$, where the objects of $\simp_{\finsetskel}^{1,\op, \xV}$ and $\bbO^{\op,\xV}$ are trees with each vertex decorated by an object of $\xV$.
Then it is was shown that this functor induces an equivalence of two corresponding models for $\xV$-enriched $\infty$-operads. 
We want to generalize this idea to the $\infty$-properadic setting and extend $\tau$ to $\bartau \colon \oplevelcV \to \opbbYV$ (these $\infty$-categories are defined just below). 
Our focus lies on studying the various properties of the associated $\infty$-categories of Segal spaces. 
Finally, by a careful examination of the map $\bartau$ we will prove in Section~\ref{sec comparison} that $\oplevelcV$ and $\opbbYV$ describe $\xV$-enriched $\infty$-properads.
The reason for introducing $\levelg^\op$ and $\levelgconn^\op$ will be clear in Section~\ref{sec tensor product}, where we use the particular structure of $\levelgconn^\op$ to prove that $\infty$-properads are tensored over Segal spaces, which then gives us the notion of algebras by adjunction.

We now introduce the $\infty$-categories used to define enriched $\infty$-properads:
\begin{definition}\label{def LV_YV}
Let $\gc$ be a graph category (Notation~\ref{notation graph categories}) and let $q \colon \xV^\otimes \to \pfinsetskel$ be a symmetric monoidal $\infty$-category.
Write $q' \colon \xV^{\op,\otimes} \to \pfinsetskel$ for the opposite symmetric monoidal $\infty$-category.
More precisely, 
$q'$ is the cocartesian fibration associated to the composite
\[
	\pfinsetskel \to \xCat_\infty \xrightarrow{\op} \xCat_\infty
\]
whose first map classifies the cocartesian fibration $q$.\footnote{The reader wishing an explicit description of these dualities is encouraged to consult \cite{BarwickGlasmanNardin}.} 
We let $\gc^{\op, \xV}$ be given by the pullback
\[ \begin{tikzcd}
\gc^{\op, \xV} \rar \dar &  \xV^{\op,\otimes} \dar{q'} \\
\gc^\op \rar{\vertex^\op_{\gc}} & \pfinsetskel.
\end{tikzcd} \]
\end{definition}
\begin{remark}\label{rem doublecatV}
	Similar to Remark~\ref{rem doublecat} it is easy to see that $\levelV$ admits a double $\infty$-categorical structure.
\end{remark}

\begin{notation}\label{notation: bar G}
By definition an object in ${\gc^{\op, \xV}}$ is given by a pair $(G, (v_c)_{c\in \vertex_{\gc}(G)})$, consisting of an object of $\gc^\op$ and an object of $\xV^{\op,\otimes}$.
We think of this as a graph $G\in \gc^\op$ whose vertices are labeled by objects of $\xV$. 
We will write $\overline G$ for the object $(G, (v_c)_{c\in \vertex_{\gc}(G)})$ when we do not wish to emphasize the labeling.
When we \emph{do} wish to emphasize the labeling, we will write such an object as $G(v_c)_{c\in \vertex_{\gc}(G)}$.
Since $\opgcV_{\xfe}\simeq \{*\}$ we also use $\xfe$ to denote the edge in $\opgcV_{\mathfrak e} \subseteq \opgcV$.
\end{notation}

\begin{remark}\label{rem gcalgpatt}
Every morphism in $\gc^{\op, \xV}$ is a pair $(f,g)$ where $f$ and $g$ are morphisms in $\gc^\op$ and $\xV^{\op,\otimes}$, respectively. 
Hence, the inert-active factorization systems on $\gc^\op$ and $\xV^{\op,\otimes}$ induce one on $\gc^{\op, \xV}$. 
More explicitly, a morphism in $\gc^{\op, \xV}$ is inert if and only if its images in $\gc^\op$ and $\xV^{\op,\otimes}$ are both inert. 
Active maps and elementary objects are defined similarly, but we can be more specific in these cases.
\begin{itemize}
\item 
A morphism $(f,g)$ of $\gc^{\op, \xV}$ is active if and only if $f$ is active in $\gc^\op$.
To see this, suppose that $f$ is active. 
Then its image in $\pfinsetskel$ is active, so by Example~\ref{example fact system smc}, $g$ is active as well. 
\item
Each elementary object of $\gc^\op$ maps to $\langle 0\rangle$ or $\langle 1 \rangle$ in $\pfinsetskel$.
It follows that elementary objects in $\gc^{\op, \xV}$ are those of the form $\xfc(v)$ where $\xfc$ is a corolla and $v\in \xV$, and also $\xfe$.
\end{itemize}
Following Definition~\ref{def algpatt} we can define the $\infty$-category $\Seg(\gcV)$ of Segal $\gcV$-spaces. 
\end{remark}

\begin{definition}\label{def bar Segal cores}
Let  $\overline G$ denote an object of ${\opgcV}$ lying over the graph $G$.
\begin{itemize}
\item The \emph{Segal core inclusion} of $\overline G$ is 
\[
	\overline G_{\Seg} \coloneqq \colim_{\overline E\in (\opgcV_{\xel})_{\overline G/}} \overline E\to \overline G
\]
in $\xFun(\opgcV,\xS)$,
where $\overline G_{\Seg}$ is called the \emph{Segal core} of $\overline G$.
\item If $L$ is a level graph, a Segal core inclusion $\overline L_{\Seg} \to \overline L$ is called a \emph{short} Segal core inclusion just when the underlying level graph $L$ is short (Definition~\ref{segal cores}).
\item Suppose that $L \in \levelg$ has height $n$ and $\overline L \in \levelV$.
Defining $\overline L^{\{i\}}$ and $\overline L^{\{i-1,i\}}$ analogously to Definition~\ref{def segmentation map}, the \emph{segmentation map} associated to $\overline L$ is
\[
\overline L|_{\Delta^n_{\Seg}} \coloneqq \overline L^{\{0,1\}} \amalg_{\overline L^{\{1\}}}  \cdots
\amalg_{\overline L^{\{n-1\}}} \overline L^{\{n-1,n\}} \to \overline L
\]
in $\xFun(\levelg^{\op,\xV},\xS)$.
\end{itemize}
\end{definition}

Notice that if $p \colon \levelV \to \levelg \to \simp$ denotes the canonical cartesian fibration, then $\overline L|_{\Delta^n_{\Seg}}$ fits into a pullback
\[ \begin{tikzcd}
\overline L|_{\Delta^n_{\Seg}} \rar \dar & \overline L \dar \\
p^*(\Delta^n_{\Seg}) \rar & p^*(\Delta^n)
\end{tikzcd} \]
in the $\infty$-topos $\Pre(\levelV)$.

As in Remark~\ref{remark equiv segal and local} the next proposition easily follows from the Yoneda embedding.
\begin{proposition}\label{proposition segal local gcv}
	An object $F \in \xFun(\opgcV,\xS)$ is a Segal $\gcV$-space if and only if it is local with respect to all Segal core inclusions $\overline G_\Seg \to \overline G$. \qed
\end{proposition}

\begin{remark}
	It follows from the definition that the cocartesian fibration ${\opgcV}\to \gc^{\op}$ restricts to a cocartesian fibration $\opgcVint \to \gcint^\op$. 
	By applying (the dual of) \cite[Lemma 2.3.13]{ChuHaugseng} to this cocartesian fibration and the full subcategory $\gcel^\op$, we see that the $\infty$-categories $(\opgcVel)_{\overline G/}$ and $(\gcel^\op)_{G/}$ are equivalent. 
	In particular, $\overline G_{\Seg}= \colim_{(\opgcVel)_{\overline G/}} \overline E$ can be identified with $\colim_{E\in (\gcel^\op)_{G/}} \overline E$ where $\overline G \to \overline E$ are cocartesian lifts.
\end{remark}

In the special case $\gc=\levelg$, we can give the following improvement, which can be proven in the same manner as Proposition~\ref{propn:SegDFcond}.
A similar statement holds when $\levelg$ is replaced by $\levelg_{\name{0-type}}$ or $\levelg_{\name{out}}$.

\begin{propn}\label{propn:SegDFVcond}
Suppose $F \in \xFun(\levelg^{\op,\xV},\xS)$.
The following are equivalent:
\begin{enumerate}
\item $F$ is a Segal $\levelV$-space. \label{SegDFVcond segal}
\item $F$ is local with respect to all Segal core inclusions $\overline L_{\Seg} \to \overline L$. \label{segDFVcond segal core}
\item $F$ is local with respect to the short Segal core inclusions and the segmentation maps from Definition~\ref{def bar Segal cores}. \label{segDFVcond short segmentation} 
\qed
\end{enumerate}
\end{propn}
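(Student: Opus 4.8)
The plan is to mirror the proof of Proposition~\ref{propn:SegDFcond} essentially verbatim, carrying the vertex labelings along, so that the only genuine work is to promote the key factorization square into the enriched setting. The equivalence of \eqref{SegDFVcond segal} and \eqref{segDFVcond segal core} is immediate, being exactly the content of Proposition~\ref{proposition segal local gcv}. Everything therefore reduces to proving \eqref{segDFVcond segal core}~$\Leftrightarrow$~\eqref{segDFVcond short segmentation}.

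First I would establish the enriched analogue of the factorization diagram used in the proof of Proposition~\ref{propn:SegDFcond}, namely a commuting triangle
\[
\begin{tikzcd}
\overline L_{\Seg} \rar{\simeq} \dar & (\overline L^{\{0,1\}})_{\Seg} \amalg_{(\overline L^{\{1\}})_{\Seg}} \cdots \amalg_{(\overline L^{\{n-1\}})_{\Seg}} (\overline L^{\{n-1,n\}})_{\Seg} \dar \\
\overline L & \overline L|_{\Delta^n_{\Seg}} \lar
\end{tikzcd}
\]
in $\xFun(\levelg^{\op,\xV},\xS)$, where the bottom arrow is the segmentation map and the right arrow is a pushout of short Segal core inclusions. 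The cleanest route to the top equivalence is to invoke the identification $(\opgcVel)_{\overline L/} \simeq (\gcel^{\op})_{L/}$ (instantiated at $\gc = \levelg$) noted in the remark preceding Proposition~\ref{proposition segal local gcv}: this says that the $\infty$-category indexing the enriched Segal core $\overline L_{\Seg} = \colim \overline E$ agrees with the one computing $L_{\Seg}$ in the unenriched case, the only change being that each elementary piece now carries the decoration transported along a cocartesian lift. Consequently the combinatorial reindexing of the colimit along the level decomposition of $\scriptyell^n_0$ used in Proposition~\ref{propn:SegDFcond} goes through unchanged and produces the top equivalence. Alternatively, one may use the pullback square displayed just before Proposition~\ref{proposition segal local gcv}, which exhibits the segmentation map as the base change of the spine inclusion $\Delta^n_{\Seg} \to \Delta^n$ along $\overline L \to p^*(\Delta^n)$; since $\Pre(\levelV)$ is an $\infty$-topos, its colimits are universal, so this base change preserves the colimit presentation of $\Delta^n_{\Seg}$ and directly yields the right-hand iterated pushout.

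With the triangle in hand, the remaining argument is identical to that of Proposition~\ref{propn:SegDFcond}. If \eqref{segDFVcond segal core} holds, then $F$ is local with respect to both legs of the triangle, so by two-out-of-three it is local with respect to the bottom segmentation map; it is automatically local with respect to short Segal core inclusions, giving \eqref{segDFVcond short segmentation}. Conversely, if \eqref{segDFVcond short segmentation} holds, then the right vertical arrow is a pushout of short Segal core inclusions and hence $F$ is local with respect to it, while locality with respect to the segmentation map handles the bottom arrow, so two-out-of-three yields locality with respect to $\overline L_{\Seg} \to \overline L$, which is \eqref{segDFVcond segal core}. The main obstacle I anticipate is precisely the promotion of the factorization triangle: one must check that, once decorations are present, the right vertical arrow really is a pushout of \emph{short} Segal core inclusions and that the pushouts assembling the segmentation object interact correctly with the cocartesian transport of labels over the spans $\overline L^{\{i\}} \to \overline L^{\{i-1,i\}}$. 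I expect the universality-of-colimits argument via the displayed pullback square to be the most economical way to avoid a hands-on verification, since it reduces the entire decomposition to the already-known spine decomposition in $\simp$ together with the fact that base change along $\overline L \to p^*(\Delta^n)$ in the presheaf $\infty$-topos preserves colimits.
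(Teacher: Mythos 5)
Your proposal is correct and follows exactly the route the paper intends: the paper gives no separate argument for this proposition, stating only that it "can be proven in the same manner as Proposition~\ref{propn:SegDFcond}," and your proof is precisely that argument with the decorations carried along. Your justification of the enriched factorization triangle via the equivalence $(\opgcVel)_{\overline L/} \simeq (\gcel^{\op})_{L/}$ (equivalently, via universality of colimits and the pullback square displayed after Definition~\ref{def bar Segal cores}) supplies exactly the detail the paper leaves implicit.
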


\begin{definition}[Fibrewise representability]\label{def cts Seg psh}
Let $\xV$ be a presentably symmetric monoidal \icat{} and $p\colon \opgcV\to \gc^\op$ be the cocartesian fibration constructed in Definition~\ref{def LV_YV}.
\begin{itemize}
\item If $F \in \Seg(\gcV)$ and $\xfc_{m,n}$ is a corolla in $\gcel$, write 
\[ F(\xfc_{m, n} (\blank))\colon \xV^\op \simeq (\opgcV)_{\xfc_{m,n}} \to \xS_{/F(\xfe)^{m+n}}\simeq \xFun(F(\xfe)^{m+n}, \xS)\]
for the functor induced by the $p$-cocartesian lifts of the $m+n$ morphisms $\xfc_{m, n}\to \xfe$ in $\gcel^\op$.
\item We say that $F\in \Seg(\gcV)$ is \emph{fibrewise representable} if for each corolla $\xfc_{m,n} \in \gcel$ and each object $\underline{xy} = (x_1,\dots,x_m;y_1,\dots,y_n) \in F(\mathfrak e)^{m+n}$, the composite functor
\[ \begin{tikzcd}[column sep=huge]
F(\xfc_{m, n} (\blank, \underline{xy})) \colon
\xV^\op \rar{F(\xfc_{m, n} (\blank))} & \xFun(F(\xfe)^{m+n}, \xS) \rar{\name{ev}} & \xS
\end{tikzcd} \]
(with `$\name{ev}$' being evaluation at $\underline{xy}$) is representable.
In this case we let $\xMap_F(x_1,\ldots, x_m;y_1, \ldots, y_n)$ denote the object in $\xV$ representing the composite $F(\xfc_{m, n} (\blank, \underline{xy})) = \name{ev} \circ F(\xfc_{m, n} (\blank))$.
	\item We write $\Segrep(\gcV)$ for the full subcategory of $\Seg(\gcV)$ spanned by the fibrewise representable Segal $\gcV$-spaces.
\end{itemize}
\end{definition}

Suppose that $F\in \Segrep(\gcV)$.
Unraveling the definition, we have, for each $v\in \xV$ and each $\underline{xy} = x_1,\ldots, x_m;y_1, \ldots, y_n$ a pullback 
\[ \begin{tikzcd}
\Map_{\xV}(v, \xMap_F(x_1,\ldots, x_m;y_1, \ldots, y_n)) \rar \dar  & F(\xfc_{m,n}(v)) \dar \\
\{ x_1,\ldots, x_m;y_1, \ldots, y_n \} \rar & F(\xfe)^{m+n}.
\end{tikzcd} \]

\begin{remark}
Informally, we occasionally refer to objects of the $\infty$-categories 
$\Segrep(\levelV)$, $\Segrep(\bbYV)$, and $\Segrep(\levelcV)$ as \emph{algebraic $\xV$-enriched $\infty$-properads}.
We later will see that the choice among the three graph categories $\levelg$, $\bbY$, and $\levelgconn$ give equivalent notions for this term.
This is a preliminary definition, and $\xV$-enriched $\infty$-properads will appear below in Definition~\ref{def completness}.
Likewise, we refer to fibrewise representable $\bbYout^\xV$-Segal spaces and $\levelg_{\name{out}}^\xV$-Segal spaces as algebraic $\xV$-enriched $\infty$-output-properads, and fibrewise representable $\bbY_{\name{sc}}^\xV$-Segal spaces and $\levelg_{\name{0-type}}^\xV$-Segal spaces as algebraic $\xV$-enriched $\infty$-dioperads.
\end{remark}

\begin{remark}
As we will see in Corollary~\ref{cor equ enr properads}, the $\infty$-categories $\Segrep(\levelV)$ and $\Segrep(\bbYV)$ are equivalent, while the interpretation of Corollary~\ref{cor AlgPrd=PCSbbY} says that the objects in these $\infty$-categories can be interpreted as enriched $\xV$-properads. 
In this picture the object $\xMap_F(x_1,\ldots, x_m;y_1, \ldots, y_n)\in \xV$ in Definition~\ref{def cts Seg psh} should be thought of as the mapping object of the $\xV$-enriched $\infty$-properad $F$.
\end{remark}

\begin{remark}\label{rem repcts}
In \cite{ChuHaugseng} the authors apply the construction of Definition~\ref{def LV_YV} to the subcategory $\simp_{\finsetskel}$ of $\levelg$ mentioned in Proposition~\ref{prop DF}. 
The resulting $\infty$-category $\simp^\xV_{\finsetskel}$ is then used in \cite[Definition 2.3.9]{ChuHaugseng} to define continuous Segal presheaves, which model enriched $\infty$-operads. 
Although the definition of fibrewise representable Segal $\levelV$-spaces from above naturally generalizes the continuous definition \cite[Definition 2.3.9]{ChuHaugseng} we prefer to use ``fibrewise representable'' instead of ``continuous'' as we believe that it describes the phenomenon better.
\end{remark}

We now give alternative characterizations of fibrewise representable Segal $\gcV$-spaces:

\begin{propn}\label{propn:CtsDFV}
	Let $\xV$ be a presentably symmetric monoidal $\infty$-category and let $\varnothing$ denote the initial object in $\xV$. For $F\in \Seg(\gcV)$ the following are equivalent:
	\begin{enumerate}
		\item $F$ is fibrewise representable.
	\label{propn:CtsDFV fibrep}
		\item For every corolla $\xfc_{m,n}\in \gcel$, the restriction \[ \xV^{\op}
		\simeq (\opgcV)_{\xfc_{m,n}} \xrightarrow{F(\xfc_{m,n}(\blank))} \xS_{/F(\xfe)^{m+n}} \to \xS\]
		preserves weakly contractible limits, and the natural map $F(\xfc_{m,n}(\varnothing)) \to
		F(\xfe)^{m+n}$ is an equivalence.
	\label{propn:CtsDFV wc limits}
		\item $F$ is local with respect to the maps $\coprod_{m+n}
		\xfe \to \xfc_{m,n}(\varnothing)$ and the maps $\colim_{\mathcal{I}}\xfc_{m,n}(\phi) \to
		\xfc_{m,n}(\colim_{\mathcal{I}} \phi)$ in $\xFun(\opgcV,\xS)$, where $\phi$ is a weakly contractible diagram in $\xV$.\label{propn:CtsDFV weakly contractible}
	\end{enumerate}
\end{propn}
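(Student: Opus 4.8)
The plan is to prove the equivalences $\eqref{propn:CtsDFV fibrep}\Leftrightarrow\eqref{propn:CtsDFV wc limits}$ and $\eqref{propn:CtsDFV wc limits}\Leftrightarrow\eqref{propn:CtsDFV weakly contractible}$ separately. The argument is uniform across all graph categories $\gc$, since the only features it uses are the fibre identification $(\opgcV)_{\xfc_{m,n}}\simeq\xV^\op$ coming from the pullback of Definition~\ref{def LV_YV} and the $m+n$ Segal maps $\xfc_{m,n}\to\xfe$ in $\gcel^\op$, which assemble $\xfc_{m,n}(\blank)$ into a functor landing in the slice $\xS_{/F(\xfe)^{m+n}}$. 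Fix a corolla $\xfc_{m,n}\in\gcel$ throughout and abbreviate $Y\coloneqq F(\xfe)^{m+n}$ and $G\coloneqq F(\xfc_{m,n}(\blank))\colon\xV^\op\to\xS_{/Y}$.

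The equivalence $\eqref{propn:CtsDFV wc limits}\Leftrightarrow\eqref{propn:CtsDFV weakly contractible}$ is a direct translation of locality through the Yoneda lemma. Being local with respect to $\coprod_{m+n}\xfe\to\xfc_{m,n}(\varnothing)$ says precisely that $F(\xfc_{m,n}(\varnothing))\to F(\xfe)^{m+n}$ is an equivalence, which is the second clause of $\eqref{propn:CtsDFV wc limits}$. For the second family, the Yoneda lemma identifies $\Map(\xfc_{m,n}(\colim_{\mathcal I}\phi),F)\simeq F(\xfc_{m,n}(\colim_{\mathcal I}\phi))$ and $\Map(\colim_{\mathcal I}\xfc_{m,n}(\phi),F)\simeq\lim_{\mathcal I^\op}F(\xfc_{m,n}(\phi))$. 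Since a colimit of $\phi$ in $\xV$ is the limit of $\phi^\op$ over the weakly contractible category $\mathcal I^\op$ inside $\xV^\op$, and since every weakly contractible limit in $\xV^\op$ arises this way, locality with respect to all these maps is exactly the assertion that the composite $\xV^\op\xrightarrow{G}\xS_{/Y}\to\xS$ preserves weakly contractible limits; together with the previous sentence this yields $\eqref{propn:CtsDFV wc limits}\Leftrightarrow\eqref{propn:CtsDFV weakly contractible}$.

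For $\eqref{propn:CtsDFV fibrep}\Leftrightarrow\eqref{propn:CtsDFV wc limits}$ I would first reduce representability to limit preservation. Under the straightening equivalence $\xS_{/Y}\simeq\xFun(Y,\xS)$ the fibre of $G$ over a point $\underline{xy}\in Y$ is the functor $F(\xfc_{m,n}(\blank,\underline{xy}))$ of Definition~\ref{def cts Seg psh}, and small limits in $\xS_{/Y}$ are computed fibrewise. Because $\xV$ is presentable, the representability criterion \cite[Proposition 5.5.2.2]{ht} says a functor $\xV^\op\to\xS$ is representable if and only if it preserves all small limits. Hence $F$ is fibrewise representable if and only if each fibre of $G$ preserves all small limits, which (limits being fibrewise) holds if and only if $G\colon\xV^\op\to\xS_{/Y}$ preserves all small limits.

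It remains to match ``$G$ preserves all small limits'' with the two clauses of $\eqref{propn:CtsDFV wc limits}$. Let $U\colon\xS_{/Y}\to\xS$ be the forgetful functor; it is conservative and preserves weakly contractible limits, so $G$ preserves weakly contractible limits if and only if $UG$ does. The terminal object of $\xV^\op$ is the initial object $\varnothing$ of $\xV$ and the terminal object of $\xS_{/Y}$ is $\id_Y$, so $G$ preserves the terminal object exactly when $F(\xfc_{m,n}(\varnothing))\to Y$ is an equivalence. The step I expect to be the main obstacle is upgrading ``terminal object plus weakly contractible limits'' to ``all small limits'': the trick is that $\xV^\op$ has a terminal object, so any product $\prod_{s}c_s$ can be rewritten as the limit over the weakly contractible cone category $S^\triangleright$ of the diagram sending the cone point to the terminal object and each $s$ to $c_s$. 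Thus preservation of the terminal object and of weakly contractible limits forces $G$ to preserve all products, while pullbacks are already weakly contractible limits; since products and pullbacks generate all small limits, $G$ preserves every small limit. The converse implication is immediate, which completes the proof.
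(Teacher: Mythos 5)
Your proof is correct and follows essentially the same route as the paper's: reduce fibrewise representability to limit-preservation via presentability of $\xV$, compute limits fibrewise in $\xS_{/F(\xfe)^{m+n}}$, identify the terminal objects, and translate \eqref{propn:CtsDFV weakly contractible} through the Yoneda lemma. The only difference is that where the paper cites \cite[Lemma 2.2.7]{GepnerHaugsengKock:IOAM} for the equivalence between preserving all small limits and preserving the terminal object plus weakly contractible limits, you prove it inline via the cone-category trick for products; that argument is sound.
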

\begin{proof}
	By definition $F$ is fibrewise representable if for every corolla $\xfc_{m,n}$ and every $\underline{xy}\in F(\xfe)^{m+n}$ the functor $\xV^\op\to \xS$ given by composite of $F(\xfc_{m,n}(\blank))$ and the evaluation map $\name{ev}_{\underline{xy}}$ is representable. 
	The assumption that $\xV$ is presentable implies that a functor $\xV^\op\to \xS$ is representable if and only if it preserves all limits. Since limits in $\xFun(F(\xfe)^{m+n},\xS)$ are computed objectwise, \eqref{propn:CtsDFV fibrep} is equivalent to saying that the functor $F(\xfc_{m,n}(\blank)) \colon \xV^{\op}\to \xS_{/F(\xfe)^{m+n}}\simeq \xFun(F(\xfe)^{m+n},\xS)$ preserves all limits. 
	This is equivalent to saying that $F(\xfc_{m,n}(\blank))$ preserves terminal objects and weakly contractible limits by \cite[Lemma 2.2.7]{GepnerHaugsengKock:IOAM}. 
	The terminal object in $\xS_{/F(\xfe)^{m+n}}$ is $F(\xfe)^{m+n}$ and the forgetful functor $\xS_{/F(\xfe)^{m+n}}\to \xS$ preserves and detects weakly contractible limits. Hence, \eqref{propn:CtsDFV fibrep} holds if and only $F(\xfc_{m,n}(\blank))$ takes the terminal object $\varnothing$ to $F(\xfe)^{m+n}$ and preserves weakly contractible limits, which is equivalent to \eqref{propn:CtsDFV wc limits}.
	The equivalence between \eqref{propn:CtsDFV wc limits} and \eqref{propn:CtsDFV weakly contractible} follows from the Yoneda lemma. 
\end{proof}

\begin{definition}\label{def set s}
Let $\gc$ be a graph category (Notation~\ref{notation graph categories}).
	\begin{enumerate}
		\item A map $X \to X'$ in $\Pre(\gc) = \xFun(\gc^\op,\xS)$ (resp.\ $\Pre(\gcV)$) is called a \emph{Segal equivalence} if $\Map(X', F) \to \Map(X, F)$ is an equivalence for every $F\in \Seg(\gc)$ (resp.\ $F\in \Seg(\gcV)$),
		\item A map $X \to X'$ in $\Pre(\gcV)$ is called a \emph{$\xV$-Segal equivalence} if $\Map(X', F) \to \Map(X, F)$ is an equivalence for every $F\in \Segrep(\gcV)$.
	\end{enumerate}
\end{definition}
Note that each Segal equivalence in $\Pre(\gcV)$ is also a $\xV$-Segal equivalence.
\begin{definition}\label{def strongly saturated}
	We call a class $S$ of morphisms in a cocomplete $\infty$-category $\xcc$ \emph{strongly saturated} if 
	\begin{enumerate}[(i)]
		\item it satisfies the $2$-of-$3$ property,
		\item it is stable under pushouts along any morphism in $\xcc$,
		\item the full subcategory of $\xFun(\Delta^1, \xcc)$ spanned by $S$ is stable under small colimits.
	\end{enumerate}
	We say that the strongly saturated class $S$ is \emph{strongly generated} by $\mathbb S$ if $S$ is the smallest strongly saturated class containing $\mathbb S$. In this case we call elements in $\mathbb S$ the \emph{generators} of $S$.
\end{definition}

\begin{remark}
	If $\mathbb{S}$ is a proper set of morphisms in a presentable $\infty$-category $\xcc$, then \cite[Proposition 5.5.4.15]{ht} implies that the strongly saturated class generated by $\mathbb{S}$ coincides with the class of morphisms $f$ such that $\xMap_\xcc(f, X)$ is an equivalence for every $\mathbb{S}$-local object $X$ in $\xcc$.
\end{remark}

\begin{example}\label{example seg equivs strongly saturated}
By \cite[Lemma 5.5.4.11]{ht} the class of Segal equivalences is strongly saturated, and likewise for $\xV$-Segal equivalences.
The previous remark and Remark~\ref{remark equiv segal and local} show that the class of Segal equivalences in $\Pre(\gc)$ is generated by Segal core inclusions.
If $\xU$ is a small symmetric monoidal \icat{}, then by Proposition~\ref{proposition segal local gcv} the class of Segal equivalences in $\Pre(\gc^\xU)$ is generated by Segal core inclusions.
In addition, Proposition~\ref{propn:SegDFcond} (resp.\ Proposition~\ref{propn:SegDFVcond}) gives that the class of Segal equivalences in $\Pre(\levelg)$ (resp.\ $\Pre(\levelU)$) can instead be generated by the set of short Segal core inclusions together with the segmentation maps.
\end{example}

Even in the case of a small $\xU$, one cannot immediately produce generators for $\xU$-Segal equivalences since, by Proposition~\ref{propn:CtsDFV}, one would want to index some of these generators by the proper class of all weakly contractible diagrams in $\xU$.
In the present paper we will never need explicit generators for $\xU$ or $\xV$-Segal equivalences.

One could ask about compatibility between the above constructions for various graph categories.
Along these lines, we have the following.

\begin{proposition}\label{proposition restriction restricts to segal stuff}
Suppose that $i \colon \gcone \to \gctwo$ is one of the fully-faithful inclusions of graph categories appearing in Figure~\ref{figure lattice of subcategories} or Figure~\ref{figure lattice of subcategories for bbY}.
Then $i^* \colon \Pre(\gctwo) \to \Pre(\gcone)$ restricts to $i^* \colon \Seg(\gctwo) \to \Seg(\gcone)$.
Likewise, we have restrictions $\bariustar \colon \Seg(\gcVtwo) \to \Seg(\gcVone)$ and $\bariustar \colon \Segrep(\gcVtwo) \to \Segrep(\gcVone)$.
\end{proposition}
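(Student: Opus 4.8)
The plan is to recognize each inclusion $i$ as a morphism of algebraic patterns and to invoke \cite[Lemma 4.5]{patterns1}, which says that precomposition with such a morphism carries Segal objects to Segal objects. For the unenriched statement I would verify the hypotheses of that lemma for $i^{\op}\colon \gcone^{\op}\to\gctwo^{\op}$: that it (i) preserves inert and active morphisms, (ii) preserves elementary objects, and (iii) for every $G\in\gcone$ induces an equivalence of slices
\[ (\gcone^{\op}_{\xel})_{G/} \isoto (\gctwo^{\op}_{\xel})_{i(G)/}. \]
Condition (i) is immediate: $i$ is a full inclusion and, by Lemma~\ref{lemma general level graph factorization systems} together with Remark~\ref{rem inert=convex_open} and Lemma~\ref{lem comparison between dirgraph cats}, the active-inert factorization system on $\gctwo$ restricts to the one on $\gcone$, so a map in $\gcone$ is active (resp.\ inert) exactly when it is so in $\gctwo$. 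Condition (ii) holds because the elementary objects are the corollas and the edge, which $i$ preserves (Notation~\ref{notation graph categories}).

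The crux is condition (iii). The comparison functor on slices is fully faithful because $i$ is fully faithful. For essential surjectivity I would argue that every elementary subgraph (corolla or edge) included inertly into some $G\in\gcone$ is again an object of $\gcone$. The conditions distinguishing $\gcone$ inside $\gctwo$ are of two kinds: global conditions (connectedness, simple-connectedness, being $0$-type) that hold vacuously for a single corolla or edge; and conditions on the number of inputs/outputs of vertices (the output/input versions, the dendroidal ``exactly one output'' condition, and linearity) that are inherited by any corolla included inertly into $G$. This is exactly the step where one cannot appeal to a sieve property: the inclusions $\levelgconn\subseteq\levelg$, $\levelg_{\name{out,c}}\subseteq\levelg_{\name{out}}$, $\levelg_{\name{sc}}\subseteq\levelg_{\name{0-type}}$, and $\simp^1_{\finsetskel}\subseteq\simp_{\finsetskel}$ are \emph{not} sieves (by the remark following Definition~\ref{def subcategories of levelg}), yet connectivity-type conditions nonetheless survive restriction to corollas even when they fail for general subgraphs. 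I expect this essential-surjectivity observation to be the main point requiring care.

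For the enriched statement, $i$ and the identity $\vertex_{\gctwo}\circ i = \vertex_{\gcone}$ induce, by the universal property of the pullbacks of Definition~\ref{def LV_YV}, a functor $\bar\imath\colon \gcone^{\op,\xV}\to\gctwo^{\op,\xV}$ covering $i^{\op}$ and the identity on $\xV^{\op,\otimes}$. Using the description of inert/active maps and elementary objects from Remark~\ref{rem gcalgpatt}, $\bar\imath$ preserves the factorization system (its $\xV$-component is the identity, and its $\gc$-component is handled by (i)) and the elementary objects ($\xfc(v)\mapsto i(\xfc)(v)$ and $\xfe\mapsto\xfe$). The slice condition for $\bar\imath$ reduces to (iii): by (the dual of) \cite[Lemma 2.3.13]{ChuHaugseng} the decorated slice $((\gcone^{\op,\xV})_{\xel})_{\overline G/}$ is equivalent to $(\gcone^{\op}_{\xel})_{G/}$ compatibly with $\bar\imath$ and $i$, so the equivalence of (iii) transports to the decorated slices. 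Hence \cite[Lemma 4.5]{patterns1} again yields $\bariustar\colon \Seg(\gcVtwo)\to\Seg(\gcVone)$.

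Finally, to see that $\bariustar$ preserves fibrewise representability I would test the condition of Definition~\ref{def cts Seg psh} corolla by corolla. For $F\in\Segrep(\gcVtwo)$ and a corolla $\xfc_{m,n}$ of $\gcone$, one has $(\bariustar F)(\xfe) = F(\xfe)$ and $(\bariustar F)(\xfc_{m,n}(\blank,\underline{xy})) = F(i(\xfc_{m,n})(\blank,\underline{xy}))$, where $i(\xfc_{m,n})$ is a corolla of $\gctwo$ with the same numbers of inputs and outputs; since $F$ is fibrewise representable this functor $\xV^{\op}\to\xS$ is representable. As fibrewise representability of $\bariustar F$ is tested only on the (possibly fewer) corollas of $\gcone$, it follows that $\bariustar F\in\Segrep(\gcVone)$, completing the argument.
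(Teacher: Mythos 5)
Your proposal is correct and follows essentially the same route as the paper: both reduce to \cite[Lemma 4.5]{patterns1} by checking the slice equivalence $(\gcone^{\op}_{\xel})_{G/}\isoto(\gctwo^{\op}_{\xel})_{i(G)/}$, with essential surjectivity coming from the observation that any elementary graph admitting an inert map to an object of $\gcone$ already lies in $\gcone$, and both then transport this to the decorated slices and note that fibrewise representability is tested only on elementary objects. The only cosmetic difference is that you identify the decorated slice via (the dual of) \cite[Lemma 2.3.13]{ChuHaugseng} where the paper uses the explicit pullback description of $\gcone^{\xV,\op}_{\xel,\overline G/}$ from Remark~\ref{rem gcalgpatt}.
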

\begin{proof}
	By declaring the edge and each corolla to be elementary objects, by Remark~\ref{rem algpattvariation}\eqref{rem algpattvariation more structure} we have the opposites of all categories appearing in Figure~\ref{figure lattice of subcategories} and Figure~\ref{figure lattice of subcategories for bbY} are algebraic patterns.
	According to \cite[Lemma 4.5]{patterns1} and \cite[Remark 4.4]{patterns1} the functor $i^* \colon \Pre(\gctwo) \to \Pre(\gcone)$ given by the precomposition with $i^\op\colon \gcone^\op \to \gctwo^\op$ restricts to $\Seg(\gctwo) \to \Seg(\gcone)$ if for every object $G\in \gcone^\op$ the functor $i$ induces an equivalence $\gcone_{\xel,G/}^{\op} \isoto \gctwo_{\xel,i(G)/}^{\op}$. 
	Note that the existence of an inert morphism $ i(G)\to E$ with $E\in \gctwo^{\op}_{\xel}$ implies that $E\simeq i(E')$ for some $E'\in \gcone^{\op}_{\xel}$. 
	The fully faithfulness of $i$ then implies that $\gcone_{\xel,G/}^{\op} \to \gctwo_{\xel,i(G)/}^{\op}$ is essentially surjective as well as fully faithful.
	
	Suppose $\overline G = G(v_c)_{c\in \vertex_{\gc}(G)}$, then it follows from Remark~\ref{rem gcalgpatt} that $\gcone_{\xel,\overline G/}^{\xV, \op}\simeq \gcone_{\xel,G/}^{\op}\times_{\finsetskel_{*, \vertex_{\gc}(G)/}} \xV^{\op, \otimes}_{(v_c)_{c}/}$. 
	Hence the equivalence $\gcone_{\xel,G/}^{\op} \isoto \gctwo_{\xel,i(G)/}^{\op}$ induces an equivalence $\gcone_{\xel,\overline G/}^{\xV, \op}\isoto \gctwo_{\xel,\overline G/}^{\xV, \op}$ which shows that $\Pre(\gctwo^\xV) \to \Pre(\gcone^\xV)$ restricts to $\Seg(\gcVtwo) \to \Seg(\gcVone)$ by \cite[Lemma 4.5]{patterns1}. 
	Finally, it further restricts to $\Segrep(\gcVtwo) \to \Segrep(\gcVone)$ as being fibrewise representable is a property that only concerns elementary objects and the map $\gcVone\to \gcVtwo$ preserves elementary objects.
\end{proof}

\begin{lemma}\label{lem actlift}
Let $i \colon \levelgconn \to \levelg$ be the inclusion.
The functor $i_*\colon \Pre(\levelgconn)\to \Pre(\levelg)$ given by the right Kan extension along $i^\op$ restricts to a functor $\Seg(\levelgconn)\to \Seg(\levelg)$.
Similarly, if $\xU$ is a small symmetric monoidal $\infty$-category, then the right Kan extension restricts to Segal objects:
\[ \begin{tikzcd}
\Seg(\levelcU) \rar \dar[dashed] & \Pre(\levelcU) \dar
\\
\Seg(\levelU) \rar & \Pre(\levelU).
\end{tikzcd} \]
In both cases, the restricted functors are right adjoints to the precomposition functors appearing in Proposition~\ref{proposition restriction restricts to segal stuff}.
Similar statements hold when the pair $(\levelgconn, \levelg)$ is replaced by $(\levelg_{\name{sc}},\levelg_{\name{0-type}})$ or $(\levelg_{\name{out,c}},\levelg_{\name{out}})$.
\end{lemma}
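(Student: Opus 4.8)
The plan is to deduce the statement from the adjunction $i^{*}\dashv i_{*}$ on presheaf categories together with a direct verification of the Segal condition for $i_{*}F$. Since $i^{*}$ already preserves Segal objects by Proposition~\ref{proposition restriction restricts to segal stuff}, once we know that $i_{*}$ carries $\Seg(\levelgconn)$ into $\Seg(\levelg)$ the asserted adjunction between Segal objects is formal: it is simply the restriction of $i^{*}\dashv i_{*}$ to the full subcategories of Segal objects, both of which are then preserved, with $i^{*}$ the left adjoint (precomposition) and $i_{*}$ its right adjoint. The same remark applies verbatim to the enriched square, where smallness of $\xU$ guarantees that the relevant right Kan extension along $\levelcU{}^{\op}\to\levelU{}^{\op}$ exists in $\Pre(\levelU)$. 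So the whole content is the claim that $i_{*}F$ is Segal whenever $F$ is.

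To check this I would use the pointwise formula for the right Kan extension: for a level graph $L$ of height $m$,
\[
  (i_{*}F)(L)\;\simeq\;\lim_{(H,\psi)} F(H),
\]
the limit being over the comma category $\mathcal{T}_{L}$ whose objects are pairs $(H,\psi)$ with $H$ a connected level graph and $\psi\colon H\to L$ a morphism of $\levelg$, and whose morphisms $(H,\psi)\to(H',\psi')$ are maps $H'\to H$ over $L$. Because $i$ is fully faithful, $(i_{*}F)(E)\simeq F(E)$ for every connected $E$, in particular on the elementary graphs; hence the Segal map for $i_{*}F$ at $L$ is exactly the comparison $(i_{*}F)(L)\to\lim_{E}F(E)$ indexed by the elementary subgraphs $E\hookrightarrow L$, and we must show it is an equivalence.

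The key geometric input is that $\mathcal{T}_{L}$ splits as a coproduct over the connected components of $L$. Writing $L\cong\coprod_{c}L^{c}$ with $c$ ranging over $L_{0,m}\cong\pi_{0}(L)$ (Lemma~\ref{lemma F0n connected}), I would argue that any $\psi\colon H\to L$ with $H$ connected of height $n$, lying over $\alpha\colon[n]\to[m]$, factors through a unique component: the element $\eta_{0,n}(\ast)\in L_{\alpha(0),\alpha(n)}$ determines a component $c$, and naturality of $\psi$ forces every $\eta_{i,j}$ to land in $L^{c}$. Consequently $\mathcal{T}_{L}\simeq\coprod_{c}\mathcal{T}_{L^{c}}$ and the limit becomes $\prod_{c}\lim_{\mathcal{T}_{L^{c}}}F(H)$. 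Each $\mathcal{T}_{L^{c}}$ has an initial object, namely $(L^{c},\id)$, since for connected $L^{c}$ any $(H,\psi)$ admits $\psi$ itself as its unique structure map; hence $\lim_{\mathcal{T}_{L^{c}}}F(H)\simeq F(L^{c})$. On the other side, the elementary subgraphs of $L$ are the disjoint union of those of the $L^{c}$, so $\lim_{E}F(E)\simeq\prod_{c}F(L^{c})$ using that $F$ is Segal on each connected $L^{c}$. Under these identifications the Segal map is the product of the Segal equivalences for $F$ at the $L^{c}$, hence an equivalence, and $i_{*}F\in\Seg(\levelg)$.

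The main obstacle is precisely this component decomposition. For connected $L$ the statement is immediate, since $(L,\id)$ is already initial in $\mathcal{T}_{L}$, giving $(i_{*}F)(L)\simeq F(L)$ and making the Segal condition for $i_{*}F$ literally that of $F$; the difficulty is entirely concentrated in the disconnected case, where one must know that a connected graph cannot map across more than one component of $L$. Everything else is bookkeeping. The two remaining pairs $(\levelg_{\name{sc}},\levelg_{\name{0-type}})$ and $(\levelg_{\name{out,c}},\levelg_{\name{out}})$ are handled identically: by Lemma~\ref{lem comparison between levelgraph cats} the connected components of an object of $\levelg_{\name{0-type}}$ (resp.\ $\levelg_{\name{out}}$) again lie in $\levelg_{\name{sc}}$ (resp.\ $\levelg_{\name{out,c}}$), so the decomposition of $\mathcal{T}_{L}$ stays inside the relevant connected subcategory. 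Finally, the enriched versions go through word for word, since the $\xU$-labelling of vertices is irrelevant to connectivity: a connected $\xU$-decorated graph over $\overline{L}$ still factors through a single component, and $(\overline{L}^{\,c},\id)$ remains initial in the corresponding comma category.
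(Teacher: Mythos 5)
Your argument is correct, but it takes a genuinely different route from the paper. The paper's proof is a short application of an abstract criterion from \cite{patterns1} (Proposition~6.3 there): the right Kan extension along a map of algebraic patterns preserves Segal objects provided the inclusion has \emph{unique lifting of active morphisms}, and this is verified for $\levelgconn^\op \subseteq \levelg^\op$ by observing, via Lemma~\ref{lemma F0n connected}, that an active map $H \to G$ with $G$ connected forces $H_{0,n} \isoto G_{0,m} \simeq \{*\}$, so $H$ is connected. You instead compute $i_*F$ pointwise: you decompose the comma category $(\levelgconn^\op)_{L/}$ as a coproduct over the connected components $L^c$ of $L$ (using that a map from a connected graph lands in a single component --- again Lemma~\ref{lemma F0n connected} in disguise), exhibit $(L^c,\id)$ as initial in each piece, and conclude $(i_*F)(L) \simeq \prod_c F(L^c)$, after which the Segal condition for $i_*F$ reduces to that of $F$ at each $L^c$. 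This is essentially the component-decomposition argument the paper defers to the proof of Proposition~\ref{proposition LcL} (where it is used to show the unit $\id \to i_*i^*$ is an equivalence), so your approach front-loads that computation and in exchange yields an explicit formula for $i_*F$ that makes Proposition~\ref{proposition LcL} nearly immediate; the paper's route is shorter here and applies uniformly to any pattern map with the lifting property. Two points you should make explicit if writing this up: that the factored map $H \to L^c$ really satisfies the monomorphism and cartesianness conditions of Definition~\ref{def M} (it does, since $L^c_{i,j} = L_{i,j} \times_{L_{0,m}} \{c\}$ is a subobject and pullbacks compose), and that in the enriched case the decomposition of the comma $\infty$-category over components requires checking that mapping spaces in $\levelU$ decompose accordingly over those of $\levelg$ --- both are routine, and the paper's own treatment of the enriched case is comparably terse.
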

\begin{proof}
We write $i\colon \gc_{\name{c}}\to \gc$ for any of the three inclusions under consideration.
For the first part, it suffices to show that both adjoint functors $i^*\colon \Pre(\gc)\rightleftarrows  \Pre(\gc_{\name{c}})\cocolon i_*$ restrict to functors between the corresponding $\infty$-category of Segal objects. 
By the previous proposition this is true for $i^*$. 
By \cite[Proposition 6.3]{patterns1} the right adjoint $i_*$ restricts to $\Seg(\gc_{\name{c}})\to \Seg(\gc)$ if $i^\op$ has unique lifting of active morphisms. 
Suppose we have an active map $f\colon H\to i(G)$ in $\gc^\op$ lying over some active morphism $[n]\to [m]$ in $\simp^\op$, then we have $H_{0,n}\isoto G_{0,m}$. 
By Lemma~\ref{lemma F0n connected}, $G$ is an object of $\gc_{\name{c}}$ if and only if $G_{0,m}\simeq\{*\}$. 
The same lemma then implies that $H\in \gc_{\name{c}}$.
Hence $f$ lifts to an active map of $\gc_{\name{c}}^\op$, and since $i$ is fully-faithful the unique lifting property holds.

Since a morphism in $\gcU$ is active if and only if its projection to $\gc$ is active (see Remark~\ref{rem gcalgpatt}), the same argument shows that the inclusion $\gc_{\name{c}}^\xU\to \gc^\xU$ induces a right adjoint $\Seg(\gc_\name{c}^\xU)\to \Seg(\gc^\xU)$ given by restriction.
\end{proof}

\begin{proposition}\label{proposition LcL}
The inclusion $i\colon\levelgconn \hookrightarrow \levelg$  induces an equivalence $i^*\colon \Seg(\levelg)\isoto\Seg(\levelgconn)$. 
For every small symmetric monoidal $\infty$-category $\xU$, the inclusion $\levelcU \hookrightarrow \levelU$ induces an equivalence $\Seg(\levelU)\isoto\Seg(\levelcU)$.
Similar statements hold when the pair $(\levelgconn, \levelg)$ is replaced by $(\levelg_{\name{sc}},\levelg_{\name{0-type}})$ or $(\levelg_{\name{out,c}},\levelg_{\name{out}})$.
\end{proposition}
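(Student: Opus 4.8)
The plan is to prove that the precomposition functor $i^*\colon \Seg(\levelg)\to\Seg(\levelgconn)$ is an equivalence by exhibiting it as a coreflective localization whose unit is invertible. By Lemma~\ref{lem actlift}, $i^*$ admits a right adjoint $i_*$, namely the restriction of the right Kan extension along $i^\op$, and both functors preserve Segal objects. Since $i$, and hence $i^\op$, is fully faithful, the right Kan extension $i_*$ is fully faithful as a functor of presheaf categories, so the counit $\varepsilon\colon i^* i_* \to \id$ is an equivalence; restricting to Segal objects this remains true. Consequently $i^*$ is essentially surjective and $i_*$ is fully faithful, so it remains only to show that the unit $\eta_F\colon F\to i_* i^* F$ is an equivalence for every $F\in\Seg(\levelg)$.

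The key step is to check $\eta_F$ on elementary objects and then propagate. The elementary level graphs, namely the corollas and the edge $\xfe$, are connected and therefore lie in the image of $i$. For such an object $E$ the counit provides an identification $(i_* i^* F)(E)\simeq (i^* F)(E)=F(E)$, and the triangle identity $\varepsilon_{i^* F}\circ i^*(\eta_F)=\id_{i^* F}$ together with the invertibility of $\varepsilon$ shows that $i^*(\eta_F)$ is an equivalence. But $i^*(\eta_F)$ is precisely $\eta_F$ evaluated on connected level graphs, so in particular $\eta_F$ is an equivalence on every elementary object. Now $F$ and $i_* i^* F$ are both Segal $\levelg$-spaces, so by the Segal condition (Definition~\ref{def algpatt}) the value of either on an arbitrary $G$ is the limit of its values over the elementary objects mapping inertly to $G$. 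As $\eta_F$ is a natural transformation inducing the comparison between these limit diagrams and is an equivalence on each elementary object, it is an equivalence on every $G$. Hence $\eta_F$ is an equivalence, $i^*$ is fully faithful, and therefore $i^*$ is an equivalence.

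The enriched statement is proved verbatim. By the enriched half of Lemma~\ref{lem actlift} the adjunction $i^*\dashv i_*$ between $\Seg(\levelU)$ and $\Seg(\levelcU)$ restricts to Segal objects; the inclusion $\levelcU\hookrightarrow\levelU$ is fully faithful, being a base change of $\levelgconn^\op\hookrightarrow\levelg^\op$; and the elementary objects are the decorated corollas $\xfc(v)$ and the edge $\xfe$ (Remark~\ref{rem gcalgpatt}), all of which are connected. The same two steps --- counit invertible by fully-faithfulness, unit invertible because it is so on elementary objects and both sides are Segal --- then apply. The remaining pairs $(\levelg_{\name{sc}},\levelg_{\name{0-type}})$ and $(\levelg_{\name{out,c}},\levelg_{\name{out}})$ are treated identically, using the corresponding cases of Lemma~\ref{lem actlift} and the connectedness criterion of Lemma~\ref{lemma F0n connected}. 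The only genuinely delicate ingredient is that the right adjoint $i_*$ restricts to Segal objects, which rests on the unique lifting of active morphisms; since this has already been established in Lemma~\ref{lem actlift}, the present argument is essentially formal.
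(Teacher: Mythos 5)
Your proof is correct, and it differs from the paper's in the one step that carries real content, namely showing that the unit $\eta_F\colon F\to i_*i^*F$ is an equivalence. The paper does this by explicitly computing the right Kan extension: it identifies $i_*i^*F(I)$ with $\lim_{J\in(\levelgconn^\op)_{I/}}F(iJ)$, proves via \cite[Theorem 4.1.3.1]{ht} that the finite subcategory of connected components $\{I\to I_j\}$ is final in that comma category (each map out of $I$ to a connected graph factors through a unique component), deduces $i_*i^*F(I)\simeq\prod_j F(I_j)$, and then matches this with $F(I)$ using the Segal condition. You instead bypass the finality computation entirely: the triangle identity plus the invertible counit show $\eta_F$ is an equivalence on all \emph{connected} level graphs, in particular on the elementary ones, and then you invoke the general principle that a map between Segal presheaves is an equivalence as soon as it is one on elementary objects (both sides being Segal thanks to Lemma~\ref{lem actlift}). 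Your route is more formal and shorter; the paper's route has the side benefit of exhibiting the explicit formula $i_*i^*F(I)\simeq\prod_j F(I_j)$ for the value of the right Kan extension on a disconnected graph. Both arguments rest on the same nontrivial input, namely that $i_*$ preserves Segal objects, which you correctly attribute to the unique lifting of active morphisms established in Lemma~\ref{lem actlift}.
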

\begin{proof}
We only give a proof for $(\levelgconn, \levelg)$, as the other two situations are entirely analogous. 
By Proposition~\ref{proposition restriction restricts to segal stuff} and Lemma~\ref{lem actlift} the adjunction
$i^*\colon \Pre(\levelg)\rightleftarrows \Pre(\levelgconn)\colon i_*,$ where the right adjoint $i_*$ is given by right Kan extension, restricts to an adjunction
\[i^*\colon \Seg(\levelg)\rightleftarrows \Seg(\levelgconn)\cocolon i_*\]
Since $i$ is fully faithful, the right adjoint $i_*$ is fully faithful and the counit is an equivalence. It only remains to show that the unit $\id \to i_*i^*$ is an equivalence after evaluating at any object $F\in \Seg(\levelg)$ and $I\in \levelg^\op$. 
The description of right Kan extension give $i_*i^*F(I)\simeq \lim_{J\in (\levelgconn^\op)_{I/}}F(iJ)$. Let us regard the set $\{I_j\}$ of objects $I\to I_j$ in $(\levelgconn^\op)_{I/}$ such that $I_j$ is a connected component of $I$. 
We now want to see that the inclusion of $\{I_j\}$ into $(\levelgconn^\op)_{I/}$ is final. 
According to \cite[Theorem 4.1.3.1]{ht} this is can be proven by verifying that for every $\phi\in (\levelgconn^\op)_{I/}$ the category $\{I_j\}_{/\phi}$ is weakly contractible. 
As the codomain of $\phi\colon I\to J$ in $\levelg^\op$ is connected, the morphism $\phi$ necessarily factors through a unique connected components $I_j$, and thus, the category $\{I_j\}_{/\phi}$ contains only the object $(I \to I_j) \to (I \to J)$.

This implies that $\lim_{J\in (\levelgconn^\op)_{I/}}F(iJ)\simeq \prod_{j}F(I_j)$ and the assumption that $F\in \Seg(\levelg)$ implies that $\prod_{j}F(I_j)\simeq \prod_{j}\lim_{E\in {(\levelg_{\xel}^\op})_{I_j/}}F(E)\simeq \lim_{E\in{(\levelg_{\xel}^\op)}_{I/}}F(E)$ where the last equivalence is given by writing ${(\levelg_{\xel}^\op)}_{I/}$ as a coproduct of ${(\levelg_{\xel}^\op)}_{I_j/}$.

The equivalence $\Seg(\levelU)\simeq \Seg(\levelcU)$ is proved in essentially the same way.
\end{proof}

In the $1$-categorical setting, taking the underlying set of colors gives a forgetful functor from the category of properads to the category of sets such that each fibre can be constructed as algebras over an operad (see Lemma~\ref{YJ Section 14.1} below). 
Theorem~\ref{thm:PCSisAlgLT} below can be interpreted as an $\infty$-categorical version of this classical fact where taking the underlying set corresponds to evaluation at the edge $\xfe$ in $\opgcV$. 
As in the classical case the fibres are certain algebras.

\begin{defn}[Edge decorations]
	Given a space $X\in \xS$, we can construct pictured right Kan extension
	\[ \begin{tikzcd}
	\{ \xfe \} \dar[swap]{X} \rar[hook] & \gc^\op \arrow[dl, dashed] \\
	\xS 
	\end{tikzcd} \]
	and we write $\gc^\op_X \to \gc^\op$ for the left fibration associated to $\gc^\op \to \xS$.
	We call a morphism in $\gc^\op_X$ 
\begin{itemize}
	\item \emph{inert} if it is cocartesian and lies over an inert morphism in $\gc^\op$, or
	\item \emph{active} if it lies over an active morphism in $\gc^\op$.
\end{itemize}
Likewise, we say that an object is elementary if its image in $\gc^\op$ is elementary.
This gives $\gc^\op_X$ the structure of an algebraic pattern.
	We let $\opgcV_X$ denote the pullback $\gc^\op_X \times_{\gc^\op}\opgcV \simeq \gc^\op_X \times_{\pfinsetskel} \xV^{\op,\otimes}$ whose inert/active morphisms as well as elementary objects are defined by the components.
\end{defn}
\begin{notation}\label{no Y_X}
	By unwinding the previous definition the fibre of $\gc^\op_X \to \gc^\op$ over a graph $G$ is given by the evaluation of the associated functor $\gc^\op \to \xS$ at $G$. 
	If $G$ has $n$ edges, then this functor takes $G$ to $X^n$. 
	Therefore, we can view an object in $\gc^\op_X$ as an object in $\gc^\op$ together with a labeling of its edges by elements of the space $X$. 
	We write $G(x_e)_{e\in \edge(G)}$ for an object in $\gc^\op_X$ and $G(v_c,x_e)_{c,e}$ for an object in $\opgcV_X$.
\end{notation}

\begin{defn}\label{def alg}
	Let $\xV^\otimes\to \pfinsetskel$ be a symmetric monoidal \icat{}.
	We define a \emph{$\gc^\op_X$-algebra} in $\xV$ to be a functor $\gc^\op_X \to \xV^\otimes$ which renders the diagram
\csquare{\gc^\op_X}{\xV^\otimes}{\gc^\op}{\pfinsetskel}{}{}{}{\vertex_\gc^\op}
	commutative and takes the inert morphisms lying over $\rho_{i}$ to inert morphisms in $\xV^\otimes$.
	We write $\Alg_{\gc^\op_X}(\xV)$ for the full subcategory of $\Fun_{\pfinsetskel}(\gc^\op_X, \xV^\otimes)$ spanned by the $\gc^\op_X$-algebras.
	This construction is (contravariantly) functorial in $X$, and we write $\Alg_{\gc^\op/\xS}(\xV) \to \xS$ for the cartesian fibration associated to the functor $\xS^\op \to \CatI$ which sends $X$ to $\Alg_{\gc^\op_X}(\xV)$.
\end{defn}

In particular, when $X$ is a point, the \icat{} $\Alg_{\bbY^\op_X}(\xS)$ coincides with the \icat{} presented by the model structure for Segal properads given in Theorem 5.3 of \cite{hry_factorizations}.

The following analogue of Theorem 2.4.4 of \cite{ChuHaugseng} can be proved in a similar manner, after replacing $\simp_{\finsetskel}^{\op, \xV}$ with $\opgcV$.
Recall from Remark~\ref{rem repcts} that the continuous condition from \cite{ChuHaugseng} is called fibrewise representability here.
\begin{thm}\label{thm:PCSisAlgLT}
	Let $\xV$ be a presentably symmetric monoidal \icat{}. There is an equivalence 
	\[
 \begin{tikzcd}[column sep=tiny]
  \Segrep(\gcV)\arrow{dr} \arrow{rr}{\sim} &
    & \Alg_{\gc^\op/\xS}(\xV) \arrow{dl} \\
    & \xS
 \end{tikzcd}
 \]
of cartesian fibrations where the left diagonal functor is given by the evaluation at the edge $\xfe$.\qed
\end{thm}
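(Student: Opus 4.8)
The plan is to adapt the proof of \cite[Theorem 2.4.4]{ChuHaugseng} essentially verbatim, systematically replacing the forest category $\simp_{\finsetskel}^{\op,\xV}$ by $\opgcV$. The structural inputs that make that argument run are exactly the features recorded in Notation~\ref{notation graph categories}: the category $\gc^\op$ carries an inert-active factorization system, it comes equipped with a vertex functor $\vertex_\gc^\op\colon \gc^\op\to \pfinsetskel$ preserving that factorization system (Proposition~\ref{prop active inert bbG} and Proposition~\ref{prop active inert bbY}), and $\opgcV$ is defined as the pullback of $\xV^{\op,\otimes}$ along $\vertex_\gc^\op$. Since all of these are now in place, the argument transfers with only notational changes. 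Concretely, I would first verify that $\name{ev}_\xfe\colon \Segrep(\gcV)\to \xS$ is a cartesian fibration, whose cartesian edges over a map $X'\to X$ of spaces are given by relabeling (base change) of the edge-colors. It then suffices to construct a functor of cartesian fibrations over $\xS$ and check that it is an equivalence on each fiber, invoking the standard criterion that a fiberwise equivalence of cartesian fibrations which preserves cartesian morphisms is an equivalence (see \cite{ht}).

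Next I would build the comparison functor. Given $F\in \Segrep(\gcV)$ with $X=F(\xfe)$, Definition~\ref{def cts Seg psh} produces, for each corolla $\xfc_{m,n}$ and each labeling $\underline{xy}\in X^{m+n}$, a representing object $\xMap_F(\underline{xy})\in \xV$. Using the pullback presentation $\opgcV_X\simeq \gc^\op_X\times_{\pfinsetskel}\xV^{\op,\otimes}$ together with the characterization of fibrewise representability in Proposition~\ref{propn:CtsDFV} (that $F(\xfc_{m,n}(\blank))$ sends $\varnothing$ to $X^{m+n}$ and preserves weakly contractible limits), these objects assemble into a functor $A_F\colon \gc^\op_X\to \xV^\otimes$ lying over $\vertex_\gc^\op$. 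The Segal condition, together with the inert decomposition available in the symmetric monoidal $\infty$-category $\xV^\otimes$, then shows that $A_F$ sends inert maps lying over $\rho_i$ to inert maps in $\xV^\otimes$, so that $A_F$ is a $\gc^\op_X$-algebra in the sense of Definition~\ref{def alg}; its behavior on active maps is dictated by the values of $F$ on the active maps of $\opgcV$.

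In the other direction, given a $\gc^\op_X$-algebra $A$ I would reconstruct a fibrewise representable Segal $\gcV$-space $F_A$ by the Segal formula, building $F_A(\overline G)$ from the corolla values of $A$ glued along the elementary objects of $\overline G$ via the Segal-core description of $\Seg(\gcV)$ (Proposition~\ref{proposition segal local gcv}); fibrewise representability is then immediate because the corolla values are corepresented by those of $A$. That the two constructions are mutually inverse on each fiber reduces to the observation that both a fibrewise representable Segal $\gcV$-space at color-space $X$ and a $\gc^\op_X$-algebra are determined by the \emph{same} corolla data in $\xV$ --- on the Segal side by the Segal condition, and on the algebra side by inert-locality of $\xV^\otimes$ --- with the active maps of $\gc^\op$ encoding the same compositions. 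Finally, both constructions commute with base change along $X'\to X$, so the comparison functor preserves cartesian edges, which completes the verification.

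The main obstacle is not conceptual but one of coherence: one must check that the passage between ``representing object of a fibrewise representable presheaf'' and ``algebra value'' is natural in all morphisms of $\gc$ at once, and in particular that composition of active maps in $\gc^\op$ (that is, graph substitution) matches composition in $\xV^\otimes$ under $A_F$. This bookkeeping is genuinely heavier here than in the forest case, because the active maps of $\gc$ are richer. However, precisely because $\opgcV$ was set up as a pullback along a factorization-system-preserving vertex functor, and because the relevant structural facts about $\gc$ were established in Section~\ref{section categories of directed graphs}, the coherence argument is identical to that of \cite[Theorem 2.4.4]{ChuHaugseng}; the only genuinely new verification --- namely that $\gc$ is a graph category in the sense of Notation~\ref{notation graph categories} --- has already been carried out.
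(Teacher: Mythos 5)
Your proposal follows exactly the route the paper takes: the paper gives no proof body at all, simply asserting that the result is "the analogue of Theorem 2.4.4 of \cite{ChuHaugseng}" proved "in a similar manner, after replacing $\simp_{\finsetskel}^{\op,\xV}$ with $\opgcV$." Your sketch fills in the same adaptation in more detail (fiberwise comparison of cartesian fibrations, corolla data determining both sides, compatibility with base change), and correctly identifies that the only genuinely new input is that $\gc$ satisfies the structural properties of Notation~\ref{notation graph categories}, which the paper establishes in Section~\ref{section categories of directed graphs}.
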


The next two items, regarding base change along lax monoidal functors, constitute analogues of Proposition 2.4.11 and Corollary 2.4.12 of \cite{ChuHaugseng} and are proved similarly.

\begin{propn}
Suppose $\xV$ and $\xW$ are presentably symmetric monoidal $\infty$-categories.
	\begin{enumerate}[(i)]
		\item If $F \colon \xV \to \xW$ is a lax monoidal functor, then $F$ induces a functor \[F_{*} \colon \Alg_{\gc^\op_X}(\xV) \to \Alg_{\gc^\op_X}(\xW).\]
		\item If $F \colon \xV \to \xW$ is a symmetric monoidal left adjoint, with (lax monoidal) right adjoint $G$, then there is an adjunction
		\[ F_{*} \colon \Alg_{\gc^\op_X}(\xV) \rightleftarrows \Alg_{\gc^\op_X}(\xW)\cocolon G_{*}.\]
		\item If $L \colon \xV \to \xW$ is a symmetric monoidal localization with (lax monoidal) fully faithful right adjoint $i$, then the right adjoint $i_{*} \colon \Alg_{\gc^\op_X}(\xW) \to \Alg_{\gc^\op_X}(\xV)$ is fully faithful.
		The image consists of those algebras $A \colon \gc^\op_X \to \xV^\otimes$ so that $A$ takes each object of $\gc^\op_X$ lying over a corolla in $\gc^\op$ to an object in $i(\xW)$. \qed
	\end{enumerate}
\end{propn}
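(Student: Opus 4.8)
The plan is to lift each of the functor $F$, the adjunction $F \dashv G$, and the localization $L \dashv i$ to the level of the cocartesian fibrations over $\pfinsetskel$, and then to check that post-composition with these lifted functors respects the two conditions defining a $\gc^\op_X$-algebra. Throughout I would use the standard reformulation that a lax monoidal functor $\xV \to \xW$ is exactly a functor $F^\otimes \colon \xV^\otimes \to \xW^\otimes$ over $\pfinsetskel$ that carries cocartesian-over-inert morphisms to inert morphisms. For (i) I would set $F_*(A) \coloneqq F^\otimes \circ A$. The square over $\pfinsetskel$ commutes automatically since both $A$ and $F^\otimes$ lie over $\pfinsetskel$; and for the inert condition one observes that $A$ sends the morphisms over $\rho_i$ to inert morphisms of $\xV^\otimes$, while $F^\otimes$ (being lax monoidal) sends inert morphisms to inert morphisms, so the composite again sends morphisms over $\rho_i$ to inert maps. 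Hence $F^\otimes \circ A$ is a $\gc^\op_X$-algebra.

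For (ii), I would invoke the relative adjunction $F^\otimes \dashv G^\otimes$ over $\pfinsetskel$ attached to a symmetric monoidal adjunction, as in \cite{ha}. Post-composition then produces an adjunction $F^\otimes_* \dashv G^\otimes_*$ between the section categories $\Fun_{\pfinsetskel}(\gc^\op_X, \xV^\otimes)$ and $\Fun_{\pfinsetskel}(\gc^\op_X, \xW^\otimes)$, exactly as in \cite[Proposition 2.4.11]{ChuHaugseng}. Since both $F$ and $G$ are lax monoidal, part (i) applied to each shows that $F^\otimes_*$ and $G^\otimes_*$ both restrict to the full subcategories of algebras. As an adjunction whose two functors each preserve a pair of full subcategories restricts to an adjunction between those subcategories (the relevant mapping spaces are computed in the ambient categories), this yields the desired adjunction $F_* \dashv G_*$.

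For (iii), fully faithfulness of $i$ gives fully faithfulness of $i^\otimes \colon \xW^\otimes \to \xV^\otimes$, so post-composition $i^\otimes_*$ and its restriction $i_*$ are fully faithful. The real content is the identification of the essential image. On the fibre over $\langle k \rangle$ the functor $i^\otimes$ is identified with $i^k \colon \xW^k \to \xV^k$, whose essential image consists of the tuples with all entries in $i(\xW)$. An algebra $A$ factors (up to equivalence) through $i^\otimes$ precisely when every value $A(\overline G)$ lies in this essential image; because $A$ preserves the inert maps over the $\rho_i$ and every object of $\gc^\op_X$ decomposes along inert morphisms into its pieces over corollas (lying over $\langle 1\rangle$) and over edges (lying over $\langle 0\rangle$, where the condition is vacuous), this is detected by the corolla-objects alone. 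Thus the image consists exactly of those $A$ carrying each object over a corolla into $i(\xW) \subseteq \xV$, as claimed.

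The step I expect to be the main obstacle is the essential-image computation in (iii): one must argue carefully that membership in the image of $i^\otimes$ on all objects is equivalent to membership on the corollas, and this hinges on the inert decomposition of objects of $\gc^\op_X$ together with the inert-preservation built into the definition of an algebra — precisely the reduction carried out in \cite[Corollary 2.4.12]{ChuHaugseng}. By contrast, the lifting of the adjunction in (ii) and the preservation checks in (i) are formal once the relative-adjunction machinery and the characterization of lax monoidal functors are in hand.
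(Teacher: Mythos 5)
Your proposal is correct and follows essentially the same route as the paper, which proves this by adapting Proposition~2.4.11 and Corollary~2.4.12 of \cite{ChuHaugseng}: lift to $F^\otimes$, $G^\otimes$, $i^\otimes$ over $\pfinsetskel$, post-compose, and detect the essential image on corollas via the inert decomposition. Nothing is missing.
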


\begin{cor}\label{cor:laxmonftr}
Suppose $\xV$ and $\xW$ are presentably symmetric monoidal $\infty$-categories.
	\begin{enumerate}[(i)]
		\item If $F \colon \xV \to \xW$ is a lax monoidal functor, then $F$ induces a functor $F_{*} \colon \Segrep(\gcV) \to
		\Segrep(\gc^\xW)$.
		\item If $F \colon \xV \to \xW$ is a symmetric monoidal left adjoint, with (lax monoidal) right adjoint $G$, then there is an adjunction
		\[ F_{*} \colon \Segrep(\gcV) \rightleftarrows  \Segrep(\gc^\xW)\cocolon G_{*}.\]
		Moreover, the functor $G_{*}$ can be identified with $F^{*}$.
		\item 
		If $L \colon \xV \to \xW$ is a symmetric monoidal localization with (lax monoidal) fully faithful right adjoint $i$, then the right adjoint $i_{*} \simeq L^{*} \colon \Segrep(\gc^\xW) \to \Segrep(\gcV)$ is fully faithful. 
		The image consists of those $F \in \Segrep(\gcV)$ so that the functors \[ F(\xfc_{m,n}(\blank, \underline{xy}))\colon \xV^\op\to \xS \] from Definition~\ref{def cts Seg psh} are representable by objects in $\xW$ for every corolla $\xfc_{m,n}$ in $\gc$ and every $\underline{xy} = (x_1,\dots,x_m;y_1,\dots,y_n) \in F(\xfe)^{m+n}$.\qed
	\end{enumerate}
\end{cor}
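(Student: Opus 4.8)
The plan is to deduce all three statements from the corresponding parts of the preceding proposition by transporting them across the equivalence of cartesian fibrations $\Segrep(\gcV) \simeq \Alg_{\gc^\op/\xS}(\xV)$ supplied by Theorem~\ref{thm:PCSisAlgLT}, under which the projection to $\xS$ is evaluation at the edge $\xfe$. The essential point is that every construction in the preceding proposition is produced by postcomposing an algebra $\gc^\op_X \to \xV^\otimes$ with an induced functor $\xV^\otimes \to \xW^\otimes$ (or conversely), and postcomposition commutes with the restriction functors along the maps $\gc^\op_{X'} \to \gc^\op_X$ induced by $X' \to X$ in $\xS$. Consequently these fibrewise functors are natural in the edge-decoration space $X$, assemble into functors of total categories over $\xS$ preserving cartesian morphisms, and can then be carried into the $\Segrep$ picture.

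For (i), part (i) of the preceding proposition gives $F_* \colon \Alg_{\gc^\op_X}(\xV) \to \Alg_{\gc^\op_X}(\xW)$ for each $X$; by the naturality just discussed these glue to a functor over $\xS$ preserving cartesian edges, hence to the desired $F_* \colon \Segrep(\gcV) \to \Segrep(\gc^\xW)$. For (ii), part (ii) yields a fibrewise adjunction $F_* \dashv G_*$ whose two functors are both compatible with base change in $X$; by the theory of relative adjunctions (\cite[\S 7.3.2]{ha}) this promotes to a relative adjunction of cartesian fibrations over $\xS$, which Theorem~\ref{thm:PCSisAlgLT} carries to the claimed adjunction on fibrewise representable Segal spaces. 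To identify $G_*$ with the precomposition functor $F^*$ induced by $F$ along the decoration functor $\opgcV \to \gc^{\op,\xW}$, I would match $F^*$ and $G_*$ directly on objects: for $P \in \Segrep(\gc^\xW)$ the mapping objects of $F^*(P)$ are the representing objects of $v \mapsto \Map_\xW(F(v), \xMap_P(\cdots))$, which by the adjunction $F \dashv G$ equals $v \mapsto \Map_\xV(v, G(\xMap_P(\cdots)))$, so that $F^*(P)$ and $G_*(P)$ share the same mapping objects $G(\xMap_P(\cdots)) \in \xV$; invoking the representability formula of Definition~\ref{def cts Seg psh} and uniqueness of right adjoints completes the identification.

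For (iii), part (iii) of the preceding proposition shows that $i_*$ is fibrewise fully faithful with image the algebras taking every object over a corolla into $i(\xW)$; fully faithfulness and the image description are preserved by the equivalence of Theorem~\ref{thm:PCSisAlgLT}. To rewrite the image condition on the $\Segrep$ side, I would use that under this equivalence the value, at an object lying over the corolla $\xfc_{m,n}$ with edge-labels $\underline{xy}$, of the algebra associated to $F \in \Segrep(\gcV)$ is precisely the representing object $\xMap_F(x_1,\dots,x_m;y_1,\dots,y_n) \in \xV$ of Definition~\ref{def cts Seg psh}. Thus this value lies in $i(\xW)$ if and only if the functor $F(\xfc_{m,n}(\blank, \underline{xy}))$ is representable by an object of $\xW$, which is the stated characterization; the identification $i_* \simeq L^*$ then proceeds exactly as the identification $G_* \simeq F^*$ in (ii).

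The transport across Theorem~\ref{thm:PCSisAlgLT} is formal once one knows the functors involved respect the cartesian fibration structure, so the real content is twofold: verifying the base-change compatibility that globalizes the fibrewise statements over $\xS$, and pinning down the two identifications $G_* \simeq F^*$ and $i_* \simeq L^*$, where an abstractly-defined right adjoint must be matched with a concrete precomposition functor. I expect the latter to be the genuinely delicate step, since it requires comparing the representability descriptions of mapping objects on both sides rather than a purely formal manipulation.
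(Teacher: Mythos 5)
Your proposal is correct and follows the same route the paper intends: the corollary is deduced from the preceding proposition by transporting the fibrewise statements across the equivalence $\Segrep(\gcV)\simeq\Alg_{\gc^\op/\xS}(\xV)$ of Theorem~\ref{thm:PCSisAlgLT}, exactly as in the operadic case of Chu--Haugseng. Your added care about base-change compatibility over $\xS$ and the identification of $G_*$ with $F^*$ via representing objects fills in precisely the details the paper leaves implicit.
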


\subsection{Enrichment in presheaves}\label{subsec psh}
In this subsection we recall some results which can be proven in a similar manner to their $\infty$-operadic counterparts in \cite[\textsection 2.5 and \textsection 2.6]{ChuHaugseng}, by replacing $\simp_{\finsetskel}$ with a graph category $\gc$.  

Let $\xU$ be a small symmetric monoidal \icat{}, then according to \cite[Corollary 4.8.1.12]{ha} the presheaf $\infty$-category $\Pre(\xU)$ has a unique symmetric monoidal structure (called `Day convolution') such that the tensor product preserves colimits in each variable and the Yoneda embedding $y\colon \xU \to \Pre(\xU)$ is symmetric monoidal.

\begin{thm}\label{thm enr psh}
For every small symmetric monoidal \icat{} $\xU$, the fully faithful functor $\gcU \to \gcPU$, induced by the symmetric monoidal Yoneda embedding $y\colon \xU \to \Pre(\xU)$, gives a fully faithful functor $y_{*} \colon \Seg(\gcU) \hookrightarrow \Seg(\gcPU)$.
The functor $y_*$ restricts to an equivalence 
\[ y_*\colon \Seg(\gcU) \isoto \Segrep(\gcPU). \]
\end{thm}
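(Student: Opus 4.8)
The plan is to realize $y_*$ as a right Kan extension and to identify its essential image with $\Segrep(\gcPU)$ via a fibrewise Yoneda computation. Since $y\colon \xU\to\Pre(\xU)$ is symmetric monoidal and fully faithful, the induced map of cocartesian fibrations $\xU^{\op,\otimes}\to\Pre(\xU)^{\op,\otimes}$ over $\pfinsetskel$ is fully faithful (it is fully faithful on each fibre and preserves cocartesian edges); pulling back along $\vertex^\op_\gc$ as in Definition~\ref{def LV_YV} shows the induced functor $\bar y\colon\gc^{\op,\xU}\to\gc^{\op,\Pre(\xU)}$ is fully faithful. By Remark~\ref{rem gcalgpatt} it is moreover a morphism of algebraic patterns: it preserves inert and active maps and elementary objects, and induces equivalences $(\gc^{\op,\xU}_{\xel})_{\overline G/}\isoto(\gc^{\op,\Pre(\xU)}_{\xel})_{\bar y\overline G/}$ because every elementary object receiving an inert map from $\bar y\overline G$ is decorated by objects in the image of $y$. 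I would then set $y_*\coloneqq\bar y_*$, the right Kan extension along $\bar y$, right adjoint to the restriction $\bar y^*\colon\Pre(\gcPU)\to\Pre(\gcU)$. As $\bar y$ is fully faithful, the counit $\bar y^*\bar y_*\to\id$ is an equivalence, so $\bar y_*$ is fully faithful; and by \cite[Lemma 4.5]{patterns1} the functor $\bar y^*$ carries $\Seg(\gcPU)$ into $\Seg(\gcU)$.

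Next I would show $\bar y_*$ preserves the Segal condition, exactly as in Lemma~\ref{lem actlift}, by verifying the unique active lifting hypothesis of \cite[Proposition 6.3]{patterns1} for $\bar y$. Given an active map out of $\bar y\overline E$, its image in $\gc^\op$ is active and its decoration component is governed by the monoidal structure of $\Pre(\xU)$; since $y$ is symmetric monoidal, $y(u_1)\otimes\cdots\otimes y(u_k)\simeq y(u_1\otimes\cdots\otimes u_k)$, so the target remains $y$-decorated, and full faithfulness of $y$ makes the lift unique. Hence $\bar y_*$ restricts to a functor $\Seg(\gcU)\to\Seg(\gcPU)$, whose full faithfulness is inherited from that of $\bar y_*$ on all presheaves.

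The heart of the argument is to compute $\bar y_*F$ on corollas for $F\in\Seg(\gcU)$. Right Kan extensions of $\xS$-valued functors are pointwise, so $(\bar y_*F)(\xfc_{m,n}(\phi))=\lim F$ over the comma category $(\gc^{\op,\xU})_{\xfc_{m,n}(\phi)/}$. Using the active–inert factorization in $\gc$ and the fact that $\xfc_{m,n}$ is elementary, I would argue that the full subcategory on those objects whose graph component is the identity of $\xfc_{m,n}$ is coinitial; this subcategory is the Yoneda comma category $(\xU^\op)_{\phi/}$ sitting inside the corolla fibre $\gc^{\op,\xU}_{\xfc_{m,n}}\simeq\xU^\op$. (Alternatively one first rewrites $F(\overline E)$ via the Segal condition as a limit over elementaries and reorganizes the resulting double limit.) The limit then collapses to the fibrewise right Kan extension of $F(\xfc_{m,n}(\blank))\in\Pre(\xU)$ along $y^\op\colon\xU^\op\to\Pre(\xU)^\op$, which is $\Map_{\Pre(\xU)}(\blank,F(\xfc_{m,n}(\blank)))$ and hence representable; as $(\bar y_*F)(\xfe)\simeq F(\xfe)$, this exhibits $\bar y_*F$ as fibrewise representable in the sense of Definition~\ref{def cts Seg psh}. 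For essential surjectivity, given $H\in\Segrep(\gcPU)$ I would put $F\coloneqq\bar y^*H\in\Seg(\gcU)$ and show the unit $H\to\bar y_*\bar y^*H$ is an equivalence: on the edge it is the identity, on corollas both sides are representable and the restriction-then-Yoneda computation above recovers the same representing object $\xMap_H$, and since both sides are Segal, an equivalence on elementary objects is an equivalence everywhere by Proposition~\ref{proposition segal local gcv}.

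The main obstacle is the corolla computation of the previous paragraph, and specifically the coinitiality statement identifying the relevant comma category with the Yoneda comma $(\xU^\op)_{\phi/}$. Everything else is either formal or parallels the operadic arguments of \cite[\S2.5--2.6]{ChuHaugseng}; this cofinality, which controls how morphisms out of a decorated corolla interact with the graphical active–inert factorization, is where the combinatorics of $\gc$ genuinely enters and must be checked with care.
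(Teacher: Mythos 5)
Your overall architecture---realize $y_*$ as the right Kan extension along the fully faithful $\bar y\colon \gc^{\op,\xU}\to\gc^{\op,\Pre(\xU)}$, get full faithfulness from the counit, show preservation of the Segal condition, and identify the essential image by computing $y_*F$ on corollas---is the same as the paper's, which simply defers to the operadic argument of Chu--Haugseng. But the specific cofinality statement you propose to prove at the crux is false. The subcategory of $(\gc^{\op,\xU})_{\xfc_{m,n}(\phi)/}$ on objects with identity graph component, i.e.\ $(\xU^{\op})_{\phi/}\simeq(\xU_{/\phi})^{\op}$, is \emph{not} initial. The cleanest counterexample is $\phi=\varnothing$: then $(\xU_{/\phi})^{\op}$ is empty and your formula would give $(y_*F)(\xfc_{m,n}(\varnothing))\simeq *$, whereas fibrewise representability forces $(y_*F)(\xfc_{m,n}(\varnothing))\simeq F(\xfe)^{m+n}$ by Proposition~\ref{propn:CtsDFV}\eqref{propn:CtsDFV wc limits}. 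More generally, $\lim_{(\xU_{/\phi})^{\op}}F(\xfc_{m,n}(u))$ is a limit in $\xS$ whose map to $F(\xfe)^{m+n}$ factors through $\Map(|\xU_{/\phi}|,F(\xfe)^{m+n})$, so its fibres over points of $F(\xfe)^{m+n}$ are not $\Map_{\Pre(\xU)}(\phi,\xMap_F(\underline{xy}))$ unless $\xU_{/\phi}$ is weakly contractible. The coinitial subcategory must also contain the $m+n$ objects $(\xfe,\ \xfc_{m,n}(\phi)\to\xfe)$; these contribute the base-correcting factor, yielding
\[
\lim_{(\xU_{/\phi})^{\op}}F(\xfc_{m,n}(u))\ \times_{\Map(|\xU_{/\phi}|,\,F(\xfe)^{m+n})}\ F(\xfe)^{m+n},
\]
which is the fibrewise mapping object you actually want. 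So the parenthetical alternative you mention---organizing the computation over the \emph{elementary} comma category via the Segal condition---is not optional; it is the only viable route, and the cofinality to be checked is that of this larger subcategory.

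A second, related gap: your mechanism for showing that $\bar y_*$ preserves Segal objects does not apply, because $\bar y$ does not have unique lifting of active morphisms. An active morphism of $\gc^{\op,\Pre(\xU)}$ with one endpoint $y$-decorated can have its other endpoint decorated by arbitrary, non-representable presheaves: monoidality of $y$ controls tensor products of representables, but an active map $\bigotimes_w\phi_w\to y(u)$ in $\Pre(\xU)$ places no constraint on the individual $\phi_w$, so there is in general no lift at all, let alone a unique one. Hence \cite[Proposition 6.3]{patterns1} is unavailable, and preservation of the Segal condition must instead be extracted from the same explicit computation of $\bar y_*F$ on the inert subcategory (where the elementary comma categories above reappear). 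The rest of your outline---full faithfulness of $\bar y$ and of $\bar y_*$, the reduction of essential surjectivity to checking the unit on elementary objects via Proposition~\ref{proposition segal local gcv}---is sound.
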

\begin{proof}
	As in \cite[Theorem 2.5.2]{ChuHaugseng}.
\end{proof}

Applying this theorem to the case $\gc=\bbY$ and $\xU = *$ yields the following.
\begin{cor}\label{cor hry equ}
Segal $\bbY$-spaces are equivalent to fibrewise representable $\bbY^{\xS}$-spaces. \qed
\end{cor}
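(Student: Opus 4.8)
The plan is to derive Corollary~\ref{cor hry equ} directly from Theorem~\ref{thm enr psh} by specializing the graph category $\gc$ to $\bbY$ and the small symmetric monoidal $\infty$-category $\xU$ to the terminal category $*$. First I would record that $\Pre(*) = \xFun(*^\op, \xS) \simeq \xS$, and that the Day convolution symmetric monoidal structure on $\Pre(*)$ coincides with the cartesian product on $\xS$ (since $*$ carries the unique symmetric monoidal structure, and the Yoneda embedding $* \to \xS$ sends the single object to the point, which is the monoidal unit). Thus the theorem's functor $y \colon \xU \to \Pre(\xU)$ becomes the inclusion $* \to \xS$ picking out the terminal space.

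With these identifications, Theorem~\ref{thm enr psh} applied to $\gc = \bbY$ and $\xU = *$ produces a fully faithful functor $y_* \colon \Seg(\bbY^*) \hookrightarrow \Seg(\bbY^{\Pre(*)})$ which restricts to an equivalence $y_* \colon \Seg(\bbY^*) \isoto \Segrep(\bbY^{\Pre(*)})$. The second step is to unwind the decorated graph categories $\bbY^*$ and $\bbY^{\Pre(*)} = \bbY^{\xS}$ appearing here. By Definition~\ref{def LV_YV}, $\bbY^{*,\op}$ is the pullback of $\vertex_\bbY^\op \colon \bbY^\op \to \pfinsetskel$ against the opposite symmetric monoidal fibration of $*$; since $*^\otimes \to \pfinsetskel$ has contractible fibres, this pullback is just $\bbY^\op$ itself, so $\Seg(\bbY^*) \simeq \Seg(\bbY)$, the $\infty$-category of Segal $\bbY$-spaces. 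Similarly $\bbY^{\xS}$ is exactly the $\xV$-decorated graph category of Definition~\ref{def LV_YV} with $\xV = \xS$, so $\Segrep(\bbY^{\xS})$ is by definition the $\infty$-category of fibrewise representable $\bbY^{\xS}$-spaces.

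Combining these identifications yields the asserted equivalence $\Seg(\bbY) \isoto \Segrep(\bbY^{\xS})$, which is precisely the statement of the corollary. The only genuine verifications are the two bookkeeping facts just described: that the Day convolution structure on $\Pre(*)$ is the cartesian one on $\xS$ (so that the hypotheses of Theorem~\ref{thm enr psh} are met, the tensor preserving colimits in each variable since cartesian product does in the $\infty$-topos $\xS$), and that decorating by the terminal symmetric monoidal category $*$ adds no data, so $\Seg(\bbY^*) \simeq \Seg(\bbY)$. I do not expect either to present an obstacle; the main point of the corollary is simply to name the $\xU = *$ instance of the general enrichment-in-presheaves theorem, recovering the comparison between ordinary (unenriched) Segal $\bbY$-spaces and the fibrewise representable spaces decorated in $\xS$. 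The one step requiring a modicum of care is confirming that the notion of fibrewise representability from Definition~\ref{def cts Seg psh} specializes correctly when $\xV = \xS$ and the edge space $F(\xfe)$ governs the representing objects, but this is immediate from the definitions once the identifications above are in place.
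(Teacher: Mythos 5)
Your proposal is correct and matches the paper's own argument exactly: the corollary is stated in the paper as the immediate specialization of Theorem~\ref{thm enr psh} to $\gc = \bbY$ and $\xU = *$, with the same implicit identifications $\Pre(*)\simeq \xS$ (Day convolution becoming the cartesian product) and $\bbY^* \simeq \bbY$. The bookkeeping you spell out is exactly what the paper leaves to the reader.
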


By taking the full subcategory of $\Seg(\bbY)$ for those functors which send the edge to a point, we obtain the \icat{} presented by the model structure for Segal properads from \cite[Theorem 5.3]{hry_factorizations}.
Thus we may regard fibrewise representable $\bbY^{\xS}$-spaces as a good algebraic version of $\infty$-properads.

\begin{definition}\label{def S properads}
Suppose that $\xU$ is a small symmetric monoidal $\infty$-category $\xU$ and $\mathbb{S}$ is a set of morphisms in $\Pre(\xU)$ such that the strongly saturated class generated by $\mathbb{S}$ is closed under tensor products. 
Let $y\colon \xU\to \Pre(\xU)$ be the Yoneda embedding.
	\begin{enumerate}[(i)]
		\item We write $\PSU$ for the full subcategory of $\Pre(\xU)$ spanned by the $\mathbb{S}$-local objects. 
		By \cite[Proposition 2.2.1.9]{ha}, it inherits a symmetric monoidal structure such that the localization $\Pre(\xU) \to \PSU$ is symmetric monoidal.
		\item We let $\SegrepS(\gcPU)$ denote the full subcategory of $\Segrep(\gcPU)$ spanned by functors which are local with respect to the maps $\xfc(s)$ where $\xfc$ is a corolla in $\gc$ and $s$ is in $\mathbb{S}$.
	\item We write $\SegS(\gcU)$ for the full subcategory of $\Seg(\gcU)$ spanned by functors which are local with respect to the maps $y^*\xfc(s)$ where $\xfc$ is a corolla in $\gc$ and $s$ is in $\mathbb{S}$.
	\end{enumerate}
\end{definition}

\begin{remark}\label{rmk S-seg repr}
	It follows from the definition that an object $F \in \Seg(\gcU)$ lies in $\SegS(\gcU)$ \IFF{} for every corolla $\xfc_{m,n}$ the induced functor \[ 
	\begin{tikzcd}
	\xU^{\op} \rar{F(\xfc_{m,n}(\blank))} &[+1.6em]
	\xS_{/F(\xfe)^{m+n}} \rar &[-0.5em] \xS
	\end{tikzcd}
	\] is local with respect to all maps in $\mathbb S$.
	Analogously, $F \in \Seg(\gcPU)$ lies in $\SegrepS(\gcPU)$ \IFF{} for every corolla $\xfc$, the induced functor $F(\xfc(\blank, \underline{xy})) \colon
	\Pre(\xU)^{\op} \to \xS$ is representable, and the representing object in $\Pre(\xU)$ is local with respect to all maps in $\mathbb{S}$.
\end{remark}

Since $\levelgconn$ and $\levelg$ have the same set of corollas, Proposition~\ref{proposition LcL} and the first part of the previous remark imply the following.

\begin{proposition}
\label{prop LcL S}
If $\xU$ is a small symmetric monoidal $\infty$-category and $\mathbb{S}$ is a set of morphisms in $\Pre(\xU)$ so that the strongly saturated class generated by $\mathbb{S}$ is closed under tensor products, then $\SegS(\levelU)\to \SegS(\levelcU)$ is an equivalence. \qed
\end{proposition}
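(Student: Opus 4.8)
The plan is to deduce the statement from the equivalence $i^*\colon \Seg(\levelU)\isoto \Seg(\levelcU)$ of Proposition~\ref{proposition LcL}, where $i\colon \levelcU \hookrightarrow \levelU$ is the inclusion, by showing that this equivalence carries the full subcategory $\SegS(\levelU)$ onto the full subcategory $\SegS(\levelcU)$. Since both are defined as full subcategories cut out by a locality condition (Definition~\ref{def S properads}), it is enough to prove that an object $F\in \Seg(\levelU)$ lies in $\SegS(\levelU)$ if and only if $i^*F\in \Seg(\levelcU)$ lies in $\SegS(\levelcU)$; together with the fact that $i^*$ is an equivalence, this forces $i^*$ to restrict to an equivalence of the two $\mathbb{S}$-local subcategories.

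To verify this biconditional, I would invoke the reformulation in Remark~\ref{rmk S-seg repr}: an object $F\in \Seg(\levelU)$ belongs to $\SegS(\levelU)$ precisely when, for every corolla $\xfc_{m,n}$, the functor $\xU^\op \xrightarrow{F(\xfc_{m,n}(\blank))} \xS_{/F(\xfe)^{m+n}} \to \xS$ is local with respect to every map in $\mathbb{S}$, and likewise for $\levelcU$. The key observation --- and the only point requiring care --- is that $\levelgconn$ and $\levelg$ have the \emph{same} elementary objects: every corolla $\xfc_{m,n}$ and the edge $\xfe$ already lie in the full subcategory $\levelgconn$, and the inert maps $\xfc_{m,n}\to \xfe$ entering the construction of $F(\xfc_{m,n}(\blank))$ are formed identically in $\levelg^{\op,\xU}$ and $\levelgconn^{\op,\xU}$ (Notation~\ref{notation graph categories}, recalling $\levelg^{\op}_{\xel}=\levelg^{\op}_{\name{c},\xel}$). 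Because $i$ is fully faithful and its image contains all of these elementary objects and morphisms, the restriction $i^*F$ agrees with $F$ on $\xfe$ and on each corolla, so the two functors $(i^*F)(\xfc_{m,n}(\blank))$ and $F(\xfc_{m,n}(\blank))$ literally coincide. Hence $F$ satisfies the $\mathbb{S}$-locality condition of Remark~\ref{rmk S-seg repr} if and only if $i^*F$ does.

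Finally, I would assemble the equivalence. The forward direction of the biconditional shows that $i^*$ restricts to the functor $\SegS(\levelU)\to \SegS(\levelcU)$ of the statement. For essential surjectivity and full faithfulness it suffices to note that the inverse of $i^*$ in Proposition~\ref{proposition LcL} is the right Kan extension $i_*$ with $i^*i_*\simeq \id$; given $G\in \SegS(\levelcU)$, the object $i_*G$ lies in $\Seg(\levelU)$ and satisfies $i^*i_*G\simeq G$, so the reverse direction of the biconditional places $i_*G$ in $\SegS(\levelU)$. Thus $i^*$ and $i_*$ restrict to mutually inverse equivalences between $\SegS(\levelU)$ and $\SegS(\levelcU)$. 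I do not expect a genuine obstacle here: the argument is pure bookkeeping, and the entire content is the stability of the corolla-wise $\mathbb{S}$-locality condition under restriction along a fully faithful inclusion that preserves elementary objects.
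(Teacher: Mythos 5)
Your proposal is correct and is exactly the argument the paper intends: the paper's one-line justification preceding the proposition ("Since $\levelgconn$ and $\levelg$ have the same set of corollas, Proposition~\ref{proposition LcL} and the first part of the previous remark imply the following") is precisely your observation that the $\mathbb{S}$-locality condition of Remark~\ref{rmk S-seg repr} only sees the values on elementary objects, which lie in $\levelgconn$ and are unchanged by $i^*$. You have simply spelled out the bookkeeping the paper leaves implicit.
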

\noindent
Under the same hypotheses, and by the same reasoning, we also have $\SegS(\levelg_{\name{0-type}}^\xU) \simeq \SegS(\levelg_{\name{sc}}^\xU)$ and $\SegS(\levelg_{\name{out}}^\xU) \simeq \SegS(\levelg_{\name{out,c}}^\xU)$.

After replacing $\simp_{\finsetskel}$ by $\gc$ in the proof of \cite[Corollary 2.6.3]{ChuHaugseng}, we obtain the following result:
\begin{proposition}\label{prop PSeg=PCS}
	Let $\xU$ and $\mathbb{S}$ be as in Definition~\ref{def S properads}.
	\begin{enumerate}[(i)]
		\item The equivalence $\Seg(\gcU) \isoto \Segrep(\gcPU)$ of Theorem~\ref{thm enr psh} restricts to an equivalence
		\[ \SegS(\gcU) \isoto \SegrepS(\gcPU).\]
		\item Let $\gcPU \to \gcPSU$ be the functor induced by the symmetric monoidal localization $\Pre(\xU) \to \PSU$.
		Precomposition induces an equivalence
		\[ \Segrep(\gcPSU) \isoto \SegrepS(\gcPU).\] 
	\end{enumerate}
Combined, these imply that $\SegS(\gcU) \simeq \Segrep(\gcPSU)$.
\qed
\end{proposition}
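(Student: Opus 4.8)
The plan is to obtain both equivalences by transporting the relevant mapping objects through equivalences already in hand and matching the two $\mathbb{S}$-locality conditions via the characterizations of Remark~\ref{rmk S-seg repr}.

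For part (i), I begin with the equivalence $y_* \colon \Seg(\gcU) \isoto \Segrep(\gcPU)$ of Theorem~\ref{thm enr psh}, which is compatible with evaluation at $\xfe$ and hence with the indexing of $\underline{xy}$ over $F(\xfe)^{m+n}$. Since $y_*$ is a right Kan extension along the fully faithful functor $\gcU \to \gcPU$ induced by the symmetric monoidal Yoneda embedding, it agrees with the original presheaf on objects lying over the image of $y$; combining this with fibrewise representability and the Yoneda lemma identifies, for each corolla $\xfc_{m,n}$ and each $\underline{xy}$, the representing object $\xMap_{y_*F}(\underline{xy}) \in \Pre(\xU)$ with the presheaf $u \mapsto F(\xfc_{m,n}(u, \underline{xy}))$ on $\xU$. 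Under this identification the condition defining $\SegS(\gcU)$, namely that each such presheaf on $\xU$ be $\mathbb{S}$-local, coincides verbatim with the condition defining $\SegrepS(\gcPU)$, namely that each representing object in $\Pre(\xU)$ be $\mathbb{S}$-local; both are read off from Remark~\ref{rmk S-seg repr}. Hence $y_*$ restricts to the asserted equivalence $\SegS(\gcU) \isoto \SegrepS(\gcPU)$.

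For part (ii), I apply Corollary~\ref{cor:laxmonftr}(iii) to the symmetric monoidal localization $L \colon \Pre(\xU) \to \PSU$ with its fully faithful right adjoint $i$. This produces a fully faithful functor $L^* \colon \Segrep(\gcPSU) \to \Segrep(\gcPU)$, induced by precomposition with the functor $\gcPU \to \gcPSU$, whose essential image is exactly the collection of $F \in \Segrep(\gcPU)$ for which every $F(\xfc_{m,n}(\blank, \underline{xy})) \colon \Pre(\xU)^\op \to \xS$ is represented by an object of $\PSU$, i.e.\ by an $\mathbb{S}$-local object of $\Pre(\xU)$. By the second characterization in Remark~\ref{rmk S-seg repr}, this essential image is precisely $\SegrepS(\gcPU)$, so $L^*$ restricts to the equivalence $\Segrep(\gcPSU) \isoto \SegrepS(\gcPU)$. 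Stringing (i) and (ii) together yields $\SegS(\gcU) \simeq \SegrepS(\gcPU) \simeq \Segrep(\gcPSU)$.

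The only genuinely nonformal point is the identification, in step (i), of the $\Pre(\xU)$-valued mapping objects of $y_*F$ with the presheaves $u \mapsto F(\xfc_{m,n}(u, \underline{xy}))$; this is built into the construction of the equivalence of Theorem~\ref{thm enr psh} (following \cite[Theorem 2.5.2]{ChuHaugseng}), after which both parts are immediate consequences of the structural results already established. In effect the whole argument is the transcription of \cite[Corollary 2.6.3]{ChuHaugseng} with $\simp_{\finsetskel}$ replaced throughout by $\gc$.
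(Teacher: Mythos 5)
Your proof is correct and is essentially the paper's own argument: the paper proves this proposition by transcribing \cite[Corollary 2.6.3]{ChuHaugseng} with $\simp_{\finsetskel}$ replaced by $\gc$, which is exactly what you have done — part (i) via the compatibility of $y_*$ with the two $\mathbb{S}$-locality conditions of Remark~\ref{rmk S-seg repr}, and part (ii) via Corollary~\ref{cor:laxmonftr}(iii) applied to the localization $\Pre(\xU) \to \PSU$. Nothing further is needed.
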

As an immediate consequence we have:
\begin{corollary}
	The $\infty$-category $\Segrep(\gcPSU)$ is given by an accessible localization of $\Seg(\gcU)$. In particular, it is presentable.
\end{corollary}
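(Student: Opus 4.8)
The plan is to exhibit $\Segrep(\gcPSU)$ as the localization of the presentable $\infty$-category $\Seg(\gcU)$ at a \emph{set} of morphisms, and then to invoke the standard fact that such localizations are accessible and land in presentable $\infty$-categories. The work has essentially been done already; what remains is to assemble Example~\ref{example seg equivs strongly saturated}, Definition~\ref{def S properads}, and Proposition~\ref{prop PSeg=PCS} and to check a smallness point.

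First I would record that $\Seg(\gcU)$ is itself presentable. Since $\gc$ and $\xU$ are small, the decorated category $\gc^{\op,\xU}$ is a small $\infty$-category, so $\Pre(\gcU) = \xFun(\gc^{\op,\xU},\xS)$ is presentable. By Example~\ref{example seg equivs strongly saturated} the Segal equivalences in $\Pre(\gcU)$ are strongly generated by the set of Segal core inclusions; hence $\Seg(\gcU)$ is the localization of $\Pre(\gcU)$ at this set, and by \cite[Proposition 5.5.4.15]{ht} it is an accessible localization of a presentable $\infty$-category, so is itself presentable.

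Next I would identify $\SegS(\gcU)$ as a localization of $\Seg(\gcU)$ at a set of maps. By Definition~\ref{def S properads} it is the full subcategory of $\Seg(\gcU)$ spanned by objects local with respect to the family $T = \{\, y^{*}\xfc(s) : \xfc \text{ a corolla of } \gc,\ s \in \mathbb{S} \,\}$ of morphisms of $\Pre(\gcU)$. Since $\Seg(\gcU)$ is a reflective localization of $\Pre(\gcU)$, a Segal object is $T$-local if and only if it is local with respect to the images of the maps of $T$ under the reflector, and these images again form a set of morphisms of $\Seg(\gcU)$ (here using that $\gc$ has only a set of corollas and that $\mathbb{S}$ is a set). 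Applying \cite[Proposition 5.5.4.15]{ht} once more, the inclusion $\SegS(\gcU)\hookrightarrow \Seg(\gcU)$ is reflective with an accessible localization functor, and $\SegS(\gcU)$ is presentable. Finally, Proposition~\ref{prop PSeg=PCS} supplies an equivalence $\SegS(\gcU) \isoto \Segrep(\gcPSU)$; composing the localization $\Seg(\gcU)\to \SegS(\gcU)$ with this equivalence exhibits $\Segrep(\gcPSU)$ as an accessible localization of $\Seg(\gcU)$, whence it is presentable.

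I do not expect a genuine obstacle, since the argument only packages results already in hand. The one point deserving care is confirming that the generating family is a bona fide \emph{set}: this is precisely where the finiteness afforded by the fixed set $\mathbb{S}$ matters, in contrast to the discussion following Example~\ref{example seg equivs strongly saturated}, where producing generators for $\xU$-Segal equivalences would force one to index over the proper class of all weakly contractible diagrams in $\xU$. Because the defining maps of $\SegS(\gcU)$ are indexed only by $T$, that difficulty does not arise, and \cite[Proposition 5.5.4.15]{ht} applies cleanly.
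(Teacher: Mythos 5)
Your proposal is correct and follows essentially the same route as the paper: identify $\Segrep(\gcPSU)$ with $\SegS(\gcU)$ via Proposition~\ref{prop PSeg=PCS}, and observe that the latter is the localization of the presentable $\infty$-category $\Seg(\gcU)$ at the \emph{set} of maps $y^*\xfc(s)$. The only cosmetic difference is that the paper cites \cite[Lemma 2.11]{patterns1} for the presentability of $\Seg(\gcU)$ rather than rederiving it from the Segal core inclusions, and invokes \cite[Proposition 5.5.4.2]{ht} where you use 5.5.4.15; your added check that the localizing family is genuinely a set is a sensible precaution that the paper leaves implicit.
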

\begin{proof}
	Lemma 2.11 of \cite{patterns1} shows that $\Seg(\gcU)$ is presentable. 
	By the previous proposition we can identify $\Segrep(\gcPSU)$ with $\SegS(\gcU)$.
	This latter \icat{} is given by an accessible localization of the presentable $\infty$-category $\Seg(\gcU)$ with respect to the set of maps $y^*\xfc(s)$, where $\xfc$ is a corolla and $s\in \mathbb S$ (see \cite[Proposition 5.5.4.2]{ht}).
	It follows that $\Segrep(\gcPSU)$ is presentable.
\end{proof}
This result implies the following.
\begin{cor}\label{cor PCSDFV pres}
If $\xV$ is a presentably symmetric monoidal
	\icat{}, then $\Segrep(\gcV)$ is a presentable \icat{}.
\end{cor}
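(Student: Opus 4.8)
The plan is to reduce to the already-settled case of a localized Day convolution presheaf category. The preceding corollary shows that $\Segrep(\gcPSU)$ is presentable for any small symmetric monoidal \icat{} $\xU$ and any set $\mathbb{S}$ of morphisms of $\Pre(\xU)$ whose strongly saturated class is closed under tensor products. It therefore suffices to realize the given $\xV$ as one such $\PSU$ and then transport presentability across the resulting equivalence. So the whole content of the proof is concentrated in producing the pair $(\xU,\mathbb{S})$; everything afterwards is formal.

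First I would invoke the fact that every presentably symmetric monoidal \icat{} arises as a symmetric monoidal accessible localization of a Day convolution presheaf category. Concretely, I would choose a small symmetric monoidal \icat{} $\xU$ together with a symmetric monoidal functor $\xU \to \xV$ whose underlying functor generates $\xV$ under colimits (for instance a small symmetric monoidal subcategory of $\xV$ containing a set of generators and closed under the tensor product and the unit). By the universal property of Day convolution \cite[\textsection 4.8.1]{ha}, this extends to a symmetric monoidal colimit-preserving functor $L \colon \Pre(\xU) \to \xV$. Since the chosen generators lie in the essential image, $L$ is a localization, and because $\xV$ is presentable it is an accessible one; hence by \cite[Proposition 5.5.4.15]{ht} there is a \emph{set} $\mathbb{S}$ of morphisms of $\Pre(\xU)$ with $\xV \simeq \PSU$ as symmetric monoidal $\infty$-categories. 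As $L$ is symmetric monoidal, the strongly saturated class it generates, namely the class of $L$-equivalences, is closed under tensoring with arbitrary objects, so $(\xU,\mathbb{S})$ satisfies the hypotheses of Definition~\ref{def S properads}. (This realization step is exactly the analogue of what is done in \cite{ChuHaugseng}, replacing $\simp_{\finsetskel}$ by $\gc$.)

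Having fixed $(\xU,\mathbb{S})$, I would conclude as follows. The equivalence $\xV \simeq \PSU$ of symmetric monoidal $\infty$-categories induces an equivalence of the associated cocartesian fibrations over $\pfinsetskel$, hence an equivalence of the decorated graph categories of Definition~\ref{def LV_YV}, and therefore an equivalence $\Segrep(\gcV) \isoto \Segrep(\gcPSU)$. By Proposition~\ref{prop PSeg=PCS} we have $\Segrep(\gcPSU) \simeq \SegS(\gcU)$, and the preceding corollary exhibits this latter \icat{} as an accessible localization of the presentable \icat{} $\Seg(\gcU)$, hence presentable. Chaining these equivalences gives that $\Segrep(\gcV)$ is presentable.

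The main obstacle is the first step: exhibiting $\xV$ as a symmetric monoidal accessible localization of $\Pre(\xU)$ with the strongly saturated class of inverted maps closed under tensor products, so that Definition~\ref{def S properads} applies. Once this pair $(\xU,\mathbb{S})$ is in hand, the remainder is a purely formal transport of presentability along $\xV \simeq \PSU$ together with Proposition~\ref{prop PSeg=PCS} and the previous corollary.
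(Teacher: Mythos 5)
Your overall route is the same as the paper's: both proofs reduce to the previous corollary by exhibiting $\xV$ as $\PSU$ for a suitable pair $(\xU,\mathbb{S})$ and then transporting presentability across $\Segrep(\gcV)\simeq\Segrep(\gcPSU)\simeq\SegS(\gcU)$. The one step that does not hold up as written is the claim that, for $\xU$ a small symmetric monoidal subcategory of $\xV$ merely ``containing a set of generators'' and closed under $\otimes$ and the unit, the induced colimit-preserving functor $L\colon\Pre(\xU)\to\xV$ is a localization ``since the chosen generators lie in the essential image.'' Essential surjectivity does not make $L$ a localization: $L$ is a localization precisely when its right adjoint, the restricted Yoneda embedding $v\mapsto\Map_\xV(\blank,v)|_{\xU^\op}$, is fully faithful, equivalently when the counit $\colim_{(u\to v)\in \xU_{/v}} u\to v$ is an equivalence for every $v$, i.e.\ when $\xU$ is \emph{dense} in $\xV$. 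A set of generators closed under tensor need not be dense; for instance, with $\xV$ the \icat{} of spectra and $\xU$ the full subcategory on the sphere spectrum, the right adjoint is essentially $\Omega^\infty$, which is not fully faithful, so $L$ is not a localization. This is exactly why the paper (via \cite[Proposition 2.6.9]{ChuHaugseng} and \cite[Theorem 5.5.1.1]{ht}) takes $\xU$ to be a small model of the $\kappa$-compact objects of $\xV$, closed under $\kappa$-small colimits, and takes $\mathbb{S}$ to be the set of comparison maps $\colim y\circ\phi\to y(\colim\phi)$: with that choice density, accessibility, and the compatibility of $\mathbb{S}$ with the tensor product (where your argument is fine) all hold. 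The gap is local and repairable --- replace ``a set of generators'' by this choice, or simply cite the quoted results --- but as stated your justification that $L$ is a localization is incorrect.
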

\begin{proof}
According to \cite[Proposition 2.6.9]{ChuHaugseng} and \cite[Theorem 5.5.1.1]{ht}, there is an equivalence $\xV\simeq \PSU$ of symmetric monoidal $\infty$-categories, where $\xU$ is a small symmetric monoidal $\infty$-category admitting all $\kappa$-small colimits for some regular cardinal $\kappa$ and $\mathbb S$ is the set of maps of the form $\colim y\circ \phi \to y(\colim \phi)$ and $\phi$ runs over a set of representatives for equivalence classes of $\kappa$-small colimits in $\xU$. 
The previous corollary shows that this $\infty$-category $\Segrep(\gcV)\simeq \Segrep(\gcPSU)$ is presentable.
\end{proof}

\begin{theorem}
\label{theorem LcL Vrep}
If $\xV$ is a presentably symmetric monoidal $\infty$-category, then the map 
\[
	\Segrep(\levelV)\to\Segrep(\levelcV)
\]
from Proposition~\ref{proposition restriction restricts to segal stuff} is an equivalence.
\end{theorem}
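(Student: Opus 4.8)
The plan is to reduce the statement to the already-established comparison in the small and presheaf settings, where the analogous equivalence is Proposition~\ref{prop LcL S}. First, following the proof of Corollary~\ref{cor PCSDFV pres}, I would choose a small symmetric monoidal $\infty$-category $\xU$ admitting all $\kappa$-small colimits together with the set $\mathbb{S}$ of maps $\colim y\circ\phi \to y(\colim\phi)$ (where $\phi$ ranges over representatives of $\kappa$-small colimits in $\xU$), so that $\xV \simeq \PSU$ as symmetric monoidal $\infty$-categories. Since this exhibits $\xV$ as a symmetric monoidal localization of $\Pre(\xU)$, the strongly saturated class generated by $\mathbb{S}$ is closed under tensor products, so the hypotheses of Proposition~\ref{prop LcL S} and Proposition~\ref{prop PSeg=PCS} are satisfied. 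Under this equivalence the functor in question is identified with $\bariustar\colon\Segrep(\levelPSU)\to\Segrep(\levelcPSU)$.

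Next I would apply Proposition~\ref{prop PSeg=PCS} twice, once with $\gc=\levelg$ and once with $\gc=\levelgconn$, to obtain equivalences $\Segrep(\levelPSU)\simeq\SegS(\levelU)$ and $\Segrep(\levelcPSU)\simeq\SegS(\levelcU)$. Each is the composite of the equivalence $y_*$ of Theorem~\ref{thm enr psh} with the inverse of the precomposition equivalence $\Segrep(\gcPSU)\isoto\SegrepS(\gcPU)$ induced by the localization $\Pre(\xU)\to\PSU$. The goal is then to fit these into a commutative square
\[
\begin{tikzcd}
\Segrep(\levelPSU) \rar{\bariustar} \dar{\simeq} & \Segrep(\levelcPSU) \dar{\simeq} \\
\SegS(\levelU) \rar & \SegS(\levelcU)
\end{tikzcd}
\]
whose bottom row is the restriction functor of Proposition~\ref{prop LcL S}. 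Granting commutativity, the bottom map is an equivalence by Proposition~\ref{prop LcL S}, so by the $2$-of-$3$ property the top map --- hence the original functor $\Segrep(\levelV)\to\Segrep(\levelcV)$ --- is an equivalence.

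The main obstacle is establishing the commutativity of that square, i.e.\ the naturality of the vertical equivalences with respect to restriction along $i\colon\levelgconn\hookrightarrow\levelg$. All the functors in sight are induced by precomposition: the horizontal maps by the (decorated) inclusion $i$, and the vertical equivalences by symmetric monoidal functors acting only on the vertex decorations ($y\colon\xU\to\Pre(\xU)$ and the localization $\Pre(\xU)\to\PSU$), obtained by base change along the vertex functor as in Definition~\ref{def LV_YV}. Because $i$ touches only the underlying graph combinatorics while the decoration functors touch only the $\xV$-labels, the two classes of precompositions commute, which is exactly what yields the square. I would verify this by checking that the relevant squares of decorated graph categories commute, using that the pullback defining $\gc^{\op,\xV}$ is functorial in both $\gc$ and the symmetric monoidal input; this step is routine but is the one place where genuine care is needed.
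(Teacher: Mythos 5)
Your proposal is correct and follows essentially the same route as the paper: the paper's proof also writes $\xV\simeq\PSU$, assembles the ladder of equivalences from Proposition~\ref{prop PSeg=PCS} over the restriction functors for $\levelgconn\hookrightarrow\levelg$, and concludes from Proposition~\ref{prop LcL S} by two-out-of-three. The only difference is that you flag and sketch the naturality/commutativity check for the comparison square, which the paper leaves implicit.
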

A similar proof to the following will also give equivalences $\Segrep(\levelV_{\name{0-type}}) \simeq \Segrep(\levelV_{\name{sc}})$ and $\Segrep(\levelV_{\name{out}}) \simeq \Segrep(\levelV_{\name{out,c}})$.
\begin{proof}
Let $\mathbb{S}$ and $\xU$ be as in the proof of the previous corollary, with $\xV \simeq \PSU$.
We have a diagram
\[ \begin{tikzcd}
\SegS(\levelU) \dar[swap]{\simeq}  \rar{\simeq} & 
	\SegrepS(\levelg^{\Pre(\xU)}) \dar & 
	\Segrep(\levelPSU) \lar[swap]{\simeq} \dar \rar{\simeq} &
	\Segrep(\levelV) \dar \\
\SegS(\levelcU) \rar{\simeq} & 
	\SegrepS(\levelgconn^{\Pre(\xU)}) & 
	\Segrep(\levelcPSU) \lar[swap]{\simeq} \rar{\simeq} &
	\Segrep(\levelcV)
\end{tikzcd} \]
where the first two equivalences on each row are from Proposition~\ref{prop PSeg=PCS} and the vertical equivalence is from Proposition~\ref{prop LcL S}.
\end{proof}

\section{Algebras over enriched \texorpdfstring{$\infty$}{∞}-properads}\label{sec algebras over inf properads}

The purpose of this section is to introduce, for two $\xV$-enriched $\infty$-properads $\mathcal{Q}$ and $\mathcal{R}$, the \icat{} $\Alg^{\xV}_{\mathcal{Q}}(\mathcal{R})$ of  $\mathcal{Q}$-algebras in $\mathcal{R}$.
Often we have that $\xV$ is self-enriched, and we take $\mathcal{R}$ to be $\xV$, appropriately regarded as a $\xV$-enriched $\infty$-properad.

We begin with an extension of the notion of `inner anodyne map' to $\levelg$ and $\levelU$-presheaves; this is a key technical tool which has no analogue for $\bbY$-presheaves.
The core of the section is in \S\ref{sec tensor product}, where we show that $\Segrep(\levelV)$ is tensored over $\Seg(\simp)$.
This tensor product is a partial extension of a Boardman--Vogt-style tensor product of properads (see \cite[\S4]{hrybook}) in the unenriched case.
The adjoint functor theorem then provides us with our notion of algebras (and also cotensors).

Everything in this section is in the setting of algebraic $\xV$-enriched $\infty$-properads.
We will return to the complete case in \S\ref{subsec ffesc}.

\subsection{Inner horn inclusions and inner anodyne maps}\label{subsec inner anodyne}
In this subsection we generalize the definition of inner anodyne maps in $\Pre(\simp)$ to the setting of $\Pre(\levelg)$ and $\Pre(\levelV)$.
After some formal observations we then finally prove the main result is Proposition~\ref{propn:iaDFV} which says that inner anodyne maps in $\Pre(\levelV)$ are Segal equivalences.
Its proof is a properadic generalization of the operadic case treated in \cite[\textsection 2.7]{ChuHaugseng}.

\begin{definition}\label{def partical/lambda x}
Let $p\colon \levelg \to \simp$ denote the cartesian fibration from Definition~\ref{def levelg} and let $I\in \levelg$ be of height $n$.
For $0 \leq k \leq n$, define
$\Lambda^{n}_{k}I$ by the pullback
	\nolabelcsquare{\Lambda^{n}_{k}I}{
		I}{p^{*}\Lambda^{n}_{k}}{p^*(\Delta^n).}
Likewise, suppose that $\xV$ is a presentably symmetric monoidal \icat{}, and $q\colon \levelV \to \levelg$ denote the cartesian fibration from Definition~\ref{def LV_YV}.
If $\olI \in \levelV$ is of height $n$, we define $\Lambda^{n}_{k}\olI \coloneqq (pq)^{*}\Lambda^{n}_{k} \times_{(pq)^*(\Delta^n)} \olI$.
When $0 < k < n$, we call the inclusions $\Lambda^{n}_{k}I \hookrightarrow I$ and $\Lambda^{n}_{k}\olI \hookrightarrow \olI$ \emph{inner horn inclusions}.
The class of \emph{inner anodyne maps} in $\Pre(\levelg)$ or $\Pre(\levelV)$ is the \emph{weakly saturated class} generated by the inner horn inclusions, that is, the smallest class which both contains the inner horn inclusions and is closed under pushouts, transfinite compositions, and retracts.
\end{definition}

We make use of the following notion of `simple' morphism from \cite[Definition 2.7.11]{ChuHaugseng}.

\begin{defn}\label{def simple}
Let $p\colon \xE\to \xcc$ be a cartesian fibration between small $\infty$-categories.
For $F \in \Pre(\xE)$ and $Y \in \Pre(\xcc)$, we say a morphism $F \to p^*(Y)$ is \emph{simple} if for every map $\sigma\colon X \to Y$ from a representable object $X$, in the pullback
	\nolabelcsquare{F'}{F}{p^*(X)}{p^*(Y)}
the presheaf $F'$ is representable and the adjunct $p_{!}F' \to X$ is an equivalence.
\end{defn}

\begin{remark}\label{remark first simple properties}
It is immediate from the definition, and was pointed out in Remark 2.7.12 of \cite{ChuHaugseng}, that
\begin{enumerate}
\item if $F\in \xE$, then the counit map $F \to p^*p_! F \simeq p^*(pF)$ is simple, and \label{item counit simple}
\item the pullback of a simple map is simple. \label{item pullback simple}
\end{enumerate}
\end{remark}

We record a relative version of \eqref{item counit simple}, whose conception and proof are joint with Rune Haugseng.
\begin{lemma}\label{lemma: relative simple unit}
Let
\begin{equation}\label{eq pullback DEBC} \begin{tikzcd}
\mathcal{D} \rar{\beta} \dar{q} & \xE \dar{p} \\
\mathcal{B} \rar{\alpha} & \xcc
\end{tikzcd} \end{equation}
be a pullback of small $\infty$-categories with $p$ and $q$ cartesian fibrations.
For each $e\in \xE$, the unit map $\beta^*e \to q^*q_!\beta^*e$ is simple with respect to $q$.
\end{lemma}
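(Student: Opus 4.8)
The plan is to transport the absolute statement of Remark~\ref{remark first simple properties}\eqref{item counit simple} across the base change $\beta^{*}$. For the representable $e\in\xE$ we have $p_{!}e\simeq pe$ (left Kan extension along $p^{\op}$ carries representables to representables, and I follow the convention identifying objects with their representable presheaves), so Remark~\ref{remark first simple properties}\eqref{item counit simple} says the unit $\eta_{e}\colon e\to p^{*}p_{!}e\simeq p^{*}(pe)$ is simple with respect to $p$. Since \eqref{eq pullback DEBC} commutes we have $\beta^{*}p^{*}=q^{*}\alpha^{*}$, so the first task is to identify $q_{!}\beta^{*}e$ and the unit map $\beta^{*}e\to q^{*}q_{!}\beta^{*}e$.

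First I would show that the Beck--Chevalley transformation $q_{!}\beta^{*}\to\alpha^{*}p_{!}$ associated to \eqref{eq pullback DEBC} is an equivalence when evaluated at the representable $e$, so that $q_{!}\beta^{*}e\simeq\alpha^{*}p_{!}e\simeq\alpha^{*}(pe)$. (Full base change fails for a general cartesian fibration, but it holds on representables: the slice projection $\xE_{/e}\to\xcc_{/pe}$ induced by $p$ is a cartesian fibration with weakly contractible fibres — the fibre over $g\colon c\to pe$ is $(\xE_{c})_{/g^{*}e}$, which has a terminal object — and this is enough to evaluate the colimit defining $(q_{!}\beta^{*}e)(b)$ as $\Map_{\xcc}(\alpha b,pe)$.) A mate computation using the triangle identities then identifies the unit $\beta^{*}e\to q^{*}q_{!}\beta^{*}e$ with $\beta^{*}\eta_{e}\colon\beta^{*}e\to\beta^{*}p^{*}p_{!}e=q^{*}\alpha^{*}p_{!}e$. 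This reduces the lemma to the assertion that $\beta^{*}\eta_{e}$ is simple with respect to $q$ — a relative version of the principle that $\beta^{*}$ carries $p$-simple maps valued in $p^{*}(\text{representable})$ to $q$-simple maps.

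For the core computation, fix a map $\sigma\colon b\to q_{!}\beta^{*}e\simeq\alpha^{*}(pe)$ out of a representable $b\in\mathcal{B}$; by adjunction this is a morphism $\bar\sigma\colon\alpha b\to pe$ in $\xcc$. Let $\chi\colon e''\to e$ be the $p$-cartesian lift of $\bar\sigma$, so $pe''\simeq\alpha b$ and $d'\coloneqq(b,e'')$ is an object of $\mathcal{D}=\mathcal{B}\times_{\xcc}\xE$. Applying simplicity of $\eta_{e}$ to $\bar\sigma$ shows that the pullback of $\eta_{e}$ along $\bar\sigma$ is $e''$; since $\beta^{*}$ preserves pullbacks, the pullback $F'$ of the cospan $\beta^{*}e\xrightarrow{\beta^{*}\eta_{e}}q^{*}\alpha^{*}(pe)\xleftarrow{q^{*}\sigma}q^{*}b$ is computed objectwise at $d=(b_{0},x)\in\mathcal{D}$ as
\[
F'(d)\simeq\Map_{\xE}(x,e)\times_{\Map_{\xcc}(px,\,pe)}\Map_{\mathcal{B}}(b_{0},b).
\]
Using the defining property of the cartesian edge $\chi$, namely $\Map_{\xE}(x,e'')\simeq\Map_{\xE}(x,e)\times_{\Map_{\xcc}(px,pe)}\Map_{\xcc}(px,pe'')$, together with the mapping-space formula for the pullback category $\mathcal{D}$, I would identify $F'(d)$ naturally in $d$ with $\Map_{\mathcal{D}}(d,d')$. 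Hence $F'\simeq d'$ is representable, and $q_{!}F'$ is the representable at $q(d')=b$ with structure map an equivalence; this is precisely the condition of Definition~\ref{def simple}.

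The main obstacle is the identification in the second paragraph: verifying the base-change equivalence on representables (where the cartesian-fibration hypothesis enters essentially, since the statement is false for arbitrary presheaves) and checking that the resulting identification carries the unit of $q_{!}\dashv q^{*}$ to $\beta^{*}\eta_{e}$. Once this compatibility is in place, the final step is a formal manipulation of mapping spaces driven by the universal property of cartesian edges (see \cite{ht}), directly paralleling the absolute argument behind Remark~\ref{remark first simple properties}\eqref{item counit simple}.
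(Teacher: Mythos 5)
Your argument is correct and follows essentially the same route as the paper's proof: both reduce to the Beck--Chevalley identification $q_!\beta^*e \simeq \alpha^*p_!e$, construct the candidate representable $d' = (b,e'')$ from a $p$-cartesian lift of the adjunct map $\alpha b \to pe$, and identify the pullback with $\Map_{\mathcal{D}}(-,d')$ by pasting the mapping-space pullback squares coming from the cartesian edge and from the pullback square of categories. The one (harmless) discrepancy is your parenthetical claim that full base change fails for cartesian fibrations: since $p^{\op}$ is a cocartesian fibration, hence smooth, the equivalence $q_!\beta^*\simeq\alpha^*p_!$ holds on all of $\Pre(\xE)$, which is exactly what the paper invokes via \cite{RiehlVerity:EICT}, so your restriction to representables is sufficient but not necessary.
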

Since the square is a pullback and $\xE^\op \to \xcc^\op$ is a cocartesian fibration, a Beck--Chevalley condition gives 
$
\alpha^* p_! \simeq q_! \beta^* \colon \Pre(\xE) \to \Pre(\mathcal{B});	
$
see Lemma~9.2.7 and Lemma~12.3.11 of \cite{RiehlVerity:EICT}.
We use this fact freely in the following proof.
We note also that $\beta^*e\to q^*q_! \beta^*e$ is equivalent to $\beta^*$ applied to the unit $e \to p^*p_!e$.
\begin{proof}
Fix $e\in \xE$ as in the statement, and let $f \colon b \to q_!\beta^*e \simeq \alpha^* p_! e$ be an arbitrary map of $\Pre(\mathcal{B})$ whose domain is representable. We need to show that the pullback of the unit map $\beta^*e \to q^*q_!\beta^*e$ along $q^*(f)$ is the unit map at a representable object $d\in \mathcal D$.
Let $f' \colon \alpha(b) \to p(e)$ be adjunct to $f$.
By \cite[Lemma 2.7.10]{ChuHaugseng}, there is a pullback 
\begin{equation}\label{dia simple}
 \begin{tikzcd}
e' \rar{\overline{f}'} \dar & e \dar \\
p^*(\alpha b) \rar{p^*(f')} & p^*(pe)
\end{tikzcd}
\end{equation}
where $\overline{f}'$ in $\xE$ is a $p$-cartesian lift of $f'$ and $p(e')\simeq \alpha b$.
Since \eqref{eq pullback DEBC} is a pullback, there exists an object $d\in \mathcal{D}$ given by the pair $(b,e')$. Suppose we define the morphism $\overline{f}\colon d\to \beta^* e$ to be adjunct to $\overline{f}'\colon e'\simeq \beta d\to e$ then we have a commutative diagram
\[ \begin{tikzcd}
d \rar{\overline f} \dar & \beta^* e \dar \\
q^*b \rar{q^*(f)} & q^*q_!\beta^*e
\end{tikzcd} \]
where the vertical maps are units. The statement of the lemma holds if we can show that this square is cartesian. In other words, it suffices to show that the image of this diagram under the functor $\Map_{\Pre(\mathcal{D})}(x,-)$ is a pullback of spaces for every $x\in \mathcal{D}$.

We first note that since the left adjoint $p_!$ preserves representable objects, for every $x\in \mathcal{D}$ the functor $\Map_{\Pre(\xE)}(\beta x, -)$ takes the pullback square (\ref{dia simple}) to the pullback diagram
\[ \begin{tikzcd}
\Map_{\xE}(\beta x, \beta d) \rar \dar & \Map_{\xE}(\beta x, e) \dar \\
\Map_{\xcc}(p\beta x, \alpha b) \rar & \Map_{\xcc}(p\beta x, pe ).
\end{tikzcd} \]
On the other hand, since \eqref{eq pullback DEBC} is a pullback, we also have a pullback square 
\[ \begin{tikzcd}
\Map_{\mathcal{D}}(x, d) \rar \dar & \Map_{\xE}(\beta x, \beta d) \dar \\
\Map_{\mathcal{B}}(q x, b) \rar & \Map_{\xcc}(\alpha q x, \alpha b).
\end{tikzcd} \]
Pasting these together, we have the left-displayed pullback
\[ \begin{tikzcd}
\Map_{\mathcal{D}}(x, d) \rar \dar & \Map_{\xE}(\beta x, e) \dar \\
\Map_{\mathcal{B}}(q x, b) \rar & \Map_{\xcc}(p\beta x, pe )
\end{tikzcd} 
\quad
\begin{tikzcd}
\Map_{\Pre(\mathcal{D})}(x, d) \rar \dar & \Map_{\Pre(\mathcal{D})}(x, \beta^*e) \dar \\
\Map_{\Pre(\mathcal{D})}(x, q^*b) \rar & \Map_{\Pre(\mathcal{D})}(x, q^*q_!\beta^*e )
\end{tikzcd}
 \]
which is equivalent to the right-hand square. Hence, the functor $\Map_{\Pre(\mathcal{D})}(x,-)$ indeed takes the commutative square (\ref{dia simple}) to a pullback.
\end{proof}

\begin{lemma}\label{lemma simple}
	Let $p^*\colon \Pre(\xcc)\to \Pre(\xE)$ be the functor induced by the composition with a cartesian fibration $p\colon \xE\to \xcc$.
	Suppose that $A,L\in \xcc$, and let $f\colon A\to L\times B$ and $g\colon L \times B\to L$ be two morphisms in $\Pre(\xcc)$ where $g$ is the projection map.
		Let  \[
		\begin{tikzcd}
		F \arrow{r} \arrow{d} & \overline L\times p^*B \arrow{r} \arrow{d} &
		\overline L \arrow{d}\\
		p^{*}A \arrow{r}{p^*f} & p^{*}(L\times B) \arrow{r}{p^*g} & p^{*}L.
		\end{tikzcd}
		\]
		be a commutative diagram in $\Pre(\xE)$ where the left-hand and the right-hand squares are pullbacks and the right vertical map is the adjunction unit.
		Then the presheaf $F$ is represented by the object $(gf)^*\overline L\in \xE$ given by the $p$-cartesian lift $(gf)^*\overline L\to \overline L$ of $gf\colon A\to L \times B\to L$.
\end{lemma}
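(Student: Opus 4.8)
The plan is to collapse the two given cartesian squares into a single one and recognize it as the defining square of a cartesian lift produced by \cite[Lemma 2.7.10]{ChuHaugseng}. Being given by precomposition, the functor $p^*$ preserves all limits; in particular $p^*(L\times B)\simeq p^*L\times p^*B$ and the map $p^*g$ is the projection onto $p^*L$. The right-hand square therefore exhibits $\overline L\times p^*B$ as the pullback of the unit $\overline L\to p^*L$ along this projection, with top horizontal map the projection $\overline L\times p^*B\to\overline L$.

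First I would apply the pasting lemma for pullbacks: since both the left-hand and right-hand squares are cartesian, so is the outer rectangle
\[ \begin{tikzcd}
F \rar \dar & \overline L \dar \\
p^*A \rar{p^*(gf)} & p^*L,
\end{tikzcd} \]
whose right vertical map is the unit $\overline L\to p^*p_!\overline L\simeq p^*L$, whose top map is the composite $F\to\overline L\times p^*B\to\overline L$, and whose bottom map is $p^*g\circ p^*f=p^*(gf)$. Here I use that $gf$, being a map between the representables $A$ and $L$ in $\Pre(\xcc)$, is the image under the Yoneda embedding of a morphism $A\to L$ in $\xcc$, so that $p^*(gf)$ and its cartesian lift make sense.

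Next I would apply \cite[Lemma 2.7.10]{ChuHaugseng} to the object $\overline L\in\xE$ and the morphism $gf\colon A\to L=p(\overline L)$. This yields a pullback square built on exactly the same cospan $\overline L\xrightarrow{\ \mathrm{unit}\ }p^*L\xleftarrow{\,p^*(gf)\,}p^*A$, whose fourth corner is the $p$-cartesian lift $(gf)^*\overline L\to\overline L$ of $gf$. As $F$ and $(gf)^*\overline L$ are both pullbacks of this one cospan, essential uniqueness of limits gives an equivalence $F\simeq(gf)^*\overline L$ in $\Pre(\xE)$; in particular $F$ is representable, represented by $(gf)^*\overline L$, as desired. (One could instead observe that $\overline L\to p^*L$ is simple by Remark~\ref{remark first simple properties}\eqref{item counit simple} and extract representability of $F$ directly from Definition~\ref{def simple}, but pinning down the representing object still needs the cartesian lift, so the route through \cite[Lemma 2.7.10]{ChuHaugseng} is the most economical.) The only place demanding care is the bookkeeping of the first paragraph---verifying that $p^*g$ is the projection and that the pasted bottom map is $p^*(gf)$---after which no genuine obstacle remains.
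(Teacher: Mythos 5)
Your proposal is correct and matches the paper's argument exactly: the paper's proof is the single observation that the outer rectangle is a pullback, after which \cite[Lemma 2.7.10]{ChuHaugseng} identifies $F$ with $(gf)^*\overline L$. The extra bookkeeping you supply (that $p^*$ preserves products, that $p^*g$ is a projection, and that $gf$ comes from a morphism in $\xcc$) is implicit in the paper but harmless to spell out.
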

\begin{proof}
The outer rectangle is a pullback, so \cite[Lemma 2.7.10]{ChuHaugseng} shows that $F\simeq (gf)^*\overline L$.
\end{proof}

\begin{propn}\label{propn:iaDFV}
	The inner anodyne maps in $\Pre(\levelV)$ are Segal equivalences.
\end{propn}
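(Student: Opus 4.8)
The plan is to adapt the operadic argument of \cite[\S 2.7]{ChuHaugseng}, the central mechanism being that both the inner horn inclusions and the segmentation maps of Definition~\ref{def bar Segal cores} are obtained by base change, along the composite cartesian fibration $pq\colon \levelV \to \simp$, of maps living in $\Pre(\simp)$.

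Since the class of Segal equivalences is strongly saturated (Example~\ref{example seg equivs strongly saturated}), hence in particular weakly saturated, and the inner anodyne maps are by Definition~\ref{def partical/lambda x} the weakly saturated class generated by the inner horn inclusions, it suffices to prove that each inner horn inclusion $\Lambda^n_k\olI \hookrightarrow \olI$ (with $0 < k < n$ and $\olI \in \levelV$ representable of height $n$) is a Segal equivalence. By Definition~\ref{def partical/lambda x} this map is the base change of the simplicial inner horn $\Lambda^n_k \hookrightarrow \Delta^n$ along the canonical map $\olI \to (pq)^*\Delta^n$; the latter is \emph{simple} in the sense of Definition~\ref{def simple} by Remark~\ref{remark first simple properties}\eqref{item counit simple}, since $\olI$ is representable and $(pq)_!\olI \simeq \Delta^n$. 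Dually, the pullback square recorded just after Definition~\ref{def bar Segal cores} exhibits the segmentation map $\olI|_{\Delta^n_{\Seg}} \to \olI$ as the base change of the spine inclusion $\Delta^n_{\Seg} \hookrightarrow \Delta^n$ along this \emph{same} simple map, and that segmentation map is a Segal equivalence by Proposition~\ref{propn:SegDFVcond}.

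I would therefore establish the following transfer statement: for every map $A \to B$ in $\Pre(\simp)$ lying in the weakly saturated class generated by the spine inclusions $\Delta^n_{\Seg}\hookrightarrow\Delta^n$, and every simple map $F \to (pq)^*B$, the base change $(pq)^*A \times_{(pq)^*B} F \to F$ is a Segal equivalence in $\Pre(\levelV)$. This is proved by showing that the collection of maps $A \to B$ enjoying this base-change property (for \emph{all} simple maps into $(pq)^*B$) is weakly saturated and contains the spine inclusions. Weak saturation is formal: given a pushout, transfinite composite, or retract of arrows, one restricts a simple map into the new codomain along the structural maps, using that pullbacks of simple maps are simple (Remark~\ref{remark first simple properties}\eqref{item pullback simple}), that $(pq)^*$ and pullback in the $\infty$-topos $\Pre(\levelV)$ preserve colimits, and that Segal equivalences are closed under pushouts, transfinite composites, retracts, and colimits. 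For the generators, Lemma~\ref{lemma simple} identifies the base change of a spine inclusion along an arbitrary simple map: writing $F$ as a colimit of representables via the defining property of simpleness and invoking closure of Segal equivalences under colimits, one reduces to the representable case, where the base change is precisely a segmentation map and hence a Segal equivalence by Proposition~\ref{propn:SegDFVcond}. Since it is a standard fact that the spine inclusions and the inner horn inclusions generate the same weakly saturated class in $\Pre(\simp)$ (the inner anodyne maps), the transfer statement applies in particular to $\Lambda^n_k \hookrightarrow \Delta^n$. Taking $F = \olI$ with its simple unit $\olI \to (pq)^*\Delta^n$ then shows that $\Lambda^n_k\olI \to \olI$ is a Segal equivalence, as desired.

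The step I expect to be the main obstacle is the verification, for the generating spine inclusions, that the base change along an \emph{arbitrary} simple map — not merely the representable unit $\olI \to (pq)^*\Delta^n$ — is a Segal equivalence. This is where Lemma~\ref{lemma simple}, combined with the reduction to representables afforded by Definition~\ref{def simple}, does the real work, and where care is needed to match the resulting pullbacks with honest segmentation and short Segal core inclusions so that Proposition~\ref{propn:SegDFVcond} can be applied. The surrounding bookkeeping — that the good class is weakly saturated — is formal, but must be carried out compatibly with the varying simple maps produced when one restricts along the structural maps of a pushout or transfinite composite.
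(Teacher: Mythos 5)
Your overall strategy is the paper's: reduce to the inner horn inclusions $\Lambda^n_k\olI\to\olI$ using strong saturation of Segal equivalences, observe that the unit $\olI\to(pq)^*\Delta^n$ is simple, identify the base change of a spine inclusion along a simple map with a segmentation map (simpleness forces the relevant pullback to be representable of the right height, so Lemma~\ref{lemma simple} and Proposition~\ref{propn:SegDFVcond} apply), and then transfer from spines to inner horns. The gap is in the transfer step. You assert that ``the spine inclusions and the inner horn inclusions generate the same weakly saturated class in $\Pre(\simp)$'' and formulate your transfer statement for weakly saturated classes only. One inclusion is standard (spine inclusions are inner anodyne, \cite[Proposition 2.13]{Quadern45}), but the other is not: it is not a standard fact that $\Lambda^n_k\to\Delta^n$ lies in the closure of the spine inclusions under pushouts, transfinite composition and retracts. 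Already for $\Lambda^3_1\to\Delta^3$ the naive cell-attachment argument breaks down: pushing out $\Delta^3_{\Seg}\to\Delta^3$ along $\Delta^3_{\Seg}\to\partial\Delta^3$ glues on a \emph{free} $3$-simplex onto the boundary rather than filling it, so one does not land in $\Delta^3$. What the proof of Lemma 3.5 of \cite{JoyalTierney} actually gives --- and what the paper uses --- is that the inner horns lie in the closure $\langle\mathbb{S}_1\rangle^r$ of the spines under pushouts, finite compositions \emph{and right cancellation} ($fg$ and $g$ in the class imply $f$ in the class). Right cancellation is not part of weak saturation, so your transfer statement as written does not apply to $\Lambda^n_k\to\Delta^n$.

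The repair is short but must be made: enlarge your ``good class'' of maps $A\to B$ in $\Pre(\simp)$ so that it is also closed under right cancellation. This works because simple maps are stable under pullback (Remark~\ref{remark first simple properties}), so a factorization $A\to B\to C$ pulls back along a simple map over $(pq)^*C$ to a factorization whose first leg is the base change of $A\to B$ along a simple map over $(pq)^*B$, and because Segal equivalences, being strongly saturated, satisfy the 2-of-3 property. With that closure added, your argument becomes essentially the paper's proof, which packages the bookkeeping as the statement that $\bar\jmath$ lies in $\langle\mathbb{S}_2\rangle^r$ via \cite[Proposition 2.7.8]{ChuHaugseng}, and then uses that $\langle\mathbb{S}_2\rangle^r$ is contained in the strongly saturated class generated by $\mathbb{S}_2$. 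Everything else in your write-up --- the saturation bookkeeping via universality of colimits, and the identification of the generators' base changes with segmentation maps --- matches the paper.
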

\begin{proof}
As the class of Segal equivalences in $\Pre(\levelV)$ is strongly saturated (Example~\ref{example seg equivs strongly saturated}), and inner anodyne maps are the weakly saturated class generated by the inner horn inclusions, it suffices to show that every inner horn inclusion $\bar \jmath \colon \Lambda^{n}_{k}\olI\to \olI$ in $\Pre(\levelV)$ is a Segal equivalence.
If $\mathbb{T}$ is a class of morphisms in a cocomplete $\infty$-category, write $\langle \mathbb{T} \rangle^r$ for the right-cancellative class generated by $\mathbb{T}$, that is, the smallest class of morphisms containing $\mathbb{T}$ and closed under finite compositions, pushouts, and right cancellations (i.e.\ $fg\in \langle \mathbb{T} \rangle^r$ and $g\in \langle \mathbb{T} \rangle^r$ implies $f\in \langle \mathbb{T} \rangle^r$).

Let $\mathbb{S}_1$ denote the set of spine inclusions $\Delta^m_\Seg\to \Delta^m$ in $\Pre(\simp)$; by the proof of Lemma 3.5 of \cite{JoyalTierney}, each inner horn inclusion $\Lambda_k^n \to \Delta^n$ is contained in $\langle \mathbb{S}_1 \rangle^r$.
Suppose we are given an inner horn inclusion 
\[ \begin{tikzcd}
\Lambda^{n}_{k}\olI \rar{\bar\jmath} \dar & \olI \dar \\
(pq)^*(\Lambda^{n}_{k}) \rar{j} & (pq)^*(\Delta^n).
\end{tikzcd} \]
Let $\mathbb{S}_2$ denote the class (depending on $\olI$) of maps $s_2$ in $\Pre(\levelV)$ appearing in a diagram of the form
\begin{equation}\label{eq etastar pq star S}
\begin{tikzcd}
A \rar{s_2} \dar & B \rar \dar & \olI \dar \\
(pq)^*(\Delta^m_\Seg) \rar{(pq)^*s_1} & (pq)^*\Delta^m \rar & (pq)^* \Delta^n
\end{tikzcd} \end{equation}
where both squares are pullbacks, $s_1 \in \mathbb{S}_1$, and the bottom right map is arbitrary.
Since inner horn inclusions $\Lambda^n_k \to \Delta^n$ are in $\langle \mathbb{S}_1 \rangle^r$, 
by Proposition 2.7.8 of \cite{ChuHaugseng}, we have that $\bar \jmath \colon \Lambda^{n}_{k}\olI\to \olI$ is contained in $\langle \mathbb{S}_2 \rangle^r$.
In particular, $\bar \jmath$ is contained in the strongly saturated class generated by $\mathbb{S}_2$.

As the right-hand map of \eqref{eq etastar pq star S} is a unit map it is simple, hence so too is the pullback $B \to (pq)^*\Delta^m$.
In particular, $B$ is representable and has height $m$.
It follows that $s_2$ is equivalent to a segmentation map, thus is also a Segal equivalence.
Since $\mathbb{S}_2$ is contained in the strongly saturated class of Segal equivalences, so too is $\bar \jmath$.
\end{proof}

\begin{corollary}\label{cor pb of inner anodyne}
Let $\xU$ be a small symmetric monoidal \icat{}, and let $\levelU \xrightarrow{q} \levelg \xrightarrow{p} \simp$ be the usual cartesian fibrations.
\begin{enumerate}
\item If $f\colon F\to (pq)^*K'$ is a simple map in $\Pre(\levelU)$ and $K\to K'$ is an inner anodyne map in $\Pre(\simp)$, then $(pq)^*K \times_{(pq)^*K'}F \to F$ in $\Pre(\levelU)$ is inner anodyne.
\item If $f\colon F \to q^*B$ is a simple map in $\Pre(\levelU)$ and $A\to B$ is an inner anodyne map in $\Pre(\levelg)$, then $q^*A \times_{q^*B} F \to F$ in $\Pre(\levelU)$ is inner anodyne.
\end{enumerate}
In both cases, the indicated map is also a Segal equivalence.
\end{corollary}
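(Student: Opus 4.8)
The plan is to prove both parts uniformly by working with a cartesian fibration $\pi\colon \levelU \to \xcc$ (taking $\pi = pq$, $\xcc = \simp$ for (1), and $\pi = q$, $\xcc = \levelg$ for (2)) together with the simple map $f\colon F \to \pi^*Y$ (where $Y = K'$, resp.\ $Y = B$). First I would introduce the functor
\[ \Phi\colon \Pre(\xcc)_{/Y} \to \Pre(\levelU), \qquad (X \to Y) \mapsto \pi^*X \times_{\pi^*Y} F. \]
Since $\pi^*$ admits a right adjoint and base change along $f$ preserves colimits in the $\infty$-topos $\Pre(\levelU)$, the functor $\Phi$ preserves colimits. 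Consequently the class of morphisms of $\Pre(\xcc)_{/Y}$ that $\Phi$ carries to inner anodyne maps is closed under pushouts, transfinite composition, and retracts, i.e.\ it is weakly saturated. The morphism in the statement is exactly $\Phi$ applied to the given inner anodyne map $h$, regarded as a morphism from $(K \xrightarrow{h} Y)$ to $\mathrm{id}_Y$ in the slice. So it suffices to show (i) that $\Phi$ carries each inner horn inclusion over $Y$ to an inner anodyne map, and (ii) that $h$ lies in the weakly saturated class generated by these slice inner horn inclusions.

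For (i), I would fix a representable $\Delta^n \in \simp$ (resp.\ an object $I \in \levelg$ of height $n$) equipped with a map to $Y$, and pull its inner horn inclusion back along $f$. By Definition~\ref{def simple}, simplicity of $f$ guarantees that $\Phi(\Delta^n \to Y)$ (resp.\ $\Phi(I \to Y)$) is representable by some $\olI \in \levelU$, and that the adjunct identifies $\pi(\olI)$ with $\Delta^n$ (resp.\ with $I$); in either case $(pq)(\olI) = \Delta^n$, so $\olI$ has height $n$. Unfolding the pullback (using $\Lambda^n_k I = p^*\Lambda^n_k \times_{p^*\Delta^n} I$ in case (2)), both computations collapse to
\[ (pq)^*\Lambda^n_k \times_{(pq)^*\Delta^n} \olI, \]
which is precisely $\Lambda^n_k \olI$ by Definition~\ref{def partical/lambda x}. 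Hence $\Phi$ sends the horn inclusion to the inner horn inclusion $\Lambda^n_k \olI \hookrightarrow \olI$, which is inner anodyne by definition. This is the single genuinely geometric step, and its crux is the interplay between simplicity (which forces both representability and the correct height $n$) and the pullback description of $\Lambda^n_k\olI$.

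The remaining point (ii) is where I expect the main obstacle to lie, and it is essentially bookkeeping in the slice: an inner anodyne map $h\colon K \to K'$ is by definition built from $K$ by a transfinite composite of pushouts of coproducts of inner horn inclusions, and each stage of this construction carries a compatible map to $K'$. Reading that tower inside $\Pre(\xcc)_{/Y}$ exhibits $h$ as a transfinite composite of pushouts of coproducts of slice inner horn inclusions, hence as a member of the weakly saturated class they generate. This is the direct analogue of the argument in \cite[\S2.7]{ChuHaugseng} for the operadic case. Combining (i) and (ii) shows that $\Phi(h)$ is inner anodyne, which proves both parts.

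Finally, to obtain the last sentence I would invoke that inner anodyne maps in $\Pre(\levelU)$ are Segal equivalences, which holds by (the proof of) Proposition~\ref{propn:iaDFV}; the argument there uses only that segmentation maps are Segal equivalences and the right-cancellative-class machinery, so it applies verbatim when the presentable $\xV$ is replaced by the small $\xU$.
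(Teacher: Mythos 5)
Your argument is correct and follows essentially the same route as the paper: the paper simply invokes \cite[Lemma 2.7.14]{ChuHaugseng} with $\mathbb{S}$ taken to be the set of inner horn inclusions in $\Pre(\simp)$ (resp.\ $\Pre(\levelg)$), and your proposal is precisely a reconstruction of that lemma's proof (colimit-preservation of the base-change functor, weak saturation in the slice, and simplicity forcing the pullbacks of representables to be the inner horns $\Lambda^n_k\olI \hookrightarrow \olI$). The final observation that Proposition~\ref{propn:iaDFV} applies verbatim over a small $\xU$ matches the paper's (implicit) treatment of the Segal-equivalence claim.
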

\begin{proof}
For the first item, let $\mathbb{S}$ denote the set of inner horn inclusions $\{\Lambda^n_k\to \Delta^n \mid 0<k<n \}$ in $\Pre(\simp)$. 
Applying \cite[Lemma 2.7.14]{ChuHaugseng}, we have that the map under consideration is in the weakly saturated class generated by the inner horn inclusions $\Lambda^n_k\olI\to \olI$ in $\Pre(\levelU)$.
This is the definition of being inner anodyne.
The proof of the second item is similar, except that $\mathbb{S}$ should be taken to be the set of inner horn inclusions $\{ \Lambda^n_k I \to I\}$ in $\Pre(\levelg)$.
\end{proof}

\subsection{Tensoring with Segal spaces}\label{sec tensor product}
In this subsection we prove that for a presentably symmetric monoidal \icat{} $\xV$, the \icat{} $\Segrep(\levelV)$ is a module over the symmetric monoidal \icat{} $\Seg(\simp)$ of Segal spaces.
At the end we will see that by applying the adjoint functor theorem to this tensoring functor we get an algebra functor which takes two objects in $\Segrep(\levelV)$ to the corresponding $\infty$-category of algebras between them.
Notice that we are no longer working with an arbitrary graph category $\gc$ (Notation~\ref{notation graph categories}), but rather just\footnote{Our techniques are slightly more general than this, and apply to $\levelg_{\name{out}}$ and $\levelg_{\name{0-type}}$ as mentioned in Remark~\ref{remark sec tensor product extension}.} with $\levelg$.
This shift is important for our proofs, though eventually we shall see (Corollary~\ref{cor equ enr properads}) that $\Segrep(\bbYV) \simeq \Segrep(\levelV)$, so our main results extend to that context (e.g., Proposition~\ref{cor:PCStensor}).

\begin{defn}\label{def tensor}
	Let $\xU$ be a small symmetric monoidal \icat{}, and let $p^{*}\colon \Pre(\simp) \to \Pre(\levelU)$ be the functor induced by $p \colon \levelU \to \simp$ given by the composition of the two cartesian fibrations $\levelU \to \levelg$ and $\levelg \to
	\simp$.
	The two presheaf categories are symmetric monoidal $\infty$-categories with respect to the cartesian product. Since $p^*$ is right adjoint to the functor $p_!$ given by left Kan extension, $p^*$ preserves products and hence, it is a morphism of commutative algebra objects in $\CatI$ (or even in the $\infty$-category $\PrL$ of presentable $\infty$-categories).
	Hence, by \cite[Corollary 3.4.1.7]{ha}, the functor 
\[\blank \times p^{*}(\blank)\colon \Pre(\levelU) \times \Pre(\simp) \to \Pre(\levelU).\]
exhibits $\Pre(\levelU)$ as a module over $\Pre(\simp)$.
	This functor then preserves colimits in each variable, because the cartesian product in $\xS$ does and $p^{*}$ is left adjoint to $p_*$ (given by right Kan extension).
\end{defn}

The main result of this section is the following theorem.
\begin{theorem}\label{theo tensor}
Let $\xU$ be a small symmetric monoidal \icat{} and let $L$ denote the localization $\Pre(\levelU) \to \Seg(\levelU)$.
The $\Pre(\simp)$-module structure on $\Pre(\levelU)$ induces a $\Seg(\simp)$-module structure on $\Seg(\levelU)$ and the tensor product
	\[\otimes \colon \Seg(\levelU) \times \Seg(\simp) \to \Seg(\levelU)\] 
	is given by $F \otimes K = L(F \times p^{*}K)$.
	In particular, the tensor product preserves colimits in each variable.
\end{theorem}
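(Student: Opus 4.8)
The strategy is to invoke the general machinery for building module structures on localizations, namely \cite[Proposition 2.2.1.9]{ha} together with \cite[Corollary 3.4.1.7]{ha} (which was already used in Definition~\ref{def tensor} to produce the $\Pre(\simp)$-module structure on $\Pre(\levelU)$). The key point is that $\Seg(\levelU) \subseteq \Pre(\levelU)$ and $\Seg(\simp) \subseteq \Pre(\simp)$ are both accessible localizations (the latter being the localization at the spine inclusions $\Delta^m_\Seg \to \Delta^m$), and that $\Seg(\simp)$ is symmetric monoidal under the cartesian product with the localization $\Pre(\simp) \to \Seg(\simp)$ being symmetric monoidal. To descend the module structure, I would verify the standard compatibility condition: for a $\Pre(\simp)$-module $\mathcal{M} = \Pre(\levelU)$ with localization $L\colon \mathcal M \to \Seg(\levelU)$ and monoidal localization $\Pre(\simp) \to \Seg(\simp)$, the tensor $\blank \times p^*(\blank)$ sends pairs $(s, \id)$ and $(\id, t)$ to $L$-equivalences whenever $s$ is a Segal equivalence in $\Pre(\levelU)$ and $t$ is a spine inclusion in $\Pre(\simp)$.

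\emph{First}, I would check that tensoring a Segal equivalence $s\colon F\to F'$ in $\Pre(\levelU)$ with a fixed $K\in \Pre(\simp)$ yields a Segal equivalence $F\times p^*K \to F'\times p^*K$. Since the class of Segal equivalences is strongly saturated (Example~\ref{example seg equivs strongly saturated}) and $\blank \times p^*K$ preserves colimits (Definition~\ref{def tensor}), it suffices to check this on the generating short Segal core inclusions and segmentation maps from Proposition~\ref{propn:SegDFcond}; here the product with $p^*K$ can be computed levelwise over $\simp$ and remains a Segal equivalence. \emph{Second}, and this is where the structure of $\levelg$ is essential, I would check that for a fixed Segal $\levelU$-space $F$ the map $F \times p^*(\Delta^m_\Seg) \to F\times p^*(\Delta^m)$ is a Segal equivalence in $\Pre(\levelU)$. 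This is precisely where the inner-anodyne technology of \S\ref{subsec inner anodyne} enters: by Corollary~\ref{cor pb of inner anodyne}, pulling back the inner anodyne (hence Segal-equivalence) spine inclusion $\Delta^m_\Seg \to \Delta^m$ along a simple map produces an inner anodyne, hence Segal equivalence, map in $\Pre(\levelU)$; and by Remark~\ref{remark first simple properties}\eqref{item counit simple} the counit $F \to p^*p_! F$ (when $F$ is representable) is simple, reducing the general case to representables via the strong saturation of Segal equivalences.

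\emph{The main obstacle} I expect is the second step: unlike the fibrewise product, the pullback $F \times p^*(\Delta^m_\Seg)$ is not obviously expressible in terms of the generating inner horn inclusions, and one must route through the notion of \emph{simple} morphism and Corollary~\ref{cor pb of inner anodyne} to transfer inner anodyneness across the cartesian fibration $p \colon \levelU \to \simp$. The subtlety is that simplicity of the relevant projection holds only because $p$ is a cartesian fibration and the spine inclusion lives in the base $\Pre(\simp)$; establishing this compatibility carefully for a general (non-representable) Segal object $F$, by reducing to representables and using that the product preserves the relevant colimits, is the technical heart of the argument. \emph{Finally}, once both compatibilities are established, \cite[Proposition 2.2.1.9]{ha} endows $\Seg(\levelU)$ with the structure of a $\Seg(\simp)$-module and identifies the tensor product with the formula $F\otimes K = L(F\times p^*K)$; preservation of colimits in each variable then follows because $L$ is a left adjoint, $p^*$ preserves colimits, and the cartesian product in the presentable $\infty$-category $\Pre(\levelU)$ preserves colimits in each variable.
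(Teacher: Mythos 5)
Your overall skeleton is the same as the paper's: reduce via \cite[Proposition 2.2.1.9]{ha} to showing that the tensor $\blank\times p^*(\blank)$ carries Segal equivalences in each variable to Segal equivalences, handle the $\Pre(\simp)$-variable by the inner-anodyne/simple-map machinery of \S\ref{subsec inner anodyne}, and handle the $\Pre(\levelU)$-variable by reducing to the generating Segal equivalences of Proposition~\ref{propn:SegDFVcond}. Your second step (locality of $F\times p^*(\Delta^m_\Seg)\to F\times p^*(\Delta^m)$) is essentially Proposition~\ref{propn prelim Seg tensor}\eqref{prelim Seg tensor two} and is fine as sketched.

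The genuine gap is in your first step, where you assert that for a generating Segal equivalence $s$ in $\Pre(\levelU)$ the map $s\times p^*K$ ``can be computed levelwise over $\simp$ and remains a Segal equivalence.'' This is precisely where the real work lies, and the levelwise picture is false: tensoring with $p^*(\Delta^1)$ changes heights. Concretely, $\xfe\times p^*(\Delta^1)\simeq \xfc_{1,1}(\bbone)$ is a height-one representable, and for a height-one $\olI$ one has $\olI\times p^*(\Delta^1)\simeq \olI^+\amalg_{\olI}\olI^-$ with $\olI^{\pm}$ of height two (Lemma~\ref{lem decomposing}); neither of these is visible ``levelwise.'' To show that a short Segal core inclusion $\coprod_i\overline L_i\to\overline L$ tensored with $\Delta^1$ is a Segal equivalence, one must first reduce $K$ to $\Delta^1$ via colimits and the spine (using your second step), then invoke the decomposition of Lemma~\ref{lem decomposing} on both sides, and then prove that $\coprod_i\overline L_i^{\pm}\to\overline L^{\pm}$ is a Segal equivalence --- which requires segmenting the height-two object $\overline L^{+}$ into $\overline L^{+,\{0,1\}}\amalg_{\overline L^{+,\{1\}}}\overline L^{+,\{1,2\}}$ and recognizing the new layer as a coproduct of corollas $\xfc_{1,1}(\bbone)$. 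None of this is supplied by strong saturation plus cocontinuity alone. Relatedly, you have misplaced the main obstacle: the spine-inclusion direction is dispatched cleanly by Corollary~\ref{cor pb of inner anodyne}, whereas the short-Segal-core-times-$\Delta^1$ computation is the technical heart of the argument and is entirely absent from your proposal.
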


If follows from \cite[Proposition 2.2.1.9]{ha} that for the proof of Theorem~\ref{theo tensor} it suffices to show that the module structure on $\Pre(\levelU)$ is compatible with the Segal equivalences in the following sense:
\begin{proposition}\label{propn Seg tensor}
	Suppose $f \colon F \to F'$ is a Segal equivalence in $\Pre(\levelU)$ and $g \colon K \to K'$ is a Segal equivalence in $\Pre(\simp)$.
	Then $f \times p^*(g) \colon F \times p^*(K) \to F' \times p^{*}(K')$ is a Segal equivalence in $\Pre(\levelU)$.
\end{proposition}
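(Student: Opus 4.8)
The plan is to show that the bifunctor $\blank\times p^{*}(\blank)$ carries pairs of Segal equivalences to Segal equivalences by reducing to a short list of base cases, all but one of which are handled by the simple-map and inner-anodyne technology of \S\ref{subsec inner anodyne}. First I would factor the map as
\[
F\times p^{*}K \xrightarrow{\ f\times \id\ } F'\times p^{*}K \xrightarrow{\ \id\times p^{*}g\ } F'\times p^{*}K'
\]
and invoke the $2$-of-$3$ property of the strongly saturated class of Segal equivalences (Example~\ref{example seg equivs strongly saturated}) to reduce to two one-variable assertions: (a) for fixed $K$, the functor $\blank\times p^{*}K$ preserves Segal equivalences; and (b) for fixed $F'$, the functor $F'\times p^{*}(\blank)$ carries Segal equivalences in $\Pre(\simp)$ to Segal equivalences. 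Since cartesian products preserve colimits in each variable and $p^{*}$ preserves colimits, each of these functors is colimit-preserving; hence the preimage of the (strongly saturated) class of Segal equivalences is again strongly saturated, and it suffices to check the assertions on generators. Using that the Segal equivalences in $\Pre(\simp)$ are generated by the spine inclusions $\Delta^{m}_{\Seg}\to\Delta^{m}$, and those in $\Pre(\levelU)$ by the short Segal core inclusions together with the segmentation maps (Example~\ref{example seg equivs strongly saturated}, Proposition~\ref{propn:SegDFVcond}), and reducing the fixed argument to a representable in each case, I am left with the base cases: $\olI\times p^{*}(\Delta^{m}_{\Seg}\to\Delta^{m})$ for a representable $\olI$; $(\overline L|_{\Delta^{n}_{\Seg}}\to\overline L)\times\id_{p^{*}\Delta^{m}}$ for a representable $\overline L$; and $(\overline L_{\Seg}\to\overline L)\times\id_{p^{*}\Delta^{m}}$ for a short representable $\overline L$.

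The first two (the ``simplicial-direction'' cases) I would dispatch uniformly. For a representable $\overline L$ of height $n$ the counit $c\colon\overline L\to p^{*}\Delta^{n}$ is simple (Remark~\ref{remark first simple properties}\eqref{item counit simple}), and its pullback along the projection $p^{*}(\Delta^{n}\times\Delta^{m})\to p^{*}\Delta^{n}$, namely $\overline L\times p^{*}\Delta^{m}\to p^{*}(\Delta^{n}\times\Delta^{m})$, is again simple (Remark~\ref{remark first simple properties}\eqref{item pullback simple}). The spine inclusion $\Delta^{n}_{\Seg}\to\Delta^{n}$ is inner anodyne in $\Pre(\simp)$, so $\Delta^{n}_{\Seg}\times\Delta^{m}\to\Delta^{n}\times\Delta^{m}$ is inner anodyne as well (pushout-product with $\varnothing\to\Delta^{m}$). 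Applying Corollary~\ref{cor pb of inner anodyne}(1) to this inner anodyne map and the above simple map, the resulting pullback $\overline L|_{\Delta^{n}_{\Seg}}\times p^{*}\Delta^{m}\to\overline L\times p^{*}\Delta^{m}$ is inner anodyne, hence a Segal equivalence by Proposition~\ref{propn:iaDFV}; here I use the pullback description of the segmentation map from Definition~\ref{def bar Segal cores}. The same computation with $\overline L=\olI$ and the spine placed on the second factor settles case (b).

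It remains to treat the short Segal core inclusions $\overline L_{\Seg}\to\overline L$ (with $\overline L$ of height $\le 1$) tensored with $p^{*}\Delta^{m}$; this ``$\mathbf{Fin}$-direction'' case is the crux, since such maps are \emph{not} inner anodyne and are not pulled back from $\simp$, so the machinery above does not apply. I would first reduce the exponent: writing $\overline L_{\Seg}=\coprod_{j}\overline{\xfc_{j}}$ and using that $p^{*}\Delta^{m}_{\Seg}=p^{*}\Delta^{1}\amalg_{p^{*}\Delta^{0}}\cdots\amalg_{p^{*}\Delta^{0}}p^{*}\Delta^{1}$, case (b) (already proved for each representable $\overline{\xfc_{j}}$ and for $\overline L$) together with closure of Segal equivalences under pushouts reduces the claim for general $m$ to the single case $m=1$ (the case $m=0$ being the short Segal core itself). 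The remaining assertion, that $\overline L_{\Seg}\times p^{*}\Delta^{1}\to\overline L\times p^{*}\Delta^{1}$ is a Segal equivalence, expresses the compatibility of the ``disjoint union $=$ product'' Segal condition with a single simplicial-direction coordinate; I expect this to be the main obstacle, to be settled by a direct analysis of the height-$\le 1$ presheaves involved (paralleling the treatment of the analogous product Segal cores in the operadic setting of \cite{ChuHaugseng}). Granting it, all base cases hold, establishing (a) and (b) and hence the proposition.
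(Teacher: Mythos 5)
Your reductions are correct and track the paper's own proof almost step for step: the same factorization and $2$-of-$3$ argument, the same reduction to generators via colimit-preservation, and the same use of simple maps together with Corollary~\ref{cor pb of inner anodyne} to dispatch the segmentation maps and the simplicial-direction cases (this is exactly the content of Proposition~\ref{propn prelim Seg tensor}). The reduction of the exponent from $\Delta^m$ to $\Delta^1$ via the spine is also how the paper proceeds.

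However, you have explicitly deferred the one case that carries the real content of the proposition, namely that $\overline L_{\Seg}\times p^{*}\Delta^{1}\to\overline L\times p^{*}\Delta^{1}$ is a Segal equivalence for $\overline L$ of height $\le 1$, so as written this is a genuine gap rather than a proof. What is missing is the following. First, one needs the identification $\olI\times p^{*}(\Delta^{1})\simeq \olI^{+}\amalg_{\olI}\olI^{-}$ for height-$1$ representables (Lemma~\ref{lem decomposing}), where $\olI^{\pm}$ are the height-$2$ cartesian lifts of the degeneracies $s_{0},s_{1}\colon[2]\to[1]$; this is proved by pulling back along the triangulation $\Delta^{2}\amalg_{\Delta^{1}}\Delta^{2}\simeq\Delta^{1}\times\Delta^{1}$ and using Lemma~\ref{lemma simple} to recognize the pieces as representable. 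Second, granting this, one must show $\coprod_{i}\overline L_{i}^{\pm}\to\overline L^{\pm}$ is a Segal equivalence; the paper does this by segmenting $\overline L^{+}$ into its $\{0,1\}$, $\{1\}$, $\{1,2\}$ pieces and observing that the three resulting comparison maps are, respectively, built from short Segal core inclusions involving $\coprod\xfc_{1,1}(\bbone)$, the original short Segal core inclusion, and $\coprod\xfe$. Third, the height-$0$ case is not vacuous: one must compute that $\overline L\times p^{*}\Delta^{1}$ is \emph{representable}, by the graph $\mathbf{n}\xto{\id}\mathbf{n}\xleftarrow{\id}\mathbf{n}$ with all vertices labeled by the unit $\bbone$ (so in particular $\xfe\times p^{*}\Delta^{1}\simeq\xfc_{1,1}(\bbone)$), after which the map in question is itself a short Segal core inclusion. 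None of these steps follows formally from the machinery you have already invoked; they use the specific structure of $\levelg$ as a cartesian fibration over $\simp$ and the explicit form of the lifts $\olI^{\pm}$, so the appeal to "a direct analysis paralleling \cite{ChuHaugseng}" needs to be carried out, not just announced.
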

As already mentioned in Definition~\ref{def tensor} the tensor functor $\blank \times p^*(\blank)$ preserves colimits in each variable.
This allows us to prove the proposition by reducing it to a few key special cases of Segal equivalences. We start with two easy cases.
\begin{propn}\label{propn prelim Seg tensor}
Given an object $\olI \in \levelU$ of height $n$ and a Segal equivalence $Z \to \Delta^{n}$, we write $f\colon \olI|_Z \to \olI$ for the map $\olI \times_{p^*(\Delta^n)}p^*(Z)\to \olI$.
For $g \colon K \to K'$ in $\Pre(\simp)$, consider the map
		\[f \times p^*(g) \colon  \olI|_{Z} \times
		p^*(K) \to \olI \times p^*(K').\] 
\begin{enumerate}
\item If $K\in \Pre(\simp)$ and $f \colon \olI|_{\Delta^n_\Seg} \to \olI$ is a segmentation map, then $f \times p^*(K)$ is a Segal equivalence in $\Pre(\levelU)$.\label{prelim Seg tensor one}
\item If $g \colon K\to K'$ is a Segal equivalence in $\Pre(\simp)$, then $\olI \times p^*(g)$ is a Segal equivalence in $\Pre(\levelU)$. \label{prelim Seg tensor two}
\end{enumerate}
\end{propn}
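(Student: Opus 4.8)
The plan is to realise both maps as pullbacks, along a \emph{simple} map, of inner anodyne maps pulled back from $\Pre(\simp)$, and then to quote Corollary~\ref{cor pb of inner anodyne}(1), which already records that such pullbacks are Segal equivalences. Throughout I write $p\colon \levelU\to\simp$ for the composite cartesian fibration of Definition~\ref{def tensor}; in the notation of Corollary~\ref{cor pb of inner anodyne} this $p$ plays the role of $pq$. By Remark~\ref{remark first simple properties}\eqref{item counit simple} the counit $\olI\to p^*(p\olI)=p^*(\Delta^n)$ is simple, where $n$ is the height of $\olI$. Since $p^*$ preserves products, for any $K\in\Pre(\simp)$ the projection $p^*(\Delta^n\times K)=p^*(\Delta^n)\times p^*(K)\to p^*(\Delta^n)$ pulls this counit back, so by Remark~\ref{remark first simple properties}\eqref{item pullback simple} the map
\[ h\colon \olI\times p^*(K)\longrightarrow p^*(\Delta^n\times K) \]
is simple. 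This is the common simple map feeding both parts, and the only inputs that differ are the inner anodyne maps I pull back.

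For \eqref{prelim Seg tensor one} I would first reduce to $K=\Delta^m$: as $\blank\times p^*(\blank)$ preserves colimits in each variable (Definition~\ref{def tensor}) and Segal equivalences form a strongly saturated class (Example~\ref{example seg equivs strongly saturated}), writing $K$ as a colimit of representables exhibits $f\times p^*(K)$ as a colimit of the arrows $f\times p^*(\Delta^m)$, so it suffices to treat these. Now the spine inclusion $\Delta^n_\Seg\to\Delta^n$ is inner anodyne, hence so is its pushout-product with the monomorphism $\varnothing\to\Delta^m$, namely $\Delta^n_\Seg\times\Delta^m\to\Delta^n\times\Delta^m$ in $\Pre(\simp)$ (a standard closure property of inner anodyne maps, see \cite[Corollary 2.3.2.4]{ht}). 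Using that $p^*$ preserves products, a direct computation of the iterated fibre product identifies
\[ p^*(\Delta^n_\Seg\times\Delta^m)\times_{p^*(\Delta^n\times\Delta^m)}\bigl(\olI\times p^*(\Delta^m)\bigr)\;\simeq\;\olI|_{\Delta^n_\Seg}\times p^*(\Delta^m), \]
since $\olI|_{\Delta^n_\Seg}=\olI\times_{p^*(\Delta^n)}p^*(\Delta^n_\Seg)$ by definition of the segmentation map. The induced map to $\olI\times p^*(\Delta^m)$ is precisely $f\times p^*(\Delta^m)$, so Corollary~\ref{cor pb of inner anodyne}(1), applied to the simple map $h$ and this inner anodyne map, shows it is a Segal equivalence.

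For \eqref{prelim Seg tensor two} I would instead vary $g$ and fix $\olI$. The preimage, under the colimit-preserving functor $\olI\times p^*(\blank)$, of the strongly saturated class of Segal equivalences is again strongly saturated, so by Example~\ref{example seg equivs strongly saturated} (where the Segal equivalences of $\Pre(\simp)$ are generated by the spine, i.e.\ Segal core, inclusions) it suffices to treat $g=(\Delta^m_\Seg\to\Delta^m)$. Here the relevant inner anodyne map is the pushout-product of $\varnothing\to\Delta^n$ with the inner anodyne $g$, namely $\Delta^n\times\Delta^m_\Seg\to\Delta^n\times\Delta^m$, and the analogous fibre-product computation gives
\[ p^*(\Delta^n\times\Delta^m_\Seg)\times_{p^*(\Delta^n\times\Delta^m)}\bigl(\olI\times p^*(\Delta^m)\bigr)\;\simeq\;\olI\times p^*(\Delta^m_\Seg), \]
with induced map $\olI\times p^*(g)$; Corollary~\ref{cor pb of inner anodyne}(1) again yields that this is a Segal equivalence.

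I expect the genuinely delicate points to be the bookkeeping rather than any new idea: correctly identifying the two iterated fibre products above (which rests on $p^*$ preserving products and on the definitional description of $\olI|_{\Delta^n_\Seg}$ as a pullback of the spine inclusion along the simple counit), and confirming that $\Delta^n_\Seg\times\Delta^m\to\Delta^n\times\Delta^m$ and $\Delta^n\times\Delta^m_\Seg\to\Delta^n\times\Delta^m$ are honestly inner anodyne in $\Pre(\simp)$. The other mild hazard is purely notational, namely keeping straight that the composite $p\colon\levelU\to\simp$ used here is the map called $pq$ in Corollary~\ref{cor pb of inner anodyne}. Once these are pinned down, the conclusion of each part is immediate from that corollary.
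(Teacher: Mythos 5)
Your proof is correct and follows essentially the same route as the paper's: both exhibit the maps in question as pullbacks, along the simple map induced by the counit $\olI \to p^*(\Delta^n)$, of inner anodyne maps coming from $\Pre(\simp)$, and then invoke Corollary~\ref{cor pb of inner anodyne}. The only cosmetic differences are that the paper treats both variables at once via the single inner anodyne map $Z \times K \to \Delta^n \times K'$ (specializing afterwards for each part), and in part (2) reduces to inner horn inclusions rather than spine inclusions as generators of the Segal equivalences.
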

\begin{proof}
Suppose $Z \to \Delta^n$ and $K\to K'$ are inner anodyne maps of simplicial sets.
Two applications (first to $(Z\to \Delta^n, \varnothing \to K)$ and then to $(\varnothing \to \Delta^n, K \to K')$) of \cite[Corollary 2.3.2.4]{ht} gives that the product $Z\times K\to \Delta^n\times K'$ of inner horn inclusions is inner anodyne.
We have a commutative diagram
	\[
	\begin{tikzcd}
	\olI|_Z \times p^*(K) \arrow{r}{f \times p^*(g)} \arrow{d} & {\olI} \times p^* (K') \arrow{r} \arrow{d} &
	\olI \arrow{d}\\
	p^*(Z) \times p^* (K) \arrow{r} & p^*(\Delta^n) \times p^* (K')\arrow{r} & p^{*}(\Delta^n)
	\end{tikzcd}
	\]
consisting of two pullback squares. 
Remark~\ref{remark first simple properties} implies that the right two vertical maps are simple.
Corollary~\ref{cor pb of inner anodyne} gives that the left upper horizontal map $f \times p^*(g)$ is inner anodyne, hence a Segal equivalence.

Since spine inclusions are inner anodyne by \cite[Proposition 2.13]{Quadern45}, the first item \eqref{prelim Seg tensor one} follows immediately from the previous paragraph by taking $Z = \Delta^n_\Seg$ and $K\to K'$ an identity.

For the second statement, first observe that the class of maps $g \in \Pre(\simp)$ so that $\olI \times p^*(g)$ is a Segal equivalence is strongly saturated by \cite[Remark 5.5.4.10]{ht}.
In the first paragraph we showed that if $g$ is an inner horn inclusion then $\olI \times p^*(g)$ is inner anodyne, hence a Segal equivalence.
On the other hand, inner horn inclusions in $\Pre(\simp)$ generate the strongly saturated class of Segal equivalences (see \cite[Proposition 2.7.7]{ChuHaugseng}), so \eqref{prelim Seg tensor two} holds.
\end{proof}

For the proof of Proposition~\ref{propn Seg tensor} we need to understand explicitly the tensor product of a corolla with $\Delta^{1}$. 
For this purpose, it is convenient to introduce some notation:
\begin{defn}\label{def X pm}
	Given an object ${\olI} \in \levelU$ of height 1, write ${\olI}^{+},{\olI}^{-} \to {\olI}$ for the cartesian lifts of $s_{0},s_{1} \colon [2] \to [1]$, respectively.
	If $\olI$ lives over $I \in \levelg$ of the form $\mathbf{m} \overset{f}{\to} \mathbf{k} \overset{g}{\leftarrow} \mathbf{n}$, then the objects ${\olI}^{+},{\olI}^{-}$ lie over the height 2 level graphs (see Figure~\ref{figure I Iplus Iminus})
	\begin{align*}
	I^+ &= \left( \begin{tikzcd}[ampersand replacement=\&, row sep=tiny, column sep=tiny]
\mathbf{m} \arrow[dr, "\id"] \& \& \mathbf{m} \arrow[dl, "\id" swap] \arrow[dr, "f"] \&
\& \mathbf{n} \arrow[dl,"g" swap]
 \\
\& \mathbf{m} \& \& \mathbf{k} 
	\end{tikzcd} \right), & 
I^- &= \left( \begin{tikzcd}[ampersand replacement=\&, row sep=tiny, column sep=tiny]
\mathbf{m} \arrow[dr, "f"] \& \& \mathbf{n} \arrow[dl, "g" swap] \arrow[dr, "\id"] \&
\& \mathbf{n} \arrow[dl,"\id" swap]
 \\
\& \mathbf{k} \& \& \mathbf{n} 
	\end{tikzcd} \right).
	\end{align*}
	If $\olI$ is connected, that is, if $\olI = \xfc_{m,n}(v)$ then the corolla in $\olI^{+},\olI^{-}$ corresponding to $\xfc_{m,n}$ is labeled by $v$ while all other corollas are of the form $\xfc_{1,1}$ and labeled by the unit $\bbone\in \xU$; see Figure~\ref{figure example I I plus bar} for an illustration in the disconnected case. 
	We write $\xfc_{m,n}^{\pm}(v)$ for $\olI^{\pm}$ when $I$ is connected.
\end{defn}

\begin{figure}
\includegraphics[width=0.6\textwidth]{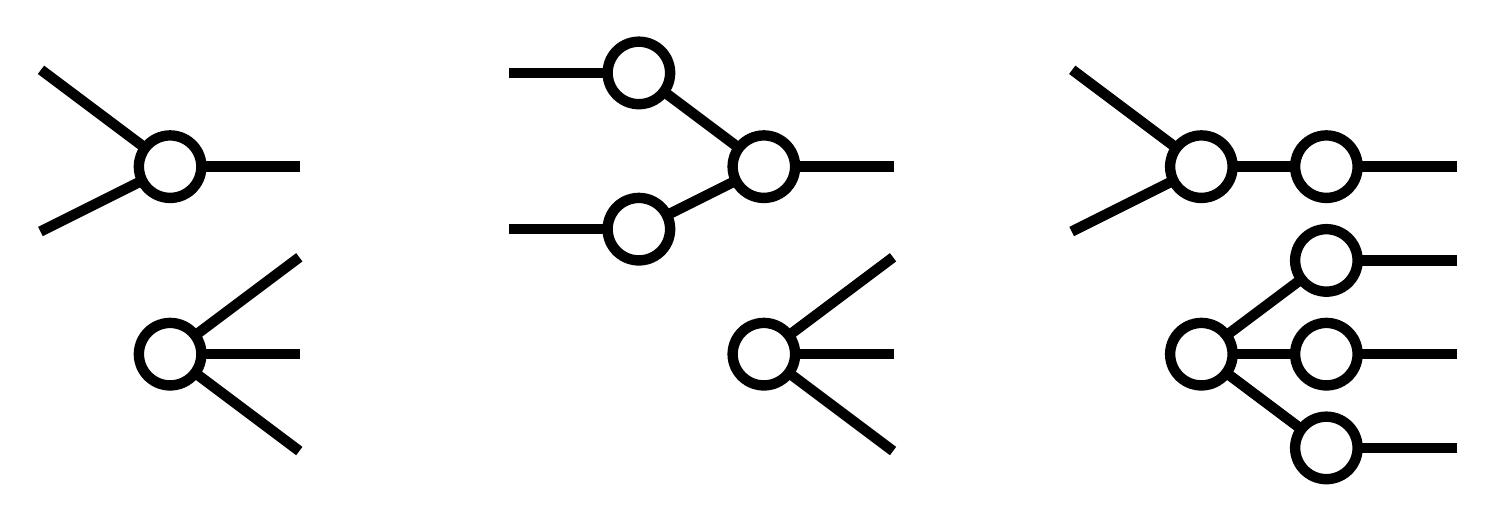}
\caption{An example $I$ (of the form $\mathbf{2} \to \mathbf{2} \leftarrow \mathbf{4}$), $I^+$, and $I^-$.}
\label{figure I Iplus Iminus}
\end{figure}

\begin{figure}[htb]
\labellist
\small\hair 2pt
 \pinlabel {$v_1$} [ ] at 49 94
 \pinlabel {$v_1$} [ ] at 220 94
 \pinlabel {$v_2$} [ ] at 49 40
 \pinlabel {$v_2$} [ ] at 220 40
 \pinlabel {$\bbone$} [ ] at 184 122
 \pinlabel {$\bbone$} [ ] at 184 76
\endlabellist
\centering
\includegraphics[scale=0.8]{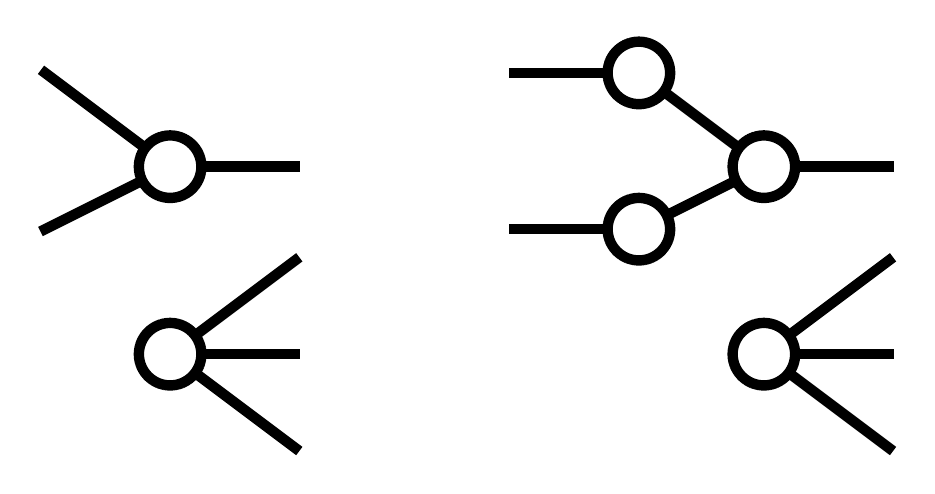}
\caption{An example of $\olI$ and $\olI^+$}
\label{figure example I I plus bar}
\end{figure}

\begin{lemma}\label{lem decomposing}
	For a height 1 object $\olI \in \levelU$, there is an equivalence
	\[ \olI^+ \amalg_{\olI} \olI^- \to
	\olI \times p^*(\Delta^{1}).\]
	In particular, in the connected case where $\olI= \xfc_{m,n}(v)$, we have an equivalence
	$\xfc_{m,n}^{+} (v)\amalg_{\xfc_{m,n}(v)}
	\xfc_{m,n}^{-} (v) \to \xfc_{m,n}(v) \times
	p^*(\Delta^{1}),$ natural in $v$.
\end{lemma}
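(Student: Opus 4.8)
The plan is to realize both sides as two ways of decomposing a product over $\simp$, transported to $\Pre(\levelU)$ along the cartesian fibration $p$. First I would record that since $\olI$ has height $1$ its image $p(\olI)$ is $[1]$, so $p_!\olI \simeq \Delta^1$ and the counit $g\colon \olI \to p^*p_!\olI \simeq p^*(\Delta^1)$ is a simple map by Remark~\ref{remark first simple properties}\eqref{item counit simple}. Since $p^*$ preserves products, I can then rewrite the target as a pullback
\[ \olI \times p^*(\Delta^1) \simeq \olI \times_{p^*(\Delta^1)} p^*(\Delta^1\times \Delta^1), \]
where $\olI \to p^*(\Delta^1)$ is $g$ and $p^*(\Delta^1\times\Delta^1) \to p^*(\Delta^1)$ is $p^*$ of the first projection $\pi_1 \colon \Delta^1\times \Delta^1 \to \Delta^1$.

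Next I would exploit the standard decomposition $\Delta^1\times\Delta^1 \cong \Delta^2 \amalg_{\Delta^1}\Delta^2$ into its two nondegenerate $2$-simplices $T_1 = [(0,0),(0,1),(1,1)]$ and $T_2 = [(0,0),(1,0),(1,1)]$, glued along the diagonal $D = [(0,0),(1,1)]$, each inclusion $D \hookrightarrow T_i$ being $d_1$. The key combinatorial computation is that under $\pi_1$ the simplex $T_1$ maps to $\Delta^1$ by the degeneracy $s_0$, the simplex $T_2$ by $s_1$, and the diagonal $D$ by the identity. Applying $p^*$ (which preserves colimits, being a left adjoint) and then pulling back along $g$ (base change preserves colimits in the $\infty$-topos $\Pre(\levelU)$) turns this pushout into
\[ \olI\times p^*(\Delta^1) \simeq \bigl(\olI\times_{p^*(\Delta^1)} p^*(T_1)\bigr)\amalg_{\olI\times_{p^*(\Delta^1)} p^*(D)}\bigl(\olI\times_{p^*(\Delta^1)} p^*(T_2)\bigr). \]

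Then I would identify the three corners. Because $g$ is the simple counit, Lemma~\ref{lemma simple} (with its $B$ taken terminal, equivalently \cite[Lemma 2.7.10]{ChuHaugseng}) identifies the pullback of $g$ along $p^*(s_0)\colon p^*(\Delta^2) \to p^*(\Delta^1)$ with the $p$-cartesian lift $s_0^*\olI = \olI^+$, the pullback along $p^*(s_1)$ with $s_1^*\olI = \olI^-$, and the pullback along $p^*(\id)=\id$ with $\olI$ itself. Since $D\hookrightarrow T_i$ is $d_1$ and $s_0 d_1 = s_1 d_1 = \id$, the two gluing maps $\olI \to \olI^{\pm}$ are exactly the cartesian-lift maps over $d_1$ built into Definition~\ref{def X pm}, so the pushout above is precisely $\olI^+\amalg_\olI \olI^-$ and the comparison map points from the pushout to the product as stated. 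Finally, the connected case is the specialization $\olI = \xfc_{m,n}(v)$, and naturality in $v$ is automatic since the counit, $p^*$, the product, and the base-change pushout are all functorial in $\olI$.

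The step I expect to be most delicate is matching the combinatorics precisely: one must verify that $\pi_1$ restricts to $s_0$ on $T_1$ and to $s_1$ on $T_2$ (rather than the reverse, which would merely interchange $\olI^+$ and $\olI^-$), and confirm that the inclusions $D\hookrightarrow T_i$ reproduce the two maps $\olI \to \olI^{\pm}$ of Definition~\ref{def X pm}. The remaining points are purely formal, guaranteed by $p^*$ being a left adjoint and by universality of colimits in $\Pre(\levelU)$.
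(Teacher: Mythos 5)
Your proposal is correct and follows essentially the same route as the paper's proof: both decompose $\Delta^1\times\Delta^1$ into its two nondegenerate $2$-simplices glued along the diagonal, apply $p^*$, use universality of colimits in $\Pre(\levelU)$ to pull this decomposition back, and invoke Lemma~\ref{lemma simple} to identify the three corners with $\olI^+$, $\olI^-$, and $\olI$ via the cartesian lifts of $s_0$, $s_1$, and $\id$. Your explicit check that the gluing maps $\olI\to\olI^{\pm}$ arise from $d_1$ is a detail the paper leaves implicit, but the argument is the same.
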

\begin{proof}
First, let us introduce some notation for a simplicial subdivision of the square.
Namely, let 
	$\sigma^+, \sigma^- \colon \Delta^{2} \to \Delta^{1} \times \Delta^{1}$ denote the two non-degenerate 2-simplices of $\Delta^{1} \times \Delta^{1}$, taking $(0,1,2)$ to $((0,0), (0,1), (1,1))$ and $((0,0), (1,0), (1,1))$, respectively, and let $\delta \colon \Delta^{1} \to \Delta^{1} \times \Delta^{1}$ denote the diagonal map.
	Then the maps $\sigma^{\pm}$ and $\delta$ induce an equivalence $\Delta^{2} \amalg_{\Delta^{1}} \Delta^{2} \isoto \Delta^{1}\times
	\Delta^{1}$.

	Let $A^+$ and $A$ denote the pullbacks
\[ \begin{tikzcd}
A^+ \rar \dar & \olI \times p^*(\Delta^1) \dar & 
A \rar \dar & \olI \times p^*(\Delta^1) \dar
 \\
p^*(\Delta^2) \rar{p^*(\sigma^+)} & p^*(\Delta^1) \times p^*(\Delta^1) & 
p^*(\Delta^1) \rar{p^*(\delta)} & p^*(\Delta^1) \times p^*(\Delta^1)
\end{tikzcd} \]
and similarly for $A^-$.
We can extend these diagrams by projection onto the first factor, yielding in the first case
\[ \begin{tikzcd}
A^+ \rar \dar & \olI \times p^*(\Delta^1) \dar \rar &  \olI \dar
 \\
p^*(\Delta^2) \rar{p^*(\sigma^+)} \arrow[rr, bend right=20, "p^*(s^0)"' near end] & p^*(\Delta^1) \times p^*(\Delta^1) \rar & p^*(\Delta^1).
\end{tikzcd} \]
It follows from this diagram and Lemma~\ref{lemma simple} that $A^+ \simeq \olI^+$.
Using the corresponding diagrams for $A^-$ and $A$, one concludes also that $A^- \simeq \olI^-$ and $A\simeq \olI$.
Since pullbacks in $\Pre(\levelU)$ preserve colimits, we have a natural pullback square 
\[
\begin{tikzcd}
\olI^+ \amalg_{\olI} \olI^- \rar{\sim} \dar & 
\olI \times p^*(\Delta^1) \dar \\
p^*(\Delta^2) \amalg_{p^*(\Delta^1)} p^*(\Delta^2) \rar{\sim} & 
p^*(\Delta^1) \times p^*(\Delta^1),
\end{tikzcd}
\]
which completes the proof.
\end{proof}

\begin{proof}[Proof of Proposition~\ref{propn Seg tensor}]
	The map $f \times p^*(g) \colon F \times p^*(K) \to F' \times p^{*}(K')$ in the statement factors as $F \times p^*(K) \to F' \times p^*(K) 
\to F'\times p^{*}(K')$ and below we prove separately that both morphisms are Segal equivalences.

By Proposition~\ref{propn prelim Seg tensor}\eqref{prelim Seg tensor two} the map $\id\times p^*(g)\colon \overline L\times p^*(K)\to \overline L \times p^*(K')$ is a Segal equivalence for every object $\overline L\in \levelU$ and every Segal equivalence $g\colon K\to K'$ in $\Pre(\simp)$.
Since $\blank\times p^*(\blank)$ preserves colimits in each variable and Segal equivalences are closed under colimits, the map $F\times p^*(K)\to F \times p^*(K')$ is a Segal equivalence for every $F\in \Pre(\levelU)$ and every Segal equivalence $K\to K'$ in $\Pre(\simp)$.

In the remaining part of this proof we show that the first map
	$F \times p^*(K)\to F'\times p^*(K)$ is a Segal equivalence as well. 
As $\blank\times p^*(\blank)$ preserves colimits the map $F \times p^*(K)\to F'\times p^*(K)$ is a Segal equivalence if it is true in the case where $K$ is a simplex $\Delta^{n}$.
	Therefore, it suffices to show that the bottom map of the commutative diagram
	\nolabelcsquare{F \times
		p^{*}(\Delta^n_{\Seg})}{F' \times
		p^{*}(\Delta^n_{\Seg})}{F \times
		p^*(\Delta^n)}{F' \times p^*(\Delta^n)}
	is a Segal equivalence. The previous paragraph shows that the vertical morphisms are Segal equivalences and the definition of $\Delta^n_\Seg$ implies that the upper horizontal map is a colimit of maps of the form $f \times p^*(\Delta^{1})$. Hence, we only need to prove that these maps are a Segal equivalences.
	By Proposition~\ref{propn:SegDFVcond}, we can further reduce to the case where $f$ is either a segmentation map or a short Segal core inclusion (Definition~\ref{def bar Segal cores}).

	For the segmentation maps, the claim follows from Proposition~\ref{propn prelim Seg tensor}\eqref{prelim Seg tensor one}.
We now consider the case where $f \colon \coprod \overline{L}_{i} \to \overline{L}$ a short Segal core inclusion, $\overline L$ is of height $1$ and each $\overline{L}_{i}$ is connected.
Using that $\blank \times p^*(\Delta^1)$ commutes with colimits, 
Lemma~\ref{lem decomposing} tells us that to show $f \times p^{*}\Delta^{1}$ is a Segal equivalence, it suffices to show that
\[
\left( \coprod_i {\overline L}_i^+ \right) \amalg_{\left( \coprod_i {\overline L}_i \right)} \left( \coprod_i {\overline L}_i^- \right) \simeq
\coprod_i({\overline L}_i^+\amalg_{{\overline L}_i}  {\overline L}_i^-) \to {\overline L}^+\amalg_{\overline L} {\overline L}^-\]
is a Segal equivalence.
We know that $f\colon \coprod_i {\overline L}_i \to \overline{L}$ is a short Segal core inclusion, hence a generating Segal equivalence.
It remains to show that $\coprod_i {\overline L}_i^+ \to {\overline L}^+$ and $\coprod_i {\overline L}_i^- \to {\overline L}^-$ are Segal equivalences; we consider only the first case (the other is similar).
There is a commutative square
	\nolabelcsquare{\coprod_i {\overline L}_i^+|
		_{\Delta^2_\Seg}}{{\overline L}^+| _{\Delta^2_\Seg}}{\coprod_i
		{\overline L}_i^+}{{\overline L}^+,} where the vertical maps are Segal equivalences and the upper horizontal map is of the form
\[ \coprod_i \left(
{\overline {L}^{+,\{0,1\}}_i}
\amalg_{{{\overline L}^{+,\{1\}}_i}} 
{{\overline L}^{+,\{1,2\}}_i}
\right)
\to 
{\overline L}^{+,\{0,1\}}
\amalg_{{\overline L}^{+,\{1\}}} 
{\overline L}^{+,\{1,2\}}
.\]
To show that the bottom map in the square is a Segal equivalence, it suffices to show that the top map is such.
This top map is a pushout of the following:
\begin{align}
\coprod_i{\overline {L}^{+,\{0,1\}}_i} &\to 
	{\overline L}^{+,\{0,1\}}  \label{aligned for pushout one}\\
\coprod_i{\overline {L}^{+,\{1,2\}}_i} &\to 
	{\overline L}^{+,\{1,2\}} \label{aligned for pushout two} \\
\coprod_i{\overline L}^{+,\{1\}}_i &\to 
	{\overline L}^{+,\{1\}} \label{aligned for pushout three}
\end{align} 
Since we generally have $\olI^{+,\{1,2\}} = \olI$, map \eqref{aligned for pushout two} is a short Segal core inclusion.
If $m$ is the number of input edges of $L$, then there are commutative triangles
\[ \begin{tikzcd}[column sep=tiny]
\coprod\limits_{j\in \mathbf{m}} \xfc_{1,1}(\bbone) \arrow[rr] \arrow[dr] & & {\overline L}^{+,\{0,1\}} \\
& \coprod\limits_i{\overline {L}^{+,\{0,1\}}_i} \arrow[ur,"\eqref{aligned for pushout one}" description]
\end{tikzcd} 
\qquad
\begin{tikzcd}[column sep=tiny]
\coprod\limits_{j\in \mathbf{m}} \mathfrak{e} \arrow[rr] \arrow[dr] & & {\overline L}^{+,\{1\}} \\
& \coprod\limits_i{\overline {L}^{+,\{1\}}_i} \arrow[ur,"\eqref{aligned for pushout three}" description]
\end{tikzcd} 
\]
where the top maps are short Segal core inclusions, and the downward arrows are coproducts of such.
Hence \eqref{aligned for pushout one} and \eqref{aligned for pushout three} are Segal equivalences as well.
It follows that $f \times p^{*}\Delta^{1}$ is a Segal equivalence when $f$ is the Segal core inclusion into a height 1 graph.

It remains to consider the case where $f\colon \coprod_{i\in \mathbf n}\xfe\to \overline L$ is a short Segal core inclusion and $\overline L$ is of height $0$. 
As above, we must understand $\overline L \times p^*(\Delta^1)$, but unlike the height 1 case (Lemma~\ref{lem decomposing}), in the height 0 case this object is representable.
We first note that canonical equivalence $\sigma\colon\Delta^1\isoto \Delta^0\times \Delta^1$ induces a pullback square
	\csquare{X}{\overline L\times
		p^*(\Delta^1)}{p^*(\Delta^1)}{p^*(\Delta^0)\times
		p^*(\Delta^1)}{\sim}{}{}{p^*\sigma} where the horizontal maps are equivalences.
	Lemma~\ref{lemma simple} implies that the presheaf $X$ is represented by $s_0\overline L$ determined by the cartesian lift $s_0 \overline L\to \overline L$ of the projection $s^0\colon \Delta^1\to \Delta^0$.
	Hence, the presheaf $X$ is represented by $L'(\bbone_c)_{c\in \vertex_{\levelg}(L')}$, where $L'$ is the graph $\mathbf{n}\xto{\id}\mathbf{n}\overset{\id}{\leftarrow}\mathbf{n}$ and $\bbone$ denotes the unit in $\xU$.
	In particular, we have $\xfe\times p^*(\Delta^1)\simeq (\xfc_{1,1},\bbone)$.
	Therefore, the map $f\times p^*(\Delta^1)$ is given by 
\[ 
\left(\coprod_{i\in \mathbf{n}} \mathfrak{e}\right)\times p^*(\Delta^1)
	\simeq
\coprod_{i\in \mathbf{n}} \left( \mathfrak{e}\times p^*(\Delta^1) \right)
	\simeq
\coprod_{i\in \mathbf{n}} \xfc_{1,1}(\bbone)
	\to 
L'(\bbone_c)_{c\in \vertex_{\levelg}(L')}
	\simeq 
\overline L\times p^*(\Delta^1)
,\]
	which is a short Segal core inclusion.
\end{proof}

This completes the proof of Theorem~\ref{theo tensor}.
As a consequence, we get:
\begin{cor}\label{cor tensor S}
	Let $\xU$ be a small symmetric monoidal \icat{} and let $\mathbb{S}$ be a set of morphisms in $\Pre(\xU)$ compatible with the symmetric monoidal structure.
	Then the $\Seg(\simp)$-module structure on $\Seg(\levelU)$ induces a $\Seg(\simp)$-module structure on $\SegS(\levelU)$. Moreover, this tensor product preserves colimits in each variable.
\end{cor}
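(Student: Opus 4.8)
The plan is to descend the $\Seg(\simp)$-module structure on $\Seg(\levelU)$ from Theorem~\ref{theo tensor} along the reflective localization $L_{\mathbb S}\colon \Seg(\levelU)\to \SegS(\levelU)$, in exactly the way that Theorem~\ref{theo tensor} itself descends the $\Pre(\simp)$-module structure along $L\colon \Pre(\levelU)\to\Seg(\levelU)$. By the module version of \cite[Proposition 2.2.1.9]{ha} --- the same citation invoked in the proof of Theorem~\ref{theo tensor} --- it suffices to check that tensoring is compatible with the localization: for every map $f$ in $\Seg(\levelU)$ inverted by $L_{\mathbb S}$ and every $K\in \Seg(\simp)$, the map $f\otimes K$ is again inverted by $L_{\mathbb S}$. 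Write $W$ for the class of maps in $\Seg(\levelU)$ inverted by $L_{\mathbb S}$. Since $\SegS(\levelU)$ is by Definition~\ref{def S properads} the full subcategory of objects local with respect to the set of maps $y^*\xfc(s)$ (with $\xfc$ a corolla and $s\in\mathbb S$), the class $W$ is the strongly saturated class (Definition~\ref{def strongly saturated}) generated by the images of these maps.

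Because $\otimes$ preserves colimits in each variable (Theorem~\ref{theo tensor}) and $W$ is strongly saturated, I may reduce both variables to generators. In the $\Seg(\simp)$ variable, the representables $\Delta^n$ generate under colimits, and each $\Delta^n$ is the iterated pushout of copies of $\Delta^1$ along $\Delta^0$ since the spine inclusion is a Segal equivalence; as $F\otimes\Delta^0\simeq F$ for $F\in\Seg(\levelU)$, it remains to treat $K=\Delta^1$. In the $\Seg(\levelU)$ variable, $W$ is generated by the maps $y^*\xfc_{m,n}(s)$, so it suffices to prove that $y^*\xfc_{m,n}(s)\otimes\Delta^1\in W$ for each corolla $\xfc_{m,n}$ and each $s\colon A\to B$ in $\mathbb S$.

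For this last step I use the explicit description of tensoring a corolla with $\Delta^1$. By Lemma~\ref{lem decomposing}, naturally in the decoration $v$, there is an equivalence $\xfc_{m,n}(v)\times p^*(\Delta^1)\simeq \xfc_{m,n}^+(v)\amalg_{\xfc_{m,n}(v)}\xfc_{m,n}^-(v)$, with $\xfc^{\pm}_{m,n}(v)$ as in Definition~\ref{def X pm}. Applying this to $s$ and using closure of $W$ under colimits in $\xFun(\Delta^1,\Seg(\levelU))$, the membership $y^*\xfc_{m,n}(s)\otimes\Delta^1\in W$ reduces to the three statements $y^*\xfc_{m,n}(s)\in W$ and $\xfc_{m,n}^{\pm}(s)\in W$. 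The first holds by definition of $W$. For $\xfc^+_{m,n}(s)$, recall from Definition~\ref{def X pm} that the height-$2$ graph $\xfc^+_{m,n}(v)$ carries the decoration $v$ only on its single $\xfc_{m,n}$-corolla, all auxiliary corollas being $\xfc_{1,1}(\bbone)$ and \emph{independent} of $v$. Its Segal core inclusion is a Segal equivalence, hence lies in $W$, and --- via short Segal core inclusions and the segmentation map of Proposition~\ref{propn:SegDFVcond}, as in the final part of the proof of Proposition~\ref{propn Seg tensor} --- exhibits $\xfc^+_{m,n}(v)$, naturally in $v$, as a gluing along edges of $\xfc_{m,n}(v)$ and the $\xfc_{1,1}(\bbone)$. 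Under $s$ the induced map of these colimit diagrams is the generating map $y^*\xfc_{m,n}(s)\in W$ on the $\xfc_{m,n}$-factor and the identity on every auxiliary factor; closure of $W$ under colimits in $\xFun(\Delta^1,\Seg(\levelU))$ then gives that the induced map on Segal cores is in $W$, and $2$-of-$3$ against the Segal core inclusions yields $\xfc^+_{m,n}(s)\in W$. The case of $\xfc^-_{m,n}(s)$ is identical.

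The two localization/colimit reductions are routine; the real content --- and the main obstacle --- is the last paragraph, matching $\xfc^{\pm}_{m,n}(s)$ against the generators of $W$. The delicate point is that the Segal decomposition of $\xfc^{\pm}_{m,n}$ must be tracked through the decoration, so that after decomposing, the only non-identity component of the resulting natural transformation is precisely a generating map $y^*\xfc_{m,n}(s)$, with all $\xfc_{1,1}(\bbone)$-components identities; only then does closure of $W$ under colimits apply. This is the decorated analogue of the height-$1$ computation closing the proof of Proposition~\ref{propn Seg tensor}, and clause (iii) of Definition~\ref{def strongly saturated} is exactly what legitimizes passing from the pointwise statement to the statement about the induced map on colimits.
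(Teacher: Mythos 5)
Your proposal is correct and follows essentially the same route as the paper's proof: reduce via \cite[Proposition 2.2.1.9]{ha} to checking that tensoring the generating maps $y^*\xfc(s)$ with $\Delta^1$ stays in the localizing class, apply Lemma~\ref{lem decomposing}, and conclude by identifying the induced map on the Segal cores of $\xfc^{\pm}$ as a pushout of $y^*\xfc(s)$ along the undecorated $\xfc_{1,1}(\bbone)$ and $\xfe$ factors, then use $2$-of-$3$ against the Segal core inclusions. The only differences are presentational — you phrase the argument via the strongly saturated class $W$ of maps inverted by $L_{\mathbb S}$ inside $\Seg(\levelU)$, whereas the paper works with $\mathbb{S}$-Segal equivalences in $\Pre(\levelU)$.
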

\begin{proof}
	By definition $\SegS(\levelU)$ is a localization of $\Seg(\levelU)$ with respect to maps of the form $y^*\xfc(s)$, $s\in \mathbb{S}$, according to \cite[Proposition 2.2.1.9]{ha} the $\Seg(\simp)$-module structure on $\Seg(\levelU)$ induces one on $\SegS(\levelU)$ if for every map $s \colon X \to Y$ in $\mathbb{S}$ and every $K\in\Seg(\simp)$, the map
	\[ y^{*}\xfc(s) \times p^{*}K \colon  y^{*}\xfc(X) \times p^{*}K \to
	y^{*}\xfc(Y) \times p^{*}K\]
 is an $\mathbb{S}$-Segal equivalence.
The proof of Proposition~\ref{propn Seg tensor} shows that it suffices to verify the this for the case $K = \Delta^{1}$.
		By Lemma~\ref{lem decomposing}, there is an equivalence $\olI^+ \amalg_{\olI} \olI^- \to
	\olI \times p^*(\Delta^{1})$ for every $\olI\in \levelU$ of height 1.
	Since $y^{*}\xfc(X)$ is a colimit of these objects, the map $y^{*}\xfc(s) \times p^{*}(\Delta^{1})$ is equivalent to
	\[ y^{*}\xfc^{+}(X) \amalg_{y^{*}\xfc(X)} y^{*}\xfc^{-}(X) \to y^{*}\xfc^{+}(Y) \amalg_{y^{*}\xfc(Y)} y^{*}\xfc^{-}(Y).\]
	It then suffices to show that the morphisms $y^{*}\xfc^{\pm}(X) \to y^{*}\xfc^{\pm}(Y)$ are both $\mathbb{S}$-Segal equivalences.
	We consider the case of $\xfc^{+}$; the proof for $\xfc^{-}$ is the same.
	If $\xfc=\xfc_{m,n}$ then the definition of $\xfc_{m,n}$ implies that the upper horizontal map of the commutative diagram
	\nolabelcsquare{y^*\xfc^{+}(X)_\Seg}{y^*\xfc^{+}(Y)_\Seg}{y^*\xfc^{+}(X)}{y^*\xfc^{+}(Y)}
	is given by $\left(\coprod_{m} \mathfrak{c}_{1,1}(\bbone)\right)
		\amalg_{\left(\coprod_{m} \mathfrak{e}\right)} y^*\xfc(X)\to\left(\coprod_{m} \mathfrak{c}_{1,1}( \bbone)\right)
		\amalg_{\left(\coprod_{m} \mathfrak{e}\right)} y^*\xfc(Y).$
	As a pushout of $y^{*}\xfc(X) \to y^{*}\xfc(Y)$, this map is an $\mathbb{S}$-Segal equivalence.
	The vertical maps are Segal equivalences by definition, hence, the bottom horizontal map is also an $\mathbb{S}$-Segal equivalence.
\end{proof}

\begin{corollary}\label{cor Alg(-,-)}
	Let $\xV$ be a presentably symmetric monoidal $\infty$-category.
	There exists a tensor product
	$\otimes\colon \Segrep(\levelV)\times \Seg(\simp)\to
	\Segrep(\levelV)$ and it induces
	\[\xAlg^\xV_{(\blank)}(\blank)\colon (\Segrep(\levelV))^\op\times
	\Segrep(\levelV) \to \Seg(\simp)\] such that
	\[ \Map_{\Seg(\simp)}(\xcc, \Alg^{\xV}_{\mathcal{Q}}(\mathcal{R}))
	\simeq \Map_{\Segrep(\levelV)}(\mathcal{Q} \otimes \xcc,
	\mathcal{R})\]
	and a cotensor product
	\[(\blank)^{(\blank)}\colon
	\Segrep(\levelV)\times (\Seg(\simp))^\op\to
	\Segrep(\levelV)\]
	such that
	\[\Map_{\Segrep(\levelV)}(\mathcal{Q}, \mathcal{R}^{\xcc}) \simeq
	\Map_{\Segrep(\levelV)}(\mathcal{Q} \otimes \xcc, \mathcal{R}).\]
	Moreover, both of these functors preserve limits in each variable.
\end{corollary}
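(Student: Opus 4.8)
The plan is to deduce this formally from Theorem~\ref{theo tensor}, the presentability established in Corollary~\ref{cor PCSDFV pres}, and the adjoint functor theorem. First I would fix a presentation $\xV\simeq\PSU$ for a small symmetric monoidal \icat{} $\xU$ and a suitable set $\mathbb{S}$, exactly as in the proof of Corollary~\ref{cor PCSDFV pres}. Under the equivalence $\Segrep(\levelV)\simeq\SegS(\levelU)$ furnished by Proposition~\ref{prop PSeg=PCS} and Theorem~\ref{theorem LcL Vrep}, Corollary~\ref{cor tensor S} transports the $\Seg(\simp)$-module structure to a tensor product $\otimes\colon\Segrep(\levelV)\times\Seg(\simp)\to\Segrep(\levelV)$ which preserves colimits in each variable. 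Since $\Segrep(\levelV)$ is presentable (Corollary~\ref{cor PCSDFV pres}) and $\Seg(\simp)$ is presentable, the adjoint functor theorem \cite[Corollary 5.5.2.9]{ht} applies to $\mathcal{Q}\otimes(\blank)$ for each fixed $\mathcal{Q}$ and to $(\blank)\otimes\xcc$ for each fixed $\xcc$.

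For the algebra functor, the colimit-preserving functor $\mathcal{Q}\otimes(\blank)\colon\Seg(\simp)\to\Segrep(\levelV)$ admits a right adjoint $\Alg^\xV_\mathcal{Q}(\blank)$, yielding the asserted natural equivalence $\Map_{\Seg(\simp)}(\xcc,\Alg^\xV_\mathcal{Q}(\mathcal{R}))\simeq\Map_{\Segrep(\levelV)}(\mathcal{Q}\otimes\xcc,\mathcal{R})$. To promote these pointwise adjoints to a bifunctor I would view $\mathcal{Q}\mapsto\mathcal{Q}\otimes(\blank)$ as a functor $\Segrep(\levelV)\to\Fun^{\mathrm{L}}(\Seg(\simp),\Segrep(\levelV))$ and compose it with the equivalence $\Fun^{\mathrm{L}}(\Seg(\simp),\Segrep(\levelV))\simeq\Fun^{\mathrm{R}}(\Segrep(\levelV),\Seg(\simp))^\op$ of \cite[Corollary 5.5.3.4]{ht} which sends a left adjoint to its right adjoint; currying the resulting functor $(\Segrep(\levelV))^\op\to\Fun^{\mathrm{R}}(\Segrep(\levelV),\Seg(\simp))$ produces the bifunctor $\xAlg^\xV_{(\blank)}(\blank)$.

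The cotensor is obtained identically: for fixed $\xcc$ the functor $(\blank)\otimes\xcc$ has a right adjoint $(\blank)^\xcc$, and applying \cite[Corollary 5.5.3.4]{ht} to the family $\xcc\mapsto(\blank)\otimes\xcc$ assembles these into $(\blank)^{(\blank)}\colon\Segrep(\levelV)\times(\Seg(\simp))^\op\to\Segrep(\levelV)$ with $\Map_{\Segrep(\levelV)}(\mathcal{Q},\mathcal{R}^\xcc)\simeq\Map_{\Segrep(\levelV)}(\mathcal{Q}\otimes\xcc,\mathcal{R})$. Limit preservation then follows formally from the adjunctions. In the variable $\mathcal{R}\in\Segrep(\levelV)$ both $\Alg^\xV_\mathcal{Q}(\blank)$ and $(\blank)^\xcc$ are right adjoints, hence preserve limits. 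In the remaining (opposite) variables one computes directly from the defining equivalences, using that $\otimes$ preserves colimits; for instance
\[
\Map_{\Seg(\simp)}(\xcc,\xAlg^\xV_{\colim_i\mathcal{Q}_i}(\mathcal{R}))
\simeq \Map_{\Segrep(\levelV)}\!\big((\colim_i\mathcal{Q}_i)\otimes\xcc,\mathcal{R}\big)
\simeq \lim_i\Map_{\Segrep(\levelV)}(\mathcal{Q}_i\otimes\xcc,\mathcal{R})
\simeq \lim_i\Map_{\Seg(\simp)}(\xcc,\xAlg^\xV_{\mathcal{Q}_i}(\mathcal{R})),
\]
so a colimit in $\Segrep(\levelV)$ is carried to a limit in $\Seg(\simp)$, i.e.\ $\xAlg^\xV_{(\blank)}(\mathcal{R})$ preserves limits out of $(\Segrep(\levelV))^\op$; the analogous computation handles the $(\Seg(\simp))^\op$ variable of the cotensor.

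Since all of the genuinely analytic content -- that the module structure descends to Segal and $\mathbb{S}$-local objects and preserves colimits in each variable -- is already packaged in Theorem~\ref{theo tensor} and Corollary~\ref{cor tensor S}, the remaining work is essentially formal. I expect the main obstacle to be the coherent assembly of the pointwise right adjoints into honest functors of two variables: the adjoint functor theorem only produces each $\Alg^\xV_\mathcal{Q}(\blank)$ and each $(\blank)^\xcc$ separately, and making these vary functorially in the second argument requires the functoriality-of-adjoints equivalence applied to a whole family, together with a check that the transported tensor on $\Segrep(\levelV)$ is indeed the restriction of $\blank\times p^*(\blank)$ under the equivalences of Proposition~\ref{prop PSeg=PCS} and Theorem~\ref{theorem LcL Vrep}. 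As these are the analogues of Chu--Haugseng's operadic constructions, the same method applies after replacing $\simp_\finsetskel$ by $\levelg$.
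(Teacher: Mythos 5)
Your proposal is correct and follows the same route as the paper: fix a presentation $\xV\simeq\PSU$, transport the module structure of Corollary~\ref{cor tensor S} along the equivalence of Proposition~\ref{prop PSeg=PCS}, and invoke the adjoint functor theorem for the algebra and cotensor functors. The paper's proof is a three-sentence version of exactly this argument, so your additional detail on assembling the pointwise right adjoints into bifunctors and verifying limit preservation is a faithful expansion rather than a different approach.
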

\begin{proof}
	Since $\xV$ is presentable there exists a small $\infty$-category $\xU$ and a set of morphisms $\mathbb{S}$ in $\Pre(\xU)$ such that $\xV\simeq \Pre_{\mathbb S}(\xU)$. The existence of the tensor product then follows from Corollary~\ref{cor tensor S} and the equivalence $\Seg_\mathbb{S}(\levelU)\simeq \Segrep(\levelV)$ of Proposition~\ref{prop PSeg=PCS}. The remaining statements follow from the adjoint functor theorem.
\end{proof}

\begin{remark}\label{remark sec tensor product extension}
All proofs and statements in \S\ref{sec tensor product} hold equally well if $\levelg$ is replaced by $\levelg_{\name{out}}$ or $\levelg_{\name{0-type}}$ from Definition~\ref{def subcategories of levelg}.
This leads to the natural question about the compatibility of the various tensor products.
We will address this in Theorem~\ref{thm generic compatibility with tensor} below, where we show that they are related via left Kan extension.
\end{remark}

\section{Comparison of \texorpdfstring{$\levelgconn$ and $\bbY$}{Lconn and G} presheaves}\label{sec comparison}

In Theorem~\ref{theorem LcL Vrep} we have shown the canonical inclusion $\levelgconn^\xV\hookrightarrow \levelV$ induces an equivalence $\Segrep(\levelg^\xV)\simeq \Segrep(\levelgconn^\xV)$. 
In this section we introduce a functor $\bartau \colon \levelcV \to \bbY^\xV$ lying over $\tau\colon \levelgconn \to \bbY$ from Lemma~\ref{lem factorization int el}.
The main result is Theorem~\ref{thm:PSegeq}, which shows that $\bartau$ induces an equivalence $\bartau^{*} \colon \Segrep(\bbYV) \isoto \Segrep(\levelcV)$.
The key step is to show that these $\infty$-categories are the $\infty$-categories of algebras of the same monad. 
Once this is proven an $\infty$-categorical version of the Barr--Beck Theorem gives the desired equivalence.

\subsection{The equivalence and its consequences}
Let $\xV$ be a symmetric monoidal \icat{}.
Recall that the category of connected level graphs admits a vertex functor $\levelgconn \hookrightarrow \levelg \xrightarrow{\vertex_{\levelg}} \pfinset^\op$ (see Definition~\ref{def vertex levelg}), which was used in 
Definition~\ref{def LV_YV} to define $\oplevelcV$ as the pullback
	\csquare{{\oplevelcV}}{\xV^{\op,\otimes}}{ \levelgconn^\op }{\pfinsetskel.}{}{}{}{}
The functor $\tau\colon \levelgconn \to \bbY$ from Lemma~\ref{lem factorization int el} fits into the commutative diagram
\[ \begin{tikzcd}[column sep=small]
\levelg \arrow[dr, "\vertex_{\levelg}" swap]& \levelgconn \lar[hook'] \rar & \bbY \arrow[dl, "\vertex_{\bbY}"] \\
& \pfinset^\op 
\end{tikzcd} \]
(where $\vertex_{\bbY}$ is from Definition~\ref{definition bbY to Fin}).
It follows that there is a commutative diagram
\[ \begin{tikzcd}
\oplevelV  \dar &  \oplevelcV \lar \rar \dar \arrow[dr, phantom, "\lrcorner" very near start] \arrow[dl, phantom, "\llcorner" very near start] & \opbbYV \dar  \\
\levelg^\op & \levelgconn^\op \lar[hook'] \rar{\tau^\op} & \bbY^\op
\end{tikzcd} \]
in which both squares are pullbacks.
\begin{defn}
For a symmetric monoidal \icat{} $\xV$, write \[ \bartau^\op \colon \oplevelcV \to \opbbYV\] for the arrow appearing in the preceding pullback diagram.
\end{defn}

\begin{remark}\label{rmk bartauel eq}
The functor $\tau$ restricts to an isomorphism of categories $\levelel \cong \bbY_\xel$ (Lemma~\ref{lem factorization int el}), hence the restriction $\levelcVel \to \bbYVel$ of $\bartau$ is an equivalence.
\end{remark}

\begin{lemma}\label{lem LY}
For $\overline L\in \oplevelcV$ and $\overline E\in \oplevelcVel$, we have equivalences $(\oplevelcVel)_{\overline L/}\simeq (\opbbYVel)_{\bartau \overline L/}$ and 
$(\oplevelcV)_{\overline E/}\simeq \oplevelcV\times_{\opbbYV}(\opbbYV)_{\bartau \overline E/}$.
\end{lemma}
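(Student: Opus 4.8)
The plan is to deduce both equivalences from combinatorial properties of the underlying functor $\tau$, exploiting that $\bartau$ is obtained from $\tau$ by base change and leaves the $\xV$-labels untouched. Concretely, in the pullback diagram preceding the definition of $\bartau$ the right-hand square exhibits
\[
\oplevelcV \simeq \levelgconn^\op \times_{\bbY^\op} \opbbYV,
\]
with $\bartau$ the projection; this rests on the fact that $\vertex_\bbY\circ\tau=\vertex_{\levelg}$ on $\levelgconn$, so that the two vertex functors factor through the same map to $\pfinsetskel$. Both assertions will thereby reduce to statements about $\tau$ alone.

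For the first equivalence I would first strip off the labels. Applying the dual of \cite[Lemma 2.3.13]{ChuHaugseng} to the inert cocartesian fibrations over $\levelgconn^\op$ and over $\bbY^\op$ together with their elementary subcategories (as observed in the remark following Definition~\ref{def bar Segal cores}) identifies $(\oplevelcVel)_{\overline L/}\simeq(\levelel^\op)_{L/}$ and $(\opbbYVel)_{\bartau\overline L/}\simeq(\bbY_{\xel}^\op)_{\tau L/}$. It then suffices to show that $\tau$ induces an equivalence $(\levelel^\op)_{L/}\isoto(\bbY_{\xel}^\op)_{\tau L/}$. Both sides are the \emph{incidence category} of elementary subgraphs (one corolla per vertex, one edge per edge, with an arrow recording each leg of each corolla), and $\tau$ is a bijection on vertices and edges sending each level subgraph to the corresponding structured subgraph (Lemma~\ref{lemma: level subgraph}, Remark~\ref{rem underlying graph}); since it moreover restricts to the isomorphism $\levelel\cong\bbY_{\xel}$ (Lemma~\ref{lem factorization int el}), it is an isomorphism of incidence categories, giving the claim.

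For the second equivalence I would use the standard fact that, for a functor $g\colon\mathcal A\to\mathcal B$ and an object $a\in\mathcal A$, the comparison $\mathcal A_{a/}\to\mathcal A\times_{\mathcal B}\mathcal B_{g(a)/}$ is a map of left fibrations over $\mathcal A$, hence an equivalence precisely when $g$ induces equivalences $\Map_{\mathcal A}(a,-)\isoto\Map_{\mathcal B}(g(a),g(-))$. Taking $g=\bartau$ and $a=\overline E$ reduces the claim to the full faithfulness of $\bartau$ out of $\overline E$. Writing $\overline E,\overline L$ also for their images in $\xV^{\op,\otimes}$, the pullback description of $\oplevelcV$ gives
\[
\Map_{\oplevelcV}(\overline E,\overline L)\simeq\Map_{\levelgconn^\op}(E,L)\times_{\Map_{\pfinsetskel}(\vertex_{\levelg}^\op E,\,\vertex_{\levelg}^\op L)}\Map_{\xV^{\op,\otimes}}(\overline E,\overline L),
\]
and the analogous expression for $\opbbYV$ differs only by replacing $\Map_{\levelgconn^\op}(E,L)$ with $\Map_{\bbY^\op}(\tau E,\tau L)$ (the base and fibre factors agree because $\vertex_\bbY\circ\tau=\vertex_{\levelg}$). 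Hence $\bartau$ is fully faithful out of $\overline E$ as soon as $\tau$ is a bijection $\xHom_{\levelgconn}(L,E)\cong\xHom_{\bbY}(\tau L,\tau E)$ for every elementary $E$.

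The main obstacle is this last combinatorial bijection, which I would establish using that $\tau$ preserves the active-inert factorization system (Lemma~\ref{lem factorization int el}). Factoring a morphism into $E$ as an active map followed by an inert one, the inert maps into an edge or corolla are the identity and the leg inclusions, which $\tau$ matches through $\levelel\cong\bbY_{\xel}$; so everything comes down to the active maps into $\xfe$ and into $\xfc_{p,q}$. An active map to $\xfe$ exists exactly when $L$ is linear and is then unique, while an active map to $\xfc_{p,q}$ forces $L$ to be a single corolla with $\xfc_{1,1}$-vertices strung along its legs and is pinned down by the boundary bijections $\inp(L)\cong\mathbf p$ and $\out(L)\cong\mathbf q$. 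Because $\tau$ preserves linearity, the boundary sets $\inp$ and $\out$, and the elementary pieces, the very same enumeration holds in $\bbY$, and $\tau$ matches the two collections of active maps bijectively. Tracking exactly where the internal edges and the auxiliary $\xfc_{1,1}$-vertices go under such an active collapse on each side is the fiddly point, but it is routine given the explicit form of $\tau$ and of active maps (Definition~\ref{definition: int and el}, Definition~\ref{def intact Y}).
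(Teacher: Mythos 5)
Your proposal is correct and follows essentially the same route as the paper: both equivalences are reduced to fibrewise comparisons of mapping spaces (the paper compares fibres of the slice projections over $\oplevelcVel\simeq\opbbYVel$ and over $\oplevelcV$, which is the same as your left-fibration criterion), and the key inputs are the same two combinatorial bijections induced by $\tau$ — on inert maps out of elementaries for the first equivalence, and on all maps into elementaries (every such map being active or factoring through $\xfe$) for the second. Your detour through the undecorated slices via the dual of \cite[Lemma 2.3.13]{ChuHaugseng} is only a mild repackaging of the paper's direct comparison of decorated mapping spaces.
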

\begin{proof}	
The definitions of inert maps in $\levelg$ and $\bbY$ imply that $\tau$ induces an equivalence $\xMap_{\levelg_{\name{c}, \name{int}}}( E,  L)\isoto \xMap_{\hryint}(\tau E, \tau L)$. Since the images $L$ and $\tau L$ in $\pfinset$ coincide, the constructions of $\oplevelcV$ and $\opbbYV$ show that the map
\[
\xMap_{\levelg_{\name{c}, \name{int}}^{\op, \xV}}( \overline L, \overline E)\isoto \xMap_{\hryint^{\op,\xV}}(\bartau \overline L, \bartau \overline E)
\]
is an equivalence.
This shows that the fibres of the vertical maps of the commutative square
\csquare{(\oplevelcVel)_{\overline L/}}{(\opbbYVel)_{\bartau \overline L/}}{\oplevelcVel}{\opbbYVel}{}{}{}{\sim}
are equivalent and therefore, the upper horizontal map is an equivalence.

For the second equivalence we first observe that a map $\tau L\to \tau E$ in $\bbY$ is either active or factors through $\tau \xfe= {\downarrow}$. 
If $\tau L\to \tau E$ is active, then it is given by contracting all inner edges, i.e. edges bounded by two vertices. 
It follows that in both cases $\tau L\to \tau E$ lies in the image of $\tau$ and hence
$\xMap_{\levelg_{\name{c}}}(L, E)\isoto \xMap_{\bbY}(\tau L, \tau E)$ is an equivalence.
Once again this gives this equivalence
\[
\xMap_{\oplevelcV}(\overline E, \overline L)\isoto \xMap_{\opbbYV}(\bartau \overline E,\bartau \overline L)
\]
which proves that the fibres of the vertical maps of the commutative square
\nolabelcsquare{(\oplevelcV)_{\overline E/}}{(\opbbYV)_{\bartau \overline E/}}{\oplevelcV}{\opbbYV}
are equivalent. It follows that the this square is cartesian, giving the second desired equivalence.
\end{proof}

Our goal is to prove:
\begin{thm}\label{thm:PSegeq}
  Let $\xU$ be a small symmetric monoidal \icat{}.
The functor $\bartau^{*} \colon \Pre(\bbYU) \to \Pre(\levelU)$ given  by composition with $\bartau^\op$
  restricts to an equivalence
  \[ \Seg(\bbYU) \to \Seg(\levelcU).\]
\end{thm}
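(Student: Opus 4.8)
The plan is to realize both $\Seg(\bbYU)$ and $\Seg(\levelcU)$ as monadic over a common base of presheaves on elementary objects, to identify the two resulting monads, and then to invoke the $\infty$-categorical Barr--Beck theorem. First I would record the underlying/restriction functors. Using the equivalence $\Seg(\mathcal Q_{\xint})\simeq \Pre(\mathcal Q_\xel)$ recalled after Definition~\ref{def algpatt}, restriction along the inert subcategories produces functors
\[ r_{\bbY}\colon \Seg(\bbYU)\to \Pre(\bbYUel), \qquad r_{\levelg}\colon \Seg(\levelcU)\to \Pre(\levelcUel). \]
By Remark~\ref{rmk bartauel eq} the functor $\bartau$ restricts to an equivalence $\levelcUel\simeq \bbYUel$, inducing $\theta\colon \Pre(\bbYUel)\isoto \Pre(\levelcUel)$. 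The first equivalence of Lemma~\ref{lem LY}, namely $(\oplevelcUel)_{\overline L/}\simeq (\opbbYUel)_{\bartau\overline L/}$, is exactly the hypothesis of \cite[Lemma 4.5]{patterns1}; this simultaneously shows that $\bartau^*$ preserves Segal objects (yielding the asserted restriction $\Seg(\bbYU)\to\Seg(\levelcU)$) and that it commutes with restriction, i.e. $r_{\levelg}\circ\bartau^*\simeq \theta\circ r_{\bbY}$.

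Second, I would establish that $r_{\bbY}$ and $r_{\levelg}$ are monadic. Both sources are presentable (as in the proof of the corollary following Proposition~\ref{prop PSeg=PCS}, via \cite[Lemma 2.11]{patterns1}), and each $r$ preserves limits and is accessible, hence admits a left adjoint by the adjoint functor theorem. Conservativity is immediate from the Segal condition: a morphism of Segal objects that is an equivalence on every elementary object is an equivalence on each $\overline G$, since $F(\overline G)\simeq \lim_{\overline E}F(\overline E)$. Finally, the Segal condition is a finite-limit condition (essentially a product over the finitely many edges and vertices of $G$), so Segal objects are closed under sifted colimits and each $r$ preserves them; Lurie's monadicity theorem \cite[Theorem 4.7.3.5]{ha} then applies.

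The heart of the argument, and the step I expect to be the main obstacle, is the comparison of the two monads $T_{\bbY}=r_{\bbY}L_{\bbY}$ and $T_{\levelg}=r_{\levelg}L_{\levelg}$, where $L_{(-)}$ are the left adjoints. The commuting square $r_{\levelg}\bartau^*\simeq \theta r_{\bbY}$ produces a canonical Beck--Chevalley transformation of monads, which I must show is an equivalence. Writing each left adjoint as left Kan extension from the elementary objects followed by Segal localization, the value of the monad at an elementary object $\overline E$ is computed by a colimit indexed by the coslice $(\oplevelcU)_{\overline E/}$ (respectively $(\opbbYU)_{\bartau\overline E/}$), with the inert Segal cores of each term identified by the first equivalence of Lemma~\ref{lem LY}. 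The second equivalence of Lemma~\ref{lem LY}, $(\oplevelcU)_{\overline E/}\simeq \oplevelcU\times_{\opbbYU}(\opbbYU)_{\bartau\overline E/}$, identifies precisely these indexing coslices, whence the Beck--Chevalley map of monads is an equivalence. Pinning down the colimit formula for the monad exactly, and checking that the Segal localization interacts with left Kan extension so that these coslice equivalences genuinely suffice, is the delicate point.

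With the monads identified, the conclusion is formal. The functor $\bartau^*$ lies over the equivalence $\theta$, is compatible with the two monadic adjunctions, and induces an equivalence of the associated monads; hence by the standard comparison of monadic right adjoints over a common base \cite[Corollary 4.7.3.16]{ha}, the functor $\bartau^*\colon \Seg(\bbYU)\to\Seg(\levelcU)$ is an equivalence.
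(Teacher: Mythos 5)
Your overall architecture --- exhibiting both sides as monadic over $\Pre(\bbYUel)\simeq\Pre(\levelcUel)$ via restriction to the inert subcategories and then invoking \cite[Corollary 4.7.3.16]{ha} --- is exactly the skeleton of the paper's proof, and your derivation of the restriction $\Seg(\bbYU)\to\Seg(\levelcU)$ from the first equivalence of Lemma~\ref{lem LY} is fine. The gap is in the step you yourself flag as delicate: the comparison of the two monads. The free Segal object functor is a left Kan extension \emph{followed by the Segal localization}, and there is no pointwise colimit formula for its values without substantial further input (in \cite{patterns1} such a formula requires the pattern to be extendable, which is not verified here for $\bbYU$ or $\levelcU$). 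Moreover the coslice $(\oplevelcU)_{\overline E/}$ appearing in the second equivalence of Lemma~\ref{lem LY} is the indexing category for the pointwise \emph{right} Kan extension (a limit), not for the free functor, so identifying these coslices does not by itself identify the monads. As written, the Beck--Chevalley map of monads is asserted rather than proved.

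The paper resolves exactly this point by dualizing: the mate condition is reformulated as the statement that $\bartauint^{*}\bar\imath_{\bbY}^{*}\bartau_{*}\to\bar\imath_{\levelg}^{*}$ is an equivalence, and the right adjoint $\bartau_{*}$ \emph{does} have a pointwise limit formula, which trivializes because $\oplevelcU\times_{\opbbYU}(\opbbYU)_{\bartau\overline E/}\simeq(\oplevelcU)_{\overline E/}$ has an initial object (the second equivalence of Lemma~\ref{lem LY}). But this dualization is only legitimate once one knows that $\bartau_{*}$ restricts to Segal objects (Proposition~\ref{proposition right kan extension of bartau restricts}), equivalently that $\bartau^{*}$ preserves Segal equivalences (Proposition~\ref{prop preserving segal equivalences}); and that is where the real work of \S\ref{subsection comparison proof} lives --- the admissible/non-degenerate filtration, Construction~\ref{construction: factorization through admissible}, and the inner anodyne decomposition of $\tau^{*}(\Dext G)\to\tau^{*}G$ in Proposition~\ref{proposition exterior boundary inner anodyne}. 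Your proposal contains no substitute for this input. A smaller issue: your verification of monadicity via preservation of sifted colimits is shaky, since the Segal limit is an iterated pullback and pullbacks in $\xS$ do not commute with sifted colimits in general; the paper sidesteps this by citing \cite[Corollary 8.2]{patterns1}.
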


Before we give the proof, we want to derive some consequences from this theorem.
\begin{cor}\label{cor:PSSegeq}
 Given a small symmetric monoidal \icat{} $\xU$ and a small set
  $\mathbb{S}$ of morphisms in $\Pre(\xU)$ which is
  compatible with the symmetric monoidal structure.
Then the following hold:
  \begin{enumerate}[label=(\roman*), ref={\roman*}]
  \item The functor $\bartau \colon \levelcU \to \bbYU$ induces an equivalence
    \[\bartau^*\colon  \SegS(\bbYU) \isoto \SegS(\levelcU).\] \label{equiv bartau U}
  \item The functor $\bartau \colon
    \levelgconn^{\PSU} \to
    \bbY^{\PSU}$ induces an equivalence
    \[ \bartau^*\colon \Segrep(\bbYPSU) \isoto
    \Segrep(\levelcPSU).\] \label{equiv bartau U presheaf}
  \end{enumerate}
\end{cor}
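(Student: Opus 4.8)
The plan is to deduce both equivalences from the unlabelled equivalence $\bartau^* \colon \Seg(\bbYU) \isoto \Seg(\levelcU)$ supplied by Theorem~\ref{thm:PSegeq}, exploiting the fact that every localization condition appearing in the statement is detected solely on corollas, where $\bartau$ is already an equivalence by Remark~\ref{rmk bartauel eq} (itself a consequence of the isomorphism $\levelel \cong \bbY_\xel$ of Lemma~\ref{lem factorization int el}).

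For part~\eqref{equiv bartau U}, recall from Remark~\ref{rmk S-seg repr} that an object $F \in \Seg(\gcU)$ lies in $\SegS(\gcU)$ if and only if, for every corolla $\xfc_{m,n}$, the composite $\xU^\op \xrightarrow{F(\xfc_{m,n}(\blank))} \xS_{/F(\xfe)^{m+n}} \to \xS$ is local with respect to $\mathbb{S}$. First I would observe that for $F \in \Seg(\bbYU)$ and a corolla $\xfc_{m,n}$ in $\levelgconn$, the functor $(\bartau^* F)(\xfc_{m,n}(\blank))$ is identified with $F((\bartau\,\xfc_{m,n})(\blank))$, since $\bartau$ carries the elementary objects of $\levelcU$ (the edge $\xfe$ together with the corollas, and the inert maps among them) isomorphically onto those of $\bbYU$. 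Consequently $F$ is $\mathbb{S}$-local exactly when $\bartau^* F$ is, so the equivalence $\bartau^*$ of Theorem~\ref{thm:PSegeq} restricts and corestricts to an equivalence $\SegS(\bbYU) \isoto \SegS(\levelcU)$, giving part~\eqref{equiv bartau U}.

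For part~\eqref{equiv bartau U presheaf}, the plan is to transport part~\eqref{equiv bartau U} along the equivalences of Proposition~\ref{prop PSeg=PCS}, which provide $\SegS(\bbYU) \simeq \Segrep(\bbYPSU)$ and $\SegS(\levelcU) \simeq \Segrep(\levelcPSU)$, each a composite of the Yoneda pushforward of Theorem~\ref{thm enr psh} with the pushforward along the localization $\Pre(\xU) \to \PSU$. I would therefore assemble the square
\[
\begin{tikzcd}
\Segrep(\bbYPSU) \rar{\bartau^*} \dar{\simeq} & \Segrep(\levelcPSU) \dar{\simeq} \\
\SegS(\bbYU) \rar{\bartau^*} & \SegS(\levelcU),
\end{tikzcd}
\]
verify that it commutes, and conclude that the top arrow is an equivalence because the bottom one is, by part~\eqref{equiv bartau U}.

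The step I expect to require the most care is the commutativity of this last square, i.e.\ checking that $\bartau^*$ is genuinely compatible with the Yoneda and localization pushforwards. This is essentially formal: both vertical equivalences are induced by the (lax) symmetric monoidal functors $\xU \to \Pre(\xU) \to \PSU$, which act in the $\xV$-direction, whereas $\bartau$ arises from the graph-category direction. Since each decorated category $\gc^{\op,\xV}$ is defined as a pullback in Definition~\ref{def LV_YV}, functoriality of this construction in $\xV$ places the induced maps $\levelcU \to \levelcPSU$ and $\bbYU \to \bbYPSU$ in a square commuting with $\bartau$; precomposition then yields the commuting square above. The only genuine content lies in Theorem~\ref{thm:PSegeq}, and everything here amounts to tracking that the relevant conditions and equivalences are compatible with the two orthogonal directions of decoration and graph structure.
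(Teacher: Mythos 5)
Your proposal is correct and follows essentially the same route as the paper: part~(i) restricts the equivalence of Theorem~\ref{thm:PSegeq} using that the $\mathbb{S}$-locality conditions are detected on elementary objects, where $\bartau$ is an equivalence by Remark~\ref{rmk bartauel eq}, and part~(ii) transports the result along the equivalences of Proposition~\ref{prop PSeg=PCS}. The extra care you take over the commutativity of the square in part~(ii) is sound but the paper treats it as immediate.
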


\begin{proof}
By Definition~\ref{def S properads}, $\SegS(\bbYU)$ and $\SegS(\levelcU)$ are the full subcategories of the respective $\infty$-categories $\Seg(\bbYU)$ and $\Seg(\levelcU)$, spanned by objects which are local with respect to the same set of morphisms under the equivalence 
$\levelg_{\name{c,el}}^\xU \isoto \bbY^\xU_\xel$ 
of Remark~\ref{rmk bartauel eq}.
Hence, the equivalence $\bartau^{*} \colon \Seg(\bbYU) \isoto \Seg(\levelcU)$ of Theorem~\ref{thm:PSegeq} restricts to an equivalence $\bartau^*\colon  \SegS(\bbYU) \isoto \SegS(\levelU)$ of \eqref{equiv bartau U}.

The equivalence \eqref{equiv bartau U presheaf} follows by combining Proposition~\ref{prop PSeg=PCS} with \eqref{equiv bartau U}.
\end{proof}

\begin{corollary}\label{cor equ enr properads}
If $\xV$ is a presentably symmetric monoidal $\infty$-category, then
the following $\infty$-categories are equivalent 
$$\Segrep(\bbYV) \simeq \Segrep(\levelcV)\simeq \Segrep(\levelV).$$
\end{corollary}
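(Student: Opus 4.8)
The plan is to assemble the two claimed equivalences from results already in hand, reducing the presentable case to the small case treated in Corollary~\ref{cor:PSSegeq}. The second equivalence $\Segrep(\levelcV)\simeq \Segrep(\levelV)$ is precisely the statement of Theorem~\ref{theorem LcL Vrep}, so nothing remains to be done there. For the first equivalence, the strategy is to write $\xV$ as an accessible localization of a presheaf category: as in the proof of Corollary~\ref{cor PCSDFV pres}, there is a small symmetric monoidal $\infty$-category $\xU$ and a compatible set of morphisms $\mathbb{S}$ in $\Pre(\xU)$ so that $\xV \simeq \PSU$ as symmetric monoidal $\infty$-categories. Under this identification, $\Segrep(\bbYV) \simeq \Segrep(\bbYPSU)$ and $\Segrep(\levelcV) \simeq \Segrep(\levelcPSU)$.

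The heart of the argument is then the chain of equivalences provided by Corollary~\ref{cor:PSSegeq}. The functor $\bartau \colon \levelgconn^{\PSU} \to \bbY^{\PSU}$ induces, by part~\eqref{equiv bartau U presheaf} of that corollary, an equivalence $\bartau^* \colon \Segrep(\bbYPSU) \isoto \Segrep(\levelcPSU)$. Chaining this with the two symmetric-monoidal identifications from the previous paragraph yields the first desired equivalence $\Segrep(\bbYV) \simeq \Segrep(\levelcV)$, and composing with Theorem~\ref{theorem LcL Vrep} completes the full three-term chain.

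Concretely, I would write the proof as follows.

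\begin{proof}
By Theorem~\ref{theorem LcL Vrep}, the inclusion $\levelgconn^\xV \hookrightarrow \levelV$ induces an equivalence $\Segrep(\levelV) \isoto \Segrep(\levelcV)$, so it remains to produce an equivalence $\Segrep(\bbYV) \simeq \Segrep(\levelcV)$.

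As in the proof of Corollary~\ref{cor PCSDFV pres}, since $\xV$ is presentably symmetric monoidal there exist a small symmetric monoidal $\infty$-category $\xU$ and a set $\mathbb{S}$ of morphisms in $\Pre(\xU)$, compatible with the symmetric monoidal structure, together with an equivalence $\xV \simeq \PSU$ of symmetric monoidal $\infty$-categories. This equivalence induces equivalences of the associated decorated graph categories, hence equivalences
\[
\Segrep(\bbYV) \simeq \Segrep(\bbYPSU)
\qquad\text{and}\qquad
\Segrep(\levelcV) \simeq \Segrep(\levelcPSU).
\]
By Corollary~\ref{cor:PSSegeq}\eqref{equiv bartau U presheaf}, the functor $\bartau \colon \levelgconn^{\PSU} \to \bbY^{\PSU}$ induces an equivalence
\[
\bartau^* \colon \Segrep(\bbYPSU) \isoto \Segrep(\levelcPSU).
\]
Composing these three equivalences gives $\Segrep(\bbYV) \simeq \Segrep(\levelcV)$, which together with the equivalence from Theorem~\ref{theorem LcL Vrep} establishes the claim.
\end{proof}

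The main obstacle, had it not already been dispatched, would be the equivalence $\bartau^* \colon \Segrep(\bbYPSU) \isoto \Segrep(\levelcPSU)$ of Corollary~\ref{cor:PSSegeq}; this is exactly where the comparison functor $\bartau$ and the monadic (Barr--Beck) argument underlying Theorem~\ref{thm:PSegeq} do the real work. Since that theorem and its localized consequence are available to us, the present statement is genuinely a formal assembly, and the only care needed is to invoke the symmetric-monoidal identification $\xV \simeq \PSU$ consistently across all three graph-category contexts.
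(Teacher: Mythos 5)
Your proof is correct and follows exactly the paper's own argument: the second equivalence is Theorem~\ref{theorem LcL Vrep}, and the first is obtained by writing $\xV \simeq \PSU$ and invoking Corollary~\ref{cor:PSSegeq}\eqref{equiv bartau U presheaf}. You have merely spelled out the identification $\xV \simeq \PSU$ more explicitly than the paper does.
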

\begin{proof}
	The first equivalence can be identified with $\Segrep(\bbYPSU) \isoto
	\Segrep(\levelcPSU)$ of Corollary~\ref{cor:PSSegeq}\eqref{equiv bartau U presheaf} for some small symmetric monoidal $\infty$-category $\xU$ and some set $\mathbb{S}$ and the second equivalence is given by Theorem~\ref{theorem LcL Vrep}. 
\end{proof}

\begin{proposition}\label{cor:PCStensor}
	For every presentably symmetric monoidal \icat{} $\xV$, the $\infty$-category $\Segrep(\bbY^\xV)$ has a $\Seg(\simp)$-module structure where the tensor product preserves colimits in each variable.
\end{proposition}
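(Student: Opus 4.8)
The plan is to transport the $\Seg(\simp)$-module structure already established on $\Segrep(\levelV)$ across the equivalence $\Segrep(\bbYV)\simeq\Segrep(\levelV)$ of Corollary~\ref{cor equ enr properads}. Recall that by Corollary~\ref{cor Alg(-,-)} — which rests on Theorem~\ref{theo tensor} and Corollary~\ref{cor tensor S} via the identification $\SegS(\levelU)\simeq\Segrep(\levelV)$ of Proposition~\ref{prop PSeg=PCS} — the \icat{} $\Segrep(\levelV)$ carries a $\Seg(\simp)$-module structure whose tensor product $\otimes\colon\Segrep(\levelV)\times\Seg(\simp)\to\Segrep(\levelV)$ preserves colimits in each variable. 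Since $\Segrep(\levelV)$ is presentable (Corollary~\ref{cor PCSDFV pres}), $\Seg(\simp)$ is a presentable symmetric monoidal $\infty$-category under its cartesian product (as already used in the proof of Theorem~\ref{theo tensor}), and the tensor is colimit-preserving in each variable, this module structure is naturally an object of $\mathrm{Mod}_{\Seg(\simp)}(\PrL)$, with $\Seg(\simp)$ regarded as a commutative algebra object of $\PrL$.

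By Corollary~\ref{cor equ enr properads}, the functor $\bartau^*$ combined with the equivalence of Theorem~\ref{theorem LcL Vrep} yields an equivalence $e\colon\Segrep(\bbYV)\isoto\Segrep(\levelV)$; as both sides are presentable (Corollary~\ref{cor PCSDFV pres}) and every equivalence preserves colimits, $e$ is an equivalence in $\PrL$. Transporting the module structure along $e$, i.e.\ setting $P\otimes K\coloneqq e^{-1}\bigl(e(P)\otimes K\bigr)$, endows $\Segrep(\bbYV)$ with a $\Seg(\simp)$-module structure for which $e$ becomes $\Seg(\simp)$-linear. Formally, this is the statement that the forgetful functor $\mathrm{Mod}_{\Seg(\simp)}(\PrL)\to\PrL$ reflects and lifts equivalences, so that $\Segrep(\bbYV)$, being equivalent in $\PrL$ to the underlying object of a module, itself underlies a module in an essentially unique way. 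The transported tensor product is the composite $e^{-1}\circ(\blank\otimes\blank)\circ(e\times\id)$ of functors preserving colimits in each variable, hence it preserves colimits in each variable as well.

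The only genuine content is the transport step, and this is exactly the mechanism already employed to produce the module structure on $\Segrep(\levelV)$ in Corollary~\ref{cor Alg(-,-)}, where the structure was carried from $\SegS(\levelU)$ across the equivalence of Proposition~\ref{prop PSeg=PCS}. I therefore do not anticipate any essential difficulty here; the only point requiring care is to record that all the $\infty$-categories in sight are presentable (Corollary~\ref{cor PCSDFV pres} and the presentability of $\Seg(\simp)$, cf.\ \cite[Lemma 2.11]{patterns1}) and all the relevant functors colimit-preserving, so that the entire argument takes place in $\PrL$ and the clause ``preserves colimits in each variable'' is automatic for the resulting module.
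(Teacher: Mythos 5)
Your proposal is correct and follows essentially the same route as the paper: both arguments transport the module structure along the chain of equivalences $\Segrep(\bbYV)\simeq\Segrep(\levelV)\simeq\SegS(\levelU)$, with the structure ultimately supplied by Corollary~\ref{cor tensor S}. The extra detail you supply about working in $\mathrm{Mod}_{\Seg(\simp)}(\PrL)$ and the forgetful functor lifting equivalences is a valid (and slightly more explicit) justification of the transport step that the paper leaves implicit.
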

\begin{proof}
	The previous corollary shows that $\Segrep(\bbY^\xV)\simeq \Segrep(\levelV)$ which by Proposition~\ref{prop PSeg=PCS} can be identified with $\Seg_\mathbb{S}(\levelU)$. 
	This $\infty$-category has a $\Seg(\simp)$-module structure by Corollary~\ref{cor tensor S}.
\end{proof}

\begin{remark}\label{remark sec comparison extensions}
The functor $\tau \colon \levelgconn \to \bbY$ restricts to functors $\levelg_{\name{out,c}} \to \bbYout$ and $\levelg_{\name{sc}} \to \bbY_{\name{sc}}$ (from Definition~\ref{def subcategories of levelg} and Definition~\ref{def Y}), and the proofs of all of the above statements remain valid when applied to these restrictions.
For instance, the analogue of Theorem~\ref{thm:PSegeq} tells us that 
\[
	\Seg(\bbYout^\xU) \to \Seg(\levelg_{\name{out,c}}^\xU) \quad \& \quad \Seg(\bbY_{\name{sc}}^\xU) \to \Seg(\levelg_{\name{sc}}^\xU)
\]
are equivalences.
The proof of Corollary~\ref{cor equ enr properads} gives equivalences 
\[ \begin{gathered}
\Segrep(\bbY_{\name{sc}}^\xV) \simeq \Segrep(\levelg_{\name{sc}}^\xV)\simeq \Segrep(\levelg_{\name{0-type}}^\xV) \\
\Segrep(\bbYout^\xV) \simeq \Segrep(\levelg_{\name{out,c}}^\xV)\simeq \Segrep(\levelg_{\name{out}}^\xV).
\end{gathered} \]
Finally, the proof of Proposition~\ref{cor:PCStensor} along with Remark~\ref{remark sec tensor product extension} tells us that both $\Segrep(\bbY_{\name{sc}}^\xV)$ and $\Segrep(\bbYout^\xV)$ have appropriate $\Seg(\simp)$-module structures.
\end{remark}

\subsection{Towards the proof of Theorem~\ref{thm:PSegeq}}
\label{subsection comparison proof}
The proof of Theorem~\ref{thm:PSegeq} relies on certain intricate filtrations.
We build these decompositions in this subsection, and conclude with a proof of the theorem.

\begin{definition}
Suppose that $G\in\bbY$, $I$ is a connected, height $n$ level graph, and $\varphi \colon I \to \tau^{*}G$ is a morphism in $\Pre(\levelgconn)$.
\begin{itemize}
\item We say that $\varphi$ is \emph{non-degenerate} if, whenever $I \to J \to \tau^{*}G$ is a factorization of $\varphi$ with $I\to J$ in $\levelg$ lying over a surjective morphism in $\simp$, we have that $I\to J$ is an isomorphism.
\item We let $\varphi$ also denote the adjoint map $\tau I\to G$ in $\bbY$ and we say that $\varphi$ is \emph{admissible} if the following conditions hold:
\begin{enumerate}
\item If $v\in I_{n-1,n}$ is a bottom vertex, then $\varphi(v)$ is either an edge or a corolla.\label{enum where bottom layer goes}
\item There is a unique bottom vertex $v\in I_{n-1,n}$ so that $\varphi(v)$ is a corolla. \label{enum unique bottom}
\end{enumerate}
\end{itemize}
\end{definition}

\begin{proposition}\label{proposition about heights and factorizations}
Let $I,J \in \levelgconn$ be of height $m$ and $n$, respectively, and $G\in \bbY$ a graph with at least one vertex.
Suppose 
\[
\begin{tikzcd}[column sep=tiny]
I \arrow[rr] \arrow[dr,"\psi" swap] & & J \arrow[dl,"\varphi"] \\
& \tau^*G
\end{tikzcd}\qquad 
\begin{tikzcd}[column sep=tiny]
\tau I \arrow[rr,"h"] \arrow[dr,"g" swap] & & \tau J \arrow[dl,"f"] \\
& G
\end{tikzcd}
\]
are adjoint commutative diagrams in $\Pre(\levelgconn)$ and $\bbY$, where $I\to J$ lies over $\alpha \colon [m] \to [n]$.
If $g$ is active, $\varphi$ is admissible, and $\psi$ is non-degenerate and not admissible, then $n > m$.
\end{proposition}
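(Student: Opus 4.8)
The plan is to argue by contradiction: suppose that $n \leq m$ and derive a contradiction with the hypotheses. The intuition is that admissibility of $\varphi$ on $J$ and activity of $g$ should force $\psi$ to ``look like'' an admissible map on $I$, and the only way to evade this is by having $I$ genuinely taller than $J$ (so that the degenerate levels of $J$ can be used up while keeping $\psi$ non-degenerate). First I would record the constraint that $\psi$ is non-degenerate: this means the factorization $I \to J \to \tau^* G$ cannot have $I \to J$ lying over a non-identity surjection in $\simp$. Combined with the fact that $I \to J$ lies over $\alpha \colon [m] \to [n]$, if $n \leq m$ then $\alpha$ must either be a non-surjection (which would force $n < m$ and is compatible with what we want to disprove only if we find a separate contradiction) or a surjection; I would handle the surjective case directly since then non-degeneracy of $\psi$ would already be threatened.

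The key structural input is that $g = f \circ h$ is active, while $f = \varphi$ (viewed in $\bbY$) is admissible. First I would unwind what activity of $g$ means via Definition~\ref{def intact Y}: $g_1(\tau I) = G$, so the structured subgraphs $g_1(C_w)$ for $w$ ranging over the vertices of $\tau I$ cover all of $G$ in the graph-substitution sense. Next I would use admissibility of $\varphi$ to control how the \emph{bottom} vertices of $J$ map into $G$: exactly one bottom vertex $v_0 \in J_{n-1,n}$ has $\varphi(v_0)$ a corolla, and all other bottom vertices map to edges. The plan is to trace this through the active map $g$ to deduce that the corresponding ``lowest'' structure in $\tau I$ is forced to realize a corolla of $G$ in an essentially unique way. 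Since $\psi$ is assumed \emph{not} admissible, at least one of the two admissibility conditions fails for $\psi$ on $I$; I would show that each failure, under the activity of $g$ and admissibility of $\varphi$, can only be resolved by additional levels in $I$ beyond those of $J$, forcing $n > m$.

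The main obstacle, and the step I expect to consume the most care, is the precise bookkeeping translating between the level-graph picture (functors $\scriptyell^k_0 \to \finset$, with the explicit edge/vertex decompositions from Lemma~\ref{lemma: level graph}) and the graphical picture in $\bbY$ (structured subgraphs and the vertex functor $\vertex_\bbY$ from Definition~\ref{definition bbY to Fin}). Specifically, I must show that if $n \leq m$ then the admissibility of $\varphi$ propagates back along $h$ to make $\psi$ admissible, contradicting the hypothesis. The cleanest route is probably to use $\vertex_{\levelg}$ and $\vertex_\bbY$ together with their compatibility under $\tau$ (Proposition~\ref{prop active inert bbG}, Proposition~\ref{prop active inert bbY}, and the commuting square following Corollary~\ref{corollary functor l to y}), turning the statement about which bottom vertices map to corollas into a statement about the partial maps $\vertex_\bbY(g) = \vertex_\bbY(h)\,\vertex_\bbY(f)$ on vertex sets. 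I would then argue that non-degeneracy of $\psi$ prevents ``collapsing'' extra levels, so the unique-corolla condition of admissibility on $J$ can only be matched by $\psi$ on $I$ when $I$ has strictly more levels available; any equality $n = m$ forces $\psi$ itself to satisfy both conditions \eqref{enum where bottom layer goes} and \eqref{enum unique bottom}, i.e.\ to be admissible, which is the desired contradiction.

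A subtlety I would flag is the role of the hypothesis that $G$ has at least one vertex: this is what guarantees admissibility is a nonvacuous condition (there genuinely is a bottom corolla to track), and it rules out the degenerate edge-only case where the height comparison would be meaningless. I would make sure the final contradiction invokes this, and conclude that the assumption $n \leq m$ is untenable, hence $n > m$.
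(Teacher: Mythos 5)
Your proposal is a plan in the same general spirit as the paper's argument (non-degeneracy controls $\alpha$, activity of $g$ plus admissibility of $\varphi$ controls the bottom level), but as written it has two genuine gaps, and the step where you would need to do real work is precisely the step you only assert.

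First, the reduction via non-degeneracy is not carried out correctly. The paper's first move is: non-degeneracy of $\psi$ forces $\alpha\colon [m]\to[n]$ to be \emph{injective} (factor $\alpha$ as a surjection followed by an injection in $\simp$, lift the injection cartesianly along $\levelg\to\simp$ to factor $I\to J'' \to J$, and apply the definition of non-degeneracy to $I\to J''$); hence $n\geq m$ from the start, and the whole proposition reduces to excluding $n=m$, i.e.\ $\alpha=\id$. Your discussion of ``surjection vs.\ non-surjection'' under the hypothesis $n\leq m$ is off (a non-surjective $\alpha\colon[m]\to[n]$ does not force $n<m$), and you never extract the injectivity statement that makes the rest of the argument well-posed. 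Second, and more seriously, the core claim --- that $n=m$ would force $\psi$ to be admissible --- is exactly the content of the proposition, and you do not prove it. Two concrete sub-arguments are needed and missing: (a) an \emph{existence} argument that some bottom vertex of $I$ hits a corolla, which in the paper comes from activity of $g$ via $\vertex_\bbY(g)=\vertex_\bbY(h)\vertex_\bbY(f)$ applied to the vertex $w$ inside the unique bottom corolla $\varphi(v_0)$, together with the level bookkeeping $\alpha(k-1)\leq n-1<n\leq\alpha(k)$ to show the resulting vertex $u$ of $I$ lies in the bottom level (this is also where the hypothesis that $G$ has a vertex is used, not merely to make admissibility ``nonvacuous''); and (b) a case analysis showing that each way $\psi$ can fail admissibility propagates to a failure for $\varphi$ --- in particular, the case where \emph{no} bottom vertex of $I$ maps to a corolla must be excluded by non-degeneracy (otherwise $\psi$ would factor over a surjection collapsing the bottom level), and the case of two distinct bottom corollas must be pushed forward along the injective bottom-level map $I_{m-1,m}\hookrightarrow J_{n-1,n}$ to contradict uniqueness for $\varphi$. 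Until (a) and (b) are written out, the proposal is a statement of intent rather than a proof; the paper's actual argument avoids the contradiction framing by directly showing $\alpha(m)=n$ and $\alpha(m-1)\neq n-1$, so that the injective $\alpha$ misses $n-1$ and $n>m$ follows by counting.
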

\begin{proof}
Since $\psi$ is non-degenerate, the map $\alpha$ is injective, hence $n\geq m$.
Since $g$ is active and $G$ has at least one vertex, we have $m > 0$.

We first show that $\alpha(m) = n$.
Let $v \in J_{n-1,n}$ be the unique vertex so that $f(C_{v})$ is a corolla, and let $w\in \vertex(G)$ be the vertex in this corolla.
The diagram
\[
\begin{tikzcd}[column sep=tiny]
\vertex(\tau I)_+ & & \vertex(\tau J)_+ \arrow[ll,"\vertex(h)" swap] & \ni v \\
& \vertex(G)_+  \arrow[ur,"\vertex(f)" swap]  \arrow[ul,"\vertex(g)"] & \ni w \arrow[ur, mapsto]
\end{tikzcd}
\]
in $\pfinset$ commutes and $\vertex(g)$ is active (Example~\ref{ex: bbG}), so $\vertex(h)(v)$ is not the base point.
In other words, since $g$ is active, there exists a vertex $u \in \vertex(\tau I)$ with $w\in g(C_{u})$.
Since $u\in I_{k-1,k}$ and $v \in J_{n-1,n}$, then $\alpha(k-1) \leq n-1 < n \leq \alpha(k)$, which implies that $n = \alpha(k)$.
This implies that $\alpha(m) = n$.

By injectivity, we know that $\alpha(m-1) < \alpha(m) = n$; now suppose that $\alpha(m-1) = n - 1$. 
Then $h(v)$ is a corolla for each $v\in I_{m-1,m} \subset \vertex(\tau I)$.
In particular, if \eqref{enum where bottom layer goes} fails for $\psi$, then there exists a $v\in I_{m-1,m}$ with $g(v)$ contains more than one vertex.
But this implies that $\varphi$ also fails \eqref{enum where bottom layer goes} for the vertex $h(v) \in J_{n-1,n}$.
On the other hand, if \eqref{enum where bottom layer goes} holds for $\psi$, then non-degeneracy and the fact that \eqref{enum unique bottom} fails for $\psi$ implies that there are distinct vertices $v,v' \in I_{m-1,m}$ which are both sent to corollas by $\psi$.
But this can't happen, otherwise $\varphi$ would fail \eqref{enum unique bottom} for $h(v) \neq h(v') \in J_{n-1,n}$.

We conclude that $\alpha(m-1)$ cannot be $n-1$, hence $n-1$ is not in the image of the injective map $\alpha$ and the conclusion follows.
\end{proof}

\begin{lemma}\label{lemma one lower nonadmissible}
Suppose that $\varphi \colon I \to \tau^* G$ is non-degenerate and admissible.
If $I \in \levelgconn$ is of height $n > 1$, then the composite $d_{n-1} I \to I \to \tau^* G$ is not admissible.
\end{lemma}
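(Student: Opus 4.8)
The plan is to argue by contradiction: suppose the composite $\psi\colon d_{n-1}I\to I\to \tau^*G$ is admissible, and write $\psi\colon \tau(d_{n-1}I)\to G$ also for its adjoint. First I would record the shape of $d_{n-1}I$. The face $d_{n-1}\colon[n-1]\to[n]$ is active (it is boundary preserving), so its cartesian lift $a\colon d_{n-1}I\to I$ is active in $\levelgconn$ and hence $\tau(a)$ is active by Lemma~\ref{lem factorization int el}; moreover $d_{n-1}I$ is connected because $(d_{n-1}I)_{0,n-1}=I_{0,n}$ is a point (Lemma~\ref{lemma F0n connected}). Its bottom vertices are $(d_{n-1}I)_{n-2,n-1}=I_{n-2,n}$, that is, the $(n-2,n)$-level subgraphs of $I$; by Remark~\ref{remark level sugraphs} and Lemma~\ref{lemma: level subgraph} each such $S$ is a connected height-$2$ piece whose image $\tau S$ is a structured subgraph of $\tau I$, and these pieces partition the layer-$(n-1)$ and layer-$n$ vertices of $I$. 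Since $\tau(a)$ sends the corolla $C_S$ of a bottom vertex $S$ to $\tau S$, condition~\eqref{definition complete morphism intersection union} gives that $\psi_1(C_S)=\varphi_1(\tau S)$ is the structured union of the $\varphi(w)$ for $w\in\vertex(\tau S)$; in particular $\lvert\vertex(\psi_1(C_S))\rvert=\sum_{w\in\vertex(\tau S)}\lvert\vertex(\varphi(w))\rvert$.

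Next I would use admissibility to pin down the vertex decorations. By admissibility of $\varphi$ there is a unique bottom vertex $v_0\in I_{n-1,n}$ with $\varphi(v_0)$ a corolla, while every other layer-$n$ vertex maps to an edge. I claim admissibility of $\psi$ forces \emph{every} layer-$(n-1)$ vertex to map to an edge as well: if some $u\in I_{n-2,n-1}$ had $\varphi(u)$ containing a vertex, let $S'$ be the piece containing $u$; if $S'$ equals the piece $S_0$ containing $v_0$, then $\psi_1(C_{S_0})$ has at least two vertices, contradicting the first admissibility condition for $\psi$, and if $S'\neq S_0$ then $\psi_1(C_{S'})$ and $\psi_1(C_{S_0})$ are both corollas, contradicting the uniqueness in the second condition. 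Since a vertex maps to $\xfe$ exactly when it has one input and one output (by condition~\eqref{definition complete morphism preservation of ports}), all layer-$(n-1)$ vertices are unary. I would then show the structure maps $I_{n-2,n-2}\to I_{n-2,n-1}\leftarrow I_{n-1,n-1}$ are bijections: every level-$(n-2)$ edge is an input of a unique layer-$(n-1)$ vertex (it is not a graph output, as $n-2\neq n$) and every level-$(n-1)$ edge is the output of a unique one (it is not a graph input, as $n-1\neq 0$), so unarity makes both maps bijective; in the degenerate case where layer $n-1$ is empty the same two maps force $I_{n-2,n-2}$ and $I_{n-1,n-1}$ to be empty too. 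These bijections say precisely that $I=\sigma^*I'$, where $\sigma\colon[n]\to[n-1]$ is the active surjection with $\sigma(n-1)=n-2$ and $I'$ is a connected (since $I'_{0,n-1}=I_{0,n}$ is a point) height-$(n-1)$ level graph, with cartesian lift $c\colon I\to I'$ over $\sigma$.

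Finally I would factor $\varphi$ through $c$ to contradict non-degeneracy. The map $\tau(c)\colon\tau I\to\tau I'$ is the active map contracting the unary ``identity layer'' $I_{n-2,n-1}$, identifying the input and output edge of each such vertex. Because $\varphi$ sends each of these vertices to a single edge, $\varphi_0$ is constant on each identified edge-pair and descends to a map $\edge(\tau I')\to\edge(G)$; since $\edge\colon\bbY\to\Set$ is faithful (\cite[Corollary 6.62]{hrybook}) this determines a morphism $\varphi'\colon\tau I'\to G$ with $\varphi=\varphi'\circ\tau(c)$, i.e.\ a factorization $I\xrightarrow{c}I'\xrightarrow{\varphi'}\tau^*G$ of $\varphi$ whose first map lies over the non-identity surjection $\sigma$. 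As $c$ is not an isomorphism, this contradicts the non-degeneracy of $\varphi$, completing the argument. I expect the main obstacle to lie in the second paragraph: extracting the bijections cleanly (and folding the empty-layer degenerate case into the same statement), and then checking in the last step that $\varphi$ genuinely descends along the contraction $\tau(c)$ rather than merely agreeing on decorations.
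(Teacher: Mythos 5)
Your proof is correct, and its combinatorial core coincides with the paper's: the two-case analysis on whether a level-$(n-1)$ vertex with non-edge image lies in the same height-two bottom piece (element of $I_{n-2,n}$) as the distinguished vertex $v_0$ is exactly the argument given there, with the same conclusions --- condition \eqref{enum where bottom layer goes} fails when they share a piece, and condition \eqref{enum unique bottom} fails when two distinct pieces both land on corollas. The difference is one of arrangement. The paper argues directly, \emph{asserting} that non-degeneracy of $\varphi$ produces a $w\in I_{n-2,n-1}$ with $\varphi(w)$ not an edge, and then runs the case analysis to show $d_{n-1}I\to\tau^*G$ is not admissible; you argue contrapositively and in doing so actually prove the assertion the paper leaves implicit: if every level-$(n-1)$ vertex maps to an edge then each is unary, the incidence maps $I_{n-2,n-2}\to I_{n-2,n-1}\leftarrow I_{n-1,n-1}$ are bijections, $I\cong\sigma^*I'$, and $\varphi$ factors through the degeneracy $c\colon I\to I'$. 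That is a genuine service, since the paper's one-line appeal to non-degeneracy secretly requires this construction. The one step you should tighten is the one you flagged: faithfulness of $\edge\colon\bbY\to\Set$ gives \emph{uniqueness} of the descended map $\varphi'$, not its \emph{existence} as a morphism of $\bbY$. To supply existence, set $\varphi'_1(C_v)=\varphi_1(C_v)$ for $v\in\vertex(\tau I')=\vertex(\tau I)\setminus I_{n-2,n-1}$ (the boundary compatibility holds because the descended edge map identifies the inputs of $v$ in $I'$ with those in $I$ under the contraction), and check the condition of Remark~\ref{remark alternative characterization of morphisms} using that substituting the edges $\varphi_1(C_w)$ into the unary vertices yields a canonical identification of $\tau I\{H_w\}_{w\in\vertex(\tau I)}$ with $\tau I'\{H_v\}_{v\in\vertex(\tau I')}$ over $G$; faithfulness then gives $\varphi=\varphi'\circ\tau(c)$. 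With that supplied, the argument is complete.
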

\begin{proof}
Write $\varphi' \colon d_{n-1}I \to I \to \tau^*G$ for the induced map.
As $\varphi$ is admissible, there is a unique bottom vertex $v\in I_{n-1,n}$ with $\varphi(v)$ a corolla.
Consider the commutative square
\[ \begin{tikzcd}
{[1]} \rar{d^1} \dar{\beta} \arrow[dr,"\gamma" description] & {[2]} \dar{\alpha} \\
{[n-1]} \rar[swap]{d^{n-1}} & {[n]}
\end{tikzcd} \]
where $\alpha(t) = t + n - 2$ and $\beta(t) = t + n - 2$.
We have that $\alpha^* I \to I$ is a (possibly disconnected) height two subgraph, whose connected components are in bijection with the vertices of $\gamma^*I$.
Since $\varphi$ is non-degenerate, there exists at least one vertex $w \in I_{n-2,n-1}$ so that $\varphi(w)$ is not an edge.

If $w$ and $v$ are in the same component of $\alpha^*I$, then there is a bottom vertex $x$ of $d_{n-1}I$ so that both $w$ and $v$ are in the image of $x$. 
In this case, $\varphi'$ does not satisfy \eqref{enum where bottom layer goes} since $\varphi'(x)$ contains $\varphi(w)$ and $\varphi(v)$, hence contains more than one vertex.

If $w$ and $v$ are in different components of $\alpha^*I$, then there are distinct bottom vertices $y$ and $x$ of $d_{n-1}I$ with $w\mapsto y$ and $v \mapsto x$ under $\vertex(I)_+ \rightarrow \vertex(d_{n-1}I)_+$.
We have $\varphi(w) \subseteq \varphi'(y)$ and $\varphi(v) \subseteq \varphi'(x)$, so $\varphi'(y)$ and $\varphi'(x)$ are not edges.
If $\varphi'(y)$ and $\varphi'(x)$ are both corollas, then $\varphi'$ does not satisfy \eqref{enum unique bottom}, while if one of them is not a corolla, then $\varphi'$ does not satisfy \eqref{enum where bottom layer goes}.
\end{proof}

\begin{lemma}\label{lemma preservation of admissibility}
Suppose that $\varphi \colon I \to \tau^* G$ is admissible and $I \in \levelgconn$ is of height $n$.
If $0< k < n-1$, then the composite $d_k I \to I \to \tau^* G$ is admissible.
\end{lemma}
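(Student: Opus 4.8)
The plan is to show that the composite $d_k I \to I \to \tau^* G$ remains admissible when $0 < k < n-1$, by observing that applying the face map $d_k$ (which collapses levels $k$ and $k+1$ into a single level) does not disturb the bottom level of the level graph. The key point is that admissibility is entirely a condition on the bottom vertices $I_{n-1,n}$ and where they are sent under $\varphi$, so as long as the face map does not alter the bottom layer, both conditions \eqref{enum where bottom layer goes} and \eqref{enum unique bottom} transfer verbatim.

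First I would set up notation: write $\varphi' \colon d_k I \to I \to \tau^* G$ for the composite, and let $\alpha \colon [n-1] \to [n]$ be the face map $d^k$, with $\alpha(t) = t$ for $t \leq k$ and $\alpha(t) = t+1$ for $t > k$. Since $0 < k < n-1$, we have $\alpha(n-2) = n-1$ and $\alpha(n-1) = n$, so the face map $d^k$ is the identity on the two top indices $n-1$ and $n$. Consequently the inert map $d_k I \to I$ lying over $\alpha$ induces an isomorphism on the bottom level data: the bottom vertices $(d_k I)_{n-2,n-1} \cong I_{n-1,n}$ and bottom edges $(d_k I)_{n-1,n-1} \cong I_{n,n}$ are identified via the naturality square of Definition~\ref{def M}, because the relevant component $G_{i,j} \to H_{\alpha(i),\alpha(j)}$ over the top indices is an isomorphism (indeed the cartesian condition \eqref{M def cartesian} together with $\alpha$ fixing $n-1,n$ forces this).

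Next I would verify the two admissibility conditions for $\varphi'$. Under the identification of bottom vertices $(d_k I)_{n-2,n-1} \cong I_{n-1,n}$, the value $\varphi'(v)$ for a bottom vertex $v$ of $d_k I$ equals $\varphi(\tilde v)$ for the corresponding bottom vertex $\tilde v \in I_{n-1,n}$, because the map $d_k I \to I \to \tau^* G$ factors through $I$ and the bottom vertex is carried to the same subgraph of $G$. Therefore condition \eqref{enum where bottom layer goes}, which states that each bottom vertex goes to an edge or a corolla, holds for $\varphi'$ exactly because it holds for $\varphi$; and condition \eqref{enum unique bottom}, asserting a unique bottom vertex maps to a corolla, likewise transfers under the bijection of bottom vertices.

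The main obstacle — though a mild one — will be carefully justifying that the bottom vertex values $\varphi'(v)$ agree with $\varphi(\tilde v)$ as structured subgraphs of $G$, rather than merely checking this on the level of vertex sets. Here I would use the functoriality of $\tau$ (Corollary~\ref{corollary functor l to y}) and the compatibility with vertex functors: the composite $\vertex(\tau G)_+ \to \vertex(\tau I)_+ \to \vertex(\tau d_k I)_+$ agrees with $\vertex(\tau G)_+ \to \vertex(\tau d_k I)_+$, and since the inert map $d_k I \to I$ restricts to an isomorphism on the bottom layer, the assignment $v \mapsto \varphi'_1(v) \in \sub(G)$ coincides with $\tilde v \mapsto \varphi_1(\tilde v)$. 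Since structured subgraphs are determined by their underlying data and the two maps have the same image, the values are the same subgraphs of $G$, completing the verification.
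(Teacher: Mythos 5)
Your proof is correct and is essentially the paper's argument, which consists of the single observation that for $0 < k < n-1$ the map $d_k I \to I$ identifies the sets of bottom vertices, so both admissibility conditions transfer verbatim. One small indexing slip: the coface $d^k\colon [n-1]\to[n]$ satisfies $\alpha(t)=t$ for $t<k$ and $\alpha(t)=t+1$ for $t\geq k$ (not $t\leq k$ and $t>k$), which is what makes $\alpha(n-2)=n-1$, $\alpha(n-1)=n$ hold in the boundary case $k=n-2$.
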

\begin{proof}
The map $d_k I \to I$ in $\levelg$ identifies the sets of bottom vertices.
\end{proof}

\begin{lemma}\label{lemma preservation of nondegeneracy}
Suppose that $\varphi \colon I \to \tau^* G$ is non-degenerate and $I \in \levelgconn$ is of height $n$.
If $0< k < n$, then the composite $d_k I \to I \to \tau^* G$ is non-degenerate.
\end{lemma}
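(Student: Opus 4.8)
The plan is to replace the factorization definition of non-degeneracy by a transparent level-by-level criterion and then read it off for $d_k I$. The criterion I would first isolate (as a sublemma) is: for any $\psi \colon J \to \tau^{*}G$ with $J \in \levelgconn$ of height $m$ (adjoint to $\tau J \to G$, also written $\psi$, with $\psi(v) \coloneqq \psi_1(C_v)$), the map $\psi$ is non-degenerate \emph{if and only if} for each $1 \le i \le m$ some vertex $v \in J_{i-1,i}$ has $\psi(v)$ not an edge. For the forward implication, if every vertex of some level $i$ has $\psi(v)$ an edge, then port preservation (Definition~\ref{definition complete morphism}\eqref{definition complete morphism preservation of ports}) forces each such $v$ to be a $\xfc_{1,1}$, and contracting this layer produces a factorization of $\psi$ through the codegeneracy $s^{i-1}\colon[m]\to[m-1]$, witnessing degeneracy. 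Conversely, any nontrivial surjection $\sigma$ satisfies $\sigma(i-1)=\sigma(i)$ for some $i$, and a factorization $J \to J' \to \tau^*G$ over $\sigma$ sends $J_{i-1,i}$ into $J'_{\sigma(i-1),\sigma(i-1)}$, i.e.\ into edges; since graphical maps send edges to edges, that entire level maps to an edge under $\psi$.

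Next I would analyze the levels of $d_k I$ using $(d_k I)_{a,b}=I_{d^{k}(a),d^{k}(b)}$ for the coface $d^{k}\colon[n-1]\to[n]$. The graph $d_k I$ is connected because $(d_k I)_{0,n-1}=I_{0,n}$ is a point (Lemma~\ref{lemma F0n connected}), so the criterion applies to $\varphi' \colon d_k I \to I \to \tau^{*}G$. For $j<k$ its level-$j$ vertices are $I_{j-1,j}$ and for $j>k$ they are $I_{j,j+1}$; on these the face map $d_k I\to I$ is the identity, so $\varphi'(v)=\varphi(v)$. The level-$k$ vertices are $I_{k-1,k+1}$, the connected components of the height-two slab of $I$ between levels $k-1$ and $k+1$ (Remark~\ref{remark level sugraphs}); by Lemma~\ref{lemma: level subgraph} each such $x$ is a structured subgraph $S_{x}\subseteq\tau I$ and $\varphi'(x)=\varphi_{1}(S_{x})$. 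By the substitution description of graphical maps (Remark~\ref{remark alternative characterization of morphisms}) one has $\varphi_{1}(S_{x})\cong S_{x}\{\varphi_{1}(C_{w})\}_{w\in\vertex(S_{x})}$, whose vertex set is $\coprod_{w}\vertex(\varphi_1(C_w))$, so $\varphi_{1}(S_{x})$ is an edge exactly when every vertex $w$ of $S_{x}$ has $\varphi(w)$ an edge.

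Finally I would verify the criterion for $\varphi'$. For $j\ne k$ the $j$-th level of $d_k I$ coincides with a level of $I$ on which $\varphi'=\varphi$, so non-degeneracy of $\varphi$ at that level supplies a vertex with $\varphi'$ not an edge. For level $k$, non-degeneracy of $\varphi$ at level $k$ gives $v_{0}\in I_{k-1,k}$ with $\varphi(v_{0})$ not an edge; since the slab components partition the level-$k$ and level-$(k+1)$ vertices of $I$, the vertex $v_{0}$ lies in a unique component $x_{0}\in I_{k-1,k+1}$, and then $\varphi'(x_{0})=\varphi_{1}(S_{x_{0}})$ is not an edge by the previous paragraph. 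Thus every level of $d_k I$ carries a non-edge vertex, so $\varphi'$ is non-degenerate. The only genuine obstacle is the level-$k$ step, where the two adjacent layers are composed and one must combine the substitution formula for $\varphi_1$ on a multi-vertex subgraph with the partition of the slab vertices into components; the off-diagonal levels are immediate.
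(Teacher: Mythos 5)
Your proof is correct and follows essentially the same route as the paper's: the paper's (much terser) argument implicitly uses exactly your level-by-level criterion for non-degeneracy, notes that $d_kI\to I$ is the identity away from level $k$, and handles the merged level $(d_kI)_{k-1,k}\cong I_{k-1,k}\amalg_{I_{k,k}}I_{k,k+1}$ via the same containment $\varphi(w)\subseteq\varphi'(v)$ that you derive from the substitution formula, just phrased contrapositively (if every $\varphi'(v)$ were an edge, every vertex of $I$ in levels $k$ and $k+1$ would map to an edge, contradicting non-degeneracy of $\varphi$). Your explicit sublemma characterizing non-degeneracy is a sound and slightly more self-contained packaging of what the paper leaves implicit.
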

\begin{proof}
Write $\psi'$ for this composite. 
Since $d_kI\to I$ restricts to an identity for vertices $v\in d_k I$ which are not in $(d_kI)_{k-1,k}$, we only need to exclude the possibility that $\psi'(v)$ is an edge for each vertex $v$ in $(d_kI)_{k-1,k} \cong I_{k-1,k+1} = I_{k-1,k} \amalg_{I_{k,k}} I_{k,k+1}$. 
If this were the case, then every vertex $w \in I_{k-1,k} \amalg I_{k,k+1}$ maps to some $v\in (d_kI)_{k-1,k}$ under this isomorphism and $\psi(w) \subset \psi'(v)$.
Thus each vertex of $I$ in level $k$ or $k+1$ must go to an edge, a contradiction.
\end{proof}

We now endeavor to show that there is a sufficient supply of non-degenerate, admissible maps $I \to \tau^*G$.
The next construction, which proves the existence of certain factorizations, is essential for the proof of Proposition~\ref{proposition exterior boundary inner anodyne}.
In the construction, we say that a vertex $v$ of a graph $G$ is a \emph{bottom vertex} if all of its outgoing edges are also outgoing edges for the graph, $\out(v) \subseteq \out(G)$.
Given a bottom vertex $v$, we can form a (possibly disconnected) graph $G\setminus v$ with $\vertex(G\setminus v) = \vertex(G) \setminus \{ v \}$, $\edge(G\setminus v) = \edge(G) \setminus \out(v)$ with the rest of the structure evident.
We have $C_v \amalg_{\inp(v)} G \setminus v \cong C_v \cup G\setminus v = G$, where the second expression is a union as subobjects of $G$.
If $H$ is a connected component of $G\setminus v$, then $H \in \sub(G)$ is a structured subgraph.

\begin{figure}[htb]
\labellist
\small\hair 2pt
 \pinlabel {$n$} [ ] at 327 171
 \pinlabel {$n+1$} [ ] at 327 112
 \pinlabel {$n+2$} [ ] at 327 48
 \pinlabel {$d_n I$} [ ] at 156 282
 \pinlabel {$I_{n-1,n} \setminus \{v_0\}$} [ ] at 239 171
 \pinlabel {$u_1$} [ ] at 55 91
 \pinlabel {$u_k$} [ ] at 129 91
 \pinlabel {$w_0$} [ ] at 47 30
\endlabellist
\centering
\includegraphics[width=0.5\textwidth]{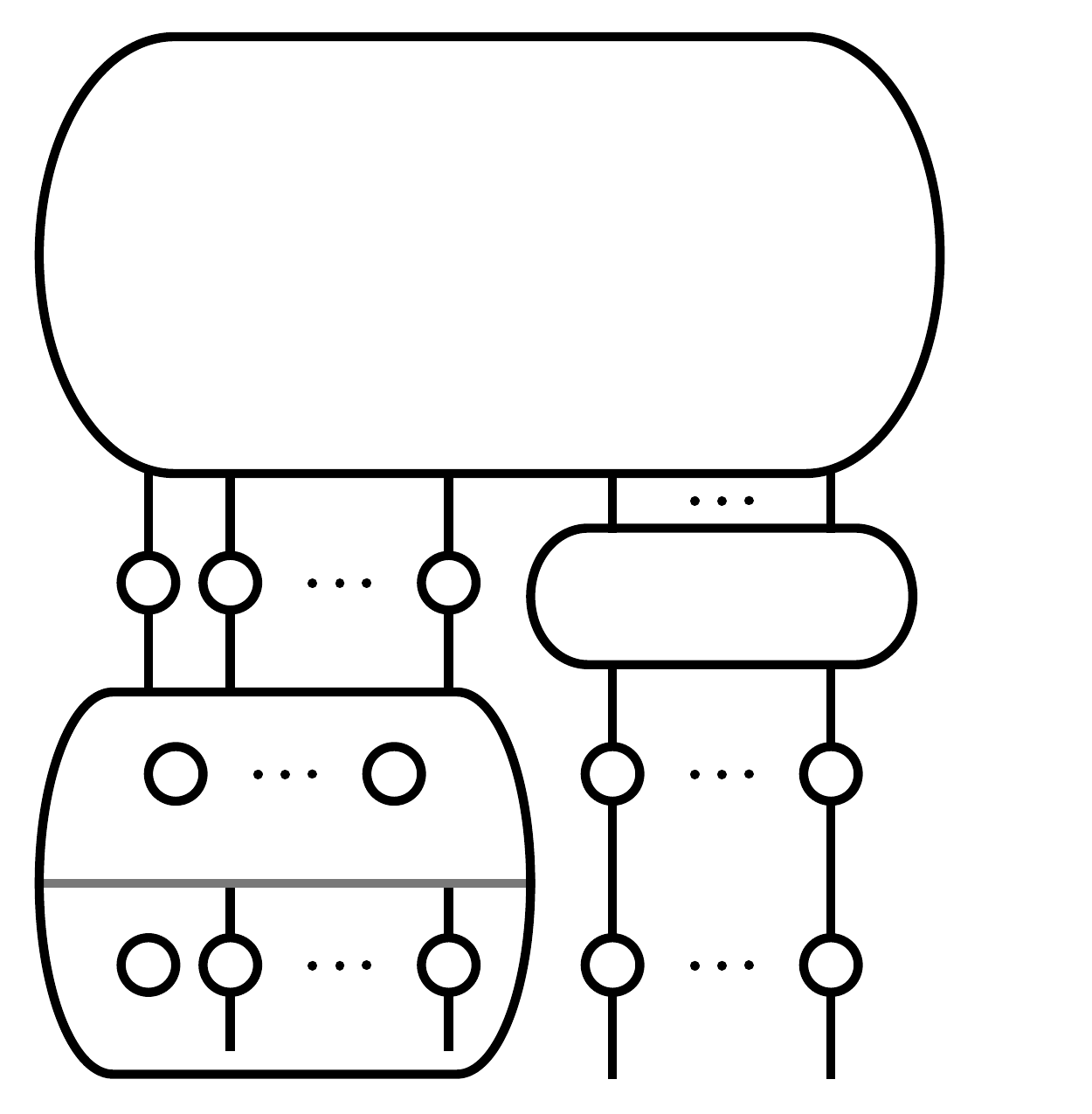}
\caption{Schematic of the level graph $J$}
\label{figure J schematic picture}
\end{figure}

\begin{construction}\label{construction: factorization through admissible}
Let $I \in \levelgconn$ be of height $n > 0$, $G \in \bbY$, and suppose that $\psi \colon  \tau I \to G$ is an active map whose adjoint $I \to \tau^*G$ is non-degenerate.
We assume that the adjoint of $\psi$ is not admissible, and construct a factorization $\tau I \to \tau J \to G$ with $I\to J$ in $\levelg$ so that $J \to \tau^*G$ is admissible and non-degenerate.
The level graph $J$ will have height $n+1$ or $n+2$, depending on $\psi$.
A schematic picture is given in Figure~\ref{figure J schematic picture}, where the $u_t$ and $w_0$ will be explained below.
The map $\tau J \to G$ will send all of the $1,1$ vertices in the picture to edges, and will behave as $\psi$ on $d_nI$ and on $I_{n-1,n} \setminus \{w_0\}$.
The schematic tells us that we should have $J_{k,k} = I_{k,k}$ and $J_{k-1,k} = I_{k-1,k}$ for $k \leq n-1$, so that the top of $J$ coincides with the (possibly disconnected) level graph $d_nI$ of height $n-1$.

Since the adjoint of $\psi$ is non-degenerate, there exists a vertex $v_0$ in level $n$ of $I$ (that is, $v_0 \in I_{n-1,n}$) with $\psi(v_0) \in \sub(G)$ not an edge.
Let $w_0$ be a bottom vertex of the graph $\psi(v_0)$, and let $H_1, \dots, H_k \in \sub(\psi(v_0)) \subseteq \sub(G)$ be the connected components of the (possibly disconnected) graph $\psi(v_0) \setminus w_0$.
For each $t=1,\dots, k$, the set $\inp(w_0) \cap \out(H_t)$ is nonempty, otherwise $\psi(v_0)$ would be disconnected.
Notice that 
\begin{equation}\label{input equation} \inp(\psi(v_0) \setminus w_0) = \inp(H_1 \amalg \dots \amalg H_k) \cong \inp(v_0) \subseteq I_{n-1,n-1}, \end{equation}
and let
\begin{equation}\label{output equation}
	\out(\psi(v_0) \setminus w_0) \cong X \amalg Y
\end{equation}
where $X = \inp(w_0)$ and $Y\subseteq \out(v_0) \subseteq I_{n,n}$ is the subset so that $\psi(Y) \amalg \out(w_0) = \out(\psi(v_0)) \cong \out(v_0)$.
Let $U = \{ u_1, \dots, u_k \}$ be a set of size $k$ (equal to the number of components of $\psi(v_0) \setminus w_0$).
Define a level graph $K \colon \scriptyell_0^3 \to \Set$ of height 3 which is of the form
\[
\resizebox{\textwidth}{!}{
 \begin{tikzcd}[column sep=-1.4cm, ampersand replacement=\&]
I_{n-1,n-1} \arrow[dr] \& \& \inp(v_0) \amalg (I_{n,n} \setminus \out(v_0))  \arrow[dl] \arrow[dr]
\& \& X \amalg Y \amalg (I_{n,n} \setminus \out(v_0))  \arrow[dr] \arrow[dl] \& \& I_{n,n} \arrow[dl]
 \\
\& \inp(v_0) \amalg (I_{n-1,n} \setminus \{v_0\})  \& \& U \amalg (I_{n,n} \setminus \out(v_0)) \& \& \{w_0\} \amalg Y \amalg (I_{n,n} \setminus \out(v_0))
\end{tikzcd}
}\]
We must of course define these maps.
All of these functions are defined to be the identity when possible (for example, the two maps on the left are the identity on $\inp(v_0) \subseteq I_{n-1,n-1}$), and otherwise the two maps on the left are induced from the maps in $I$.
The third function involves a map $\inp(v_0) \to U$, which sends $a\in \inp(v_0)$ to $u_t$ if $a$ corresponds to an input of $H_t$ under the isomorphism \eqref{input equation}.
Likewise, the map $X\amalg Y \to U$ in the fourth map comes from sending $a$ to $u_t$ if $a$ corresponds to an output of $H_t$ under the isomorphism \eqref{output equation}.
The fifth map sends $X$ to $w_0$ and the final map sends those elements of $I_{n,n}$ which map to $\out(w_0)$ under $\psi$ to $w_0$.

Let us now define $f \colon \tau K \to G$ on vertices.
At level one, $f$ is given by 
\[
	\inp(v_0) \amalg (I_{n-1,n} \setminus \{ v_0 \}) \hookrightarrow \edge(\tau I) \amalg \vertex(\tau I) \xrightarrow{\psi} \sub(G).
\]
The binary vertices labeled by $I_{n,n} \setminus \out(v_0)$ in levels two and three are sent to the appropriate elements of $\edge(G) \subseteq \sub(G)$ using $\psi$.
The remainder of level two is the set $U$, and we declare that $f(u_t) = H_t$.
At level three, we define $f(w_0) = C_{w_0}$ and $f(y) = \psi(y) \in \edge(G)$ for $y\in Y$.

As constructed, it is possible that the adjoint of $f \colon \tau K \to G$ is degenerate.
This will happen either when $\psi(v_0) = C_{w_0}$, in which case $f$ sends every vertex in level two to edges, or when there is a \emph{unique} vertex $v_0 \in I_{n-1,n}$ with $\psi(v_0)$ not an edge, in which case $f$ sends all vertices in level one to edges.
Notice that we cannot have both of these situations occur simultaneously, otherwise the adjoint of $\psi$ would have been admissible already.
We thus let $f' \colon \tau K' \to G$ be a non-degenerate factorization, where $K'$ is of height two or three and $K \to K'$ lives over $s^0\colon [3] \to [2]$, $s^1 \colon [3] \to [2]$, or $\id\colon [3] \to [3]$.
We do not say that the adjoint of $f'$ is admissible and non-degenerate, but only because those terms were only defined when the domain is a \emph{connected} level graph.
Let \[ J \coloneqq d_nI \coprod_{I_{n-1,n-1}} K'\]
be the graph of height $n+1$ or $n+2$, and $\psi' \colon \tau J \to G$ be induced by $d_n\psi$ and $f'$.
Note that $J$ is connected, $\psi'$ is active, and the adjoint of $\psi'$ is admissible and non-degenerate.
Further, we either have $I = d_{n} J$ or $I=d_{n} d_{n+1} J$, and $\tau I \to \tau J \to G$ is just $\psi$.
\end{construction}

\begin{defn}[External boundary]\label{def ext} \quad
\begin{itemize}
\item Suppose that $G\in \bbY$ is a graph, and let $\name{Sub}(G)$ be the full subcategory of $(\bbY_{\xint})_{/G}$ spanned by the non-invertible morphisms.
We let $\Dext G$ be the colimit of the composition
\[
	\name{Sub}(G) \to \bbY \to \Pre(\bbY),
\]
which we call the \emph{external boundary} of $G$.
\item There is an analogous functor $\name{Sub}(G) \to \Pre(\bbY)$ which takes $H$ to $H_{\Seg}$.
We write $(\Dext G)_{\Seg}$ for the colimit of this functor, which is the same as 
\[
	\colim_{H\in \name{Sub}(G)} H_{\Seg} \simeq \colim_{H\in \name{Sub}(G)} (H \times_{G} G_{\Seg}) \simeq (\Dext G) \times_{G} G_\Seg.
\]
\item Analogously, suppose we are given an object $\overline G \in \bbYV$.
	We write $\name{Sub}(\overline G)$ for the full subcategory of $(\bbYVint)_{/\overline G}$ spanned by non-equivalence morphisms and we define $\Dext \overline G$ to be the colimit of the composition $\name{Sub}(\overline G) \to	\bbYV \to \Pre(\bbYV)$.
	We call $\Dext \overline G$ the \emph{external boundary} of $\overline G$.
\end{itemize}
\end{defn}

\begin{propn}\label{proposition exterior boundary inner anodyne}
For every object $G$ in $\bbY$ with at least two vertices, the map $\tau^{*} (\Dext G)  \to \tau^{*} G$ is an inner anodyne map in $\Pre(\levelgconn)$.
\end{propn}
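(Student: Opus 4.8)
The plan is to realize $\tau^*G$ as built from $\tau^*(\Dext G)$ by a transfinite sequence of pushouts of inner horn inclusions, following the operadic argument of \cite{ChuHaugseng}. First I would record that $\tau^* \colon \Pre(\bbY) \to \Pre(\levelgconn)$, being precomposition with $\tau^\op$, preserves all colimits, so both $\tau^*G$ and $\tau^*(\Dext G) = \colim_{H\in\name{Sub}(G)} \tau^* H$ are the colimits of their categories of elements. Writing a point of the category of elements of $\tau^*G$ as a pair $(I,\varphi)$ with $I\in\levelgconn$ and $\varphi\colon\tau I\to G$ in $\bbY$, the subobject $\tau^*(\Dext G)$ consists exactly of those $(I,\varphi)$ whose active--inert factorization $\tau I\to H\hookrightarrow G$ (from Lemma~\ref{lem factorization int el}) has $H\subsetneq G$; equivalently, those for which $\varphi$ is \emph{not} active. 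Hence $\tau^*(\Dext G)\hookrightarrow\tau^*G$ is a monomorphism whose remaining non-degenerate cells are precisely the pairs $(I,\varphi)$ with $\varphi$ active and non-degenerate (the underlying map of $\simp$ injective).

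Next I would partition these active non-degenerate cells into the admissible and the non-admissible ones, and set up a pairing between the two classes, which is the combinatorial heart of the argument. The face lemmas and Construction~\ref{construction: factorization through admissible} are tailored to this. Lemma~\ref{lemma one lower nonadmissible} together with Lemma~\ref{lemma preservation of nondegeneracy} shows that the inner face $d_{n-1}$ of an admissible non-degenerate cell of height $n$ is again non-degenerate but \emph{not} admissible, while Construction~\ref{construction: factorization through admissible} shows conversely that every non-admissible active non-degenerate cell is reached, via the indicated face(s), from an admissible one. Proposition~\ref{proposition about heights and factorizations} guarantees a strict height increase when passing from a non-admissible cell to an admissible cell filling it, providing the well-foundedness needed to organize the attachments; meanwhile Lemma~\ref{lemma preservation of admissibility} and Lemma~\ref{lemma preservation of nondegeneracy} ensure the interior faces $d_k$ with $0<k<n-1$ stay admissible and non-degenerate, so that they are already present at an earlier stage. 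I would also note here that, since no active map out of a single corolla into a graph with at least two vertices can be admissible, every admissible active cell has height $n\ge 2$, so that $0<n-1<n$ and $\Lambda^{n}_{n-1}$ really is an inner horn.

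With this pairing in hand I would filter $\tau^*(\Dext G)=F_1\subseteq F_2\subseteq\cdots$ by attaching, at stage $F_m$, all admissible non-degenerate active cells of height $m$, never attaching a non-admissible cell on its own: each non-admissible cell appears only as the omitted inner face $d_{m-1}$ of its admissible partner (or as an iterated face thereof). Concretely, the passage $F_{m-1}\to F_m$ is a pushout of $\coprod_J \Lambda^{m}_{m-1}J \to \coprod_J J$ over the admissible height-$m$ cells, exhibited for each $J$ as
\[
\begin{tikzcd}
\Lambda^{m}_{m-1}J \rar \dar & F_{m-1} \dar \\
J \rar & F_{m}.
\end{tikzcd}
\]
One must check that $\Lambda^{m}_{m-1}J\to\tau^*G$ factors through $F_{m-1}$: its outer faces $d_0J,d_mJ$ have non-active composites to $G$, hence lie in $\tau^*(\Dext G)\subseteq F_1$, while the remaining inner faces $d_kJ$ ($0<k<m-1$) are admissible of height $m-1$, hence lie in $F_{m-1}$. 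Since inner horn inclusions are inner anodyne and inner anodyne maps are stable under coproduct, pushout, and transfinite composition, the colimit map $\tau^*(\Dext G)\to\tau^*G$ is inner anodyne.

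The main obstacle will be the bookkeeping in this last step: verifying that $J\mapsto d_{m-1}J$ is a genuine bijection from admissible height-$m$ cells onto their non-admissible inner faces (injectivity via reconstructing $J$ from its inner face, surjectivity/exhaustion via Construction~\ref{construction: factorization through admissible}), so that the horns neither over- nor under-count cells and every non-degenerate active simplex is eventually swept in. The delicate point is the height $n+1$ versus $n+2$ dichotomy of Construction~\ref{construction: factorization through admissible}: in the $n+2$ case a non-admissible cell of height $n$ is reached not as a direct inner face of an admissible cell but as an iterated face of the non-admissible cell $d_{n+1}J$, and I would need to check that such cells are correctly dragged into the filtration when their admissible ancestors $J$ are attached, rather than requiring a separate horn. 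I expect the lemmas quoted above to make each of these verifications routine once the pairing is set up, exactly as in the operadic case.
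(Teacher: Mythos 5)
Your proposal is correct in outline and takes essentially the same route as the paper: the same filtration of $\tau^*G$ starting from $\tau^{*}(\Dext G)$, the same admissible/non-degenerate dichotomy, attachment of admissible height-$n$ cells along the inner horns $\Lambda^{n}_{n-1}$, and the same supporting lemmas plus Construction~\ref{construction: factorization through admissible} for exhaustiveness. The one point where you diverge—organizing the argument around a bijective pairing $J\mapsto d_{m-1}J$ and worrying about the height $n+1$ versus $n+2$ dichotomy—is dissolved in the paper by defining $F_n(I)$ directly as the set of maps (with active adjoint) factoring through \emph{some} admissible non-degenerate cell of height $\leq n$, so no pairing is needed; in particular, in the height-$(n{+}2)$ case the cell $d_nd_{n+1}J=d_nd_nJ$ is a face of the admissible inner face $d_nJ$ of height $n+1$ and therefore already lies in $F_{n+1}$ before $J$ is attached, rather than being swept in as the free face of $J$'s horn.
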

\begin{proof}
The presheaf $\tau^{*}(\Dext G)$ is a subpresheaf of $\tau^{*}G$, which is a presheaf of sets.
For $n\geq 0$, we define the presheaf $F_{n}$ by declaring that $F_n(I)$ is the union of $\tau^{*}(\Dext G)(I)$ with the set of maps $I \to \tau^{*}G$ which factor through an admissible and non-degenerate morphism $J \to \tau^{*}G$ with $J$ of height $\leq n$ and whose adjoint $\tau I \to G$ is active. Note that then $\tau J\to G$ has to be active too. To see this let $\tau J \to K\to G$ be the active-inert factorization of $\tau J\to G$  and let $\tau I \to L \to K$ the active-inert factorization of $\tau I\to \tau J \to K$. Since $\tau I \to L$ is active, $L \to K \to G$ is inert and their composition equals the active morphism $\tau I\to G$, the uniqueness of the factorization implies that $L\to G$ is an equivalence. Now, the equivalence $\bbY\simeq \hryGamma$ of Theorem~\ref{theorem upsilon equivalent definitions} shows that $\bbY$ is a generalized Reedy category by \cite[Theorem 6.70]{hrybook}. According to \cite[Lemma 6.65]{hrybook} inert morphisms such as $G\simeq L\to K $ and $K\to G$ preserve or raise degrees. Since their composite is an equivalence, it preserves the degree, hence, $G\to K $ and $K\to G$ also preserve the degree and are isomorphisms by the definition of generalized Reedy categories \cite[Definition 1.1]{bm_reedy}. In particular, we see that $\tau J\to G$ is active if $\tau I\to G$ is.

Each morphism $I \to \tau^*G$ whose adjoint is not active is automatically contained in $\tau^{*}(\Dext G)$.
Thus each map $\mathfrak{e} \to \tau^{*}G$  (where $\mathfrak{e}$ is the unique height zero object of $\levelgconn$) is in $\tau^{*}(\Dext G)$.
On the other hand, $\mathfrak{e} \to G$ is never active when $G$ has at least one vertex, hence $F_0 = \tau^{*}(\Dext G)$.
By Construction~\ref{construction: factorization through admissible}, every active map $\tau J \to G$ with $J \in \levelgconn$ necessarily factors through a non-degenerate, admissible map, hence the filtration
\[ F_0 = \tau^{*}(\Dext G) \subseteq F_1 \subseteq \cdots \subseteq \colim_{n \to \infty} F_{n} = \tau^{*}G
\]
is exhaustive.

It suffices to show that each inclusion $F_{n-1} \hookrightarrow F_{n}$ is inner anodyne.
Let $S_{n}$ denote the set of isomorphism classes of non-degenerate, admissible, active maps $\varphi \colon \tau(I) \to G$ where $I \in \levelgconn$ is has height $n$.
Since $G$ has at least two vertices, there is no map $\varphi \colon \xfc \to \tau^*G$  which is admissible and has active adjoint, so $S_1 = \varnothing$.
It follows that $F_0 = F_1$.
In general, for $\varphi \in F_n$ ($n \geq 2$) the following hold:
	 \begin{itemize}
	 	\item For $i\in \{0,n\}$, the assumption that $\varphi$ is non-degenerate and the definition of $\tau^{*}(\Dext G)$ imply that the faces $d_{i}I \to I \to \tau^{*}G$
	 	factor through $\tau^{*}(\Dext G)$, and thus through
	 	$F_{n-1}$.
	 	\item For $0 < i < n-1$, admissibility and non-degeneracy of $\varphi$ implies the same about the faces $d_{i}I \to I \to \tau^{*}G$ (by Lemma~\ref{lemma preservation of admissibility} and Lemma~\ref{lemma preservation of nondegeneracy}) which thus factor through $F_{n-1}$.
	 	\item The face $d_{n-1}I \to I \to \tau^{*}G$ is \emph{not}
	 	admissible by Lemma~\ref{lemma one lower nonadmissible}.
	 	Further, it is non-degenerate (by Lemma~\ref{lemma preservation of nondegeneracy}) and its adjoint is active, so Proposition~\ref{proposition about heights and factorizations} applies that it can only factor through an admissible map whose domain has at least height $n$.
	 	In particular, $d_{n-1}I \to I \to \tau^{*}G$ cannot lie in $F_{n-1}$.
	 \end{itemize}
This shows that there exists a pushout diagram
	 \nolabelcsquare{\coprod_{S_{n}}
	 	\Lambda^{n}_{n-1}I}{F_{n-1}}{\coprod_{S_{n}} I}{F_{n}.}  
Since the left vertical morphism is inner anodyne by definition, so is the right vertical map.
\end{proof}

\begin{propn}\label{prop preserving segal equivalences}
	Let $\xU$ be a symmetric monoidal $\infty$-category.
  The functor $\bartau^{*} \colon \Pre(\bbYU) \to \Pre(\levelcU)$
  preserves Segal equivalences.
\end{propn}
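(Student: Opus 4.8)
The plan is to reduce to the generating Segal equivalences and then induct on the number of vertices, using the external boundary (Definition~\ref{def ext}) to convert Segal cores into the inner anodyne maps produced by Proposition~\ref{proposition exterior boundary inner anodyne}.

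First, since $\bartau^*$ is precomposition with $\bartau^\op$ it admits both a left and a right adjoint, and in particular preserves all colimits. The class of Segal equivalences in $\Pre(\levelcU)$ is strongly saturated (Example~\ref{example seg equivs strongly saturated}), so it follows formally that the class $S$ of morphisms $g$ in $\Pre(\bbYU)$ for which $\bartau^* g$ is a Segal equivalence is itself strongly saturated: closure under the $2$-of-$3$ property, pushouts, and colimits all transfer along the colimit-preserving functor $\bartau^*$. Because the Segal equivalences in $\Pre(\bbYU)$ are strongly generated by the Segal core inclusions $\overline G_\Seg \to \overline G$ (Example~\ref{example seg equivs strongly saturated} together with Proposition~\ref{proposition segal local gcv}), it suffices to show that every such inclusion lies in $S$.

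Next, fix $\overline G \in \bbYU$ lying over $G\in \bbY$ and induct on $|\vertex(G)|$. If $G$ is elementary, i.e.\ has at most one vertex, then $\overline G_\Seg \to \overline G$ is already an equivalence and there is nothing to prove. Suppose $G$ has at least two vertices. I would factor the Segal core inclusion as
\[
\overline G_\Seg \longrightarrow \Dext \overline G \longrightarrow \overline G,
\]
where $\Dext \overline G = \colim_{\overline H\in \name{Sub}(\overline G)} \overline H$, the first map arising from the fact that each (necessarily non-invertible) elementary object under $\overline G$ belongs to $\name{Sub}(\overline G)$. Every $\overline H\in \name{Sub}(\overline G)$ has strictly fewer vertices than $\overline G$, so by the inductive hypothesis $\bartau^*(\overline H_\Seg \to \overline H)$ is a Segal equivalence. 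Since $\bartau^*$ and the Segal-core construction commute with colimits, and since the canonical map $\overline G_\Seg \to (\Dext \overline G)_\Seg = \colim_{\overline H} \overline H_\Seg$ is an equivalence (using $(\Dext \overline G)_\Seg \simeq \Dext\overline G\times_{\overline G}\overline G_\Seg$ as in Definition~\ref{def ext}, together with the comma-category identification recorded after Proposition~\ref{proposition segal local gcv}, which matches the enriched and unenriched Segal cores), the first factor $\bartau^*\overline G_\Seg \to \bartau^*\Dext \overline G$ is a Segal equivalence.

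It then remains to show that the second factor $\bartau^*\Dext \overline G \to \bartau^*\overline G$ is a Segal equivalence, which completes the induction by composing the two factors. This is the enriched incarnation of Proposition~\ref{proposition exterior boundary inner anodyne}, which states that the unenriched map $\tau^*(\Dext G) \to \tau^* G$ is inner anodyne in $\Pre(\levelgconn)$, hence a Segal equivalence by Proposition~\ref{propn:iaDFV} and its variant for the connected level graph category. To transport this to $\Pre(\levelcU)$ I would use the cartesian fibration $\levelcU \to \levelgconn$: the labelling data of $\overline G$ is carried by cocartesian lifts, so the enriched external-boundary inclusion is obtained from the unenriched inner anodyne map by pullback along a simple map, and Corollary~\ref{cor pb of inner anodyne} (with Lemma~\ref{lemma: relative simple unit}) then guarantees that it remains inner anodyne, and therefore a Segal equivalence. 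I expect this last step to be the main obstacle: one must match the enriched external boundary $\Dext \overline G$ precisely with such a pullback and verify that the relevant unit maps are simple, rather than merely invoking the unenriched statement; the comma-category comparison of Lemma~\ref{lem LY} is the technical input that makes this matching possible.
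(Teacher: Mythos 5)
Your proposal is correct and follows essentially the same route as the paper's proof: reduction to Segal core inclusions, induction on the number of vertices via the external boundary factorization $\overline G_{\Seg} \to \Dext \overline G \to \overline G$, and the final identification of $\bartau^{*}\Dext \overline G \to \bartau^{*}\overline G$ as the pullback of the inner anodyne map $\tau^{*}\Dext G \to \tau^{*}G$ along a simple unit map, using Lemma~\ref{lemma: relative simple unit} and Corollary~\ref{cor pb of inner anodyne}. The step you flag as the main obstacle is handled in the paper exactly as you anticipate, by defining $\Dext \overline G$ as the pullback of $\pi^{*}\Dext G \to \pi^{*}G$ along $\overline G \to \pi^{*}G$ and noting that the right-hand map is the adjunction unit.
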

\begin{proof}
  It suffices to show that $\bartau^*$ preserves Segal core inclusions, which generate Segal equivalences.
In other words, for every $\overline G\in\bbYU$, the map
  $\bartau^{*}\overline G_{\Seg} \to \bartau^{*}\overline G$ needs to be a Segal
  equivalence in $\Pre(\levelcU)$.
We prove this by inducting on the
  number of vertices of $\overline G$.
Since the statement is
  vacuous if $\overline G$ has zero or one vertices, we assume that $\overline G$ has at least two vertices and lies over $G\in\bbY$.
Let $\pi^*\colon \Pre(\bbY)\to \Pre(\bbYU)$ be given by composition with the opposite of the natural projection $\pi\colon \bbYU \to \bbY$. We then define $\Dext \overline G$ to be the presheaf given by the pullback square
  \nolabelcsquare{\Dext \overline G}{\overline G}{\pi^* \Dext
    G}{\pi^* G.}  
We let $(\Dext \overline G)_{\Seg}$ be defined analogously (see Definition~\ref{def ext}).
By applying the idea of the proof of \cite[Lemma 5.8]{ChuHaugsengHeuts} to our case, we see that the canonical map $(\Dext G)_{\Seg} \to G_{\Seg}$ is an equivalence. 

Since $\Pre(\bbYU)$ is an $\infty$-topos, pullbacks commute with colimits, and the map $(\Dext \overline G)_{\Seg} \isoto \overline G_{\Seg}$, given by the pullback of $\pi^*(\Dext G)_{\Seg} \isoto \pi^*G_{\Seg}$, is an equivalence.
We now have a
  commutative square
  \csquare{(\Dext \overline G)_{\Seg}}{\Dext
    \overline G}{\overline G_{\Seg}}{\overline G.}{}{\wr}{}{}
The upper horizontal morphism is a colimit indexed by $\name{Sub}(G)$ of generating Segal
  equivalences for graphs with fewer vertices than $\overline G$, and
  is therefore mapped to a Segal equivalence in $\Pre(\levelcU)$ by the
  inductive hypothesis.
By the $2$-of-$3$ property the claim follows if
  $\bartau^{*}\Dext \overline G \to \bartau^{*}\overline G$ is a Segal equivalence.
Let $p\colon \levelcU \to
  \levelgconn$ be the projection.
Since $\tau p\simeq \pi \bartau$ and the right adjoint $\bartau^{*}$  preserves pullbacks we have a cartesian
  square
  \nolabelcsquare{\bartau^{*}\Dext
    \overline G}{\bartau^{*}\overline G}{p^* \tau^* \Dext
    G}{p^* \tau^* G.}
  The previous proposition shows that
  $\tau^{*} \Dext G  \to \tau^{*} G$ is an inner anodyne map in
  $\Pre(\levelg)$.
The right-hand map $\bartau^*\overline G \to p^*\tau^* G$ is equivalent to the unit $\bartau^* \overline G \to p^*p_! \bartau^* \overline G$, so by Lemma~\ref{lemma: relative simple unit} is simple.
By Corollary~\ref{cor pb of inner anodyne}, the top map is inner anodyne, hence a Segal equivalence.
\end{proof}

\begin{proposition}
\label{proposition right kan extension of bartau restricts}
	Let $\xU$ be a small symmetric monoidal $\infty$-category.
The adjunction 
$\bartau^*\colon \xFun(\opbbYU,\xS)\rightleftarrows \xFun(\oplevelcU, \xS) \cocolon \bartau_*$ restricts to an adjunction
\[ \bartau^* \colon \Seg(\bbYU)\rightleftarrows \Seg(\levelcU) \cocolon \bartau_*.\]
\end{proposition}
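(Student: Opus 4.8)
The plan is to exploit the standard fact that an adjunction $\bartau^{*} \dashv \bartau_{*}$ between presheaf $\infty$-categories restricts to an adjunction between reflective subcategories as soon as \emph{both} functors preserve those subcategories; since $\Seg(\bbYU) \subseteq \Pre(\bbYU)$ and $\Seg(\levelcU) \subseteq \Pre(\levelcU)$ are full (they are the localizations at the respective classes of Segal equivalences), the restricted functors are then automatically adjoint. Thus the entire task reduces to two independent checks: that $\bartau^{*}$ carries Segal objects to Segal objects, and that $\bartau_{*}$ does the same.

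For the left adjoint $\bartau^{*}$, I would argue exactly as in the proof of Proposition~\ref{proposition restriction restricts to segal stuff}. First I would observe that $\bartau^\op \colon \oplevelcU \to \opbbYU$ is a morphism of algebraic patterns: it preserves the inert-active factorization systems because $\tau$ does (Lemma~\ref{lem factorization int el}), and it preserves elementary objects because the restriction $\levelcVel \to \bbYVel$ is an equivalence (Remark~\ref{rmk bartauel eq}). Then I would invoke \cite[Lemma 4.5]{patterns1}, whose remaining hypothesis is an equivalence of elementary slices $(\oplevelcUel)_{\overline L/} \isoto (\opbbYUel)_{\bartau \overline L/}$ for each $\overline L$; this is precisely the first equivalence of Lemma~\ref{lem LY}, specialized to $\xV = \xU$. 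This yields the restriction $\bartau^{*} \colon \Seg(\bbYU) \to \Seg(\levelcU)$.

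For the right adjoint $\bartau_{*}$, I would use the formal duality between ``the left adjoint preserves local equivalences'' and ``the right adjoint preserves local objects.'' Concretely, given $D \in \Seg(\levelcU)$ and a Segal equivalence $w \colon X \to Y$ in $\Pre(\bbYU)$, the adjunction identifies $\Map(Y, \bartau_{*} D) \to \Map(X, \bartau_{*} D)$ with $\Map(\bartau^{*} Y, D) \to \Map(\bartau^{*} X, D)$; since $\bartau^{*}$ preserves Segal equivalences by Proposition~\ref{prop preserving segal equivalences} and $D$ is Segal-local, this is an equivalence. Hence $\bartau_{*} D$ is local with respect to all Segal equivalences, i.e.\ $\bartau_{*} D \in \Seg(\bbYU)$.

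I do not expect a genuine obstacle, since all of the substantive content has already been isolated into the preceding results: Proposition~\ref{prop preserving segal equivalences} (which rests on the inner-anodyne filtration of Proposition~\ref{proposition exterior boundary inner anodyne}) settles $\bartau_{*}$, while Lemma~\ref{lem LY} settles $\bartau^{*}$. The only point needing care is to confirm that $\bartau^\op$ genuinely satisfies the hypotheses of the patterns lemma, so that the slice equivalence of Lemma~\ref{lem LY} may be fed into \cite[Lemma 4.5]{patterns1} in the same way fully-faithful inclusions were handled in Proposition~\ref{proposition restriction restricts to segal stuff}.
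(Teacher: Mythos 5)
Your proposal is correct and follows the same basic decomposition as the paper: both halves reduce to showing that the relevant \emph{left} adjoints preserve Segal equivalences, and the $\bartau_*$ half is handled identically via Proposition~\ref{prop preserving segal equivalences}. The only divergence is in the $\bartau^*$ half: the paper computes $\bartau_!$ directly on a generating Segal core inclusion, using the slice equivalence $(\oplevelcUel)_{\overline L/}\simeq (\opbbYUel)_{\bartau \overline L/}$ of Lemma~\ref{lem LY} to identify $\bartau_!(\overline L_\Seg)\to \bartau_!\overline L$ with $(\bartau\overline L)_\Seg \to \bartau\overline L$, whereas you route the same input through \cite[Lemma 4.5]{patterns1} as in Proposition~\ref{proposition restriction restricts to segal stuff}; since that lemma's hypotheses are exactly preservation of the pattern structure (Lemma~\ref{lem factorization int el}, Remark~\ref{rmk bartauel eq}) plus the slice equivalence, the two packagings carry the same mathematical content and your version is a perfectly valid substitute.
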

\begin{proof}
The claim is saying that the functors $\bartau_*$ and $\bartau^*$ preserve local objects which is equivalent to requiring that the corresponding left adjoints $\bartau^*$ and $\bartau_!$ preserve Segal equivalences. 
The functor $\bartau^*$ preserves Segal equivalences by the previous proposition and it remains to prove that the same is true for $\bartau_!$. 
Since the left adjoint $\bartau_!$ preserves colimits and $\bartau_! \overline K$ is represented by $\bartau \overline K$ we have \[\bartau_!(\overline L_\Seg)\simeq \colim_{\overline E\in (\levelg^{\op,\xU}_\xel)_{\overline L/}} \bartau\overline E. \] 
The equivalence $\levelcUel \to \bbYUel$ and
$(\oplevelcUel)_{\overline L/}\simeq (\opbbYUel)_{\bartau \overline L/}$ of Remark~\ref{rmk bartauel eq} and Lemma~\ref{lem LY} implies that \[ \bartau_!(\overline L_\Seg)\simeq \colim_{\overline G\in (\opbbYUel)_{\bartau \overline L/}} \overline G.\] 
Hence, $\bartau_!$ takes a generating Segal equivalence $\overline L_\Seg\to \overline L$ to a generating Segal equivalence $(\bartau \overline L)_{\Seg}\to \bartau \overline L$. 
\end{proof}

\begin{proof}[Proof of Theorem~\ref{thm:PSegeq}]
We have a commutative square 
\[
\begin{tikzcd}
\Seg(\bbYU) \arrow{d}{\bar \imath^*_{\bbY}} \arrow{r}{\bartau^*} & \Seg(\levelcU) \arrow{d}{\bar\imath_{\levelg}^*} \\
\Seg(\bbYUint) \arrow{r}{\bartauint^*}  & \Seg(\levelcUint) 
\end{tikzcd}
\]
of right adjoints (where $\bartauint \colon  \levelcUint \to \bbYUint$ is the restriction of $\bartau$), where the vertical morphisms induced by precompositions of the canonical inclusions $\bar\imath_{\bbY}\colon \bbYUint \to \bbYU$, $\bar\imath_{\levelg}\colon \levelcUint \to \levelcU$ are monadic according to \cite[Corollary 8.2]{patterns1}. 
Let $F_{\bbY}$ and $F_\levelg$ denote the corresponding left adjoints. By the discussion in the paragraph following Definition~\ref{def algpatt}, the bottom horizontal map $\bartauint^*$ can be identified with $\Pre(\bbYUel)\to \Pre(\levelcUel)$ which is an equivalence by Remark~\ref{rmk bartauel eq}. Then \cite[Corollary 4.7.3.16]{ha} implies that $\bartau^{*}$ is an
equivalence if the canonical natural
transformation $F_{\levelg} \to
\bartau^{*} F_{\bbY} (\bartauint^{*})^{-1}$ is an equivalence, which is the same as requiring that the corresponding
transformation of right adjoints $\bartauint^{*} \bar\imath_{\bbY}^{*}\bartau_{*} \to \bar\imath_{\levelg}^{*}$ is an equivalence. In other words it suffices to show that for every $F \in \Seg(\levelcU)$ and $\overline E\in \levelg_{\name{c}, \xint}^{\xU, \xel}$, the canonical map $\bartauint^{*}\bar\imath_{\bbY}^{*}\bartau_{*}F(\overline E) \simeq
\bar\imath_{\levelg}^{*}\bartau^{*}\bartau_{*}F(\overline E) \to \bar\imath_{\levelg}^{*}F(\overline E)$ is an
equivalence.

The description of right Kan extension allows us to identify the domain of this map with
\begin{equation}\label{equation of limit over LcopV tau E}
(\bartau_{*}F)(\bartau \overline E) \simeq \lim F(\overline L)
\end{equation}
where the limit is over $\oplevelcU\times_{\opbbYU}(\opbbYU)_{\bartau \overline E/}$.
The equivalence \[ \oplevelcU\times_{\opbbYU}(\opbbYU)_{\bartau \overline E/} \simeq (\levelgconn^{\op, \xU})_{\overline E/}\] of Lemma~\ref{lem LY} implies that this $\infty$-category has an initial object $\id_{\overline E}$, hence the limit in \eqref{equation of limit over LcopV tau E} is equivalent to $F(\overline E)$.
\end{proof}

\section{Completeness and enriched \texorpdfstring{$\infty$}{∞}-properads}\label{sec ffes}
Two ordinary properads are equivalent if there is a fully faithful and essentially surjective functor between them. 
These two notions have natural generalizations in the $\infty$-categorical setting, and the $\infty$-category of $\xV$-enriched $\infty$-properads should be given by a localization of the algebraic model $\Segrep(\bbYV)$ for $\xV$-enriched $\infty$-properads with respect to fully faithful and essentially surjective functors. 
In \cite{Rezk}, Rezk introduced the notion of completeness and proved that complete Segal spaces model $\infty$-categories. 
This idea was generalized to enriched $\infty$-operads in \cite{ChuHaugseng}.
After defining completeness for objects in $\Segrep(\levelV)\simeq\Segrep(\bbYV)$ we show that the full subcategory of complete objects is the correct $\infty$-category of $\xV$-enriched $\infty$-properads, in the sense that it is given by localizing $\Segrep(\bbYV)$ with respect to fully faithful and essentially surjective functors. 

There are analogues of the statements in this section to dioperads and output properads, which will be discussed in Remark~\ref{remark sec ffes extensions}.
We begin with a necessary detour comparing Segal objects for presheaves over various graph categories.

\subsection{Free functors}\label{section free functors}
Consider any of the functors appearing in the following diagram.
\begin{equation}\label{eq big diagram of graph cats}
\begin{tikzcd}[column sep=small]
& & \simp \arrow[drr] \arrow[dll] \dar \\
\bbO \dar & & \simp_\finsetskel^1 \dar \arrow[rr] \arrow[ll]& & \simp_\finsetskel \dar \\
\bbYout \arrow[dd] & & \levelg_{\name{out,c}} \arrow[rr] \arrow[ll]\arrow[dd] & & \levelg_{\name{out}} \arrow[dd] \\[-1.5em]
& \bbY_{\name{sc}} \arrow[from=uul, bend left, crossing over]\arrow[dl] & & \levelg_{\name{sc}} \arrow[from=uul, bend left, crossing over]\arrow[dl] \arrow[rr, crossing over] \arrow[ll, crossing over]& & \levelg_{\name{0-type}} \arrow[from=uul, bend left, crossing over]\arrow[dl] \\[-1.5em]
\bbY & & \levelgconn \arrow[rr] \arrow[ll]& & \levelg 
\end{tikzcd} \end{equation}
Write $i \colon \gcone \to \gctwo$ for such a functor, and, for a small symmetric monoidal $\infty$-category $\xU$, write $\bar\imath \colon \gcUone \to \gcUtwo$.
We have previously shown that we have restrictions
\begin{equation}\label{eq restrictions of middle functor}
\begin{tikzcd}[column sep=small]
\Seg(\gctwo) \dar[dashed,"i^*"] \rar & \Pre(\gctwo) \dar{i^*} & \Segrep(\gcUtwo) \dar[dashed, "\bariustar"] \rar & \Seg(\gcUtwo) \dar[dashed, "\bariustar"] \rar & \Pre(\gcUtwo) \dar{\bariustar} \\
\Seg(\gcone) \rar & \Pre(\gcone) & \Segrep(\gcUone) \rar & \Seg(\gcUone) \rar & \Pre(\gcUone)
\end{tikzcd} 
\end{equation}
of $i^*$ and $\bariustar$ (see Proposition~\ref{proposition restriction restricts to segal stuff} and Proposition~\ref{proposition right kan extension of bartau restricts} along with Remark~\ref{remark sec comparison extensions}).
These should generally be considered as forgetful functors.
For example, the map $\Segrep(\levelU) \to \Segrep(\levelg_{\name{sc}}^\xU)$ takes an enriched $\infty$-properad to its underlying enriched $\infty$-dioperad.

By \cite[Proposition 4.7]{patterns1}, each dashed arrow in \eqref{eq restrictions of middle functor} admits a left adjoint, given by first taking the left Kan extension and then localizing.\footnote{In the case of the horizontal functors, which induce equivalences, this left adjoint can instead be described as the restriction of the \emph{right} Kan extension.}
We generically denote this by $Li_! \dashv i^*$ (resp.\ $L\barishriek \dashv \bariustar$).
Our main goal is to prove Theorem~\ref{thm generic compatibility with tensor}, which says that the various $\Seg(\simp)$-module structures are compatible via these left adjoints.

For several of the functors in \eqref{eq big diagram of graph cats}, we can actually get away with just using the raw left Kan extension, rather than composing with localization.
For example, the free $\infty$-properad generated by an $\infty$-operad is just given by left Kan extension at the presheaf level.
This will be convenient in the next section.

\begin{lemma}
\label{lemma: restriction of lke}
Suppose $i \colon \gcone \to \gctwo$ is one of the fully-faithful functors appearing in the diagram
\[ \begin{tikzcd}[column sep=small]
& & \simp \arrow[dll] \dar \\
\bbO \dar & & \simp_\finsetskel^1 \dar  & & \simp_\finsetskel \dar \\
\bbYout \arrow[dd] & & \levelg_{\name{out,c}}  \arrow[dd] & & \levelg_{\name{out}} \arrow[dd] \\[-1.5em]
& \bbY_{\name{sc}} \arrow[from=uul, bend left, crossing over] & & \levelg_{\name{sc}} \arrow[from=uul, bend left, crossing over] & & \levelg_{\name{0-type}} \arrow[from=uul, bend left, crossing over] \\[-1.5em]
\bbY & & \levelgconn  & & \levelg 
\end{tikzcd} \]
and let $\xU$ be a small symmetric monoidal $\infty$-category.
Then we have restrictions
\[ \begin{tikzcd}
\Seg(\gcone) \dar[dashed] \rar & \Pre(\gcone) \dar{i_!} \\
\Seg(\gctwo) \rar & \Pre(\gctwo)
\end{tikzcd} 
\quad
\begin{tikzcd}
\Segrep(\gcUone) \dar[dashed] \rar & \Seg(\gcUone) \dar[dashed] \rar & \Pre(\gcUone) \dar{\barishriek} \\
\Segrep(\gcUtwo) \rar & \Seg(\gcUtwo) \rar & \Pre(\gcUtwo)
\end{tikzcd} 
\]
which we also call $i_!$ and $\barishriek$.
These restrictions are left adjoint to the restrictions $i^*$ and $\bariustar$.
\end{lemma}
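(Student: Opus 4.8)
The plan is to exploit that every functor $i\colon \gcone\to\gctwo$ in the displayed diagram is a fully faithful \emph{sieve} inclusion. Indeed, by Lemma~\ref{lem comparison between levelgraph cats}\eqref{item one comparison between levelgraph cats} and Lemma~\ref{lem comparison between dirgraph cats}, whenever $f\colon G\to H$ is a morphism of $\gctwo$ with $H\in\gcone$ we already have $G\in\gcone$; equivalently $\gcone^\op$ is closed under targets inside $\gctwo^\op$. Since every morphism of $\gcUtwo$ projects to one of $\gctwo$ (Remark~\ref{rem gcalgpatt}), the decorated inclusion $\gcUone\to\gcUtwo$ is again a sieve. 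First I would record the consequence that $i_!$ (resp.\ $\barishriek$) is \emph{extension by the initial object}: as $i$ is fully faithful the unit $F\isoto i^*i_!F$ is an equivalence, so $i_!F$ restricts to $F$ on $\gcone$, while for $G\notin\gcone$ the comma category indexing the pointwise colimit is empty --- any object would supply a morphism $G\to H$ in $\gctwo$ with $H\in\gcone$, forcing $G\in\gcone$ --- whence $i_!F(G)\simeq\varnothing$.

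Next I would check that this extension preserves the Segal condition, so as to produce the middle dashed arrows $\Seg(\gcone)\to\Seg(\gctwo)$ and $\Seg(\gcUone)\to\Seg(\gcUtwo)$. For $G\in\gcone$ the sieve property guarantees that every elementary object lying over $G$ is again in $\gcone$, so the Segal limit for $i_!F$ at $G$ is computed entirely inside $\gcone$ and agrees with the Segal condition for $F$. For $G\notin\gcone$ I would invoke Lemma~\ref{lem comparison between levelgraph cats}\eqref{item two comparison between levelgraph cats} (and its $\bbY$-analogue, which is immediate from Definition~\ref{def Y}): since $G$ fails to lie in $\gcone$, there is an inert corolla $\xfc\to G$ with $\xfc\notin\gcone$, so the factor $i_!F(\xfc)\simeq\varnothing$ occurs in the Segal limit $\lim_E i_!F(E)$, forcing it to equal $\varnothing\simeq i_!F(G)$. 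Hence the Segal condition holds at every object. The decorated case runs identically, lifting $\xfc\to G$ to an inert morphism $\overline{\xfc}\to\overline{G}$ of $\gcUtwo$.

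It then remains to treat fibrewise representability and to restrict the adjunction. The adjunction is formal: $i_!\dashv i^*$ and $\barishriek\dashv\bariustar$ hold at the presheaf level, and since both members of each pair have now been shown to preserve the relevant local subcategories (using Proposition~\ref{proposition restriction restricts to segal stuff} for $i^*$ and $\bariustar$), the adjunctions restrict. The hard part will be the leftmost dashed arrow $\Segrep(\gcUone)\to\Segrep(\gcUtwo)$. Since fibrewise representability is a condition concerning only elementary objects (Definition~\ref{def cts Seg psh}), and $\barishriek F$ agrees with $F$ on the corollas of $\gcone$, the point to verify is representability of $\barishriek F$ at the newly added corollas, i.e.\ those $\xfc\in\gctwo$ with $\xfc\notin\gcone$; here I would test against the criterion of Proposition~\ref{propn:CtsDFV}\eqref{propn:CtsDFV weakly contractible}, namely locality with respect to the maps $\coprod\xfe\to\xfc(\varnothing)$ and $\colim\xfc(\phi)\to\xfc(\colim\phi)$, so that the value of $\barishriek F$ at such a corolla is represented by the initial object $\varnothing$ of $\xV$. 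Reconciling the extension-by-initial formula on these new corollas with the representability demanded by $\Segrep$ is the delicate step, and is precisely where the interaction between the sieve structure and the localization onto fibrewise representable objects must be used.
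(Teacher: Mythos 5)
Your argument for the two Segal restrictions is correct and is essentially the paper's own proof: the paper likewise uses that the unit of $i_!\dashv i^*$ is an equivalence so that the Segal condition at objects of $\gcone$ is inherited from $A\simeq i^*i_!A$, that the sieve property of Lemma~\ref{lem comparison between levelgraph cats}\eqref{item one comparison between levelgraph cats} and Lemma~\ref{lem comparison between dirgraph cats} makes the comma category computing $(i_!A)(K)$ empty for $K\notin\gcone$, and that Lemma~\ref{lem comparison between levelgraph cats}\eqref{item two comparison between levelgraph cats} supplies an inert corolla $\xfc\to K$ with $\xfc\notin\gcone$ forcing $\Map(K_{\Seg},i_!A)\simeq\varnothing$. (The paper's written proof in fact stops there: it treats only the undecorated Segal case explicitly and declares the decorated and fibrewise representable cases ``similar.'')

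The genuine gap is in the step you yourself flag as delicate, and the resolution you sketch cannot work. You correctly compute that $(\barishriek F)(\xfc_{m,n}(v))\simeq\varnothing$ for every $v$ when $\xfc_{m,n}$ is a corolla of $\gctwo$ not lying in $\gcone$ (e.g.\ $\xfc_{m,0}$ for $\bbYout\to\bbY$). But the constant functor at the empty space is \emph{not} represented by the initial object of $\xV$, nor by anything else: a representing object $X$ would have to satisfy $\Map_{\xV}(X,X)\simeq\varnothing$, which is absurd. Equivalently, the criterion of Proposition~\ref{propn:CtsDFV} that you propose to test against visibly fails at these corollas, since it requires $(\barishriek F)(\xfc_{m,n}(\varnothing))\to(\barishriek F)(\xfe)^{m+n}$ to be an equivalence, and here the source is empty while the target is $F(\xfe)^{m+n}$, which is nonempty in general. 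So extension-by-$\varnothing$ is incompatible with fibrewise representability at the newly added corollas whenever $F(\xfe)\neq\varnothing$, and no amount of ``interaction between the sieve structure and the localization'' rescues the raw left Kan extension: to obtain a left adjoint landing in fibrewise representable objects one must genuinely localize (pass through $\SegS$ and Proposition~\ref{prop PSeg=PCS}, which is why the paper later writes $\barinotshriek$ rather than $\barishriek$ and remarks that this functor is not literally a left Kan extension). As written, your plan cannot produce the leftmost dashed arrow of the second diagram.
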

\begin{proof}
We give the proof for the version without `$\xU$,' the other is similar.
We know that $i_!$ takes representable presheaves to representable presheaves, and, since $i_!$ preserves colimits, that $i_!(K_\Seg \to K) = K_\Seg \to K$ for any $K\in \gcone$.
Further, since $i$ is fully-faithful, the unit of the adjunction $i_! \dashv i^*$ is an equivalence.
Suppose that $A$ is an object of $\Seg(\gcone)$.
As $A \simeq i^*i_!A$ is again Segal, we have, for each $K\in \gcone$, the right equivalence in the following natural diagram.
\[ \begin{tikzcd}[column sep=small]
\Map_{\Pre(\gctwo)} (K, i_!A)\rar[no head, "\simeq"]\dar & \Map_{\Pre(\gctwo)} (i_!K, i_!A)\rar[no head, "\simeq"]\dar & \Map_{\Pre(\gcone)} (K, i^*i_!A) \dar["\simeq"] \\
\Map_{\Pre(\gctwo)} (K_\Seg, i_!A)\rar[no head, "\simeq"] & \Map_{\Pre(\gctwo)} (i_!(K_\Seg), i_!A)\rar[no head, "\simeq"] & \Map_{\Pre(\gcone)} (K_\Seg, i^*i_!A)
\end{tikzcd} \]
It follows that the left-hand map is also an equivalence.

It remains to show that the map $\Map_{\Pre(\gctwo)} (K, i_!A) \to \Map_{\Pre(\gctwo)} (K_\Seg, i_!A)$ is an equivalence for $A\in \Seg(\gcUone)$ and $K\notin \gcone$.
If $G\to H$ is a morphism in $\gctwo^\op$ with $G\in \gcone$, then $H$ is also in $\gcone$ (Lemma~\ref{lem comparison between levelgraph cats}\eqref{item one comparison between levelgraph cats} and Lemma~\ref{lem comparison between dirgraph cats}).
We thus have $(\gcone^\op)_{/K}$ is empty whenever $K\notin \gcone$, hence $\Map_{\Pre(\gctwo)} (K, i_!A)= (i_!A)(K)  =\varnothing$.
But by Lemma~\ref{lem comparison between levelgraph cats}\eqref{item two comparison between levelgraph cats} or Definition~\ref{def Y} we see that $K\notin \gcone$ implies that there exists an inert map $K\to \xfc$ of $\gctwo^\op$ with $\xfc$ a corolla in $\gctwo^\op$.
As $(i_!A)(\xfc) = \varnothing$, it follows that $\Map_{\Pre(\gctwo)} (K_\Seg, i_!A) = \varnothing$.
Thus $i_!A$ is a Segal presheaf.
\end{proof}

By adjointness, we have that $i_!$ commutes with the localization functors
\[
\begin{tikzcd}
\Pre(\gcone) \dar{i_!} \rar & \Seg(\gcone) \dar{i_!} \\
\Pre(\gctwo) \rar & \Seg(\gctwo)
\end{tikzcd} 
\]
(and likewise for $\barishriek$), hence is a retract of $i_! \colon \Pre(\gcone) \to \Pre(\gctwo)$.

\begin{example}\label{example: failure of lke restriction}
The proof Lemma~\ref{lemma: restriction of lke} fails for $i \colon \levelg_{\name{sc}} \to \levelgconn$, since even if $G$ is not in $\levelg_{\name{sc}}$, all its elementary subgraphs will be.
But the result also does not hold. 
Indeed, if $F\in \Seg(\levelg_{\name{sc}})$ and $L$ is the graph 
$\begin{tikzcd}[cramped, sep=small]
{} \rar[no head, end anchor=center] & \bullet \rar[bend left=30, no head, start anchor=center, end anchor=center] \rar[bend right=30, no head, start anchor=center, end anchor=center] & \bullet \rar[no head, start anchor=center] & {}
\end{tikzcd}$
then, as in the proof of Lemma~\ref{lemma: restriction of lke}, $(i_!F)(L) = \varnothing$.
On the other hand, \[ \Map(L_{\Seg},i_!F) \simeq F(\xfc_{1,2}) \times_{F(\xfe)^2} F(\xfc_{2,1}), \]
which is often not equivalent to $\varnothing$.
\end{example}

The distinction between the two situations should not be too surprising.
To pass from an operad $O$ to the free properad $P$ it generates, we have
\[
	P(a_1, \dots, a_m; b_1, \dots, b_n) = \begin{cases}
		\varnothing & n \neq 1 \\
		O(a_1, \dots, a_m; b_1) & n=1.
	\end{cases}
\]
On the other hand, to go from a dioperad $D$ to the free properad $P$ it generates, we likely have that $P(a_1, \dots, a_m; b_1, \dots, b_n)$ and $D(a_1, \dots, a_m; b_1, \dots, b_n)$ are both inhabited, but very different.
The properad $P$ has many operations formally generated from those in $D$.

\begin{theorem}\label{thm generic compatibility with tensor}
Let $\xU$ be a small symmetric monoidal $\infty$-category and let $i \colon \gcone \to \gctwo$ be a composite of fully-faithful functors appearing in Lemma~\ref{lemma: restriction of lke}.
Then for $A \in \Seg(\gcUone)$ and $\xcc \in \Seg(\simp)$, we have
\[
	\barishriek (A \otimes \xcc) \simeq (\barishriek A) \otimes \xcc
\]
in $\Seg(\gcUtwo)$.
If instead $i$ appears in the diagram \eqref{eq big diagram of graph cats}, then $L\barishriek (A \otimes \xcc) \simeq (L\barishriek A) \otimes \xcc$.
\end{theorem}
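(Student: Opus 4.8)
The plan is to construct a natural comparison map $\mu\colon \barishriek(A\otimes \xcc)\to (\barishriek A)\otimes \xcc$ and to prove it is an equivalence by reducing, through colimit arguments, to a single explicit computation with height-one graphs that is handled by Lemma~\ref{lem decomposing}. First I would cut down the class of functors $i$ to be treated. Since $\barishriek$ is functorial and commutes with the Segal localizations, the functors $i$ for which the statement holds are closed under composition, so it suffices to treat the atomic arrows of \eqref{eq big diagram of graph cats}. The horizontal arrows all induce equivalences on Segal objects (Proposition~\ref{proposition LcL}, Theorem~\ref{thm:PSegeq}, Remark~\ref{remark three contexts}), and the $\Seg(\simp)$-module structures on the $\bbY$-column and the connected level-graph column are, by construction, transported along these; hence for a horizontal $i$ the functor $\barishriek$ is a module equivalence and there is nothing to prove. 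For a vertical (forgetful) inclusion in the $\bbY$-column or the connected level-graph column, the commuting squares of \eqref{eq big diagram of graph cats} whose horizontal sides are these equivalences reduce the claim to the corresponding inclusion among the \emph{disconnected} level-graph categories $\simp,\simp_\finsetskel,\levelg_{\name{out}},\levelg_{\name{0-type}},\levelg$. This is the only situation in which the concrete module structure $F\otimes K = L(F\times p^*K)$ of Theorem~\ref{theo tensor} is available, together with the analogues of Proposition~\ref{propn Seg tensor} and Lemma~\ref{lem decomposing} (Remark~\ref{remark sec tensor product extension}), and it is where the real work happens.

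So I would fix such a disconnected-column inclusion $i$, with cartesian fibrations $p_1\colon \gcUone\to\simp$ and $p_2\colon \gcUtwo\to\simp$ satisfying $p_2\circ i = p_1$, so that $i^*p_2^*=p_1^*$ on presheaves. Writing $\barishriek = L_2\,i_!$ with $i_!$ the presheaf-level left Kan extension, the presheaf-level projection map (adjunct to the product of the unit $F\to i^*i_!F$ with $\id_{p_1^*K}$) gives a natural map $i_!(F\times p_1^*K)\to (i_!F)\times p_2^*K$; after localizing and using that $i_!$ preserves representables this induces $\mu$. Since $\otimes$ preserves colimits in each variable (Theorem~\ref{theo tensor}) and $\barishriek$ is a left adjoint, both $(A,\xcc)\mapsto \barishriek(A\otimes\xcc)$ and $(A,\xcc)\mapsto (\barishriek A)\otimes\xcc$ preserve colimits separately, so it is enough to check $\mu$ on colimit generators, namely $A=L_1\overline G$ for a representable $\overline G$ and $\xcc=\Delta^n$. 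There $\mu$ is an equivalence precisely when the projection map $i_!(\overline G\times p_1^*\Delta^n)\to \overline{iG}\times p_2^*\Delta^n$ is a \emph{Segal} equivalence; crucially, we do not need it to be an equivalence of presheaves, which genuinely fails in general. Using the spine inclusion $\Delta^n_{\Seg}\to\Delta^n$ and Proposition~\ref{propn prelim Seg tensor} (products with $p^*$ of Segal equivalences are Segal equivalences, preserved by $i_!$) I reduce $\Delta^n$ to $\Delta^1$ and $\Delta^0$, and using the Segal core inclusion $\overline G_{\Seg}\to\overline G$ together with Proposition~\ref{propn Seg tensor} I reduce $\overline G$ to the elementary objects $\overline E$ occurring in $\overline G_{\Seg}$, all of height $\le 1$ (here $i_!\overline G_{\Seg}\simeq \overline{iG}_{\Seg}$ because $i$ induces an equivalence of elementary slice categories, as in the proof of Proposition~\ref{proposition restriction restricts to segal stuff}).

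The crux is then the explicit identification in the two remaining elementary cases, where I expect an honest equivalence rather than merely a Segal equivalence. For $\overline E=\xfe$ one has $\xfe\times p^*\Delta^1\simeq \xfc_{1,1}(\bbone)$ (the height-zero computation at the end of the proof of Proposition~\ref{propn Seg tensor}) in both $\gcUone$ and $\gcUtwo$, and $i_!$ carries the former corolla to the latter. For a height-one corolla $\overline E$, Lemma~\ref{lem decomposing} gives $\overline E\times p^*\Delta^1\simeq \overline E^{+}\amalg_{\overline E}\overline E^{-}$ on each side; the key point is that the auxiliary graphs $\overline E^{\pm}$ of Definition~\ref{def X pm} are obtained from $\overline E$ by adjoining a layer of unit-labelled $\xfc_{1,1}$'s, an operation preserving connectedness, output-positivity and simple-connectedness, so $\overline E^{\pm}$ again lie in $\gcone$ and $i_!\overline E^{\pm}\simeq \overline{iE}^{\pm}$ since $i$ is fully faithful. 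As $i_!$ preserves the defining pushout and, by the naturality in Lemma~\ref{lem decomposing}, is compatible with the projection map, the comparison $i_!(\overline E\times p_1^*\Delta^1)\to \overline{iE}\times p_2^*\Delta^1$ is an equivalence.

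Assembling the colimit, spine and Segal-core reductions then shows $\mu$ is an equivalence for $A=L_1\overline G$ and $\xcc=\Delta^n$, completing the disconnected-column case and, via the transport discussed above, the general statement; the second assertion (with $L\barishriek$) is identical, since the whole argument was already carried out at the level of Segal equivalences and merely localized at the end. I expect the main obstacle to be organisational rather than conceptual: one must keep careful track of the passage between the many graph categories and of which module structures are transported, and verify that the explicit $\pm$-constructions of Definition~\ref{def X pm} remain inside the relevant subcategory, since these sieve-stability facts (cf.\ Lemma~\ref{lem comparison between levelgraph cats} and Lemma~\ref{lem comparison between dirgraph cats}) are exactly what upgrade the height-one comparison from a Segal equivalence to an honest equivalence.
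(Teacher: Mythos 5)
Your proposal is correct and follows essentially the same route as the paper's proof: reduce to inclusions among the disconnected level-graph categories, build the natural comparison map by adjunction from the localization/unit maps, use cocontinuity of both sides to reduce to corollas (and the edge) tensored with $\Delta^1$, and conclude with the explicit decomposition of Lemma~\ref{lem decomposing} together with the fact that $\barishriek$ preserves representables. The only differences are organizational (your detour through representables, spines and Segal cores versus the paper's direct appeal to Segal objects being colimits of corollas), not mathematical.
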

The tensorings here are those from Theorem~\ref{theo tensor}, Remark~\ref{remark sec tensor product extension}, and \cite{ChuHaugseng}.

\begin{proof}
It is enough to prove the result for functors between categories of possibly disconnected level graphs (that is, in the rightmost part of \eqref{eq big diagram of graph cats}), as the other tensorings are defined along the compatible equivalences induced by the horizontal functors.
We prove the statement for $\barishriek$, as the proof for $L\barishriek$ is nearly identical.

Write $q$ and $p$ for the relevant composite cartesian fibrations from $\gcUone$ and $\gcUtwo$ to $\simp$, that is, the following diagram commutes:
\[ \begin{tikzcd}[row sep=tiny]
\gcUone \arrow[dd,"\bar \imath"] \rar \arrow[rrd,bend left, "q"] & \gcone \arrow[dd,"i"] \arrow[dr] \\
& & \simp\\
\gcUtwo \rar \arrow[rru, bend right, "p"'] & \gctwo \arrow[ur] 
\end{tikzcd} \]
Write $L_1 \colon \Pre(\gcUone) \to \Seg(\gcUone)$ and $L_2 \colon \Pre(\gcUtwo) \to \Seg(\gcUtwo)$ for the localization functors.

For $B\in \Seg(\gcUtwo)$ and $\xcc \in \Seg(\simp)$, we have that $B\otimes \xcc$ is defined to be $L_2(B \times p^*\xcc)$.
We apply $\bariustar$ to the localization map $B \times p^*\xcc \to B\otimes \xcc$ in $\Pre(\gcUtwo)$.
As $\bariustar$ preserves Segal objects, $\bariustar(B\otimes \xcc)$ is Segal and we have the indicated factorization
\[ \begin{tikzcd}
\bariustar(B \times p^*\xcc) \dar{\simeq} \rar & 
	\bariustar (B\otimes \xcc)  \\
(\bariustar B) \times q^*\xcc \rar & (\bariustar B)\otimes \xcc \uar[dashed]
\end{tikzcd} \]
in $\Pre(\gcUone)$.
We thus have constructed a map 
\[
	(\bariustar B)\otimes \xcc \to \bariustar (B\otimes \xcc)
\]
natural in $B\in \Seg(\gcUtwo)$ and $\xcc \in \Seg(\simp)$, hence we have
\[
	A \otimes \xcc \to (\bariustar \barishriek A)\otimes \xcc \to \bariustar ((\barishriek A)\otimes \xcc)
\]
natural in $A \in \Seg(\gcUone)$ and $\xcc \in \Seg(\simp)$.
By adjointness, this amounts to 
\begin{equation}\label{eq desired map}
	\barishriek(A \otimes \xcc) \to (\barishriek A)\otimes \xcc
\end{equation}
where both sides commute with colimits in each variable.

First note that objects $A$ in $\Seg(\gcUone)$ are colimits over corollas $\xfc_{m,n}(v)$ where $\xfc_{m,n}$ ranges over corollas of $\gcone$.
As usual it is true that any presheaf is a colimit of representables $\overline{G}$, and applying $L_1$ each of these representables splits into a colimit of elementary representables.
Moreover, $\xfe$ is a retract of $\xfc_{1,1}(\bbone)$.
Finally, $A$ was already assumed to be Segal, so the result follows for $A$ and not just $L_1(A)$.
Similarly, every object of $\Seg(\simp)$ is a colimit of $\Delta^1$.

By cocontinuity, to see that \eqref{eq desired map} is an equivalence it suffices to show that $\barishriek(\xfc(v) \otimes \Delta^1) \to (\barishriek \xfc(v))\otimes \Delta^1$ is an equivalence for each corolla $\xfc \in \gcone$.
This follows more or less by Lemma~\ref{lem decomposing}.
Specifically, the lemma gives that the left hand-side is 
\begin{align*}
\barishriek L_1 (\xfc(v) \times q^* \Delta^1) & \simeq
\barishriek L_1 (\xfc^+(v) \amalg_{\xfc(v)} \xfc^-(v)) \\ & \simeq  
(\barishriek L_1 \xfc^+(v) \amalg_{\barishriek L_1 \xfc(v)} \barishriek L_1 \xfc^-(v)) \\ & \simeq 
\xfc^+(v) \amalg_{\xfc(v)} \xfc^-(v),
\end{align*}
using that representables lying over simply-connected graphs are already local, and that $\barishriek$ sends representables to representables.
Likewise, the right-hand side is 
\begin{align*}
(\barishriek \xfc(v)) \otimes \Delta^1 &\simeq \xfc(v) \otimes \Delta^1 \\ &\simeq 
L_2 (\xfc(v) \times p^* \Delta^1) \\ &\simeq 
L_2 (\xfc^+(v) \amalg_{\xfc(v)} \xfc^-(v)) \\ &\simeq 
L_2\xfc^+(v) \amalg_{L_2\xfc(v)} L_2\xfc^-(v) \\ & \simeq
\xfc^+(v) \amalg_{\xfc(v)} \xfc^-(v).
\qedhere
\end{align*}
\end{proof}

We can extend these results to the presentable case in the usual way, by choosing a suitable subcategory of $\kappa$-compact objects.
This gives the following result.

\begin{corollary}
\label{cor adjoint rep}
Suppose that $i \colon  \gcone\to\gctwo$ is one of the maps in \eqref{eq big diagram of graph cats}.
Let $\xV$ be a presentably symmetric monoidal \icat{} and let $\bar\imath \colon \gcVone \to \gcVtwo$.
Then
	\begin{enumerate}[label=(\roman*), ref={\roman*}]
		\item There is an adjunction \label{enum first adjunction}
		\[ \barinotshriek\colon\Segrep(\gcVone) \rightleftarrows
		\Segrep(\gcVtwo)\cocolon\bariustar.\]
		\item The left adjoint $\barinotshriek$ is compatible with the tensoring with $\Seg(\simp)$ in the sense that there is a natural equivalence $\barinotshriek(X \otimes \xcc) \simeq (\barinotshriek X) \otimes \xcc$ for $X \in \Segrep(\gcVone)$ and $\xcc \in \Seg(\simp)$.\label{second natural equiv}
		\item There is a natural equivalence
		$\bariustar (A^{\xcc}) \simeq
		(\bariustar A)^{\xcc}$ for $A \in
		\Segrep(\gcVtwo)$ and $\xcc \in \Seg(\simp)$.
		\label{third natural equiv}
	\end{enumerate}
\end{corollary}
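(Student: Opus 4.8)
The plan is to reduce to the small, $\mathbb S$-local situation already handled in Theorem~\ref{thm generic compatibility with tensor} and then transport the conclusions along the equivalences of Proposition~\ref{prop PSeg=PCS}. Exactly as in the proof of Corollary~\ref{cor PCSDFV pres}, I would choose a small symmetric monoidal $\infty$-category $\xU$ together with a set $\mathbb S$ of morphisms in $\Pre(\xU)$, compatible with the tensor product, so that $\xV\simeq \PSU$. This yields equivalences $\Segrep(\gcVone)\simeq \SegS(\gcUone)$ and $\Segrep(\gcVtwo)\simeq \SegS(\gcUtwo)$ under which the $\Seg(\simp)$-tensorings correspond (Corollary~\ref{cor tensor S} and Corollary~\ref{cor Alg(-,-)}); since $\xU$ and $\mathbb S$ depend only on $\xV$, the same choice serves both graph categories.

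For \eqref{enum first adjunction}, the key observation is that $\bariustar$ preserves $\mathbb S$-local objects: by Remark~\ref{rmk S-seg repr}, $\mathbb S$-locality is a condition on the functors $F(\xfc(\blank))$ indexed by corollas $\xfc$, and because $\bar\imath$ carries corollas to corollas we have $(\bariustar F)(\xfc(\blank))\simeq F(i(\xfc)(\blank))$, which is again $\mathbb S$-local. Hence the adjunction $L\barishriek \dashv \bariustar$ of Theorem~\ref{thm generic compatibility with tensor} on $\Seg(\gcU)$-level Segal objects restricts to the reflective subcategories, and writing $L_{\mathbb S}$ for the localization $\Seg(\gcUtwo)\to \SegS(\gcUtwo)$ and $\iota$ for the inclusion $\SegS(\gcUone)\hookrightarrow \Seg(\gcUone)$, the composite $\barinotshriek \coloneqq L_{\mathbb S}\,L\barishriek\,\iota$ is the left adjoint to the restricted $\bariustar$.

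For \eqref{second natural equiv}, I would transport $L\barishriek(A\otimes \xcc)\simeq (L\barishriek A)\otimes \xcc$ through the two localizations, relying on two preservation facts: $L\barishriek$ preserves $\mathbb S$-Segal equivalences (its right adjoint $\bariustar$ preserves $\mathbb S$-local objects, by the previous paragraph), and $\blank\otimes \xcc$ preserves $\mathbb S$-Segal equivalences (this is precisely the content of Corollary~\ref{cor tensor S}). Using that both $\barinotshriek$ and the $\SegS$-tensoring are obtained from their $\Seg$-level analogues by postcomposing with $L_{\mathbb S}$, these facts give, for $X\in \SegS(\gcUone)$,
\[
\barinotshriek(X\otimes \xcc)\simeq L_{\mathbb S}\,L\barishriek(\iota X\otimes \xcc)\simeq L_{\mathbb S}\bigl((L\barishriek\,\iota X)\otimes \xcc\bigr)\simeq (\barinotshriek X)\otimes \xcc,
\]
the middle equivalence being Theorem~\ref{thm generic compatibility with tensor}.

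Finally, \eqref{third natural equiv} is a purely formal consequence of \eqref{enum first adjunction} and \eqref{second natural equiv} via the Yoneda lemma: for any $\mathcal Q\in \Segrep(\gcVone)$,
\begin{align*}
\Map(\mathcal Q,(\bariustar A)^{\xcc})
&\simeq \Map(\mathcal Q\otimes \xcc,\bariustar A)
\simeq \Map(\barinotshriek(\mathcal Q\otimes \xcc),A) \\
&\simeq \Map((\barinotshriek \mathcal Q)\otimes \xcc,A)
\simeq \Map(\barinotshriek \mathcal Q,A^{\xcc})
\simeq \Map(\mathcal Q,\bariustar(A^{\xcc})),
\end{align*}
using the tensor--cotensor adjunction, the adjunction of \eqref{enum first adjunction}, and the compatibility of \eqref{second natural equiv}. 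I expect the main obstacle to be the bookkeeping in \eqref{second natural equiv} --- checking that passing to $\mathbb S$-local objects does not disrupt the small-case compatibility --- which ultimately reduces to the two preservation properties of $\mathbb S$-Segal equivalences noted above.
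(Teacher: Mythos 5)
Your proposal is correct and follows essentially the same route as the paper: choose $\xU$ and $\mathbb{S}$ with $\xV\simeq \PSU$, transport everything through $\Segrep(\gc^\xV)\simeq \SegS(\gc^\xU)$, deduce \eqref{enum first adjunction} and \eqref{second natural equiv} from Theorem~\ref{thm generic compatibility with tensor}, and obtain \eqref{third natural equiv} by the identical mapping-space/Yoneda computation. The only difference is that you spell out the descent to the $\mathbb{S}$-local subcategories (that $\bariustar$ preserves $\mathbb{S}$-local objects via the corolla-level criterion, hence $\barinotshriek = L_{\mathbb S}\,L\barishriek\,\iota$, and the two preservation facts for $\mathbb{S}$-Segal equivalences), details the paper leaves implicit.
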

\begin{proof}
	Note that there exists a small symmetric monoidal $\infty$-category $\xU$ and a set $\mathbb{S}$ such that $\xV\simeq \PSU$.
The claims \eqref{enum first adjunction} and \eqref{second natural equiv} then follow from $\Segrep(\gc^\xV)\simeq \SegS(\gc^\xU)$ and the previous theorem.
In particular, $\barinotshriek$ is induced along equivalences from either $\barishriek$ or $L\barishriek$ from $\SegS(\gcone^\xU)$ to $\SegS(\gctwo^\xU)$ (depending on the domain and codomain of $i$).

We show only \eqref{third natural equiv}; let $Q \in \Segrep(\gcVone)$ be arbitrary.
On the one hand we have (see Proposition~\ref{cor Alg(-,-)}) 
\[
	\Map(Q, (\bariustar A)^{\xcc}) \simeq \Map(Q \otimes \xcc, \bariustar A) \simeq \Map(\barinotshriek (Q \otimes \xcc), A), 
\]
while on the other we have 
\[
	\Map(Q, \bariustar(A^\xcc)) \simeq \Map(\barinotshriek Q , A^\xcc) 
	\simeq \Map((\barinotshriek Q) \otimes \xcc, A).
\]
These coincide by \eqref{second natural equiv}.
\end{proof}

Notice that we have written $\barinotshriek$ for the left adjoint, rather than $\barishriek$, as the functor is not literally given by left Kan extension even in the situation of Lemma~\ref{lemma: restriction of lke}.
This contrasts with the conventions of \cite{ChuHaugseng}, see Warning~2.9.7 there.
This notation will only be used again at the end of \S\ref{subsec ffesc}.
 
\subsection{Fully faithfulness, essential surjectivity and completeness}\label{subsec ffesc}
In this section, we generalize the definition of the $\infty$-category $\name{Opd}^\xV_\infty$ of $\xV$-enriched $\infty$-operads from \cite[\S3.2]{ChuHaugseng} to give $\infty$-categories of $\xV$-enriched $\infty$-properads, $\infty$-output-properads, and $\infty$-dioperads.
In each case, these are full subcategories of representable presheaves on the objects whose underlying simplicial presheaf is complete.

Given a graph category $\gc$, there is a functor $\gc \to \gcV$ which on objects sends an object $G$ to $G(\bbone_c)_{c\in\vertex(G)}$.
We thus have functors, both denoted by $u$,
\[
	u \colon \simp \to \bbYV \qquad u \colon \simp \to \levelcV.
\]
By similar techniques to above, one can show that $u^* \colon \Pre(\bbYV) \to \Pre(\simp)$ and $u^* \colon \Pre(\levelcV) \to \Pre(\simp)$ restrict to Segal objects, and we have:
\begin{definition}[Underlying $\infty$-category]
Given $F$ in $\Segrep(\levelcV)$ or $\Segrep(\bbYV)$, we write call 
\[
	u^*F \in \Seg(\simp)
\]
the \emph{underlying $\infty$-category of} $F$.
\end{definition}
 
\begin{remark}[Another description of the underlying $\infty$-category]
Let $\varphi \coloneqq \xMap_\xV(\bbone, \blank)\colon \xV\to \xS$ denote the lax monoidal functor which is right adjoint to the unique colimit-preserving functor $\xS\to \xV$ taking $*$ to the unit $\bbone$.
Let $\varphi_*\colon \Segrep(\bbYV) \leftrightarrows  \Segrep(\bbY^\xS)$ be the right adjoint of Corollary~\ref{cor:laxmonftr} induced by $\varphi$.
Given an object $F\in \Segrep(\bbYV)$, we can use the equivalence $\Segrep(\bbY^\xS)\simeq \Seg(\bbY)$ of Corollary~\ref{cor hry equ} to consider $\varphi_*F \in \Seg(\bbY)$.
This object restricts further, along $i \colon \simp \to \bbY$, and we have $i^*\varphi_*F \simeq u^*F$ is the underlying enriched $\infty$-properad associated to $F$.
\end{remark}

\begin{definition}\label{def ffes}
	Let $\xMap_F(x_{1},\ldots, x_{m};y_1,\ldots, y_n)$ be as defined in Definition~\ref{def cts Seg psh}. We say a morphism $f\colon F\to F'$ in $\Segrep(\levelV)$ is
	\begin{itemize}
		\item \emph{fully faithful} if it induces an equivalence
		$$\xMap_F(x_{1},\ldots, x_{m};y_1,\ldots, y_n)\isoto \xMap_{F'}(fx_{1},\ldots, fx_{m};fy_1,\ldots,f y_n)$$ in $\xV$ for every corolla $\xfc_{m,n}$ and every $\underline{xy} \in F(\xfe)^{m+n}$, and
		\item \emph{essentially surjective} if the induced functor $u^*(f)\colon u^*(F)\to  u^*(F')$ of the underlying Segal space is essentially surjective.
	\end{itemize}
\end{definition}

\begin{definition}\label{def En}
We write $E^{n}$ for the indiscrete category (viewed as a Segal space) with $n+1$ objects, i.e.\ it has a unique morphism between any pair of objects.
\end{definition}
\begin{remark}
	The category $E^{1}$ is the ``generic isomorphism,'' so giving a morphism of Segal spaces $E^{1} \to X$ corresponds to giving two objects of $X$ and an equivalence between them.
	Similarly, giving a map $E^n\to X$ is equivalent to specifying $n+1$ equivalent objects in $X$.
\end{remark}

\begin{remark}
	Following \cite{Rezk}, the correct space of objects of a Segal space $\xcc$ should be given by $\iota \xcc$ defined as the colimit of the simplicial space $\xMap_{\Seg(\simp)}(E^{(\blank)},\xcc) $. Using this notation, the functor $f$ of Definition~\ref{def ffes} is essentially surjective if and only if 
	$\pi_0(\iota u^*(f))\colon \pi_0(\iota u^*(F))\to \pi_0(\iota u^*(F'))$ is a surjection of sets.
\end{remark}

The following extends Definition 3.2.1 and Definition 4.4.12 of \cite{ChuHaugseng} to the setting of properads.

\begin{defn}\label{def completness}
	Let $\xV$ be a presentably symmetric monoidal $\infty$-category.
	\begin{enumerate}
		\item We say a Segal space $F$ is \emph{complete} if the map
		\[F([0])\simeq \Map(E^0, F) \to \Map(E^{1}, F)\]
		induced by the map $s^0\colon E^{1} \to E^{0} \simeq \{[0]\}$, is an equivalence of spaces. 
We write $\CatI$ for the full subcategory of $\Seg(\simp)$ spanned by the complete Segal spaces.
\item We say an object $F\in \Segrep(\levelcV)$ is \emph{complete} if its underlying Segal space $u^*F$ is complete and we let $\Segrepc(\levelcV)$ denote the full subcategory of $\Segrep(\levelcV)$ spanned by the complete objects.
\item 
A similar definition holds for objects of $\Segrep(\bbYV)$, and we write $\Segrepc(\bbYV)$ for the full subcategory of $\Segrep(\bbYV)$ spanned by complete objects.
The equivalence $\Segrep(\levelcV)\simeq \Segrep(\bbYV)$ of Corollary~\ref{cor equ enr properads} induces an equivalence $\Segrepc(\levelcV) \simeq \Segrepc(\bbYV)$.
\item A \emph{$\xV$-enriched $\infty$-properad} is a complete, fibrewise representable $\bbYV$-Segal space or $\levelcV$-Segal space. 
\end{enumerate}
\end{defn}
\begin{remark}
	It follows from the previous definition and \cite[Definition 3.2.1]{ChuHaugseng} that an object in $\Segrep(\levelcV)\simeq \Segrep(\bbYV)$ is complete if and only if the underlying object in $\Segrep(\simp^{1,\xV}_{\finsetskel})\simeq \Segrep(\bbO^\xV)$ is complete.
\end{remark}

By replacing $\simp_{\finsetskel}^{\xV}$ with $\levelcV$ the proofs of \cite[\textsection 3.5]{ChuHaugseng} give the following result.
\begin{theorem}\label{theo cp obj are loc}
	There is a completion functor 
	\[\Segrep(\levelcV) \to \Segrepc(\levelcV)
	\]
	which takes every object in $\Segrep(\bbYV)$ to a complete one and is left adjoint to the inclusion $\Segrepc(\levelcV)\hookrightarrow \Segrep(\levelcV)$ of the full subcategory. Moreover, it exhibits $\Segrepc(\levelcV)$ as a localization of $\Segrep(\levelcV) $ with respect to the class of fully faithful and essentially surjective functors. \qed
\end{theorem}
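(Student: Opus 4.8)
The plan is to follow the strategy of \cite[\S3.5]{ChuHaugseng} essentially verbatim, replacing the operadic indexing category $\simp^\xV_{\finsetskel}$ by $\levelcV$ throughout; by Corollary~\ref{cor equ enr properads} it is harmless to argue with $\Segrep(\levelcV)$ and transport the final statement to $\Segrep(\bbYV)$. The ambient facts I would use are that $\Segrep(\levelcV)$ is presentable (Corollary~\ref{cor PCSDFV pres}) and tensored and cotensored over $\Seg(\simp)$ (Corollary~\ref{cor Alg(-,-)}). The underlying $\infty$-category functor $u^* \colon \Segrep(\levelcV)\to \Seg(\simp)$ is a restriction of a precomposition functor between reflective subcategories, hence preserves limits and is accessible, so it admits a left adjoint $u_!$.

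First I would realize completeness as a locality condition. By Definition~\ref{def completness}, $F$ is complete if and only if $u^*F$ is complete, which means $u^*F$ is local with respect to the single map $s^0\colon E^1\to E^0$ (Definition~\ref{def En}). Using the adjunction $u_!\dashv u^*$, an object $F$ is therefore complete precisely when it is local with respect to $u_!(s^0)\colon u_!E^1\to u_!E^0$. Since $\Segrep(\levelcV)$ is presentable, \cite[Proposition 5.5.4.15]{ht} shows that the localization at this single map is reflective and accessible; its local objects are exactly the complete objects $\Segrepc(\levelcV)$, and the reflector is the desired completion functor, left adjoint to the inclusion. This establishes the first two assertions, and the statement for $\Segrep(\bbYV)$ follows through the equivalence of Corollary~\ref{cor equ enr properads}.

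It remains to identify the strongly saturated class $W$ generated by $u_!(s^0)$ — equivalently, the class of maps inverted by completion — with the fully faithful and essentially surjective morphisms of Definition~\ref{def ffes}. The two key inputs, which I would transcribe from \cite{ChuHaugseng}, are: (i) the completion map $F\to \widehat F$ induces equivalences $\xMap_F(x_1,\dots,x_m;y_1,\dots,y_n)\isoto \xMap_{\widehat F}(\dots)$ on all mapping objects and a bijection $\pi_0(\iota u^*F)\cong \pi_0(\iota u^*\widehat F)$ on equivalence classes of objects; and (ii) a fully faithful, essentially surjective morphism between complete $\xV$-enriched $\infty$-properads is an equivalence. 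Granting these, if $f\in W$ then $\widehat f$ is an equivalence, so (i) forces $f$ to be fully faithful and, by naturality of the bijections on $\pi_0\iota$, essentially surjective; conversely, if $f$ is fully faithful and essentially surjective then (i) shows $\widehat f$ is again fully faithful and essentially surjective between complete objects, whence (ii) gives that $\widehat f$ is an equivalence and $f\in W$. For (ii) I would reduce to Rezk's theorem: applying the lax monoidal functor $\varphi=\xMap_\xV(\bbone,\blank)$ via Corollary~\ref{cor:laxmonftr} and Corollary~\ref{cor hry equ}, full faithfulness on $\xV$-mapping objects implies full faithfulness of the underlying Segal spaces, so such an $f$ restricts to an equivalence of underlying complete Segal spaces by \cite{Rezk}; in particular $F(\xfe)\simeq F'(\xfe)$, and combining this equivalence of object spaces with full faithfulness and the Segal condition upgrades $f$ to an equivalence in $\Segrep(\levelcV)$.

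The hard part will be input (i), specifically that completion leaves the $\xV$-mapping objects unchanged. This is where the concrete shape of the localizing map $u_!(s^0)$ matters: one must verify that localizing at $u_!(s^0)$ alters only the ``space of objects'' factor and not the values at corollas. I expect the cleanest route is to present the completion through the cotensors $F^{E^n}$ of Corollary~\ref{cor Alg(-,-)} and to check that cotensoring by the indiscrete categories $E^n$ and geometrically realizing preserves fibrewise representability together with the representing objects $\xMap_F(\ldots)$. Once this compatibility is in place, the remaining bookkeeping is formal and mirrors \cite[\S3.5]{ChuHaugseng} with $\simp^\xV_{\finsetskel}$ replaced by $\levelcV$.
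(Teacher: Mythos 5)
Your proposal is correct and takes essentially the same route as the paper: the paper's entire proof is the one-line observation that the arguments of \cite[\S 3.5]{ChuHaugseng} go through after replacing $\simp^{\xV}_{\finsetskel}$ by $\levelcV$, which is precisely the strategy you execute (and you additionally flesh out the skeleton — locality at $u_!(s^0)$, the identification of the saturated class with the fully faithful and essentially surjective maps, and the key compatibility of completion with the $\xV$-mapping objects — that the paper leaves implicit in the citation).
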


Of course a similar theorem holds if $\levelcV$ is replaced by $\bbYV$.
This theorem says that the $\infty$-categories $\Segrepc(\levelcV)\simeq \Segrepc(\bbYV)$ are the correct $\infty$-category of $\xV$-enriched $\infty$-properads.
We now introduce a new notation, similar to $\name{Opd}^\xV_\infty$ for $\xV$-enriched $\infty$-operads from Notation 3.2.2 of \cite{ChuHaugseng}.

\begin{notation}\label{no PrpdV}
	We write $\nmproperad_\infty^\xV$ for $\Segrepc(\levelcV)$ or $\Segrepc(\bbYV)$ when we do not want to emphasize the specific implementation of $\infty$-category of $\xV$-enriched $\infty$-properads.
\end{notation}

It follows from the definition that a lax symmetric monoidal functor $F\colon \xV\to \xW$ induces a functor $\Alg_{\levelgconn^{\op}/\xS}(\xV)\to \Alg_{\levelgconn^{\op}/\xS}(\xW)$ of $\infty$-categories. 
This functor can be identified with $\Segrep(\levelcV)\to \Segrep(\levelgconn^\xW)$ under the equivalence $\Segrep(\levelcV)\isoto \Alg_{\levelgconn^{\op}/\xS}(\xV)$ of Theorem~\ref{thm:PCSisAlgLT}. 
We then obtain the next proposition by localizing $\Segrep(\levelcV)\to \Segrep(\levelgconn^\xW)$ with respect to fully faithful and essential surjective functors.
\begin{propn}
	The \icat{} $\nmproperad_\infty^\xV$ is functorial in $\xV$ with respect to lax symmetric monoidal functors.
	Moreover, if $F \colon \xV
	\to \xW$ is a colimit-preserving symmetric monoidal functor then $F_{*} \colon \nmproperad_\infty^\xV \to \nmproperad_\infty^{\xW}$ preserves colimits; thus $\nmproperad_\infty^{(\blank)}$ defines a functor $\name{CAlg}(\PrL) \to \PrL$ where $ \PrL$ is the $\infty$-category of presentable $\infty$-categories. \qed
\end{propn}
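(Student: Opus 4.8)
The plan is to descend the functoriality of the algebraic models $\Segrep(\levelcV) \simeq \Alg_{\levelgconn^\op/\xS}(\xV)$ along the completion localizations of Theorem~\ref{theo cp obj are loc}. The functoriality at the algebraic level is already recorded in the paragraph preceding the statement (and in Corollary~\ref{cor:laxmonftr}): a lax symmetric monoidal $F \colon \xV \to \xW$ induces $F_* \colon \Segrep(\levelcV) \to \Segrep(\levelgconn^\xW)$, given on mapping objects by $\xMap_{F_* A}(\underline{xy}) \simeq F(\xMap_A(\underline{xy}))$ and leaving the color space $A(\xfe)$ unchanged. Since $\Segrepc(\levelcV)$ is, by Theorem~\ref{theo cp obj are loc}, the localization of $\Segrep(\levelcV)$ at the fully faithful and essentially surjective maps, the universal property of localization reduces the first claim to showing that $F_*$ carries fully faithful and essentially surjective maps to such maps, so that the composite $\Segrep(\levelcV) \xrightarrow{F_*} \Segrep(\levelgconn^\xW) \to \Segrepc(\levelgconn^\xW)$ inverts them and factors through $\nmproperad_\infty^\xV$.

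For this reduction, first I would check that fully faithfulness is preserved: if $f$ induces equivalences $\xMap_A(\underline{xy}) \isoto \xMap_{A'}(f\underline{xy})$, then applying the functor $F$ yields equivalences $\xMap_{F_* A}(\underline{xy}) \isoto \xMap_{F_* A'}(f \underline{xy})$, so $F_* f$ is fully faithful (Definition~\ref{def ffes}). Essential surjectivity is slightly more delicate, since $F_*$ need not preserve the underlying Segal space $u^*(\blank)$ on the nose. The key observation is that every equivalence between colors of $A$ --- encoded by a section $\bbone_\xV \to \xMap_A(x;x')$ of the underlying $\infty$-category --- is sent, via the lax structure map $\bbone_\xW \to F\bbone_\xV$ and $F$, to an equivalence between the same colors of $F_* A$; hence $\pi_0(\iota u^* A) \to \pi_0(\iota u^*(F_* A))$ is a surjection (it can only coarsen the equivalence relation on the fixed color set $A(\xfe)$). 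A diagram chase in the commuting square relating these surjections for $A$ and $A'$ then shows that $\pi_0(\iota u^*(F_* f))$ is surjective whenever $\pi_0(\iota u^* f)$ is, i.e.\ $F_* f$ is essentially surjective. This establishes functoriality of $\nmproperad_\infty^{(\blank)}$ in lax symmetric monoidal functors.

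For the colimit statement, I would specialize to a colimit-preserving symmetric monoidal $F$, which by Corollary~\ref{cor:laxmonftr}(ii) is the left adjoint in an adjunction $F_* \colon \Segrep(\levelcV) \rightleftarrows \Segrep(\levelgconn^\xW) \colon G_* \simeq F^*$, and so preserves all colimits at the algebraic level. Since both $\Segrep$ categories are presentable (Corollary~\ref{cor PCSDFV pres}), $\Segrepc$ is an accessible reflective localization of $\Segrep$ (Theorem~\ref{theo cp obj are loc}), and $F_*$ carries the completion's local equivalences (the fully faithful and essentially surjective maps) into local equivalences by the previous step, the descended functor $\nmproperad_\infty^\xV \to \nmproperad_\infty^\xW$ is the composite of the inclusion $\Segrepc(\levelcV) \hookrightarrow \Segrep(\levelcV)$ with $F_*$ and the completion $\Segrep(\levelgconn^\xW) \to \Segrepc(\levelgconn^\xW)$, and therefore preserves colimits: colimits in $\Segrepc$ are formed by taking the colimit in $\Segrep$ and completing, and $F_*$ commutes with both operations up to the local equivalences that completion inverts.

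The hard part will be the last clause, namely assembling the pointwise functors $F_*$ into an honest functor $\nmproperad_\infty^{(\blank)} \colon \name{CAlg}(\PrL) \to \PrL$ with all of its higher coherences, rather than merely a pointwise assignment preserving colimits. Here I would lean on the manifestly (coherently) functorial algebra description $\xV \mapsto \Alg_{\levelgconn^\op/\xS}(\xV)$ of Theorem~\ref{thm:PCSisAlgLT} --- functoriality in the symmetric monoidal target being supplied by Lurie's functoriality of categories of algebras --- and then exhibit the completion as a natural transformation of such functors, so that post-composition with it yields the desired functor landing in $\PrL$. Checking that completion is natural in $\xV$ (equivalently, that the local equivalences vary functorially) is exactly the coherence that requires care; this runs parallel to the corresponding functoriality statement for $\name{Opd}^\xV_\infty$ in \cite{ChuHaugseng}, to which one can appeal after replacing $\simp_\finsetskel^\xV$ by $\levelcV$.
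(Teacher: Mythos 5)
Your proposal is correct and follows essentially the same route as the paper: the paper's (very terse) argument is precisely to transport the functoriality of $\xV \mapsto \Alg_{\levelgconn^{\op}/\xS}(\xV)$ across the equivalence of Theorem~\ref{thm:PCSisAlgLT} and then localize $\Segrep(\levelcV)\to\Segrep(\levelgconn^{\xW})$ at the fully faithful and essentially surjective maps. You supply the detail the paper leaves implicit, namely that $F_*$ preserves fully faithful and essentially surjective maps so that the localization actually descends, and your treatment of colimits and coherence matches the intended argument.
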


The proof of \cite[Proposition 3.4.9]{ChuHaugseng} gives the following proposition.
\begin{proposition}\label{prop tensor}
	The tensor product
	$\otimes\colon\Segrep(\levelV)\times \Seg(\simp)\to
	\Segrep(\levelV)$ of Proposition~\ref{cor:PCStensor} restricts to a tensor product functor
	\[\otimes\colon  \nmproperad_\infty^\xV\times \xCat_\infty\to
	\nmproperad_\infty^\xV\]
	of presentable $\infty$-categories	which preserves colimits in each variable. \qed
\end{proposition}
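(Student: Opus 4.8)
The plan is to mimic \cite[Proposition 3.4.9]{ChuHaugseng}, descending the $\Seg(\simp)$-module structure of Proposition~\ref{cor:PCStensor} along the two reflective localizations $\Seg(\simp)\to\xCat_\infty$ (Rezk completion) and $\Segrep(\levelV)\to\nmproperad_\infty^\xV$ (Theorem~\ref{theo cp obj are loc}). Both $\nmproperad_\infty^\xV$ and $\xCat_\infty$ sit as full subcategories of $\Segrep(\levelV)$ and $\Seg(\simp)$, and presentability of source and target is Corollary~\ref{cor PCSDFV pres} together with Theorem~\ref{theo cp obj are loc}. By \cite[Proposition 2.2.1.9]{ha}, applied exactly as in the proof of Corollary~\ref{cor tensor S}, it then suffices to verify two compatibilities: (a) for every complete $\xcc$ and every fully faithful and essentially surjective map $f\colon F\to F'$ (Definition~\ref{def ffes}) the map $f\otimes\xcc$ is again fully faithful and essentially surjective; and (b) for every $F$ and every map $\xcc\to\xcc'$ inverted by Rezk completion, the map $F\otimes\xcc\to F\otimes\xcc'$ is fully faithful and essentially surjective. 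Granting (a) and (b), \cite[Proposition 2.2.1.9]{ha} produces the restricted bifunctor together with its colimit-preservation in each variable.

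The backbone of both conditions is the compatibility of the underlying-$\infty$-category functor $u^*$, where $u\colon\simp\to\levelcV$ decorates linear graphs by the unit $\bbone$, with the tensoring. First I would record that $p\circ u=\id_\simp$ for the structural fibration $p\colon\levelV\to\simp$, so that $u^*p^*\simeq\id$; since $u^*$ preserves products and is compatible with the Segal localization (by the same arguments as in \S\ref{section free functors}, cf.\ Proposition~\ref{proposition restriction restricts to segal stuff} and Lemma~\ref{lemma: restriction of lke}), one obtains a natural equivalence
\[ u^*(F\otimes\xcc)\simeq u^*F\times\xcc \]
in $\Seg(\simp)$. As products of complete Segal spaces are complete — the functors $\Map(E^0,\blank)$ and $\Map(E^1,\blank)$ commute with products, so $\xCat_\infty$ is closed under $\otimes$ — this already yields that $F\otimes\xcc\in\nmproperad_\infty^\xV$ whenever $F$ and $\xcc$ are complete (Definition~\ref{def completness}), i.e.\ the tensoring literally restricts without re-completion. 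It also settles the essential-surjectivity clauses of (a) and (b), since essential surjectivity is detected on $u^*$, which turns $f\otimes\xcc$ and $F\otimes(\xcc\to\xcc')$ into products of (essentially surjective, resp.\ Rezk-) equivalences of Segal spaces.

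The full-faithfulness clauses require understanding the mapping objects $\xMap_{F\otimes\xcc}(x_1,\dots;y_1,\dots)$ of Definition~\ref{def cts Seg psh}, and this is where I expect the real work to lie. Here I would run the generator-by-generator reduction of \S\ref{sec tensor product}: by cocontinuity of $\otimes$ in each variable it is enough to treat $\xcc=\Delta^1$, and Lemma~\ref{lem decomposing} rewrites $\xfc_{m,n}(v)\times p^*(\Delta^1)$ as the pushout $\xfc_{m,n}^+(v)\amalg_{\xfc_{m,n}(v)}\xfc_{m,n}^-(v)$, whose extra vertices are unit corollas $\xfc_{1,1}(\bbone)$. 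Tracing this decomposition through the representability condition expresses $\xMap_{F\otimes\Delta^1}$ in terms of the objects $\xMap_F$, so that a fully faithful $f$ induces equivalences on these mapping objects after tensoring, and similarly for a Rezk equivalence $\xcc\to\xcc'$ with fixed $F$. Pinning down this identification of $\xMap_{F\otimes\xcc}$ and checking that fully faithfulness survives the tensoring is the main obstacle, as it is the only step genuinely using the fibrewise-representable structure rather than formal properties of reflective localizations; once it is in place, conditions (a) and (b) hold and \cite[Proposition 2.2.1.9]{ha} completes the proof.
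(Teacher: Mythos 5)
Your proposal is correct and follows essentially the same route as the paper, which simply invokes the proof of \cite[Proposition 3.4.9]{ChuHaugseng}: descend the $\Seg(\simp)$-module structure along the two completion localizations via \cite[Proposition 2.2.1.9]{ha}, using the identification $u^*(F\otimes\xcc)\simeq u^*F\times\xcc$ to handle closure under completeness and the essential-surjectivity clauses, and the generator-by-generator reduction to $\xfc_{m,n}(v)\otimes\Delta^1$ via Lemma~\ref{lem decomposing} to control the mapping objects. The one step you flag but do not carry out --- pinning down $\xMap_{F\otimes\xcc}$ and checking that full faithfulness survives tensoring --- is precisely the content the paper also delegates to the citation, and the reduction you propose for it is the right one.
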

The adjoint functor theorem gives the following corollary.
\begin{corollary}\label{corollary algebras cotensors complete}
	Let $\xcc\in \CatI$ and $\mathcal{Q},\mathcal{R}\in \nmproperad_\infty^\xV$.
	The tensor product $\otimes\colon \nmproperad_\infty^\xV \times \CatI \to \nmproperad_\infty^\xV$ induces
	\[\xAlg^\xV_{(\blank)}(\blank)\colon (\nmproperad_\infty^\xV)^\op\times \nmproperad_\infty^\xV \to
	\CatI\] such that
	\[ \Map_{\CatI}(\xcc, \Alg^{\xV}_{\mathcal{Q}}(\mathcal{R}))
	\simeq \Map_{\nmproperad_\infty^\xV}(\mathcal{Q} \otimes \xcc,
	\mathcal{R})\]
	and a cotensor product
	\[(\blank)^{(\blank)}\colon
	\nmproperad_\infty^\xV \times \CatI^\op\to \nmproperad_\infty^\xV\]
	such that
	\[\Map_{\nmproperad_\infty^\xV}(\mathcal{Q}, \mathcal{R}^{\xcc}) \simeq
	\Map_{\nmproperad_\infty^\xV}(\mathcal{Q} \otimes \xcc, \mathcal{R}).\]
	Moreover, both of these functors preserve limits in each variable. \qed
\end{corollary}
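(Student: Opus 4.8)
The plan is to deduce Corollary~\ref{corollary algebras cotensors complete} as a formal consequence of the tensor product functor from Proposition~\ref{prop tensor} together with the adjoint functor theorem, exactly parallel to how Corollary~\ref{cor Alg(-,-)} was obtained from Theorem~\ref{theo tensor}. The essential input is that the tensor product $\otimes \colon \nmproperad_\infty^\xV \times \CatI \to \nmproperad_\infty^\xV$ preserves colimits in each variable separately, and that both $\nmproperad_\infty^\xV$ and $\CatI$ are presentable $\infty$-categories (the former by Corollary~\ref{cor PCSDFV pres} combined with the fact that $\Segrepc(\levelcV)$ is a localization of $\Segrep(\levelcV)$ via Theorem~\ref{theo cp obj are loc}, hence again presentable; the latter because complete Segal spaces form a presentable $\infty$-category).

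First I would fix $\mathcal{R} \in \nmproperad_\infty^\xV$ and consider the functor $\blank \otimes \mathcal{R} \colon \CatI \to \nmproperad_\infty^\xV$ obtained by holding the first variable fixed (or symmetrically $\mathcal{Q} \otimes \blank$). Since this functor preserves colimits and both categories are presentable, the adjoint functor theorem \cite[Corollary 5.5.2.9]{ht} produces a right adjoint. Applying this to the variable in $\CatI$, the right adjoint to $\mathcal{Q} \otimes \blank$ at a fixed $\mathcal{Q}$ gives the cotensor $\mathcal{R}^{(\blank)}$, characterized by the mapping space equivalence $\Map_{\nmproperad_\infty^\xV}(\mathcal{Q}, \mathcal{R}^{\xcc}) \simeq \Map_{\nmproperad_\infty^\xV}(\mathcal{Q} \otimes \xcc, \mathcal{R})$. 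Applying it instead to the variable in $\nmproperad_\infty^\xV$ (fixing $\xcc$) yields the adjoint $\xAlg^\xV_{(\blank)}(\blank)$ whose defining property is $\Map_{\CatI}(\xcc, \Alg^{\xV}_{\mathcal{Q}}(\mathcal{R})) \simeq \Map_{\nmproperad_\infty^\xV}(\mathcal{Q} \otimes \xcc, \mathcal{R})$. The bifunctoriality of $\Alg$ and of the cotensor then follows formally from the naturality of these adjunctions in the remaining variables, giving functors out of the stated product $\infty$-categories.

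For the final claim that both $\xAlg^\xV_{(\blank)}(\blank)$ and $(\blank)^{(\blank)}$ preserve limits in each variable, I would argue as usual: a right adjoint preserves all limits, so the preservation of limits in the variables where each functor is constructed as a right adjoint is immediate. For the remaining (contravariant) variables, the point is that $\otimes$ preserves colimits in each variable, so $\otimes$ carries colimits in the source variable to colimits in $\nmproperad_\infty^\xV$; passing to mapping spaces turns these into limits, and by the Yoneda lemma one concludes that the corresponding adjoint sends colimits to limits. Concretely, $\Map_{\CatI}(\xcc, \Alg^\xV_{\mathcal{Q}}(\blank))$ and the analogous expressions are exhibited as limits, and since this holds after mapping out of every object, the functors themselves preserve the relevant limits.

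The main obstacle is a bookkeeping one rather than a conceptual one: one must verify that the tensor product of Proposition~\ref{prop tensor}, which lands in $\nmproperad_\infty^\xV = \Segrepc(\levelcV)$ with $\CatI$ replacing the full $\Seg(\simp)$, genuinely restricts and preserves colimits in each variable when both factors are taken in the \emph{complete} subcategories. This is precisely the content of Proposition~\ref{prop tensor}, so I may invoke it directly; the verification there rests on the completion functor of Theorem~\ref{theo cp obj are loc} being a localization compatible with the $\Seg(\simp)$-module structure. Granting Proposition~\ref{prop tensor}, everything else is a standard and entirely formal application of the adjoint functor theorem in the presentable setting, mirroring the proof of Corollary~\ref{cor Alg(-,-)}, and so the proof is marked \qed in the statement as a routine consequence.
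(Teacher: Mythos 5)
Your proposal is correct and matches the paper's own argument: the paper derives this corollary from Proposition~\ref{prop tensor} by a direct application of the adjoint functor theorem in the presentable setting, exactly mirroring Corollary~\ref{cor Alg(-,-)}. The limit-preservation claims follow formally, just as you describe.
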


Though we are using the same notation for algebras and cotensors as we did in Corollary~\ref{cor Alg(-,-)}, we do not know that these two notions coincide. 
Specifically, we do not know that the objects produced by Corollary~\ref{cor Alg(-,-)} are complete even when the inputs are.

\begin{remark}\label{remark sec ffes extensions}
As is usual, the theorems and definitions of this section can be carried out using other graph categories.
We write 
\[
	\name{DOpd}^\xV_\infty \simeq \Segrepc(\levelg_{\name{sc}}^\xV) \simeq \Segrepc(\bbY_{\name{sc}}^\xV) 
\]
for the \icat{} of $\xV$-enriched $\infty$-dioperads
and 
\[ \nmproperad^{{\name{out}},\xV}_\infty \simeq \Segrepc(\levelg_{\name{out,c}}^\xV) \simeq \Segrepc(\bbYout^\xV) 
\]
for the \icat{} of $\xV$-enriched $\infty$-output-properads.
These are defined as full subcategories of the appropriate $\Segrep(\gcV)$ on the complete objects (where completeness is created by $\Segrep(\gcV) \to \Seg(\simp)$), and then shown to be localizations at the fully-faithful and essentially surjective functors (as in Theorem~\ref{theo cp obj are loc}).
Similar statements also hold for $\name{Opd}^\xV_\infty$, and previously appeared in \cite{ChuHaugseng}.
\end{remark}

\begin{proposition}\label{proposition adjunctions}
There are adjunctions
\[ 
\begin{tikzcd}
\name{DOpd}^\xV_\infty 
	\arrow[r, shift right=2] \arrow[r, phantom, "\text{\footnotesize$\bot$}"] 
& 
\name{Opd}^\xV_\infty 
	\arrow[r, shift left=2] \arrow[r, phantom, "\text{\footnotesize$\bot$}"] 
	\arrow[l, shift right=2] 
& 
\nmproperad^{{\name{out}},\xV}_\infty 
	\arrow[l, shift left=2] 
	\arrow[r, shift left=2] \arrow[r, phantom, "\text{\footnotesize$\bot$}"] 
& 
\nmproperad_\infty^\xV 
	\arrow[l, shift left=2]
\end{tikzcd}
\]
restricted from those in Corollary~\ref{cor adjoint rep}\eqref{enum first adjunction}.
The left adjoints preserve tensors.
\end{proposition}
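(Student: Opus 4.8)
The three adjunctions should all be instances of Corollary~\ref{cor adjoint rep}\eqref{enum first adjunction}, applied to the three fully faithful inclusions $\bbO \hookrightarrow \bbY_{\name{sc}}$, $\bbO \hookrightarrow \bbYout$, and $\bbYout \hookrightarrow \bbY$. Each of these appears in diagram \eqref{eq big diagram of graph cats} (indeed already in Lemma~\ref{lemma: restriction of lke}), so for such an inclusion $i\colon \gcone \to \gctwo$ we get an adjunction $\barinotshriek \colon \Segrep(\gcVone) \rightleftarrows \Segrep(\gcVtwo)\cocolon \bariustar$ whose right adjoint is a forgetful functor and whose left adjoint is a free functor. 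Concretely these specialize to the forgetful functors $\name{DOpd}^\xV_\infty \to \name{Opd}^\xV_\infty$, $\nmproperad^{{\name{out}},\xV}_\infty \to \name{Opd}^\xV_\infty$, and $\nmproperad_\infty^\xV \to \nmproperad^{{\name{out}},\xV}_\infty$ together with their left adjoints. The entire content of the proposition is that these restrict to the full subcategories of complete objects and that the left adjoints respect the $\CatI$-tensorings.

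The crux is to show that \emph{both} functors preserve completeness. Since completeness of an object is by definition a property of its underlying Segal space $u^*(\blank)$ (Definition~\ref{def completness} and Remark~\ref{remark sec ffes extensions}), I would check that neither $\barinotshriek$ nor $\bariustar$ alters the underlying $\infty$-category. Each $i$ commutes with the canonical inclusions of $\simp$ as the linear graphs, i.e.\ $\bar\imath \circ \bar u_{\gcone} \simeq \bar u_{\gctwo}$; passing to presheaves gives $\bar u_{\gctwo}^* \simeq \bar u_{\gcone}^* \circ \bariustar$. For $\bariustar$ this is immediate: $u^*(\bariustar F) \simeq u^* F$, so $\bariustar F$ is complete whenever $F$ is. For $\barinotshriek$ I would use that $i$ is fully faithful, so the unit $\id \to \bariustar \barinotshriek$ is an equivalence (this is exactly the fully-faithfulness established in the proof of Lemma~\ref{lemma: restriction of lke}, transported to the present setting through the equivalences $\Segrep(\gcV) \simeq \SegS(\gcU)$ of Proposition~\ref{prop PSeg=PCS}). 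Combining these, $u^*(\barinotshriek F) \simeq \bar u_{\gcone}^*\bariustar \barinotshriek F \simeq \bar u_{\gcone}^* F = u^* F$, so $\barinotshriek$ also leaves the underlying Segal space unchanged and hence preserves completeness.

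Once both adjoints are known to carry complete objects to complete objects, the adjunction restricts to the full subcategories $\Segrepc(\gcVone) \rightleftarrows \Segrepc(\gcVtwo)$ with no further work, since a pair of full-subcategory inclusions respected by an adjoint pair inherits the adjunction on the nose (the relevant mapping spaces are computed in the ambient categories). Unwinding for the three inclusions yields precisely the displayed chain $\name{DOpd}^\xV_\infty \rightleftarrows \name{Opd}^\xV_\infty \rightleftarrows \nmproperad^{{\name{out}},\xV}_\infty \rightleftarrows \nmproperad_\infty^\xV$, with the left adjoints being the restrictions of the free functors $\barinotshriek$.

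Finally, for tensor-preservation I would invoke that the $\Seg(\simp)$-tensoring restricts to a $\CatI$-tensoring on complete objects (Proposition~\ref{prop tensor} together with Remark~\ref{remark sec ffes extensions}), so that for complete $X$ and $\xcc \in \CatI$ the complete-level tensor $X \otimes \xcc$ agrees with the $\Segrep$-level tensor. The natural equivalence $\barinotshriek(X \otimes \xcc) \simeq (\barinotshriek X) \otimes \xcc$ of Corollary~\ref{cor adjoint rep}\eqref{second natural equiv} then applies verbatim, and since $\barinotshriek X$ is again complete, both sides are identified with the complete-level tensors. I expect the only genuine subtlety — and hence the main thing to get right — to be the bookkeeping in the completeness-preservation step: matching the various underlying-$\infty$-category functors across the four graph categories and confirming that the unit $\id \to \bariustar\barinotshriek$ remains an equivalence after imposing fibrewise representability, rather than any deeper difficulty.
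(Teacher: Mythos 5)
Your proposal is correct and follows essentially the same route as the paper's proof: the decisive point in both is that for these particular inclusions (which fall under Lemma~\ref{lemma: restriction of lke}) the unit $\id \to \bariustar\barinotshriek$ is an equivalence, so that $u^*\barinotshriek A \simeq u^*A$ and both adjoints preserve completeness, after which Corollary~\ref{cor adjoint rep} gives tensor-preservation. The paper writes out only the $\nmproperad^{{\name{out}},\xV}_\infty \rightleftarrows \nmproperad_\infty^\xV$ case and declares the others analogous, exactly as you do.
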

\begin{proof}
For concreteness, we only prove the statement about $\nmproperad^{{\name{out}},\xV}_\infty  \rightleftarrows \nmproperad_\infty^\xV$ and we do so in terms of $\bbY_{\name{out}}^\xV$ and $\bbY^\xV$-presheaves.
Write $\bar \imath \colon \bbY_{\name{out}}^\xV \to \bbY^\xV$.
The key fact that we use about this situation, which falls under the setting of Lemma~\ref{lemma: restriction of lke}, is that the unit $1 \to \bariustar \barinotshriek$ of the adjunction $\barinotshriek\colon\Segrep(\bbYV_{\name{out}}) \rightleftarrows \Segrep(\bbYV)\cocolon\bariustar$ of Corollary~\ref{cor adjoint rep}\eqref{enum first adjunction} is an equivalence. 
Indeed, this follows from the corresponding fact for the equivalent adjunction $\SegS(\bbY_{\name{out}}^\xU) \rightleftarrows \SegS(\bbYU)$ (for an appropriately chosen $\xU$ and $\mathbb{S}$), using that $\bbY_{\name{out}}^\xU \to \bbYU$ is fully-faithful.

We name two additional functors
\[ \begin{tikzcd}
\simp \rar{u'} \arrow[dr, "u"'] & \bbY_{\name{out}}^\xV \dar{\overline{\imath}} \\
& \bbYV. 
\end{tikzcd} \]
An object $F \in \Segrep(\bbYV)$ is complete if and only if $u^*F = (u')^*\bariustar F \in \Segrep(\simp)$ is complete if and only if $\bariustar F \in \Segrep(\bbY_{\name{out}}^\xV)$ is complete. 
Thus $\bariustar$ restricts to $\Segrepc(\bbYV) \to \Segrepc(\bbY_{\name{out}}^\xV)$.
Now suppose that $A\in \Segrep(\bbY_{\name{out}}^\xV)$ is complete.
As
\begin{equation}\label{eq using unit equiv}
	u^* \barinotshriek A \simeq (u')^*\bariustar \barinotshriek A \simeq (u')^* A,
\end{equation}
it follows that $\barinotshriek A \in \Segrep(\bbYV)$ is also complete.
Thus $\barinotshriek \dashv \bariustar$ restricts to an adjunction $\barinotshriek \colon \Segrepc(\bbY_{\name{out}}^\xV) \rightleftarrows \Segrepc(\bbYV) \cocolon \bariustar$.
Using the two tensorings (from Proposition~\ref{prop tensor} and Remark~\ref{remark sec ffes extensions}) we know that $\barinotshriek (A\otimes \xcc) \simeq (\barinotshriek A) \otimes \xcc$ by Corollary~\ref{cor adjoint rep}\eqref{second natural equiv}.
\end{proof}

\begin{remark}
The proof of the preceding proposition can be modified to show that if $i \colon \bbY_{\name{sc}} \to \bbY$ is the inclusion, then $\bariustar \colon \Segrep(\bbYV) \to \Segrep(\bbYV_{\name{sc}})$ restricts to $\nmproperad_\infty^\xV \to \name{DOpd}^\xV_\infty$.
By Theorem~\ref{theo cp obj are loc} this functor will have a left adjoint, though it is not clear whether or not it is restriction of the functor $\barinotshriek \colon \Segrep(\bbYV_{\name{sc}}) \to \Segrep(\bbYV)$ from Corollary~\ref{cor adjoint rep}.
Indeed, $\barinotshriek$ will typically significantly enlarge the underlying $\Seg(\simp)$ object, meaning that the equivalence corresponding to \eqref{eq using unit equiv} will not hold.
This is in the same spirit as Example~\ref{example: failure of lke restriction}, since $\barinotshriek$ requires a localization. 

That said, \cite[\S4]{HackneyRobertsonYau:SMIP} provides evidence that the adjunction \[ \barinotshriek \colon \Segrep(\bbYV_{\name{sc}}) \rightleftarrows \Segrep(\bbYV) \cocolon \bariustar\] restricts to $\name{DOpd}^\xV_\infty \rightleftarrows \nmproperad_\infty^\xV$.
Suppose that $F$ is the functor from simplicially-enriched dioperads to simplicially-enriched properads, left adjoint to the forgetful functor.
One consequence of Proposition 4.4 of \cite{HackneyRobertsonYau:SMIP} is that if $D$ is a nice-enough dioperad, then the underlying simplicially-enriched categories of $D$ and $FD$ will have the same classes of equivalences.
The main idea is that the new operations in $FD$ are obtained by decorating graphs having nonzero first betti number by operations in $D$.
Composing operations in $FD$ is additive on first betti numbers of said graphs, so in particular the newly added operations are never invertible.
It would be interesting to see if a similar argument can be carried out in the present situation, but such a detailed argument is beyond what we are trying to achieve here.
Thus we leave this as Conjecture~\ref{conjecture equivalences} below.
Notice that if $A$ and $\barinotshriek A$ have the same objects and same equivalences, then $A$ will be complete if and only if $\barinotshriek A$ is complete.
This tells us that the rest of the proof of Proposition~\ref{proposition adjunctions} carries through without change.
\end{remark}

\begin{conj}\label{conjecture equivalences}
If $A \in \Segrep(\bbYV_{\name{sc}})$, then the equivalences in $\barinotshriek A \in \Segrep(\bbYV)$ coincide with the equivalences in $A$.
Consequently, $\barinotshriek$ restricts to $\Segrepc(\bbYV_{\name{sc}}) \to \Segrepc(\bbYV)$ which preserves tensors.
\end{conj}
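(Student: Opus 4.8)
The plan is to isolate the combinatorial invariant that the remark preceding Conjecture~\ref{conjecture equivalences} identifies as the heart of the matter---the \emph{genus} (first Betti number) of a graph---and to promote it to an invariant of the unary operations of $\barinotshriek A$. Recall that here $\barinotshriek = L\barishriek$ is the localization of a left Kan extension; this is exactly the case of Example~\ref{example: failure of lke restriction}, where the raw Kan extension fails to be Segal. Since completeness of an object of $\Segrep(\bbYV)$ is detected solely by its underlying Segal space $u^*(\blank)$, and in fact only by the objects $\barinotshriek A(\xfe)\simeq A(\xfe)$ (which agree because $\barinotshriek$ fixes the edge) together with the space of equivalences $\iota u^*(\blank)$, the second sentence of the conjecture follows formally from the first, exactly as in the proof of Proposition~\ref{proposition adjunctions}. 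So I would concentrate entirely on the claim that $u^*A \to u^* \barinotshriek A$ is an equivalence on spaces of equivalences.

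First I would record the key geometric input: assigning to a connected acyclic graph $G$ the first Betti number $b_1(G)$ of its underlying topological space (equivalently, the number of independent undirected cycles) defines a function $\bbY \to \mathbb{N}$ whose zero locus is precisely $\bbY_{\name{sc}}$, and which is \emph{additive under graph substitution}: if $K \cong G(H_v)_{v}$, then $b_1(K) = b_1(G) + \sum_v b_1(H_v)$. In particular, along an active map (Definition~\ref{def intact Y}) the genus can only increase, with the increment itself additive along composites. The upshot at the level of the free properad should be a \emph{genus grading} of $\barinotshriek A$: the unary part $\barinotshriek A(\xfc_{1,1})$ should decompose, as a space, into a coproduct indexed by $b_1$, with the genus-$0$ summand identified via $u^*A \to u^*\barinotshriek A$ with the underlying $\infty$-category of the dioperad $A$, and each higher summand consisting of genuinely new operations.

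Granting such a grading, the non-invertibility argument is short and is the point of the whole approach. Suppose $\alpha \in \pi_0 \xMap_{\barinotshriek A}(x;y)$ has positive genus and that some $\beta$ satisfies $\beta \circ \alpha \simeq \id_x$. Composition of unary operations is induced by substitution into the two-vertex linear graph (which has genus $0$), so additivity gives $b_1(\beta \circ \alpha) = b_1(\alpha) + b_1(\beta) \geq b_1(\alpha) > 0$; but $\id_x$ is represented by a single edge and so has genus $0$, a contradiction. Hence every equivalence of $u^*\barinotshriek A$ lies in the genus-$0$ summand. Conversely, since $0 + 0 = 0$, the genus-$0$ operations are closed under composition and, identified with $u^*A$, carry the same notion of equivalence because $u^*A \to u^*\barinotshriek A$ is fully faithful onto this summand. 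Combining the two directions yields $\iota u^*A \simeq \iota u^* \barinotshriek A$, and then $A$ is complete if and only if $\barinotshriek A$ is, so that $\barinotshriek$ restricts to $\Segrepc(\bbYV_{\name{sc}}) \to \Segrepc(\bbYV)$ and preserves tensors by Corollary~\ref{cor adjoint rep}\eqref{second natural equiv}.

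The main obstacle---and the reason this remains a conjecture---is making the genus grading rigorous at the homotopy-coherent level, that is, showing that the localization $L$ is compatible with the genus decomposition of $\barishriek A$. The decomposition is transparent for the raw Kan extension $\barishriek A$, whose value on a graph is a colimit over decorated subgraphs that visibly splits by genus; the danger is that enforcing the Segal and fibrewise-representability conditions might \emph{a priori} glue together operations of different genera or manufacture an inverse, collapsing the invariant. I would attack this by producing an explicit genus-graded model for $\barinotshriek A$ and checking directly that it is already Segal and fibrewise representable---most plausibly via the operadic bar construction, using that the operad governing properads (Definition~\ref{def PrdS}) carries a grading by genus for which the $\Sigma$-action and the free-algebra functor are homogeneous---or by transporting the strict computation of \cite[Proposition 4.4]{HackneyRobertsonYau:SMIP}, which establishes exactly this genus-additivity for simplicially enriched dioperads, across a suitable comparison of models. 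It is precisely this compatibility of the $\infty$-categorical localization with the grading that the strict, point-set argument of \cite{HackneyRobertsonYau:SMIP} sidesteps, and bridging that gap is the crux of the problem.
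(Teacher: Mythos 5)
This statement is left as a \emph{conjecture} in the paper: the authors give no proof, only the heuristic in the preceding remark (genus additivity under graph substitution, imported from \cite[Proposition 4.4]{HackneyRobertsonYau:SMIP} in the strict simplicial setting) together with the observation that the second sentence follows formally from the first. Your proposal reproduces exactly that heuristic --- first Betti number is additive under substitution, the two-vertex linear graph has genus zero, so a positive-genus unary operation can never be one-sidedly invertible --- and you correctly reduce the ``consequently'' clause to the first sentence just as the remark does. So as a strategy you are aligned with the authors; but neither you nor they have a proof, and you say so yourself.

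The genuine gap, which you name but do not close, is the existence of the genus grading on $\barinotshriek A$ itself. Everything in your non-invertibility argument is downstream of the assertion that $\barinotshriek A(\xfc_{1,1})$ decomposes as a coproduct indexed by $b_1$ with composition additive on the index. This is transparent for the raw left Kan extension $\barishriek A$, but $\barinotshriek$ here is $L\barishriek$ and the localization $L$ is not innocent: this is precisely the situation of Example~\ref{example: failure of lke restriction}, where $\barishriek A$ fails to be Segal and $L$ genuinely creates new operations by gluing decorated subgraphs along Segal cores. You would need to exhibit an explicit Segal, fibrewise representable, genus-graded model of $L\barishriek A$ (or prove that the generating Segal equivalences and the representability localization are homogeneous for the grading); absent that, one cannot rule out that $L$ identifies operations of different genera or manufactures an inverse. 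A second, related unproved assertion is that the unit $u^*A \to u^*\barinotshriek A$ is fully faithful onto the genus-zero summand: unlike the $\bbYout \subseteq \bbY$ case used in Proposition~\ref{proposition adjunctions}, the unit $1 \to \bariustar\barinotshriek$ is not known to be an equivalence for $\bbY_{\name{sc}} \subseteq \bbY$ (indeed $\barinotshriek$ enlarges mapping objects), and this is needed for the ``conversely'' half of your argument, i.e.\ that genus-zero equivalences in $\barinotshriek A$ are already equivalences in $A$. Both gaps are the crux the authors point to; your proposal is a reasonable programme but not a proof.
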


\section{Rectification theorems}\label{sec alg}
The aim of this section is to understand whether the homotopy theory of enriched $\infty$-properads is equivalent to a Dwyer--Kan-type homotopy theory for ordinary enriched properads.
In \S\ref{subsec approx} we show that $\bbY$ (resp.\ $\bbY_{\name{sc}}$, $\bbY_{\name{out}}$) and the operads governing properads (resp.\ dioperads, output properads) induce the same $\infty$-category of algebras.
Then, in \S\ref{subsec rect} we turn to the question of rectifying enriched $\infty$-properads.
This we can do only over very particular bases (see Theorem~\ref{theorem non-rect}), though for enriched $\infty$-dioperads and enriched $\infty$-output-properads rectification holds quite generally (Theorem~\ref{thm:rect}).

\subsection{Operads governing properads}\label{subsec approx}
In this subsection we first recall, for a set $S$, the operad whose algebras in a symmetric monoidal $\infty$-category are enriched $S$-colored properads.
The main result of this subsection is that the $\infty$-category $\bbY^\op_S$ is an ``approximation'' to this operad in the sense of \cite[\S 2.3.3]{ha}.
This observation immediately implies that an enriched $\infty$-properad in our sense is indeed equivalent to an enriched $\infty$-properad defined as an algebra over the operad for properads.

\begin{definition}
\label{ordered graph definition}
Let $Z$ be a finite set.
For our purposes, a $Z$-graph will consist of a connected, acyclic graph $G$ together with total orderings on each of the sets $\inp(G)$, $\out(G)$, $\inp(v)$ (for each $v\in \vertex(G)$), and $\out(v)$, as well as a bijection $Z \isoto \vertex(G)$.
Likewise, an \emph{$S$-colored $Z$-graph} will additionally come with a function $\edge(G) \to S$.
We say that two $Z$-graphs are \emph{strictly isomorphic} if there is a graph isomorphism preserving all of the structure.
\end{definition}
Note that there is at most one strict isomorphism between any two $Z$-graphs.

We now recall, from \cite[\S 14.1]{YauJohnson:FPAM}, a colored operad $\bfproperad_S$ which controls $S$-colored properads.
Before we define a colored operad $\bfproperad_S$, we first introduce the special case where $S$ is the terminal set.

\begin{definition}\label{def Prd}
Let $Z$ be a finite set.
Suppose that $\nsqelt{k}$ and $\nsqelt{k}_z$ (indexed over $z\in Z$) are elements of $\mathbb{N}\times\mathbb{N}$.
Define
\[
	\bfproperad(\{\nsqelt{k}_z\}_{z\in Z}; \nsqelt{k})
\]
to be the set of strict isomorphism classes of $Z$-graphs $G$ with $(|\inp(v_z)|, |\out(v_z)|) = \nsqelt{k}_z$ for each $z \in Z$ and $(|\inp(G)|, |\out(G)|) = \nsqelt{k}$.
This forms an $\mathbb{N}\times \mathbb{N}$-colored operad $\bfproperad$ with operadic composition given by graph substitution.
As all of the sets $\inp(v)$, $\inp(G)$ and so on are totally ordered, we use the unique order-preserving isomorphisms as our graph substitution data.
\begin{itemize}
	\item 
	For $Z$ a one-element set, the identity element in $\bfproperad(\{\nsqelt{k}\}; \nsqelt{k})$ is a corolla $C$ with vertex $v$ so that the two orderings on $\inp(C) = \inp(v)$ agree, and likewise for $\out(C) = \out(v)$.
	\item If $\sigma \colon  Z \to Z'$ is a bijection, there is an isomorphism
	\[
		\bfproperad(\{\nsqelt{k}_{z'}\}_{z'\in Z'}; \nsqelt{k}) \to \bfproperad(\{\nsqelt{k}_{\sigma(z)}\}_{z\in Z}; \nsqelt{k})
	\]
	given on a $Z'$-graph $G$ by precomposing the bijection $Z' \to \vertex(G)$ with $\sigma$.
\end{itemize}
\end{definition}

A special case of this definition is when $Z$ is the empty set.
As there is a unique graph $G$ in $\bbY$ which does not have any vertices, we have
\[
	\bfproperad(\{\,\, \}; \nsqelt{k}) = \begin{cases}
		\{ \xfe \} & \text{if $\nsqelt{k} = (1,1)$, and} \\
		\varnothing & \text{otherwise.}
	\end{cases}
\]

\begin{remark}
Note that the operad from \cite[\S 14.1]{YauJohnson:FPAM} is actually the `skeletal' version of this one, that is, is only indexed on the finite sets $Z = \mathbf{n}$.
As is customary, we will write the corresponding set of operations as 
\[
	\bfproperad(\nsqelt{k}_1, \dots, \nsqelt{k}_n; \nsqelt{k})
\]
since $\mathbf{n}$ has a natural total order.
\end{remark}

We now want to extend $\bfproperad$ to a colored operad $\bfproperad_S$.

\begin{definition}\label{def PrdS}
Let $S$ be a nonempty set.
The operations in $\bfproperad_S$ as are strict isomorphism classes of $S$-colored $Z$-graphs (where strict isomorphism is interpreted to mean strict isomorphism preserving the coloring functions).
The set of colors is $(\coprod_{n\geq 0} S^{\times n})^{\times 2}$, the set of pairs of ordered lists of elements of $S$.
Given an $S$-colored $Z$-graph, the vertex $v_z$ has an associated pair of lists of elements of $S$ using the two functions $\inp(v_z) \to S$ and $\out(v_z) \to S$, as does the whole graph using $\inp(G) \to S$ and $\out(G) \to S$.
This determines the profile where this operation lives.
Otherwise, the structure is very similar to that of $\bfproperad$.
\end{definition}

\begin{lemma}[Section 14.1 of \cite{YauJohnson:FPAM}]\label{YJ Section 14.1}
	For a set $S$, the $\bfproperad_S$-algebras in a symmetric monoidal category $\xcc$ are $\xcc$-enriched properads with $S$ as the set of colors.
\end{lemma}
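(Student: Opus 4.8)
The plan is to make explicit the correspondence between $\bfproperad_S$-algebras and $S$-colored $\xcc$-enriched properads, matching the operadic structure on $\bfproperad_S$ against the compositional structure of a properad. Recall that an $S$-colored $\xcc$-enriched properad assigns to each pair of profiles $(\underline a; \underline b) = (a_1,\dots,a_m; b_1,\dots,b_p)$ of elements of $S$ an object $P(\underline a; \underline b)\in \xcc$, together with unit maps, symmetric group actions, and composition operations that connect several outputs of one operation to the corresponding inputs of another (subject to associativity, unitality, and equivariance axioms). The key observation is that these composition operations are precisely indexed by connected acyclic graphs: a way of composing a family of operations (one per vertex) into a single operation is the same as a graph recording how the outputs and inputs are wired together.

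First I would recall from Definition~\ref{def PrdS} that a $\bfproperad_S$-algebra $A$ in $\xcc$ consists of an object $A(\underline a;\underline b)\in \xcc$ for each color $(\underline a;\underline b)$, together with a structure map
\[
	\bigotimes_{z\in Z} A(\underline a_z; \underline b_z) \to A(\underline a; \underline b)
\]
for every operation in $\bfproperad_S(\{\nsqelt{k}_z\}_{z\in Z}; \nsqelt{k})$ of the appropriate profile, i.e.\ for each strict isomorphism class of $S$-colored $Z$-graphs, compatible with operadic composition, units, and the symmetric group actions coming from bijections $\sigma\colon Z\to Z'$. I would then set $P(\underline a;\underline b) \coloneqq A(\underline a;\underline b)$ and show this assembles into a properad. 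The unit operations of $P$ come from the corolla identity elements described in Definition~\ref{def Prd}; the symmetric group actions on $P$ come from the equivariance under bijections $Z\to Z'$ together with reorderings of $\inp(G)$, $\out(G)$; and the $\circ$-type composition of properads comes from the structure maps associated to the two-vertex graphs, with the properad associativity and unitality axioms following from operadic associativity and unitality of $\bfproperad_S$ via graph substitution (which is exactly the operadic composition). The empty-set case $\bfproperad_S(\{\,\};\nsqelt k)$ recorded above supplies exactly the colored units of the properad.

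Conversely, given an $S$-colored $\xcc$-enriched properad $P$, I would produce a $\bfproperad_S$-algebra by assigning to each strict isomorphism class of $S$-colored $Z$-graph $G$ the structure map $\bigotimes_{z} P(\underline a_z;\underline b_z)\to P(\underline a;\underline b)$ obtained by iterating the properadic composition according to the wiring prescribed by $G$. The total orderings on all input/output sets that are part of the $Z$-graph data make this map canonically defined (without ambiguity from the symmetric actions), and the coherence theorem for properads --- that any two ways of composing according to the same graph agree --- guarantees well-definedness. One then checks that operadic composition in $\bfproperad_S$, given by graph substitution, is respected, which is again a direct consequence of the associativity axioms of $P$. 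These two assignments are mutually inverse and natural, yielding the claimed equivalence of categories.

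The main obstacle, and the only genuinely substantive point, is the well-definedness and coherence in the direction from properads to algebras: one must verify that the map assigned to a graph $G$ is independent of the order in which the elementary compositions of $P$ are performed, and that it is strictly compatible with graph substitution. This is precisely the content of the coherence result underlying the definition of properad (and is handled in detail in \cite[\S 14.1]{YauJohnson:FPAM}, on which we rely); the role of the total orderings in Definition~\ref{ordered graph definition} is exactly to rigidify the combinatorics so that each graph names a \emph{single} well-defined composite rather than an orbit of composites. Once this coherence is in hand, the remaining verifications --- units, equivariance, and associativity --- are routine translations between the operadic axioms for $\bfproperad_S$ and the properad axioms for $P$, so I would merely indicate them rather than carry them out.
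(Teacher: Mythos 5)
The paper does not actually prove this lemma: it is stated with a citation to \cite[\S 14.1]{YauJohnson:FPAM}, where the operad $\bfproperad_S$ is constructed precisely so that its algebras are $S$-colored properads (essentially by identifying $\bfproperad_S$ with the underlying colored operad of the free-properad monad, whose formula is a coproduct over decorated graphs). Your sketch is the standard unwinding of that fact and is essentially correct, including the correct identification of the crux: coherence, i.e.\ that a properad assigns to each graph a single well-defined composite compatible with graph substitution. Two points deserve more care than you give them. First, operations in $\bfproperad_S$ are \emph{strict isomorphism classes} of $S$-colored $Z$-graphs (Definition~\ref{ordered graph definition}); since there is at most one strict isomorphism between any two $Z$-graphs, passing to classes creates no ambiguity, but this should be said. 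Second, and more substantively, $\bfproperad_S$ is not $\Sigma$-free (Remark~\ref{remark not sigma free}): a $Z$-graph can be fixed by a nontrivial permutation of $Z$, as with the graph of Example~\ref{example of etale but not mono}. So in the direction from properads to algebras, the required $\Sigma$-equivariance of the structure maps is not merely a bookkeeping translation — at such a fixed point it forces the iterated composite $\bigotimes_z P(\underline a_z;\underline b_z)\to P(\underline a;\underline b)$ to be literally invariant under the corresponding permutation of tensor factors, which is where the bi-equivariance axioms of a properad are genuinely used rather than "routine." With that caveat your argument goes through; for the generation step in the other direction (recovering the properad structure from corollas, the edge, and two-vertex graphs) one uses that every connected acyclic graph with at least two vertices decomposes via an almost isolated vertex, exactly as in the decomposition $H = C_v \strcup H'$ employed in Appendix~\ref{appendix comparison with HRY}.
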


Let us now look at some special cases of $\bfproperad_S$.
Notice that there is an operad map $\bfproperad_S \to \bfproperad$ which forgets the edge colorings on operations, and whose color map is induced from $S\to *$:
\[
	\left(\coprod_{n\geq 0} S^{\times n}\right) \times \left(\coprod_{n\geq 0} S^{\times n}\right) \to \left(\coprod_{n\geq 0} *\right) \times \left(\coprod_{n\geq 0} *\right) \cong \mathbb{N} \times \mathbb{N}.
\]

\begin{example}\label{examples colored operads}
\leavevmode
	\begin{enumerate}
		\item For $S=*$, $\bfproperad_*$ coincides with $\bfproperad$ of Definition~\ref{def Prd}.
		\item Let $\mathbf{Cat}_S$ denote the full suboperad of $\bfproperad_S$ with color set the pullback of $ \{1\} \times \{1 \} \hookrightarrow \mathbb{N} \times \mathbb{N}$.
		Algebras over $\mathbf{Cat}_S$ are categories with $S$ as the set of objects (see \cite[2.1]{enriched}). Likewise, letting $\mathbf{Opd}_S$ denote the full suboperad of $\bfproperad_S$ with color set the pullback of $\mathbb{N}\times \{1 \} \hookrightarrow \mathbb{N} \times \mathbb{N}$, we recover the operad whose algebras are $S$-colored operads (see \cite[Definition 5.1.5]{ChuHaugseng}).
		\item Let $\bfproperad^{\out}_S$ denote the full suboperad of $\bfproperad_S$ with color set the pullback of $\mathbb{N} \times \{1, 2, 3, \dots \} = \mathbb{N} \times \mathbb{N}_+ \hookrightarrow \mathbb{N} \times \mathbb{N}$.
		Algebras over $\bfproperad^{\out}_S$ are $S$-colored properads in which every operation has at least one output color. Likewise, there is a colored operad $\bfproperad^{\inp}_S$ with color set living over $\mathbb{N}_+ \times \mathbb{N}$.
		\item Let $\mathbf{DOpd}_S$ be the suboperad of $\bfproperad_S$ with the same color set, with the requirement that the underlying graph of any operation is simply-connected.
		Algebras over $\mathbf{DOpd}_S$ are $S$-colored dioperads (see \cite[\S 11.5]{YauJohnson:FPAM}).
	\end{enumerate}
\end{example}

\begin{remark}\label{remark not sigma free}
The operad $\bfproperad_S$ is not $\Sigma$-free.
Indeed, consider the left graph from Example~\ref{example of etale but not mono} with the output orderings at $u_i$ and the input orderings at $v_i$ are given from left to right, and with vertex ordering $u_0, u_1, v_0, v_1$.
With this convention, the graph represents an element of $\bfproperad((0,2),(0,2),(2,0),(2,0); (0,0))$.
This element is fixed by the group element $(12)(34) \in \Sigma_4$.
This issue is intrinsic, that is, any other operad governing properads will also have such a fixed point.
In particular, this means that our conception of properads is different from that of \cite[10.4]{batanin-berger} which arises as algebras over a finitary polynomial monad in $\Set$, as a polynomial monad always describes a $\Sigma$-free colored operad (see Section 6 of \cite{batanin-berger}).

The other colored operads from Example~\ref{examples colored operads} are $\Sigma$-free.
Indeed, for each of the other types of graphs, the only strict automorphisms are identities.
For $\mathbf{DOpd}_S$, this fact is \cite[Proposition 4.14]{YauJohnson:FPAM}, while for the others ($\bfproperad^{\out}_S$, $\mathbf{Opd}_S$, and so on) this is \cite[Lemma 4.8]{YauJohnson:FPAM}.
\end{remark}

For the reader's convenience we now recall the definition of the $\infty$-operad associated to a symmetric operad introduced in \cite[Construction 2.1.1.7]{ha}.
\begin{definition}
	For a symmetric operad $\mathbf{O}$, we define its associated $\infty$-operad $\xxO\to \pfinsetskel$ to be the functor determined by the following:
	\begin{enumerate}
		\item The objects in $\xxO$ are finite sequences $(x_1,\ldots, x_m)$ of colors in $\mathbf{O}$.
		\item For two objects $(x_1,\ldots, x_m), (y_1,\ldots, y_n)$ in $\xxO$, we define the set of morphisms
		$$\xxO((x_1,\ldots, x_m), (y_1,\ldots, y_n)) \coloneqq \coprod_{\alpha\in \xHom(\langle m\rangle, \langle n\rangle)}\prod_{1\leq j\leq n}\mathbf{O}(\{x_i\}_{\alpha(i)=j}, y_j).$$
		\item The composition in $\xxO$ is induced by that of $\pfinsetskel$ and $\mathbf{O}$.
		\item The map $\xxO\to \pfinsetskel$ is the obvious projection map.
	\end{enumerate}
\end{definition}
\begin{notation}
		We let $\calproperad_S \to \pfinsetskel$ denote the $\infty$-operad associated to the simplicial operad $\bfproperad_S$.
\end{notation}

If $S$ is a set and $\xV$ is a symmetric monoidal $\infty$-category, then we can consider the \icat{} $\xAlg_{\calproperad_S}(\xV)$ of algebras over $\calproperad_S$.
We should consider objects of this \icat{} as $S$-colored, $\xV$-enriched $\infty$-properads.
Our next main goal is to show this is reasonable, by proving in Corollary~\ref{cor:OOpSeq} that this \icat{} is equivalent to the \icat{} $\xAlg_{\bbY_S^\op}(\xV)$ from Definition~\ref{def alg}.
The stated equivalence utilizes a relationship between $\calproperad_S$ and $\bbY_S^\op$.
Our initial task is to define a family of functors $\Theta_S \colon \bbY^\op_S \to \calproperad_S$.
Before doing so, we explain in the next remark why we do not use the category $\kockgraphs$, and then we replace $\bbY$ with a convenient, equivalent category.

\begin{remark}\label{remark no vertex map Gr}
In Proposition~\ref{prop active inert bbY}, we studied a functor $\vertex_\bbY\colon \bbY\to \pfinset^\op$ which took a graph to its set of vertices.
There is no corresponding functor from $\kockgraphs$ (Definition~\ref{definition kockgraphs}), or even from $\kockint$.
Indeed, consider the graphs from Example~\ref{example of etale but not mono}. 
In $\kockint$ there is an \'etale map from $G$ to $K$ which takes each $u_i$ to $u$ and each $v_i$ to $v$, so it is unclear how one should construct a meaningful base point preserving function
\[
	\vertex(K)_+ = \{u,v, *\} \rightarrow \{u_0, u_1, v_0, v_1, *\} = \vertex(G)_+
\]
in the same manner.
Certainly the rule \eqref{definition of partial map} from Definition~\ref{definition bbY to Fin} is not single-valued.
As there is no meaningful vertex functor $\kockgraphs \to \pfinset^\op$, there will not be a meaningful functor $\kockgraphs^\op \to \calproperad$.
\end{remark}

A variant of the following notion appeared in Appendix~\ref{appendix comparison with HRY}.
\begin{notation}[Ordered variant of $\bbY$]
For the remainder of this section, we will work with the category whose objects are graphs together with orderings on the sets $\inp(v)$, $\out(v)$, and also on $\vertex(G)$ (but not on $\inp(G)$ or $\out(G)$).
Morphisms are just morphisms in $\bbY$, that is, they ignore this extra structure.
This category is equivalent to the usual $\bbY$. 
To avoid clutter, we will simply write $\bbY$ for this category until the end of \S\ref{subsec approx}.
\end{notation}

\begin{definition}
\label{definition of Theta}
Define a functor $\Theta \colon \bbY^\op \to \calproperad$ as follows.
Each graph in $\bbY$ comes equipped with a chosen ordering of the vertices.
On objects, send a graph $G$ to $(\nsqelt{k}_1, \dots, \nsqelt{k}_n)$ where $\nsqelt{k}_a = (|\inp(v_a)|, |\out(v_a)|)$.
Now suppose that $f \colon H \to G$ is a morphism in $\bbY$.
We already know there is a functor $\vertex_\bbY\colon \bbY^{\op} \to \pfinset\simeq \pfinsetskel$ from just above Proposition~\ref{prop active inert bbY}, which is one part of the morphism $\Theta(f)$. 
That is, we have a morphism $q \colon  \langle n \rangle \to \langle m \rangle$, so that $q(a) = b$ means that $v_a \in \vertex(G)$ is a vertex of the structured subgraph $f(C_{w_b}) \in \sub(G)$.
On the other hand, for $b = 1, \dots, m$, we write $G_b$ for the structured subgraph $f(C_{w_b}) \in \sub(G)$ together with the following data:
\begin{itemize}
	\item the induced bijection
	\[ \begin{tikzcd}[sep=small]
	q^{-1}(b) \dar["\subseteq"] \rar["\cong"] & \vertex(f(C_{w_b})) \dar["\subseteq"] \\
	\langle n \rangle \rar["\cong"] & \vertex(G),
	\end{tikzcd} \]
	\item for each $v\in \vertex(f(C_{w_b}))$, an ordering on $\inp(v)$ and $\out(v)$ from the corresponding orderings in $G$, and
	\item an ordering on $\inp(f(C_{w_b}))$ and $\out(f(C_{w_b}))$ from the corresponding orderings on $\inp(w_b)$ and $\out(w_b)$.
\end{itemize}
In this way, $G_b$, for $b=1,\dots, m$ is considered as an element of 
\[
	\bfproperad( \{\nsqelt{k}_a\}_{a \in q^{-1}(b)}; \nsqelt{\jmath}_b).
\]
Declare the value of $\Theta(f)$ to be
\[
	(G_1, \dots, G_m) \in \prod_{b=1}^m \bfproperad(\{\nsqelt{k}_a\}_{q^{-1}(b)}; \nsqelt{\jmath}_b) \subseteq_{q} \calproperad(\nsqelt{k}_1, \dots, \nsqelt{k}_n; \nsqelt{\jmath}_1, \dots, \nsqelt{\jmath}_m).
\]
\end{definition}

\begin{lemma}
$\Theta$ is a functor.
\end{lemma}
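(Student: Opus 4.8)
The plan is to verify the two functor axioms separately, reducing the composition axiom to the associativity of graph substitution together with the already-established functoriality of $\vertex_{\bbY}$. First I would dispatch the identity axiom. For $\id_G \colon G \to G$ the map $\vertex_{\bbY}(\id_G)$ is the identity of $\langle n \rangle$ by Proposition~\ref{prop active inert bbY} and its surrounding discussion, and the unitality of graph substitution gives $(\id_G)_1(C_{v_a}) = C_{v_a}$ for every vertex $v_a$. Under the recipe of Definition~\ref{definition of Theta}, each $C_{v_a}$ is equipped with the orderings on $\inp(v_a)$ and $\out(v_a)$ inherited from $G$ both as its vertex/port orderings and as the port orderings of the subgraph, and these agree by construction; hence $C_{v_a}$ represents the identity element of $\bfproperad(\{\nsqelt{k}_a\}; \nsqelt{k}_a)$ described in Definition~\ref{def Prd}, so $\Theta(\id_G) = \id_{\Theta(G)}$.

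The substance is the composition axiom. Given composable $e \colon I \to H$ and $f \colon H \to G$ in $\bbY$, so that $\Theta(e)\circ\Theta(f)$ and $\Theta(fe)$ are the two maps to compare, the morphism $\Theta(fe)$ lies over $\vertex_{\bbY}(fe) = \vertex_{\bbY}(e)\,\vertex_{\bbY}(f)$, which is exactly the $\pfinsetskel$-component of $\Theta(e)\circ\Theta(f)$ by the functoriality of $\vertex_{\bbY}$ recorded just before Proposition~\ref{prop active inert bbY}. It therefore remains to match the fibrewise $\bfproperad$-data. Composition in $\calproperad$ is operadic composition in $\bfproperad$, i.e. graph substitution: for each vertex $u$ of $I$, the operadic composite inserts, into each vertex $x$ of the subgraph $e_1(C_u) \in \sub(H)$, the subgraph $f_1(C_x) \in \sub(G)$. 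The composition rule for $\bbY$ (composition of pairs of functions, Definition~\ref{def Y}), whose effect on vertices is the identity $\vertex((fe)_1(C_u)) = \coprod_{x \in \vertex(e_1(C_u))} \vertex(f_1(C_x))$ established in the functoriality proof of $\vertex_{\bbY}$, identifies this substituted graph with the underlying graph of $(fe)_1(C_u)$, which is precisely the operation component of $\Theta(fe)$ at $u$.

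The step I expect to be the main obstacle is verifying that this identification is a \emph{strict} isomorphism of vertex-ordered and port-ordered graphs, since $\bfproperad$ is built from strict isomorphism classes and its operadic composition is performed along the unique order-preserving graph substitution data. Concretely, one must check that the total orderings produced by the substitution — the ordering of $\coprod_{x} \vertex(f_1(C_x))$ assembled from the vertex ordering of $e_1(C_u)$ and those of the $f_1(C_x)$, and the port orderings transported from $\inp(u),\out(u)$ through $e_1(C_u)$ — coincide with the orderings that Definition~\ref{definition of Theta} assigns directly to $(fe)_1(C_u)$. This is a bookkeeping matter settled by unravelling how orderings are inherited in each construction, but it is where all the care is required; once it is done, the associativity and unitality of graph substitution (see \cite{YauJohnson:FPAM}) close the argument.
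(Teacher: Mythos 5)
Your proposal is correct and follows essentially the same route as the paper's proof: both reduce the composition axiom to the fact that operadic composition in $\bfproperad$ is graph substitution, use the vertex decomposition $\vertex((fg)_1(C_u)) \cong \coprod_{x} \vertex(f_1(C_x))$ coming from the functoriality of $\vertex_{\bbY}$ to match the indexing bijections, and then observe that the port orderings of the substituted graph and of $(fg)_1(C_u)$ are inherited from the same places in $G$ and from $\inp(u),\out(u)$. The ordering check you flag as the main obstacle is exactly the point the paper settles with its commuting square of bijections, so no gap remains.
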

\begin{proof}
For composition, suppose we have
\[ \begin{tikzcd}[row sep=tiny]
K \rar{g} & H \rar{f} & G & \text{in $\bbY$} \\
\langle \ell \rangle & \langle m \rangle \lar["p" swap] & \langle n \rangle \lar["q" swap] & \text{in $\pfinsetskel$}
\end{tikzcd} \]
with 
\[
	(H_1, \dots, H_\ell) \in \prod_{c=1}^\ell \bfproperad(\{ \nsqelt{\jmath}_b \}_{p^{-1}(c)}; \nsqelt{\imath}_c) \subseteq_{p} \calproperad (\nsqelt{\jmath}_1, \dots, \nsqelt{\jmath}_m; \nsqelt{\imath}_1, \dots, \nsqelt{\imath}_\ell)
\]
equaling $\Theta(g)$, 
\[
	(J_1, \dots, J_\ell) \in \prod_{c=1}^\ell \bfproperad(\{\nsqelt{k}_a\}_{(pq)^{-1}(c)}; \nsqelt{\imath}_c) \subseteq_{pq} \calproperad (\nsqelt{k}_1, \dots, \nsqelt{k}_n; \nsqelt{\imath}_1, \dots, \nsqelt{\imath}_\ell)
\]
equaling $\Theta(fg)$, and $\Theta(f) = (G_1, \dots, G_m)$ as given in Definition~\ref{definition of Theta}.
Note that $J_c$ is the graph $f(g(C_{u_c})) \in \sub(G)$ together with the bijection $(pq)^{-1}(c) \to \vertex(f(g(C_{u_c})))$ and the above indicated orderings on inputs and outputs. 

On the other hand, the composition $\Theta(g)\Theta(f)$ is also in the $pq$ component of $\calproperad (\nsqelt{k}_1, \dots, \nsqelt{k}_n; \nsqelt{\imath}_1, \dots, \nsqelt{\imath}_\ell)$.
Its $c$th projection is given by applying the operadic composition
\[\begin{tikzcd}
\bfproperad(\{ \nsqelt{\jmath}_b \}_{p^{-1}(c)}; \nsqelt{\imath}_c) \times \prod\limits_{b\in p^{-1}(c)} \bfproperad(\{\nsqelt{k}_a\}_{q^{-1}(b)}; \nsqelt{\jmath}_b) \dar \\ 
\bfproperad(\{\nsqelt{k}_a\}_{(pq)^{-1}(c)}; \nsqelt{\imath}_c)
\end{tikzcd}
\]
to $H_c, \{ G_b \}_{b\in p^{-1}(c)}$.
This graph substitution $H_c \{ G_b \}_{b\in p^{-1}(c)}$ is isomorphic to $J_c$ since
\[ \begin{tikzcd}
\coprod\limits_{p^{-1}(c)} q^{-1}(b) \rar{\cong} \dar{\cong} & (pq)^{-1}(c) \dar{\cong} \\
\coprod\limits_{\vertex(g(C_{u_c}))} \vertex(f(C_{w_b})) \rar{\cong} &  \vertex(f(g(C_{u_c})))
\end{tikzcd} \]
commutes, and the input / output orderings are induced from the same places.
Thus $\Theta(fg) = \Theta(g)\Theta(f)$.
\end{proof}

\begin{definition}\label{def Y_S}
Given a set $S$, we let $\bbY_S$ be the category where an object consists of a graph $G$ in $\bbY$ together with a function $\edge(G) \to S$.
Morphisms should respect the coloring function.
In other words, if $\edge \colon \bbY \to \Set$ is the functor which takes $G$ to its set of edges $\edge(G)$, then $\bbY_S $ is the comma category $\edge \downarrow S$. Analogously we define the categories $\bbY_{\name{out},S}$ and $\bbY_{\name{sc}, S}$.
\end{definition}

The category $\bbY_S$ is given by applying the construction of Notation~\ref{no Y_X} to the special case where $\gc = \bbY$ and $S$ is a discrete space.
We will not need such generality here, and our $\bbY^\op_S$ (for $S$ a set) will be an ordinary category, rather than an \icat. Moreover, notice that $\bbY_S\cong \bbY$ if $S=*$.

\begin{notation}\label{no Theta_S}
	For a set $S$, the functor $\Theta \colon \bbY^\op \to \calproperad$ from Definition~\ref{definition of Theta} naturally extends to a functor $\bbY^\op_S \to \calproperad_S$ which we denote by $\Theta_S$.
\end{notation}

We want to prove that the functor $\Theta_S$ is an
\emph{approximation} in the sense of \cite[Definition 2.3.3.6]{ha}:
\begin{definition}\label{def approximation}
Given an $\infty$-operad $p\colon \xxO \to \pfinsetskel$ and an $\infty$-category $\xcc$.
	We say a functor $f\colon \xcc\to \xxO$ is an \emph{approximation to $\xxO$}, if it satisfies the following conditions:
	\begin{enumerate}
		\item Suppose $p'=p\circ f$, $c\in \xcc$ is an object and $p'(c)=\langle n\rangle$.
		For every $1\leq i\leq n$, the inert map $\rho^i\colon\langle n\rangle\to \langle 1\rangle$ has a locally $p'$-cocartesian lift $\tilde \rho^i\colon c\to c_i $ in $\xcc$ such that $f(\tilde \rho^i)$ in $\xxO$ is inert.
		\item Every active morphism $\alpha\colon x\to f(c)$ in $\xxO$ has an $f$-cartesian lift $\tilde \alpha\colon \tilde x\to c$ in $\xcc$.
	\end{enumerate}
\end{definition}

\begin{propn}\label{prop approx}
	The functor $\Theta_{S} \colon \bbY_S^\op \to \calproperad_{S}$ is an approximation to $\calproperad_{S}$.
\end{propn}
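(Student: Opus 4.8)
The plan is to verify the two conditions of Definition~\ref{def approximation} directly, using the explicit description of $\Theta_S$ from Notation~\ref{no Theta_S} (which extends Definition~\ref{definition of Theta}) together with the active-inert factorization system on $\bbY$ from Theorem~\ref{theo Yfs} and its behavior under the vertex functor established in Proposition~\ref{prop active inert bbY}. Throughout, recall that $p \colon \calproperad_S \to \pfinsetskel$ is the projection and that $p \circ \Theta_S$ agrees with the composite $\bbY_S^\op \to \bbY^\op \xrightarrow{\vertex_\bbY} \pfinsetskel$, since the color set of $\calproperad_S$ lives over $\mathbb{N}\times\mathbb{N}$ and $\Theta_S$ is built on top of $\vertex_\bbY$ (Definition~\ref{definition bbY to Fin}). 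This identification is the workhorse: it lets me translate cocartesian/cartesian lifting statements about $p' = p\circ\Theta_S$ into statements about inert and active morphisms in $\bbY_S^\op$, which I understand concretely.

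\textbf{First condition (inert lifts of $\rho^i$).} Fix an object $G \in \bbY_S^\op$ with $p'(G) = \langle n\rangle$, so $G$ is a graph in $\bbY$ with $n$ vertices (equipped with coloring and orderings). For each $1\leq i \leq n$, I would take the inert morphism in $\bbY_S^\op$ whose image under $\vertex_\bbY$ is $\rho^i$: concretely, the corolla $C_{v_i} \strsub G$ on the $i$th vertex (Example~\ref{ex inert}), regarded with its inherited edge-coloring, determines an inert map $G \to C_{v_i}$ in $\bbY_S^\op$. By Proposition~\ref{prop active inert bbY}, $\vertex_\bbY$ carries this to an inert map, i.e.\ to $\rho^i$ up to the standard identifications, so $\Theta_S$ sends it to an inert morphism in $\calproperad_S$ (inert morphisms in the associated $\infty$-operad being exactly those lying over inert maps and projecting to identities on components). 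That this inert map is a \emph{locally} $p'$-cocartesian lift of $\rho^i$ follows because the fibre of $\bbY_S^\op \to \pfinsetskel$ over $\langle 1\rangle$ through $G$ is governed by the single-vertex structured subgraphs, and the universal property is checked against active maps, where the factorization system guarantees uniqueness.

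\textbf{Second condition (cartesian lifts of active maps).} This is where the real content lies, and I expect it to be the main obstacle. Given $c = G \in \bbY_S^\op$ and an active morphism $\alpha \colon x \to \Theta_S(G)$ in $\calproperad_S$, I must produce an $\Theta_S$-cartesian lift $\tilde\alpha \colon \tilde x \to G$ in $\bbY_S^\op$. The object $x = (\nsqelt{k}_1,\dots,\nsqelt{k}_n)$ together with the active operation $\alpha$ records, for each vertex $v_a$ of $G$, an $S$-colored $Z_a$-graph $K_a$ whose boundary matches $(|\inp(v_a)|,|\out(v_a)|)$; the active condition ($\alpha$ lies over an active map of $\pfinsetskel$) means these data assemble. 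I would construct $\tilde x$ by graph substitution, replacing each vertex $v_a$ of $G$ by the graph $K_a$ using the ordered boundary bijections, forming $\tilde x = G\{K_a\}_{a}$, and taking $\tilde\alpha$ to be the induced active map in $\bbY_S^\op$ (this is precisely the kind of active map appearing in the factorization $G\{K_v\} \to H$ of \cite[Lemma 6.42]{hrybook}, read in the opposite category). That $\tilde x$ is again a connected acyclic graph, and that $\tilde\alpha$ projects correctly under $\Theta_S$, follow from associativity and unitality of graph substitution (cited after Figure~\ref{figure graph substitution example}) and from the definition of $\Theta_S$ on morphisms. The delicate point is verifying the $\Theta_S$-\emph{cartesian} universal property: given any $h \colon G' \to G$ in $\bbY_S^\op$ and a factorization of $\Theta_S(h)$ through $\alpha$ in $\calproperad_S$, I must produce a unique $G' \to \tilde x$ over $G$ lifting the given factorization. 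Here I would lean on the fact that a morphism of $\bbY$ is determined by its edge map (\cite[Corollary 6.62]{hrybook}, used already in Corollary~\ref{corollary functor l to y}), so uniqueness is automatic once existence is shown, and existence reduces to checking that the edge-level data prescribed by the $\calproperad_S$-factorization does define a legitimate morphism of $\bbY_S^\op$ into $G\{K_a\}$—that is, that conditions \eqref{definition complete morphism preservation of ports} and \eqref{definition complete morphism intersection union} of Definition~\ref{definition complete morphism} hold. I anticipate that the bulk of the work, and the genuine subtlety, is in managing the orderings and the $\Sigma$-action (cf.\ Remark~\ref{remark not sigma free}): because $\bfproperad_S$ is not $\Sigma$-free, several distinct strict-isomorphism classes of substitution data could a priori map to the same operation, and I must confirm that the ordered-variant convention on $\vertex(G)$ fixed just before Definition~\ref{definition of Theta} rigidifies the choice so that the cartesian lift is genuinely unique rather than merely unique up to automorphism.
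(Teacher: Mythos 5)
Your proposal follows essentially the same route as the paper's proof: the inert lifts of $\rho^i$ are the corolla inclusions $G \to C_{v_i}$ (with the locally cocartesian property checked trivially against corollas), the cartesian lift of an active operation is built by graph substitution $G\{K_a\}$, and uniqueness comes from faithfulness of $\Theta_S$ (morphisms of $\bbY$ being determined by their edge maps). The only step you elide that the paper spells out is the preliminary reduction of the test morphism $G' \to G$ to the active case via the inert--active factorization, after which your plan of defining the lift on corollas by $\tilde t(C_b) = K_b$ and verifying it is a graphical map (via \cite[Theorem 6.50]{hrybook}) is exactly what the paper does; and your worry about the $\Sigma_2$-stabilizers is indeed defused by the ordered-variant convention, just as you anticipate.
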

\begin{proof}
For simplicity, we restrict the proof to the case where $S$ is a point.
The general case is similar.
\begin{enumerate}
\item 
Let $p\colon \calproperad\to \pfinsetskel$ be the structure map and let $p'= p\circ \Theta$.
Then it follows from the definition of $p$ and $\Theta$ that for each $G$ with $p'(G) = \langle n \rangle$,  every inert map $\rho^i \colon \langle n \rangle \to \langle 1 \rangle$ has a unique inert lift $g \colon G \to C_i$ in $\bbY^\op$.
Notice that $\Theta(g)$, which is 
represented by 
\[
	\id_{\nsqelt{k}_i} \in \bfproperad(\nsqelt{k}_i; \nsqelt{k}_i) \subseteq_{\rho^i} \calproperad(\nsqelt{k}_1, \dots, \nsqelt{k}_n; \nsqelt{k}_i) 
\]
 is $p$-cocartesian, hence it is inert in $\calproperad$. 
It remains to show that $g$ is locally $p'$-cocartesian.
That is, given the pullback
\[ \begin{tikzcd}
\bbY^{\op}_i \rar \dar{q} & \bbY^{\op} \dar{p'} \\
\Delta^1 \rar{\rho^i} & \pfinsetskel
\end{tikzcd} \]
we must show that the morphism $(g,0\to 1)$ in $\bbY^{\op}_i$ is $q$-cocartesian.
	Hence, for every corolla $C'\in \bbY^\op$, we have to show that the commutative diagram	
	\nolabelcsquare{\xMap_{\bbY^\op_i}( (C,1),  (C',1))}{\xMap_{\bbY^\op_i}( (G,0),  (C',1))}{\xMap_{\Delta^1}(1,1)}{\xMap_{\Delta^1}(0,1)}
	is a pullback square.
	This is automatic as the horizontal maps are equivalences.
\item 
Let $G$ be an object of $\bbY$ and let 
\begin{equation*}\label{eq map we start with}
	(\nsqelt{\jmath}_1, \dots, \nsqelt{\jmath}_m) \to \Theta(G) = (\nsqelt{k}_1, \dots, \nsqelt{k}_n) \tag{$\heartsuit$}
\end{equation*}
be an active morphism of $\calproperad$ lying over $\alpha \colon \langle m \rangle \to \langle n \rangle$, which is exhibited by
\[
	(H_a)_a \in \prod_{a=1}^n \bfproperad(\{ \nsqelt{\jmath}_b\}_{b \in \alpha^{-1}(a)}; \nsqelt{k}_a).
\]
Suppose that $G'$ is $G$ with some ordering of $\inp(G)$ and $\out(G)$ and that $\nsqelt{k}$ is $(|\inp(G)|, |\out(G)|)$.
Let $H'$ be the image of $(G', (H_a)_a)$ under the operadic composition
\[
	\bfproperad(\nsqelt{k}_1, \dots, \nsqelt{k}_n; \nsqelt{k}) \times \prod_{a=1}^n \bfproperad(\{ \nsqelt{\jmath}_b\}_{b \in \alpha^{-1}(a)}; \nsqelt{k}_a) \to \bfproperad(\nsqelt{\jmath}_1, \dots, \nsqelt{\jmath}_m; \nsqelt{k}) 
\]
and let $H \in \bbY$ be the graph obtained by forgetting the orderings on inputs and outputs of $H'$.
We then have an active map $f \colon H \to G$ of $\bbY^{\op}$ which sends $H_a \in \sub(H)$ to $C_a \in \sub(G)$ (using \cite[Theorem 6.50]{hrybook} since $G\{H_a\} = H$), and the image of this map under $\Theta$ is \eqref{eq map we start with}.
The map $f$ is our proposed $\Theta$-cartesian lift of \eqref{eq map we start with}.

Suppose that we are in the situation of having $g \colon K \to G$ in $\bbY^{\op}$ and $t \colon \Theta(K) \to \Theta(H)$ in $\calproperad$ satisfying $\Theta(f) t = \Theta(g)$.
\begin{equation*}\label{eq original cartesian lifting problem} \begin{tikzcd}
K \arrow[drr, bend left=15, "g" near end] \arrow[dd, no head, dashed] \arrow[dr, dotted, "\exists ! \tilde t" swap] \\
& H \arrow[dd, no head, dashed] \rar["f"]& G \arrow[dd, no head, dashed] \\[-2em]
\Theta(K) \arrow[drr, bend left=15, "\Theta(g)" near end]
\arrow[dr, "t" swap]
\\
& \Theta(H) \rar["\Theta(f)"]& \Theta(G)
\end{tikzcd} \tag{$\diamondsuit$}
\end{equation*}
Our goal is to show there exists a unique $\tilde t \colon K \to H$ so that $\Theta(\tilde t) = t$ and $f\tilde t = g$.

We first reduce to the active case.
Observe that if $g = \bar{g} \mathring{g} \colon K \to L \to G$ is a decomposition of $g$ with $\mathring{g}$ inert and $\bar{g}$ active, then $\Theta(\mathring{g}) =: \mathring{t}$ is part of a similar decomposition $t = \bar{t} \mathring{t}$ since $\Theta(f)$ is active.
\[ \begin{tikzcd}
K \arrow[dd, no head, dashed] \rar{\mathring{g}} & L \arrow[drr, bend left=15, "\bar{g}" near end] \arrow[dd, no head, dashed] \\
& & H \arrow[dd, no head, dashed] \rar["f"]& G \arrow[dd, no head, dashed] \\[-2em]
\Theta(K) \rar{\mathring{t}} & \Theta(L) \arrow[drr, bend left=15, "\Theta(\bar{g})" near end]
\arrow[dr, "\bar{t}" swap]
\\
& & \Theta(H) \rar["\Theta(f)"]& \Theta(G)
\end{tikzcd} \]
If we knew that there is a unique $s \colon L \to H$ so that $f s = \bar{g}$ and $\Theta(s) = \bar{t}$, then $s \mathring{g}$ gives existence of a solution to \eqref{eq original cartesian lifting problem}.
Further, $s$ must be active since $f$ and $\bar{g}$ are active.
Suppose that $q$ is some other solution satisfying $\Theta(q) = t$ and $f q = g$, and write $q = \bar{q} \mathring{q}$ for an inert-active factorization $K \to J \to H$.
There are unique isomorphisms $z \colon L \to J$, $r \colon \Theta(L) \to \Theta(J)$, and $w \colon  \Theta(L) \to \Theta(J)$ making the following diagrams commute:
\[ 
\begin{tikzcd}[column sep=small, row sep=scriptsize]
& J \arrow[dr, "f\bar{q}"] \\
K \arrow[ur,"\mathring q"] \arrow[dr, "\mathring g" swap] & & G \\
& L \arrow[uu, "z"] \arrow[ur, "\bar{g}" swap] 
\end{tikzcd} 
\begin{tikzcd}[column sep=small, row sep=scriptsize]
& \Theta(J) \arrow[dr, "\Theta(\bar{q})"] \\
\Theta(K) \arrow[ur,"\Theta(\mathring q)"] \arrow[dr, "\mathring t" swap] & & \Theta(H) \\
& \Theta(L) \arrow[uu, "r"] \arrow[ur, "\bar{t}" swap] 
\end{tikzcd} 
\begin{tikzcd}[column sep=small, row sep=scriptsize]
& \Theta(J) \arrow[dr, "\Theta(f\bar{q})"] \\
\Theta(K) \arrow[ur,"\Theta(\mathring q)"] \arrow[dr, "\Theta(\mathring g)" swap] & & \Theta(G) \\
& \Theta(L) \arrow[uu, "w"] \arrow[ur, "\Theta(\bar{g}) = \Theta(f) \bar{t}" swap] 
\end{tikzcd} 
\]
As replacing $w$ in the third diagram by either $\Theta(z)$ or $r$ makes it commute, we have that $r = \Theta(z)$.
Since $f \bar{q} z = \bar{g} = f s$ and $\Theta(\bar{q} z) = \Theta(\bar{q}) r = \bar{t}$, we see that $\bar{q} z = s$.
It follows that $s \mathring g = \bar{q} z \mathring{g} = \bar{q} \mathring{q} = q$.
Hence we have showed that $s \mathring{g}$ is the \emph{unique} solution to \eqref{eq original cartesian lifting problem}.

It remains to show that \eqref{eq original cartesian lifting problem} has a unique solution in the case when $g$ is active.
Let the morphism $t \colon \Theta(K) = (\nsqelt{\imath}_1, \dots, \nsqelt{\imath}_\ell) \to \Theta(H)$ lying over $\beta \colon  \langle \ell \rangle \to \langle m \rangle$
be exhibited by
\[
	(K_b) \in \prod_{b=1}^m \bfproperad(\{ \nsqelt{\imath}_c\}_{c \in \beta^{-1}(b)}; \nsqelt{\jmath}_b).
\]
Commutativity of the bottom triangle of \eqref{eq original cartesian lifting problem} is simply the assertion if $L_a$ is the image of $H_a, \{K_b\}_{b\in \alpha^{-1}(a)}$ under operadic composition
\[
	\bfproperad(\{ \nsqelt{\jmath}_b\}_{b \in \alpha^{-1}(a)}; \nsqelt{k}_a) \times \prod_{b\in \alpha^{-1}(a)} \bfproperad(\{ \nsqelt{\imath}_c\}_{c \in \beta^{-1}(b)}; \nsqelt{\jmath}_b) \to \bfproperad( \{ \nsqelt{\imath}_c\}_{c \in \beta^{-1}\alpha^{-1}(a)}; \nsqelt{k}_a),
\]
then 
\[
	(L_a) \in \prod_{a=1}^n \bfproperad( \{ \nsqelt{\imath}_c\}_{c \in \beta^{-1}\alpha^{-1}(a)}; \nsqelt{k}_a).
\]
exhibits $\Theta(g) \colon \Theta(K) \to \Theta(G)$.
But as graphs, 
\begin{equation}\label{showing g is tilde t f}
g(C_a) \cong L_a \cong H_a\{K_b\}_{b\in \alpha^{-1}(a)},
\end{equation} so since $g(C_a)$ is a structured subgraph of $K$, so is $K_b$.
Define $\tilde t \colon K \to H$ in $\bbY^\op$ by setting $\tilde t (C_b)  = K_b$.
This is a graphical map by \cite[Theorem 6.50]{hrybook} since its image under $H\{K_b\} \cong G\{H_a\} \{K_b\} \cong G \{ H_a \{ K_b\}_{b\in \alpha^{-1}(a)} \} \cong G\{ L_a \} \cong K$ is a structured subgraph of $K$.
By construction, $\Theta(\tilde t) = t$, and $f \tilde t = g$ by \eqref{showing g is tilde t f}.
Finally, $\Theta$ is a faithful functor, so $\tilde t$ is unique. \qedhere
\end{enumerate}
\end{proof}
\begin{corollary}\label{cor on other subcategories and theta}
	Let $\calproperad^{\out}_S$ be the $\infty$-operad associated to the operad $\bfproperad^{\out}_S$ defined in Example~\ref{examples colored operads}. Then $\Theta_S\colon \bbY^\op_S\to  \calproperad_{S}$ restricts to an approximation $\bbY^{\op}_{\out,S}\to \calproperad^{\out}_S$.
\end{corollary}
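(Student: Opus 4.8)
The plan is to verify that $\Theta_S$ carries the full subcategory $\bbY^\op_{\out,S}$ into $\calproperad^{\out}_S$ and then that the two conditions of Definition~\ref{def approximation}, already established for $\Theta_S$ in Proposition~\ref{prop approx}, are inherited by this restriction. First I would confirm well-definedness. On objects, $\Theta_S$ sends a graph $G$ to its tuple of vertex profiles $(\nsqelt{k}_a)_a$ with $\nsqelt{k}_a = (|\inp(v_a)|,|\out(v_a)|)$; when $G\in\bbYout$ every vertex has at least one output, so each $\nsqelt{k}_a$ lies over $\mathbb{N}\times\mathbb{N}_+$ and is therefore a color of $\calproperad^{\out}_S$ (Example~\ref{examples colored operads}). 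On morphisms, the components of $\Theta_S(f)$ are the structured subgraphs $f(C_{w_b})\in\sub(G)$; applying Lemma~\ref{lem comparison between dirgraph cats} to the inert inclusion $f(C_{w_b})\to G$ with $G\in\bbYout$ shows each such subgraph again lies in $\bbYout$, hence has all its vertex profiles and its output profile over $\mathbb{N}\times\mathbb{N}_+$, so it is an operation of $\bfproperad^{\out}_S$ and $\Theta_S(f)$ is a morphism of $\calproperad^{\out}_S$.

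Next I would handle condition (1). For $c\in\bbY^\op_{\out,S}$ lying over $\langle n\rangle$, the locally cocartesian lift $g\colon G\to C_i$ of $\rho^i$ produced in Proposition~\ref{prop approx} has codomain the corolla $C_i$ on the vertex $v_i$; since $v_i$ has positive output, $C_i\in\bbYout$ and $g$ is a morphism of $\bbY^\op_{\out,S}$. Because $\bbY^\op_{\out,S}\hookrightarrow\bbY^\op_S$ is full, the same mapping-space computation as in Proposition~\ref{prop approx} shows $g$ remains locally cocartesian for the restricted composite, and $\Theta_S(g)$ is inert; thus condition (1) holds.

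The only real obstacle is condition (2), namely that the $\Theta_S$-cartesian lift of an active morphism stays inside $\bbY^\op_{\out,S}$. Given $G\in\bbYout$ and an active morphism $x\to\Theta_S(G)$ of $\calproperad^{\out}_S$ exhibited by operations $(H_a)_a$, the lift built in Proposition~\ref{prop approx} is the graph substitution $H=G\{H_a\}$. Here the crucial point is that, since $x$ is a tuple of colors of $\calproperad^{\out}_S$, every vertex profile of each $H_a$ lies over $\mathbb{N}\times\mathbb{N}_+$; as the vertices of $H$ are precisely those of the $H_a$, every vertex of $H$ has positive output and hence $H\in\bbYout$. Therefore the cartesian lift $f\colon H\to G$ is a morphism of $\bbY^\op_{\out,S}$, and since both $\bbY^\op_{\out,S}\hookrightarrow\bbY^\op_S$ and $\calproperad^{\out}_S\hookrightarrow\calproperad_S$ are full, the $f$-cartesianness from Proposition~\ref{prop approx} restricts to the $f$-cartesianness required of $\Theta_S|_{\bbY^\op_{\out,S}}$ --- every test object and every active morphism arising in the lifting problem already lies in the two subcategories, so the defining pullback square of mapping spaces is simply the restriction of the ambient one. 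Together these verify both conditions of Definition~\ref{def approximation}.
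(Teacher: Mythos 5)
Your proof is correct, but it takes a genuinely different route from the paper's. The paper's argument is a two-line appeal to general machinery: by \cite[Remark 2.3.3.9]{ha}, the pullback of an approximation along a map of $\infty$-operads is again an approximation, and since $\calproperad^{\out}_S \subseteq \calproperad_S$ is a full suboperad, the pullback $\bbY^\op_S\times_{\calproperad_{S}}\calproperad^{\out}_S$ is the full subcategory of $\bbY^\op_S$ on the graphs whose vertex profiles land in $\mathbb{N}\times\mathbb{N}_+$, which is exactly $\bbY^{\op}_{\out,S}$. What you have done instead is re-verify the two axioms of Definition~\ref{def approximation} by hand for the restriction, which amounts to proving the special case of Lurie's pullback-stability result that is needed here: your well-definedness check and your observation that $H=G\{H_a\}$ lies in $\bbYout$ because its vertices are those of the $H_a$ and their profiles are colors of $\calproperad^{\out}_S$ is precisely the content of the identification of the pullback, and your fullness arguments for inheriting the (co)cartesian properties are the proof of the cited remark in this instance. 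The paper's route is shorter and immediately reusable; yours is self-contained and makes explicit the combinatorial reason the cartesian lifts of active morphisms stay in the output subcategory, namely that graph substitution of output-positive operations into an output-positive graph is again output-positive. Both are valid; note also that the paper deliberately does \emph{not} use this pullback argument for the dioperad case (Lemma~\ref{lem DOpd app}), where the analogous pullback identification fails and a direct verification closer in spirit to yours is required.
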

\begin{proof}
	According to \cite[Remark 2.3.3.9]{ha} the pullback of the approximation $\Theta_S\colon \bbY^\op_S\to  \calproperad_{S}$ along the morphisms $\calproperad^{\out}_S\to \calproperad_S$ induced by the canonical inclusions $\bfproperad^{\out}_S\to \bfproperad_S$ is again an approximation and the construction of $\Theta_S$ implies that the pullback $\bbY^\op_S\times_{\calproperad_{S}}\calproperad^{\out}_S$ coincides with $\bbY^{\op}_{\out,S}$ introduced in Definition~\ref{def Y_S}.
\end{proof}
	Let $\mathcal{DO}pd_S$ be the $\infty$-operad associated to the operad $\mathbf{DOpd}_S$ defined in Example~\ref{examples colored operads}. Contrary to the previous corollary $\bbY^{\op}_{\name{sc}, S}$ is not given by the pullback of $\Theta_S$ along the canonical inclusion $\mathcal{DO}pd_S\to \calproperad_{S}$. Nevertheless, a small adaptation of the proof of  Proposition~\ref{prop approx} yields the following result. 
\begin{lemma}\label{lem DOpd app}
The functor $\Theta_S\colon \bbY^\op_S\to  \calproperad_{S}$ restricts to an approximation $\bbY^{\op}_{\name{sc}, S}\to \mathcal{DO}pd_S$. 
\end{lemma}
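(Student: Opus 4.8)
The plan is to adapt the proof of Proposition~\ref{prop approx} directly rather than to realize $\bbY^{\op}_{\name{sc}, S}$ as a pullback. The pullback strategy of \cite[Remark 2.3.3.9]{ha} that was used in Corollary~\ref{cor on other subcategories and theta} is unavailable here: since $\mathbf{DOpd}_S$ has the \emph{same} color set as $\bfproperad_S$, the fibre product $\bbY^\op_S\times_{\calproperad_S}\mathcal{DO}pd_S$ retains every object of $\bbY^\op_S$ (each vertex-corolla is automatically simply-connected) and only restricts the morphisms, so it does not agree with the full subcategory $\bbY^{\op}_{\name{sc}, S}$. Before checking the two conditions of Definition~\ref{def approximation}, I would first verify that the restricted functor genuinely lands in $\mathcal{DO}pd_S$. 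For a morphism $f\colon H\to G$ of $\bbY^\op_{\name{sc}}$, each component $G_b=f(C_{w_b})$ of $\Theta_S(f)$ is a structured subgraph of the simply-connected graph $G$, hence is itself simply-connected by Lemma~\ref{lem comparison between dirgraph cats} (equivalently \cite[Proposition 5.2.8]{hrybook}); thus $\Theta_S(f)$ is exhibited by simply-connected graphs and lies in $\mathcal{DO}pd_S$.

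For condition (1) of Definition~\ref{def approximation} nothing new is needed. The inert lift $g\colon G\to C_i$ of $\rho^i$ built in Proposition~\ref{prop approx} has corolla codomain, so it is a morphism of $\bbY^{\op}_{\name{sc}, S}$; and the verification that it is locally $p'$-cocartesian only tests mapping spaces against corollas, all of which already lie in $\bbY_{\name{sc}}$, so that computation is unchanged.

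The substance is condition (2). Given $G\in\bbY_{\name{sc}}$ and an active map $\alpha\colon x\to\Theta_S(G)$ in $\mathcal{DO}pd_S$ exhibited by a tuple $(H_a)$ of \emph{simply-connected} operations, I would take the same lift $f\colon H\to G$ as in Proposition~\ref{prop approx}, where $H$ is the graph substitution $G\{H_a\}$. The key point is that $H\in\bbY_{\name{sc}}$: since $\mathbf{DOpd}_S$ is a suboperad of $\bfproperad_S$ (Example~\ref{examples colored operads}) it is closed under graph substitution, and $H$ is the operadic composite of the simply-connected operations $G$ (with a choice of boundary orderings) and $(H_a)$, so it is simply-connected (equivalently, the first Betti number is additive under graph substitution). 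It then remains to see that $f$ is $\Theta_S$-cartesian for the \emph{restricted} functor. Because $\bbY^{\op}_{\name{sc}, S}\subseteq\bbY^\op_S$ is full, for any $K\in\bbY^{\op}_{\name{sc}, S}$ the source mapping spaces agree with those computed in $\bbY^\op_S$, and Proposition~\ref{prop approx} already identifies $\xMap_{\bbY^\op_S}(K,H)$ with the fibre product of mapping spaces taken over $\calproperad_S$. I would finish by showing this fibre product is unchanged upon replacing $\calproperad_S$ by $\mathcal{DO}pd_S$: a pair $(g,t)$ in it has $\Theta_S(g)\in\mathcal{DO}pd_S$ since $K$ is simply-connected, while the exhibiting graphs $K_b$ of $t$ are, by \eqref{showing g is tilde t f}, structured subgraphs $K_b\strsub g(C_a)\strsub K$ (using Proposition~\ref{strsub and ordsub coincide}), hence simply-connected, so that $t\in\mathcal{DO}pd_S$ automatically. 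Thus the two fibre products coincide and the cartesian lift property follows.

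I expect the only genuine obstacle to be the stability of simple-connectedness under graph substitution needed to place $H$ in $\bbY_{\name{sc}}$; once this is granted—either by invoking that $\mathbf{DOpd}_S$ is a suboperad or by the Betti-number computation—the remainder is bookkeeping resting on the fullness of $\bbY^{\op}_{\name{sc}, S}$ and on the identifications already made in Proposition~\ref{prop approx}. As with that proposition, I would carry out the argument for $S$ a point and remark that the colored case is identical.
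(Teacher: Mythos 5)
Your proposal is correct and follows essentially the same route as the paper's proof, which simply reruns Proposition~\ref{prop approx} with $\mathbf{Prpd}$ replaced by $\mathbf{DOpd}$, observing that the inert lifts have corolla (hence simply-connected) codomain and that $\bbY^{\op}_{\name{sc}}$ is a full subcategory closed under graph substitution. You merely spell out details the paper leaves implicit (that $\Theta_S$ lands in $\mathcal{DO}pd_S$ on morphisms, and that the relevant fibre products of mapping spaces are unchanged), all of which are accurate.
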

\begin{proof}
	We assume that $S=*$ as the general case can be proven analogously.
	After replacing $\mathbf{Prpd}$ with $\mathbf{DOpd}$ in the proof of Proposition~\ref{prop approx} we see that the first part of the proof is still valid since the graph corresponding to $\id_{\nsqelt{k}_i} $ is a corolla and in particular simply-connected. The second part of the proof is also not affected by the change because $\bbY^{\op}_{\name{sc}}$ is closed under graph substitutions ($\bbY^{\op}_{\name{sc}}$ is a full subcategory of $\bbY^\op$).
\end{proof}

The following corollary is an easy application of Proposition~\ref{prop approx}.
\begin{cor}\label{cor:OOpSeq}
For every symmetric monoidal \icat{} $\xV$, the precomposition with $\Theta_{S}$ induces an equivalence
\[\Theta_{S}^{*} \colon \Alg_{\calproperad_{S}}(\xV) \isoto	\Alg_{\bbY^\op_{S}}(\xV).\] 
\end{cor}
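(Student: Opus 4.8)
The plan is to obtain the equivalence as a formal consequence of Proposition~\ref{prop approx} via Lurie's theory of approximations. Definition~\ref{def approximation} reproduces the notion of an approximation to an $\infty$-operad from \cite[Definition 2.3.3.6]{ha}, so Proposition~\ref{prop approx} says exactly that $\Theta_S \colon \bbY^\op_S \to \calproperad_S$ is such an approximation. The key external input is then the main theorem on approximations, \cite[Theorem 2.3.3.23]{ha}: for any approximation $f \colon \xcc \to \xxO^\otimes$ and any $\infty$-operad $\mathcal{D}^\otimes$, precomposition with $f$ induces an equivalence $\Alg_{\xxO}(\mathcal{D}) \isoto \Alg_{\xcc}(\mathcal{D})$ of $\infty$-categories of algebra objects. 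Applying this with $\xcc = \bbY^\op_S$, $\xxO^\otimes = \calproperad_S$, $f = \Theta_S$, and $\mathcal{D}^\otimes = \xV^\otimes$ (recall that a symmetric monoidal $\infty$-category is in particular an $\infty$-operad) yields the desired equivalence $\Theta_S^* \colon \Alg_{\calproperad_S}(\xV) \isoto \Alg_{\bbY^\op_S}(\xV)$.

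The one genuine point to check is that the target of this equivalence, namely Lurie's $\infty$-category of $\bbY^\op_S$-algebra objects in $\xV^\otimes$, agrees with $\Alg_{\bbY^\op_S}(\xV)$ as defined in Definition~\ref{def alg}. Both are full subcategories of $\Fun_{\pfinsetskel}(\bbY^\op_S, \xV^\otimes)$; Definition~\ref{def alg} asks that a functor preserve those inert morphisms lying over the maps $\rho^i$, while Lurie's definition asks that it preserve all inert morphisms of $\bbY^\op_S$. I would verify that these two conditions cut out the same subcategory. The containment of Lurie-algebras in Definition~\ref{def alg}-algebras is immediate. For the reverse containment, I would use that $\bbY^\op_S$ carries the structure of an algebraic pattern (as recorded in the Edge decorations definition), together with condition (1) of the approximation property, established in the first part of the proof of Proposition~\ref{prop approx}, which guarantees that the requisite locally cocartesian lifts of the $\rho^i$ exist and are inert. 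This lets one reduce preservation of an arbitrary inert morphism to preservation of the $\rho^i$-lifts, so that the two conditions coincide on the nose.

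Once the two notions of algebra are identified, nothing further is needed: the equivalence is precisely $\Theta_S^*$, and it is by construction a functor over $\pfinsetskel$ compatible with evaluation at the colors. The main obstacle is therefore not the appeal to the approximation theorem, which is immediate given Proposition~\ref{prop approx}, but the comparison of definitions in the previous paragraph; that is, checking that in Definition~\ref{def alg} one may upgrade ``inert over $\rho^i$'' to ``inert'' without altering the class of algebras. I expect this to be routine but worth spelling out, since the whole purpose of the corollary is to license the passage between the pattern-theoretic description of enriched $\infty$-properads and the classical description as algebras over the operad $\bfproperad_S$ governing properads.
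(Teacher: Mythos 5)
Your overall route is the same as the paper's: cite Proposition~\ref{prop approx} to see that $\Theta_S$ is an approximation, then invoke \cite[Theorem 2.3.3.23]{ha}. However, you misquote that theorem: it does \emph{not} say that precomposition with an arbitrary approximation induces an equivalence on algebra categories. It carries the additional hypothesis that the approximation restricts to an equivalence on the fibres over $\langle 1\rangle$ (equivalently, induces an equivalence onto the underlying $\infty$-category of the $\infty$-operad), and verifying that $\bbY^\op_{S}\times_{\pfinsetskel}\{\langle 1\rangle\}\isoto \calproperad_{S,\langle 1\rangle}$ is precisely the one thing the paper's proof actually checks. In the present situation this is easy --- the fibre of $\bbY^\op_S$ over $\langle 1\rangle$ is spanned by $S$-coloured corollas, and $\Theta_S$ matches these and their (active) morphisms with the unary part of $\calproperad_S$ --- but as written your argument appeals to a false general statement, so you should add this verification. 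Your second paragraph, reconciling Definition~\ref{def alg} (preservation of inert lifts of the $\rho^i$) with Lurie's notion of algebra over an approximation, is a reasonable point of care that the paper leaves implicit; it is fine as you describe it.
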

\begin{proof}
	Since $\Theta_{S}$ obviously restricts to an equivalence $\bbY^\op_{S}\times_{\mathbb{F}_*}\{\langle 1\rangle\}\isoto \calproperad_{{S},\langle 1\rangle}$ of fibres over $\langle 1\rangle$, by
	\cite[Theorem 2.3.3.23]{ha} the functor $\Theta_{S}^{*}$ is an equivalence.
\end{proof}

A similar statement holds in the $\bbY_{\name{out},S}^\op$ / $\calproperad^{\out}_S$ and $\bbY_{\name{sc}, S}^\op$ / $\mathcal{DO}pd_S$ contexts as well, by
using Corollary~\ref{cor on other subcategories and theta} and Lemma~\ref{lem DOpd app} instead of Proposition~\ref{prop approx}.
Indeed, the four remaining items from this subsection have analogues in both of these contexts.

\begin{defn}
We write $\AlgPrdSet(\xV) \to \Set$ for the cartesian fibration corresponding to the functor $\Set^{\op} \to \CatI$ taking $S$ to $\Alg_{\calproperad_{S}}(\xV)$, and we let $\Alg_{\bbY^\op/\Set}(\xV) \to \Set$ denote the pullback of the cartesian fibration $\Alg_{\bbY^\op/\xS}(\xV)	\to \xS$ along the inclusion $\Set \hookrightarrow \xS$.
Since the functors $\Theta_{S}$ are natural in $S$, they induce a functor for which we write
\[\Theta^{*} \colon
\AlgPrdSet(\xV) \to
\Alg_{\bbY^\op/\Set}(\xV)\]
of cartesian fibrations over $\Set$.
\end{defn}

\begin{cor}\label{cor:AlgOpdeq}
	The functor \[\Theta^{*} \colon
	\AlgPrdSet(\xV) \to
	\Alg_{\bbY^\op/\Set}(\xV)\] is an equivalence.
\end{cor}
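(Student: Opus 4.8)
The statement Corollary~\ref{cor:AlgOpdeq} asserts that the fiberwise equivalence $\Theta_S^*$ of Corollary~\ref{cor:OOpSeq} assembles into an equivalence of the total cartesian fibrations over $\Set$.

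My plan is to deduce this directly from the fiberwise statement together with a standard criterion for when a map of cartesian fibrations is an equivalence. Recall that a functor $\Phi \colon \mathcal{E} \to \mathcal{E}'$ lying over $\Set$, where both $\mathcal{E} \to \Set$ and $\mathcal{E}' \to \Set$ are cartesian fibrations and $\Phi$ preserves cartesian morphisms, is an equivalence if and only if it induces an equivalence on each fiber (this is \cite[Corollary 2.4.4.4]{ht}, or can be seen via straightening). So the proof reduces to two verifications: first, that $\Theta^*$ preserves cartesian morphisms, and second, that its restriction to each fiber over $S \in \Set$ recovers the equivalence $\Theta_S^*$ of Corollary~\ref{cor:OOpSeq}.

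First I would observe that the fiber of $\AlgPrdSet(\xV) \to \Set$ over a set $S$ is by definition $\Alg_{\calproperad_S}(\xV)$, while the fiber of $\Alg_{\bbY^\op/\Set}(\xV) \to \Set$ over $S$ is $\Alg_{\bbY^\op_S}(\xV)$ (the latter because $S$ is discrete, so the pullback of $\Alg_{\bbY^\op/\xS}(\xV) \to \xS$ along $\{S\} \hookrightarrow \Set \hookrightarrow \xS$ is exactly $\Alg_{\bbY^\op_S}(\xV)$ by Definition~\ref{def alg}). Since the functors $\Theta_S$ are natural in $S$—i.e.\ they are compatible with the base change maps induced by functions $S \to S'$ of sets, which act by pushing forward the edge-coloring—the induced functor $\Theta^*$ restricts on the fiber over $S$ to precisely $\Theta_S^*$, which is an equivalence by Corollary~\ref{cor:OOpSeq}.

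It then remains to check that $\Theta^*$ is a map of cartesian fibrations, i.e.\ that it carries cartesian edges to cartesian edges. The cartesian edges of both fibrations lie over morphisms $\sigma \colon S \to S'$ of $\Set$ and implement the base change (recoloring) operations. Since $\Theta^*$ is defined by precomposition with the family $\{\Theta_S\}$, which is natural in $S$, the induced functor intertwines the two base-change functors up to canonical equivalence; hence it preserves cartesian morphisms. With both hypotheses of the fiberwise criterion verified, \cite[Corollary 2.4.4.4]{ht} yields that $\Theta^*$ is an equivalence of $\infty$-categories. The only mildly technical point—and thus the main obstacle—is making precise the naturality of $\{\Theta_S\}$ in $S$ at the level of the cartesian fibrations, so that one genuinely obtains a morphism of cartesian fibrations rather than merely a fiberwise-defined functor; but this naturality is immediate from the explicit, color-compatible definition of $\Theta_S$ in Notation~\ref{no Theta_S}, since recoloring edges along $\sigma$ commutes on the nose with the formation of the structured-subgraph data defining $\Theta_S$.
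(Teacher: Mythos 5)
Your proof is correct and follows essentially the same route as the paper: the paper's definition of $\Theta^{*}$ already packages the naturality of $\{\Theta_S\}$ into a morphism of cartesian fibrations over $\Set$, and its proof of the corollary simply invokes the fiberwise criterion together with Corollary~\ref{cor:OOpSeq}, exactly as you do. Your write-up merely makes explicit the two verifications (preservation of cartesian edges and identification of the fibers) that the paper leaves implicit.
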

\begin{proof}
	The functor $\Theta^*$ is an equivalence because it is one at each fibre by Corollary~\ref{cor:OOpSeq}. 
\end{proof}

\begin{propn}\label{propn:FFESSet}
	For every presentably symmetric monoidal \icat{} $\xV$,	the inclusion $\Alg_{\bbY^\op/\Set}(\xV) \hookrightarrow
	\Alg_{\bbY^\op/\xS}(\xV)$ induces an equivalence
	\[ \Alg_{\bbY^\op/\Set}(\xV)[\name{FFES}^{-1}] \isoto
	\Alg_{\bbY^\op/\xS}(\xV)[\name{FFES}^{-1}]\]
	after localizing with respect to the class $\name{FFES}$ of fully faithful and essential surjective functors.
\end{propn}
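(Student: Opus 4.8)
The plan is to reduce the statement to the completeness localization already established on the $/\xS$ side and then to compare the two localizations directly. By Theorem~\ref{thm:PCSisAlgLT} we identify $\Alg_{\bbY^\op/\xS}(\xV) \simeq \Segrep(\bbYV)$, and the $\bbYV$-analogue of Theorem~\ref{theo cp obj are loc} identifies the completion functor with the localization at $\name{FFES}$, so that $\Alg_{\bbY^\op/\xS}(\xV)[\name{FFES}^{-1}] \simeq \Segrepc(\bbYV)$. Writing $L$ for the completion and $\iota$ for the inclusion $\Alg_{\bbY^\op/\Set}(\xV) \hookrightarrow \Alg_{\bbY^\op/\xS}(\xV)$, the functor in question is identified (after this localization) with the factorization of the composite $\bar L := L\circ \iota \colon \Alg_{\bbY^\op/\Set}(\xV) \to \Segrepc(\bbYV)$. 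It therefore suffices to show that $\bar L$ exhibits $\Segrepc(\bbYV)$ as the localization of $\Alg_{\bbY^\op/\Set}(\xV)$ at $\name{FFES}$. Since $\bar L$ inverts $\name{FFES}$ by construction, this comes down to essential surjectivity of $\bar L$ together with a mapping-space comparison.

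The first step I would establish is a clean description of $\name{FFES}$-maps via the cartesian fibration $\Alg_{\bbY^\op/\xS}(\xV) \to \xS$ of Definition~\ref{def alg}, whose fibre functor is evaluation at $\xfe$. Given a morphism $g\colon A \to B$ over $s\colon X_A \to X_B$, factor it through the cartesian lift as $A \to s^{*}B \to B$. Because pullback preserves mapping objects, $g$ is fully faithful in the sense of Definition~\ref{def ffes} precisely when $A \to s^{*}B$ is a fibrewise equivalence; that is, $g$ is fully faithful if and only if it is, up to equivalence, a cartesian morphism. Adding essential surjectivity, $g$ lies in $\name{FFES}$ if and only if it is equivalent to a cartesian morphism whose base map is surjective on equivalence classes of objects. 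In particular, the $\name{FFES}$-maps into a fixed $F\in\Segrep(\bbYV)$ with colour space $X = F(\xfe)$ coming from set-coloured algebras are exactly the cartesian maps $S^{*}F \to F$ indexed by sets $S$ equipped with a map $S \to X$ surjective on equivalence classes.

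This description immediately gives essential surjectivity of $\bar L$: every complete object is $L(F)$ for some $F$, and choosing a set $S$ with a suitable $S \to X$ produces $F_S := S^{*}F \in \Alg_{\bbY^\op/\Set}(\xV)$ with an $\name{FFES}$-map $F_S \to F$, so $\bar L(F_S) \simeq L(F)$. For the mapping-space comparison I would invoke a Quillen-Theorem-A-style criterion for localizations, exactly as in the operadic comparison of \cite{ChuHaugseng}: the inclusion $\iota$ induces an equivalence on $\name{FFES}$-localizations once one checks that, for each $F$, the comma $\infty$-category of set-coloured algebras mapping to $F$ is weakly contractible. By the previous paragraph this comma category is the ordinary category $\mathcal{R}_X$ whose objects are sets $S$ over $X$ with structure map surjective on equivalence classes and whose morphisms are maps of sets over $X$. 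One then checks $\mathcal{R}_X$ is filtered: it is nonempty (take one point per equivalence class), closed under finite coproducts (the coproduct structure map remains surjective on equivalence classes), and admits coequalizers (parallel maps over $X$ agree after composing to $X$, so the set-theoretic coequalizer again lies over $X$ and remains surjective). Filtered categories have weakly contractible nerve, which is what the criterion requires.

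The steps that are routine are the verification that $\bar L$ inverts $\name{FFES}$ and that $\mathcal{R}_X$ is filtered. The main obstacle is the middle, structural step: establishing the cartesian characterization of $\name{FFES}$-maps and, above all, pinning down the precise localization criterion together with the correct comma-category so that full faithfulness genuinely reduces to the (easy) contractibility of $\mathcal{R}_X$. This is where one must transcribe the argument of \cite{ChuHaugseng} from $\simp_{\finsetskel}$ to $\bbY$, using that the fibration $\Alg_{\bbY^\op/\xS}(\xV)\to\xS$ behaves formally like its operadic counterpart; the analogous statements for $\bbY_{\name{out}}$ and $\bbY_{\name{sc}}$ then follow by the identical argument, using Corollary~\ref{cor on other subcategories and theta} and Lemma~\ref{lem DOpd app} in place of Proposition~\ref{prop approx}.
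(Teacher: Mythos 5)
Your overall strategy matches the one the paper intends: the paper's own proof is a one-line citation to \cite[Theorem 5.3.17]{enriched}, and the argument there does proceed by (i) characterizing fully faithful maps via the cartesian fibration $\Alg_{\bbY^\op/\xS}(\xV)\to\xS$ as maps equivalent to cartesian lifts, (ii) observing that every algebra receives an $\name{FFES}$ map $S^*F\to F$ from a set-coloured one, and (iii) a cofinality/contractibility input about sets covering the space of colours. Your first two steps are correct and are exactly the right transcription from $\simp_{\finsetskel}$ to $\bbY$ (nothing properad-specific intervenes, since everything happens over the fibration to $\xS$ and the fibre functor is evaluation at $\xfe$).

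The gap is in step (iii), in two places. First, the ``Quillen-Theorem-A-style criterion'' you invoke --- that a fully faithful inclusion $\xcc_0\subseteq\xcc$ with $W_0=W\cap\xcc_0$ induces an equivalence on localizations as soon as, for each $F$, the comma category of objects of $\xcc_0$ with a $W$-map to $F$ is weakly contractible --- is not a standard theorem and is not obviously true as stated: contractibility of these comma categories controls essential surjectivity and one ``leg'' of the mapping-space comparison, but full faithfulness of $\xcc_0[W_0^{-1}]\to\xcc[W^{-1}]$ additionally requires a fraction-type computation of mapping spaces in the localization (e.g.\ that $\Map_{\xcc_0[W_0^{-1}]}(A,B)$ is computed as a colimit over the $W$-covers $B'\to B$ with $B'\in\xcc_0$, and that this colimit agrees with $\Map_{\xcc}(A,LB)$ for the completion $L$). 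This is precisely the content of the cited argument and cannot be replaced by contractibility alone; you need to either quote a precise localization criterion or carry out the mapping-space computation. Second, $\mathcal{R}_X$ is not an ordinary category: a morphism of sets over a \emph{space} $X$ involves a path in $X^S$, so the mapping spaces of $\mathcal{R}_X$ are disjoint unions of products of path spaces of $X$, and the ``nonempty, has coproducts and coequalizers, hence filtered, hence contractible'' argument does not literally apply. Contractibility of $\mathcal{R}_X$ is still true --- for instance, fixing one essentially surjective $S_0\to X$, the zigzag of natural transformations $\id\Rightarrow(-)\sqcup S_0\Leftarrow\mathrm{const}_{S_0}$ over $X$ contracts the classifying space --- but it needs an argument of this homotopy-coherent kind rather than a 1-categorical filteredness check. (A minor point: your closing appeal to Corollary~\ref{cor on other subcategories and theta} and Lemma~\ref{lem DOpd app} is misplaced here, as the approximation $\Theta_S$ plays no role in this proposition; the $\bbYout$ and $\bbY_{\name{sc}}$ variants follow by the same fibration-over-$\xS$ argument verbatim.)
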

\begin{proof}
It can be proven in as in \cite[Theorem 5.3.17]{enriched}.
\end{proof}

\begin{cor}\label{cor AlgPrd=PCSbbY}
	For every presentably symmetric monoidal \icat{} $\xV$, there is equivalence of $\infty$-categories
	\[ \AlgPrdSet(\xV)[\name{FFES}^{-1}]\simeq \Segrep(\bbYV)[\name{FFES}^{-1}]\simeq \nmproperad_\infty^\xV.\]
\end{cor}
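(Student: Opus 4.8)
The plan is to assemble the two displayed equivalences from a chain of four intermediate equivalences, each already available in the paper and each respecting the class $\name{FFES}$ of fully faithful and essentially surjective functors. Since an equivalence of $\infty$-categories carrying one marked class of morphisms onto another descends to an equivalence of the associated localizations, it suffices to run through the chain and check $\name{FFES}$-compatibility at each stage.

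First I would invoke Corollary~\ref{cor:AlgOpdeq}, which gives an equivalence $\Theta^{*}\colon \AlgPrdSet(\xV) \isoto \Alg_{\bbY^\op/\Set}(\xV)$ of cartesian fibrations over $\Set$. Because $\Theta^*$ is an equivalence on each fibre over $S$ and is compatible with the data of the fibre over $\langle 1\rangle$, it carries the notions of fully faithful (an equivalence on mapping objects) and essentially surjective (on objects, i.e.\ on the underlying $\infty$-category) on one side to the corresponding notions on the other. Hence it restricts to an equivalence
\[ \AlgPrdSet(\xV)[\name{FFES}^{-1}] \isoto \Alg_{\bbY^\op/\Set}(\xV)[\name{FFES}^{-1}]. \]
Proposition~\ref{propn:FFESSet} then supplies
\[ \Alg_{\bbY^\op/\Set}(\xV)[\name{FFES}^{-1}] \isoto \Alg_{\bbY^\op/\xS}(\xV)[\name{FFES}^{-1}], \]
and Theorem~\ref{thm:PCSisAlgLT} gives an equivalence $\Segrep(\bbYV)\simeq \Alg_{\bbY^\op/\xS}(\xV)$ over $\xS$, realized by evaluation at the edge $\xfe$. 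Since the mapping objects $\xMap_F$ of Definition~\ref{def cts Seg psh} and the underlying Segal space $u^*F$ are both computed from this evaluation together with the elementary and linear objects, this equivalence again identifies the classes $\name{FFES}$, yielding
\[ \Segrep(\bbYV)[\name{FFES}^{-1}] \simeq \Alg_{\bbY^\op/\xS}(\xV)[\name{FFES}^{-1}]. \]

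Finally, Theorem~\ref{theo cp obj are loc} (in its $\bbYV$ form, noting Definition~\ref{def completness}) exhibits $\Segrepc(\bbYV) = \nmproperad_\infty^\xV$ as precisely the localization of $\Segrep(\bbYV)$ at the fully faithful and essentially surjective functors, that is, $\Segrep(\bbYV)[\name{FFES}^{-1}] \simeq \nmproperad_\infty^\xV$. Splicing the four equivalences together gives the first asserted equivalence as the composite of steps one through three and the second as step four.

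I expect the only genuine work to be the bookkeeping verification that each equivalence in the chain sends $\name{FFES}$ to $\name{FFES}$. In every case this reduces to the single observation that both "fully faithful" and "essentially surjective" are detected by the fibre over $\langle 1\rangle$ (equivalently, by the mapping objects and the underlying $\infty$-category), which is exactly the data preserved by $\Theta^*$ and by the evaluation-at-$\xfe$ equivalence of Theorem~\ref{thm:PCSisAlgLT}. Granting this, no map needs to be constructed by hand and the conclusion follows formally from the universal property of localization.
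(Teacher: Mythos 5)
Your proposal is correct and follows essentially the same route as the paper: the first equivalence is assembled from Corollary~\ref{cor:AlgOpdeq}, Proposition~\ref{propn:FFESSet}, and Theorem~\ref{thm:PCSisAlgLT}, and the second from Theorem~\ref{theo cp obj are loc} (in its $\bbYV$ form) together with Notation~\ref{no PrpdV}. The only difference is that you spell out the $\name{FFES}$-compatibility bookkeeping that the paper leaves implicit, which is a reasonable and harmless elaboration.
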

\begin{proof}
The first equivalence is induced by that of Corollary~\ref{cor:AlgOpdeq}, Proposition~\ref{propn:FFESSet} and Theorem~\ref{thm:PCSisAlgLT}, while the second is given by Theorem~\ref{theo cp obj are loc} and Notation~\ref{no PrpdV}.
\end{proof}

\subsection{Rectification}\label{subsec rect}
In this subsection we compare our $\infty$-categorical definition of $\xV$-enriched $\infty$-properads with the strict notion of properad enriched in a symmetric monoidal model category $\mathbf{V}$.
One of our main findings, Theorem~\ref{theorem non-rect}, is that it is not always possible to rectify an enriched $\infty$-properad to a strict one.
We show in Theorem~\ref{theorem rational rectification for properads} that it is possible to perform such rectification when working over a field of characteristic zero.
The situation is very different for dioperads and for output properads, where we prove a rectification result over an arbitrary base in Theorem~\ref{thm:rect}.

\begin{defn}\label{def admissible}
	Let $\mathbf{V}$ be a simplicial symmetric monoidal model category. 
	We call an operad $\mathbf{O}$ \emph{admissible in $\mathbf{V}$} (alternatively, $\mathbf{V}$ is admissible for $\mathbf{O}$) if there is a model structure on $\Alg_{\mathbf{O}}(\mathbf{V})$ such that the weak equivalences and fibrations are those maps whose underlying maps in $\mathbf{V}$ are weak equivalences and fibrations, respectively.
\end{defn}
\begin{examples}\label{exs:admissible}
	According to \cite[\S
	7]{PavlovScholbachSymm} following model categories are admissible for \emph{all} operads:
	\begin{enumerate}[(i)]
		\item the category of simplicial sets, equipped with the Kan--Quillen model structure,
		\item the category of compactly generated weak Hausdorff spaces, equipped with the usual model structure,
		\item the category of chain complexes of $k$-vector spaces, where $k$ is a field of characteristic $0$ (or more generally a commutative ring containing $\mathbb{Q}$), equipped with the projective model structure, and
		\item the category of symmetric spectra, equipped with the positive stable model structure.
	\end{enumerate}
	Moreover, by \cite{NikolausSagave} we know that for any presentably symmetric monoidal \icat{} $\xV$ there exists a symmetric monoidal simplicial combinatorial model category modeling $\xV$ for which all (simplicial) operads are admissible.
\end{examples}

We are of course especially interested in the operad $\bfproperad$.
The relevant model structure for properads in chain complexes over a field of characteristic 0 was first constructed in the appendix of \cite{MerkulovVallette:DTRP}.

\begin{definition}\label{def canmor}
	If $\mathbf{V}$ is a simplicial symmetric monoidal model category and an operad $\mathbf{O}$ which is admissible for $\mathbf{V}$, and let $\xV$, $\xxO$ denote the associated $\infty$-category and $\infty$-operad, respectively. We refer to the map \[\Alg_{\mathbf{O}}(\mathbf{V})\to \Alg_{\xxO}(\xV) \]
	as the \emph{canonical map}.  
	Here, we regard $\Alg_{\mathbf{O}}(\mathbf{V})$ as the $\infty$-category associated to the model category $\Alg_{\mathbf{O}}(\mathbf{V})$, i.e.\ we implicitly identify it with the nerve of the localization of the full subcategory of $\Alg_{\mathbf{O}}(\mathbf{V})$ spanned by cofibrant objects with respect to weak equivalences.
\end{definition}

\begin{theorem}\label{theo:AlgOpScomp}
If $\mathbf{O}$ is either of $\mathbf{Prpd}^{\name{out}}_S$ or $\mathbf{DOpd}_S$ and is admissible in $\mathbf{V}$, then the canonical map
\[\Alg_{\mathbf{O}}(\mathbf{V})\to \Alg_{\xxO}(\xV) \]
is an equivalence.
\end{theorem}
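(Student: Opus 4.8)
The plan is to derive the statement from the standard rectification principle for $\Sigma$-cofibrant operads, the crucial input being that both $\mathbf{Prpd}^{\name{out}}_S$ and $\mathbf{DOpd}_S$ are $\Sigma$-free by Remark~\ref{remark not sigma free}. First I would record that a discrete colored operad whose symmetric group actions on operation sets are free is $\Sigma$-cofibrant once regarded as a (constant) simplicial operad: its underlying collection is a coproduct of free $\Sigma$-sets, and free $\Sigma$-objects are cofibrant in the projective model structure on collections. Hence whichever of $\mathbf{Prpd}^{\name{out}}_S$ and $\mathbf{DOpd}_S$ we call $\mathbf{O}$ is $\Sigma$-cofibrant. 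By contrast $\mathbf{Prpd}_S$ is not $\Sigma$-cofibrant (again Remark~\ref{remark not sigma free}), which is precisely why the analogous assertion fails for full properads over a general base.

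Next I would invoke the rectification theorem for $\Sigma$-cofibrant operads. Conceptually this goes by choosing a cofibrant resolution $\mathbf{O}_{\infty} \isoto \mathbf{O}$ in the model category of simplicial colored operads: strict $\mathbf{O}_{\infty}$-algebras in $\mathbf{V}$ present the $\infty$-category $\Alg_{\xxO}(\xV)$ of homotopy-coherent algebras, while the weak equivalence $\mathbf{O}_{\infty} \to \mathbf{O}$ between $\Sigma$-cofibrant (admissible) operads induces a Quillen equivalence $\Alg_{\mathbf{O}_{\infty}}(\mathbf{V}) \simeq \Alg_{\mathbf{O}}(\mathbf{V})$ by \cite[Theorem 4.4]{BergerMoerdijk}. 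Unwinding the associated $\infty$-operad (Definition~\ref{def canmor} and \cite[Construction 2.1.1.7]{ha}), the composite of these two equivalences is exactly the canonical map, so the latter is an equivalence. This is the content of the rectification results of \cite{PavlovScholbachSymm}, which apply over each of the base categories of Examples~\ref{exs:admissible} (and, via \cite{NikolausSagave}, over the model presenting a general $\xV$).

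The step I expect to be the main obstacle is the identification $\Alg_{\mathbf{O}_{\infty}}(\mathbf{V}) \simeq \Alg_{\xxO}(\xV)$ at the $\infty$-categorical level: matching strict algebras over the cofibrant resolution with $\infty$-categorical algebras over the associated $\infty$-operad, compatibly with the canonical map of Definition~\ref{def canmor}. Once this comparison is in hand, the remaining points---admissibility (which is assumed), checking the precise cofibrancy hypotheses of the cited theorems over each admissible base, and naturality in the color set $S$ for the family version used in the sequel---are routine, since the operads here are discrete and $\Sigma$-free.
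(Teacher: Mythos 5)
Your proof is correct and follows essentially the same route as the paper: the decisive input in both is the $\Sigma$-freeness of $\mathbf{Prpd}^{\name{out}}_S$ and $\mathbf{DOpd}_S$ from Remark~\ref{remark not sigma free}, after which one invokes the standard rectification machinery for $\Sigma$-cofibrant operads. The paper simply packages your second and third steps into a single application of the symmetric-flatness criterion of \cite[Theorem 7.11]{PavlovScholbach}, which subsumes both the Berger--Moerdijk Quillen equivalence along a cofibrant resolution and the comparison with $\infty$-categorical algebras that you flag as the main obstacle.
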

\begin{proof}
	The proof of the theorem is based on the verification of the assumptions of \cite[Theorem 7.11]{PavlovScholbach} which gives a necessary and sufficient condition for under which the canonical map is an equivalence. 
By Remark~\ref{remark not sigma free}, the operads $\mathbf{Prpd}^{\name{out}}_S$ and $\mathbf{DOpd}_S$ are $\Sigma$-free which implies that the symmetric flatness condition of \cite[Theorem 7.11]{PavlovScholbach} is satisfied. 
This gives the equivalence $\Alg_{\mathbf{O}}(\mathbf{V})\isoto \Alg_{\xxO}(\xV)$.
\end{proof}

Theorem~\ref{theo:AlgOpScomp} is a key component in the proof of our Rectification Theorem~\ref{thm:rect} below.
Unfortunately it is a statement about dioperads and output (or input) properads, rather than about general properads.
This is simply because it does not hold in generality:

\begin{theorem}\label{theorem non-rect}
	Let $\xsSet$ denote the model category of simplicial sets equipped with the Kan--Quillen model structure, and suppose that $S$ is a nonempty set. 
	Then the canonical map of Definition~\ref{def canmor}
	\[\Alg_{\mathbf{Prpd}_S}(\xsSet)\to \Alg_{\calproperad_S}(\xS) \] is \emph{not} an equivalence.
\end{theorem}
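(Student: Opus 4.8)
The plan is to exploit the failure of $\Sigma$-freeness of $\bfproperad_S$ recorded in Remark~\ref{remark not sigma free}, and to show that an equivalence would force strict symmetric coinvariants to agree with homotopy coinvariants. This is the properadic incarnation of the classical fact that not every $\mathcal{E}_\infty$-space rectifies to a strict commutative monoid, and indeed the obstruction will again be a nontrivial classifying space $B\Sigma_2$. Note that $\bfproperad_S$ is admissible in $\xsSet$ by Examples~\ref{exs:admissible}(i), so the canonical map of Definition~\ref{def canmor} is defined.

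First I would set up the relevant free/forgetful adjunctions. Since the model structure on $\Alg_{\bfproperad_S}(\xsSet)$ has weak equivalences created by the forgetful functor $U$ to colored objects (underlying bicollections), $U$ preserves all weak equivalences and hence descends directly to a functor $U\colon \Alg_{\bfproperad_S}(\xsSet)\to \mathcal{B}$ of $\infty$-categories, where $\mathcal{B}=\prod_{\text{colors}}\xS$; no fibrant replacement is needed. Its left adjoint is the derived free functor $\mathbb{L}F$, which agrees with the point-set free functor $F$ on cofibrant colored objects. On the target side there is the analogous forgetful functor $U_\infty\colon \Alg_{\calproperad_S}(\xS)\to \mathcal{B}$ with left adjoint the $\infty$-operadic free functor $F_\infty$, whose value on $X\in\mathcal{B}$ is computed by \emph{homotopy} coinvariants $F_\infty(X)_c \simeq \coprod_n\coprod_{(c_1,\dots,c_n)}\bigl(\bfproperad_S(c_1,\dots,c_n;c)\times X_{c_1}\times\cdots\times X_{c_n}\bigr)_{h\Sigma_n}$. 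The canonical map $c$ is compatible with the two forgetful functors by construction, so $U_\infty\circ c\simeq U$; thus if $c$ were an equivalence, uniqueness of left adjoints would give $c\circ\mathbb{L}F\simeq F_\infty$, and applying $U_\infty$ would force $U\,\mathbb{L}F(X)\simeq U_\infty F_\infty(X)$ for every cofibrant $X$. In other words, strict coinvariants would have to compute homotopy coinvariants.

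Next I would exhibit a cofibrant $X$ where these disagree, using the non-free orbit of Remark~\ref{remark not sigma free}. Fix a color $a\in S$ and let $X$ be the colored simplicial set which is a point $\Delta^0$ at the two profiles $(\,;a,a)$ and $(a,a;\,)$ (i.e.\ on the colors corresponding to $(0,2)$ and $(2,0)$ with all edges colored $a$), and is $\emptyset$ elsewhere; this $X$ is cofibrant in $\mathcal{B}$. Because $X$ is a discrete colored set and $\bfproperad_S$ is set-valued, the point-set free algebra $FX$ is again discrete, so $U\,\mathbb{L}F(X)=U(FX)$ has trivial higher homotopy. On the other hand, I would isolate the arity-$4$ summand of $U_\infty F_\infty(X)$ at the output color $(\,;\,)$ (all-$a$, profile $(0,0)$): the only contributing tuples are permutations of $\bigl((\,;a,a),(\,;a,a),(a,a;\,),(a,a;\,)\bigr)$, whose $\Sigma_4$-orbit of operations contains the graph $K_{2,2}$ from Example~\ref{example of etale but not mono} with constant edge-coloring $a$. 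Its stabilizer contains $(12)(34)$ by Remark~\ref{remark not sigma free}, so the corresponding homotopy-orbit summand is equivalent to $B\Sigma_2$, with $\pi_1=\mathbb{Z}/2\neq 0$. This contradicts discreteness of $U\,\mathbb{L}F(X)$, so $c$ cannot be an equivalence. The case of general nonempty $S$ is identical after fixing such an $a$.

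The main obstacle is the careful bookkeeping of the third step: identifying exactly which summand survives after restricting the generators to the two profiles, reducing the $\Sigma_4$-coinvariants to the stabilizer of the relevant graph-orbit via the orbit decomposition, and confirming that this stabilizer is genuinely nontrivial (so that the homotopy quotient contributes $B\Sigma_2$) while the point-set quotient remains discrete. A secondary technical point to nail down is the compatibility of the canonical map with the forgetful functors at the level of underlying $\infty$-categories, together with the identification $\mathbb{L}F(X)\simeq F(X)$ for cofibrant $X$ and $\mathbb{R}U\simeq U$; once these are in place, the uniqueness-of-adjoints argument and the $B\Sigma_2$ computation combine to give the result, in sharp contrast to the $\Sigma$-free cases covered by Theorem~\ref{theo:AlgOpScomp} and Theorem~\ref{thm:rect}.
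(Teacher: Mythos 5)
Your proof is correct, and it isolates exactly the same witness as the paper --- the closed graph of Example~\ref{example of etale but not mono} with constant coloring, whose $\Sigma_2$-stabilizer (Remark~\ref{remark not sigma free}) makes a homotopy orbit $B\Sigma_2$ where the strict orbit is a point --- but it packages the deduction differently. The paper never invokes the homotopy-coinvariants formula for free algebras over an $\infty$-operad; instead it takes a $\Sigma$-cofibrant replacement $\phi\colon \mathbf{O}\to\mathbf{Prpd}_S$, shows by the same $B\Sigma_2$-versus-point computation that $F_{\mathbf{O}}(\ast_T)\to\phi^*F_{\mathbf{Prpd}_S}(\ast_T)$ is not a weak equivalence, concludes that $\phi_!\dashv\phi^*$ is not a Quillen equivalence, and then cites Pavlov--Scholbach (Theorems 7.5 and 7.11) to translate ``not symmetric flat'' into ``canonical map not an equivalence.'' Your route trades that machinery for two standard facts you correctly flag as needing care: that the canonical map commutes with the forgetful functors to colored spaces (which is essentially built into its construction), and that $F_\infty$ is computed by homotopy $\Sigma_n$-coinvariants (\cite[3.1.3.13]{ha}); given those, the uniqueness-of-adjoints argument and the orbit decomposition of the arity-four summand at output profile $(\,;\,)$ do the rest. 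What each approach buys: the paper's version sits entirely inside the Pavlov--Scholbach framework it already uses for the positive rectification results (Theorem~\ref{theo:AlgOpScomp}), so the contrast with the $\Sigma$-free cases is immediate; yours is more self-contained on the $\infty$-categorical side and makes the ``strict coinvariants versus homotopy coinvariants'' obstruction explicit, which is arguably the more transparent explanation of why properads differ from dioperads and output properads. One cosmetic point: your generator $X$ supported at only the two biprofiles $(\,;a,a)$ and $(a,a;\,)$ is a leaner choice than the paper's terminal colored object $\ast_T$, and your parenthetical claim that these are the only tuples contributing in arity four at output $(\,;\,)$ does check out by an edge-counting argument, though the proof does not actually need that exclusivity since coproduct summands of homotopy orbits are detected separately on $\pi_1$.
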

\begin{proof}
Let $\mathbf{P} = \mathbf{Prpd}_S$ and let $\phi \colon \mathbf{O} \to \mathbf{P}$ be a $\Sigma$-cofibrant replacement of $\mathbf{P}$ in the category of $T$-colored simplicial operads (by taking, for instance, a suitable product of $\mathbf{P}$ with the Barratt--Eccles operad), where $T$ is the color set of $\mathbf{P}$ (see Definition~\ref{def PrdS}).
We let $x \in \mathbf{P}(a,a,b,b;c)$ be the left graph from Example~\ref{example of etale but not mono}, where all edges are colored by some fixed $s\in S$ (so $a = (\,\,; s, s)$, $b=(s,s;\,\,)$, and $c=(\,\,;\,\,)$ are elements of $T$).
Let $V= \{ e, (12), (34), (12)(34) \} \subseteq \Sigma_4$ be the group of permutations fixing the profile $a,a,b,b$ and let $\Sigma_2 = \{e, (12)(34)\} \subseteq V$.
As mentioned in Remark~\ref{remark not sigma free}, the stabilizer group of $x$ is $\Sigma_2$.
Let $X \subseteq \mathbf{O}(a,a,b,b;c)$ be the fiber over $x$, which is a summand of $\mathbf{O}(a,a,b,b;c)$ (since $\mathbf{P}$ is a discrete colored operad).
We have that $X = \phi^{-1}(x) \to \{x\}$ is a weak equivalence, and using that $\mathbf{O}$ is $\Sigma$-cofibrant, one can show that the $\Sigma_2$-action on $X$ is free.

Let $F_{\mathbf{P}} \colon \xsSet^T \to \Alg_{\mathbf{P}}(\xsSet)$ be the free $\mathbf{P}$-algebra functor, and likewise for $F_{\mathbf{O}}$.
Letting $\ast_T$ be the terminal $T$-colored object, we have a commutative diagram
\[ \begin{tikzcd}
X \rar \dar & \{ x\} \dar \\
\mathbf{O}(a,a,b,b;c) \rar \dar & \mathbf{P}(a,a,b,b;c) \dar \\
F_{\mathbf{O}}(\ast_T)_c \rar & F_{\mathbf{P}}(\ast_T)_c
\end{tikzcd} \]
where the vertical composites are surjective onto summands.
More specifically, $\mathbf{O}(a,a,b,b;c) / V$ is naturally a summand of $F_{\mathbf{O}}(\ast_T)_c$, and $\phi^{-1}(x \cdot V) \subseteq \mathbf{O}(a,a,b,b;c)$ is a $V$-summand, so 
\[
	X / \Sigma_2 \cong \phi^{-1}(x \cdot V) / V
\]
is a summand of $\mathbf{O}(a,a,b,b;c) / V$.

As the left vertical map factors through $X / \Sigma_2 \simeq B\Sigma_2$, the map $F_{\mathbf{O}}(\ast_T)_c \to F_{\mathbf{P}}(\ast_T)_c$ has a summand of the form $B\Sigma_2 \to \{x\} / \Sigma_2 = \ast$.
It follows that $F_{\mathbf{O}}(\ast_T) \to \phi^*F_{\mathbf{P}}(\ast_T)$ is not a weak equivalence of $\mathbf{O}$-algebras.

We are now in a position to see that 
\[
\phi_! \colon \Alg_{\mathbf{O}}(\xsSet) \rightleftarrows \Alg_{\mathbf{P}}(\xsSet) \colon \phi^*
\]
is not a Quillen equivalence.
Both model structures are right-induced from the forgetful functors to $\xsSet^T$, hence $\phi^*$ creates weak equivalences and fibrations.
In particular, $\phi^*$ is right Quillen and reflects weak equivalences between fibrant objects, so by \cite[Corollary 1.3.16]{Hovey:MC}, $\phi_! \dashv \phi^*$ is a Quillen equivalence if and only if $A \to \phi^* (\phi_!A)^f$ is a weak equivalence for all cofibrant $A$ in $\Alg_{\mathbf{O}}(\xsSet)$.
But the fibrant replacement $\phi_!A \to (\phi_!A)^f$ is a weak equivalence and $\phi^*$ preserves weak equivalences, so by 2-of-3, $\phi_! \dashv \phi^*$ is a Quillen equivalence if and only if $A \to \phi^*\phi_!A$ is a weak equivalence for all cofibrant $A$ in $\Alg_{\mathbf{O}}(\xsSet)$.

The algebra $A = F_{\mathbf{O}}(\ast_T)$ is cofibrant.
Since the diagram
\[ \begin{tikzcd}[column sep=tiny]
\Alg_{\mathbf{O}}(\xsSet)\arrow[dr] & & \arrow[ll, "\phi^*" swap] \Alg_{\mathbf{P}}(\xsSet) \arrow[dl]\\
& \xsSet^T
\end{tikzcd} \]
commutes, we have $\phi_!A = \phi_! F_{\mathbf{O}}(\ast_T) \cong F_{\mathbf{P}}(\ast_T)$.
But as we saw, $A = F_{\mathbf{O}}(\ast_T) \to \phi^* \phi_! F_{\mathbf{O}}(\ast_T) = \phi^* F_{\mathbf{P}}(\ast_T) $ is not a weak equivalence, so $\phi_! \dashv \phi^*$ is not a Quillen equivalence.

Since this is not a Quillen equivalence, \cite[Theorem 7.5]{PavlovScholbach} shows that $\phi$ is not symmetric flat in $\xsSet$.
Using their terminology, $\mathbf{O} = Q\mathbf{P}$, so the fact that $\phi \colon \mathbf{O} \to \mathbf{Prpd}_S$ is not symmetric flat implies, by \cite[Theorem 7.11]{PavlovScholbach},
that $\Alg_{\mathbf{Prpd}_S}(\xsSet)\to \Alg_{\calproperad_S}(\xS)$ is not an equivalence.
\end{proof}

\begin{definition}
\label{def prpdV}
	Given a symmetric monoidal model category $\mathbf{V}$ for which the operads $\bfproperad_{S}$ are admissible for all sets $S$. 
	Let $\xV$ denote the associated $\infty$-category. We write $\nmproperad(\mathbf{V}) \to \Set$ for the Grothendieck fibration (and opfibration) which corresponds to the functor $\Set\to \xCat$ taking $S$ to $\Alg_{\bfproperad_{S}}(\mathbf{V})$. 
	It follows from \cite[Proposition 4.25]{enrcomp} or \cite[Theorem 3.0.12]{HarpazPrasma} that the model structure on $\Alg_{\bfproperad_{S}}(\mathbf{V})$ induces one on $\nmproperad(\mathbf{V}) $ where weak equivalences are those morphisms that are bijective on objects and weak equivalences on all multimorphism objects. 
	We write $\nmproperad(\xV)$ for the $\infty$-category associated to the model category $\nmproperad(\mathbf{V})$. 
	Similarly, we define $\infty$-categories $\name{Prpd}^{\name{out}}(\xV)$ and $\name{DOpd}(\xV)$ when $\mathbf{V}$ is a symmetric monoidal model category so that the operads $\mathbf{Prpd}^{\name{out}}_{S}$ (respectively, $\mathbf{DOpd}_{S}$) are admissible for all sets $S$.
\end{definition} 

We should emphasize here that the model structures $\nmproperad(\mathbf{V})$ and so on are only \emph{intermediate} model structures, and not of independent interest for us.
The reader should also be careful to distinguish between this model structure and others that may exist on the same underlying category.
For example (with some restriction on $\mathbf{V}$) there is a model structure on $\nmproperad(\mathbf{V})$ whose weak equivalences are the Dwyer--Kan equivalences from Definition~\ref{def DK equiv} below (see \cite{HackneyRobertsonYau:SMIP} for the case $\mathbf{V} = \xsSet$ and \cite{Yau:DKHTAOC} for certain other $\mathbf{V}$).

\begin{cor}\label{cor:OpdWeq}
Let $\mathbf{V}$ be a symmetric monoidal model category.
If $\mathbf{Prpd}^{\name{out}}_{S}$ is admissible in $\mathbf{V}$ for all sets $S$, then there is an equivalence
\[\nmproperad^{\name{out}}(\xV) \isoto
	\Alg_{\calproperad^{\name{out}}/\name{Set}}(\mathbf{\xV})\]
	over $\Set$.
Likewise, if $\mathbf{DOpd}_{S}$ is admissible in $\mathbf{V}$ for all sets $S$, then there is an equivalence
\[\name{DOpd}(\xV) \isoto
	\Alg_{\mathcal{DO}pd/\name{Set}}(\mathbf{\xV})\]
	over $\Set$.
\end{cor}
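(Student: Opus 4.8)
The plan is to combine the equivalences established earlier with the model-categorical rectification input of Theorem~\ref{theo:AlgOpScomp}, applied fibrewise and then assembled over $\Set$. The statement to prove is Corollary~\ref{cor:OpdWeq}, which asserts equivalences $\nmproperad^{\name{out}}(\xV) \isoto \Alg_{\calproperad^{\name{out}}/\name{Set}}(\xV)$ and $\name{DOpd}(\xV) \isoto \Alg_{\mathcal{DO}pd/\name{Set}}(\xV)$ over $\Set$. I will treat the output-properad case in detail, as the dioperad case is identical after substituting the relevant operads.

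First I would recall that both sides are (by definition) cartesian fibrations over $\Set$: the left side $\nmproperad^{\name{out}}(\xV)$ is the $\infty$-category associated to the intermediate model category of Definition~\ref{def prpdV}, which is a Grothendieck (op)fibration over $\Set$ with fibre over $S$ the $\infty$-category associated to $\Alg_{\mathbf{Prpd}^{\name{out}}_S}(\mathbf{V})$; the right side $\Alg_{\calproperad^{\name{out}}/\name{Set}}(\xV)$ is the cartesian fibration corresponding to $S \mapsto \Alg_{\calproperad^{\name{out}}_S}(\xV)$ (the evident analogue of $\AlgPrdSet(\xV)$ from Definition~\ref{def Prd}, restricted to $\calproperad^{\name{out}}$). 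By the standard principle that an equivalence of cartesian fibrations can be checked fibrewise, it suffices to produce a map over $\Set$ which is an equivalence on each fibre. I would define the fibrewise map to be the canonical map of Definition~\ref{def canmor},
\[
\Alg_{\mathbf{Prpd}^{\name{out}}_S}(\mathbf{V}) \to \Alg_{\calproperad^{\name{out}}_S}(\xV),
\]
and observe that these are natural in $S$ (the operads $\mathbf{Prpd}^{\name{out}}_S$ are functorial in $S$ along bijections and color-restrictions, and the canonical map is compatible with this structure), so they assemble into a functor of cartesian fibrations over $\Set$.

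Then the crux is Theorem~\ref{theo:AlgOpScomp}: since we assume $\mathbf{Prpd}^{\name{out}}_S$ is admissible in $\mathbf{V}$ for every $S$, that theorem says precisely that each canonical map $\Alg_{\mathbf{Prpd}^{\name{out}}_S}(\mathbf{V}) \to \Alg_{\calproperad^{\name{out}}_S}(\xV)$ is an equivalence of $\infty$-categories. Thus the assembled functor is a fibrewise equivalence of cartesian fibrations over $\Set$, and hence an equivalence. The dioperad statement follows by the identical argument with $\mathbf{Prpd}^{\name{out}}_S$, $\calproperad^{\name{out}}_S$, $\nmproperad^{\name{out}}$ replaced by $\mathbf{DOpd}_S$, $\mathcal{DO}pd_S$, $\name{DOpd}$ throughout, using the dioperad half of Theorem~\ref{theo:AlgOpScomp}.

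I expect the main obstacle to be purely bookkeeping rather than mathematical: one must check that the fibrewise canonical maps genuinely organize into a morphism of cartesian fibrations over $\Set$, i.e.\ that they are compatible with cartesian lifts of bijections $S \to S'$ (restriction of scalars along color maps on both the strict and the $\infty$-operadic sides). This compatibility is essentially formal — the Grothendieck construction on the left (Definition~\ref{def prpdV}) and the cartesian fibration $\Alg_{\calproperad^{\name{out}}/\name{Set}}(\xV)$ on the right are both built from the same $S \mapsto \mathbf{Prpd}^{\name{out}}_S$ functoriality, and the canonical map of Definition~\ref{def canmor} is defined uniformly — but it should be spelled out, perhaps by invoking the naturality already implicit in the construction of $\Theta^*$ and Corollary~\ref{cor:AlgOpdeq}. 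Everything genuinely nontrivial has already been packaged into Theorem~\ref{theo:AlgOpScomp}, whose proof rests on the $\Sigma$-freeness of these operads (Remark~\ref{remark not sigma free}) and the symmetric flatness criterion of \cite{PavlovScholbach}; the present corollary is then just the fibrewise application and assembly.
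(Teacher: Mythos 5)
Your proposal is correct and follows essentially the same route as the paper: identify both sides as cartesian fibrations over $\Set$, check the comparison functor preserves cartesian morphisms, and conclude by the fibrewise equivalence supplied by Theorem~\ref{theo:AlgOpScomp}. The one point you treat as ``by definition'' that the paper justifies with a citation is that the $\infty$-category associated to the model structure on the \emph{total} Grothendieck construction $\nmproperad^{\name{out}}(\mathbf{V})$ really is the cartesian fibration assembled from the fibrewise localizations $\Alg_{\calproperad^{\name{out}}_S}(\xV)$ --- this is exactly the assembly/bookkeeping issue you flag, and it is handled by invoking \cite[Corollary 4.22]{enrcomp} or \cite[Proposition 2.1.4]{hinloc} rather than being automatic.
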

\begin{proof}
We prove the first statement, as the second is entirely analogous.
According to \cite[Corollary 4.22]{enrcomp} or
	\cite[Proposition 2.1.4]{hinloc}, $\nmproperad^{\name{out}}(\mathbf{V}) \to \Set$ is the cartesian (and cocartesian) 
	fibration corresponding to the functor taking $S$ to the $\infty$-category $\Alg_{\calproperad^{\name{out}}_{S}}(\xV)$ associated to $\Alg_{\bfproperad^{\name{out}}_{S}}(\mathbf{V})$.
	This shows that the functor
	\[\nmproperad^{\name{out}}(\xV) \to
	\Alg_{\calproperad^{\name{out}}/\name{Set}}(\xV)\]
	over $\Set$ is a functor between cartesian fibrations that preserves cartesian morphisms which is then an equivalence as it is one on each fibre by Theorem~\ref{theo:AlgOpScomp}.
\end{proof}

Suppose that $\mathbf{V}$ is a symmetric monoidal model category.
The functor $\mathbf{V} \to h\mathbf{V}$ to the homotopy category is symmetric monoidal, so to a $\mathbf{V}$-enriched operad $\mathbf{P}$ we can associate an $h\mathbf{V}$-enriched operad $h\mathbf{P}$.

\begin{defn}\label{def DK equiv}
If $\mathbf{V}$ is a symmetric monoidal model category, we say a morphism $F \colon \mathbf{P} \to \mathbf{P}'$ of $\mathbf{V}$-enriched properads is a \emph{Dwyer--Kan equivalence} if:
	\begin{enumerate}[(1)]
		\item The map 
		\[ \mathbf{P}(x_{1},\ldots,x_{m}; y_1,\ldots, y_n) \to
		\mathbf{P}'(F(x_{1}), \ldots, F(x_{m}); F(y_m),\ldots, F(y_n))\] 
		is a weak equivalence in $\mathbf{V}$ for all $x_{1},\ldots,x_{},y_1,\ldots, y_n$ in $\mathbf{P}$.
		\item  The induced functor of $h\mathbf{V}$-enriched operads $h F \colon h \mathbf{P} \to h\mathbf{P}'$ is essentially surjective (i.e.\ its underlying functor of enriched categories is essentially surjective).
	\end{enumerate}
Dwyer--Kan equivalences for $\mathbf{V}$-enriched dioperads are defined similarly.
\end{defn}

We are now ready to prove our main rectification result.

\begin{thm}\label{thm:rect}
Suppose $\mathbf{V}$ is a symmetric monoidal model category for which the operads $\bfproperad^{\name{out}}_{S}$ are admissible for all sets $S$.
Then the $\infty$-category $\nmproperad^{{\name{out}},\xV}_\infty$ (Notation~\ref{no PrpdV}) can be identified with the localization of $\nmproperad^{\name{out}}(\xV)$ at the class of Dwyer--Kan equivalences:
\[
	\nmproperad^{{\name{out}},\xV}_\infty \simeq \nmproperad^{\name{out}}(\xV)[\name{DK}^{-1}].
\]
Likewise, if $\mathbf{DOpd}_S$ is admissible in $\mathbf{V}$ for all sets $S$, then the $\infty$-category $\name{DOpd}^\xV_\infty$ of $\xV$-enriched $\infty$-dioperads can be identified with the localization of $\name{DOpd}(\xV)$ at the Dwyer--Kan equivalences.
\end{thm}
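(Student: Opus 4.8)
The plan is to prove the output-properad statement by splicing together equivalences already established in the excerpt, and then to deduce the dioperad statement by the identical argument run through the parallel results. The single ingredient that is not purely formal bookkeeping is Theorem~\ref{theo:AlgOpScomp}: it provides, fibrewise over $\name{Set}$, a comparison between the strict model-categorical algebras and the $\infty$-categorical ones, and its hypothesis is exactly the $\Sigma$-freeness of $\bfproperad^{\name{out}}_S$ recorded in Remark~\ref{remark not sigma free}. This is precisely what fails for general properads, which is why Theorem~\ref{theorem non-rect} obstructs rectification there but the present theorem succeeds for output properads and dioperads.

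First I would assemble the following chain of equivalences, treating the output case. Localizing each side at the appropriate class of weak equivalences, one has
\[
\nmproperad^{\name{out}}(\xV)[\name{DK}^{-1}]
\simeq \Alg_{\calproperad^{\name{out}}/\name{Set}}(\xV)[\name{FFES}^{-1}]
\simeq \Alg_{\bbYout^\op/\name{Set}}(\xV)[\name{FFES}^{-1}]
\simeq \Alg_{\bbYout^\op/\xS}(\xV)[\name{FFES}^{-1}]
\simeq \Segrep(\bbYout^\xV)[\name{FFES}^{-1}]
\simeq \nmproperad^{{\name{out}},\xV}_\infty.
\]
The first equivalence is the output-properad form of Corollary~\ref{cor:OpdWeq}, and it is here that Theorem~\ref{theo:AlgOpScomp} (hence $\Sigma$-freeness) is invoked fibrewise. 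The second is the output analogue of Corollary~\ref{cor:AlgOpdeq}, built from the approximation $\bbY^{\op}_{\out,S} \to \calproperad^{\out}_S$ of Corollary~\ref{cor on other subcategories and theta} together with the output form of Corollary~\ref{cor:OOpSeq}. The third is the output analogue of Proposition~\ref{propn:FFESSet}, passing from $\name{Set}$ to $\xS$. The fourth is the $\bbYout$-version of Theorem~\ref{thm:PCSisAlgLT}. The last is Theorem~\ref{theo cp obj are loc} combined with Remark~\ref{remark sec ffes extensions}, identifying the $\name{FFES}$-localization of $\Segrep(\bbYout^\xV)$ with the complete objects $\Segrepc(\bbYout^\xV) = \nmproperad^{{\name{out}},\xV}_\infty$.

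The step I expect to be the real obstacle is the weak-equivalence bookkeeping: I must verify that the class $\name{DK}$ of Dwyer--Kan equivalences of Definition~\ref{def DK equiv} is carried onto the class $\name{FFES}$ of fully faithful and essentially surjective maps under the first two equivalences over $\name{Set}$, so that the localizations genuinely match. Concretely, condition~(1) of Definition~\ref{def DK equiv} (a weak equivalence on every multimorphism object) should correspond, via Corollary~\ref{cor:OpdWeq} and the identification of multimorphism objects with the representing objects $\xMap_F(x_1,\dots,x_m;y_1,\dots,y_n)$ of Definition~\ref{def cts Seg psh} under the approximation functor, to the fully faithfulness condition of Definition~\ref{def ffes}; and condition~(2) (essential surjectivity of $h F$) should correspond to essential surjectivity of the underlying $\infty$-category $u^*(\blank)$. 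Since the equivalences over $\name{Set}$ preserve fibres (hence the fibrewise fully-faithful condition) and induce equivalences on homotopy categories of total spaces (hence the essential-surjectivity condition), once this matching is in place the two outer localizations agree and the remaining links in the chain are localizations at $\name{FFES}$ that have already been recorded.

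Finally, the dioperad case is entirely parallel and requires no new idea: one replaces $\bfproperad^{\name{out}}_S$, $\calproperad^{\name{out}}$, and $\bbYout$ by $\mathbf{DOpd}_S$, $\mathcal{DO}pd$, and $\bbY_{\name{sc}}$ throughout, using Lemma~\ref{lem DOpd app} in place of Corollary~\ref{cor on other subcategories and theta} and the dioperad half of Corollary~\ref{cor:OpdWeq} in place of the output half. The $\Sigma$-freeness of $\mathbf{DOpd}_S$ (Remark~\ref{remark not sigma free}) again supplies the hypothesis of Theorem~\ref{theo:AlgOpScomp}, so the same chain yields $\name{DOpd}^\xV_\infty \simeq \name{DOpd}(\xV)[\name{DK}^{-1}]$.
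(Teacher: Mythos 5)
Your proposal is correct and follows essentially the same route as the paper: the paper's proof invokes Corollary~\ref{cor:OpdWeq} to match Dwyer--Kan equivalences with the fully faithful and essentially surjective morphisms in $\Alg_{\calproperad^{\name{out}}/\name{Set}}(\xV)$, and then cites the output-properad variation of Corollary~\ref{cor AlgPrd=PCSbbY}, which packages exactly the chain of equivalences you spell out (Corollary~\ref{cor:AlgOpdeq}, Proposition~\ref{propn:FFESSet}, Theorem~\ref{thm:PCSisAlgLT}, Theorem~\ref{theo cp obj are loc}). Your identification of Theorem~\ref{theo:AlgOpScomp} and the $\Sigma$-freeness of $\bfproperad^{\name{out}}_S$ and $\mathbf{DOpd}_S$ as the only non-formal ingredient matches the paper's structure, and the dioperad case is handled by the same substitutions in both.
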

\begin{proof}
	We prove the first statement, as the second is similar.
	Under the equivalence of Corollary~\ref{cor:OpdWeq} the class of Dwyer--Kan equivalences corresponds to the class of fully faithful and essentially surjective morphisms in $\Alg_{\calproperad^{\name{out}}/\name{Set}}(\mathbf{\xV})$. 
	Hence, the localization of $\nmproperad^{\name{out}}(\xV)$ with respect to Dwyer--Kan equivalences is equivalent to $\Alg_{\calproperad^{\name{out}}/\name{Set}}(\mathbf{\xV})[\name{FFES}^{-1}]$ which can be identified with $\nmproperad^{{\name{out}},\xV}_\infty$ by the appropriate variation of Corollary~\ref{cor AlgPrd=PCSbbY}.
\end{proof}

In light of Theorem~\ref{theorem non-rect}, the method of proof used in this theorem breaks down if one tries to prove that $\nmproperad_\infty^\xV$ is equivalent to $\nmproperad(\xV)[\name{DK}^{-1}]$ in generality.
However, we are able to adapt this proof when working over very special bases.

\begin{theorem}\label{theorem rational rectification for properads}
Let $k$ be a commutative ring containing $\mathbb{Q}$, let $\mathbf{Ch}_k$ be the category of unbounded chain complexes equipped with the projective model structure, and let $\mathcal{C}h_k$ denote the $\infty$-category associated to $\mathbf{Ch}_k$.
Then there is an equivalence (see Notation~\ref{no PrpdV})
\[
	\nmproperad(\mathcal{C}h_k)[\name{DK}^{-1}] \simeq \nmproperad_\infty^{\mathcal{C}h_k}
\]
after localizing at the class of Dwyer--Kan equivalences.
\end{theorem}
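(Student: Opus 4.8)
The plan is to run the argument of Theorem~\ref{thm:rect} essentially verbatim, the only new ingredient being a rational analogue of Theorem~\ref{theo:AlgOpScomp} for the (non-$\Sigma$-free) operads $\bfproperad_S$. The skeleton has three steps: first, prove that for every set $S$ the canonical map (Definition~\ref{def canmor})
\[
	\Alg_{\bfproperad_S}(\mathbf{Ch}_k) \to \Alg_{\calproperad_S}(\mathcal{C}h_k)
\]
is an equivalence; second, upgrade this fibrewise equivalence to an equivalence of cartesian fibrations $\nmproperad(\mathcal{C}h_k) \isoto \AlgPrdSet(\mathcal{C}h_k)$ over $\Set$, exactly as in the proof of Corollary~\ref{cor:OpdWeq}; and third, localize at the class $\name{DK}$ of Dwyer--Kan equivalences, which corresponds to the class $\name{FFES}$ of fully faithful and essentially surjective maps under this identification, and invoke Corollary~\ref{cor AlgPrd=PCSbbY} to obtain
\[
	\nmproperad(\mathcal{C}h_k)[\name{DK}^{-1}] \simeq \AlgPrdSet(\mathcal{C}h_k)[\name{FFES}^{-1}] \simeq \nmproperad_\infty^{\mathcal{C}h_k}.
\]
Note that $\bfproperad_S$ is admissible in $\mathbf{Ch}_k$ for all $S$ by Examples~\ref{exs:admissible}(iii), so $\nmproperad(\mathcal{C}h_k)$ is indeed defined (Definition~\ref{def prpdV}), and steps two and three are formal transcriptions of the output/dioperad arguments with $\bfproperad^{\name{out}}_S$ replaced by $\bfproperad_S$.

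All the genuinely new work is concentrated in the first step. As in Theorem~\ref{theo:AlgOpScomp}, I would appeal to the criterion of \cite[Theorem 7.11]{PavlovScholbach}, which says that the canonical map is an equivalence precisely when a $\Sigma$-cofibrant replacement $\phi \colon \mathbf{O} \to \bfproperad_S$ (in $T$-colored simplicial operads, where $T$ is the color set) is symmetric flat. In Theorem~\ref{theo:AlgOpScomp} symmetric flatness was extracted from $\Sigma$-freeness; that route is unavailable here, since $\bfproperad_S$ is \emph{not} $\Sigma$-free (Remark~\ref{remark not sigma free}), and indeed Theorem~\ref{theorem non-rect} shows that the conclusion genuinely fails over $\xsSet$. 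The point is that over $\mathbf{Ch}_k$ with $\mathbb{Q} \subseteq k$, symmetric flatness comes instead from the base category: because $|\Sigma_n|$ is invertible in $k$, Maschke's theorem makes $k[\Sigma_n]$ semisimple, so the averaging idempotent $\tfrac{1}{n!}\sum_{\sigma \in \Sigma_n} \sigma$ exhibits the coinvariants functor $(\blank)_{\Sigma_n}$ on $\Sigma_n$-objects of $\mathbf{Ch}_k$ as a retract of the identity. Hence $(\blank)_{\Sigma_n}$ is exact, preserves quasi-isomorphisms, and agrees with its derived functor; the same holds for the $\Sigma_n$-equivariant tensor-power (pushout-product) constructions built from it. Consequently \emph{every} object of $\mathbf{Ch}_k$ is symmetric flat, and in particular so is the map $\phi$, with no freeness hypothesis on $\bfproperad_S$.

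The main obstacle, then, is precisely this verification: making rigorous that the characteristic-zero hypothesis supplies symmetric flatness of the cofibrant replacement of the non-$\Sigma$-free operad $\bfproperad_S$. I would phrase the key lemma as the statement that $\mathbf{Ch}_k$ is a symmetric flat symmetric monoidal model category in the sense of Pavlov--Scholbach, so that the hypotheses of \cite[Theorem 7.11]{PavlovScholbach} are met for the \emph{arbitrary} operad $\bfproperad_S$ rather than only for $\Sigma$-free ones; this is exactly the feature that is absent over $\xsSet$ and responsible for the dichotomy with Theorem~\ref{theorem non-rect}. Once this single point is established, the identification of Dwyer--Kan equivalences with fully faithful and essentially surjective maps, the equivalence of fibrations over $\Set$, and the final localization all proceed word-for-word as in the proofs of Corollary~\ref{cor:OpdWeq} and Theorem~\ref{thm:rect}, now applied to the full properadic operad $\bfproperad_S$ and concluded via Corollary~\ref{cor AlgPrd=PCSbbY}.
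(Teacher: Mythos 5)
Your proposal is correct and follows essentially the same route as the paper: the paper likewise reduces everything to the symmetric flatness of $\mathbf{Ch}_k$ (citing the observation in \cite[\S7.4]{PavlovScholbachSymm} rather than rederiving it via the Maschke averaging idempotent as you do), then adapts Theorem~\ref{theo:AlgOpScomp}, Corollary~\ref{cor:OpdWeq}, and Theorem~\ref{thm:rect} exactly as you describe. Your identification of the single non-formal input --- that symmetric flatness here comes from the base category rather than from $\Sigma$-freeness of $\bfproperad_S$ --- matches the paper's reasoning precisely.
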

\begin{proof}
It was observed in \cite[\S7.4]{PavlovScholbachSymm} that $\mathbf{Ch}_k$ is symmetric flat, so the proof of Theorem~\ref{theo:AlgOpScomp} can be adapted to show that the canonical map
\[\Alg_{\mathbf{Prpd}_S}(\mathbf{Ch}_k)\to \Alg_{\calproperad_S}(\mathcal{C}h_k) \]
is an equivalence for all $S$.
A variation on the proof of Corollary~\ref{cor:OpdWeq} gives that
\[\nmproperad(\mathbf{Ch}_k) \isoto \AlgPrdSet(\mathcal{C}h_k)\] 
is an equivalence.
The remainder of the proof follows that of Theorem~\ref{thm:rect}.
\end{proof}

\begin{remark}\label{remark symm spectra}
The key to the proof of the previous theorem is that the model structure on $\mathbf{Ch}_k$ is symmetric flat.
In particular, it is also true that 
\[
	\nmproperad(\xV)[\name{DK}^{-1}] \simeq \nmproperad_\infty^{\xV}
\]
when $\mathbf{V}$ is the positive stable model structure on symmetric spectra.
Indeed, \cite[Proposition 3.5.1]{PavlovScholbachSp} shows that this $\mathbf{V}$ is symmetric flat (actually this holds for spectra over more general `nice' bases as in \cite[Definition 2.3.1]{PavlovScholbachSp}), so the proof of Theorem~\ref{theorem rational rectification for properads} is readily adapted.
\end{remark}

\begin{question}
In Theorem~\ref{thm:rect}, we showed that the $\infty$-category of $\infty$-dioperads, $\name{DOpd}^\xS_\infty$, is equivalent to $\name{DOpd}(\xS)[\name{DK}^{-1}]$.
On the other hand, \cite[Theorem 5.4]{HackneyRobertsonYau:SMIP} shows that the category $\name{DOpd}(\xsSet)$ (see Definition~\ref{def prpdV}) admits a model structure whose weak equivalences are the Dwyer--Kan equivalences from Definition~\ref{def DK equiv}.
We do not know if the $\infty$-category presented by this model category is equivalent to $\name{DOpd}(\xS)[\name{DK}^{-1}]$ or not, but it would be interesting to explore.
One difficulty in addressing this in the present context is simply that the model structure from \cite[Theorem 5.4]{HackneyRobertsonYau:SMIP} is not a Bousfield localization of the model structure in Definition~\ref{def prpdV}. 
In fact, more is true: the identity functor on this category is not a Quillen functor between the two model structures.
\end{question}

Similar questions can be raised for other ground symmetric monoidal categories $\mathbf{V}$ and for properads instead of dioperads.

\appendix

\renewcommand{\thetheorem}{\thesection.\arabic{theorem}}

\section{Equivalence of \texorpdfstring{$\bbY$}{G} with the properadic graphical category}
\label{appendix comparison with HRY}

This section is devoted to a proof of the following theorem.

\begin{theorem}\label{theorem upsilon equivalent definitions}
The category $\bbY$ is equivalent to the category $\hryGamma$ from \cite{hrybook}.
\end{theorem}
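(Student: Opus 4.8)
The plan is to establish the equivalence $\bbY \simeq \hryGamma$ by showing that the two categories have the same objects and naturally isomorphic hom-sets, with the isomorphism compatible with composition. Since both categories have as objects (isomorphism classes of) connected acyclic directed graphs, the object level is immediate, and the work is entirely at the level of morphisms. The graphical maps of \cite[Definition 6.46]{hrybook} are presented in terms of the data of a function $f_0 \colon \edge(G) \to \edge(K)$ together with an assignment $v \mapsto H_v \in \sub(K)$ on vertices, satisfying certain compatibility conditions expressed via the alternative characterization recalled in Remark~\ref{remark alternative characterization of morphisms}. So the first step is to compare the morphism-data of Definition~\ref{definition complete morphism} (a pair $(f_0, f_1)$ with $f_1$ defined on \emph{all} of $\sub(G)$) against the graphical-map data (which only specifies values on vertices/corollas).

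First I would show that every morphism $f \colon G \to K$ in the sense of Definition~\ref{definition complete morphism} restricts to a graphical map: the assignment $\vertex(G) \to \sub(K)$ obtained by restricting $f_1$ along the inclusion $\vertex(G) \hookrightarrow \sub(G)$ (identifying each vertex $v$ with its corolla $C_v$) satisfies the port-preservation condition \eqref{definition complete morphism preservation of ports} automatically, and I would verify that conditions \eqref{alternate def subgraphs} and \eqref{alternate def whole graph} of Remark~\ref{remark alternative characterization of morphisms} hold using the union-preservation property \eqref{definition complete morphism intersection union}; here I would use that every $J \in \sub(G)$ can be written as $J = \strcup_{v \in \vertex(J)} C_v$, so that $f_1(J) = \strcup_v f_1(C_v) = J\{H_v\}_{v \in \vertex(J)}$ as a structured subgraph of $K$, exactly the content of convex openness. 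Conversely, I would show that a graphical map, i.e.\ the data $(f_0, v \mapsto H_v)$ satisfying one of the conditions in Remark~\ref{remark alternative characterization of morphisms}, extends \emph{uniquely} to a morphism in the sense of Definition~\ref{definition complete morphism}: the value on an arbitrary $J \in \sub(G)$ must be $f_1(J) = \strcup_{v \in \vertex(J)} H_v = J\{H_v\}$, which is a structured subgraph of $K$ precisely by condition \eqref{alternate def subgraphs}, and I would check that this extension preserves binary unions \eqref{definition complete morphism intersection union} and the port functions \eqref{definition complete morphism preservation of ports}. This establishes a bijection on hom-sets; its naturality in the sense of respecting composition follows since composition in $\bbY$ is composition of the underlying pairs of functions, which matches the composition of graphical maps under graph substitution (again via the associativity of graph substitution used in Proposition~\ref{strsub and ordsub coincide}).

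The main obstacle I anticipate is the well-definedness and faithfulness bookkeeping, specifically verifying that the extension of a graphical map to all of $\sub(G)$ really does land in structured (not merely open) subgraphs and respects $\strcup$. The subtlety is exactly the phenomenon illustrated in Example~\ref{example no intersections}: unions of structured subgraphs need not be structured, so condition \eqref{definition complete morphism intersection union} is a genuine constraint rather than an automatic consequence, and I must track carefully when $H_1 \strcup H_2$ exists in $\sub(G)$ and argue that its image $f_1(H_1) \strcup f_1(H_2)$ then also exists in $\sub(K)$. I would lean on the characterization of structured subgraphs via the right lifting property from Remark~\ref{rem convex} and the convex-open language of \cite{Kock_Properads}, together with the fact (from \cite[Corollary 6.62]{hrybook}) that a graphical map is determined by its value $f_0$ on edges, which guarantees that the two presentations cannot diverge once they agree on edges. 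Pinning down this equivalence cleanly, rather than through a long diagram chase, is where most of the care will be required.
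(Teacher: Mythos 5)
Your proposal matches the paper's proof in essentially every respect: the paper also establishes a bijection on hom-sets by restricting $f_1$ to corollas in one direction, extending a graphical map to all of $\sub(G)$ via $H \mapsto f(H)$ (structured by \cite[Theorem 6.50]{hrybook}) in the other, and invoking \cite[Corollary 6.62]{hrybook} to see that the two constructions are mutually inverse and compatible with composition. The one point you flag as needing care — that $f_1(J) = \strcup_{v} f_1(C_v)$ must be derived from the \emph{binary} union condition — is handled in the paper by induction on $\lvert\vertex(H)\rvert$ using an almost isolated vertex (\cite[Corollary 2.76]{hrybook}) to write $H = C_v \strcup H'$ with $H'$ structured, which is exactly the ordering of vertices your sketch would need to make the iterated unions legitimate.
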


One difference between the setup of \cite{hrybook} and that of the present paper is that the graphs in that book always come equipped with orderings of $\inp(v)$, $\out(v)$, $\inp(G)$, and $\out(G)$.
As such, we first replace $\bbY$ with an equivalent category $\bbY'$ whose objects are graphs together with orderings on the sets $\inp(v)$, $\out(v)$, $\inp(G)$, and $\out(G)$.
Morphisms ignore this extra structure entirely.

We will show that $\bbY'$ is isomorphic to the category $\hryGamma$ from \cite{hrybook}.
Both categories have the same set of objects.

Given a morphism $f : G \to K$ in $\bbY'$, we define a corresponding properadic graphical map $f^\gamma : G \to K$ in $\hryGamma$.
This map has $f^\gamma_0 = f_0$, while $f^\gamma_1 (v) \coloneqq f_1(C_v)$ (see \cite[Lemma 5.19]{hrybook}).
At the moment, this is just a map of the corresponding colored properads.
\begin{lemma}\label{computing the image of falpha at a graph}
	If $H\in \sub(G)$, then $f^\gamma(H)$ is equal to $f_1(H)$.
\end{lemma}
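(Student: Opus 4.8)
The plan is to prove the identity $f^\gamma(H) = f_1(H)$ by induction on the number of vertices of $H$, exploiting the two structural properties of the two halves of the construction: on the one hand $f_1$ preserves structured unions by Definition~\ref{definition complete morphism}\eqref{definition complete morphism intersection union}, and on the other hand $f^\gamma$, being a map of colored properads, commutes with graph substitution. Since both constructions agree on corollas (by the very definition $f^\gamma_1(v) = f_1(C_v)$) and on edges (both give $\downarrow_{f_0(e)}$, using that $f_1(\downarrow_e)$ is the unique subgraph with boundary $(f_0(e), f_0(e))$ via the uniqueness statement cited just before Definition~\ref{def intact Y}), it suffices to propagate the agreement along a decomposition of $H$ into its corollas.

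For the inductive step, suppose $H$ has at least two vertices and choose a bottom vertex $v$ (one with $\out(v) \subseteq \out(H)$). Writing $H_1, \dots, H_r$ for the connected components of $H \setminus v$, each $H_i$ is a structured subgraph of $H$, hence of $K$ by Proposition~\ref{strsub and ordsub coincide}, with strictly fewer vertices than $H$, and $H = C_v \cup H_1 \cup \cdots \cup H_r$ as open subgraphs. The main point is to check that the intermediate unions $Q_i \coloneqq C_v \cup H_1 \cup \cdots \cup H_i$ are genuinely \emph{structured} subgraphs, so that $H = C_v \strcup H_1 \strcup \cdots \strcup H_r$ is a valid iterated structured union. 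This holds because $v$ is a bottom vertex: no directed path can leave $Q_i$ and re-enter it through some $H_j$ with $j > i$, since the only edges out of $v$ are global outputs and the distinct components of $H \setminus v$ meet only by terminating at $v$; collapsing $Q_i$ therefore creates no directed cycle, and the characterization of structured subgraphs in Remark~\ref{rem convex} applies. Applying Definition~\ref{definition complete morphism}\eqref{definition complete morphism intersection union} repeatedly then gives
\[
f_1(H) = f_1(C_v) \strcup f_1(H_1) \strcup \cdots \strcup f_1(H_r) = H_v \strcup f^\gamma(H_1) \strcup \cdots \strcup f^\gamma(H_r),
\]
where the second equality uses the inductive hypothesis together with $H_v = f_1(C_v) = f^\gamma_1(v)$.

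Finally I would match this with $f^\gamma(H)$. Since $f^\gamma$ is a properad map, it commutes with the operadic composition, which is graph substitution; the decomposition $H = C_v \cup \bigcup_i H_i$ therefore yields $f^\gamma(H) = H\{H_w\}_w = H_v \cup \bigcup_i H_i\{H_w\} = H_v \cup \bigcup_i f^\gamma(H_i)$, and this union is precisely the structured union appearing above. Hence $f^\gamma(H) = f_1(H)$, completing the induction; as a byproduct this also shows that $f^\gamma$ carries structured subgraphs to structured subgraphs. I expect the main obstacle to be the convexity check for the intermediate graphs $Q_i$ — making precise, via Remark~\ref{rem convex}, that a bottom-vertex decomposition can be realized as a chain of structured unions — together with carefully transporting the compatibility of $f^\gamma$ with graph substitution across the $\kockgraphs$ formalism. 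A convenient sanity check (and an alternative way to conclude, once one knows that $f^\gamma(H)$ is a subgraph) is that $f_1(H)$ and $f^\gamma(H)$ are both structured subgraphs of $K$ with the same boundary $(f_0\inp(H), f_0\out(H))$, and so must coincide by the uniqueness of subgraphs possessing a given boundary.
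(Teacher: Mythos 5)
Your proof is correct, and it follows the same overall strategy as the paper's — induction on $\lvert\vertex(H)\rvert$, peeling off one vertex and invoking condition \eqref{definition complete morphism intersection union} of Definition~\ref{definition complete morphism} — but the decomposition in the inductive step is genuinely different. The paper peels off an \emph{almost isolated} vertex $v$ (citing \cite[Corollary 2.76]{hrybook} for its existence), for which the complement $H'$ with vertex set $\vertex(H)\setminus\{v\}$ is a \emph{single} structured subgraph; this yields $H = C_v \strcup H'$ in one step, so only one application of the union axiom is needed and no convexity verification arises. You instead peel off a \emph{bottom} vertex, whose complement $H\setminus v$ may break into several components $H_1,\dots,H_r$, and you must then justify that the intermediate unions $Q_i = C_v \ordcup H_1 \ordcup \cdots \ordcup H_i$ are structured before you can iterate the union axiom. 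Your justification is sound: since $\out(v)\subseteq\out(H)$ one gets $\inp(H_j)\subseteq\inp(H)$ and $\out(H_j)\subseteq\inp(v)\cup\out(H)$, so no directed path in $H$ can exit $Q_i$ and re-enter through some $H_j$ with $j>i$, and Remark~\ref{rem convex} applies. So your route works; it simply trades the paper's appeal to an existence result for almost isolated vertices against a hands-on convexity check. What each buys: the paper's choice makes the inductive step two lines at the cost of an external citation, while yours is self-contained on the graph-theoretic side but longer, and you also have to track the degenerate cases (components $H_j$ that are single edges, and vertices $w$ with $f^\gamma_1(w)$ an edge) when matching vertex sets at the end — the paper isolates the latter explicitly via the case $f^\gamma(C_v)=\downarrow_e$. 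One caution: your closing ``sanity check'' (comparing boundaries of $f_1(H)$ and $f^\gamma(H)$ and invoking uniqueness of subgraphs with a given boundary) is circular as an independent argument, since it presupposes that $f^\gamma(H)$ is a structured subgraph of $K$ — which is exactly what this lemma is used to establish in the proof of Theorem~\ref{theorem upsilon equivalent definitions}; you hedge appropriately, but it cannot replace the main argument.
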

\begin{proof}
We induct on $\deg(H) = \lvert\vertex(H)\rvert$. 
The result is either automatic for $\deg(H)\in\{ 0, 1\}$.
Suppose that $\deg(H) \geq 2$.
By \cite[Corollary 2.76]{hrybook}, $H$ has an almost isolated vertex $v$.
We have $H = C_v \strcup H'$, where $H' \in \sub(H) \subseteq \sub(G)$\footnote{Using Proposition~\ref{strsub and ordsub coincide}.} has $\vertex(H) \setminus \{v \}$ as its set of vertices (see \cite[Definition 2.60 \& \S 6.1.2]{hrybook}). 
Then by the induction hypothesis, we have
\begin{equation}\label{breaking H into pieces}
f_1(H) = f_1 (C_v) \strcup f_1(H') = f^\gamma(C_v) \strcup f^\gamma(H').
\end{equation}
If $f^\gamma(C_v) = {\downarrow_e}$, then $f^\gamma(H') = f^\gamma(H)$ and we are done.
Otherwise, the subgraph from \eqref{breaking H into pieces} is an open subgraph with the same set of vertices as $f^\gamma(H)$ (see \cite[Definition 6.40]{hrybook}), hence must be equal to $f^\gamma(H)$.
\end{proof}
\begin{proof}[Proof of Theorem~\ref{theorem upsilon equivalent definitions}]
As $\bbY'$ and $\hryGamma$ have the same set of objects it suffices to show that there is a bijection of morphisms which respects compositions and identities.
The previous lemma shows $f^\gamma(G) = f_1(G)$ is a structured subgraph of $K$, so  that the assignment $f\mapsto f^\gamma$ takes a morphism in $\bbY'$ to a properadic graphical map in $\hryGamma$ introduced in \cite[Definition 6.46]{hrybook}.

In the reverse direction, suppose that $f: G \to K$ is a properadic graphical map in $\hryGamma$.
Define $f^\upsilon$ by $f^\upsilon_0 = f_0$ and $f^\upsilon_1(H) = f(H)$, which we know is a structured subgraph by \cite[Theorem 6.50]{hrybook}.
The fact that Definition~\ref{definition complete morphism}\eqref{definition complete morphism preservation of ports} holds follows from the fact that $f$ is a map between the corresponding colored properads, as in the second part of \cite[Lemma 5.19]{hrybook}.
We have that
\[
	\vertex(f(H)) \cong \coprod_{v\in \vertex(H)} \vertex(f_1(v))
\]
for any $H\in \sub(G)$ by \cite[Definition 6.40 \& Remark 2.42(1)]{hrybook}, thus
\begin{equation*}
\vertex(f^\upsilon_1(H_1 \strcup H_2)) = \vertex(f^\upsilon_1 (H_1)) \cup \vertex(f^\upsilon_1 (H_2)) = \vertex( f^\upsilon_1 (H_1) \ordcup f^\upsilon_1 (H_2) ).
\end{equation*}
Further, we have $f^\upsilon_1 (H_1) \ordcup f^\upsilon_1 (H_2)$ is an open subgraph of $f^\upsilon_1(H_1 \strcup H_2)$ since $f^\upsilon_1$ preserves the partial order $\strsub$.
Here we have two open subgraphs which have the same set of vertices. If that vertex set is non-empty, then the containment becomes an equality.

We now show that the operations $f\mapsto f^\gamma$ and $f\mapsto f^\upsilon$ are inverse to each other. First suppose that $f$ is a morphism in $\bbY'$, we wish to show that $(f^\gamma)^\upsilon = f$. Since this is true by definition on edge sets, we must show $(f^\gamma)^\upsilon_1 = f_1$.
Let $H\in \sub(G)$. Then we have 
\[
	(f^\gamma)^\upsilon_1(H) = f^\gamma(H) = f_1(H)
\]
where the first equality is the definition and the second equality is given by Lemma~\ref{computing the image of falpha at a graph}.

Likewise, if $f$ is a properadic graphical map in $\hryGamma$, we wish to show that $(f^\upsilon)^\gamma = f$. 
But \[
	(f^\upsilon)^\gamma_0 = f^\upsilon_0 = f_0
\]
hence $(f^\upsilon)^\gamma = f$ by \cite[Corollary 6.62]{hrybook}. Thus we have established a bijective correspondence between morphisms of $\bbY'$ and properadic graphical maps of $\hryGamma$.

It remains to show that this bijection constitutes a functor between the two categories in question.
But if $f, g$ are two composable morphisms in $\bbY'$, then $f^\gamma g^\gamma = (fg)^\gamma$ using \cite[Corollary 6.62]{hrybook}, since
\[
	(f^\gamma g^\gamma)_0 = f^\gamma_0 g^\gamma_0 = f_0 g_0 = (fg)_0 = (fg)^\gamma_0.
\]
Likewise, $(\id_G)^\gamma_0 = (\id_G)_0$ is an identity, hence $(\id_G)^\gamma$ must be an identity.
\end{proof}
\begin{remark}
By Theorem~\ref{theorem upsilon equivalent definitions}, the category $\bbY$ is equivalent to the category $\hryGamma$ which can be identified with a wide subcategory of Kock's category $\kockgraphs$ (from Remark~\ref{definition kockgraphs}) containing fewer inert morphisms and the same active morphisms (see \cite[2.4.14]{Kock_Properads}).
\end{remark}

\section{Proof of Proposition~\ref{proposition functor l to y}}\label{appendix proof l to y}

In this section, \emph{graph} will always mean \emph{acyclic} graph.
The following is a variation of the construction in Definition~\ref{definition kockgraphs}, which was only about \emph{connected} graphs.
\begin{definition}\label{def properad genned by graph}
If $G$ is a (possibly disconnected) graph, then there is an $\edge(G)$-colored properad $\mathfrak{P}(G)$ generated by the vertices of $G$.
Each connected, open subgraph $H$ of $G$ determines (up to a choice of orderings on inputs and outputs) an element in $\mathfrak{P}(G)$.
\end{definition} 
In general, there are many other elements of $\mathfrak{P}(G)$ that do not come from connected, open subgraphs.
The reader concerned about disconnected graphs can either define $\mathfrak{P}(\coprod_i G_i) \coloneqq \coprod_i \mathfrak{P}(G_i)$ where each $G_i$ is connected, or note that Definition 5.3 and 5.7 of \cite{hrybook} make no real use of connectedness of $G$.
The properads acquired in these two ways will be isomorphic.

Many examples of this construction, as well as for properad maps $\mathfrak{P}(G) \to \mathfrak{P}(H)$, are given for connected graphs in \cite[Chapter 5]{hrybook}.
Thus we will give a single example for a disconnected graph here.

\newcommand{\coloredone}{{\color[HTML]{006666}1}}
\newcommand{\coloredtwo}{{\color[HTML]{999933}2}}
\newcommand{\coloredthree}{{\color[HTML]{3333CC}3}}
\newcommand{\coloredfour}{{\color[HTML]{990099}4}}
\newcommand{\coloredfive}{{\color[HTML]{336600}5}}
\newcommand{\coloredsix}{{\color[HTML]{990000}6}}

\begin{figure}[htb]
\labellist
\small\hair 2pt
 \pinlabel {\coloredone} [ ] at 28 111
 \pinlabel {\coloredtwo} [ ] at 2 52
 \pinlabel {\coloredthree} [ ] at 75 52
 \pinlabel {\coloredfour} [ ] at 112 111
 \pinlabel {\coloredfive} [ ] at 99 18
 \pinlabel {\coloredsix} [ ] at 141 18
\endlabellist
\centering
\includegraphics[scale=0.75]{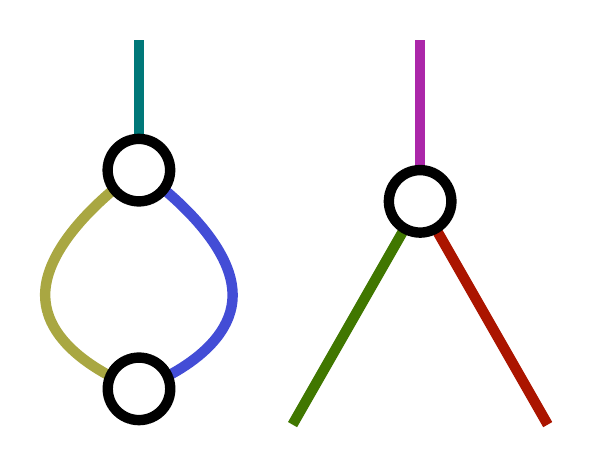}
\caption{The graph $G$ from Example~\ref{example PG disconnected}}
\label{figure example PG disconnected}
\end{figure}

\begin{example}\label{example PG disconnected}
Consider the disconnected graph $G$ from Figure~\ref{figure example PG disconnected} with edge set $\{\coloredone,\coloredtwo,\coloredthree,\coloredfour,\coloredfive,\coloredsix\}$.
Up to orderings, $\mathfrak{P}(G)$ has elements precisely in the following profiles (where $n\geq 1$):
\begin{align*}
&\mathfrak{P}(G)(\coloredone; \coloredtwo, \coloredthree)  && \mathfrak{P}(G)(\coloredtwo, \coloredthree; \,\,) \\
&\mathfrak{P}(G)(\coloredone, \overset{n}{\cdots}, \coloredone; \,\,)  && \mathfrak{P}(G)(\coloredone, \overset{n}{\cdots}, \coloredone, \coloredtwo; \coloredtwo) \\
& \mathfrak{P}(G)(\coloredone, \overset{n}{\cdots}, \coloredone, \coloredtwo, \coloredthree;\,\,) && \mathfrak{P}(G)(\coloredone, \overset{n}{\cdots}, \coloredone, \coloredthree; \coloredthree) \\
& \mathfrak{P}(G)(\coloredone, \overset{n}{\cdots}, \coloredone; \coloredtwo, \coloredthree)  && \mathfrak{P}(G)(\coloredfour; \coloredfive, \coloredsix).
\end{align*}
See Figure~\ref{figure example PG disconnected elements} for pictures of some of these elements.
\end{example}

\begin{figure}[htb]
\includegraphics[scale=0.75]{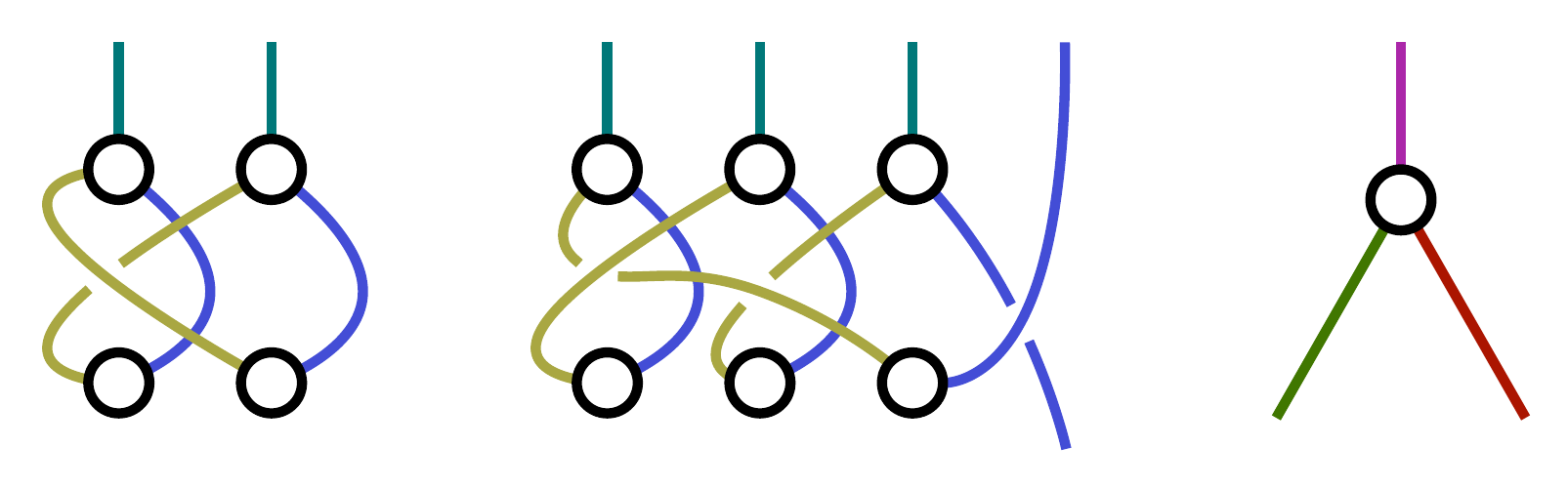}
\caption{Elements of $\mathfrak{P}(G)(\coloredone, \coloredone; \,\,)$, $\mathfrak{P}(G)(\coloredone, \coloredone, \coloredone, \coloredthree; \coloredthree)$, and $\mathfrak{P}(G)(\coloredfour; \coloredfive, \coloredsix)$ in the graph $G$ from Figure~\ref{figure example PG disconnected}.}
\label{figure example PG disconnected elements}
\end{figure}

If $(\alpha, \eta) : G \to H$ is in $\levelg$, then there is a well-defined map of properads $\mathfrak{P}(G) \to \mathfrak{P}(H)$ sending $v\in G_{i,i+1}$ to $\eta_{i,i+1}(v) \in H_{\alpha(i),\alpha(i+1)} \hookrightarrow \mathfrak{P}(H)$. 
The goal of this section is to show, when $G \to H$ is in $\levelgconn$, that the map $f : \mathfrak{P}(G) \to \mathfrak{P}(H)$ is actually a `graphical map.'
That is, the \emph{functor} from $\levelgconn \to \properads$ factors through the (non-full) subcategory $\hryGamma$.
Thus we are using Theorem~\ref{theorem upsilon equivalent definitions} in an essential way in this section.

Each $\mathfrak{C}$-colored properad $P$ has an underlying bimodule (see \S3.6.1 of \cite{hrybook}), that is, for each pair of lists of colors $c_1,\dots, c_n$ and $c_1', \dots, c_m'$, a set
\[
	P(c_1,\dots, c_n; c_1', \dots, c_m'),
\]
along with isomorphisms 
\[ \begin{tikzcd}
P(c_1,\dots, c_n ; c_1', \dots, c_m') \rar["\cong","(\sigma';\sigma)" swap] & P(c_{\sigma'(1)},\dots, c_{\sigma'(n)} ; c_{\sigma^{-1}(1)}', \dots, c_{\sigma^{-1}(m)}')
\end{tikzcd} \]
for $\sigma' \in \Sigma_n$ and $\sigma \in \Sigma_m$.
These give a right $\Sigma_n$ action and a left $\Sigma_m$ action on 
\[
	P(n;m) \coloneqq \coprod_{\mathfrak{C}^n \times \mathfrak{C}^m} P(c_1,\dots, c_n ; c_1', \dots, c_m').
\]
\begin{definition}
If $P$ is a $\mathfrak{C}$ colored properad (in $\Set$), let $\overline{P}$ denote the bigraded set
\[
	\overline{P}(n;m) \coloneqq P(n;m)/(\Sigma_n^{\op} \times \Sigma_m)
\]
along with the induced functions
\[
	\overline{P}(n;m) \to (\mathfrak{C}^n/\Sigma_n) \times (\mathfrak{C}^m / \Sigma_m).
\]
\end{definition}
Another way of phrasing this is that if $M\mathfrak{C}$ is the free commutative monoid on $\mathfrak{C}$, then $\overline{P}$ is a set equipped with a function $\overline{P} \to M\mathfrak{C} \times M\mathfrak{C}$.
As we've taken the underlying object of $P$ and removed all symmetry, the object $\overline{P}$ will not typically admit a properad structure.

Why do we consider this structure?
Each vertex $v \in \vertex(G)$ with $n$ inputs and $m$ outputs will generate $n!m!$ different morphisms in the properad $\mathfrak{P}(G)$ generated by a graph $G$, one for each choice of ordering on $\inp(v)$ and $\out(v)$.
For our current purposes, this extra data is a distraction, hence we have use for $\overline{\mathfrak{P}(G)}$, which admits a well-defined function
\[
	\vertex(G) \hookrightarrow \overline{\mathfrak{P}(G)}
\]
sending $v$ to the set of morphisms it generates.

\begin{definition}\label{definition vertex lift}
Let us say that a map of properads $f : \mathfrak{P}(H) \to \mathfrak{P}(G)$ \emph{has a vertex lift} if the dotted arrow $\widetilde f$ exists in the diagram
\begin{equation}\label{eq vertex lifts}
\begin{tikzcd}
\vertex(H) \dar[hook] \rar[dashed, "\widetilde f"] & \vertex(G) \dar[hook] \\
\overline{\mathfrak{P}(H)} \rar{\overline{f}} & \overline{\mathfrak{P}(G)}.
\end{tikzcd} \end{equation}
\end{definition}

\begin{lemma}
\label{lemma inj colors to etale mono}
Let $H$ be a connected graph, and let $f  : \mathfrak{P}(H) \to \mathfrak{P}(G)$ be a map of properads.
If $f$ has a vertex lift, then $f$ comes from an \'etale map $H \to G$.
If, in addition, $f$ is injective on colors (that is, if $\edge(H) \to \edge(G)$ is injective), then $f$ comes from an \'etale monomorphism $H\to G$.
\end{lemma}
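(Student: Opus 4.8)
The plan is to reconstruct the geometry of an étale map directly from the algebraic data of the vertex lift, and then to reconcile the combinatorial (graphical) structure with the coloring function. Recall that an étale map of graphs is a naïve morphism (a natural transformation of the underlying $\mathscr{G}$-diagrams in $\finset$) whose middle two squares are pullbacks (Definition~\ref{definition kockint}); concretely, this means it sends each vertex $v$ to a vertex $\widetilde{f}(v)$ in a manner preserving the local incidence data, i.e.\ inducing bijections $\inp(v)\cong \inp(\widetilde f(v))$ and $\out(v)\cong \out(\widetilde f(v))$. First I would use the given vertex lift $\widetilde f\colon \vertex(H)\to \vertex(G)$ together with the color map $f_0=\edge(H)\to\edge(G)$ (which $f$ determines, being a properad map) to assemble a candidate naïve morphism. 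The key point is that the commutativity of \eqref{eq vertex lifts} in $\overline{\mathfrak{P}(H)}\to\overline{\mathfrak{P}(G)}$ records precisely that, after forgetting the orderings, the generating operation attached to $v$ is sent to the generating operation attached to $\widetilde f(v)$, which unwinds to say that $f_0$ restricts to bijections $\inp(v)\cong \inp(\widetilde f(v))$ and $\out(v)\cong \out(\widetilde f(v))$ on incident edges. This is exactly the pullback (étale) condition on the middle squares.

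Next I would verify naturality of the square from Definition~\ref{definition kockint}: that is, the data $(f_0,\widetilde f)$ commutes with the source/target inclusions $s,t$ and the structure maps $p,q$. This should follow formally because $\widetilde f$ was obtained as a lift of $\overline f$, and $\overline f$ is induced by an actual properad homomorphism $f$, so it must respect the incidence relations encoded in the bimodule structure. Here I expect to lean on the bookkeeping of \cite[\S3.6.1]{hrybook} and on the way $\mathfrak{P}(H)$ is generated by its vertices (Definition~\ref{def properad genned by graph}): any properad map out of $\mathfrak{P}(H)$ is determined by its values on the generating vertices and on colors, so matching these on vertices and edges forces the naïve morphism $H\to G$ to induce exactly $f$ after applying $\mathfrak{P}(-)$.

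For the second assertion, I would add the hypothesis that $f_0$ is injective on edges and deduce that the étale map $H\to G$ is a monomorphism. Since an étale map is determined by its effect on edges and vertices, and since injectivity of $\edge(H)\to\edge(G)$ together with the local bijections forces $\widetilde f$ to be injective on vertices (two vertices with a shared image would share incident edges, contradicting edge-injectivity once one accounts for the étale pullback squares), the map is injective on both edges and vertices, hence a monomorphism of graphs. The main obstacle I anticipate is the bidirectional translation at the level of $\overline{\mathfrak{P}}$: one must be careful that the passage to $\overline{P}=P/(\Sigma_n^{\op}\times\Sigma_m)$ discards \emph{only} the ordering data and nothing essential, so that a lift on the quotient genuinely reassembles into well-defined incidence bijections rather than merely a set-level matching of operations. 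Establishing that the quotient map $\vertex(H)\hookrightarrow\overline{\mathfrak{P}(H)}$ is injective and that its image is exactly the $\Sigma$-orbits of generating operations — so that a lift through it is unambiguous — is where the real content lies; the rest is the routine verification that a vertex-and-edge assignment satisfying the local bijection conditions is the same thing as an étale map.
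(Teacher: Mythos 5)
Your approach is the same as the paper's: read off from the vertex lift that the generating operation of $v$ (whose profile is an ordering of $\inp(v)$ and $\out(v)$, with no repetitions) is sent to the class of the generating operation of $\widetilde f(v)$, conclude that $f_0$ restricts to bijections $\inp(v)\cong\inp(\widetilde f(v))$ and $\out(v)\cong\out(\widetilde f(v))$, and assemble $(f_0,\widetilde f)$ into a na\"ive morphism that is \'etale by construction. That part is fine, and the worry you raise at the end about the quotient $\overline{\mathfrak{P}(H)}$ is not a real obstacle: the right-hand vertical arrow in the lifting square is a monomorphism, so the lift $\widetilde f$, if it exists, is unique, and the profile of a generating operation already records the incidence sets on the nose.

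There is, however, a concrete gap in your argument for the second assertion. You deduce injectivity of $\widetilde f$ by arguing that two distinct vertices $v_1\neq v_2$ with $\widetilde f(v_1)=\widetilde f(v_2)=w$ would force two distinct edges (one in $\inp(v_1)$, one in the disjoint set $\inp(v_2)$) to have the same image in $\inp(w)$, contradicting edge-injectivity. This works only when $\inp(w)$ or $\out(w)$ is nonempty. If $w$ is an isolated vertex, i.e.\ a $\xfc_{0,0}$ with $\inp(w)=\out(w)=\varnothing$, there are no incident edges to compare and your argument says nothing; indeed the statement is \emph{false} without the connectedness hypothesis (take $H$ to be two isolated vertices mapping to a single isolated vertex of $G$: this is vacuously injective on colors but not a monomorphism). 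The paper closes this case by invoking connectedness of $H$: if $\inp(v_k)=\out(v_k)=\varnothing$ then a connected $H$ has exactly one vertex, so $v_1=v_2$. Your proposal never uses the connectedness hypothesis, which is the signal that this case has been missed; adding that one sentence repairs the proof.
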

\begin{proof}
Let
\[
	x \in \mathfrak{P}(H)(e_1, \dots, e_n; e_1', \dots, e_m')
\]
be a representative for a vertex $v$.
This implies that $\inp(v) = \{ e_1, \dots, e_n \}$ is an $n$-element subset of $\edge(H)$ and $\out(v) = \{ e_1', \dots, e_m' \}$ is an $m$-element subset of $\edge(H)$.
We know
\[
	fx \in  \mathfrak{P}(G)(fe_1, \dots, fe_n; fe_1', \dots, fe_m')
\]
and that this element represents the vertex $\widetilde f (v) \in \vertex(G)$.
As is the case whenever we look at an element representing a vertex in $\mathfrak{P}(G)$, we have that $\inp(\widetilde f (v)) = \{ fe_1, \dots, fe_n \}$ is an $n$-element set and $\out(\widetilde f (v)) = \{ fe_1', \dots, fe_m' \}$ is an $m$-element set.
Define a na\"ive morphism $f' : H\to G$ which is $f|_{\edge(H)}$ on edges and $\widetilde f$ on vertices.
We have seen that $f'$ preserves numbers of inputs and outputs, hence $f'$ is an \'etale map (Definition~\ref{definition kockint}).

With the first statement proved, let us now suppose that $\edge(H) \to \edge(G)$ is injective.
For the second statement, we must show that $\widetilde f : \vertex(H) \to \vertex(G)$ is injective.
Suppose $\widetilde f(v_1) = \widetilde f(v_2)$, from which it follows that $\inp(\widetilde f(v_1)) = \inp(\widetilde f(v_2))$ and $\out(\widetilde f(v_1)) = \out(\widetilde f(v_2))$.
If $\inp(\widetilde f(v_k)) = \varnothing = \out(\widetilde f(v_k))$, then we also have that $\inp(v_k) = \varnothing = \out(v_k)$.
Since $H$ is connected, this implies that $\vertex(H)$ has a unique element, so $v_1 = v_2$.

We may thus assume that one of the two sets $\inp(\widetilde f(v_k))$ or $\out(\widetilde f(v_k))$ is nonempty; without loss of generality, assume $\inp(\widetilde f(v_1)) = f(\inp(v_1)) = f(\inp(v_2))$ is nonempty.
Since $f|_{\edge(H)}$ is injective, this set is inhabited by an element $f(e)$ with $e\in \inp(v_1) \cap \inp(v_2)$, so $v_1=v_2$.
Thus $\widetilde f$ is injective, so the \'etale map $f'$ given in the first paragraph is a monomorphism.
\end{proof}

We now turn to the case when $f$ is not injective on colors.

\begin{lemma}\label{lemma mix up each ways}
Let $H$ be a connected graph, and let $f  : \mathfrak{P}(H) \to \mathfrak{P}(G)$ be a map of properads.
Suppose that $f$ has a vertex lift $\widetilde f : \vertex(H) \to \vertex(G)$ which is injective.
If $e_1 \neq e_2$ are edges of $H$ so that $f(e_1) = f(e_2)$, then $(e_1,e_2)$ or $(e_2,e_1)$ is an element of $\inp(H) \times \out(H)$.
\end{lemma}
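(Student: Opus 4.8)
The plan is to translate the hypotheses into the \'etale/combinatorial language of Definition~\ref{graph definition} and then argue by bookkeeping with the two decompositions of $\edge(H)$. Since $f$ has a vertex lift $\widetilde f$, the first part of Lemma~\ref{lemma inj colors to etale mono} shows that $f$ is induced by an \'etale map $f'\colon H \to G$ with $f'|_{\edge(H)} = f_0$ and $f'|_{\vertex(H)} = \widetilde f$. The key consequence I want to extract from \'etaleness (Definition~\ref{definition kockint}) is that for every $v\in\vertex(H)$ the edge map $f_0$ restricts to bijections $\inp(v)\isoto\inp(\widetilde f(v))$ and $\out(v)\isoto\out(\widetilde f(v))$; this falls out of the two middle squares being pullbacks together with the fact that $s$ and $t$ are monomorphisms.

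Next I would prove two symmetric impossibility statements about the edges $e_1\neq e_2$ with $f_0(e_1)=f_0(e_2)=:e$. First, $e_1$ and $e_2$ cannot both be outputs of vertices: if $e_i\in\out(u_i)$, then $e\in\out(\widetilde f(u_i))$ by the bijections above, and since in $G$ an edge is the output of at most one vertex (by the decomposition $\edge(G)=\inp(G)\amalg\coprod_v\out(v)$), we get $\widetilde f(u_1)=\widetilde f(u_2)$; injectivity of $\widetilde f$ forces $u_1=u_2$, and then injectivity of $f_0$ on $\out(u_1)$ forces $e_1=e_2$, a contradiction. Hence at least one of $e_1,e_2$ lies in $\inp(H)$. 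The mirror-image argument using $\edge(H)=\out(H)\amalg\coprod_v\inp(v)$ shows at least one of $e_1,e_2$ lies in $\out(H)$.

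The remaining step is to combine these with the disjointness $\inp(H)\cap\out(H)=\varnothing$. This disjointness holds because an edge in the intersection is attached to no vertex, hence constitutes its own connected component; as $H$ is connected and has at least two edges (we are given $e_1\neq e_2$), no such isolated edge can exist. A short case analysis then finishes: if $e_1\in\inp(H)$ then $e_1\notin\out(H)$, so the ``output'' witness must be $e_2$, giving $(e_1,e_2)\in\inp(H)\times\out(H)$; otherwise $e_2\in\inp(H)$ and symmetrically $(e_2,e_1)\in\inp(H)\times\out(H)$.

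The main obstacle I anticipate is purely bookkeeping rather than conceptual: carefully justifying the local bijections $\inp(v)\isoto\inp(\widetilde f(v))$ from the pullback squares, and keeping straight which decomposition of $\edge(H)$ governs ``global input versus output-of-a-vertex'' and which governs ``global output versus input-of-a-vertex.'' Once these are pinned down, the two impossibility arguments are immediate and the concluding case split is routine.
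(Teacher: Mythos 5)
Your argument is correct and is essentially the paper's proof: the two contradiction arguments (the edges $e_1,e_2$ cannot both be vertex-outputs, and cannot both be vertex-inputs), driven by injectivity of $\widetilde f$ together with the local bijections $\inp(v)\cong\inp(\widetilde f(v))$ and $\out(v)\cong\out(\widetilde f(v))$ and the fact that an edge of $G$ is the output (resp.\ input) of at most one vertex, are exactly the two steps of the proof in the paper, which merely packages them as a chain of implications rather than as your two symmetric claims followed by the disjointness $\inp(H)\cap\out(H)=\varnothing$. The only cosmetic difference is where connectivity of $H$ enters (you use it to rule out a loose edge and obtain the disjointness; the paper uses it to guarantee that an edge of $\out(H)$ is attached to some vertex), but the content is identical.
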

\begin{proof}
Suppose $e_1 \neq e_2$ and $f(e_1) = f(e_2)$.
It suffices to show that $(e_1,e_2)\notin \inp(H) \times \out(H)$ implies $(e_2,e_1)\in \inp(H) \times \out(H)$. 
For this we first prove that $(e_1,e_2)\notin \inp(H) \times \out(H)$ implies $e_1\notin \inp(H) $ and $e_2\notin \out(H)$. 

The existence of $e_1 \neq e_2$ shows that $H\not\cong {\downarrow}$, hence, if $e_1 \notin \inp(H)$ then $e_1 \in \out(v)$ for some vertex $v$.
If $e_2 \in \out(H)$ then there exists a vertex $u$ with $e_2 \in \out(u)$.
On the one hand, the injectivity of $f|_{\out(v)}$ implies that $u \neq v$.
On the other, $f(e_1) = f(e_2)$ can be an output for at most one vertex. 
Thus, $\widetilde f(u) = \widetilde f(v)$ and the injectivity of $\widetilde f$ gives $u = v$.
This contradiction proves that $e_1 \notin \inp(H)$ implies $e_2 \notin \out(H)$ and a symmetric argument shows that the reverse implication also holds. 
Therefore, if $(e_1,e_2)\notin \inp(H) \times \out(H)$ then $e_1\notin \inp(H) $ and $e_2\notin \out(H)$. 

Now let us assume that $e_1 \in \out(v)$ and $e_2 \in \inp(w)$ for some vertices $v, w$. 
To show that $(e_2, e_1) \in \inp(H) \times \out(H)$ it is necessary to exclude that possibility that $e_1 \in \inp(u)$ for some $u$. 
If such a vertex $u$ exists, then $f(e_1) \in \inp(\widetilde f(u))$ equals $f(e_2) \in \inp(\widetilde f(w))$, so we must have $\widetilde f(u) = \widetilde f(w)$ which implies that $u=w$.
But $f|_{\inp(w)}$ is a monomorphism, so this would imply $e_1 = e_2$, contrary to our hypothesis.
Thus no such $u$ exists and $e_1 \in \out(H)$.
The symmetric argument establishes that $e_2 \in \inp(H)$, so $(e_2, e_1) \in \inp(H) \times \out(H)$.
\end{proof}

The following example, which appeared as Example 5.25 of \cite{hrybook}, shows that the behavior of Lemma~\ref{lemma mix up each ways} actually occurs.

\begin{example}\label{ex etale not inj edges}
There is an evident \'etale map from the graph $H$ on the left to the graph $G$ on the right which is not injective on edge sets.
The induced map of properads $f : \mathfrak{P}(H) \to \mathfrak{P}(G)$ has an injective vertex lift, but does not satisfy the hypothesis of the next corollary.
\begin{center}
\begin{tikzpicture}[new set=import nodes]
\begin{scope}[nodes={set=import nodes}] 
\node (u0) at (1.2,.8) {};
\node [circle,draw,very thick] (u1) at (2,2) {$u$};
\node (v0) at (1.2,1.2) {};
\node [circle,draw,very thick] (v1) at (2,0) {$v$};
\node [circle,draw,very thick] (u) at (5,2) {$u$};
\node [circle,draw,very thick] (v) at (5,0) {$v$};
\end{scope}
\graph [edges={thick}]
{ (import nodes);
u0 -> v1;
u1 -> v0;
u1 -> [bend left] v1;
u -> [bend right] v; u -> [bend left] v;
}; \end{tikzpicture}
\end{center}
\end{example}

\begin{corollary}\label{cor vertex inclusion in out inclusion}
Let $H$ be a connected graph, and let $f  : \mathfrak{P}(H) \to \mathfrak{P}(G)$ be a map of properads.
Suppose that $f$ has a vertex lift $\widetilde f : \vertex(H) \to \vertex(G)$ which is injective.
If $f|_{\inp(H) \cup \out(H)}$ is injective, then $f$ comes from an \'etale monomorphism.
\end{corollary}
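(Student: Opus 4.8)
The plan is to combine the two preceding lemmas to reduce the general case to the injective-on-edges case already handled by Lemma~\ref{lemma inj colors to etale mono}. First I would invoke Lemma~\ref{lemma inj colors to etale mono} to get the underlying \'etale map $f'\colon H\to G$, which is determined by $f|_{\edge(H)}$ on edges and by $\widetilde f$ on vertices. What remains is precisely to show that $f|_{\edge(H)}$ is a monomorphism, for then $f'$ is an \'etale monomorphism as desired. (The injectivity of $\widetilde f$ on vertices is assumed, so that half of the monomorphism condition is free.)

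So the real content is to prove that $f_0=f|_{\edge(H)}$ is injective. Suppose not: there are distinct edges $e_1\neq e_2$ of $H$ with $f(e_1)=f(e_2)$. By Lemma~\ref{lemma mix up each ways}, one of the ordered pairs $(e_1,e_2)$ or $(e_2,e_1)$ lies in $\inp(H)\times\out(H)$; relabeling if necessary, assume $e_1\in\inp(H)$ and $e_2\in\out(H)$. In particular both $e_1$ and $e_2$ lie in $\inp(H)\cup\out(H)$, and they are distinct edges with the same image under $f$. This directly contradicts the standing hypothesis that $f|_{\inp(H)\cup\out(H)}$ is injective. Hence no such pair exists, $f_0$ is injective, and combined with the injectivity of $\widetilde f$ we conclude via Lemma~\ref{lemma inj colors to etale mono} that $f'$ is an \'etale monomorphism.

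The argument is short because the two lemmas do all the work; the main (minor) obstacle is purely bookkeeping, namely correctly tracking the asymmetry in Lemma~\ref{lemma mix up each ways}, which only guarantees that one of $(e_1,e_2)$, $(e_2,e_1)$ is an input-output pair rather than symmetrically both edges being boundary edges of the same type. I would emphasize that this asymmetry is exactly why the hypothesis is phrased in terms of injectivity on the union $\inp(H)\cup\out(H)$ rather than separately on inputs and outputs: once Lemma~\ref{lemma mix up each ways} places $e_1$ and $e_2$ both inside this union, injectivity on the union immediately kills the supposed collision. No further case analysis is needed, and Example~\ref{ex etale not inj edges} serves as the sanity check showing the hypothesis cannot simply be dropped.
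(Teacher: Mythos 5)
Your proof is correct and is essentially the paper's own argument: Lemma~\ref{lemma mix up each ways} forces any collision $f(e_1)=f(e_2)$ with $e_1\neq e_2$ to occur within $\inp(H)\cup\out(H)$, which the hypothesis rules out, so $f$ is injective on edges and Lemma~\ref{lemma inj colors to etale mono} finishes the job. No differences of substance.
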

\begin{proof}
Lemma~\ref{lemma mix up each ways} shows that $f$ is injective on edges, so Lemma~\ref{lemma inj colors to etale mono} applies.
\end{proof}

\begin{lemma}
\label{lemma in and out determination}
Suppose $H_1$ and $H_2$ are connected open subgraphs of a connected graph $G$.
If $\inp(H_1) = \inp(H_2)$ and $\out(H_1) = \out(H_2)$, then $H_1=H_2$.
\end{lemma}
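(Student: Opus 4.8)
The plan is to prove this by induction on the number of vertices of $H_1$ (equivalently, of $H_2$), using the connectedness of the subgraphs together with the acyclicity of $G$. The statement is intuitively clear: a connected subgraph of an acyclic graph should be reconstructible from its input and output boundary, since traveling inward from the boundary determines all the internal edges and vertices uniquely. The subtlety is that $\inp$ and $\out$ of a subgraph refer to edges on the \emph{boundary} of the subgraph, not to the internal edges, so I must argue that the boundary data propagates to recover the interior.

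First I would dispose of the base cases. If $\vertex(H_1) = \varnothing$, then since $H_1$ is connected it is a single edge $\downarrow_e$, so $\inp(H_1) = \out(H_1) = \{e\}$; the hypotheses then force $\inp(H_2) = \out(H_2) = \{e\}$, and since $H_2$ is an open subgraph, having a single edge as both its sole input and sole output means $H_2$ cannot contain any vertex (any vertex would have an incident edge that is internal or changes the boundary structure), so $H_2 = \downarrow_e = H_1$. For the inductive step, suppose both subgraphs have at least one vertex. Since $G$ is acyclic and $H_1$ is a connected, open subgraph, I would locate a distinguished vertex to peel off — for instance a vertex $v$ of $H_1$ all of whose \emph{input} edges lie in $\inp(H_1)$ (a "top" vertex of $H_1$); such a vertex exists by acyclicity of $H_1$. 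The key claim is that the \emph{same} vertex $v$ must be a vertex of $H_2$. This is where I would use that $\inp(H_1) = \inp(H_2)$: the input edges of $v$ are input edges of $H_1$, hence of $H_2$; because $G$ is a graph in the sense of Definition~\ref{graph definition}, each edge is the input of at most one vertex of $G$, and openness of the subgraphs ($\inp^{H_i}(w) = \inp^G(w)$) pins down that the vertex of $G$ receiving those edges as inputs is forced to belong to $H_2$ as well, namely $v$.

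Having established that $v \in \vertex(H_1) \cap \vertex(H_2)$ with $\inp(v) \subseteq \inp(H_1) = \inp(H_2)$, I would form the open subgraphs $H_1' = H_1 \setminus v$ and $H_2' = H_2 \setminus v$ (removing $v$ and its outgoing edges, as in the discussion preceding Construction~\ref{construction: factorization through admissible}). The goal is to show $H_1'$ and $H_2'$ have matching inputs and outputs so that the inductive hypothesis applies to each connected component. Concretely, removing $v$ replaces its input edges (which were inputs of the whole subgraph) by its output edges as new boundary edges, in exactly the same way for both $H_1$ and $H_2$ since they share the same boundary and both contain $v$ with the same incidence data (openness forces $\inp^{H_i}(v)=\inp^G(v)$ and $\out^{H_i}(v)=\out^G(v)$). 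Thus $\inp(H_1') = \inp(H_2')$ and $\out(H_1') = \out(H_2')$. Applying the inductive hypothesis componentwise gives $H_1' = H_2'$, and since $H_i = C_v \cup H_i'$ (union of subgraphs of $G$, with $v$ reattached along the same edges), we conclude $H_1 = H_2$.

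The main obstacle I anticipate is the claim that a boundary-adjacent vertex of $H_1$ is necessarily a vertex of $H_2$ — that is, that the input boundary genuinely forces interior vertices to agree. The argument rests on the structural fact that in a graph (Definition~\ref{graph definition}) every edge is an input of at most one vertex and an output of at most one vertex, combined with openness of the subgraphs; I would need to be careful that removing $v$ keeps the remaining pieces open subgraphs with correctly computed boundaries, and that the disjoint-union/inductive-hypothesis bookkeeping is valid even when $H_i'$ becomes disconnected (the inductive hypothesis is stated for connected subgraphs, so I apply it to each connected component after checking their boundaries match, which follows from the global boundary agreement together with connectedness determining the component decomposition).
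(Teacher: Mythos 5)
Your overall strategy (peel a boundary vertex, induct on the number of vertices) is genuinely different from the paper's proof, which disposes of the bare-edge case and then invokes Proposition 1.25 of \cite{HackneyRobertsonYau:GCHMO} on embeddings of undirected graphs. A self-contained argument would be welcome, but as written yours has two real gaps. First, the key claim that a ``top'' vertex $v$ of $H_1$ must lie in $H_2$ is argued only by tracing an input edge of $v$: you take $e \in \inp(v) \subseteq \inp(H_1) = \inp(H_2)$, note that $e$ cannot lie in $\out(H_2)$ (a connected open subgraph with a vertex has $\inp \cap \out = \varnothing$), conclude that $e$ is the input of some vertex of $H_2$, and use that each edge of $G$ is the input of at most one vertex. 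That chain is fine, but it requires $\inp(v) \neq \varnothing$. A top vertex can perfectly well have no inputs (for instance whenever $\inp(H_1) = \varnothing$), and then your argument produces no common vertex at all; you would need instead to trace through some nonempty part of the boundary (say an output edge of $H_1$), and the degenerate case $\inp(H_1) = \out(H_1) = \varnothing$ --- which forces $H_1 = G = H_2$, since no edge can then join $H_1$ to its complement --- needs separate treatment.

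Second, and more seriously, the inductive step does not close. After removing $v$ you correctly compute that $H_1'$ and $H_2'$ have equal \emph{total} input and output sets, but the inductive hypothesis is stated only for \emph{connected} subgraphs, and $H_1'$, $H_2'$ are typically disconnected. To apply the hypothesis componentwise you must exhibit a bijection between the components of $H_1'$ and those of $H_2'$ under which the \emph{individual} boundaries match; equality of the unions $\inp(H_1') = \inp(H_2')$ and $\out(H_1') = \out(H_2')$ does not by itself determine how those sets are distributed among components, and ``connectedness determining the component decomposition'' is an assertion, not an argument. This matching is essentially the content of the lemma one level down, so the induction is circular at exactly this point. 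It can be repaired --- e.g.\ by showing each component of $H_1'$ contains an edge of $\out(v)$, that the components of $H_1'$ and $H_2'$ containing a given such edge share a vertex, and then propagating outward --- but that is a genuine additional argument (closer in spirit to a direct reachability proof that $\vertex(H_1) = \vertex(H_2)$, working edge by edge from the boundary) which the proposal does not supply.
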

\begin{proof}
Notice that if $H$ is an open \emph{connected} subgraph of $G$, then $\inp(H) \cap \out(H) \neq \varnothing$ if and only if $H$ is an edge.
Moreover, if this holds then $H$ is uniquely determined by $\inp(H)$, a one element set.
Thus it suffices to consider the case when $\inp(H_k) \cap \out(H_k) = \varnothing$ ($k=1,2$).
Forgetting the direction, the two inclusions become embeddings of undirected graphs in the sense of \cite{HackneyRobertsonYau:GCHMO}. 
The conclusion follows of the lemma follows by applying \cite[Proposition 1.25]{HackneyRobertsonYau:GCHMO} to these two inclusions (in their notation we have $\eth(f_k) = \inp(H_k) \amalg \out(H_k)$, $k=1,2$).
\end{proof}

\begin{theorem}\label{theorem on levelgconn to etale mono}
Let $\varphi = (\alpha, \eta): G \to H$ be a morphism of $\levelgconn$.
Suppose that $x\in G_{i,j}$ is an $(i,j)$-level subgraph with associated level graph $K$.
Let $\vertex(K) \cong \{v_1, \dots, v_n\} \subseteq \coprod_{p = i}^{j-1} G_{p,p+1}$ be the set of vertices that map to $x$, and let $H_\ell$ be the graph associated to $\eta(v_\ell) \in H_{\alpha(p),\alpha(p+1)}$.
Then there is an \'etale monomorphism $f' : K\{H_1, \dots, H_n\} \to H$ whose image is $\eta(x)$.
\end{theorem}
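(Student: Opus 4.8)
The plan is to reduce the statement to the \'etale monomorphism criteria developed earlier in this appendix, namely Corollary~\ref{cor vertex inclusion in out inclusion} together with Lemma~\ref{lemma in and out determination}. First I would set up the map on the level of generated properads. The graph substitution $K\{H_1,\dots,H_n\}$ has a natural edge set and vertex set built from the $H_\ell$, and I would define a properad map $f\colon \mathfrak{P}(K\{H_1,\dots,H_n\}) \to \mathfrak{P}(H)$ sending each vertex of $H_\ell$ (which corresponds to a vertex $w \in G_{p,p+1}$ of the level subgraph $K$, or rather to its image) to the appropriate vertex of $H$ via $\eta$. Concretely, the vertices of $K\{H_1,\dots,H_n\}$ are precisely $\coprod_\ell \vertex(H_\ell)$, and these are vertices of $H$ since each $H_\ell = \eta(v_\ell)$ is a structured subgraph of $H$ (using Lemma~\ref{lemma: level subgraph} and that $\eta$ lands in actual subgraphs). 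This visibly provides a vertex lift $\widetilde f$ in the sense of Definition~\ref{definition vertex lift}, and it is injective: distinct vertices of $K\{H_1,\dots,H_n\}$ lie in (possibly distinct) structured subgraphs $H_\ell$ of $H$, and since the $v_\ell$ are distinct vertices mapping to the same level subgraph $x$, the subgraphs $\eta(v_\ell)$ have disjoint vertex sets (by the coproduct decomposition $\vertex(\eta(x)) \cong \coprod_\ell \vertex(\eta(v_\ell))$, which follows from $\varphi$ being a morphism of $\levelgconn$ and the compatibility of $\vertex_\levelg$ established in the text).

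Next I would verify the boundary-injectivity hypothesis of Corollary~\ref{cor vertex inclusion in out inclusion}, i.e.\ that $f$ restricted to $\inp(K\{H_1,\dots,H_n\}) \cup \out(K\{H_1,\dots,H_n\})$ is injective. This is where the level structure does real work. The key point is that the pullback squares comprising the morphism $\varphi$ in $\levelgconn$ (Definition~\ref{def M}\eqref{M def cartesian}) force the edge maps $G_{p,p} \hookrightarrow H_{\alpha(p),\alpha(p)}$ to be compatible with the incidence data in a way that prevents identification of boundary edges. The input edges of $K\{H_1,\dots,H_n\}$ are, up to the substitution, the input edges of the level subgraph $x$ sitting in level $i$, and these inject into $H_{\alpha(i),\alpha(i)}$ by the monomorphism condition \eqref{M def mono}; similarly for outputs at level $j$. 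I would argue that because the level assignment is preserved by $\alpha$, two boundary edges of $K\{H_1,\dots,H_n\}$ that are identified under $f$ would have to lie in the same level of $H$ and be inputs/outputs of the same substituted subgraph, contradicting the monomorphism property of the relevant $\eta_{p,p}$. With injectivity on the boundary established, Corollary~\ref{cor vertex inclusion in out inclusion} gives that $f$ comes from an \'etale monomorphism $f'\colon K\{H_1,\dots,H_n\} \to H$.

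Finally I would identify the image of $f'$ with the structured subgraph $\eta(x) \in \sub(H)$. The image is an open connected subgraph of $H$ whose vertex set is $\coprod_\ell \vertex(H_\ell) = \coprod_\ell \vertex(\eta(v_\ell)) = \vertex(\eta(x))$, using the coproduct decomposition of vertices under graph substitution. To conclude that the image equals $\eta(x)$ and not merely shares its vertex set, I would compare inputs and outputs: the boundary of $K\{H_1,\dots,H_n\}$ maps onto $\inp(\eta(x))$ and $\out(\eta(x))$ because $\varphi$ preserves ports (the commuting square from Remark~\ref{remark alternative characterization of morphisms}, verified in the paragraph preceding Proposition~\ref{proposition functor l to y}), and then Lemma~\ref{lemma in and out determination} forces the image to equal $\eta(x)$ since both are connected open subgraphs of $H$ with the same inputs and outputs.

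The main obstacle I expect is the boundary-injectivity step and the precise bookkeeping identifying the image with $\eta(x)$: one must carefully track how the input/output edges of the substituted graph $K\{H_1,\dots,H_n\}$ correspond to boundary edges of $x$ at levels $i$ and $j$, and use the cartesian squares of Definition~\ref{def M} to rule out accidental identifications. Everything else is a fairly direct application of the lemmas already assembled in this appendix, but the interaction between the level-graph pullback conditions and the graph-substitution boundary is the delicate part.
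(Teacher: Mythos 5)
Your proposal follows essentially the same route as the paper's proof: the same properad map $f$ on $\mathfrak{P}(K\{H_1,\dots,H_n\})$, the same injective vertex lift obtained from disjointness of the $\vertex(H_\ell)$ (the paper gets this from $\vertex_{\levelg}(\varphi)(w)=v_\ell$ for $w\in H_\ell$), reduction to Corollary~\ref{cor vertex inclusion in out inclusion}, and identification of the image by comparing inputs and outputs and invoking Lemma~\ref{lemma in and out determination}.

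The one step that does not quite close as you state it is boundary injectivity in the case $\alpha(i)=\alpha(j)$. Your argument rules out two identified inputs (via injectivity of $\eta_{i,i}$) and two identified outputs (via $\eta_{j,j}$), but an input edge and an output edge of $K$ live in $G_{i,i}$ and $G_{j,j}$ respectively and pass through the \emph{different} components $\eta_{i,i}$ and $\eta_{j,j}$; when $\alpha(i)=\alpha(j)$ these have the same codomain $H_{\alpha(i),\alpha(i)}$ and their images do intersect (everything in sight maps to the single edge $\eta(x)$), so "the monomorphism property of the relevant $\eta_{p,p}$" gives no contradiction. The paper handles this by observing that when $\alpha(i)=\alpha(j)$ each $\eta(v_\ell)$, and hence each $H_\ell$, is an edge, so $K\{H_1,\dots,H_n\}$ itself degenerates to a single edge and the hypothesis of Corollary~\ref{cor vertex inclusion in out inclusion} holds vacuously. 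When $\alpha(i)<\alpha(j)$ the inputs and outputs land in distinct levels of $H$ and your argument goes through. The gap is local and easily patched; the rest of the proposal matches the paper's proof.
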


\begin{proof}
We know that
\[
	\vertex(K\{H_1, \dots, H_n\}) = \coprod_\ell \vertex(H_\ell)
\]
and we can use this to define a properad map
\[
	f : \mathfrak{P}(K\{H_1, \dots, H_n\}) \to \mathfrak{P}(H)
\]
whose restriction to the generators in $\vertex(H_\ell)$ is just the inclusion $\vertex(H_\ell) \subseteq \vertex(H)$.
Since $\vertex_{\levelg}(\varphi)(w) = v_\ell$ when $w$ is in $H_\ell$, we see that the graphs $H_\ell$ have disjoint sets of vertices.
Thus the properad map $f$ induces a monomorphism
\[
	\widetilde f : \vertex(K\{H_1, \dots, H_n\}) \to \vertex(H).
\]
Moreover, we have
\begin{gather*}
	\inp(K\{H_1, \dots, H_n\}) = \inp(K) \\
	\out(K\{H_1, \dots, H_n\}) = \out(K)
\end{gather*}
and the map $f|_{\inp(K\{H_1, \dots, H_n\})}$ is just the restriction of $\eta_{i,i}$ and likewise for outputs.
Since $\eta_{i,i}$ and $\eta_{j,j}$ are monomorphisms, this gives that $f|_{\inp \cup \out}$ is a monomorphism except in the case when $\alpha(i) = \alpha(j)$.
If $\alpha(i) = \alpha(j)$ then $\eta(v_\ell) = \eta(x)$ is always an edge, as is $K\{H_1, \dots, H_n\}$.
Thus the requirements of Corollary~\ref{cor vertex inclusion in out inclusion} are satisfied, and we have that $f' : K\{H_1, \dots, H_n\} \to H$ is an \'etale monomorphism.

We will now show that the image of this monomorphism is $\eta(x)$.
The set $\inp(K\{H_1, \dots, H_n \}) \cong \inp(K)$ maps to the inputs (in $H_{\alpha(i),\alpha(i)}$) of $\eta(x) \in H_{\alpha(i),\alpha(j)}$. 
A similar statement holds for outputs.
By Lemma~\ref{lemma in and out determination}, the result follows.
\end{proof}

\begin{proof}[Proof of Proposition~\ref{proposition functor l to y}]
By Theorem~\ref{theorem upsilon equivalent definitions}, it is sufficient to show that this becomes a morphism in $\hryGamma$.
Let $f : \mathfrak{P}(G) \to \mathfrak{P}(H)$ be the properad morphism associated to a morphism $(\alpha, \eta) : G\to H$ in $\levelgconn$, where $G$ has height $k$.
Apply Theorem~\ref{theorem on levelgconn to etale mono} to the unique element $x\in G_{0,k}$, representing $G$ itself.
As $\eta(x)$ represents a structured subgraph of $H$ by Lemma~\ref{lemma: level subgraph}, it follows that $f(G) \cong K\{H_1, \dots, H_n\}$ can be considered as a structured subgraph of $H$.
Thus we have verified the condition from \cite[Definition 6.46]{hrybook}, so $f$ is a morphism of $\hryGamma$.
\end{proof}

\bibliographystyle{amsalpha}
\bibliography{biblio}

\end{document}